\newcommand*{\titleAT}{\begingroup 
\newlength{\drop} 
\drop=0.1\textheight 

\centering 
\rule{\textwidth}{1pt}\par 
\vspace{2pt}\vspace{-\baselineskip} 
\rule{\textwidth}{0.4pt}\par 

\vspace{\drop} 
{\Large \textbf{Ground state, bound state, and normalized solutions to semilinear Maxwell and Schr\"odinger equations}}

\let\cleardoublepage=\clearpage
\vspace{0.25\drop} 
\rule{0.3\textwidth}{0.4pt}\par 
\vspace{\drop} 

{\Large \textsc{Jacopo Schino}}\par 
\vfill
{Supervisor: \textit{dr hab. Jaros{\l}aw Mederski}}\\

\vfill 

{\large \textsc{Instytut Matematyczny Polskiej Akademii Nauk}}\par 

\vspace*{\drop} 

\rule{\textwidth}{0.4pt}\par 
\vspace{2pt}\vspace{-\baselineskip} 
\rule{\textwidth}{1pt}\par 

\endgroup}
\newtheorem{Th}{Theorem}[section]
\newtheorem{Prop}[Th]{Proposition}
\newtheorem{Lem}[Th]{Lemma}
\newtheorem{Cor}[Th]{Corollary}
\theoremstyle{definition}
\newtheorem{Def}[Th]{Definition}
\newtheorem{Ex}[Th]{Example}
\theoremstyle{remark}
\newtheorem{Rem}[Th]{Remark}
\newcommand{\R}{\mathbb{R}}
\newcommand{\Z}{\mathbb{Z}}
\newcommand{\N}{\mathbb{N}}
\newcommand{\rr}{\R^3}
\newcommand{\rn}{\R^N}
\newcommand{\rk}{\R^K}
\newcommand{\rnk}{\R^{N-K}}
\newcommand{\hrn}{H^1(\rn)}
\newcommand{\cA}{\mathcal{A}}
\newcommand{\cS}{\mathcal{S}}
\newcommand{\cD}{\mathcal{D}}
\newcommand{\cC}{\mathcal{C}}
\newcommand{\cH}{\mathcal{H}}
\newcommand{\cO}{\mathcal{O}}
\newcommand{\cJ}{\mathcal{J}}
\newcommand{\cG}{\mathcal{G}}
\newcommand{\cK}{\mathcal{K}}
\newcommand{\cE}{\mathcal{E}}
\newcommand{\cB}{\mathcal{B}}
\newcommand{\cP}{\mathcal{P}}
\newcommand{\cM}{\mathcal{M}}
\newcommand{\cN}{\mathcal{N}}
\newcommand{\cF}{\mathcal{F}}
\newcommand{\cL}{\mathcal{L}}
\newcommand{\cV}{\mathcal{V}}
\newcommand{\cW}{\mathcal{W}}
\newcommand{\cT}{\mathcal{T}}
\newcommand{\cX}{\mathcal{X}}
\newcommand{\cY}{\mathcal{Y}}
\newcommand{\cZ}{\mathcal{Z}}
\newcommand{\cQ}{\mathcal{Q}}
\newcommand{\DF}{\cD_{\cF}}
\newcommand{\UU}{\mathbf{U}}
\newcommand{\VV}{\mathbf{V}}
\newcommand{\BB}{\mathbf{B}}
\newcommand{\mbS}{\mathbb{S}}
\newcommand{\SO}{\cS\cO}
\newcommand{\fN}{\mathfrak{N}}
\newcommand{\fM}{\mathfrak{M}}
\newcommand{\fI}{\mathfrak{I}}
\newcommand{\fH}{\mathfrak{H}}
\newcommand{\fS}{\mathfrak{S}}
\newcommand{\fD}{\mathfrak{D}}
\DeclareMathOperator*{\essinf}{ess\,inf}
\DeclareMathOperator*{\esssup}{ess\,sup}
\numberwithin{equation}{section}
\begin{document}

\titleAT

\newpage
Oświadczam, że niniejsza rozprawa została napisana przeze mnie
samodzielnie.\\[2em]

\noindent \begin{minipage}{0.4\textwidth}
	\begin{flushright}
		Jacopo Schino

		\vskip 2em
		........................................\\
		{\small (data i podpis)}
	\end{flushright}
\end{minipage}

\hfill \break\\[7em]

Niniejsza rozprawa jest gotowa do oceny przez recenzentów.\\[2em]

\noindent \begin{minipage}{0.4\textwidth}
	\begin{flushright}
		\mbox{dr hab. Jaros{\l}aw Mederski}\vskip 2em
		........................................\\
		{\small (data i podpis)}
	\end{flushright}
\end{minipage}

\selectlanguage{english}
\tableofcontents

\chapter*{Acknowledgements}

\section*{Academic acknowledgements}

First and foremost, my academic acknowledgements are to my supervisor Jaros{\l}aw Mederski, who with patience and enthusiasm has introduced me to to world of professional mathematics and guided me therein.

Then, I am grateful to my other co-authors Andrzej Szulkin and Micha{\l}  Gaczkowski; in particular, I would like to thank Andrzej for his hospitality during my stay at the Stockholm University.

I thank Panos Smyrnelis for inviting me to his mini-course in Bilbao and his will to have me as a collaborator. I also warmly thank Iwona Chlebicka for all the times she helped me with this or that issue, especially with Orlicz spaces, at the beginning of my Ph.D., and with my application for Preludium.

Finally, I show my gratitude to the other organizers of the Young Researchers Colloquium I worked with since February 2019: Adam Abrams, Filip Rupniewski, Jacek Krajczok, Joachim Jelisiejew, and Mariusz Tobolski.

\section*{Personal acknowledgements}

Of course, my first personal acknowledgements are to my family, who have always supported me during my Ph.D. and showed they are here for me whenever I need.

Next, I thank again Jarek Mederski, for his constant kindness towards me, for always behaving as a friend and not just as a supervisor, and for helping me a lot even before the start of my Ph.D., with my application at IM PAN, the search for an accommodation in Warsaw, and many other things.

I warmly thank Gianni Calabrese, in whom my social life at the beginning of my second year basically consisted and who first made Warsaw a city worth living in. I would also like to thank all my friends at IM PAN (and nearby) for the moments spent together and the great memories we will always carry with us; in particular, I am grateful to Eleonora Romano and Reza Mohammadpour. I thank as well all my friends in Bari (broadly speaking) that through their deeds have somehow shortened the distance between me and them.

I show all my gratitude to the staff at IM PAN for the welcoming and lovely atmosphere they created and keep on. I thank Marta Szpak too, my Polish teacher, for the passion in her classes and to whom I owe the possibility to face everyday situations in Poland.

Many thanks to my boardgames friends, without whom I could have never enjoyed one of my biggest passions, and to Tomek Cie\'slak, for having me play football a few times. I am very grateful to my landlord Piotr Durka too for his constant understanding and helpfulness.

In the end, last but by no means least, I deeply thank Taisiia Borshchova: by far the sweetest person I have met in Poland, undoubtedly the only good thing of my first accommodation, she has always been nice to me, has always helped me -- never asking anything in return -- every time I needed, and proved to be a true friend.

\chapter*{Abstract}

This Ph.D. thesis is concerned with the existence of entire solutions to semilinear elliptic equations and is divided into two parts, dealing with unconstrained and constrained problems respectively. Chapter \ref{K:prel} contains a series of recalls about notions and properties used throughout this work and lies before the aforementioned division into parts.

In Part \ref{1}, we study existence and multiplicity results for equations of the form
\begin{equation*}
\nabla\times\nabla\times\UU=f(x,\UU), \quad \UU\colon\rn\to\rn,
\end{equation*}
where $N\ge3$ and $f=\nabla F\colon\rn\times\rn\to\rn$ is the gradient (with respect to $\UU$) of a given nonlinear function $ F\colon\rn\times\rn\to\R$. Here, when $N\ge4$, $\nabla\times\nabla\times\UU$ is defined using the identity, valid when $N=3$, $\nabla\times\nabla\times\UU=\nabla(\nabla\cdot\UU)-\Delta\UU$. Such problems are known as \textit{curl-curl} problems and arise, when $N=3$, from the nonlinear Maxwell equations in absence of electric charges, electric currents, and magnetization. The main issue is that the kernel of the differential operator $\nabla\times\nabla\times$ consists of the subspace of gradient fields and is therefore infinite-dimensional. Historically, two approaches have been used to tackle curl-curl problems by means of variational methods and both make use of divergence-free vector fields. The reason is that $\nabla\times\nabla\times\UU=-\Delta\UU$ for every divergence-free field $\UU$ and the vector Laplacian is a differential operator easier to handle.

In Chapter \ref{K:intro1}, we give an accurate physical derivation of curl-curl problems and then survey important results throughout the last decades, from the first works to the current days, including those illustrated in this Ph.D. thesis.

In Chapter \ref{K:nosym}, based on \cite{MeScSz}, we focus on the (physically relevant) case $N=3$. The nonlinearity $F$ is controlled, from above and from below, by a suitable nice Young function: in particular, Sobolev-supercritical at zero, Sobolev-subcritical but superquadratic at infinity, and satisfying the $\Delta_2$ and $\nabla_2$ conditions globally. Our approach makes use of a Helmholtz-type decomposition of the function space we work with into a divergence-free subspace and a curl-free subspace (the aforementioned kernel), i.e., $u=v+w$ with $\nabla\cdot v=\nabla\times w=0$ and $(v,w)$ uniquely determined; then we build a homeomorphism from the former subspace to a certain topological submanifold (of the whole space) that contains all the nontrivial solutions. This somehow allows us to work only with the divergence-free subspace, although the ``curl-free'' part must be taken care of; in fact, that is what causes the most difficulties in the methods we use. We prove the existence of a least-energy solution and, if $f$ is odd, of infinitely many distinct solutions. Unlike Chapter \ref{K:cylsym}, we do not use any symmetries; in particular, we provide the first multiplicity result about curl-curl problems in unbounded domains without any symmetry assumptions.

In Chapter \ref{K:cylsym}, based on \cite{Gacz}, we consider the general case $N\ge3$. Under certain symmetry assumptions about the nonlinearity, we exploit suitable group actions to reduce the curl-curl problem to the Schr\"odinger equation with singular potential
\[
-\Delta u+\frac{a}{|y|^2}u=\tilde{f}(x,u), \quad u\colon\rn\to\R,
\]
with $x=(y,z)\in\rk\times\rnk$, $K=2$, and $a=1$, studying as well the general case $2\le K<N$ and $a>-(K/2-1)^2$. More in detail, we require that $f(\cdot,\alpha w)=\tilde{f}(\cdot,\alpha)w$ for every $\alpha\in\R$ and every $w\in\mbS^{N-1}$ and that $\tilde{f}(gx,\cdot)=\tilde{f}(x,\cdot)$ for a.e. $x\in\rn$ and every $g\in\SO(2)\times\{I_{N-2}\}$. Extending to the case of weak solutions a well-known equivalence property that puts in a 1-to-1 correspondence the classical solutions to the two problems via the formula $\UU(x)=u(x)/|y|(-x_2,x_1,0)$, we prove new existence results (nontrivial solutions, least-energy solutions relatively to the functions with the same symmetry, infinitely many distinct solutions) about both, in the Sobolev-critical and -noncritical cases; in particular, we work with the curl-curl equation in the former case, using the same symmetry machinery to reduce $\nabla\times\nabla\times\UU$ to $-\Delta\UU$, and with the Schr\"odinger equation in the latter. The most prominent result is the existence, when $N=3$, of a divergent sequence of solutions in the critical case, obtained with the aid of another group action, which restores compactness; this is the first multiplicity result for curl-curl problems in unbounded domains in the Sobolev-critical case. Concerning the existence, in the noncritical case, of a least-energy solution and of infinitely many distinct solutions, we exploit an abstract critical point theory built in Chapter \ref{K:nosym}.

In Part \ref{2}, we look for least-energy solutions to autonomous Schr\"odinger systems of the form
\[
\begin{cases}
-\Delta u_j+\lambda_ju_j=\partial_jF(u)\\
\int_{\rn}u_j^2\,dx=\rho_j^2
\end{cases}
\forall j\in\{1,\dots,K\}, \quad u\colon\rn\to\rk,
\]
where $N,K\ge1$, $\rho=(\rho_1,\dots,\rho_K)\in]0,\infty[^K$ is given, and $\lambda=(\lambda_1,\dots,\lambda_K)\in\rk$ is part of the unknown. Solutions to such problems are called \textit{normalized} due to the $L^2$-constraints, which are what causes the quantity $\lambda$ to appear as a $K$-tuple of Lagrange multipliers. Equations of this type arise when seeking standing wave solutions to similar time-dependent problems and come from areas of Physics such as nonlinear optics and Bose--Einstein condensation. Their importance lies in the physical meaning of the masses (the $L^2$ norms squared) and the fact that such quantities are conserved in time in the corresponding evolution equations.

In Chapter \ref{K:intro2}, we introduce the problem, briefly comment some seminal papers and other results in the literature, and provide useful preliminary properties.

Depending on the assumptions about $F$ and, sometimes, on the value of $\rho$, the associated energy functional exhibits different behaviours: it can be bounded from below for all, some, or no values of $\rho$ and these cases are known, respectively, as mass-subcritical, -critical, and -supercritical. The first two are studied in Chapter \ref{K:sub}, based on \cite{Schino}, while the last is studied in Chapter \ref{K:super}, based on \cite{MeSc}. In both cases, we consider a minimizing sequence for the energy functional and work out proper assumptions so that such a sequence converges to a solution to the system. In the mass-supercritical case, since the functional is unbounded from below, we restrict it to a natural manifold, given by a suitable linear combination of the Nehari and the Poho\v{z}aev identities to get rid of the unknown quantity $\lambda$, in order to recover such boundedness. The outcome consists of a least-energy solution, relatively to the functions with the same symmetry or in the general sense depending on the structure of the nonlinearity.

The novelty of this approach consists of considering the $L^2(\rn)$ balls
\[
\Set{u\in\hrn|\left|u\right|_2\le\rho_j}, \quad j\in\{1,\dots,K\}
\]
instead of the $L^2(\rn)$ spheres
\[
\Set{u\in\hrn|\left|u\right|_2=\rho_j}, \quad j\in\{1,\dots,K\}
\]
in order to work with a weakly closed subset and have, a priori, additional information about the sign of the components of $\lambda$, which is due to the fact that the constraints are given by inequalities and that the critical points we obtain are minimizers.

When $K\ge2$, we need particular hypotheses about the nonlinearity in order to make use of the Schwarz symmetric rearrangements; nevertheless, we can still deal with rather generic functions, which is new about systems.

Finally, Chapter \ref{K:join} contains new results and deals with normalized solutions both to curl-curl problems and to nonautonomous Schr\"odinger equations with singular potential as in Chapter \ref{K:cylsym}, but always with autonomous nonlinearities. Such results are obtained combining the symmetry and the equivalence from Chapter \ref{K:cylsym} with the outcomes from Chapters \ref{K:sub} and \ref{K:super}. In  particular, the symmetry allows us to reduce the curl-curl problem to a vector-valued autonomous Schr\"odinger equation, with a single $L^2$-constraint and which we study directly, while the equivalence provides analogous results for the scalar-valued Schr\"odinger equation with singular potential. Again, we obtain least-energy solutions relatively to the functions with the same symmetry.

\selectlanguage{polish}

\chapter*{Streszczenie}

Niniejsza rozprawa doktorska dotyczy istnienia rozwiązań półliniowych równań eliptycznych i jest podzielona na dwie części, dotyczące odpowiednio problemów bez ograniczenia i z ograniczeniem. Rozdział \ref{K:prel} zawiera szereg odwołań do pojęć i własności używanych w tej pracy i  zaprezentowany jest przed wspomnianym podziałem na części.

W Części \ref{1} badamy istnienie i wielokrotność rozwiązań równania postaci 
\begin{equation*}
\nabla\times\nabla\times\UU=f(x,\UU), \quad \UU\colon\rn\to\rn,
\end{equation*}
gdzie $N\ge3$ oraz $f=\nabla F\colon\rn\times\rn\to\rn$ jest gradientem (ze względu na $\UU$) danej nieliniowej funkcji $ F\colon\rn\times\rn\to\R$. 
Tutaj, gdy $N\ge4$, $\nabla\times\nabla\times\UU$ jest zdefiniowane przy użyciu tożsamości $\nabla\times\nabla\times\UU=\nabla(\nabla\cdot\UU)-\Delta\UU$ zachodzącej, gdy $N=3$. Takie problemy są znane jako problemy \textit{curl-curl} i powstają, gdy $N=3$, z nieliniowych równań Maxwella przy braku ładunków elektrycznych, prądów elektrycznych i magnetyzacji. Główną trudnością jest fakt, że jądro operatora różniczkowego $\nabla\times\nabla\times$ składa się z podprzestrzeni pól gradientowych i dlatego jest nieskończenie wymiarowe. Historycznie rzecz biorąc, dwa podejścia były stosowane do rozwiązywania problemów curl-curl za pomocą metod wariacyjnych i oba wykorzystywały bezdywergencyjne pola wektorowe. Powodem był fakt, że $\nabla\times\nabla\times\UU=-\Delta\UU$ dla każdego pola  bezdywergencyjnego $\UU$, a wektorowy Laplacian jest łatwiejszym w stosowaniu operatorem różniczkowym.

W rozdziale \ref{K:intro1} podajemy dokładne fizyczne wyprowadzenie problemu curl-curl, a następnie przywołujemy ważne wyniki z ostatnich dziesięcioleci, od pierwszych prac do najnowszych wyników, w tym zilustrowanych w tej rozprawie doktorskiej. 

W rozdziale \ref{K:nosym} opartym na \cite{MeScSz}, skupiamy się na fizycznie istotnym przypadku $N=3$. Nieliniowość $F$ jest kontrolowana od góry i od dołu przez odpowiednią regularną (ang. {\em nice}) funkcję Younga: w szczególności nadkrytyczną  w zerze, podkrytyczną w nieskończoności w sensie wykładnika Sobolewa, ale superkwadratową w nieskończoności i spełniającą globalne warunki $\Delta_2$ i $\nabla_2$. Nasze podejście wykorzystuje dekompozycję typu Helmholtza przestrzeni funkcyjnej z którą pracujemy, na podprzestrzeń bezdywergencyjną  i podprzestrzeń bezwirową (wspomniane jądro), tj. $u=v+w$, gdzie $\nabla\cdot v =\nabla\times w = 0$ i para $(v,w)$ jest jednoznacznie określona; następnie budujemy homeomorfizm z poprzedniej podprzestrzeni do pewnej topologicznej podrozmaitości (całej przestrzeni) zawierającej wszystkie nietrywialne rozwiązania. To w pewien sposób pozwala nam pracować tylko z bezdywergencyjną podprzestrzenią, chociaż trzeba zadbać o tę część bezwirową. W rzeczywistości to właśnie powoduje najwięcej trudności w stosowanych przez nas metodach. Udowadniamy istnienie rozwiązania o najmniejszej energii oraz, jeśli $f$ jest nieparzyste, istnienie nieskończenie wielu różnych rozwiązań. W przeciwieństwie do Rozdziału \ref{K:cylsym} nie używamy żadnych symetrii; w szczególności podajemy pierwsze wyniki dotyczące wielokrotności rozwiązań  problemu curl-curl na nieograniczonej dziedzinie bez złożeń o symetrii.

W rozdziale \ref{K:cylsym} bazującym na \cite{Gacz}, rozważymy przypadek ogólny $N\ge3$. Przy pewnych założeniach dotyczących symetrii nieliniowości, wykorzystujemy odpowiednie działania grupowe, aby zredukować problem curl-curl do równania Schr\"odingera z potencjałem osobliwym
\[
-\Delta u+\frac{a}{|y|^2}u=\tilde{f}(x,u), \quad u\colon\rn\to\R,
\]
gdzie $x=(y,z)\in\rk\times\rnk$, $K=2$ i $a=1$,
badając również przypadek ogólny $2\le K<N$ oraz $a>-(K/2-1)^2$. Mówiąc bardziej szczegółowo, wymagamy, aby $f(\cdot,\alpha w)=\tilde{f}(\cdot,\alpha)w$ dla każdego $\alpha\in\R$ 
i każdego $w\in\mbS^{N-1}$ oraz $\tilde{f}(gx,\cdot) = \tilde{f}(x,\cdot)$ dla prawie wszystkich $x\in\rn$ i dla każdego $g\in\SO(2)\times\{I_ {N-2}\} $. 
Rozszerzając do przypadku słabych rozwiązań dobrze znaną równoważność klasycznych rozwiązań obu problemów za pomocą wzoru $\UU(x) = u(x)/|y|(- x_2, x_1,0)$, dowodzimy nowych wyników istnienia (rozwiązania nietrywialne, rozwiązania o najmniejszej energii wśród rozwiązań o tej samej symetrii, nieskończenie wiele różnych rozwiązań) w obu problemach, zarówno w przypadkach krytycznych jak i niekrytycznych w sensie wykładnika Sobolewa; w szczególności pracujemy z równaniem curl-curl w poprzednim przypadku, używając tej samej maszynerii symetrii, aby zredukować $\nabla\times\nabla\times\UU$ do $-\Delta\UU$ oraz z równaniem Schr\"odingera w tej ostatniej sytuacji. Najbardziej znaczącym rezultatem jest istnienie, gdy $N=3$, rozbieżnego ciągu rozwiązań w krytycznym przypadku. Wynik ten uzyskamy za pomocą innego działania grupowego, które daje nam zwartość problemu; to jest pierwszy wynik dotyczący wielokrotności rozwiązań problemu curl-curl na nieograniczonej dziedzinie w wykładnikiem krytycznym  Sobolewa. Odnośnie istnienia rozwiązań w niekrytycznym przypadku, rozwiązania o najmniejszej energii oraz nieskończenie wiele różnych rozwiązań uzyskujemy wykorzystując abstrakcyjną teorię punktów krytycznych zbudowaną w rozdziale \ref{K:nosym}.

W Części \ref{2} szukamy rozwiązań o najmniejszej energii dla autonomicznego układu równań Schr\"odingera w postaci
\[
\begin{cases}
-\Delta u_j+\lambda_ju_j=\partial_jF(u)\\
\int_{\rn}u_j^2\,dx=\rho_j^2
\end{cases}
\forall j\in\{1,\dots,K\}, \quad u\colon\rn\to\rk,
\]
gdzie $N,K\ge1$, $\rho=(\rho_1,\dots,\rho_K)\in]0,\infty[^K$ jest dane, oraz  $\lambda=(\lambda_1,\dots,\lambda_K)\in\rk$ jest wielkością nieznaną.
Rozwiązania takich problemów określane są jako \textit{unormowane} ze względu na ograniczenia $L^2$, które powodują, że  $\lambda$ pojawia się jako $K$-krotka mnożników Lagrange'a. Równania tego typu pojawiają się podczas poszukiwania rozwiązań fali stojącej dla podobnych problemów zależnych od czasu i pochodzą z takich dziedzin fizyki, jak nieliniowa optyka i kondensacja Bosego-Einsteina. Ich waga polega na fizycznym znaczeniu masy (normy $L^2$ podniesionej do kwadratu) oraz fakcie, że wielkości te są zachowywane w czasie w odpowiednich równaniach ewolucji.

W rozdziale \ref{K:intro2} wprowadzamy problem, krótko komentujemy niektóre nowatorskie artykuły i inne wyniki w literaturze oraz podajemy przydatne preliminaria.

W zależności od założeń dotyczących $F$, a czasami wartości $\rho$, powiązana funkcja energetyczna wykazuje różne zachowania: może być ograniczona od dołu dla wszystkich, niektórych lub żadnych wartości $\rho$ i te przypadki są znane odpowiednio jako masowo podkrytyczne, -krytyczne i -nadkrytyczne. Pierwsze dwa przypadki są omówione w rozdziale \ref{K:sub} bazującym na pracy \cite{Schino}, a ostatnie w rozdziale \ref{K:super} bazują na \cite{MeSc}. W obu przypadkach rozważamy ciąg minimalizujący dla funkcjonału energii i wypracowujemy odpowiednie założenia takie, aby ten ciąg był zbieżny do rozwiązania układu równań. W przypadku masowo nadkrytycznym funkcjonał jest nieograniczony od dołu i ograniczamy go do rozmaitości naturalnej, określonej przez odpowiednią liniową kombinację tożsamości Nehariego i Poho\v{z}aeva, aby pozbyć się nieznanej wielkości $\lambda$ -- wówczas uzyskujemy ograniczenie funkcjonału z dołu. Wynik składa się z rozwiązania o najmniejszej energii wśród funkcji o tej samej symetrii lub w ogólnym sensie  w zależności od struktury nieliniowości.

Nowość tego podejścia polega na rozważeniu kul przestrzeni $L^2(\rn)$
\[
\Set{u\in\hrn|\left|u\right|_2\le\rho_j}, \quad j\in\{1,\dots,K\}
\]
zamiast sfer przestrzeni $L^2(\rn)$
\[
\Set{u\in\hrn|\left|u\right|_2=\rho_j}, \quad j\in\{1,\dots,K\}
\]
i pozwala pracować ze słabo domkniętym podzbiorem i mieć, a priori, dodatkowe informacje o znaku składowych $\lambda$, które wynikają z faktu, że ograniczenia są wyznaczane przez nierówności, a punkty krytyczne, które otrzymujemy, są punktami minimalnymi.

Jeśli $K\ge2$, to potrzebujemy konkretnych założeń dotyczących nieliniowości, aby skorzystać z symetryzacji Schwarza; niemniej jednak nadal możemy zajmować się raczej ogólnymi funkcjami, co jest nowością w przypadku układów.

Ostatecznie, Rozdział \ref{K:join} zawiera nowe wyniki i dotyczy unormowanych rozwiązań zarówno problemów curl-curl jak i nieautonomicznych równań Schr\"odingera z potencjałem osobliwym, jak w rozdziale \ref{K:cylsym}, jednak zawsze z autonomicznymi nieliniowościami. Takie wyniki uzyskuje się łącząc symetrię oraz równoważność z rozdziału \ref{K:cylsym} z wynikami z rozdziałów \ref{K:sub} oraz \ref{K:super}. W szczególności symetria pozwala nam zredukować problem curl-curl do autonomicznego równania Schr\"odingera z niewiadomą o wartościach wektorowych z pojedynczym ograniczeniem $L^2$, które badamy bezpośrednio, podczas gdy równoważność zapewnia analogiczne wyniki dla równania Schr\"odingera o wartości skalarnej z potencjałem osobliwym. Ponownie otrzymujemy rozwiązania o najmniejszej energii wśród rozwiązań o tej samej symetrii.

\selectlanguage{english}

\chapter*{Notations}

The symbols $\cdot$ and $\times$ stand, respectively, for the inner product in $\rn$, $N\ge1$ integer, and the cross product in $\rr$. In particular, $\nabla\cdot\UU$ stands for the divergence of $\UU\colon\rn\to\rn$ and $\nabla\times\UU$ stands for the curl of $\UU\colon\rr\to\rr$. The elements of the standard basis are denoted by $e_i$, $i\in\{1,\dots,N\}$.

The space of matrices $N\times K$ is denoted by $\R^{N\times K}$. When $N=K$, $I_N\in\R^{N\times N}$ stands for the identity matrix.

The closed semiline of nonnegative numbers $[0,\infty[$ is sometimes denoted by $\R^+$. If $A\subset\rn$ is a measurable set, $|A|$ denotes its Lebesgue measure.

If $\alpha\in\N^N$, $N\ge1$ integer, then the length of $\alpha$ is denoted by $|\alpha|:=\sum_{i=1}^N\alpha_i$. If, moreover, $f$ is a function of class $\cC^k$, $k\ge|\alpha|$, then we denote $\displaystyle D^\alpha f:=\frac{\partial^{|\alpha|}f}{\partial x_1^{\alpha_1}\dots\partial x_N^{\alpha_N}}$.

$N$ will always stand for the dimension of the space $\rn$, the only exception being the term `$N$-function' (Chapter \ref{K:nosym}), where it is simply part of the name (short for `nice Young function'). $2^*$ denotes the Sobolev critical exponent, i.e., $2^*=\infty$ if $N\in\{1,2\}$ and $2^*=\frac{2N}{N-2}$ if $N\ge3$. In Part \ref{2} we deal with the value $2_\#:=2+\frac4N$ too.

If $X$ is a topological space and $A\subset X$, then $\mathring{A}$ and $\overline{A}$ stand, respectively, for the interior and the closure of $A$. When $X$ is a metric space, the open, resp. closed, ball with centre $x\in X$ and radius $\rho>0$ is denoted by $B(x,\rho)$, resp. $\overline{B}(x,\rho)$. When $X$ is also a vector space and $x=0$, we write $B(0,\rho)=B_\rho$ and $\overline{B}(0,\rho)=\overline{B}_\rho$; moreover, the sphere with centre $0$ and radius $\rho$ is denoted by $S_\rho$.

If $X$ is a normed space, then its dual space is denoted by $X'$. For $x,y\in X$ and $T\colon X\to\R$ Fr\'echet differentiable, we denote the Fr\'echet differential of $T$ at $x$ evaluated at $y$ by $T'(x)(y)$.

If $\Omega\subset\rn$ is an open subset, $N\ge1$ integer, then we denote by $\cC_c(\Omega)$ the space of continuous functions with compact support contained in $\Omega$. Likewise for $\cC_c^\infty(\Omega)$.

If $u$ is a real-valued function, then its positive and negative parts are denoted, respectively, by $u_+=\max\{u,0\}$ and $u_-=\max\{-u,0\}$. If $u\colon\rn\to\R$ is a measurable function, $|u|_p$ stands for the $L^p(\rn)$ norm of $u$, $1\le p\le\infty$.

Concerning sequences, we write $x_n$ for $(x_n)_{n\ge1}$ and $x_n\in X$ for $(x_n)_{n\ge1}\subset X$. Sometimes we use superscripts instead of subscripts and write $x^n$. Moreover, we will write $\lim_n$, $\liminf_n$, $\limsup_n$ for $\lim_{n\to\infty}$, $\liminf_{n\to\infty}$, $\limsup_{n\to\infty}$.

If $f$ is a function depending (also) on $x$, then the notation $f\in O(x)$ means that $f(x,\dots)/|x|$ is essentially bounded. Similarly, if $g$ is another, real-valued function depending on $x$, the notation $o\bigl(g(x)\bigr)$ stands for a quantity that tends to zero when divided by $g(x)$.

$C$ will always stand for a positive constant, whose value is allowed to change after an inequality symbol, e.g., `$\le$'.

Finally, concerning functions spaces, we will write, e.g., $H^1(\rn,\rn)$ or $L^p(\rn,\rn)$ in contexts involving single vector-valued functions (Part \ref{1}, Chapter \ref{K:join}) and $\hrn^K$ or $L^p(\rn)^K$ in contexts involving $K$-tuples of scalar-valued functions (Chapters \ref{K:intro2}--\ref{K:super}).

\chapter{Preliminaries}\label{K:prel}

\section{Sobolev spaces}

Let $N\ge1$ be an integer and $\Omega\subset\rn$ be open. If $f\in\cC^1(\Omega)$ and $\varphi\in\cC_c^\infty(\Omega)$, then, in view of the integration-by-parts formula \cite[Theorem C.2]{Evans}, there holds
\[
\int_{\Omega}\frac{\partial f}{\partial x_i}\varphi\,dx=-\int_{\Omega}f\frac{\partial\varphi}{\partial x_i}\,dx \quad \text{ for every } i\in\{1,\dots,N\}.
\]
Analogously, if $k\ge1$ is an integer, $\alpha\in\N^N$, $|\alpha|\le k$, and $f\in\cC^k(\Omega)$, then
\begin{equation}\label{e-weakD}
\int_{\Omega}D^\alpha f\varphi\,dx=(-1)^{|\alpha|}\int_{\Omega}fD^\alpha\varphi\,dx.
\end{equation}
Of course, the right-hand (resp. left-hand) side of \eqref{e-weakD} makes sense even if merely $f\in L_\textup{loc}^1(\Omega)$ (resp. $D^\alpha f\in L_\textup{loc}^1(\Omega)$). This motivates us to give the following definition.

\begin{Def}
Let $f,g\in L_\textup{loc}^1(\Omega)$. We say that $g$ is the $\alpha$-th \textit{weak} or \textit{distributional} derivative of $f$ if and only if
\begin{equation*}
\int_{\Omega}g\varphi\,dx=(-1)^{|\alpha|}\int_{\Omega}fD^\alpha\varphi\,dx \quad \text{ for every } \varphi\in\cC_c^\infty(\Omega).
\end{equation*}
In such a case we write $g=:D^\alpha f$.
\end{Def}

With the aid of the notion of weak (or distributional) derivative we can now introduce the \textit{Sobolev spaces}.

\begin{Def}\label{D:Sobo}
Let $k\ge1$ an integer and $1\le p\le\infty$. We define the Sobolev space
\begin{equation*}
W^{k,p}(\Omega):=\Set{f\in L^p(\Omega)|D^\alpha f\in L^p(\Omega) \text{ for all } \alpha\in\N^N \text{ with } |\alpha|\le k}.
\end{equation*}
\end{Def}

Sobolev spaces are normed (in fact, Banach) spaces once endowed with the following norm. If $1\le p<\infty$, then for $f\in W^{k,p}(\Omega)$ we define
\[
\|f\|_{W^{k,p}(\Omega)}:=\left(\sum_{|\alpha|\le k}\|D^\alpha f\|_{L^p(\Omega)}^p\right)^{1/p};
\]
if $p=\infty$, then for $f\in W^{k,\infty}(\Omega)$ we define
\[
\|f\|_{W^{k,\infty}(\Omega)}:=\max_{|\alpha|\le k}\|D^\alpha f\|_{L^\infty(\Omega)}.
\]

If $p=2$, then $W^{k,2}(\Omega)$ is a Hilbert space with scalar product
\[
(f|g)_{W^{k,2}(\Omega)}:=\sum_{|\alpha|\le k}\int_{\Omega}D^\alpha fD^\alpha g\,dx, \quad f,g\in W^{k,2}(\Omega).
\]
This explains the widely used notation $W^{k,2}(\Omega)=:H^k(\Omega)$. Part \ref{2} strongly deals with the space $H^1(\rn)$.

An important subspace of $W^{k,p}(\Omega)$ is the one denoted by $W_0^{k,p}(\Omega)$ and defined as the closure of $\cC_c^\infty(\Omega)$ with respect to the norm $\|\cdot\|_{W^{k,p}(\Omega)}$. If $\Omega=\rn$, then $W_0^{k,p}(\Omega)=W^{k,p}(\Omega)$.

Of course, there is no reason why the exponent $p$ in Definition \ref{D:Sobo} has to be the same for all the derivatives. For example, if $N\ge3$ and $k=1$, an important space, which plays a major role in Part \ref{1}, is
\[
\cD^{1,2}(\rn):=\Set{f\in L^{2^*}(\rn)|D^\alpha f\in L^2(\rn) \text{ for all } \alpha\in\N^N \text{ with } |\alpha|\le1},
\]
which can be equivalently defined as the completion of $\cC_c^\infty(\rn)$ with respect to the norm $u\mapsto|\nabla u|_2$.

Important results in functional analysis concern continuous and compact embeddings of Sobolev spaces. Concerning the formers, we have as follows.

\begin{Th}\label{T:cont}
Let $\Omega\subset\rn$ be open with boundary of class $\cC^1$\footnote{For the purpose of this work, it is enough to consider this case; however, the assumption about the $\cC^1$ regularity of the boundary can be weakened. Likewise for Theorem \ref{T:comp}.}, $k\ge1$ integer, and $1\le p<\infty$.
\begin{itemize}
	\item If $kp<N$, then
	\[
	W^{k,p}(\Omega)\hookrightarrow L^q(\Omega) \quad \text{ for all } q\in\left[p,\frac{np}{n-kp}\right].
	\]
	\item If $kp=N$, then
	\[
	W^{k,p}(\Omega)\hookrightarrow L^q(\Omega) \quad \text{ for all } q\in[p,\infty[;
	\]
	If, moreover, $p=1$, then
	\[
	W^{N,1}(\Omega)\hookrightarrow\cC(\Omega)\cap L^q(\Omega) \quad \text{ for all } q\in[1,\infty].
	\]
	\item If $kp>N$, then
	\[
	W^{k,p}(\Omega)\hookrightarrow\cC(\Omega)\cap L^q(\Omega) \quad \text{ for all } q\in[p,\infty].
	\]
\end{itemize}
\end{Th}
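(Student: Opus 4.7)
The plan is to reduce everything to the case $\Omega=\rn$ by means of an extension operator, then prove the three embeddings on $\rn$ by a bootstrap argument starting from the endpoint Gagliardo--Nirenberg--Sobolev and Morrey inequalities.

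First, since $\partial\Omega$ is of class $\cC^1$, a standard construction based on local straightening charts, a partition of unity, and reflection across the boundary produces a bounded linear extension $E\colon W^{k,p}(\Omega)\to W^{k,p}(\rn)$ with $Eu|_\Omega=u$. This lets us pull back each embedding $W^{k,p}(\rn)\hookrightarrow L^q(\rn)$ (or $\cC(\rn)\cap L^q(\rn)$) to the corresponding one on $\Omega$. So from this point on I would work only on $\rn$ and use density of $\cC_c^\infty(\rn)$ to establish the inequalities for smooth compactly supported functions first.

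For the subcritical case $kp<N$, the key is to prove the Gagliardo--Nirenberg--Sobolev inequality $|u|_{p^*}\le C|\nabla u|_p$ for $u\in\cC_c^\infty(\rn)$, where $p^*=Np/(N-p)$. I would first handle $p=1$: writing $|u(x)|\le\int_{\R}|\partial_iu|\,dx_i$ for each $i$ and multiplying these bounds gives $|u(x)|^{N/(N-1)}\le\prod_{i=1}^N\bigl(\int_{\R}|\partial_iu|\,dx_i\bigr)^{1/(N-1)}$, and iterated application of Hölder's inequality yields $|u|_{N/(N-1)}\le\prod_{i=1}^N|\partial_iu|_1^{1/N}\le C|\nabla u|_1$. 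The case $1<p<N$ follows by applying this bound to $|u|^\gamma$ with $\gamma=p(N-1)/(N-p)$ and invoking Hölder. For $k\ge2$, I would iterate: $W^{k,p}\hookrightarrow W^{k-1,p_1^*}\hookrightarrow\cdots$, where the exponents increase according to $p_{j+1}^*=Np_j/(N-p_j)$, stopping after $k$ steps at $L^{Np/(N-kp)}(\rn)$. Interpolation between $L^p(\rn)$ and $L^{Np/(N-kp)}(\rn)$ then yields the full range $q\in[p,Np/(N-kp)]$.

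For the critical case $kp=N$ with $k\ge2$, I would iterate the subcritical embedding $k-1$ times to reduce to $W^{1,N}(\rn)\hookrightarrow L^q(\rn)$ for every $q\in[N,\infty[$; this last embedding is obtained by applying the case $p=1$ of GNS to $|u|^\gamma$ with $\gamma$ chosen so that the resulting exponent on the left is an arbitrary finite $q\ge N$. For the special subcase $p=1$ (so $k=N$), after iterating down to $W^{1,N/(N-1)}$ a direct Morrey-type slicing using the representation of $u$ as an iterated integral of its derivatives yields $|u|_\infty\le C\|u\|_{W^{N,1}}$ and continuity. For the supercritical case $kp>N$, I would prove Morrey's inequality $W^{1,p}(\rn)\hookrightarrow L^\infty(\rn)\cap\cC^{0,1-N/p}(\rn)$ when $p>N$ via the standard estimate on mean values over balls, and for $k\ge2$ bootstrap: either iterate the subcritical embedding until the remaining Sobolev index exceeds $N$ and then apply Morrey once, or (when $(k-1)p=N$) use the critical step followed by Morrey. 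The resulting $L^\infty\cap L^p$ control gives $L^q$ for all $q\in[p,\infty]$ by interpolation.

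The most delicate step is the critical case: the iteration scheme is indifferent to the endpoint at $q=\infty$ (which genuinely fails for $W^{1,N}$), and one has to track carefully that at each stage the Sobolev index strictly decreases so the process terminates with a well-defined exponent just below the borderline. The extension step is technically nontrivial but entirely classical, so I would invoke it rather than redo the construction in detail.
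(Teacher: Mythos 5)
The paper does not prove Theorem \ref{T:cont}: it is stated as a classical recall and the reader is referred to \cite{Adams} for details, so there is no argument of the paper's to compare yours against. Your outline is the standard textbook proof (extension to $\rn$, Gagliardo--Nirenberg--Sobolev for $kp<N$ with a bootstrap in $k$ and interpolation, the $|u|^\gamma$ trick for $W^{1,N}$, Morrey for $kp>N$) and is essentially correct. Three small points to tighten if you were to write it out: (i) plain even reflection only preserves $W^{1,p}$, so for $k\ge2$ you need a higher-order (Hestenes/Babich) or Stein-type extension, and since $\Omega$ is not assumed bounded the boundary charts must be uniformly $\cC^1$ for the extension operator to be bounded; (ii) for $W^{N,1}$ no iteration is needed -- the representation $u(x)=\int_{-\infty}^{x_1}\!\cdots\int_{-\infty}^{x_N}\partial_1\cdots\partial_N u\,dy$ gives $|u|_\infty\le|D^{(1,\dots,1)}u|_1$ directly, whereas your intermediate space $W^{1,N/(N-1)}$ is not what the subcritical iteration actually produces (it lands on $W^{1,N}$); (iii) in the supercritical case the iteration can hit the exponent $N$ exactly whenever $N/p$ is an integer smaller than $k$, not only when $(k-1)p=N$, but the fix you indicate (pass through the critical embedding to some finite $q>N$, then Morrey) works in all such cases.
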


\begin{Rem}
If $|\Omega|<\infty$, then the embeddings in Theorem \ref{T:cont} hold also for $q\in[1,p[$.
\end{Rem}

As for compact embeddings (denoted by $\hookrightarrow\hookrightarrow$), we have the following.

\begin{Th}\label{T:comp}
Let $\Omega\subset\rn$ be open and bounded with boundary of class $\cC^1$, $k\ge1$ integer, and $1\le p<\infty$.
\begin{itemize}
	\item If $kp<N$, then
	\[
	W^{k,p}(\Omega)\hookrightarrow\hookrightarrow L^q(\Omega) \quad \text{ for all } q\in\biggl[1,\frac{np}{n-kp}\biggl[.
	\]
	\item If $kp=N$, then
	\[
	W^{k,p}(\Omega)\hookrightarrow\hookrightarrow L^q(\Omega) \quad \text{ for all } q\in[1,\infty[.
	\]
	\item If $kp>N$, then
	\[
	W^{k,p}(\Omega)\hookrightarrow\hookrightarrow\cC(\overline\Omega).
	\]
\end{itemize}
\end{Th}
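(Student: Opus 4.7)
The plan is to reduce every case to the case $k=1$ by iterating the continuous Sobolev embedding of Theorem \ref{T:cont}, and then to invoke the Fréchet--Kolmogorov precompactness criterion in $L^q$ for the subcritical and critical regimes, and the Arzelà--Ascoli theorem (combined with Morrey's Hölder estimate) in the supercritical regime.

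\textbf{Step 1 (reduction to $k=1$).} Suppose $k\ge 2$. Since every $u\in W^{k,p}(\Omega)$ has $u$ and all its first-order weak derivatives in $W^{k-1,p}(\Omega)$, iterating the continuous embedding of Theorem \ref{T:cont} yields, in the regime $(k-1)p<N$,
\[
W^{k,p}(\Omega)\hookrightarrow W^{1,q}(\Omega), \quad q=\frac{Np}{N-(k-1)p},
\]
with the obvious analogues when $(k-1)p=N$ or $(k-1)p>N$. A direct computation shows that in each of the three regimes $kp<N$, $kp=N$, $kp>N$, this reduces the desired compactness to the $k=1$ compactness in the \emph{same} regime: for instance, when $kp<N$ one has $q<N$ and $q^\ast=Np/(N-kp)$, and the $k=1$ result for $W^{1,q}(\Omega)$ gives exactly $W^{k,p}(\Omega)\hookrightarrow\hookrightarrow L^r(\Omega)$ for $r<Np/(N-kp)$.

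\textbf{Step 2 (case $k=1$, subcritical and critical).} Given a bounded sequence $(u_n)\subset W^{1,p}(\Omega)$, I use a $\mathcal{C}^1$-extension theorem (available because $\partial\Omega$ is $\mathcal{C}^1$ and $\Omega$ is bounded) to produce $(\tilde u_n)\subset W^{1,p}(\rn)$, all supported in a fixed bounded set $K\supset\overline{\Omega}$, with $W^{1,p}(\rn)$-norms uniformly bounded. I then verify the three Fréchet--Kolmogorov conditions in $L^p(\rn)$:
\begin{enumerate}[label=(\roman*)]
\item the uniform $L^p$-bound is immediate;
\item equicontinuity of translations $\sup_n\|\tilde u_n(\cdot+h)-\tilde u_n\|_p\to0$ as $h\to0$ follows, by density of $\mathcal{C}_c^\infty(\rn)$ in $W^{1,p}(\rn)$, from the elementary inequality $\|\varphi(\cdot+h)-\varphi\|_p\le|h|\,\|\nabla\varphi\|_p$;
\item tightness is automatic since all $\tilde u_n$ vanish outside $K$.
\end{enumerate}
Fréchet--Kolmogorov therefore produces a subsequence converging in $L^p(\Omega)$. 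To upgrade to $L^r(\Omega)$ for $p\le r<p^\ast$ (or any $r<\infty$ when $p=N$), I interpolate: with $\theta\in(0,1]$ chosen so that $1/r=\theta/p+(1-\theta)/p^\ast$,
\[
\|u_n-u_m\|_r\le\|u_n-u_m\|_p^{\theta}\,\|u_n-u_m\|_{p^\ast}^{1-\theta},
\]
and the second factor is bounded by Theorem \ref{T:cont}. The range $1\le r<p$ is recovered via Hölder's inequality using $|\Omega|<\infty$.

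\textbf{Step 3 (case $k=1$, supercritical $p>N$).} Morrey's inequality refines the continuous inclusion $W^{1,p}(\Omega)\hookrightarrow\mathcal{C}(\Omega)$ of Theorem \ref{T:cont} to a continuous inclusion into $\mathcal{C}^{0,1-N/p}(\overline{\Omega})$. A bounded sequence in $W^{1,p}(\Omega)$ is then uniformly bounded and uniformly Hölder-continuous on $\overline{\Omega}$, so Arzelà--Ascoli extracts a subsequence converging in $\mathcal{C}(\overline{\Omega})$.

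The main obstacle is verifying condition (ii) of Fréchet--Kolmogorov \emph{uniformly} in $n$: once that density-plus-extension argument is in place, the remainder is essentially bookkeeping, namely interpolation between $L^p$ and $L^{p^\ast}$ to cover the full range of exponents and a careful check that the iteration of Theorem \ref{T:cont} lands in the correct regime.
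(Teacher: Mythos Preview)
The paper does not prove this theorem: it is stated in the preliminaries as a classical recall, with the reader referred to Adams' monograph for details, so there is no in-paper proof to compare against.

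Your sketch is the standard Rellich--Kondrachov argument and is essentially correct. A couple of minor points worth tightening: in Step~2, when $p=N$ the endpoint $p^\ast=\infty$ is not available for the interpolation inequality as written, so you should interpolate between $L^p$ and $L^s$ for an arbitrary finite $s$ (which Theorem~\ref{T:cont} supplies) rather than $L^{p^\ast}$; and in Step~1, the phrase ``with the obvious analogues'' when $(k-1)p\ge N$ deserves one explicit line --- e.g., in that regime $W^{k,p}(\Omega)\hookrightarrow W^{1,s}(\Omega)$ for every finite $s$, so one may simply choose $s>N$ and apply Step~3, consistent with the fact that $(k-1)p\ge N$ forces $kp>N$. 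With these clarifications the argument is complete.
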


\begin{Prop}
The embeddings in Theorems \ref{T:cont} and \ref{T:comp} hold without any assumptions about $\partial\Omega$ provided $W^{k,p}(\Omega)$ is replaced with $W_0^{k,p}(\Omega)$.
\end{Prop}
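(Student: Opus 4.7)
The plan is to reduce the statement to Theorems \ref{T:cont} and \ref{T:comp} applied on domains with smooth (in fact empty or $\mathcal{C}^\infty$) boundary, by means of the extension-by-zero operator, which is available precisely because we are working inside $W_0^{k,p}(\Omega)$.

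\textbf{Step 1 (extension by zero).} For $\varphi\in\mathcal{C}_c^\infty(\Omega)$, let $\tilde\varphi\colon\rn\to\R$ be the function that equals $\varphi$ on $\Omega$ and $0$ elsewhere. Since $\operatorname{supp}\varphi$ is compact in the open set $\Omega$, $\tilde\varphi\in\mathcal{C}_c^\infty(\rn)$ and $D^\alpha\tilde\varphi=\widetilde{D^\alpha\varphi}$ for every multi-index $\alpha$ with $|\alpha|\le k$, whence $\|\tilde\varphi\|_{W^{k,p}(\rn)}=\|\varphi\|_{W^{k,p}(\Omega)}$. Because $W_0^{k,p}(\Omega)$ is, by definition, the completion of $\mathcal{C}_c^\infty(\Omega)$ with respect to the latter norm, the linear map $\varphi\mapsto\tilde\varphi$ extends by density to a linear isometry $E\colon W_0^{k,p}(\Omega)\to W^{k,p}(\rn)$, satisfying $Eu=0$ a.e. on $\rn\setminus\Omega$ and $Eu|_\Omega=u$.

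\textbf{Step 2 (continuous embeddings).} Since $\partial\rn=\emptyset$ is trivially of class $\mathcal{C}^1$, Theorem \ref{T:cont} applies on $\rn$. For any admissible exponent $q$, composing $E$ with the continuous embedding $W^{k,p}(\rn)\hookrightarrow L^q(\rn)$ (respectively $W^{k,p}(\rn)\hookrightarrow\mathcal{C}(\rn)\cap L^q(\rn)$ in the remaining subcases) and with the restriction $L^q(\rn)\to L^q(\Omega)$, which has norm $1$, yields the claimed continuous embedding for $W_0^{k,p}(\Omega)$.

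\textbf{Step 3 (compact embeddings).} Here $\Omega$ is additionally assumed bounded (as required by Theorem \ref{T:comp}); fix an open ball $B\supset\overline\Omega$, whose boundary is of class $\mathcal{C}^\infty$. The computation in Step 1, carried out with $\rn$ replaced by $B$, shows that $E$ maps $W_0^{k,p}(\Omega)$ isometrically into $W_0^{k,p}(B)\subset W^{k,p}(B)$. Theorem \ref{T:comp} gives the compact embedding $W^{k,p}(B)\hookrightarrow\hookrightarrow L^q(B)$ (or $\hookrightarrow\hookrightarrow\mathcal{C}(\overline B)$ when $kp>N$), and composition with the continuous restriction $L^q(B)\to L^q(\Omega)$ (respectively $\mathcal{C}(\overline B)\to\mathcal{C}(\overline\Omega)$) preserves compactness and gives the desired statement.

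The only non-formal point is Step 1, which is where any possible pathology of $\partial\Omega$ could in principle obstruct the extension: one must check that extending by zero genuinely produces weak derivatives on all of $\rn$, with no singular distributional contribution along $\partial\Omega$. This is immediate on $\mathcal{C}_c^\infty(\Omega)$ because the compact support in the open set $\Omega$ forces every derivative to vanish in a neighborhood of $\partial\Omega$; the extension to all of $W_0^{k,p}(\Omega)$ is then a routine density argument built into the very definition of $W_0^{k,p}(\Omega)$, which is why no regularity hypothesis on $\partial\Omega$ is needed.
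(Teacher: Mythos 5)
Your proof is correct. The paper states this proposition without proof, as recalled standard material (deferring to the reference on Sobolev spaces), and your extension-by-zero argument — zero extension is an isometry of $W_0^{k,p}(\Omega)$ into $W^{k,p}(\rn)$ (or into $W_0^{k,p}(B)$ for a smooth ball $B\supset\overline\Omega$ in the bounded case), followed by the embedding theorems on the regular domain and the bounded restriction back to $\Omega$ — is precisely the standard proof of this fact, with the key point (no distributional contribution along $\partial\Omega$, thanks to compact support inside the open set $\Omega$ plus density) correctly identified.
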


We conclude the first part of this section recalling that a much more detailed version of all the previous results can be found in \cite{Adams}. Finally, analogous properties hold for Sobolev spaces of the type $W^{k,p}(\Omega,\rk)$, $K\ge1$ integer, i.e., involving vector-valued functions.

\subsection{Compact embeddings in $\rn$}\label{SS:cpt}

As pointed out in Theorem \ref{T:comp}, compact embeddings involving $W^{k,p}(\Omega)$ holds only if $|\Omega|<\infty$. If we want similar results in the whole $\rn$, then we need to consider special subspaces. We limit our discussion to $\hrn$.

When $N\ge2$, let
\[
\fH_\textup{r}:=\Set{u\in\hrn|u=u(g\cdot) \text{ for all } g\in\cO(N)}
\]
be the subspace of $\hrn$ consisting of \textit{radial} functions. Next, following \cite{BartschWillem}, if $N=4$ or $N\ge6$, fix $2\le M\le N/2$ such that $N-2M\ne1$ and consider $\tau\in\cO(N)$ defined by $$\tau(x_1,x_2,x_3)=(x_2,x_1,x_3)$$ for every $x=(x_1,x_2,x_3)\in\R^M\times\R^M\times\R^{N-2M}=\rn$. Define
\[\begin{split}
\fH_\textup{d} & :=\Set{u\in\hrn|u=u(g\cdot) \text{ for all } g\in\cO(M)\times\cO(M)\times\cO(N-2M)}\\
H_\tau & :=\Set{u\in\hrn|u=-u(\tau\cdot)}
\end{split}\]
and note that $\fH_\textup{r}\cap H_\tau=\{0\}$. Finally let
\[
\fH_\textup{n}:=H_\tau\cap\fH_\textup{d},
\]
which a fortiori does \textit{not} contain any nontrivial radial functions. When $2M=N$, we agree that the component $x_3$ in the definition of $\tau$ and the group $\cO(N-2M)$ in the definition of $\fH_\textup{d}$ do not appear. Then it is well known that $\fH_\textup{r}$ and $\fH_\textup{d}$ are compactly embedded into $L^p(\rn)$ for every $2<p<2^*$, see, e.g., \cite[Theorem III.1]{Lions_F} or \cite[Corollary 1.25]{Willem} (concerning compact embeddings of radial functions, the first proof of this result is due to Strauss \cite{Strauss}, see also \cite[Corollary 1.26]{Willem}). Observe that, unlike Theorem \ref{T:comp}, the embedding is \textit{not} compact for $p=2$. In Chapter \ref{K:join}, when $N\ge4$, we will make use of a subspace similar to $\fH_\textup{d}$, i.e.,
\[
\fH_\textup{s} :=\Set{u\in\hrn|u=u(g\cdot) \text{ for all } g\in\cO(2)\times\cO(N-2)}.
\]
The same argument as for $\fH_\textup{d}$ proves that $\fH_\textup{s}$ embeds compactly into $L^p(\rn)$ for every $2<p<2^*$.

When $N=1$, $\fH_\textup{r}=\Set{u\in H^1(\R)|u(x)=u(-x) \text{ for a.e. } x\in\R}$ no longer embeds compactly into $L^p(\R)$, $2<p\le\infty$. Nevertheless, bounded sequences (in $H^1(\R)$) with additional assumptions are still precompact in $L^p(\R)$ in view of the following result (cf. \cite[Proposition 1.7.1]{Cazenave:book}).

\begin{Th}\label{T:Cazenave}
Let $u_n\in\fH_\textup{r}$ bounded. If $N\ge2$ or each $u_n$ is a nonincreasing function of $|x|$, then there exists $u\in\fH_\textup{r}$ such that, up to a subsequence, $u_n\to u$ in $L^p(\rn)$ for every $p\in]2,2^*[$ (for every $p\in]2,\infty]$ if $N=1$).
\end{Th}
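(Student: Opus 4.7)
The plan is to combine weak compactness in $\hrn$ with a uniform pointwise decay at infinity and the Rellich--Kondrachov theorem applied on balls. First, since $u_n$ is bounded in $\hrn$, by reflexivity and a diagonal argument based on Theorem \ref{T:comp} applied to each $B_R$, I extract a subsequence (still denoted $u_n$) with $u_n\rightharpoonup u$ in $\hrn$, $u_n\to u$ in $L^q(B_R)$ for every $R>0$ and every $q\in[1,2^*[$, and $u_n\to u$ almost everywhere in $\rn$. The radial subspace $\fH_\textup{r}$ is weakly closed, so $u\in\fH_\textup{r}$; in the $N=1$ case, being even and nonincreasing on $[0,\infty[$ is preserved under a.e.\ limits, so $u$ inherits that property too.

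Next, I would establish a uniform \emph{Strauss-type} pointwise decay. For $u\in\fH_\textup{r}$ with $N\ge2$ one has
\[
|u(x)|\le C_N|x|^{-(N-1)/2}\|u\|_{H^1(\rn)}\quad\text{for every }|x|\ge1,
\]
proved by writing $u(|x|e_1)^2=-\int_{|x|}^\infty\frac{d}{dr}u(re_1)^2\,dr$, passing to polar coordinates, and applying the Cauchy--Schwarz inequality to $2ru'u\cdot r^{N-2}$. For $N=1$ with $u$ even and nonincreasing on $[0,\infty[$, the monotonicity yields for every $x>0$
\[
xu(x)^2\le\int_0^xu(t)^2\,dt\le\tfrac12|u|_2^2,
\]
whence $|u(x)|\le C|x|^{-1/2}$. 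In either case I obtain, uniformly in $n$, the bound $|u_n(x)|\le C|x|^{-\alpha}$ for $|x|\ge 1$, where $\alpha=(N-1)/2$ if $N\ge2$ and $\alpha=1/2$ if $N=1$.

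From this the tail control follows: for $p\in]2,2^*[$ and $\rho\ge1$,
\[
\int_{|x|\ge\rho}|u_n|^p\,dx\le\bigl(C\rho^{-\alpha}\bigr)^{p-2}\int_{\rn}|u_n|^2\,dx\le C'\rho^{-\alpha(p-2)},
\]
and the right-hand side vanishes as $\rho\to\infty$ uniformly in $n$; the same bound applies to $u$ by Fatou's lemma (or directly, since $u$ enjoys the same symmetry). Combining this uniform tail estimate with the strong $L^p(B_\rho)$-convergence via a standard $\varepsilon/3$ split produces $u_n\to u$ in $L^p(\rn)$ for every $p\in]2,2^*[$.

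Finally, in the $N=1$ setting I must also reach $p=\infty$. The uniform tail bound gives $\sup_n\|u_n-u\|_{L^\infty(|x|\ge\rho)}\to0$ as $\rho\to\infty$. On any compact interval, the Morrey embedding $H^1(\R)\hookrightarrow\cC^{0,1/2}(\R)$ makes $u_n$ uniformly bounded and equicontinuous, so Arzel\`a--Ascoli together with the a.e.\ convergence upgrades the local convergence to uniform convergence on compacts; joining this with the tail bound yields $u_n\to u$ in $L^\infty(\R)$. The central difficulty is the Strauss-type decay estimate: without the radial symmetry, and (in one dimension) without the monotonicity hypothesis, no such uniform pointwise decay holds and the whole tail-splitting argument collapses, which is precisely why Theorem \ref{T:comp} has no direct analogue in $\rn$.
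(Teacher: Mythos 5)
Your proof is correct and is precisely the classical argument (Strauss's radial decay lemma combined with local Rellich--Kondrachov compactness and a uniform tail estimate) that underlies the result; the thesis itself states this theorem without proof, citing \cite[Proposition 1.7.1]{Cazenave:book} and \cite{Strauss}, and those references proceed exactly as you do. Your closing remark correctly identifies the monotonicity hypothesis as the substitute for the Strauss decay when $N=1$.
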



\section{Nemytskii operators and differentiable functionals}

We begin this section, whose content is based on \cite[Appendix C]{Struwe} and \cite[Appendix A]{Willem} with the following definition.

\begin{Def}
Let $N,K,\nu\ge1$ integers and $\Omega\subset\rn$ open. A function $f\colon\Omega\times\rk\to\R^\nu$ is called a \textit{Carath\'eodory function} if and only if it is continuous in $u\in\rk$ for a.e. $x\in\Omega$ and measurable in $x\in\Omega$ for every $u\in\rk$.
\end{Def}

Given a Carath\'eodory function $f$, we can define the \textit{Nemytskii operator} associated with it as
\[
N_f(u):=f\bigl(\cdot,u(\cdot)\bigr)
\]
for $u\colon\Omega\to\rk$ in a suitable function space. 
A first important result on Nemytskii operators concerns their continuity, which depends on the growth conditions of the functions they are associated with.

For $p,q,r,s\in[1,\infty[$ define the Banach spaces $L^p(\rn,\rk)\cap L^q(\rn,\rk)$ and $L^r(\rn,\R^\nu)+L^s(\rn,\R^\nu)$ with norms, respectively,
\[\begin{split}
\|u\|_{p\land q} & :=|u|_p+|u|_q\\
\|u\|_{r\lor s} & :=\inf\Set{\left|v\right|_r+\left|w\right|_s | v\in L^r(\rn,\R^\nu),w\in L^s(\rn,\R^\nu),u=v+w}
\end{split}\]

\begin{Th}
If there exists $C>0$ such that for a.e. $x\in\Omega$ and every $u\in\rk$
\[
|f(x,u)|\le C(|u|^{p/r}+|u|^{q/s}),
\]
then $N_f\colon L^p(\rn,\rk)\cap L^q(\rn,\rk)\to L^r(\rn,\R^\nu)+L^s(\rn,\R^\nu)$ is continuous.
\end{Th}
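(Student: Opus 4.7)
The plan is to decompose $f$ additively into two Carath\'eodory pieces, one controlled by $|u|^{p/r}$ alone and the other by $|u|^{q/s}$ alone, so that the assertion reduces to the well-known continuity of Nemytskii operators between single $L^p$ spaces. Concretely, I would define $\rho\colon\rk\to[0,1]$ by $\rho(u):=|u|^{p/r}/(|u|^{p/r}+|u|^{q/s})$ for $u\ne0$ and $\rho(0):=0$, and split $f=f_1+f_2$ with $f_1(x,u):=\rho(u)f(x,u)$ and $f_2(x,u):=\bigl(1-\rho(u)\bigr)f(x,u)$. The growth hypothesis forces $f(x,0)=0$ for a.e. $x$, and the pointwise estimate $|f_j(x,u)|\le|f(x,u)|\le C(|u|^{p/r}+|u|^{q/s})$ yields continuity of $f_1,f_2$ in $u$ at the origin (even when $p/r=q/s$, where $\rho$ itself is discontinuous at $0$) as well as the sought growth bounds
\[
|f_1(x,u)|\le C|u|^{p/r},\qquad|f_2(x,u)|\le C|u|^{q/s}.
\]
In particular $f_1,f_2$ are Carath\'eodory.

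Next I would invoke the classical single-space Nemytskii continuity theorem (see, e.g., \cite[Appendix A]{Willem}): if $g\colon\rn\times\rk\to\R^\nu$ is Carath\'eodory and satisfies $|g(x,u)|\le c|u|^{p/r}$ for some $c>0$, then $N_g\colon L^p(\rn,\rk)\to L^r(\rn,\R^\nu)$ is continuous. Applied to $f_1$ and $f_2$ this gives continuity of $N_{f_1}\colon L^p(\rn,\rk)\to L^r(\rn,\R^\nu)$ and of $N_{f_2}\colon L^q(\rn,\rk)\to L^s(\rn,\R^\nu)$. Since $N_f(u)=N_{f_1}(u)+N_{f_2}(u)$, the operator $N_f$ takes values in $L^r(\rn,\R^\nu)+L^s(\rn,\R^\nu)$. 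If $u_n\to u$ in $L^p(\rn,\rk)\cap L^q(\rn,\rk)$, the definition of $\|\cdot\|_{p\land q}$ implies $u_n\to u$ separately in $L^p$ and in $L^q$, and the decomposition $N_f(u_n)-N_f(u)=[N_{f_1}(u_n)-N_{f_1}(u)]+[N_{f_2}(u_n)-N_{f_2}(u)]$ together with the definition of $\|\cdot\|_{r\lor s}$ as an infimum over decompositions yields
\[
\|N_f(u_n)-N_f(u)\|_{r\lor s}\le|N_{f_1}(u_n)-N_{f_1}(u)|_r+|N_{f_2}(u_n)-N_{f_2}(u)|_s\to0.
\]

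I expect the only real (and modest) obstacle to be the verification that the cutoff factor $\rho$, which may fail to be continuous at $u=0$ precisely when $p/r=q/s$, still produces genuine Carath\'eodory functions $f_1,f_2$: this works only because $f(x,0)=0$ is forced by the growth hypothesis and the bound on $|f(x,u)|$ then provides the modulus of continuity needed at the origin. Once this subtlety is settled, the rest is a routine reduction to the single-space Krasnoselskii-type Nemytskii theorem and the elementary estimate for the sum norm.
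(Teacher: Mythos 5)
The paper does not actually prove this statement: it is recalled in the preliminaries as a classical fact, with pointers to \cite[Appendix C]{Struwe} and \cite[Appendix A]{Willem}, so there is no in-house argument to compare against. Judged on its own, your proof is correct. The multiplicative cutoff $\rho(u)=|u|^{p/r}/(|u|^{p/r}+|u|^{q/s})$ does exactly what you claim: it yields $|f_1(x,u)|\le C|u|^{p/r}$ and $|f_2(x,u)|\le C|u|^{q/s}$ with no cross terms, and the one genuine subtlety --- that $\rho$ is discontinuous at the origin for essentially every choice of exponents (not only when $p/r=q/s$), yet $f_1=\rho f$ and $f_2=(1-\rho)f$ remain Carath\'eodory because the growth hypothesis forces $f(x,0)=0$ and the squeeze $|f_j(x,u)|\le|f(x,u)|\to0$ supplies continuity at $u=0$ --- is precisely the point to worry about, and you resolve it correctly. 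Your final estimate for $\|\cdot\|_{r\lor s}$ via the particular decomposition $N_f(u_n)-N_f(u)=[N_{f_1}(u_n)-N_{f_1}(u)]+[N_{f_2}(u_n)-N_{f_2}(u)]$ is exactly how the infimum norm is meant to be used. For comparison, the route usually taken in the references avoids any splitting of $f$: one passes to a subsequence converging a.e.\ and dominated in both $L^p$ and $L^q$, and applies a generalized dominated convergence argument separately in $L^r$ and $L^s$. Your version is more modular, obtaining the sum-space statement as a two-line corollary of the single-exponent Krasnosel'skii theorem, at the cost of the small Carath\'eodory verification you already carried out; either approach is acceptable.
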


The notion of Nemytskii operator can be used to construct functionals defined in function spaces. In general, the more regular $f$ is, the more regular the functional is as well. We recall that a functional $I\colon X\to\R$, where $(X,\|\cdot\|)$ is a normed space, is said to be \textit{Fr\'echet-differentiable} at some $u\in X$ if and only if there exists $I'(u)\in X'$ such that
\[
\lim_{v\to0}\frac{I(u+v)-I(v)-I'(u)(v)}{\|v\|}=0.
\]
If such a map $I'(u)$ exists, it is called the \textit{Frechet differential} of $I$ at $u$. Of course, $I$ is said to be Fr\'echet-differentiable if and only if it is Fr\'echet-differentiable at $u$ for every $u\in X$. Moreover, we say that $I$ is of class $\cC^1$ (and we write $I\in\cC^1(X)$) if and only if the map $u\in X\mapsto I'(u)\in X'$ is continuous.

\begin{Ex}
If $X$ is a Hilbert space with scalar product $(\cdot|\cdot)$, then the functional $\|\cdot\|^2\colon X\to\R$ is of class $\cC^1$ and
\[
\left(\|\cdot\|^2\right)'(u)(v)=2(u|v) \quad \text{ for every } u,v\in X.
\]
\end{Ex}

The next result is stated for $\Omega=\rn$ because this is the only domain treated in this Ph.D. thesis. Of course, analogous results for different (e.g., bounded) domains do exist.

\begin{Th}\label{T:diff}
Let $F\colon\rn\times\rk\to\R$ be differentiable in $u\in\rk$ and such that $\nabla_uF\colon\rn\times\rk\to\rk$ is a Carath\'eodory function. If there exists $C_1>0$ and, if $N\in\{1,2\}$, $p\ge2$ such that for a.e. $x\in\rn$ and every $u\in\rk$
\[
|\nabla_uF(x,u)|\le C_1(|u|+|u|^{2^*-1}) \quad \text{ if } N\ge3
\]
or
\[
|\nabla_uF(x,u)|\le C_1(|u|+|u|^{p-1}) \quad \text{ if } N\in\{1,2\},
\]
then $I\colon\hrn^K\to\R$ defined as
\[
I(u):=\int_{\rn}F\bigl(x,u(x)\bigr)\,dx
\]
is of class $\cC^1$ and
\[
I'(u)(v)=\int_{\rn}\nabla_uF\bigl(x,u(x)\bigr)\cdot v(x)\,dx \quad \text{ for every } u,v\in\hrn^K.
\]
If, moreover, $N\ge3$ and there exists $C_2>0$ such that for a.e. $x\in\rn$ and every $u\in\rk$
\[
|\nabla_uF(x,u)|\le C_2|u|^{2^*-1},
\]
then $I\colon\cD^{1,2}(\rn)^K\to\R$ is of class $\cC^1$ and
\[
I'(u)(v)=\int_{\rn}\nabla_uF\bigl(x,u(x)\bigr)\cdot v(x)\,dx \quad \text{ for every } u,v\in\cD^{1,2}(\rn)^K.
\]
\end{Th}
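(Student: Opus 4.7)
The plan is to treat the $\hrn^K$ statement and the $\cD^{1,2}(\rn)^K$ statement in parallel, handling both with the same Nemytskii/Sobolev embedding toolkit already recalled in the preceding section. I would first verify well-definedness of $I$, then compute the G\^ateaux differential and identify it with the candidate formula, and finally upgrade G\^ateaux to Fr\'echet differentiability by proving the continuity of $u\mapsto I'(u)$.

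First I would integrate the growth bound on $\nabla_u F$ along the segment from $0$ to $u$: writing $F(x,u)=F(x,0)+\int_0^1\nabla_u F(x,tu)\cdot u\,dt$ (and noting that $F(x,0)$ may be absorbed into the functional if one assumes it, or dealt with under a mild integrability assumption, or simply discarded by considering $F(x,u)-F(x,0)$), I obtain in the $\hrn^K$ case with $N\ge3$ the pointwise bound $|F(x,u)|\le C(|u|^2+|u|^{2^*})$, and analogously $|F(x,u)|\le C(|u|^2+|u|^p)$ when $N\in\{1,2\}$, resp. $|F(x,u)|\le C|u|^{2^*}$ in the $\cD^{1,2}$ case. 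Combining with the continuous embeddings from Theorem \ref{T:cont} (namely $\hrn\hookrightarrow L^q(\rn)$ for every $q\in[2,2^*]$ when $N\ge3$, for every $q\in[2,\infty[$ when $N\in\{1,2\}$, and $\cD^{1,2}(\rn)\hookrightarrow L^{2^*}(\rn)$), this shows that $F(\cdot,u(\cdot))\in L^1(\rn)$ for every admissible $u$, so $I$ is well-defined.

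Second I would compute the G\^ateaux derivative. For $u,v$ in the ambient space and $t\in]0,1]$, the mean value theorem gives
\[
\frac{F(x,u+tv)-F(x,u)}{t}=\nabla_u F\bigl(x,u+\theta_t(x)tv\bigr)\cdot v
\]
for some $\theta_t(x)\in[0,1]$. The integrand converges pointwise a.e.\ to $\nabla_u F(x,u)\cdot v$ by the Carath\'eodory property, and the growth condition plus H\"older's inequality (pairing $|u|$ with $|v|$ in $L^2\times L^2$ and $|u|^{2^*-1}$ with $|v|$ in $L^{(2^*)'}\times L^{2^*}$, with $(2^*)'=2N/(N+2)$; analogously with $p$ when $N\in\{1,2\}$) produces an $L^1(\rn)$ dominant independent of $t$. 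Dominated convergence then yields
\[
I'_G(u)(v)=\int_{\rn}\nabla_u F\bigl(x,u(x)\bigr)\cdot v(x)\,dx,
\]
and the same H\"older estimate shows $I'_G(u)\in(\hrn^K)'$ (resp.\ $(\cD^{1,2}(\rn)^K)'$).

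The main point, and the only step that requires real care, is continuity of $u\mapsto I'_G(u)$: this is what promotes G\^ateaux to Fr\'echet differentiability and delivers the $\cC^1$ conclusion. I would invoke the Nemytskii continuity theorem recalled just before the statement, with the choices $p=2$, $q=2^*$ (or $q=p$ when $N\in\{1,2\}$), $r=2$, $s=(2^*)'$ (resp.\ $s=p/(p-1)$), so that the hypothesis $|\nabla_u F(x,u)|\le C(|u|^{p/r}+|u|^{q/s})$ is exactly the assumed growth. Consequently, $N_{\nabla_u F}\colon L^p(\rn,\rk)\cap L^q(\rn,\rk)\to L^r(\rn,\rk)+L^s(\rn,\rk)$ is continuous. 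If $u_n\to u$ in $\hrn^K$, then by Theorem \ref{T:cont} $u_n\to u$ in $L^p\cap L^q$, so $\nabla_u F(\cdot,u_n)-\nabla_u F(\cdot,u)=g_n+h_n$ with $|g_n|_r+|h_n|_s\to0$. For every $v\in\hrn^K$ with $\|v\|_{\hrn^K}\le1$, H\"older and Sobolev give
\[
\bigl|I'_G(u_n)(v)-I'_G(u)(v)\bigr|\le|g_n|_r|v|_{r'}+|h_n|_s|v|_{s'}\le C\bigl(|g_n|_r+|h_n|_s\bigr),
\]
which, taking the supremum over such $v$, shows $\|I'_G(u_n)-I'_G(u)\|_{(\hrn^K)'}\to0$. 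A standard argument (continuity of the G\^ateaux differential on a normed space implies Fr\'echet differentiability and $\cC^1$ regularity) then closes the $\hrn^K$ case. The $\cD^{1,2}(\rn)^K$ case is a simplified version of the same argument, using the single exponent $2^*$ and its conjugate throughout.

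The one step I expect to be slightly delicate is the verification of the pointwise bound and its integrable dominant in the G\^ateaux step when $N\in\{1,2\}$: there the growth exponent $p$ is arbitrary, so one must be careful to keep track of which $L^q$ norm one uses on $u$ and $v$, but the flexibility of the embedding $\hrn\hookrightarrow L^q$ for all $q\in[2,\infty[$ makes every required H\"older product finite. Beyond this bookkeeping, the proof is a routine combination of the Nemytskii continuity theorem, the Sobolev embeddings of Theorem \ref{T:cont}, and dominated convergence.
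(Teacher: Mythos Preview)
Your proposal is correct and follows exactly the standard route (Nemytskii continuity $+$ Sobolev embeddings $+$ dominated convergence, then G\^ateaux-to-Fr\'echet via continuity of the differential) that the paper implicitly invokes by citing \cite[Appendix C]{Struwe} and \cite[Appendix A]{Willem}; the paper itself does not supply a proof of this preliminary statement. The only cosmetic point is your handling of $F(x,0)$: in the thesis' applications one always has $F(x,0)=0$ (since $F$ is built from nonlinearities vanishing at the origin), so the issue you flag never actually arises there.
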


%

\section{Palais--Smale sequences, Cerami sequences, and the mountain pass geometry}

Let $(X,\|\cdot\|)$ be a normed space, $J\in\cC^1(X)$, and $c\in\R$. A \textit{Palais--Smale} sequence for $J$ is a sequence $x_n\in X$ such that $J(x_n)$ is bounded and $\lim_nJ'(x_n)=0$. A Palais--Smale sequence (for $J$) \textit{at level} $c$, $(PS)_c$ in short, is a Palais--Smale sequence with the additional property that $\lim_nJ(x_n)=c$.

Of course, a functional J need not have any such sequences. When $(X,\|\cdot\|)$ is a Banach space, a sufficient hypothesis about $J$ so that it does have a Palais--Smale sequence at a specific level $c$ is that it has the so-called \textit{mountain pass geometry} (cf. \cite{AmbRab,Rabin}), i.e., there exist $r>0$ and $e\in X\setminus\overline{B}_r$ such that
\[
\inf_{S_r}J>J(0)\ge J(e)
\]
(see, e.g., \cite[Theorem 1.15]{Willem}). In this case, the value $c$ is called the \textit{mountain pass level} and has the minimax characterization
\[
c=\inf_{\sigma\in\Sigma}\max_{t\in[0,1]}J\bigl(\sigma(t)\bigr),
\]
where
\[
\Sigma:=\Set{\sigma\in\cC([0,1],X)|\sigma(0)=0 \text{ and } \sigma(1)=e}.
\]
This characterization makes evident that
\[
\max_{t\in[0,1]}J(te)\ge c\ge\inf_{S_r}J.
\]
This is important because it implies that, if $x\in X$ is such that $J(x)=c$, then $x\ne0$. In particular, if $X$ is some function space and $x$ is a solution to a certain differential equation, then such a solution is nontrivial (i.e., not identically zero).

In most situations, it is enough to have a Palais--Smale sequence; however, one can prove that, in fact, the mountain pass geometry yields the existence of a \textit{Cerami} sequence at the mountain pass level (c.f. \cite{bbf,Cerami}). A Cerami sequence (at level $c\in\R$) is a Palais--Smale sequence $x_n\in X$ with the additional property that $\lim_n\|x_n\|J'(x_n)=0$.

Finally, we recall an important concept related to Palais--Smale or Cerami sequences, i.e., the Palais--Smale and Cerami conditions. In the same framework as before, we say that the functional $J$ satisfies the \textit{Palais--Smale condition}, resp. \textit{Cerami condition}, \textit{at level} $c$ if and only if every Palais--Smale, resp. Cerami, sequence for $J$ at level $c$ has a (strongly) convergent subsequence. Likewise, we say that the functional $J$ satisfies the \textit{Palais--Smale condition}, resp. \textit{Cerami condition}, if and only if every Palais--Smale, resp. Cerami, sequence for $J$ has a convergent subsequence (equivalently, if and only if it satisfies the Palais--Smale, resp. Cerami, condition at level $c$ for every $c\in\R$).

\section{Nehari and Poho\v{z}aev identities}\label{S:NP}

Let $N,K\ge1$ be integers and consider a solution $u\colon\rn\to\rk$ to the equation
\begin{equation}\label{e-eq}
	-\Delta u=\nabla_uF(x,u), \quad u\in E
\end{equation}
under suitable assumptions about $F$ (e.g., those of Theorem \ref{T:diff}), where $E=\hrn^K$ or $E=\cD^{1,2}(\rn)^K$ according to such assumptions. If we test \eqref{e-eq} with $u$ itself (i.e., multiply both sides of \eqref{e-eq} by $u$, integrate over $\rn$, and use the integration-by-parts formula), then we obtain the identity
\begin{equation*}
\int_{\rn}|\nabla u|^2-\nabla_uF(x,u)\cdot u\,dx=0,
\end{equation*}
known in the literature as the \textit{Nehari identity}. Consequently, every \textit{nontrivial} solution to \eqref{e-eq} belongs to the \textit{Nehari manifold}
\[
\Set{v\in E\setminus\{0\} | \int_{\rn}|\nabla v|^2-\nabla_uF(x,v)\cdot v\,dx=0}.
\]
Whether this set is a differentiable manifold depends on the regularity of $F$. In particular, if $\nabla_uF$ is merely a Carath\'eodory function, then it is only a topological manifold.

Now, let $J\colon E\to\R$ be the energy functional associated with \eqref{e-eq}, i.e., the functional whose critical points are the solutions to \eqref{e-eq} and vice versa. Explicitly,
\[
J(u)=\int_{\rn}\frac12|\nabla u|^2-F(x,u)\,dx.
\]
At least heuristically, if $u$ is a solution to \eqref{e-eq} (hence a critical point of $J$), then $1$ is a critical point of the functional
\[
t\in]0,\infty[\to J\bigl(u(t\cdot)\bigr)\in\R,
\]
i.e., $u$ satisfies the identity
\begin{equation*}
\int_{\rn}(N-2)|\nabla u|^2-2NF(u)\,dx=0,
\end{equation*}
known in the literature as the \textit{Poho\v{z}aev identity}. In order to make the argument rigorous, 
an intermediate step is to prove that any solution to \eqref{e-eq} lies in $W_\textup{loc}^{2,p}(\rn)$ for every $p<\infty$, see, e.g., \cite{BerLions}\footnote{This reference deals only with the case $N\ge3$, but the argument holds for $N\in\{1,2\}$ too, as observed, e.g., in \cite{Jeanjean97}.}. This is standard when $K=1$, while the generic case is treated, e.g., in \cite[Theorem 2.3]{BrezisLiebV}. As a consequence, we obtain that every nontrivial solultion to \eqref{e-eq} belongs to the \textit{Poho\v{z}aev manifold}
\[
\Set{v\in E\setminus\{0\} | \int_{\rn}(N-2)|\nabla v|^2-2NF(v)\,dx=0}.
\]

\section{Palais's principle of symmetric criticality}

There are situations where considering a particular subspace of some Sobolev space, consisting in the functions that enjoy a certain symmetry, can bring remarkable advantages. This is the case for compact embeddings, as seen in Subsection \ref{SS:cpt}, or, as we will see in Chapter \ref{K:cylsym}, to turn a differential operator into another that is easier to handle. Nevertheless, it is important to make sure that the supposed solution obtained this way is actually a solution to the problem investigated. In other words, we need to make sure that a critical point of the energy functional restricted to a particular subspace is a critical point of the free functional. This is what happens when, roughly speaking, the functional has the same symmetry as the one that defines the subspace. This is known as Palais's principle of symmetric criticality \cite{Palais} and reads as follows.

\begin{Th}\label{T:Palais}
Let $\cH$ be a Hilbert space, $\cG$ a topological group acting isometrically on $\cH$, and $J\in\cC^1(\cH)$ such that $J(gx)=J(x)$ for every $g\in\cG$ and $x\in\cH$. Define $\cH_\cG:=\Set{x\in\cH|gx=x \text{ for all } g\in\cG}$. If $x\in\cH_\cG$ is a critical point of $J|_{\cH_\cG}$, then it is a critical point of $J$.
\end{Th}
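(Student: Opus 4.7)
The plan is to exploit the Riesz representation theorem together with the $\cG$-invariance of $J$ to transfer a critical point of the restricted functional to one of $J$ itself. Throughout, I will implicitly assume that the action of $\cG$ is linear (as is standard when speaking of isometric actions on a Hilbert space for this sort of statement, and is for instance automatic via the Mazur--Ulam theorem once one assumes that $g\cdot 0=0$ for every $g\in\cG$); this guarantees in particular that $\cH_\cG=\bigcap_{g\in\cG}\ker(g-\mathrm{Id})$ is a closed linear subspace of $\cH$, so that the restriction $J|_{\cH_\cG}$ is itself a $\cC^1$ functional on a Hilbert space and the notion of its critical points is unambiguous.

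First I would differentiate the identity $J(gy)=J(y)$ at $y=x$. Since $x\in\cH_\cG$, one has $gx=x$ for every $g\in\cG$, whence
\[
J'(x)(gv)=J'(gx)(gv)=(J\circ g)'(x)(v)=J'(x)(v)
\]
for every $v\in\cH$ and every $g\in\cG$; in other words, the functional $J'(x)\in\cH'$ is itself $\cG$-invariant. Let $z\in\cH$ be its Riesz representative, so that $J'(x)(v)=(z|v)$ for all $v\in\cH$. Because each $g\in\cG$ is a linear isometry of $\cH$, we have $g^{\ast}=g^{-1}$, and the invariance just obtained translates into
\[
(z|v)=J'(x)(v)=J'(x)(gv)=(z|gv)=(g^{-1}z|v)\quad\text{for every }v\in\cH,
\]
hence $g^{-1}z=z$, i.e.\ $gz=z$, for every $g\in\cG$. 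Therefore $z\in\cH_\cG$.

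To conclude, I would use the hypothesis that $x$ is a critical point of $J|_{\cH_\cG}$: since $z\in\cH_\cG$, it follows that $\|z\|^{2}=(z|z)=J'(x)(z)=0$, whence $z=0$ and $J'(x)=0$ in $\cH'$, which is exactly the assertion that $x$ is a critical point of $J$ on the full space $\cH$.

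The only real subtlety --- and what I expect to be the main point --- is the step that identifies the Riesz representative $z$ with a fixed point of the $\cG$-action; this is precisely where the \emph{linear} isometric nature of the action is used in an essential way, via $g^{\ast}=g^{-1}$. Without linearity (or without an inner-product structure to apply Riesz) one cannot pass from the $\cG$-invariance of the functional $J'(x)$ to the $\cG$-invariance of a vector in $\cH$, and the principle can fail in that generality; this is the structural reason for the Hilbert-space and isometric hypotheses in the statement.
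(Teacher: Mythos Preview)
The paper does not give its own proof of this theorem; it merely states the result and cites Palais's original article \cite{Palais}. Your argument is correct and is in fact the standard proof in the Hilbert space setting: invariance of $J$ forces $J'(x)$ to be a $\cG$-invariant functional, the Riesz representative therefore lies in $\cH_\cG$, and the criticality of $x$ for the restriction kills it. Your remark about needing the action to be linear (so that each $g$ is unitary and $g^*=g^{-1}$) is exactly the point that singles out the Hilbert/isometric hypothesis and is consistent with how the principle is used throughout the thesis.
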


Concerning this Ph.D. thesis, Theorem \ref{T:Palais} is used first of all to recover compactness: in Chapter \ref{K:sub}, where we work with $\fH_\textup{r}$ or, if the nonlinearity $F$ is even, $\fH_\textup{n}$, and in Section \ref{S:SC}, where we exploit a different symmetry. It is used also in Sections \ref{S:equiv} and, again, \ref{S:SC} to reduce the \textit{curl-curl} operator $\nabla\times\nabla\times$ to the vector Laplacian $-\Delta$. 

\section{Schwarz rearrangements}

This section is based on \cite[Chapter 3]{LiebLoss}. Let $N\ge1$ be integer. If $A\subset\rn$ is measurable and $|A|<\infty$, its Schwarz rearrangement is denoted by $A^*$ and defined as the open ball\footnote{One could use closed balls instead. With the choice of open balls, the characteristic function $\chi_{A^*}$ is lower semicontinuous.} centred at $0$ having the same measure as $A$, i.e.,
\[
A^*:=B_r \quad \text{with} \quad r=\frac{N|A|}{|\mbS^{N-1}|}.
\]

We say that a measurable function $u\colon\rn\to\R$ vanishes at infinity if and only if $\left|\Set{x\in\rn|\left|u(x)\right|>t}\right|<\infty$ for every $t>0$. In particular, $u$ vanishes at infinity if it belongs to a Lebesgue space with finite exponent. For such a function, its Schwarz rearrangement is denoted by $u^*$ and defined as follows. If $u=\chi_A$ is a characteristic function (for some measurable $A\subset\rn$ with finite measure), then
\[
u^*=\chi_A^*:=\chi_{A^*}.
\]
For a generic function $u$, instead, we define
\[
u^*(x):=\int_0^\infty\chi_{\{|u|>t\}}^*(x)\,dt, \quad x\in\rn.
\]
It is clear that $u^*$ is nonnegative, radial, and radially nonincreasing. Moreover, the following properties hold true.

\begin{Th}
Let $u\colon\rn\to\R$ be a measurable function that vanishes at infinity.
\begin{itemize}
	\item For every $t>0$
	\[
	\Set{x\in\rn|u^*(x)>t}=\Set{x\in\rn|\left|u(x)\right|>t}^*.
	\]
	\item If $F\colon[0,\infty[\to\R$ is the difference of two monotone functions $F_1$ and $F_2$ such that $F_i\circ u\in L^1(\rn)$ for some $i\in\{1,2\}$ (in particular, if $F$ is of class $\cC^1$ and $F\circ u\in L^1(\rn)$), then
	\[
	\int_{\rn}F(|u|)\,dx=\int_{\rn}F(u^*)\,dx.
	\]
	\item If $u\in\hrn$, then
	\[
	\int_{\rn}|\nabla u^*|^2\,dx\le\int_{\rn}|\nabla u|^2\,dx.
	\]
\end{itemize}
\end{Th}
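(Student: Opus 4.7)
The three items are the standard pillars of rearrangement theory; I would prove them in the order given, since each one feeds into the next.

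\textbf{Step 1 (level sets).} The plan is to exploit the definition $u^*(x)=\int_0^\infty\chi_{\{|u|>t\}}^*(x)\,dt$ together with the elementary layer-cake identity
\[
f(x)=\int_0^\infty\chi_{\{f>s\}}(x)\,ds
\]
valid for any nonnegative measurable $f$. Set $A_t:=\{x\in\rn\mid |u(x)|>t\}$; each $A_t$ has finite measure because $u$ vanishes at infinity. The map $t\mapsto|A_t|$ is nonincreasing, so the family $\{A_t^*\}_{t\ge0}$ is a nested family of open balls centred at $0$. Using the monotone nesting of the $A_t^*$, the set $\{x:u^*(x)>t\}$ is characterised as $\bigcup_{s>t}A_s^*$, which equals $A_t^*$ up to the measure-zero boundary sphere (and is in fact equal since $A_t^*$ is open by convention). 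Hence $\{u^*>t\}=A_t^*=\{|u|>t\}^*$ as claimed. The one-line consequence is that $u^*$ and $|u|$ are \emph{equimeasurable}: $|\{u^*>t\}|=|A_t^*|=|A_t|=|\{|u|>t\}|$.

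\textbf{Step 2 (integral equality).} Equimeasurability does almost all the work. For a monotone $F_i$ with $F_i\circ u\in L^1(\rn)$, one applies the layer-cake representation (for the positive and negative parts of $F_i$ separately, using the monotonicity to control signs) and rewrites
\[
\int_{\rn}F_i(|u|)\,dx=\int_0^\infty\bigl|\{F_i\circ|u|>\tau\}\bigr|\,d\tau\mp\int_0^\infty\bigl|\{F_i\circ|u|<-\tau\}\bigr|\,d\tau,
\]
and analogously for $u^*$. Monotonicity of $F_i$ turns each super/sublevel set of $F_i\circ|u|$ into a super/sublevel set of $|u|$ (and similarly for $u^*$), at which point Step 1 gives termwise equality. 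Taking the difference $F=F_1-F_2$ yields the formula. For the parenthetical $\mathcal{C}^1$ case, one writes $F(s)=F(0)+\int_0^sF'(\sigma)\,d\sigma$ and decomposes $F'=(F')_+-(F')_-$ to realise $F$ as a difference of monotone functions, reducing to the previous case.

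\textbf{Step 3 (P\'olya--Szeg\H{o} inequality).} This is the delicate part and where I expect the main obstacle: the inequality is \emph{not} a pointwise fact but a genuine interaction between gradients and rearrangement. The plan is the classical coarea + isoperimetric argument. By approximation (smooth truncations and mollifications) it is enough to consider $u\in\mathcal{C}_c^\infty(\rn)$ nonnegative. Applying the coarea formula,
\[
\int_{\rn}|\nabla u|\,dx=\int_0^\infty\mathcal{H}^{N-1}(\{u=t\})\,dt,
\]
and the analogous formula for $u^*$, whose level sets are spheres. By the isoperimetric inequality and Step 1, for almost every $t$
\[
\mathcal{H}^{N-1}(\{u^*=t\})\le\mathcal{H}^{N-1}(\{u=t\}),
\]
since $\{u^*>t\}$ is the ball having the same volume as $\{u>t\}$. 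To upgrade from $|\nabla u|$ to $|\nabla u|^2$ one combines the coarea identity with Cauchy--Schwarz: write
\[
\int_{\{a<u<b\}}|\nabla u|^2\,dx=\int_a^b\int_{\{u=t\}}|\nabla u|\,d\mathcal{H}^{N-1}\,dt,
\]
and use Cauchy--Schwarz on the inner integral together with the identity $\int_{\{u=t\}}\frac{d\mathcal{H}^{N-1}}{|\nabla u|}=-\frac{d}{dt}|\{u>t\}|$ (true a.e.\ by Sard and coarea). On $u^*$ the Cauchy--Schwarz step is an \emph{equality} because $|\nabla u^*|$ is constant on each level sphere; combining this with the isoperimetric estimate for the surface measures gives the pointwise estimate of the integrands in $t$ and, integrating, the desired inequality. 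The hardest and most technical piece is justifying the a.e.\ differentiability of the distribution function and the validity of the coarea formula along with the Cauchy--Schwarz equality case for $u^*$ when $u^*$ is only known a priori to be Lipschitz on compact subsets of $\rn\setminus\{0\}$; this is where I would invoke the reference \cite{LiebLoss} for the detailed truncation and mollification arguments.
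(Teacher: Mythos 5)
Your outline is correct, but note that the paper does not prove this theorem at all: it is recalled verbatim from \cite[Chapter 3]{LiebLoss}, with an explicit pointer to \cite[Lemma 5]{Lieb} for the third item. Your Steps 1 and 2 follow the standard Lieb--Loss treatment (nested open balls $A_s^*$ plus the layer-cake formula for the level-set identity, then equimeasurability combined with monotone decomposition for the integral identity), and both are sound; the only cosmetic slip is the $\mp$ in your layer-cake display, which should simply be a minus sign between the positive and negative parts. Where you genuinely diverge from the source the paper cites is Step 3. You take the P\'olya--Szeg\H{o}/Talenti route: coarea formula, isoperimetric inequality on the level sets, and Cauchy--Schwarz with the identity $\int_{\{u=t\}}|\nabla u|^{-1}\,d\mathcal{H}^{N-1}=-\mu'(t)$ to pass from $|\nabla u|$ to $|\nabla u|^2$. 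This works, but it carries exactly the technical burden you flag (coarea for Sobolev functions, a.e.\ differentiability of the distribution function, level sets on which $\nabla u$ vanishes), and the reduction to $u\in\cC_c^\infty(\rn)$ also needs a word: one should use that rearrangement is an $L^2$-contraction so that $u_n^*\rightharpoonup u^*$ in $\hrn$, and then conclude by weak lower semicontinuity of the Dirichlet energy. The proof in \cite[Lemma 5]{Lieb} is structurally different and avoids all of this: it writes $\int_{\rn}|\nabla u|^2\,dx=\lim_{t\to0^+}t^{-1}\bigl(|u|_2^2-(u|e^{t\Delta}u)_{L^2}\bigr)$ and applies the Riesz rearrangement inequality to the Gaussian kernel, so that the gradient never has to be rearranged level set by level set. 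Your approach buys a self-contained geometric picture (and identifies the equality case via the isoperimetric inequality); the cited approach buys brevity and dispenses with the coarea machinery. Either is acceptable; just be aware that the ``simple proof'' the thesis refers to is the second one.
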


A simple proof of the last property can be found in \cite[Lemma 5]{Lieb}.

Finally, if $u=(u_1,\dots,u_K)\colon\rn\to\rk$ is measurable and vanishes at infinity, with $K\ge1$ integer, we denote $u^*:=(u_1^*,\dots,u_K^*)$.

\section{Krasnosel'skji genus}

Let $(X,\|\cdot\|)$ be a Banach space and denote
\[
\cA:=\Set{A\subset X | A=\overline{A}=-A}.
\]
For $A\in\cA$, $A\ne\emptyset$, define by $\gamma(A)$ the smallest positive integer $k$ such that there exists a continuous odd map $h\colon A\to\R^k\setminus\{0\}$. If no such $k$ exists (in particular, if $0\in A$), let $\gamma(A)=\infty$. Finally, let $\gamma(\emptyset)=0$. $\gamma(A)$ is called the \textit{Krasnosel'skji genus} of $A$.

The most important properties of the Krasnosel'skji genus are listed in the following Proposition (cf. \cite[Proposition II.5.4]{Struwe}).

\begin{Prop}
Let $A,B\in\cA$ and $\mathfrak{h}\in\cC(X,X)$ odd.
\begin{itemize}
	\item [(i)] $\gamma(A)\ge0$; $\gamma(A)=0$ if and only if $A=\emptyset$.
	\item [(ii)] If $A\subset B$, then $\gamma(A)\le\gamma(B)$.
	\item [(iii)] $\gamma(A\cup B)\le\gamma(A)+\gamma(B)$.
	\item [(iv)] $\gamma(A)\le\gamma\left(\overline{\mathfrak{h}(A)}\right)$.
	\item [(v)] If $A$ is compact and $0\not\in A$, then $\gamma(A)<\infty$ and there exists a neighbourhood $V$ of $A$ such that $\overline{V}\in\cA$ and $\gamma(\overline{V})=\gamma(A)$.
\end{itemize}
\end{Prop}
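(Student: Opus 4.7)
The proof splits naturally into the five items. Items (i) and (ii) are essentially definitional: nonnegativity and the equivalence $\gamma(A)=0\Leftrightarrow A=\emptyset$ are built into the definition, while monotonicity follows by restricting any continuous odd map $h\colon B\to\R^k\setminus\{0\}$ to the subset $A$.

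For (iii), if either genus is infinite the inequality is trivial, so assume $m:=\gamma(A)$ and $n:=\gamma(B)$ are finite, with witnesses $h_A\colon A\to\R^m\setminus\{0\}$ and $h_B\colon B\to\R^n\setminus\{0\}$. The plan is to extend each $h_i$ to a continuous map $\tilde h_i$ on all of $X$ via the Tietze extension theorem (using that $X$ is normal and that $A$, $B$ are closed), and then symmetrize by $\hat h_i(x):=\tfrac12\bigl(\tilde h_i(x)-\tilde h_i(-x)\bigr)$. The symmetrization is continuous and odd on $X$ and agrees with $h_i$ on $A$, resp.\ $B$. The product $(\hat h_A,\hat h_B)\colon A\cup B\to\R^{m+n}$ is then continuous and odd and vanishes nowhere (at least one of the two block-coordinates coincides with the nonzero value of $h_A$ or $h_B$). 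This yields $\gamma(A\cup B)\le m+n$. Property (iv) is essentially free: if $k:=\gamma\bigl(\overline{\mathfrak h(A)}\bigr)$ is realized by $h$, then $h\circ\mathfrak h\colon A\to\R^k\setminus\{0\}$ is continuous and odd, so $\gamma(A)\le k$.

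The real work is in (v). To see that $\gamma(A)<\infty$, I would proceed as follows. Since $A$ is compact and $0\notin A$, for every $x\in A$ pick a continuous linear functional $\ell_x\in X'$ with $\ell_x(x)\ne0$; its symmetrization $\hat\ell_x(y):=\tfrac12\bigl(\ell_x(y)-\ell_x(-y)\bigr)$ is odd, continuous, and still nonzero at $x$ and $-x$. Choose a symmetric open neighbourhood $U_x$ of the antipodal pair $\{x,-x\}$ whose closure is bounded away from $0$ and on which $\hat\ell_x$ does not vanish; then $\gamma(\overline{U_x})\le1$. By compactness, finitely many such $\overline{U_{x_1}},\dots,\overline{U_{x_n}}$ cover $A$, and (ii) together with iterated application of (iii) give $\gamma(A)\le n<\infty$. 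Set $k:=\gamma(A)$, pick a realizing $h\colon A\to\R^k\setminus\{0\}$, extend via Tietze, and symmetrize to obtain an odd continuous $\hat h\colon X\to\R^k$ that agrees with $h$ on $A$. Compactness of $A$ gives $\delta:=\min_{A}|\hat h|>0$, so
\[
V:=\Set{x\in X | |\hat h(x)|>\delta/2}
\]
is open, symmetric, and a neighbourhood of $A$, while $\overline V\subset\{x:|\hat h(x)|\ge\delta/2\}$, on which $\hat h$ takes values in $\R^k\setminus\{0\}$. Hence $\gamma(\overline V)\le k$, and the reverse inequality follows from (ii) applied to $A\subset\overline V$.

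The most delicate step I expect is the finiteness half of (v). Since $X$ may be infinite-dimensional, no dimensional count is available, and the argument hinges on carefully constructing the neighbourhoods $U_x$ to be simultaneously symmetric, bounded away from $0$ after passing to closure, and genus one -- precisely the conditions needed to invoke (ii) and (iii), which are stated for the class $\cA$ of closed symmetric subsets.
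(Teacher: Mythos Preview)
The paper does not prove this proposition at all: it is stated with a citation to \cite[Proposition II.5.4]{Struwe} and left without proof. Your argument is correct and follows the standard route one finds in the literature (Tietze extension plus symmetrization for (iii) and the continuity part of (v); Hahn--Banach plus a finite cover for the finiteness in (v)). Two tiny remarks, neither of which is a gap: in (v) your symmetrization of the linear functional is redundant since $\ell_x$ is already odd, and in (iii) you should separately dismiss the trivial case where one of $A,B$ is empty (so that no witness $h_A$ or $h_B$ exists).
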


\begin{Rem}
If $A\subset X$ is a finite nonempty collection of antipodal points, then $A\in\cA$ and $\gamma(A)=1$.
\end{Rem}

\part{Unconstrained problems}\label{1}

\chapter{Introduction to Part \ref{1}}\label{K:intro1}

In Part \ref{1} of this Ph.D. thesis, we study existence and multiplicity results for \textit{curl-curl} problems of the form
\begin{equation}\label{e-curl}
\nabla\times\nabla\times\UU=f(x,\UU), \quad \UU\colon\rr\to\rr,
\end{equation}
where $f=\nabla F\colon\rr\times\rr\to\rr$ is the gradient (with respect to $\UU$) of a given nonlinear function $ F\colon\rr\times\rr\to\R$. Problems as in \eqref{e-curl} find their origins in Maxwell's equations (in $\rr$) in the differential form
\begin{equation}\label{e-Maxwell}
\begin{cases}
\nabla\times\cH=\cJ+\partial_t\cD \, & \text{(Amp\`ere's Law)}\\
\nabla\cdot\cD=\rho \, & \text{(Gauss's Electric Law)}\\
\nabla\times\cE=-\partial_t\cB \, & \text{(Faraday's Law)}\\
\nabla\cdot\cB=0 \, & \text{(Gauss's Magnetic Law)}
\end{cases}
\end{equation}
where $\cH,\cJ,\cD,\cE,\cB\colon\rr\times\R\to\rr$ are time-dependent vector fields and $\rho\colon\rr\times\R\to\R$ is the electric charge density. In particular, $\cH$ is the magnetic intensity field, $\cJ$ the electric current intensity, $\cD$ the electric displacement field, $\cE$ the electric field, and $\cB$ the magnetic induction. We consider as well the constitutive relations (still in $\rr$)
\begin{equation}\label{e-const}
\begin{cases}
\cD=\epsilon\cE+\cP\\
\cH=\frac1\mu\cB-\cM,
\end{cases}
\end{equation}
where $\cP,\cM\colon\rr\times\R\to\rr$ are, respectively, the polarization field (which depends on $\cE$, in general nonlinearly) and the magnetization field, while $\epsilon,\mu\colon\rr\to\R$ are, respectively, the permettivity and the permeability of the material.

In order to derive \eqref{e-curl} we make additional assumptions about the physical model. We begin by considering absence of electric charges ($\rho=0$), electric currents ($\cJ=0$), and magnetization ($\cM=0$). Then, plugging \eqref{e-const} into \eqref{e-Maxwell} and differentiating with respect to the time variable we obtain\footnote{We assume as well that the time and space derivatives can be switched.}
\[
\nabla\times\left(\frac1\mu\nabla\times\cE\right)+\epsilon\partial_t^2\cE=-\partial_t^2\cP.
\]
Moreover, we assume that $\cE$ and $\cP$ are \textit{monochromatic waves}, i.e., $\cE(x,t)=\cos(\omega t)E(x)$ and $\cP(x,t)=\cos(\omega t)P(x)$ for some $\omega\in\R$ and $E,P\colon\rr\to\rr$ (or, equivalently, $\cE(x,t)=\sin(\omega t)E(x)$ and $\cP(x,t)=\sin(\omega t)P(x)$), which leads to the \textit{time-harmonic} Maxwell equation
\[
\nabla\times\left(\frac1\mu\nabla\times E\right)-\epsilon\omega^2 E=\omega^2P.
\]
Finally, if $\mu\equiv1$, $\epsilon\equiv0$, and we set $\UU=E$ and $f=\omega^2P$, then we obtain \eqref{e-curl}. Note that, if $\epsilon\not\equiv0$, then we simply have an additional linear term of the form $V(x)\UU$ on the left-hand side of \eqref{e-curl}.

A different derivation, still based on Maxwell's equations and -- at the same time -- on the Born--Infeld theory, is provided in \cite{BenFor} in the magnetostatic case, i.e., when the magnetic field does not depend on time and the electric field is identically $0$. In this case, $\UU$ stands for the gauge potential of the magnetic field: $\nabla\times\UU=\cB$.

A major mathematical difficulty of \eqref{e-curl} and similar curl-curl problems is that the differential operator $\UU\mapsto\nabla\times\nabla\times\UU$ has an infinite-dimensional kernel, i.e., the space of gradient vector fields; this makes the associated energy functional
\begin{equation*}
\UU\mapsto\int_{\rr}\frac12|\nabla\times\UU|^2-F(x,\UU)\,dx
\end{equation*}
strongly indefinite, i.e., unbounded from above and below (when $F\ge0$) even on subspaces of finite codimension and such that its critical points have infinite Morse index. Another issue is that the Fr\'echet differential of the energy functional is not sequentially weak-to-weak* continuous, therefore the limit point of a weakly convergent sequence need not be a critical point of the functional. Moreover, one has to struggle with the lack of compactness because the problem is set in the whole space $\rr$.

We underline that the aforementioned difficulties in dealing with curl-curl problems -- even in bounded domains --  have given rise to several simplifications in the literature. The most widely used is the scalar or vector nonlinear Schr\"odinger equation, where, e.g., one assumes that the term $\nabla(\nabla\cdot\UU)$ in $\nabla\times\nabla\times\UU=\nabla(\nabla\cdot\UU)-\Delta\UU$ is negligible and can therefore be removed from the equation, or uses the so-called \textit{slowly varying envelope approximation}. Nevertheless, such approximations may produce non-physical solutions, which do not describe the \textit{exact} propagation of electromagnetic waves in Maxwell's equations, as remarked, e.g., in \cite{AAS-C,CCDY}, whence the importance of curl-curl problems from a physical point of view. As far as we know, the first papers dealing with exact solutions to Maxwell's equations are \cite{McStTr,Stuart90}, where the problem is turned in an ODE and treated with ad hoc techniques. The same approach is used in the series of papers \cite{Stuart93,Stuart04,StuartZhou96,StuartZhou01,StuartZhou03,StuartZhou05,StuartZhou10}.

To the best of our knowledge, the first work on \eqref{e-curl} using variational methods in $\rr$ is due to Benci and Fortunato \cite{BenFor}: they consider the autonomous case and a double-power type nonlinearity, i.e., $F(x,\UU)=F(\UU)\simeq\min\{|\UU|^q,|\UU|^p\}$ for some $2<p<6<q$. They introduce a series of brilliant ideas which will be exploited later on by other authors for other curl-curl problems, such as the splitting of the function space they work with into a divergence-free subspace and a curl-free subspace and the restriction of the energy functional to the aforementioned divergence-free subspace via a ``relative minimization'' trick that makes use of the strict convexity of $F$. Nonetheless, their argument contains a mistake: in order to recover compactness, the authors work with $\cO(3)$-equivariant (radial) vector fields, without realizing that the subspace of $\cO(3)$-equivariant and divergence-free vector fields is nothing but the trivial space $\{0\}$.

The second attempt to tackle \eqref{e-curl} is due to Azzollini, Benci, D'Aprile, and Fortunato \cite{AzzBenDApFor}, once again with an autonomous double-power type nonlinearity. Their strategy consists of the use of two group actions in order to reduce the curl-curl operator to the vector Laplace operator, which is easier to handle. First, they consider the group action of $\SO:=\SO(2)\times\{1\}$ on
\[
\cD^{1,2}(\rr,\rr)=\Set{\UU\in L^6(\rr,\rr)|\nabla\UU\in L^2(\rr,\R^{3\times3})}
\]
defined by
\begin{equation}\label{e-Fix}
(g\UU)(x):=g^{-1}\UU(gx)=g^T\UU(gx), \quad x\in\rr
\end{equation}
for every $g\in\SO$ and every $\UU\in\cD^{1,2}(\rr,\rr)$, and the subspace $\textup{Fix}(\SO)$ consisting of the vector fields $\UU\in\cD^{1,2}(\rr,\rr)$ which are invariant with respect to this action. They prove (cf. \cite[Lemma 1]{AzzBenDApFor}) that every $\UU\in\textup{Fix}(\SO)$ decomposes as $\UU=\UU_\rho+\UU_\tau+\UU_\zeta$, where $\UU_\rho(x)$, $\UU_\tau(x)$, and $\UU_\zeta(x)$ are the orthogonal projections of $\UU(x)$ onto $\textup{span}(x_1,x_2,0)$, $\textup{span}(-x_2,x_1,0)$, and $\textup{span}(0,0,1)$ respectively for a.e. $x=(x_1,x_2,x_3)\in\rr$. Second, they introduce the action
\begin{equation}\label{e-S}
\cS\UU=\cS(\UU_\rho+\UU_\tau+\UU_\zeta):=-\UU_\rho+\UU_\tau-\UU_\zeta
\end{equation}
on $\textup{Fix}(\SO)$ and consider the subspace
\begin{equation*}
\DF:=\Set{\UU\in\textup{Fix}(\SO)|\UU=\cS\UU},
\end{equation*}
i.e., $\UU(x)=(x_1^2+x_2^2)^{-1/2}u(x)(-x_2,x_1,0)$ for some $\SO$-invariant $u\colon\rr\to\R$. Since the divergence of every element of $\DF$ is identically $0$, there holds $\nabla\times\nabla\times\UU=\nabla(\nabla\cdot\UU)-\Delta\UU=-\Delta\UU$ for every $\UU\in\DF$ and so, using Palais's principle of symmetric criticality \cite{Palais} (Theorem \ref{T:Palais}), they reduce \eqref{e-curl} to
\begin{equation*}
-\Delta\UU=f(\UU), \quad \UU\in\DF
\end{equation*}
and then use a Lions-type lemma as in \cite{Lions84_1,Lions84_2} to obtain a nontrivial solution via a constrained minimization argument in the spirit of \cite{BerLions}.

The action $\cS$ defined in \eqref{e-S} is also used by D'Aprile and Siciliano \cite{DApSic} in a somewhat antipodal way, i.e., considering the subspace
\[
\Set{\UU\in\textup{Fix}(\SO)|\UU=-\cS\UU}
\]
and obtaining solutions of the form
\[
\UU(x)=\frac{u(x)}{\sqrt{x_1^2+x_2^2}}
\begin{pmatrix}
x_1\\
x_2\\
0
\end{pmatrix}
+v(x)
\begin{pmatrix}
0\\
0\\
1
\end{pmatrix}
\]
for some $\SO$-invariant $u,v\colon\rr\to\R$. Since in this case the curl-curl operator does not reduce to the vector Laplacian, they make use of the tools introduced in \cite{BenFor} and then find a nontrivial solution similarly to \cite{AzzBenDApFor}.

The first work on a nonautonomous curl-curl problem and without the use of any symmetry is by Bartsch and Mederski \cite{BartschMederski1}, where they investigate one similar to \eqref{e-curl} but on a bounded domain $\Omega\subset\rr$, pairing it with boundary conditions that model the case of a medium surrounded by a perfect conductor (i.e., the electric field on the boundary of the medium is tangential to it), obtaining the system
\begin{equation}\label{e-bddcurl}
\begin{cases}
\nabla\times\nabla\times\UU+\lambda\UU=f(x,\UU) & \, \text{in } \Omega\\
\nu\times\UU=0 & \, \text{on } \partial\Omega,
\end{cases}
\end{equation}
with $\lambda\le0$ and $\nu\colon\partial\Omega\to\mbS^2$ the outer normal unit vector. As in \cite{BenFor,DApSic}, the authors split the function space they work with into a divergence-free part and a curl-free part, but then, instead of using a constrained minimization method, they adopt techniques from \cite{SzWe,SzWeHandbook} that exploit a generalization of the Nehari manifold, which need not be of class $\cC^1$. First of all, they split the divergence-free subspace into two more subspaces, where the quadratic form induced by the left-hand side of the differential equation in \eqref{e-bddcurl} is positive definite and negative semidefinite respectively. Next, under some technical assumptions about $F$, the generalized Nehari manifold (also known in the literature as the Nehari--Pankov manifold) is proved to be homeomorphic to the unit sphere in the subspace of divergence-free vector fields where the aforementioned quadratic form is positive definite, and this homeomorphism is utilized as a counterpart of the one obtained with the ``relative minimization'' trick from \cite{BenFor,DApSic}. In addition, since the Nehari--Pankov manifold is a natural constraint, by a suitable minimization argument the authors find a ground state solution, i.e., a notrivial solution with minimal energy, as well as infinitely many solutions with divergent energy. The advantage of working in a bounded domain is that, despite the presence of the subspace of curl-free vector fields, which does not embed compactly in any ``good'' (e.g., Lebesgue) function space, a variant of the Palais--Smale condition is satisfied, which provides some compactness in the aforementioned minimization argument.

The same authors generalize in \cite{BartschMederski2} their results by allowing also the terms on the left-hand side of the differential equation in \eqref{e-bddcurl} to be nonautonomous, obtaining the more generic system
\[
\begin{cases}
\nabla\times\bigl(\mu(x)^{-1}\nabla\times\UU\bigr)-V(x)\UU=f(x,\UU) & \, \text{in } \Omega\\
\nu\times\UU=0 & \, \text{on } \partial\Omega,
\end{cases}
\]
where now $\mu,V\colon\Omega\to\R^{3\times3}$ and $\mu(x),V(x)$ are symmetric positive definite matrices. They relax other assumptions about $F$, including the ones that allow to build a homeomorphism between the Nehari--Pankov manifold and the unit sphere in a suitable subspace; as a consequence, they build another one between a larger manifold (i.e., containing the Nehari--Pankov one) and the whole subspace of divergence-free vector fields where a similar quadratic form is positive definite, exploiting once again the trick from \cite{BenFor,DApSic} and obtaining similar results to their previous work \cite{BartschMederski1}.

Returning to unbounded domains, the problem
\begin{equation}\label{e-curlna}
\nabla\times\nabla\times\UU+V(x)\UU=f(x,\UU) \quad \text{in } \rr
\end{equation}
is studied by Mederski in \cite{Mederski} with $f$ $1$-periodic in $x$ along every direction and $V\colon\rr\to\R$ in a suitable intersection of Lebesgue spaces, $V\le0$. Tools from \cite{BartschMederski1} and \cite{DApSic} are matched with the $\Z^3$-invariance of the problem with $V=0$ given by the periodicity of $f$. When $V\ne0$, this invariance is lost and a careful analysis of Palais--Smale sequences is needed, including a splitting result for bounded sequences introduced in \cite{DApSic}.

The version of \eqref{e-curlna} with the same symmetry as in \cite{AzzBenDApFor} and $F(x,\UU)=\Gamma(x)|\UU|^p/p$, $2<p<6$, is investigated by Bartsch, Dohnal, Plum, and Reichel in \cite{BDPR}, where they build a Nehari--Pankov manifold of class $\cC^1$ and find a nontrivial solution with minimal energy by minimizing the energy functional constrained to it. They also consider the case $F\le0$, finding a least energy solution at a negative level as a minimizer of the unconstrained energy functional, and the radially symmetric case for $V/\Gamma>0$, where the term $\nabla\times\UU$ vanishes and -- consequently -- \eqref{e-curlna} becomes an algebraic equation, with explicit solutions
\[
\UU(x)=s(|x|)\left(\frac{V(|x|)}{\Gamma(|x|)}\right)^\frac{1}{p-2}\frac{x}{|x|}
\]
for some measurable $s\colon]0,\infty[\,\to\{\pm1\}$.

The first multiplicity result in unbounded domains without any symmetry assumptions is due to Mederski, the author, and Szulkin \cite{MeScSz}, where they obtain infinitely many solutions and a least-energy solution to \eqref{e-curlna} for $V=0$. Some of the techniques are borrowed from \cite{BartschMederski2}, others are introduced therein. They also generalize the double-power type nonlinearities of \cite{AzzBenDApFor,BenFor,DApSic,Mederski} by means of $N$-functions and Orlicz spaces. For more details, see Chapter \ref{K:nosym} in this thesis.

In all the papers mentioned so far, except for \cite{BDPR} in the particular case $F\le0$, the Sobolev-critical exponent $6$ (in dimension $N=3$) is not dealt with. The first results in this direction are due to Mederski \cite{MederskiJFA}, who considers the curl-curl equivalent of the Brezis-Nirenberg problem on bounded domains \cite{BrNi}
\begin{equation}\label{e-curlBN}
\begin{cases}
\nabla\times\nabla\times\UU+\lambda\UU=|\UU|^4\UU & \, \text{in } \Omega\\
\nu\times\UU=0 & \, \text{on } \partial\Omega,
\end{cases}
\end{equation}
$\lambda\le0$. His approach makes use of the same symmetry as in \cite{AzzBenDApFor} together with a compact perturbation relative to the case $\lambda=0$, which leads to a series of considerations on the difference between the energy functional associated with \eqref{e-curlBN} and the one when $\lambda=0$. He also obtains additional results in the Sobolev subcritical case about the continuity and the (strict) monotonicity of the ground state energy map, i.e., the function that maps $\lambda$ to the least energy achieved by a nontrivial solution to \eqref{e-bddcurl}.

The counterpart of \eqref{e-curlBN} set in the whole space $\rr$
\[
\nabla\times\nabla\times\UU=|\UU|^4\UU
\]
is investigated by Mederski and Szulkin in \cite{MeSzu}, where they find a ground state solution again with the aid of the Nehari--Pankov manifold. Additional results in that paper concern optimal constants in Sobolev-type inequalities involving the curl operator $\nabla\times$, in $\rr$ or in bounded domains. In particular, they improve the results in \cite{MederskiJFA} as they do not require any symmetry assumptions.

Multiple entire solutions in the Sobolev-critical case are obtained, for the first time, by Gaczkowski, Mederski, and the author in \cite{Gacz} combining the symmetry introduced in \cite{AzzBenDApFor} with another introduced in \cite{Ding}, which restores compactness. They also extend rigorously an equivalence result, known for the classical formulations, that relates the weak solutions to \eqref{e-curl} with the weak solutions to the Schr\"odinger equation with singular potential
\[
-\Delta u+\frac{u}{x_1^2+x_2^2}=\tilde{f}(x,u) \quad \text{in } \rr
\]
under some assumptions relating $f$ and $\tilde{f}$. The noncritical case is dealt with too. For more details, see Chapter \ref{K:cylsym} in this thesis. Similar equations appear also in context that are not related to Maxwell's equations and curl-curl problems. For instance, in \cite[Theorem 3.9]{EstLions} they are the result of the limiting problem for a nonlinear Schr\"odinger equation of critical growth with an external magnetic field, while in \cite{BadBenRol} they are derived from Schr\"odinger equations of the form
\[
-\Delta v=g(|v|)\frac{v}{|v|}
\]
assuming that $v(x)=u(x)e^{i\theta(x_1,x_2)}$, $u\ge0$, where $\theta(x_1,x_2)$ gives the angle of the point $(x_1,x_2)$ in the plane $\Set{y=(y_1,y_2,y_3)\in\rr|y_3=0}$.

In the end, we would like to mention the surveys about curl-curl problems \cite{BarMed,MedSURV} and the following two papers, which study cases that are not taken into account in this thesis: \cite{HirRei}, where different symmetries are considered, and \cite{QinTangLIN}, with asymptotically linear nonlinearities.

\chapter{Maxwell's equations and absence of symmetry}\label{K:nosym}

\section{Statement of the results}

In this chapter, based on \cite{MeScSz}, we study the curl-curl problem \eqref{e-curl}, which for the reader's convenience we rename
\begin{equation}\label{e-Curl}
\nabla\times\nabla\times\UU=\nabla_\UU F(x,\UU)=:f(x,\UU) \quad \text{in } \rr,
\end{equation}
without any hypotheses about the symmetry of the solutions to \eqref{e-Curl}.

The nonlinearity is controlled by an $N$-function $\Phi\colon\R\to[0,\infty[$ (cf. (F3) below) which satisfies the following assumptions:
\begin{itemize}
	\item [(N1)] $\Phi$ satisfies the $\Delta_2$ and $\nabla_2$ conditions globally;
	\item [(N2)] $\displaystyle\lim_{t\to0}\frac{\Phi(t)}{t^6}=\lim_{t\to\pm\infty}\frac{\Phi(t)}{t^6}=0$;
	\item [(N3)] $\displaystyle\lim_{t\to\pm\infty}\frac{\Phi(t)}{t^2}=\infty$.
\end{itemize}
As a reference about $N$-functions, which will be rigorously introduced in Section \ref{S:Orlicz}, we mention \cite{RaoRen}. Note that in \cite{MeScSz} it was additionally required that $\Phi$ be strictly convex and of class $\cC^1$.

Now we list our assumptions about the nonlinearity.
\begin{itemize}
	\item [(F1)] $F\colon\rr\times\rr\to\R$ is differentiable with respect to $\UU\in\rr$ for a.e. $x\in\rr$ and $f\colon\rr\times\rr\to\rr$ is a Carath\'eodory function. Moreover, $f$ is $\Z^3$-periodic in $x$, i.e., $f(x,\UU)=f(x+y,\UU)$ for every $\UU\in \rr$ and a.e. $x\in\R^3$ and $y\in\Z^3$.
	\item [(F2)] $F$ is uniformly strictly convex in $\UU$, i.e., for every compact $K\subset(\rr\times\rr)\setminus\Set{(\UU,\UU)|\UU\in\rr}$
	\[
	\inf_{\substack{x\in\rr \\ (\UU,\VV)\in K}}\frac{F(x,\UU)+F(x,\VV)}{2}-F\left(x,\frac{\UU+\VV}{2}\right)>0.
	\]
	\item [(F3)] There exist $c_1,c_2>0$ such that
	\[
	|f(x,\UU)|\le c_1\Phi'(|\UU|) \text{ and } F(x,\UU)\ge c_2\Phi(|\UU|)
	\]
	for every $\UU\in\rr$ and a.e. $x\in\rr$.
	\item [(F4)] $f(x,\UU)\cdot\UU\ge2F(x,\UU)$ for every $\UU\in\rr$ and a.e. $x\in\rr$.
	\item [(F5)] For every $\UU,\VV\in\rr$ and a.e. $x\in\rr$ such that $f(x,\UU)\cdot\VV=f(x,\VV)\cdot\UU>0$
	\[
	F(x,\UU)-F(x,\VV)\le\frac{\bigl(f(x,\UU)\cdot\UU\bigr)^2-\bigl(f(x,\UU)\cdot\VV\bigr)^2}{2f(x,\UU)\cdot\UU}.
	\]
\end{itemize}

Under these assumptions, the energy functional defined as
\begin{equation}\label{e-J}
E(\UU)=\int_{\rr}\frac12|\nabla\times\UU|^2-F(x,\UU)\,dx
\end{equation}
is well defined and of class $\cC^1$ (see Proposition \ref{P:classC1}) in the space
\[
\cD(\textup{curl},\Phi):=\Set{\UU\in L^\Phi(\rr,\rr)|\nabla\times\UU\in L^2(\rr,\rr)},
\]
where
\[
L^\Phi(\rr,\rr)=\Set{\UU\colon\rr\to\rr \text{ measurable }|\int_{\rr}\Phi(|\UU|)\,dx<\infty}.
\]

Note that, as observed in \cite[Remark 3.3 (b)]{MederskiCPDE}, if $F,\bar{F}\colon\rr\times\rr\to\R$ satisfy (F1)--(F5), then so does $F+\bar{F}$. This is not trivial concerning (F5) as it is not an additive assumption.

We provide some examples for $F$. Let $G\colon\rr\times\R\to\R$ be differentiable in the second variable $t$, with $g:=\partial_tG$ a Carath\'eodory function, fix $M\in GL(3,\R)$, and define $F(x,\UU):=G(x,|M\UU|)$. If $G(x,0)=0$ and $t\mapsto g(x,t)/t$ is nondecreasing for $t>0$, then $F$ satisfies (F4) (cf. \cite{SzWe}) and (F5).

Now let $\Gamma\in L^\infty(\rr)$ be $\Z^3$-periodic, positive, and bounded away from $0$ and let $W\in\cC^1(\R)$ such that $W(0)=W'(0)=0$ and $t\mapsto W'(t)/t$ is nondecreasing for $t>0$. If we define $F(x,\UU):=\Gamma(x)W(|M\UU|^2)$, where $M$ is as before, then (F1), (F2), (F4), and (F5) hold. If we take $W(t^2)=\bigl((1+|t|^q)^{p/q}-1\bigr)/p$ or $W(t^2)=\min\{|t|^p/p-1/p+1/q,|t|^q/q\}$ with $2<p<6<q$, then (F3) holds too with $\Phi(t)=W(t^2)$. Note that such $\Phi$'s are models for the double-power type nonlinearities considered in \cite{AzzBenDApFor,BenFor,DApSic,Mederski}.

We observe the following: if $W'$ is constant on $[a,b]$ for some $0<a<b<\infty$, then (take for simplicity $M$ as the identity matrix)
\[
0<F(x,\UU)-F(x,\VV)=\frac{\bigl(f(x,\UU)\cdot\UU\bigr)^2-\bigl(f(x,\UU)\cdot\VV\bigr)^2}{2f(x,\UU)\cdot\UU}
\]
for every $\UU\in\rr$ and $\VV\in\textup{span}(\UU)$ such that $a<|\VV|<|\UU|<b$, therefore the stronger variant of (F5) \cite[(F5)]{Mederski} (see also \cite[(F7)]{BartschMederski1}) is no longer satisfied and we cannot make use of techniques relying on a minimization over the Nehari--Pankov manifold
\begin{equation*}
\fN:=\Set{\UU\in\cD(\textup{curl},\Phi)\setminus\{0\}|E'(\UU)(\UU)=E'(\UU)(\nabla\varphi)=0 \, \forall\varphi\in\cC_c^\infty(\rr)}
\end{equation*}
as in \cite{BartschMederski1,Mederski} (see also \cite{SzWe,SzWeHandbook}). Note that $\fN$ need not be a differentiable manifold because in general $E$ is only of class $\cC^1$.

In addition, we allow nonlinearities that do not fit in the double-power case. If
\[
W(t^2)=
\begin{cases}
\frac12(t^2-1)\ln(1+|t|)-\frac14t^2+\frac12|t| & \, \text{if } |t|>1\\
\frac{\ln2}{q}(|t|^q-1)+\frac14 & \, \text{if } |t|\le1
\end{cases}
\]
for some $q>6$ and
\begin{equation}\label{e-ex1}
f(x,\UU)=
\begin{cases}
\Gamma(x)\ln(1+|\UU|)\UU & \, \text{if } |t|>1\\
\ln2\,\Gamma(x)|\UU|^{q-2}\UU & \, \text{if } |t|\le1,
\end{cases}
\end{equation}
then (F1)--(F5) hold, but $F$ cannot be controlled by any $N$-function related to the sum of two suitable Lebesgue spaces, as it is the case with a double-power behaviour. Moreover, there exists no $\eta>2$ such that $F$ satisfies the classical Ambrosetti--Rabinowitz condition \cite{AmbRab}
\[
f(x,\UU)\cdot\UU\ge\eta F(x,\UU)>0
\]
if $|\UU|>1$.

Another example is given by $F(x,\UU)=\Gamma(x)\Phi(|\UU|)$, where $\Phi(0)=0$ and
\begin{equation}\label{e-ex2}
\Phi'(t)=
\begin{cases}
\frac{\ln2\,t^5}{16\ln t} & \, \text{if } |t|>2\\
t & \, \text{if } 1\le|t|\le2\\
\frac{t^5}{1-\ln|t|} & \, \text{if } 0<|t|<1.
\end{cases}
\end{equation}
Once again, such $F$ does not satisfy the Ambrosetti--Rabinowitz condition for $1<|\UU|<2$. Furthermore, $\Phi$ satisfies (N2), but
\[
\lim_{t\to\pm\infty}\frac{\Phi(t)}{|t|^p}=\lim_{t\to0}\frac{\Phi(t)}{|t|^q}=\infty
\]
for every $2<p<6<q$, which is not the case (concerning the limit at zero) for a similar example given in \cite[page 256]{MeScSz}, where $\lim_{t\to0}\Phi(t)/|t|^{6+\varepsilon}=0$ for sufficiently small $\varepsilon>0$. We point out that, in the last two examples, it is convenient to use Lemma \ref{L:all} \textit{(i)} to check (N1).

The main result of this chapter reads as follows.

\begin{Th}\label{T:mainSZ}
Assume that (F1)--(F5) hold.
\begin{itemize}
	\item [(a)] There exists a ground state solution to \eqref{e-Curl}, i.e., $\UU\in\fN$ such that $E'(\UU)=0$ and
	\[
	E(\UU)=\inf_\fN E.
	\]
	\item [(b)] If, moreover, $F$ is even in $\UU$, then there exists an infinite sequence $\UU_n\in\fN$ of geometrically distinct solutions to \eqref{e-Curl}, i.e., $(\Z^3*\UU_n)\cap(\Z^3*\UU_m)=\emptyset$ for every $n\ne m$, where
	\[
	\Z^3*\UU=\Set{\UU(\cdot+z)|z\in\Z^3}.
	\]
\end{itemize}
\end{Th}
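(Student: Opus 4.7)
My approach will be to exploit the Helmholtz-type decomposition $\cD(\textup{curl},\Phi)=V\oplus W$, where $V$ consists of divergence-free fields and $W$ of gradient (hence curl-free) fields, and thereby reduce \eqref{e-Curl} to a mountain-pass problem on $V$ alone. Since $|\nabla\times(v+w)|^2=|\nabla\times v|^2$ for every $v\in V$ and $w\in W$, the quadratic part of $E$ ignores the curl-free summand, and only the nonlinear term $\int_{\rr}F(x,v+w)\,dx$ couples the two components. Assumption (F2) makes this term strictly convex in $w$, and together with the coercivity built into (F3) via (N3) it forces the existence of a unique minimizer $m(v)\in W$, characterized by the Euler equation $\int_{\rr}f(x,v+m(v))\cdot\nabla\varphi\,dx=0$ for every $\varphi\in\cC_c^\infty(\rr)$. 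Equivalently, $m(v)$ is the unique maximizer of $w\mapsto E(v+w)$, and $v\mapsto m(v)$ is continuous and commutes with $\Z^3$-translations.

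Next I would work with the reduced functional $J\colon V\to\R$, $J(v):=E(v+m(v))=\max_{w\in W}E(v+w)$, which is of class $\cC^1$ (by the envelope principle, since $\partial_wE=0$ at $w=m(v)$) and whose nontrivial critical points correspond bijectively to those of $E$ through $v\mapsto v+m(v)$. The positive-definiteness of $v\mapsto|\nabla\times v|_2^2$ on $V$ together with (F3) and (N2) yields the mountain-pass geometry for $J$; combined with (F4)--(F5), on every half-line $\{tv:t>0\}$ with $v\in V\setminus\{0\}$ the map $t\mapsto J(tv)$ attains its maximum at a unique $t(v)>0$, so the Nehari-type set $\fN_V:=\{v\in V\setminus\{0\}:J'(v)(v)=0\}$ is radial and $v\mapsto t(v)v$ realizes a homeomorphism from the unit sphere of $V$ onto $\fN_V$. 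Consequently $c:=\inf_\fN E=\inf_{\fN_V}J$ admits the min-max representation $c=\inf_{v\in V\setminus\{0\}}\max_{t>0}J(tv)>0$.

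For (a) I would produce a Cerami sequence $v_n\in V$ for $J$ at level $c$, show its boundedness in $\cD(\textup{curl},\Phi)$ using (F3)--(F4) in the Orlicz-space framework of Section \ref{S:Orlicz}, and invoke a Lions-type non-vanishing lemma adapted to $L^\Phi$: vanishing is ruled out because $c>0$, so there exist translations $z_n\in\Z^3$ such that $v_n(\cdot-z_n)$ converges weakly to some nonzero $v\in V$. Since $E$ and $m$ are both $\Z^3$-equivariant by (F1), the shifted sequence remains Cerami for $J$, and local $L^\Phi$-compactness together with pointwise a.e. convergence of $f(\cdot,v_n+m(v_n))$ allows passing to the limit in $J'(v_n)(\psi)$ for compactly supported $\psi\in V$; hence $v$ is a critical point of $J$, and $\UU:=v+m(v)\in\fN$ satisfies $E(\UU)\le c$, so $E(\UU)=c$ and $\UU$ is the desired ground state.

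For (b), with $F$ even the functional $J$ becomes even, but the $\Z^3$-periodicity means every nontrivial critical orbit of $J$ is infinite, so a direct application of the Krasnosel'skji genus yields nothing useful. I would follow a Coti Zelati--Rabinowitz-type scheme: argue by contradiction, assuming that the set of $\Z^3$-orbits of nontrivial critical points of $J$ is finite, and use an equivariant pseudo-gradient flow on $V$ commuting with the $\Z^3$-action together with the Lions-type splitting of the previous paragraph to produce a sequence of min-max values $c_k\to\infty$, each a critical value of $J$ carrying a $\Z^3$-orbit distinct from all previous ones, contradicting the finiteness assumption. The principal technical obstacle is that (F5) is strictly weaker than the strict monotonicity of $\UU\mapsto f(x,\UU)\cdot\UU/|\UU|^2$ exploited in \cite{BartschMederski1,Mederski}, so $J$ is merely $\cC^1$ and $\fN_V$ is only a topological manifold; every deformation argument must therefore be carried out on the linear space $V$ through the homeomorphism rather than intrinsically on $\fN_V$, and the discreteness of critical orbits must be extracted purely from the structure of Cerami sequences combined with the non-vanishing alternative above.
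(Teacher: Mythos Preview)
Your overall architecture matches the paper's: reduce via the Helmholtz splitting to $\cJ:=J\circ m$ on $V$, establish mountain-pass geometry, obtain a bounded Cerami sequence, rule out vanishing by a Lions-type lemma in $L^\Phi$, translate by $\Z^3$ and pass to a nontrivial weak limit for (a); for (b) argue by contradiction assuming finitely many critical orbits and run an equivariant pseudo-gradient/pseudoindex argument on $V$.

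There are, however, two genuine gaps. First, your claim that $t\mapsto J(tv)$ has a \emph{unique} maximizer $t(v)$, so that $v\mapsto t(v)v$ is a homeomorphism from the unit sphere onto $\fN_V$, is not justified by (F4)--(F5). Condition (F5) (equivalently (I8)) only yields that every $u\in\cN$ is \emph{a} global maximizer of $J$ on $\R^+u+W$ (Lemma~\ref{L:max}), not the unique one; the set of maximizing $t$ is a closed interval $[t_{\min},t_{\max}]$ on which $\cJ$ is constant. The paper circumvents this by observing that $V\setminus\cN_0$ still has exactly two connected components (since $\cJ'(tu)(u)>0$ for $t<t_{\min}$ and $<0$ for $t>t_{\max}$), which is enough to prove $c_{\cM}=c_{\cN}$ without any sphere--Nehari homeomorphism. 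Your later remark that $\fN_V$ ``is only a topological manifold'' already conflicts with the uniqueness claim; drop the latter and work directly with $\cJ$ on the linear space $V$, using the homeomorphism $m\colon V\to\cM$ rather than a projection onto $\fN_V$.

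Second, ``pointwise a.e.\ convergence of $f(\cdot,v_n+m(v_n))$ allows passing to the limit'' hides the hardest step. Weak convergence $v_n\rightharpoonup v$ in $V$ gives only $m(v_n)\rightharpoonup w_0$ in $W$, and $W$ has no local compact embedding into any Lebesgue or Orlicz space, so a.e.\ convergence of $m(v_n)$ is not free. The paper (Proposition~\ref{P:ae}, Corollary~\ref{C:Jweaklycont}) obtains it by a monotonicity argument: on each ball write $m(v_n)=\nabla\xi_n$ with $\xi_n$ bounded in $H^1$, test the Euler equation for $m$ against $\nabla\bigl((\xi_n-\xi)\zeta\bigr)$, and use the uniform strict convexity (F2) to turn
\[
\int_{\rr}\bigl(f(x,v_n+m(v_n))-f(x,v+\nabla\xi)\bigr)\cdot\bigl(v_n+m(v_n)-v-\nabla\xi\bigr)\zeta\,dx\to0
\]
into a.e.\ convergence of $v_n+m(v_n)$. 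Without this step you cannot verify that the weak limit lies in $\cM$ and is a critical point, nor establish the discreteness condition $(M)_0^\beta(b)$ (Lemma~\ref{L:Discreteness}) on which the multiplicity argument for (b) rests.
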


Instead of working directly with $\fN$, we make use of a second subset of $\cD(\textup{curl},\Phi)$, i.e.,
\[
\fM:=\Set{\UU\in\cD(\textup{curl},\Phi)|E(\UU)(\nabla\varphi)=0 \, \forall\varphi\in\cC_c^\infty(\rr)},
\]
which clearly contains $\fN$. A very important property of $\fM$ is that $E'$ is weak-to-weak* continuous when restricted to it, because it allows to find a solution to \eqref{e-Curl} as a weak limit point of a Cerami sequence, which is later proved to be a ground state solution.

\section{Functional and Orlicz setting}\label{S:Orlicz}

In the first part of this section, we recall a series of basic facts about $N$-functions and Orlicz spaces. Our discussion is based on \cite{RaoRen}.

A function $\Phi\colon\R\to[0,\infty[$ is called an $N$-function (or nice Young function) if and only if it is even, convex, and
\[
\Phi(t)=0\Leftrightarrow t=0, \quad \lim_{t\to0}\frac{\Phi(t)}{t}=0, \quad \lim_{t\to\infty}\frac{\Phi(t)}{t}=\infty.
\]

Given an $N$-function $\Phi$, we can define a second $N$-function as
\[
\Psi(t)=\sup\Set{s\left|t\right|-\Phi(s)|s\ge0}.
\]
$\Psi$ is called the \textit{complementary function} to $\Phi$ and $(\Phi,\Psi)$ is called a \textit{complementary pair} of $N$-functions. Recall from \cite[Section 1.3]{RaoRen} that $\Phi'$ and $\Psi'$ exist a.e.\footnote{In fact, $\Phi'$ and $\Psi'$ exist everywhere if we define them as the left (or right) derivatives.}, $\Psi'(t)=\inf\{s\ge 0:\Phi'(s)>t\}$ for $t\ge 0$, and $\Psi$ can be expressed as
\[
\Psi(t)=\int_0^{|t|}\Psi'(s)\,ds.
\]

We recall from \cite[Section 2.3]{RaoRen} that $\Phi$ satisfies the $\Delta_2$ \textit{condition globally} (denoted $\Phi\in\Delta_2$) if and only if there exists $K>1$ such that for every $t\in\R$
\[
\Phi(2t)\le K\Phi(t)
\]
(here $2$ can be replaced by any constant $a>1$), while $\Phi$ satisfies the $\nabla_2$ \textit{condition globally} (denoted $\Phi\in\nabla_2$) if and only if there exists $K'>1$ such that for every $t\in\R$
\[
\Phi(K't)\ge 2K'\Phi(t).
\]
When the inequalities above hold for every sufficiently large $|t|$, we say that $\Phi$ satisfies the $\Delta_2$ or $\nabla_2$ conditions \textit{locally}.

The set
\[
\cL^\Phi:=\cL^\Phi(\rr,\rr):=\Set{\UU\colon\rr\to\rr \text{ measurable }|\int_{\rr}\Phi(|\UU|)\,dx<\infty}
\]
is called the \textit{Orlicz class} relative to $\Phi$ and need not be a vector space, because it need not be closed under multiplication by a scalar. In order to have a vector space, we need to consider the set
\[
L^\Phi:=L^\Phi(\rr,\rr):=\Set{\UU\colon\rr\to\rr \text{ measurable }|\exists \, a>0:a\UU\in\cL^\Phi},
\]
called the \textit{Orlicz space} relative to $\Phi$. The property of $\cL^\Phi$ being a vector space is related to $\Phi$ satisfying the $\Delta_2$ condition. More precisely, $\cL^\Phi(\rr,\rr)=L^\Phi(\rr,\rr)$ if and only if $\Phi\in\Delta_2$ (cf. \cite[Theorem III.I.2]{RaoRen}).

Identifying functions equal a.e., $L^\Phi$ becomes a Banach space (cf. \cite[Theorems III.II.3 and III.III.10]{RaoRen}) if endowed with the norm
\[
|\UU|_\Phi:=\inf\Set{\alpha>0|\int_{\rr}\Phi\left(\frac{|\UU|}{\alpha}\right)\,dx\le1}.
\]
One can define an equivalent norm (cf. \cite[Proposition III.III.4]{RaoRen}) as
\[
|\UU|_{\Phi,1}:=\sup\Set{\int_{\rr}\left|\UU\right|\left|\VV\right|\,dx|\VV\in\cL^\Psi \text{ and } \int_{\rr}\Psi(|\VV|)\,dx\le1}.
\]
Here and in the sequel, $\Psi$ stands for the complementary function to $\Phi$.

Finally, if $\Phi\in\Delta_2$, then $L^\Phi$ is separable (cf. \cite[Theorem III.V.1]{RaoRen}) and its dual space is $L^\Psi$ (cf. \cite[Corollary IV.I.9]{RaoRen}); if, moreover, $\Psi\in\Delta_2$, then $L^\Phi$ and $L^\Psi$ are reflexive (cf. \cite[Theorem IV.I.10]{RaoRen}). As usual, the equality $(L^\Phi)'=L^\Psi$ is meant as an isometric isomorphism, where the norm on $(L^\Phi)'$ is induced in the standard way from the one in $L^\Phi$. In particular, if we consider the norm $|\cdot|_\Phi$ in $L^\Phi$, then such an isometry holds if we consider the norm $|\cdot|_{\Psi,1}$ in $L^\Psi$, i.e., $\bigl((L^\Phi)',\|\cdot\|_{(L^\Phi)'}\bigr) \simeq (L^\Psi,|\cdot|_{\Psi,1})$; similarly if we consider the norms $|\cdot|_{\Phi,1}$ and $|\cdot|_\Psi$.

We point out that the results in \cite{RaoRen} are given for scalar vector fields. Nonetheless, this is irrelevant in view of the following lemma. For $\Omega\subset\rr$ measurable define
\[
L^\Phi(\Omega):=\Set{u\colon\Omega\to\R \text{ measurable }|\int_{\Omega}\Phi(|au|)\,dx<\infty \text{ for some } a>0}
\]
and, with a small abuse of notation, endow it with the norm $|\cdot|_\Phi$ defined as before.

\begin{Lem}
$L^\Phi=L^\Phi(\rr)^3$ and their norms are equivalent.
\end{Lem}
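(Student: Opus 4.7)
The plan is to prove the set equality and the norm equivalence simultaneously by exploiting the pointwise bounds
\[
\max_{i\in\{1,2,3\}}|u_i(x)|\le|\UU(x)|\le|u_1(x)|+|u_2(x)|+|u_3(x)|
\]
for $\UU=(u_1,u_2,u_3)$, together with the convexity, monotonicity, and the global $\Delta_2$ condition satisfied by $\Phi$ (assumption (N1)). The natural norm on $L^\Phi(\rr)^3$ is, for instance, $\|(u_1,u_2,u_3)\|_*:=\max_{i}|u_i|_\Phi$ (or $\sum_i|u_i|_\Phi$, which is clearly equivalent to it).

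The first step is a pointwise lemma: there is a constant $C=C(\Phi)>0$ such that $\Phi(a+b+c)\le C\bigl(\Phi(a)+\Phi(b)+\Phi(c)\bigr)$ for all $a,b,c\ge0$. Since $a+b+c\le 4\max(a,b,c)$ and since iterating $\Phi(2t)\le K\Phi(t)$ yields $\Phi(4t)\le K^2\Phi(t)$, monotonicity of $\Phi$ on $[0,\infty[$ gives $\Phi(a+b+c)\le K^2\Phi(\max(a,b,c))\le K^2\bigl(\Phi(a)+\Phi(b)+\Phi(c)\bigr)$, so $C=K^2$ works. Combining this with the pointwise bounds above and integrating, a measurable field $\UU$ lies in $\cL^\Phi$ iff each component does, and since $\Phi\in\Delta_2$ forces $\cL^\Phi=L^\Phi$ (cf.\ \cite[Theorem III.I.2]{RaoRen}), the set equality $L^\Phi=L^\Phi(\rr)^3$ follows.

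For the norm equivalence, the easy direction uses $|u_i|\le|\UU|$ and the definition of the Luxemburg norm: whenever $\alpha>|\UU|_\Phi$, the condition $\int_{\rr}\Phi(|\UU|/\alpha)\,dx\le1$ implies $\int_{\rr}\Phi(|u_i|/\alpha)\,dx\le1$, hence $|u_i|_\Phi\le|\UU|_\Phi$, so $\|\UU\|_*\le|\UU|_\Phi$. For the reverse direction, set $\alpha:=\max_i|u_i|_\Phi$ (assuming $\alpha>0$, the case $\UU=0$ being trivial). Then $\int_{\rr}\Phi(|u_i|/\alpha)\,dx\le1$ for each $i$, and by the pointwise lemma
\[
\int_{\rr}\Phi(|\UU|/\alpha)\,dx\le C\sum_{i=1}^3\int_{\rr}\Phi(|u_i|/\alpha)\,dx\le 3C.
\]
Finally, convexity of $\Phi$ together with $\Phi(0)=0$ gives $\Phi(t/s)\le\Phi(t)/s$ for $s\ge1$; taking $s=3C$,
\[
\int_{\rr}\Phi\bigl(|\UU|/(3C\alpha)\bigr)\,dx\le\frac{1}{3C}\int_{\rr}\Phi(|\UU|/\alpha)\,dx\le1,
\]
so $|\UU|_\Phi\le 3C\alpha=3C\|\UU\|_*$.

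There is no real obstacle here: the only subtlety is making sure the $\Delta_2$ condition used is global (as granted by (N1)) and not merely local, since otherwise the estimate $\Phi(a+b+c)\lesssim\Phi(a)+\Phi(b)+\Phi(c)$ would fail for small values of the arguments and one would have to patch the proof with a truncation argument; with global $\Delta_2$ the bookkeeping is straightforward.
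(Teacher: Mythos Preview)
Your argument is correct, but it takes a slightly heavier route than the paper's. You use the global $\Delta_2$ condition to obtain $\Phi(a+b+c)\le K^2\bigl(\Phi(a)+\Phi(b)+\Phi(c)\bigr)$ and then convexity to pass from the modular bound $\int\Phi(|\UU|/\alpha)\le 3K^2$ to a norm bound, arriving at $|\UU|_\Phi\le 3K^2\|\UU\|_*$. The paper bypasses $\Delta_2$ entirely: from $|\UU|\le|u_1|+|u_2|+|u_3|$ and convexity alone it writes
\[
\Phi\!\left(\frac{|\UU|}{k}\right)\le\frac13\sum_{i=1}^3\Phi\!\left(\frac{3|u_i|}{k}\right)\le\Phi\!\left(\frac{3\max_i|u_i|}{k}\right),
\]
which immediately gives $|\UU|_\Phi\le 3\|\UU\|_*$. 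So the paper's proof works for \emph{any} $N$-function and yields a sharper constant; your proof is perfectly valid under (N1), but your closing remark that global $\Delta_2$ is the ``only subtlety'' slightly oversells its role, since in fact it is not needed at all.
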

\begin{proof}
For $\UU\in L^\Phi(\rr)^3$ we use the norm $|\UU|_{\Phi,3}:=\max_{i\in\{1,2,3\}}|\UU_i|_\Phi$, with $\UU=(\UU_1,\UU_2,\UU_3)$. Since $\Phi$ is increasing on positive numbers, for every $k>0$
\[
\int_{\R^3}\Phi\left(\frac{|\UU_i|}k\right)dx \le \int_{\R^3}\Phi\left(\frac{|\UU|}k\right)dx,
\]
hence, if the second integral is less than $1$, so is the first one. Taking the infimum over $k>0$ we obtain $|\UU_i|_\Phi\le|\UU|_\Phi$ for every $i\in\{1,2,3\}$ and $|\UU|_{\Phi,3}\le|\UU|_\Phi$. In particular, $L^\Phi(\rr)^3\subset L^\Phi$.

On the other hand, since $\Phi$ is convex,
\[
\int_{\R^3}\Phi\left(\frac{|\UU|}{k}\right)dx \le \frac13\sum_{i=1}^3 \int_{\R^3}\Phi\left(\frac{3|\UU_i|}{k}\right)dx\le\int_{\rr}\Phi\left(\frac{3\max_{i=1,2,3}|\UU_i|}{k}\right)\,dx,
\]
so $|\UU|_\Phi \le 3\max_{i=1,2,3}|\UU_i|_\Phi = 3|\UU|_{\Phi,3}$ and, in particular, $L^\Phi\subset L^\Phi(\rr)^3$.
\end{proof}

We conclude this series of recalls with the following properties.

\begin{Lem}\label{L:all}
\begin{itemize}
	\item [(i)] The following are equivalent:
	\begin{itemize}
		\item [-] $\Phi\in\Delta_2$,
		\item [-] there exists $K>1$ such that $t\Phi'(t)\le K\Phi(t)$ for every $t\in\R$,
		\item [-] there exists $K'>1$ such that $t\Psi'(t)\ge K'\Psi(t)$ for every $t\in\R$,
		\item [-] $\Psi\in\nabla_2$.
	\end{itemize}
	\item [(ii)] For every $\UU\in L^\Phi$, $\VV\in L^\Psi$ there holds
	$$\int_{\R^3}|\UU||\VV|\,dx\le\min\{|\UU|_{\Phi,1}|\VV|_{\Psi},|\UU|_{\Phi}|\VV|_{\Psi,1}\}.$$
	\item [(iii)] Let $\UU_n$, $\UU\in L^\Phi$. Then $|\UU_n-\UU|_\Phi\to 0$ implies that $\int_{\R^3}\Phi(|\UU_n-\UU|)\,dx\to 0$. If $\Phi\in\Delta_2$, then $\int_{\R^3}\Phi(|\UU_n-\UU|)\,dx\to 0$ implies $|\UU_n-\UU|_\Phi\to 0$.
	\item [(iv)] Let $X\subset L^\Phi$. Then $X$ is bounded if $\Set{\int_{\R^3}\Phi(\left|\UU\right|)\,dx|\UU\in X}$ is bounded. If $\Phi\in\Delta_2$, then the opposite implication holds.
\end{itemize}
\end{Lem}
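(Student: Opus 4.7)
The plan is to establish (i) using the Young equality $s\Phi'(s)=\Phi(s)+\Psi(\Phi'(s))$, which couples the local growth of $\Phi$ and $\Psi$ through their derivatives. For $\Phi\in\Delta_2\Rightarrow t\Phi'(t)\le K\Phi(t)$, I would use convexity: since $\Phi'$ is nondecreasing, $\Phi(2t)-\Phi(t)=\int_t^{2t}\Phi'(s)\,ds\ge t\Phi'(t)$, so the $\Delta_2$ bound $\Phi(2t)\le K\Phi(t)$ yields $t\Phi'(t)\le(K-1)\Phi(t)$. Conversely, if $t\Phi'(t)\le K\Phi(t)$, then $(d/ds)\log\Phi(s)\le K/s$ and integrating from $t$ to $2t$ gives $\Phi(2t)\le 2^K\Phi(t)$. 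The analogous implications for $\Psi$ are identical in form. Finally, the bridge $\Phi\in\Delta_2\Leftrightarrow\Psi\in\nabla_2$ follows by substituting $t=\Phi'(s)$ (so $s=\Psi'(t)$) into Young's equality and rewriting $\Psi(t)=s\Phi'(s)-\Phi(s)$: a bound $s\Phi'(s)\le K\Phi(s)$ converts directly into $t\Psi'(t)\ge\tfrac{K}{K-1}\Psi(t)$ with $K/(K-1)>1$.

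For (ii), I would fix $\VV\in L^\Psi$ with $\alpha:=|\VV|_\Psi>0$. By the definition of the Luxemburg norm, $\int_{\rr}\Psi(|\VV|/\alpha)\,dx\le 1$, so $|\VV|/\alpha$ is admissible in the supremum defining $|\UU|_{\Phi,1}$, yielding $\int_{\rr}|\UU||\VV|/\alpha\,dx\le|\UU|_{\Phi,1}$, i.e.\ $\int_{\rr}|\UU||\VV|\,dx\le|\UU|_{\Phi,1}|\VV|_\Psi$. The other bound follows by swapping the roles of $\Phi$ and $\Psi$.

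For (iii), the first implication rests on the standard fact that if $\alpha:=|\UU|_\Phi\le 1$, convexity of $\Phi$ and $\Phi(0)=0$ give
\[
\int_{\rr}\Phi(|\UU|)\,dx=\int_{\rr}\Phi\!\left(\alpha\cdot\tfrac{|\UU|}{\alpha}\right)dx\le\alpha\int_{\rr}\Phi\!\left(\tfrac{|\UU|}{\alpha}\right)dx\le\alpha.
\]
Applying this to $\UU_n-\UU$ for $n$ large gives modular convergence. For the converse under $\Phi\in\Delta_2$, I iterate the $\Delta_2$ inequality to get, for any $\varepsilon\in(0,1)$, a constant $C(\varepsilon)>0$ with $\Phi(t/\varepsilon)\le C(\varepsilon)\Phi(t)$ for every $t\in\R$; then $\int_{\rr}\Phi(|\UU_n-\UU|/\varepsilon)\,dx\le C(\varepsilon)\int_{\rr}\Phi(|\UU_n-\UU|)\,dx\to 0$, so eventually this integral is $\le 1$, which means $|\UU_n-\UU|_\Phi\le\varepsilon$.

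For (iv), if the modular values are uniformly bounded by some $M\ge 1$, convexity gives $\int_{\rr}\Phi(|\UU|/M)\,dx\le M^{-1}\int_{\rr}\Phi(|\UU|)\,dx\le 1$, hence $|\UU|_\Phi\le M$ uniformly. Conversely, under $\Phi\in\Delta_2$ the same iteration as in (iii) gives $\Phi(t)\le C(R)\Phi(t/R)$ whenever $R\ge 1$; combined with $\int_{\rr}\Phi(|\UU|/R)\,dx\le 1$ for $R\ge|\UU|_\Phi$, this produces a uniform modular bound. I expect the main obstacle to be the bookkeeping in (i): one must move freely between $\Phi$, $\Phi'$, $\Psi$, $\Psi'$ without assuming pointwise differentiability everywhere, which I would handle by invoking left/right derivatives and the Young equality in the form $st\le\Phi(s)+\Psi(t)$ with equality precisely when $t\in[\Phi'_-(s),\Phi'_+(s)]$, as in \cite[Section 1.3]{RaoRen}.
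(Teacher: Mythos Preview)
Your proofs are correct, but your approach differs from the paper's in a trivial yet notable way: the paper does not prove any of (i)--(iv) directly, it simply cites the relevant results from Rao--Ren, namely \cite[Theorem II.III.3]{RaoRen} for (i), \cite[Formula (III.III.17)]{RaoRen} for (ii), \cite[Theorem III.IV.12]{RaoRen} for (iii), and \cite[Corollary III.IV.15]{RaoRen} for (iv). What you have written is essentially the content of those references unpacked into self-contained arguments; your convexity/integration estimates in (i), the definition-chasing for the H\"older-type bound in (ii), and the modular-vs-norm manipulations in (iii)--(iv) are exactly the standard proofs behind those citations. The paper's route is shorter for a thesis that already leans heavily on \cite{RaoRen}; your route has the advantage of being independent of that monograph and makes explicit where the $\Delta_2$ hypothesis enters.
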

\begin{proof}
\textit{(i)} follows from \cite[Theorem II.III.3]{RaoRen}, \textit{(ii)} follows from \cite[Formula (III.III.17)]{RaoRen}, \textit{(iii)} follows from \cite[Theorem III.IV.12]{RaoRen}, \textit{(iv)} follows from \cite[Corollary III.IV.15]{RaoRen}.
\end{proof}

From now on, we assume (F1)--(F5) and (N1)--(N3) hold and $\Phi$ denotes the $N$-function mentioned in the assumption (F3).

We recall that $\cD(\textup{curl},\Phi)$ is the completion of $\cC_c^\infty(\rr,\rr)$ with respect to the norm
\[
\|\UU\|_{\textup{curl},\Phi}:=\sqrt{|\nabla\times\UU|_2^2+|\UU|_\Phi^2}.
\]

The subspace of divergence-free vector fields is defined as
\[\begin{split}
\cV & := \Set{\UU\in\cD(\textup{curl},\Phi)|\int_{\rr}\UU\cdot\nabla\varphi\,dx=0 \text{ for every } \varphi\in\cC_c^\infty(\rr)}\\
& = \Set{\UU\in\cD(\textup{curl},\Phi)|\nabla\cdot\UU=0},
\end{split}\]
where the divergence of $\UU$ is understood in the distributional sense. As usual, let $\cD:=\cD^{1,2}(\rr,\rr)$ be the closure of $\cC_c^\infty(\rr,\rr)$ with respect to the norm $\|\UU\|_\cD=|\nabla\UU|_2$. Finally, let $\cW$ be the closure of $\Set{\nabla\varphi|\varphi\in\cC_c^\infty(\rr)}$ in $L^\Phi$.

\begin{Lem}\label{L:emb}
	$L^6(\R^3,\R^3)$ is continuously embedded in $L^{\Phi}$.
\end{Lem}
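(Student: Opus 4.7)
The plan is to combine the growth hypothesis (N2) with the definition of the Luxemburg norm: (N2) should supply a pointwise domination $\Phi(t)\le C t^6$ valid for every $t\ge 0$, which via scaling translates directly into continuity of the inclusion.

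First, I would observe that the function $g(t):=\Phi(t)/t^6$ is continuous on $]0,\infty[$ (since $\Phi$, being convex and finite, is continuous there) and, by (N2), satisfies $\lim_{t\to 0^+}g(t)=\lim_{t\to\infty}g(t)=0$. Extending $g$ to $[0,\infty]$ by $g(0)=g(\infty)=0$ produces a continuous function on a compact space; hence $g$ is bounded and there exists $C>0$ such that
\[
\Phi(t)\le C t^6\quad\text{for every }t\ge 0.
\]

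Next, given any $\UU\in L^6(\rr,\rr)$ with $\UU\neq 0$, I would estimate, for an arbitrary $\alpha>0$,
\[
\int_{\rr}\Phi\!\left(\frac{|\UU|}{\alpha}\right)\,dx \le \frac{C}{\alpha^6}\int_{\rr}|\UU|^6\,dx=\frac{C\,|\UU|_6^6}{\alpha^6}.
\]
Choosing $\alpha=C^{1/6}|\UU|_6$ makes the right-hand side equal to $1$, so by the definition of the Luxemburg norm,
\[
|\UU|_\Phi\le C^{1/6}|\UU|_6.
\]
This simultaneously shows $\UU\in L^\Phi$ (note $\Phi\in\Delta_2$ by (N1), so $\cL^\Phi=L^\Phi$) and that the inclusion map $L^6(\rr,\rr)\hookrightarrow L^\Phi$ is continuous, with embedding constant $C^{1/6}$.

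There is essentially no obstacle here: the only ingredient beyond the definitions is the uniform bound $\Phi(t)\le Ct^6$, which follows automatically from the two limits in (N2) together with continuity of $\Phi$ on the compact interval obtained by one-point-compactifying $]0,\infty[$. The conditions (N1) and (N3) are not needed for this particular embedding, although (N1) is what guarantees that $L^\Phi$ coincides with the Orlicz class $\cL^\Phi$ and is a well-behaved Banach space, which is the correct ambient space for the conclusion.
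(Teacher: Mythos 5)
Your proof is correct, and it rests on the same key observation as the paper's: (N2) together with the continuity of $\Phi$ yields a uniform bound $\Phi(t)\le C|t|^6$. Where you diverge is in how continuity of the inclusion is concluded. The paper feeds the bound into Lemma \ref{L:all} \textit{(iii)}: if $\UU_n\to\UU$ in $L^6$, then $\int_{\rr}\Phi(|\UU_n-\UU|)\,dx\le C|\UU_n-\UU|_6^6\to0$, and the $\Delta_2$ condition converts this modular convergence into norm convergence, giving sequential continuity of the (linear) inclusion. You instead work directly with the Luxemburg norm and obtain the explicit estimate $|\UU|_\Phi\le C^{1/6}|\UU|_6$ by choosing $\alpha=C^{1/6}|\UU|_6$ in the defining infimum. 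Your route is slightly more elementary and more informative: it produces an explicit embedding constant and, as you note, does not use (N1)/$\Delta_2$ at all for the embedding itself, whereas the paper's shortcut through Lemma \ref{L:all} \textit{(iii)} does invoke $\Delta_2$. Both arguments are sound.
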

\begin{proof}
	In view of (N2), there exists $C>0$ such that
	$\Phi(t)\leq C|t|^6$ for every $t\in\R$, therefore we can conclude by Lemma \ref{L:all} \textit{(iii)}.
\end{proof}

The following Helmholtz decomposition holds.

\begin{Lem}\label{L:Hel}
$\cV$ and $\cW$ are closed subspaces of $\cD(\textup{curl},\Phi)$  and
\begin{equation}\label{e-Hel}
\cD(\textup{curl},\Phi)=\cV\oplus \cW.
\end{equation}
Moreover, $\cV\subset\cD$ and the norms $\|\cdot\|_{\cD}$ and $\|\cdot\|_{\textup{curl},\Phi}$ are equivalent in $\cV$.
\end{Lem}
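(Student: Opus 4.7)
My plan is to establish each assertion in turn, arguing in three steps: closedness of the two subspaces, existence and uniqueness of the decomposition, and the embedding with norm equivalence.

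\emph{Closedness.} For $\cV$: if $\UU_n \to \UU$ in $\cD(\textup{curl},\Phi)$, then $\UU_n \to \UU$ in $L^\Phi$, and since any $\nabla\varphi$ with $\varphi \in \cC_c^\infty(\rr)$ lies in $L^\Psi$ (being bounded and compactly supported), Lemma \ref{L:all}\textit{(ii)} yields $\int_{\rr} \UU_n \cdot \nabla\varphi\,dx \to \int_{\rr} \UU \cdot \nabla\varphi\,dx$, preserving the divergence-free condition. For $\cW$: $\nabla \times \nabla \varphi = 0$ distributionally for every $\varphi \in \cC_c^\infty(\rr)$, and this identity persists under $L^\Phi$-limits, so $\nabla\times\UU=0$ for every $\UU \in \cW$; therefore the $\cD(\textup{curl},\Phi)$-norm restricts to $|\cdot|_\Phi$ on $\cW$, and closedness of $\cW$ in $L^\Phi$ (by definition) yields closedness in $\cD(\textup{curl},\Phi)$.

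\emph{Helmholtz decomposition.} I would introduce the Helmholtz--Leray projections $P$ and $Q = I - P$ on $L^\Phi$, defined as the operators whose Fourier multipliers are $I_3 - \xi\xi^\top/|\xi|^2$ and $\xi\xi^\top/|\xi|^2$, respectively; by construction $P$ projects onto divergence-free fields, $Q$ onto gradients, and both are matrices of compositions of Riesz transforms. Assumption (N1) gives $\Phi \in \Delta_2 \cap \nabla_2$, so by the classical Calder\'on--Zygmund theory on reflexive Orlicz spaces, $P,Q\colon L^\Phi \to L^\Phi$ are bounded. For $\UU \in \cD(\textup{curl},\Phi)$ set $\VV := P\UU$: then $\nabla\cdot\VV = 0$ and $\nabla\times\VV = \nabla\times\UU \in L^2(\rr,\rr)$, so $\VV \in \cV$, while $Q\UU = \UU - \VV \in L^\Phi$ has zero distributional curl. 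To place $Q\UU$ in $\cW$, I would approximate $\UU$ by $\UU_n \in \cC_c^\infty(\rr,\rr)$ in $L^\Phi$: each $Q\UU_n$ is the gradient of a Schwartz scalar function, which can be approximated in $L^\Phi$ by $\nabla\varphi$, $\varphi \in \cC_c^\infty(\rr)$, via standard truncation-plus-mollification, so $Q\UU_n \in \cW$ and closedness yields $Q\UU \in \cW$. Uniqueness reduces to $PQ = 0$: any $\UU \in \cV \cap \cW$ satisfies $\UU = P\UU$ (as $P$ is the identity on $\cV$) and $P\UU = 0$ (by continuity of $P$ and $P\nabla\varphi = 0$ for $\varphi \in \cC_c^\infty(\rr)$), whence $\UU = 0$. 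The principal obstacle here is the $L^\Phi$-boundedness of $Q$, which I would cite from the classical singular-integral theory for Orlicz spaces enabled by (N1).

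\emph{Inclusion $\cV \subset \cD$ and norm equivalence.} Given $\VV \in \cV$, set $\AAa := (-\Delta)^{-1}(\nabla\times\VV)$, well-defined in $\cD$ via the Riesz potential since $\nabla\times\VV \in L^2(\rr,\rr)$, with $\nabla\cdot\AAa = 0$. A direct calculation gives $\nabla\times\nabla\times\AAa = -\Delta\AAa = \nabla\times\VV$, so $\VV - \nabla\times\AAa$ is both curl-free and divergence-free with appropriate decay, hence identically zero; thus $\VV = \nabla\times\AAa$ and each entry of $\nabla\VV$ is a second-order Riesz transform of $\nabla\times\VV$. The $L^2$-boundedness of the Riesz transforms yields $|\nabla\VV|_2 \le C\,|\nabla\times\VV|_2$, so $\VV \in \cD$ and $\|\VV\|_\cD \le C\,\|\VV\|_{\textup{curl},\Phi}$. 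For the reverse bound, $|\nabla\times\VV|_2 \le \sqrt{3}\,|\nabla\VV|_2$ pointwise, and by Lemma \ref{L:emb} combined with the Sobolev embedding $\cD \hookrightarrow L^6(\rr,\rr)$,
\[
|\VV|_\Phi \le C\,|\VV|_6 \le C'\,|\nabla\VV|_2,
\]
so $\|\VV\|_{\textup{curl},\Phi} \le C''\,\|\VV\|_\cD$. Combining the two inequalities completes the proof of the norm equivalence on $\cV$.
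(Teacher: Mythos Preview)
Your argument is essentially correct, but it takes a different route from the paper. The paper avoids Fourier multipliers and Calder\'on--Zygmund theory on Orlicz spaces altogether: it approximates $\UU$ by $\varphi_n\in\cC_c^\infty(\rr,\rr)$, writes $\varphi_n=\varphi_n^1+\nabla\varphi_n^2$ with $\varphi_n^2$ the Newtonian potential of $\nabla\cdot\varphi_n$, and uses the elementary integration-by-parts identity $|\nabla\varphi_n^1|_2=|\nabla\times\varphi_n^1|_2=|\nabla\times\varphi_n|_2$ (valid for smooth divergence-free fields) to show $\varphi_n^1$ is Cauchy in $\cD$. This gives $\cV\subset\cD$ and one half of the norm equivalence directly, with the other half coming from $L^6\hookrightarrow L^\Phi$ (Lemma~\ref{L:emb}). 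Uniqueness is handled by observing that any $\UU\in\cV\cap\cW$ is harmonic (curl-free gives $\UU=\nabla\xi$ by \cite{Leinfelder}; divergence-free makes $\xi$ harmonic) and lies in $\cD$, hence vanishes. Your approach via the Leray projection $P$ is cleaner conceptually and gives uniqueness for free via $P|_\cW=0$, but it rests on the boundedness of Riesz transforms on $L^\Phi$---a true theorem under (N1), yet a nontrivial external input that the paper's more hands-on argument does not need.

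Two small slips in your write-up: $Q\UU_n$ is \emph{not} the gradient of a Schwartz function (the Newtonian potential of a compactly supported function only decays polynomially), though your truncation-plus-mollification still places it in $\cW$ since $\nabla\varphi_n^2\in L^6\hookrightarrow L^\Phi$; and $\AAa=(-\Delta)^{-1}(\nabla\times\VV)$ lies in $\dot H^2$, not in $\cD=\dot H^1$, but this does not affect your Riesz-transform estimate for $\nabla\VV$. Note also that the paper actually gets the sharper identity $|\nabla v|_2=|\nabla\times v|_2$ on $\cV$ (rather than your inequality $|\nabla v|_2\le C|\nabla\times v|_2$), which is occasionally useful downstream.
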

\begin{proof}
Let $w\in\cW$ and $\varphi_n\in\cC_c^{\infty}(\R^3)$ such that $|w -\nabla\varphi_n|_{\Phi}\to 0$.
Then for every $\psi\in\cC_c^{\infty}(\R^3,\R^3)$
\[
\int_{\R^3}w\cdot\nabla\times\psi\,dx
=\lim_n\int_{\R^3}\nabla \varphi_n\cdot\nabla\times\psi\,dx=
\lim_n\int_{\R^3}\nabla\times(\nabla \varphi_n)\cdot\psi\,dx=0
\]
where we have used Lemma \ref{L:all} \textit{(ii)} and the fact that $\nabla\times\psi \in L^\Psi$. Hence $\nabla\times w=0$ in the sense of distributions and
$\|w\|_{\textup{curl},\Phi}=|w|_{\Phi}$. Therefore $\cW$ is closed in $\cD(\textup{curl},\Phi)$; moreover, it is easily checked that $\cV$ is closed in $\cD(\textup{curl},\Phi)$.

Now, take any $\UU\in\cD(\textup{curl},\Phi)$ and $\varphi_n\in\cC_c^{\infty}(\R^3,\R^3)$ such that $\varphi_n\to\UU$ in $\cD(\textup{curl},\Phi)$. Let $\varphi_n^2\in\cC^{\infty}(\R^3)$ be the Newtonian potential of $\nabla\cdot\varphi_n$, i.e., $\varphi_n^2$ solves $\Delta \varphi_n^2 = \nabla\cdot\varphi_n$. Note that the derivative $\partial_i\varphi_n^2$ is the Newtonian potential of $\nabla\cdot\partial_i\varphi_n$. Since $\varphi_n\in\cC_c^{\infty}(\R^3)$, then by \cite[Proposition 1]{Iwaniec}, $\nabla\varphi_n^2$ and $ \nabla(\partial_i\varphi_n^2) \in L^r(\R^3,\R^3)$ for every $r\in\,]1,\infty[$. Hence from Lemma \ref{L:emb}
\[
\nabla \varphi_n^2\in L^6(\R^3,\R^3)\subset L^{\Phi}
\] 
and $\varphi_n^1:=\varphi_n-\nabla \varphi_n^2\in L^{\Phi}$. Moreover, $\varphi_n^1,\partial_i\varphi_n^1\in L^r(\R^3,\R^3)$.
We also have $\nabla\times\varphi_n^1 = \nabla\times\varphi_n$  and $\nabla\cdot\varphi_n^1=0$ pointwise. Using these two equalities and integrating by parts we obtain
$|\nabla\varphi^1_n|_2 = |\nabla\times\varphi^1_n|_2 = |\nabla\times\varphi_n|_2$. It follows that for every $m,n$
\[
|\nabla(\varphi_n^1-\varphi_m^1)|_{2}=|\nabla\times(\varphi_n^1-\varphi_m^1)|_{2}=|\nabla\times(\varphi_n-\varphi_m)|_{2}\leq \|\varphi_n-\varphi_m\|_{\textup{curl},\Phi},
\]
thus $\varphi_n^1$ is a Cauchy sequence in $\cD$. Let $v:=\lim_n\varphi_n^1$ in $\cD$. Then
\[
\int_{\R^3}v\cdot\nabla \varphi\,dx=\lim_n\int_{\R^3}\varphi^1_n\cdot\nabla\varphi\,dx=0
\]
for every $\varphi\in\cC_c^{\infty}(\R^3)$, hence $\nabla\cdot v=0$ and $v\in\cV$.
Moreover,
\[
|\nabla\times(\varphi_n^1-v)|_{2}=|\nabla(\varphi_n^1-v)|_{2}\to 0,
\]
so $\varphi_n^1\to v$ in  $\cD(\textup{curl},\Phi)$ and
$\nabla \varphi^2_n=\varphi_n-\varphi^1_n\to\UU-v$ in $\cD(\textup{curl},\Phi)$. Since $\cW$ is closed in $\cD(\textup{curl},\Phi)$, then $\UU-v\in \cW$ and we get the decomposition
$$\UU=v+(\UU-v)\in\cV+\cW.$$

Now take $\UU\in\cV\cap\cW$. Then $\nabla\times\UU=0$, so by \cite[Lemma 1.1 \textit{(i)}]{Leinfelder}, $\UU=\nabla\xi$ for some $\xi\in W^{1,6}_\textup{loc}(\R^3)$. Since $\nabla\cdot\UU=0$, $\xi$ is harmonic and therefore so is $\UU$. Hence 
\[
0 = -\int_{\R^3}\UU\cdot\Delta\UU\,dx = \int_{\R^3}|\nabla\UU|^2\,dx
\] 
and so $\UU=0$, thus  $\cV\cap\cW=\{0\}$ and we obtain \eqref{e-Hel}.

The equivalence of norms follows from Lemma \ref{L:emb}.
\end{proof}

In view of Lemmas \ref{L:emb} and \ref{L:Hel}, $\cV$ is continuously embedded in $L^\Phi$. We introduce a norm in $\cV\times\cW$ by the formula
\[
\|(v,w)\|:=\sqrt{\|v\|_{\cD}^2+|w|_{\Phi}^2}.
\]
We also define a functional on $\cV\times\cW$ that is the counterpart of $E$, defined in $\cV\oplus\cW=\cD(\textup{curl},\Phi)$, as
\[
J(v,w):=\int_{\rr}\frac12|\nabla v|^2-F(x,v+w)\,dx.
\]
In order to prove that $J\in\cC^1(\cV\times\cW)$, we need the following result.

\begin{Lem}\label{L:forC1}
There exists $C>0$ such that for every $t\in\R$
\[
\Psi\bigl(\Phi'(t)\bigr)\le C\Phi(t).
\]
In particular, if $\UU\in L^\Phi$, then $\Phi'(|\UU|)\in L^\Psi$.
\end{Lem}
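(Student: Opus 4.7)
The plan is to combine the Young equality for a complementary pair of $N$-functions with the $\Delta_2$ condition supplied by (N1), which has already been repackaged in Lemma \ref{L:all} \textit{(i)} as the pointwise bound $t\Phi'(t)\le K\Phi(t)$.

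First I would recall the Young equality: since $\Psi(s)=\sup_{\tau\ge0}\bigl(s\tau-\Phi(\tau)\bigr)$, the supremum is attained at $\tau=t$ whenever $s=\Phi'(t)$ (interpreting $\Phi'$ as, say, the right derivative so that it is defined everywhere on $[0,\infty[$ by convexity of $\Phi$). This yields, for every $t\ge0$,
\[
\Psi\bigl(\Phi'(t)\bigr)=t\Phi'(t)-\Phi(t).
\]

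Next, by (N1) and Lemma \ref{L:all} \textit{(i)} there exists $K>1$ such that $t\Phi'(t)\le K\Phi(t)$ for every $t\in\R$. Inserting this in the Young equality gives
\[
\Psi\bigl(\Phi'(t)\bigr)\le(K-1)\Phi(t) \quad \text{for every } t\ge0.
\]
To extend to arbitrary $t\in\R$, observe that $\Phi$ is even, so $\Phi'$ is odd, while $\Psi$ is even; hence $\Psi\bigl(\Phi'(t)\bigr)=\Psi\bigl(\Phi'(|t|)\bigr)\le(K-1)\Phi(|t|)=(K-1)\Phi(t)$. This proves the inequality with $C:=K-1$.

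For the final assertion, assume $\UU\in L^\Phi$. Since $\Phi\in\Delta_2$ globally by (N1), the Orlicz class and the Orlicz space coincide (cf. \cite[Theorem III.I.2]{RaoRen}), so $\int_{\rr}\Phi(|\UU|)\,dx<\infty$. The pointwise inequality just established, applied to $t=|\UU(x)|$ and integrated over $\rr$, gives
\[
\int_{\rr}\Psi\bigl(\Phi'(|\UU|)\bigr)\,dx\le C\int_{\rr}\Phi(|\UU|)\,dx<\infty,
\]
so $\Phi'(|\UU|)\in\cL^\Psi\subset L^\Psi$. There is essentially no obstacle here — the only mild subtlety is choosing a pointwise representative of $\Phi'$ (e.g.\ the right derivative) so that the Young equality is a genuine pointwise identity rather than only an a.e.\ statement; this is harmless because both $\Phi$ and $\Psi$ are convex and the right derivatives exist everywhere.
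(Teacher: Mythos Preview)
Your proof is correct and follows essentially the same approach as the paper: both combine the Young equality $\Psi(\Phi'(t))=t\Phi'(t)-\Phi(t)$ (the paper cites \cite[Theorem I.III.3]{RaoRen} for this) with the bound $t\Phi'(t)\le K\Phi(t)$ from Lemma \ref{L:all} \textit{(i)} to obtain $C=K-1$. You merely spell out more detail (the extension to negative $t$ via parity, and the ``in particular'' clause via $\cL^\Phi=L^\Phi$ under $\Delta_2$) that the paper leaves implicit.
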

\begin{proof}
Since $\Phi\in\Delta_2$, from Lemma \ref{L:all} \textit{(i)} and \cite[Theorem I.III.3]{RaoRen} there holds
\[
\Psi\bigl(\Phi'(t)\bigr)=t\Phi'(t)-\Phi(t)\le(K-1)\Phi(t).\qedhere
\]
\end{proof}

\begin{Prop}\label{P:classC1}
$J\in\cC^1(\cV\times\cW)$ and
\[
J'(v,w)(v',w')=\int_{\rr}\nabla v\cdot\nabla v'-f(x,v+w)\cdot(v'+w')\,dx
\]
for every $v,v'\in\cV$ and every $w,w'\in\cW$.
\end{Prop}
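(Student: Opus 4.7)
The plan is to write $J = J_1 + J_2$ with
\[
J_1(v,w) := \frac12\int_{\rr}|\nabla v|^2\,dx, \qquad J_2(v,w) := -\int_{\rr}F(x,v+w)\,dx,
\]
handle $J_1$ by inspection, and reduce $J_2$ to the study of the functional $\fI\colon L^\Phi\to\R$, $\fI(\UU):=\int_{\rr}F(x,\UU)\,dx$, composed with the continuous linear map $(v,w)\in\cV\times\cW\mapsto v+w\in L^\Phi$ (this map is continuous thanks to Lemmas \ref{L:emb} and \ref{L:Hel}). The claim for $J_1$ is immediate: it is a continuous quadratic form on $\cV$ (with norm $\|\cdot\|_\cD$ equivalent to $\|\cdot\|_{\textup{curl},\Phi}$), hence $\cC^1$ with differential $(v',w')\mapsto\int\nabla v\cdot\nabla v'\,dx$. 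So everything boils down to showing that $\fI\in\cC^1(L^\Phi)$ with $\fI'(\UU)(\VV)=\int f(x,\UU)\cdot\VV\,dx$.

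The crucial intermediate step is the Nemytskii operator $N_f\colon L^\Phi\to L^\Psi$, $N_f(\UU):=f(\cdot,\UU)$. Its well-definedness comes from (F3) together with Lemma \ref{L:forC1}: indeed, $\Psi\bigl(|f(x,\UU)|\bigr)\le\Psi\bigl(c_1\Phi'(|\UU|)\bigr)\le C\Phi(|\UU|)$ (using convexity of $\Psi$ when $c_1>1$ and $\Psi\in\Delta_2$, which follows from (N1) via Lemma \ref{L:all} \textit{(i)}), so $\int\Psi(|f(x,\UU)|)\,dx<\infty$ and $N_f(\UU)\in L^\Psi$ by Lemma \ref{L:all} \textit{(iv)}. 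Continuity of $N_f$ is then the standard Orlicz Nemytskii argument: if $\UU_n\to\UU$ in $L^\Phi$, by Lemma \ref{L:all} \textit{(iii)} (and $\Phi\in\Delta_2$) we have $\int\Phi(|\UU_n-\UU|)\,dx\to0$, whence a subsequence converges a.e.\ and is dominated in a Vitali-type sense by $\Phi(|\UU_n|)$; the pointwise continuity of $f(x,\cdot)$ combined with the modular bound $\Psi(|f(x,\UU_n)-f(x,\UU)|)\le C(\Phi(|\UU_n|)+\Phi(|\UU|))$ and the equi-integrability of $\Phi(|\UU_n|)$ (a consequence of norm convergence and $\Phi\in\Delta_2$) yield $\int\Psi(|f(x,\UU_n)-f(x,\UU)|)\,dx\to0$, hence $|N_f(\UU_n)-N_f(\UU)|_\Psi\to0$ by Lemma \ref{L:all} \textit{(iii)} applied in $L^\Psi$ (using $\Psi\in\Delta_2$). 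A standard subsequence-of-subsequence argument promotes this to convergence of the whole sequence.

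With $N_f$ continuous, Fr\'echet differentiability of $\fI$ follows by the usual mean-value trick: for $\UU,\VV\in L^\Phi$,
\[
\fI(\UU+\VV)-\fI(\UU)-\int_{\rr}f(x,\UU)\cdot\VV\,dx=\int_0^1\int_{\rr}\bigl[f(x,\UU+t\VV)-f(x,\UU)\bigr]\cdot\VV\,dx\,dt,
\]
and Lemma \ref{L:all} \textit{(ii)} (the Orlicz H\"older inequality) bounds the inner integral by $|N_f(\UU+t\VV)-N_f(\UU)|_\Psi\,|\VV|_{\Phi,1}$, which equivalently controls $|\VV|_\Phi$ up to a constant. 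Continuity of $N_f$ gives $\sup_{t\in[0,1]}|N_f(\UU+t\VV)-N_f(\UU)|_\Psi\to0$ as $|\VV|_\Phi\to0$, yielding Fr\'echet differentiability at $\UU$ with the stated derivative. Continuity of $\UU\mapsto\fI'(\UU)\in(L^\Phi)'$ is then a direct consequence of the continuity of $N_f$ and the identification $(L^\Phi)'\simeq(L^\Psi,|\cdot|_{\Psi,1})$ recalled in Section \ref{S:Orlicz}. Composing with the continuous linear map $(v,w)\mapsto v+w$ transfers all this to $J_2$ on $\cV\times\cW$, giving the stated formula for $J'(v,w)(v',w')$.

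The main obstacle is the continuity of $N_f\colon L^\Phi\to L^\Psi$, since $\Phi$ is not necessarily a power function and one cannot invoke classical $L^p$ Nemytskii theorems directly; the argument above genuinely relies on \emph{both} the $\Delta_2$ and $\nabla_2$ conditions from (N1), which is precisely why these are imposed. Everything else (boundedness of the quadratic part, Orlicz H\"older, the linear-decomposition step) is routine once this continuity is in hand.
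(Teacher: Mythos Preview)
Your proposal is correct and follows essentially the same route as the paper: both reduce to showing that $\fI(\UU)=\int_{\rr}F(x,\UU)\,dx$ is $\cC^1$ on $L^\Phi$ with $\fI'(\UU)(\VV)=\int f(x,\UU)\cdot\VV\,dx$, and both hinge on the continuity of the Nemytskii operator $N_f\colon L^\Phi\to L^\Psi$, established via (F3), Lemma~\ref{L:forC1}, convexity of $\Psi$, the $\Delta_2$/$\nabla_2$ conditions, and a dominated-convergence/Vitali argument after extracting an a.e.\ convergent subsequence. The only cosmetic difference is that the paper first computes the G\^ateaux derivative by dominated convergence and then proves $N_f$ continuous (implicitly using that a continuous G\^ateaux derivative implies Fr\'echet), whereas you establish continuity of $N_f$ first and deduce Fr\'echet differentiability directly from the integral mean-value identity and the Orlicz H\"older inequality.
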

\begin{proof}
From Lemma \ref{L:emb}, the proof is complete if we prove that $I\in\cC^1(L^\Phi)$, where $I(\UU) := \int_{\R^3}F(x,\UU)\,dx$, and $I'(\UU)(\VV)=\int_{\rr}f(x,\UU)\cdot\VV\,dx$ for every $\UU,\VV\in L^\Phi$. Let $\UU,\VV\in L^\Phi$ and $t\ne0$ (we can assume $|t|\le1$). Then
\[\begin{split}
\frac{I(\UU)-I(\UU+t\VV)}{t} & =\frac1t\int_{\rr}F(x,\UU)-F(x,\UU+t\VV)\,dx\\
& =\int_{\rr}f(x,\UU+\theta t\VV)\cdot\VV\,dx
\end{split}\]
for some $\theta=\theta(t)\in[0,1]$. Since $f$ is a Charat\'eodory function, for a.e. $x\in\rr$
\[
\lim_{t\to0}f\bigl(x,\UU(x)+\theta t\VV(x)\bigr)\cdot\VV(x)=f\bigl(x,\UU(x)\bigr)\cdot\VV(x).
\]
Moreover, from (F3) and the monotonicity of $\Phi'$,
\[
|f(x,\UU+\theta t\VV)\cdot\VV| \le c_1\Phi'(|\UU+\theta t\VV|)|\VV| \le c_1\Phi'(|\UU|+|\VV|)|\VV| \in L^1(\rr)
\]
owing to Lemmas \ref{L:all} \textit{(ii)} and \ref{L:forC1}. In view of the dominated convergence theorem, $I'(\UU)(\VV)=\int_{\rr}f(x,\UU)\cdot\VV\,dx$. In addition, the same argument proves that, for a fixed $\UU\in L^\Phi$, the linear map
\[
\VV\in L^\Phi\mapsto I'(\UU)(\VV)\in\R
\]
is continuous. Now we prove that $I'\in \cC(L^\Phi,L^\Psi)$ (recall that $L^\Psi\simeq(L^\Phi)'$). Let $\UU_n\to\UU$ in $L^\Phi$. We want to prove that $\lim_n|f(\cdot,\UU_n)-f(\cdot,\UU)|_\Psi=0$, or equivalently
\[
\lim_n\left|\frac{f(\cdot,\UU_n)-f(\cdot,\UU)}{2c_1}\right|_\Psi=0,
\]
where $c_1$ is the same as (F3). In view of Lemma \ref{L:all} \textit{(iii)}, this is the same as proving that
\[
\lim_n\int_{\rr}\Psi\left(\left|\frac{f(\cdot,\UU_n)-f(\cdot,\UU)}{2c_1}\right|\right)\,dx=0.
\]
By the same classical argument used for Lebesgue spaces (see, e.g., \cite[Proof of Theorem 3.11]{Rudin}) together with Lemma \ref{L:all} \textit{(iv)}, $\UU_n\to\UU$ a.e. in $\rr$ up to a subsequence, hence $\Psi\bigl(|f(\cdot,\UU_n)-f(x,\UU)|/(2c_1)\bigr)\to0$ a.e. in $\rr$. With the intent to use again the dominated convergence theorem, from (F3), Lemma \ref{L:forC1}, and the convexity of $\Psi$ we have
\[\begin{split}
\Psi\left(\left|\frac{f(x,\UU_n)-f(x,\UU)}{2c_1}\right|\right) & \le \Psi\left(\frac{\Phi'(|\UU_n|)+\Phi'(|\UU|)}2\right)\\
& \le \frac{\Psi\bigl(\Phi'(|\UU_n|)\bigr)+\Psi\bigl(\Phi'(|\UU|)\bigr)}2\\
& \le \frac{\Phi(|\UU_n|)+\Phi(|\UU|)}2
\end{split}\]
and
\[
\Phi(|\UU_n|)\le\frac{\Phi(2|\UU_n-\UU|)}2+\frac{\Phi(2|\UU|)}2.
\]
Finally, since $\Phi(2|\UU_n-\UU|)\to0$ in $L^1(\rr)$, from \cite[Lemma A.1]{Willem} there exists $g\in L^1(\rr)$ such that $\Phi(2|\UU_n-\UU|)\le g$ for every $n$.
\end{proof}

We conclude this section with the variational formulation of \eqref{e-Curl} and showing the ``equivalence'' of the functionals $E$ and $J$.

\begin{Prop}\label{P:var}
If $\UU=v+w\in\cV\oplus\cW$, then the following are equivalent:
\begin{itemize}
	\item [(i)] $(v,w)$ is a critical point of $J$;
	\item [(ii)] $\UU$ is a critical point of $E$;
	\item [(iii)] $\UU$ is a (weak) solution to \eqref{e-Curl}.
\end{itemize}
\end{Prop}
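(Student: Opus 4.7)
My plan rests on the fact that the Helmholtz decomposition of Lemma \ref{L:Hel} is a topological direct sum, so the map $\iota\colon\cV\times\cW\to\cD(\textup{curl},\Phi)$ given by $\iota(v,w)=v+w$ is a linear homeomorphism. The strategy is to show that $E\circ\iota=J$, which will make the equivalence (i)$\Leftrightarrow$(ii) automatic, and then to recognize $E'(\UU)=0$ as the weak formulation of \eqref{e-Curl} by density of test functions.

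First I would show that $E$ is of class $\cC^1$ on $\cD(\textup{curl},\Phi)$ with
\[
E'(\UU)(\VV)=\int_{\rr}\nabla\times\UU\cdot\nabla\times\VV-f(x,\UU)\cdot\VV\,dx.
\]
The quadratic part is straightforward from the definition of $\|\cdot\|_{\textup{curl},\Phi}$, while the nonlinear part follows from exactly the same Nemytskii-operator argument used in Proposition \ref{P:classC1} applied in $L^\Phi$ (which is embedded in $\cD(\textup{curl},\Phi)$ by the very definition of the norm).

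Next, for any $\UU=v+w\in\cV\oplus\cW$, since $w$ lies in the closure of $\{\nabla\varphi:\varphi\in\cC_c^\infty(\rr)\}$ in $L^\Phi$ and $\nabla\times(\nabla\varphi)=0$, a limiting argument (as in the proof of Lemma \ref{L:Hel}) yields $\nabla\times w=0$ in the sense of distributions, so $\nabla\times\UU=\nabla\times v$ pointwise. Moreover, by polarizing the identity $|\nabla\times v|_2=|\nabla v|_2$ established in Lemma \ref{L:Hel} for the divergence-free part, one gets
\[
\int_{\rr}\nabla\times v\cdot\nabla\times v'\,dx=\int_{\rr}\nabla v\cdot\nabla v'\,dx\quad\text{for every }v,v'\in\cV.
\]
Combining these two observations, $E(v+w)=J(v,w)$, and with $\VV=v'+w'$,
\[
E'(v+w)(v'+w')=\int_{\rr}\nabla v\cdot\nabla v'-f(x,v+w)\cdot(v'+w')\,dx=J'(v,w)(v',w'),
\]
which matches the formula in Proposition \ref{P:classC1}. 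Since $\iota$ is a homeomorphism, $J'(v,w)=0$ on $\cV\times\cW$ iff $E'(v+w)=0$ on $\cV\oplus\cW=\cD(\textup{curl},\Phi)$; this gives (i)$\Leftrightarrow$(ii).

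Finally, for (ii)$\Leftrightarrow$(iii): by definition a weak solution of \eqref{e-Curl} satisfies
\[
\int_{\rr}\nabla\times\UU\cdot\nabla\times\varphi-f(x,\UU)\cdot\varphi\,dx=0\quad\text{for every }\varphi\in\cC_c^\infty(\rr,\rr),
\]
i.e.\ $E'(\UU)(\varphi)=0$ for such $\varphi$. Since $\cC_c^\infty(\rr,\rr)$ is dense in $\cD(\textup{curl},\Phi)$ by construction and $E'(\UU)$ is continuous, this is equivalent to $E'(\UU)=0$. The only step demanding genuine care is verifying the curl--gradient identity for general $v,v'\in\cV$ (and correspondingly $\nabla\times w=0$ for $w\in\cW$), which must be done by approximation since $w$ is only an $L^\Phi$-limit of smooth gradients and $v$ is only an $L^\Phi$-limit of smooth divergence-free fields; all other steps are essentially bookkeeping once the Helmholtz decomposition and $\cC^1$-regularity are in place.
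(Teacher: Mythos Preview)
Your proposal is correct and follows essentially the same approach as the paper: both exploit that $\nabla\times w=0$ for $w\in\cW$ and (implicitly in the paper, explicitly in your sketch) the polarized identity $\int\nabla\times v\cdot\nabla\times v'=\int\nabla v\cdot\nabla v'$ on $\cV$ to match $E'(\UU)(\UU')$ with $J'(v,w)(v',w')$, and then pass between critical points of $E$ and weak solutions via test functions. The only cosmetic difference is that the paper handles (ii)$\Leftrightarrow$(iii) by the single integration-by-parts identity $\int\nabla\times\UU\cdot\nabla\times\varphi=\int\UU\cdot\nabla\times\nabla\times\varphi$ rather than your density argument, but these are equivalent once $E'$ is continuous.
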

\begin{proof}
For the first equivalence, let $\UU'=v'+w'\in\cV\oplus\cW$. Then we have
\[
\int_{\R^3}f(x,v+w)\cdot(v'+w')\,dx=\int_{\R^3}f(x,\UU)\cdot\UU'\,dx
\]
and, since $\nabla\times w=\nabla\times w'=0$,
\[
\int_{\R^3}\nabla\times v\cdot\nabla\times v'\,dx=\int_{\R^3}\nabla\times\UU\cdot\nabla\times\UU'\,dx
\]
so that
\[\begin{split}
\int_{\R^3}\nabla\times v\cdot\nabla\times v'\,dx=\int_{\R^3}f(x,v+w)\cdot(v'+w')\,dx\\
\Leftrightarrow\int_{\R^3}\nabla\times\UU\cdot\nabla\times\UU'\,dx=\int_{\R^3}f(x,\UU)\cdot\UU'\,dx
\end{split}\]
and the conclusion follows from Lemma \ref{L:Hel}.
For the second equivalence we just need to observe that for every $\varphi\in\cC_c^\infty(\R^3,\R^3)$
\[
\int_{\R^3}\nabla\times\UU\cdot\nabla\times\varphi\,dx=\int_{\R^3}\UU\cdot\nabla\times\nabla\times\varphi\,dx.\qedhere
\]
\end{proof}

\section{An abstract critical point theory}\label{S:abstract}

We recall the abstract setting from \cite{BartschMederski1,BartschMederski2}.
Let $X$ be a reflexive Banach space with norm $\|\cdot\|$ and a topological direct sum decomposition $X=X^+\oplus X^-$, where $X^+$ is a Hilbert space with scalar product $(\cdot|\cdot)$. For $u\in X$ we denote by $u^+\in X^+$ and $u^-\in X^-$ the corresponding summands such that $u = u^++u^-$. We assume $\|u\|^2 = \|u^+\|^2+\|u^-\|^2 = (u^+|u^+) + \|u^-\|^2$ for every $u\in X$. The topology $\cT$ on $X$ is defined as the product of the norm topology in $X^+$ and the weak topology in $X^-$. Thus $u_n\overset{\cT}{\to}u$ is equivalent to $u_n^+\to u^+$ and $u_n^-\rightharpoonup u^-$.

Let $J\colon X\to\R$ be a functional of the form 
\begin{equation*}
J(u) = \frac12\|u^+\|^2-I(u)
\end{equation*}
for some $I\colon X\to\R$. The set
\begin{equation*}
\cM := \Set{u\in X|\, J'(u)|_{X^-}=0}=\Set{u\in X|\, I'(u)|_{X^-}=0}
\end{equation*}
obviously contains all the critical points of $J$. Suppose the following assumptions hold.
\begin{itemize}
	\item[(I1)] $I\in\cC^1(X)$ and $I(u)\ge I(0)=0$ for every $u\in X$.
	\item[(I2)] If
	$u_n\overset{\cT}{\to} u$, then $\liminf_n I(u_n)\ge I(u)$.
	\item[(I3)] If $u_n\overset{\cT}{\to}u$ and $I(u_n)\to I(u)$, then $u_n\to u$.
	\item[(I4)] $\|u^+\|+I(u)\to\infty$ as $\|u\|\to\infty$.
	\item[(I5)] If $u\in\cM$, then $I(u)<I(u+v)$ for every $v\in X^-\setminus\{0\}$.
\end{itemize}

Clearly, if a strictly convex functional $I$ satisfies (I4), then (I2) and (I5) hold. Observe that for every $u\in X^+$ we define $m(u)\in\cM$ as the unique global maximizer of $J|_{u\oplus X^-}$. Note that $m$ need not be $\cC^1$ and $\cM$ need not be a differentiable manifold because $I'$ is only required to be continuous. Recall from \cite{BartschMederski2} that 
$J$ satisfies the $(PS)_c^\cT$-condition on $\cM$ if and only if each $(PS)_c$-sequence $u_n\in\cM$ has a subsequence converging in the $\cT$-topology. In order to apply classical critical point theory to $J\circ m: X^+\to \R$ like the mountain pass geometry we need some additional assumptions.
\begin{itemize}
	\item[(I6)] There exists $r>0$ such that $a:=\inf\limits_{u\in X^+,\|u\|=r} J(u)>0$.
	\item[(I7)] $\displaystyle\frac{I(t_nu_n)}{t_n^2}\to\infty$ if $t_n\to\infty$ and $u_n^+\to u^+\ne 0$.
\end{itemize}

According to \cite[Theorem 4.4]{BartschMederski2}, if (I1)--(I7) hold and 
\[
c_\cM := \inf_{\gamma\in\Gamma}\max_{t\in [0,1]} J\bigl(\gamma(t)\bigr),
\]
where
\[
\Gamma := \Set{\gamma\in\cC([0,1],\cM)|\gamma(0)=0,\ \|\gamma(1)^+\|>r, \text{ and } J\bigl(\gamma(1)\bigr)<0},
\]
then $c_{\cM}\ge a>0$ and $J$ has a $(PS)_{c_\cM}$-sequence $u_n\in\cM$. If, in addition,
$J$ satisfies the $(PS)_{c_\cM}^\cT$-condition in $\cM$,  then $c_\cM$ is achieved by a critical point of $J$. Since we look for solutions to \eqref{e-Curl} in $\R^3$ and not in a bounded domain as in \cite{BartschMederski2}, the $(PS)_{c_\cM}^\cT$-condition is no longer satisfied. We consider the set
\begin{equation*}
\begin{split}
\cN & := \Set{u\in X\setminus X^-| J'(u)|_{\R u\oplus X^-}=0}\\
& =\Set{u\in\cM\setminus X^-| J'(u)(u)=0} \subset\cM,
\end{split}
\end{equation*}
which clearly contains all the nontrivial critical points of $J$, and require the following condition on $I$:
\begin{itemize}
	\item[(I8)] $\displaystyle\frac{t^2-1}{2}I'(u)(u)+I(u) - I(tu+v)\le0$ for every $u\in \cN$, $t\ge 0$, $v\in X^-$.
\end{itemize}
In \cite{BartschMederski1,BartschMederski2} it was additionally assumed that the \emph{strict inequality holds} if $u\neq tu+v$. This stronger variant of (I8) implies that for every $u^+\in X^+\setminus\{0\}$ the functional $J$ has a unique critical point $n(u^+)$ on the half-space $\R^+u^+ +X^-$. Moreover, $n(u^+)$ is a global maximizer of $J$ on the space $\R u^++X^-$, the map
$$n\colon\Set{u^+\in X^+|\|u^+\|=1} \to \cN$$
is a homeomorphism, the set $\cN$ is a topological manifold, and it is enough to look for critical points of $J\circ n$. This is the approach of \cite{SzWe,SzWeHandbook}. However, if the weaker condition (I8) holds, this procedure cannot be repeated. In particular, $\cN$ need not be a manifold. However, the following holds.

\begin{Lem}\label{L:max}
If $u\in\cN$, then $u$ is a (not necessarily unique) maximizer of $J$ on $\R^+u +X^-$.
\end{Lem}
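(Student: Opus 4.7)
The plan is to unwind the definitions and show that the desired inequality $J(u) \geq J(tu+v)$ for $t \geq 0$, $v \in X^-$ is essentially equivalent to condition (I8), once we use the fact that $u \in \cN$.

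First I would compute $J(tu+v)$ explicitly. Since $v \in X^-$, the positive part of $tu+v$ is $tu^+$, so
\[
J(tu+v) = \frac{1}{2}\|tu^+\|^2 - I(tu+v) = \frac{t^2}{2}\|u^+\|^2 - I(tu+v).
\]
Therefore
\[
J(u) - J(tu+v) = \frac{1-t^2}{2}\|u^+\|^2 + I(tu+v) - I(u).
\]

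Next I would exploit the Nehari-type identity encoded in $u \in \cN$. Since $J'(u)(u) = 0$ and $J'(u)(u) = (u^+|u^+) - I'(u)(u) = \|u^+\|^2 - I'(u)(u)$, we obtain $\|u^+\|^2 = I'(u)(u)$. Substituting,
\[
J(u) - J(tu+v) = \frac{1-t^2}{2}I'(u)(u) + I(tu+v) - I(u) = -\Bigl[\tfrac{t^2-1}{2}I'(u)(u) + I(u) - I(tu+v)\Bigr].
\]
By (I8) the bracketed expression is $\le 0$, hence $J(u) \ge J(tu+v)$ for every $t \ge 0$ and every $v \in X^-$, which is exactly what we had to prove.

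There is no serious obstacle here; the lemma is a direct algebraic consequence of (I8) combined with the Nehari identity coming from membership in $\cN$. The only thing worth flagging is the reason non-uniqueness of the maximizer is acknowledged in the statement: the strict inequality variant of (I8) used in \cite{BartschMederski1,BartschMederski2} would have given uniqueness, but under the weaker (I8) assumed here equality is possible for some $tu+v \ne u$, so one cannot upgrade the conclusion.
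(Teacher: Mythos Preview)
Your proof is correct and follows essentially the same approach as the paper: the paper writes the identity as $J(tu+v)=J(tu+v)-J'(u)\bigl(\tfrac{t^2-1}{2}u+tv\bigr)\le J(u)$, using that $J'(u)$ vanishes on $\R u\oplus X^-$ for $u\in\cN$, which after expansion is exactly your computation via $\|u^+\|^2=I'(u)(u)$ combined with (I8).
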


\begin{proof}
	Let $u\in\cN$. In view of (I8), we get by explicit computations
	\begin{equation*}
	J(tu+v)=J(tu+v)-J'(u)\left(\frac{t^2-1}{2}u+tv\right)\le J(u)
	\end{equation*}
	for every $t\geq 0$ and every $v\in X^-$.
\end{proof}

Let 
\[
\cJ := J\circ m\colon X^+\to\R. 
\]
Before proving the main results of this section, we recall the following properties from \cite[Proof of Theorem 4.4]{BartschMederski2}. Note that (I8) was not used there.
\begin{itemize}
	\item[(i)] For every $u^+\in X^+$ there exists a unique $u^-\in X^-$ such that $m(u^+):=u^++u^-\in\cM$. This $m(u^+)$ is the minimizer of $I$ on $u^++X^-$.
	\item[(ii)] $m:X^+\to \cM$  is a homeomorphism, its inverse being $u\in\cM\mapsto u^+\in X^+$.
	\item[(iii)] $\cJ=J\circ m \in\cC^1(X^+,\R)$.
	\item[(iv)]$\cJ'(u^+) = J'\bigl(m(u^+)\bigr)|_{X^+}\colon X^+\to\R$ for every $u^+\in X^+$.
\end{itemize}
Property (i) was already discussed earlier. We will also need the following property. 

\begin{Lem}\label{L:infty}
Let $X_k$ be a $k$-dimensional subspace of $X^+$. Then $\cJ(u)\to-\infty$ whenever $\|u\|\to\infty$ and $u\in X_k$.
\end{Lem}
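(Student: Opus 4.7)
The plan is to reduce the statement directly to hypothesis (I7), which is tailor-made for situations of this kind. Given a sequence $u_n \in X_k$ with $\|u_n\| \to \infty$, I would set $t_n := \|u_n\|$ and $w_n := u_n/t_n$, so that $\|w_n\|=1$. The crucial use of the finite-dimensionality of $X_k$ enters at this step: since the closed unit sphere of $X_k$ is compact, I can pass to a subsequence along which $w_n \to w$ strongly, with $\|w\|=1$; in particular $w\ne 0$. Note that $u_n \in X_k \subset X^+$, so $u_n^+ = u_n$ and $w_n = w_n^+$.

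Next, I would introduce the renormalized full-space sequence $v_n := m(u_n)/t_n \in X$. By property (i) of $m$ recalled above, $m(u_n) = u_n + m(u_n)^-$ with $m(u_n)^- \in X^-$, so the $X^+$-component of $v_n$ is exactly $w_n$, which converges to the nonzero limit $w$. Hypothesis (I7) then applies to the pair $(v_n, t_n)$ and yields
\[
\frac{I(t_n v_n)}{t_n^2} \;=\; \frac{I\bigl(m(u_n)\bigr)}{t_n^2} \;\longrightarrow\; \infty.
\]

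The conclusion is now immediate from the identity
\[
\cJ(u_n) \;=\; J\bigl(m(u_n)\bigr) \;=\; \frac{1}{2}\|u_n\|^2 - I\bigl(m(u_n)\bigr) \;=\; t_n^2\left(\frac{1}{2} - \frac{I\bigl(m(u_n)\bigr)}{t_n^2}\right),
\]
whose right-hand side tends to $-\infty$. Since every subsequence of $(u_n)$ admits a sub-subsequence for which this argument goes through, the full sequence $\cJ(u_n)$ converges to $-\infty$. I do not foresee any real obstacle: the argument is essentially a one-line consequence of (I7), and the only substantive point is the \emph{strong} convergence of $w_n$ to a nonzero limit. This is precisely where the finite dimension of $X_k$ matters — without it one could only extract weak convergence in $X^+$, possibly to zero, and (I7) would then fail to apply.
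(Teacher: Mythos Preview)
Your proposal is correct and follows essentially the same approach as the paper's own proof: normalize $u_n = t_n w_n$ with $\|w_n\|=1$, use the compactness of the unit sphere in the finite-dimensional $X_k$ to extract a strongly convergent subsequence with nonzero limit, and apply (I7) to $m(u_n)/t_n$ to obtain $\cJ(u_n)/t_n^2 \to -\infty$. The paper's argument is identical in substance, only more tersely written; your version makes the role of finite-dimensionality and the subsequence-of-subsequence reasoning more explicit.
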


\begin{proof}
	It suffices to show that each sequence $u_n^+\in X_k$ such that $\|u_n^+\|\to\infty$ contains a subsequence along which $\cJ$ tends to $-\infty$. Let $u_n^+ = t_nv_n$ with $\|v_n\|=1$ and set $m(u_n^+)=u_n^++u_n^-\in\cM$. Then, passing to a subsequence and using (I7), we obtain
	\[
	\frac{\cJ(t_nv_n)}{t_n^2} = \frac12 - \frac{I\bigl(t_n(v_n+u_n^-/t_n)\bigr)}{t_n^2} \to - \infty.\qedhere
	\]
\end{proof}

In view of (I4), it is clear that if $u_n$ is a bounded Cerami sequence for $\cJ$, then $m(u_n)\in\cM$ is a bounded Cerami sequence for $J$. We introduce the set $\cN_0 := \Set{u\in X^+\setminus\{0\}|\cJ'(u)(u)=0}$, i.e., the Nehari manifold for $\cJ$. Denote $c_{\cN_0} := \inf_{\cN_0}\cJ$.

\begin{Th}\label{T:Link1}
	Suppose $J \in \cC^1(X)$ satisfies (I1)--(I8). Then:
	\begin{itemize}
		\item[(a)] $c_{\cM}\ge a>0$ and $\cJ$ has a Cerami sequence $u_n\in X^+$ at the level $c_\cM$. 
		\item[(b)] $c_{\cM}=c_{\cN}:= \inf_\cN J$.
	\end{itemize}
\end{Th}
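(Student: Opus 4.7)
\textit{Part (a).} My idea is to reduce the statement to a mountain-pass theorem for the functional $\cJ=J\circ m$ on $X^+$, using the homeomorphism $m\colon X^+\to\cM$ recorded in properties (i)--(iv). Since $J\bigl(m(u^+)\bigr)=\cJ(u^+)$, the assignment $\gamma\mapsto m^{-1}\circ\gamma$ bijects $\Gamma$ with
\[
\widetilde\Gamma:=\Set{\widetilde\gamma\in\cC([0,1],X^+)|\widetilde\gamma(0)=0,\ \|\widetilde\gamma(1)\|>r,\ \cJ\bigl(\widetilde\gamma(1)\bigr)<0},
\]
whence $c_\cM=\widetilde c:=\inf_{\widetilde\gamma\in\widetilde\Gamma}\max_{t\in[0,1]}\cJ\bigl(\widetilde\gamma(t)\bigr)$. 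I would then check that $\cJ$ has mountain-pass geometry: $\cJ(0)=0$ is immediate; for $\|u^+\|=r$, since $m(u^+)$ is a global maximizer of $J$ on $u^++X^-$ and $0\in X^-$, one has $\cJ(u^+)\ge J(u^+)\ge a$ by (I6); and for any $v\in X^+\setminus\{0\}$, Lemma \ref{L:infty} applied to $X_1:=\R v$ yields $\cJ(Tv)\to-\infty$ as $T\to\infty$, producing the ``mountain end'' $e\in X^+$. The Cerami version of the mountain-pass theorem (see, e.g., \cite{bbf,Cerami}) then delivers a Cerami sequence $u_n\in X^+$ for $\cJ$ at the level $\widetilde c=c_\cM\ge a>0$.

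\textit{Part (b), the easy direction $c_\cM\le c_\cN$.} Fix $u\in\cN$ and define $\gamma(t):=m(tTu^+)$, $t\in[0,1]$, with $T>0$ so large that $T\|u^+\|>r$ and $\cJ(Tu^+)<0$ (possible by Lemma \ref{L:infty}). Then $\gamma\in\Gamma$, and
\[
\max_{t\in[0,1]}J\bigl(\gamma(t)\bigr)=\max_{s\in[0,T]}\cJ(su^+)=\max_{s\in[0,T]}\max_{w\in X^-}J(su^++w).
\]
Since $su+X^-=su^++X^-$ as affine subsets of $X$, Lemma \ref{L:max} applied to $u\in\cN$ bounds each inner maximum by $J(u)$ for every $s\ge 0$. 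Hence $c_\cM\le\max_t J\bigl(\gamma(t)\bigr)\le J(u)$, and the infimum over $\cN$ yields $c_\cM\le c_\cN$.

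\textit{Part (b), the reverse inequality and the main obstacle.} For $c_\cN\le c_\cM$ my plan is to show that every $\gamma\in\Gamma$ meets $\cN$, so that $\max_t J\bigl(\gamma(t)\bigr)\ge c_\cN$. Setting $\cN_0:=\Set{u^+\in X^+\setminus\{0\}|\cJ'(u^+)(u^+)=0}$, the identities $\cJ'(u^+)=J'\bigl(m(u^+)\bigr)|_{X^+}$ and $J'(u)|_{X^-}=0$ for $u\in\cM$ establish an energy-preserving bijection $u\leftrightarrow u^+$ between $\cN$ and $\cN_0$, reducing the task to showing that $\bar\gamma:=(\cdot)^+\circ\gamma$ meets $\cN_0$ in $X^+$. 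For each $u^+\in X^+\setminus\{0\}$, (I6) and Lemma \ref{L:infty} force $s\mapsto\cJ(su^+)$ to attain a positive maximum at some $s^\ast>0$ with $s^\ast u^+\in\cN_0$, and quick estimates using $I\ge 0$ and $\cJ(u^+)\ge J(u^+)$ give $\cJ\ge a$ on $\cN_0$ together with $\|u^+\|\ge\sqrt{2a}$ for every $u^+\in\cN_0$. Combining $\bar\gamma(0)=0$, $\|\bar\gamma(1)\|>r$, and $\cJ\bigl(\bar\gamma(1)\bigr)<0$ with these bounds, I would run an intermediate-value/intersection argument on the continuous indicator $t\mapsto\cJ'\bigl(\bar\gamma(t)\bigr)\bigl(\bar\gamma(t)\bigr)$ to extract $t^\ast\in(0,1)$ with $\bar\gamma(t^\ast)\in\cN_0$. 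The hardest point is that (I8) is stated without strict inequality, so the maximizer $s^\ast(u^+)$ on each ray need not be unique and the radial retraction onto $\cN_0$ may fail to be continuous; my plan to circumvent this is a purely topological separation of $X^+$ into the ``inside'' and ``outside'' of $\cN_0$ based on the sign of that indicator together with the uniform bound $\cN_0\subset X^+\setminus\overline{B}_{\sqrt{2a}}$.
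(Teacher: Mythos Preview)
Your Part (a) and the inequality $c_\cM\le c_\cN$ in Part (b) are correct and match the paper's approach; the path $\gamma(t)=m(tTu^+)$ combined with Lemma \ref{L:max} is exactly the right move.

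The gap is in the reverse inequality $c_\cN\le c_\cM$. You correctly identify the obstacle (non-uniqueness of ray maximizers under the weak (I8)), but your proposed workaround---separating $X^+$ by the sign of $\phi(u^+):=\cJ'(u^+)(u^+)$---does not close on its own. The problem is at the endpoint: from $\cJ\bigl(\bar\gamma(1)\bigr)<0$ you cannot conclude $\phi\bigl(\bar\gamma(1)\bigr)<0$ without further structure, so the intermediate-value argument on $\phi\circ\bar\gamma$ has no guaranteed sign change. Likewise, the ``inside/outside'' separation by sign of $\phi$ presupposes that $\phi$ is of constant sign on each component of $X^+\setminus\cN_0$, which is precisely what needs proof.

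The paper supplies the missing piece via Lemma \ref{L:max}: if $t_1u^+,t_2u^+\in\cN_0$, then $m(t_1u^+),m(t_2u^+)\in\cN$ and Lemma \ref{L:max} forces $\cJ(t_1u^+)=\cJ(t_2u^+)$ (both equal the ray maximum). Hence $\cN_0$ meets each ray $\{tu^+:t>0\}$ in a \emph{closed interval} $[t_{\min},t_{\max}]$, with $\cJ'(tu^+)(u^+)>0$ on $(0,t_{\min})$ and $<0$ on $(t_{\max},\infty)$. This immediately gives that $X^+\setminus\cN_0$ has exactly two components; $\bar\gamma(1)$ lies in the outer one because along its ray $\cJ$ is nonnegative on $[0,t_{\max}]$ while $\cJ\bigl(\bar\gamma(1)\bigr)<0$. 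So every path in $\widetilde\Gamma$ crosses $\cN_0$, giving $c_\cM\ge c_{\cN_0}=c_\cN$. In short, you already have Lemma \ref{L:max} at hand---use it on pairs of Nehari points on the same ray to get the interval structure, and the connectedness argument becomes rigorous.
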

\begin{proof}
	\textit{(a)} Set
	\begin{equation}\label{e-Gamma_Phi}
	\Sigma := \Set{\sigma\in\cC([0,1],X^+)|\sigma(0)=0, \|\sigma(1)\|>r,
	\cJ(\sigma(1))<0}.
	\end{equation}
	Observe that $\cJ$ has the mountain pass geometry and $\Gamma$ and $\Sigma$ are related as follows: if $\gamma\in\Gamma$, then $\gamma^+\in\Sigma$ and $J\bigl(\gamma(t)\bigr)=\cJ\bigl(\gamma^+(t)\bigr)$; if $\sigma\in\Sigma$, then $m\circ\sigma\in\Gamma$ and $\cJ\bigl(\sigma(t)\bigr) = J\bigl(m\bigl(\sigma(t)\bigr)\bigr)$. Hence the mountain pass value for $\cJ$ is given by
	\begin{equation}\label{e-cm}
	c_{\cM}=\inf_{\sigma\in\Sigma}\max_{t\in [0,1]}J\bigl(m\bigl(\sigma(t)\bigr)\bigr)=\inf_{\sigma\in\Sigma}\max_{t\in [0,1]}\cJ\bigl(\sigma(t)\bigr) \ge a > 0.
	\end{equation} 
	The existence of a Cerami sequence $u_n\in X^+$ for $\cJ$ at the level $c_\cM$ then follows.
			
	The map $u\mapsto m(u)$ is a homeomorphism between $\cN_0$ and $\cN$ and, since $\cJ(u)=J\bigl(m(u)\bigr)$, $c_{\cN_0}=c_{\cN}$. For $u\in X^+\setminus\{0\}$, consider $\cJ(tu)$, $t>0$. From Lemma \ref{L:infty}, $\cJ(tu)\to-\infty$ as $t\to\infty$. Hence $\max_{t > 0}\cJ(tu)\geq a$ exists. If $t_1u,t_2u\in \cN_0$, then $m(t_1u),m(t_2u)\in\cN$, so from Lemma \ref{L:max}, $\cJ(t_1u) =\cJ(t_2u)$. Consequently, there exist $0 < t_\textup{min} \le t_\textup{max}$ such that $\cJ(tu)\in\cN_0$ if and only if $t\in[t_\textup{min},t_\textup{max}]$ and $\cJ(tu)$ has the same value for those $t$. Hence $\cJ'(tu)(u)>0$ for $0<t<t_\textup{min}$ and $\cJ'(tu)(u)<0$ for $t>t_\textup{max}$.  It follows that $X^+\setminus \cN_0$ consists of two connected components, hence each path in $\Sigma$ must intersect $\cN_0$. Therefore $c_{\cM}\ge c_{\cN_0}$. Since $c_{\cN_0} = \inf_{u\in X^+\setminus\{0\}}\max_{t>0}\cJ(tu)$, \eqref{e-cm} implies $c_{\cM} = c_{\cN_0} = c_{\cN}$. Note in particular that $\cJ\ge 0$ on $B_r$, where $r$ is given in (I6), so the condition $\|\sigma(1)\|>r$ in the definition of $\Sigma$ is redundant because it must necessarily hold if $\cJ\bigl(\sigma(1)\bigr)<0$.
\end{proof}

Since $c_{\cN_0}=c_{\cN}=c_{\cM}>0$, $\cN_0$ is bounded away from 0 and hence closed in $X^+$, while $\cN$ is bounded away from $X^-$ and hence closed in $X$.

For a topological group acting on $X$, denote the \emph{orbit} of $u\in X$ by $G\ast u$, i.e., 
$$G\ast u:=\Set{gu|g\in G}.$$
A set $A\subset X$ is called $G$\emph{-invariant} if and only if $gA\subset A$ for all $g\in G$. $J\colon X\to\R$ is called \emph{$G$-invariant} and $T\colon X\to X'$ (or $T\colon X\to X$) $G$\emph{-equivariant} if and only if $J(gu)=J(u)$ and $T(gu)=gT(u)$ for all $g\in G$, $u\in X$.
 
In order to deal with  multiplicity of critical points, assume that $G$ is a topological group such that
\begin{itemize}
	\item[(G)] $G$ acts on $X$ by isometries and $(G*u)\setminus\{u\}$ is bounded away from $u$ for every $u\ne 0$. Moreover, $J$ is $G$-invariant and $X^+,X^-$ are $G$-invariant.
\end{itemize}
Observe that $\cM$ is $G$-invariant and $m\colon X^+\to\cM$ is $G$-equivariant if (G) holds. In our application to \eqref{e-Curl} we have $G=\mathbb{Z}^3$ acting by translations.

\begin{Lem} \label{L:discrete}
	For all $u,v\in X$ there exists $d=d_{u,v}>0$ such that $\|gu-hv\|>d$ for every $g,h\in G$ satisfying $gu\ne hv$. Moreover, $d_{u,v}$ only depends on the orbits of $u$ and $v$.
\end{Lem}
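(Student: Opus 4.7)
My plan is to derive the lemma from (G) by exploiting that $G$ acts by isometries, which reduces pair-distance questions to single-orbit questions. First I would dispose of degenerate cases: if $u = v = 0$, the condition $gu \ne hv$ is vacuous (note that $0$ is fixed by $G$ since $X^\pm$ are $G$-invariant, which forces the isometric action to be linear); if exactly one of $u,v$ vanishes, say $u = 0$, then $\|gu - hv\| = \|hv\| = \|v\| > 0$, so $d := \|v\|$ does the job.

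For the principal case $u, v \ne 0$, I would argue by contradiction. Suppose there exist sequences $g_n, h_n \in G$ with $g_n u \ne h_n v$ and $\|g_n u - h_n v\| \to 0$. Applying the isometry $g_n^{-1}$ and setting $k_n := g_n^{-1} h_n$, this rewrites as $k_n v \to u$ with $k_n v \ne u$ for every $n$. If the set $\{k_n v : n \ge 1\}$ were finite, some value $w$ would be attained infinitely often, and passing to the limit would force $w = u$, contradicting $k_n v \ne u$. Hence this set is infinite, and after passing to a subsequence I may assume the $k_n v$ are pairwise distinct.

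The key step is then the following: by the isometric action, $\|k_n v - k_m v\| = \|v - k_n^{-1} k_m v\|$, and $k_n v \ne k_m v$ forces $k_n^{-1} k_m v \ne v$; so (G) applied to $v \ne 0$ yields a uniform $\delta = \delta_v > 0$ with $\|k_n v - k_m v\| \ge \delta$ for every $n \ne m$. This contradicts the Cauchy property of the convergent sequence $k_n v \to u$. The main (and essentially only) obstacle is this transfer of discreteness from a single orbit to pairs of orbits, and it is handled cleanly once one rewrites all norms in terms of distances to the base points $u, v$ via the isometry.

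For the final orbit-dependence claim, I would observe that if $u' = g_0 u$ and $v' = h_0 v$ lie in the orbits of $u$ and $v$ respectively, then $g \mapsto g g_0^{-1}$ and $h \mapsto h h_0^{-1}$ are bijections of $G$, so $\{(gu', hv') : g, h \in G\} = \{(gu, hv) : g, h \in G\}$ and the same constant $d$ serves for both pairs. This yields $d_{u,v} = d_{u', v'}$ whenever $u,u'$ and $v,v'$ are in the same respective orbits.
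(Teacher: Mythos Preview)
Your argument is correct. The paper proceeds differently: after disposing of the case where both points vanish, it splits into two cases according to whether $v$ lies in the orbit $G*u$. If it does, the claim reduces immediately to (G); if not, the paper replaces $v$ by an orbit representative minimizing the distance to $u$ and takes $d := \tfrac12\|u-v\|$, so that $\|gu-hv\|=\|u-g^{-1}hv\|\ge\|u-v\|>d$ by minimality. Your route avoids this case distinction and the (tacit) step of arguing that such a minimizer exists: instead you pull the problem back via $g_n^{-1}$ to a single orbit $G*v$ and exploit directly that (G) makes every orbit uniformly separated, so no nonstationary sequence in $G*v$ can be Cauchy. Both approaches ultimately rest on the same input---the isometry of the action and the discreteness from (G)---but yours is slightly more economical, while the paper's yields an explicit constant $d$ in the different-orbit case. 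Your orbit-independence argument matches the paper's (up to the harmless direction of the bijection $g\mapsto gg_0$ rather than $g\mapsto gg_0^{-1}$).
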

\begin{proof}
	Since $gu\ne hv$, $u$ and $v$ are not both $0$ and we can assume $u\ne0$. If $v\in G*u$, then $g^{-1}hv\in G*u$ and the claim follows from (G). If $v\not\in G*u$, then we can assume that $v$ minimizes the distance from $u$ to $G*v$, thus it suffices to take $d := \frac12\|u-v\|$. As for the last part, let $\bar{u}=e_1u$ and $\bar{v}=e_2v$ for some $e_1,e_2\in G$. If $g,h\in G$ are such that $g\bar{u}\ne h\bar{v}$, then setting $\bar{g}=ge_1$ and $\bar{h}=he_2$ we have $\bar{g}u\ne\bar{h}v$ and $\|g\bar{u}-h\bar{v}\|=\|\bar{g}u-\bar{h}v\|>d_{u,v}$.
\end{proof}

We will use the notations
\begin{gather*}
\cJ^\beta := \Set{u\in X^+|\cJ(u)\le\beta}, \quad  \cJ_\alpha := \Set{u\in X^+|\cJ(u)\ge\alpha}, \\
\cJ_\alpha^\beta := \cJ_\alpha\cap\cJ^\beta, \quad \cK:=\Set{u\in X^+|\cJ'(u)=0}.
\end{gather*}
Since all the nontrivial critical points of $J$ are in $\cN$, it follows from Theorem \ref{T:Link1} that $\cJ(u)\ge a$ for all $u\in\cK\setminus\{0\}$.

We introduce the following variant of the Cerami condition between the levels $\alpha, \beta\in\R$, $\alpha\le\beta$.
\begin{itemize}
	\item[$(M)_\alpha^\beta$]
	\begin{itemize}
		\item[(a)] There exists $M_\alpha^\beta>0$ such that $\limsup_n\|u_n\|\le M_\alpha^{\beta}$ for every $u_n\in X^+$ satisfying $(1+\|u_n\|)\cJ'(u_n)\to 0$ and $$\alpha\le\liminf_n\cJ(u_n)\le\limsup_n\cJ(u_n)\leq\beta.$$
		\item[(b)] Suppose, in addition, that the number of critical orbits\footnote{A critical orbit is the orbit of a critical point.} in $\cJ_\alpha^\beta$ is finite. Then there exists $m_\alpha^\beta>0$ such that if $u_n,v_n$ are two sequences as above and there exists $n_0\ge1$ such that $\|u_n-v_n\|<m_\alpha^\beta$ for every $n\ge n_0$, then  $\liminf_n\|u_n-v_n\|=0$.
	\end{itemize}
\end{itemize}

Note that if $J$ is even, then $m$ is odd (hence $\cJ$ is even) and $\cM$ is symmetric,  i.e., $\cM=-\cM$. Note also that $(M)_\alpha^\beta$ is a condition on $\cJ$ and \emph{not} on $J$. Our main multiplicity result reads as follows.

\begin{Th}\label{T:CrticMulti}
	Suppose $J \in \cC^1(X)$ satisfies (I1)--(I8) and $\dim(X^+)=\infty$.
	\begin{itemize}
		\item [(a)] If  $(M)_0^{c_\cM+\varepsilon}$ holds for some $\varepsilon>0$, then either $c_{\cM}$ is attained by a critical point or there exists a sequence of critical values $c_n$ such that $c_n>c_{\cM}$ and $c_n\to c_{\cM}$ as $n\to\infty$.
		\item [(b)] If $(M)_0^{\beta}$ holds for every $\beta>0$ and $J$ is even, 
		then $J$ has infinitely many distinct critical orbits.
	\end{itemize}
\end{Th}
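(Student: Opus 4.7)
The plan is to reduce both parts to critical point theory for the $\cC^1$ functional $\cJ=J\circ m$ on the Hilbert space $X^+$: by property (iv) and the $G$-equivariant homeomorphism $m$, critical points (and orbits) of $\cJ$ correspond bijectively to those of $J$, and $\cJ$ inherits evenness from $J$ in part (b).

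For part (a), Theorem \ref{T:Link1}(a) already supplies a Cerami sequence $u_n\in X^+$ for $\cJ$ at level $c_\cM$, which is bounded by $(M)_0^{c_\cM+\varepsilon}$(a). I would argue by contradiction: if neither alternative holds, there exists $\delta\in(0,\varepsilon)$ such that $\cJ$ has no critical value in $[c_\cM,c_\cM+\delta]$; in particular the set of critical orbits in that closed strip is empty, so $(M)_0^{c_\cM+\varepsilon}$(b) applies vacuously. Parts (a) and (b) of the condition together yield enough compactness to construct, via a standard $G$-equivariant locally Lipschitz pseudo-gradient flow on $X^+$, a continuous deformation $\eta\colon X^+\to X^+$ and $\eta'\in(0,\delta/2)$ with $\eta(\cJ^{c_\cM+\eta'})\subset\cJ^{c_\cM-\eta'}$. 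Composing $\eta$ with any $\sigma\in\Sigma$ from \eqref{e-Gamma_Phi} would produce a path in $\Sigma$ whose maximum value is strictly below $c_\cM$, contradicting \eqref{e-cm}.

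For part (b), argue by contradiction, assuming $\cJ$ has only finitely many critical orbits, whence only finitely many critical values $v_1<\dots<v_q$; this makes $(M)_0^\beta$, including part (b), available for every $\beta>0$. Define
\[
c_k:=\inf_{A\in\cA_k}\sup_{u\in A}\cJ(u), \qquad \cA_k:=\Set{A\subset X^+|A=-A,\ A\text{ compact},\ \gamma(A)\ge k,\ A\cap S_r\ne\emptyset}
\]
with $S_r$ from (I6). Since $\cJ(u)\ge J(u)\ge a$ on $S_r$ ($m(u)$ being a maximiser of $J$ on $u\oplus X^-$), one has $c_k\ge a>0$; and since $X_k\cap S_r\in\cA_k$ for any $k$-dimensional $X_k\subset X^+$, also $c_k\le\sup_{S_r}\cJ<\infty$. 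A standard even $G$-equivariant deformation argument (as in part (a)) forces each $c_k$ to be a critical value of $\cJ$; hence $\{c_k\}\subset\{v_1,\dots,v_q\}$, and some value $\bar c$ satisfies $c_k=\bar c$ for infinitely many $k$. The classical genus ``pinch'' then yields $\gamma(\cK\cap\cJ^{-1}(\bar c))=\infty$. On the other hand, $\cK\cap\cJ^{-1}(\bar c)$ is a finite union of $G$-orbits which, by Lemma \ref{L:discrete}, are discrete, so pairing each with its antipode gives genus $1$ per pair and $\gamma(\cK\cap\cJ^{-1}(\bar c))<\infty$, a contradiction.

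The main obstacle is the $G$-equivariant pseudo-gradient deformation underlying both parts: since $\cJ$ is merely $\cC^1$ and $m$ only continuous, the flow must be constructed directly on $X^+$ while simultaneously respecting the $G$-action, isolating the finitely many critical orbits inside disjoint $G$-invariant neighbourhoods (using the discreteness from Lemma \ref{L:discrete}), and exploiting the quantitative ``separation'' of approximate critical sequences encoded in $(M)_\alpha^\beta$(b). Weaving these ingredients together while keeping the deformation continuous and genus-preserving is the technical heart of both (a) and (b).
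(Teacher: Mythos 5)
Your part (a) follows the paper's route: assume no critical values in $[c_\cM,c_\cM+\varepsilon_0]$, so $0$ is the only critical point in the sublevel set, use the flow analysis (Lemma \ref{L:flow}) to push down below $c_\cM$, and contradict the minimax characterization \eqref{e-cm}. One imprecision: you claim a single deformation with $\eta(\cJ^{c_\cM+\eta'})\subset\cJ^{c_\cM-\eta'}$, but the sublevel set is not compact and the time needed to descend is not uniformly bounded on it; the paper instead introduces the entrance-time map $e(u)$ into the level $c_\cM/2$, proves its continuity, and reparametrizes only along the (compact image of the) path, setting $\tilde\sigma(t)=\eta\bigl(e(\sigma(t)),\sigma(t)\bigr)$. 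This is a repairable slip, not a conceptual one.

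Part (b) has a genuine gap. You take the plain-genus minimax classes $\cA_k=\{A \text{ symmetric compact},\ \gamma(A)\ge k,\ A\cap S_r\ne\emptyset\}$. The constraint $A\cap S_r\ne\emptyset$ is indispensable for $c_k\ge a$ (without it, spheres of $k$-dimensional subspaces pushed far out where $\cJ<0$ give $c_k<0$), but it is \emph{not} preserved by the descending flow: the odd deformation $h=\widetilde\eta(T,\cdot)$ may carry $A$ entirely off $S_r$, so $h(A)\notin\cA_k$ and the "standard deformation argument" showing each $c_k$ is critical breaks down; the same non-invariance undermines the genus-pinching step $c_k=c_{k+1}\Rightarrow\gamma\bigl(\cK\cap\cJ^{-1}(\bar c)\bigr)\ge2$. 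This is exactly what the paper's pseudoindex $i^*(A)=\min_{h\in\cH}\gamma\bigl(h(A)\cap S_r\bigr)$ is designed to fix: by minimizing over all admissible odd, $\cJ$-decreasing homeomorphisms, $i^*$ becomes monotone under further deformations (Lemma \ref{L:index} \textit{(iii)}), still detects linking with $S_r$ (so $\beta_k\ge a$), and is bounded below on $X_k\cap\overline{B}_R$ via Borsuk--Ulam (Lemma \ref{L:index} \textit{(iv)}). The counting inequality \eqref{e-LSvaluse}, $k\le\gamma\bigl(\overline B(\cK^\beta,\delta)\cap A\bigr)+i^*\bigl(A\setminus B(\cK^\beta,\delta)\bigr)\le\gamma(\cK^\beta)+k-1$, then simultaneously yields $\cK^{\beta_k}\ne\emptyset$ and $\beta_k<\beta_{k+1}$, giving infinitely many critical values directly rather than via your "infinitely many $k$ share one value" pinch. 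A second unresolved point, which you flag but do not address, is the construction of the admissible deformation itself: one must prove the entrance-time estimate \eqref{e-entrancetime1} (that trajectories starting in $\cJ^{\beta+\varepsilon}_{\beta-\varepsilon}\setminus B(\cK^\beta,\delta)$ descend below $\beta-\varepsilon$), which requires the quantitative separation in $(M)_0^\beta$(b) together with Lemma \ref{L:discrete}, and then truncate the flow with cutoffs near $\cK^\beta$ to get a globally defined odd $h\in\cH$. Without the pseudoindex and this deformation lemma, your outline does not close.
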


By a standard argument (cf. \cite[Lemma II.3.2]{Struwe}, \cite[Lemma 2.2]{Willem}) we can find
a locally Lipschitz continuous pseudo-gradient vector field $V\colon X^+\setminus \cK\to X^+$ associated with $\cJ$, i.e.,
\begin{equation*}\begin{split}
\|V(u)\|&<1\\
\cJ'(u)\bigl(V(u)\bigr) &> \frac12\|\cJ'(u)\|
\end{split}\end{equation*}
for every $u\in X^+\setminus \cK$. Moreover, if $J$ is even, then we can assume $V$ is odd.  
Let
$\eta\colon\cG\to  X^+\setminus \cK$ be the flow defined by
\begin{equation*}
\left\{
\begin{aligned}
&\partial_t \eta(t,u)=-V(\eta(t,u))\\
&\eta(0,u)=u
\end{aligned}
\right.
\end{equation*}
where $\cG:=\Set{(t,u)\in [0,\infty[\times (X^+\setminus \cK)|t<T(u)}$ and $T(u)$ is the maximal time of existence of $\eta(\cdot,u)$. Recall that $\cJ$ is decreasing along the trajectories of $\eta$, i.e.
\[
u\in X^+ \text{ and } 0\le s<t<T(u) \Rightarrow \cJ\bigl(\eta(s,u)\bigr)>\cJ\bigl(\eta(t,u)\bigr).
\]
We prove Theorem \ref{T:CrticMulti} by contradiction. From now on we assume:  
\[
\text{There is a finite number of distinct critical orbits $\Set{G\ast u|u\in\cK}$.}
\]

\begin{Lem}\label{L:flow}
	Suppose $(M)_0^\beta$ holds for some $\beta>0$ and let $u \in \cJ_0^\beta\setminus \cK$. Then either $\lim_{t \to T(u)^-}\eta(t,u)$ exists and belongs to $\cK$ or $\lim_{t\to T(u)^-}\cJ(\eta(t,u)) = -\infty$. In the latter case, $T(u)=\infty$.
\end{Lem}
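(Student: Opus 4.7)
The plan is to track the strictly decreasing function $\phi(t) := \cJ(\eta(t,u))$ along the flow and split into cases according to whether $T(u)$ is finite and whether the limit $\ell := \lim_{t \to T(u)^-} \phi(t)$ is finite. From the pseudo-gradient inequality, $\phi'(t) = -\cJ'(\eta(t,u))\bigl(V(\eta(t,u))\bigr) < -\tfrac12 \|\cJ'(\eta(t,u))\|$ on $X^+\setminus\cK$, so $\phi$ is strictly decreasing, $\ell \in [-\infty, \cJ(u)]$ exists, and integrating gives $\tfrac12 \int_0^{T(u)}\|\cJ'(\eta(t,u))\|\,dt \le \cJ(u) - \ell$.

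First I would dispose of the case $T(u) < \infty$. Since $\|V\| < 1$, the orbit $\eta(\cdot, u)$ is $1$-Lipschitz on $[0, T(u))$, hence extends continuously to $[0, T(u)]$ with a limit $\bar u$. Maximality of $T(u)$ forces $\bar u \in \cK$ (otherwise the Cauchy--Lipschitz theorem would permit the solution to continue past $T(u)$), giving the first alternative and $\ell = \cJ(\bar u) > -\infty$. If $T(u) = \infty$ and $\ell = -\infty$, the second alternative holds trivially; so the essential case is $T(u) = \infty$ with $\ell > -\infty$.

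In that case, the integrability above produces a sequence $t_n \to \infty$ with $\|\cJ'(\eta(t_n,u))\| \to 0$. Choosing the pseudo-gradient so that additionally $(1+\|v\|)\|V(v)\|$ is uniformly bounded (the standard refinement for Cerami-type deformations) upgrades this to $(1+\|\eta(t_n,u)\|)\|\cJ'(\eta(t_n,u))\| \to 0$. Since every nonzero critical point of $\cJ$ satisfies $\cJ \ge a > 0$ while $\cJ(0)=0$, one rules out $\ell < 0$, so $(M)_0^\beta$(a) applies and yields boundedness of $\eta(t_n, u)$; continuity of $\cJ'$ then identifies a subsequential limit as an element of $\cK$.

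To promote this subsequential convergence to convergence of the entire orbit $\eta(t,u)$ as $t \to \infty$, I would invoke $(M)_0^\beta$(b) together with the standing hypothesis that only finitely many critical orbits exist. If $\eta(t,u)$ failed to converge, one could produce, by continuity of the flow, two sequences $\eta(s_n,u)$, $\eta(r_n,u)$ of Cerami type that are eventually $m_0^\beta$-close yet do not satisfy $\|\eta(s_n,u) - \eta(r_n,u)\| \to 0$, contradicting $(M)_0^\beta$(b); Lemma \ref{L:discrete} enters here to separate distinct critical orbits. This whole-orbit convergence step is the main obstacle, because one must carefully exclude the possibility that the flow oscillates indefinitely between neighbourhoods of different critical orbits at nearly the same level.
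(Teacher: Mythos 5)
Your treatment of $T(u)<\infty$ and of the trivial subcase $\lim_{t\to\infty}\cJ(\eta(t,u))=-\infty$ matches the paper, and your final paragraph (pairs of Cerami sequences along the orbit played against $(M)_0^\beta$(b)) is indeed the engine of the paper's argument when the orbit is bounded. But the core case $T(u)=\infty$ with $\cJ(\eta(\cdot,u))$ bounded below has a genuine gap: everything hinges on producing a \emph{Cerami} sequence, i.e.\ $(1+\|\eta(t_n,u)\|)\,\|\cJ'(\eta(t_n,u))\|\to0$, and your mechanism for this is backwards. Making $(1+\|v\|)\|V(v)\|$ uniformly bounded forces $\|V(v)\|\le C(1+\|v\|)^{-1}$, which is incompatible with $\cJ'(v)\bigl(V(v)\bigr)>\tfrac12\|\cJ'(v)\|$ for large $\|v\|$; with the correspondingly weakened descent inequality the energy integral only yields $\|\cJ'(\eta(t_n,u))\|/(1+\|\eta(t_n,u)\|)\to0$, which is \emph{weaker} than the Cerami condition, not stronger (the Cerami-adapted field has $\|V\|$ comparable to $1+\|v\|$, not to its reciprocal; and in any case the lemma concerns the flow already fixed in the section, with $\|V\|<1$). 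Without the Cerami property you cannot invoke $(M)_0^\beta$(a) to bound $\eta(t_n,u)$, and without a bound you cannot verify the Cerami property: the circle is not broken. A second error: even for a bounded sequence with $\cJ'(\eta(t_n,u))\to0$, the claim that ``continuity of $\cJ'$ identifies a subsequential limit in $\cK$'' is false in this setting --- $(M)_0^\beta$ is a boundedness-and-discreteness condition, not a compactness condition, and in an infinite-dimensional space such a sequence need not have any strongly convergent subsequence. The paper never extracts limits of Cerami sequences; it only derives contradictions from pairs of them via $(M)_0^\beta$(b).

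What is missing, concretely, is the paper's trichotomy on the behaviour of $t\mapsto\|\eta(t,u)\|$. If the orbit is bounded, then $\|\cJ'(\eta(t_n,u))\|\to0$ along a subsequence already gives Cerami sequences for free, and your Cauchy-type argument via $(M)_0^\beta$(b) (with the explicit entrance/exit-time construction producing two Cerami sequences at distance between $\varepsilon/3$ and $m_0^\beta$) closes the case. If the orbit is unbounded but $\|\eta(t,u)\|\not\to\infty$, the paper produces a Cerami sequence trapped in an annulus $R\le\|u_n\|\le R+1$ with $R>M_0^\beta$, contradicting $(M)_0^\beta$(a). If $\|\eta(t,u)\|\to\infty$, the absence of unbounded Cerami sequences in $\cJ_0^\beta$ gives the quantitative bound $\|\cJ'(v)\|\ge\delta/\|v\|$ for $\|v\|$ large, and an integral estimate along the orbit yields $\tfrac12\|\eta(t_n,u)\|\le\tfrac14\|\eta(t_n,u)\|$, a contradiction. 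Your proposal needs these two additional arguments (or substitutes for them) before the bounded-orbit argument can be run.
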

\begin{proof}
	Suppose $T(u)<\infty$ and let $0\le s<t<T(u)$. Then
	\[
	\|\eta(t,u)-\eta(s,u)\| \le \int_s^t \left\|V\bigl(\eta(\tau,u)\bigr)\right\|\,d\tau \le t-s.
	\] 
	Hence the limit exists and, if it were not a critical point, then $\eta(\cdot,u)$ could be extended for $t>T(u)$.
	
	Suppose now $T(u)=\infty$ and $\cJ\bigl(\eta(\cdot,u)\bigr)$ is bounded from below. We distinguish three cases: 
	\begin{itemize}
		\item[(i)] $t\mapsto \eta(t,u)$ is bounded,
		\item[(ii)] $t\mapsto \eta(t,u)$ is unbounded but $\|\eta(t,u)\|\not\to\infty$,
		\item[(iii)] $\|\eta(t,u)\|\to\infty$.
	\end{itemize} 
	(i) We follow an argument in \cite{SzWe}. We will show that for every $\varepsilon>0$ there exists $t_\varepsilon>0$ such that
	$\|\eta(t_\varepsilon,u)-\eta(t,u)\|<\varepsilon$ for all $t \ge t_\varepsilon$ (this implies $\lim_{t \to \infty}\eta(t,u)$ exists and so, as before, it is a critical point). Arguing by contradiction, we can find $\varepsilon\in\,]0,m_0^\beta/2[$, $R>0$ and $t_n\to\infty$ such that $\|\eta(t_n,u)\|\le R$ and $\|\eta(t_n,u)-\eta(t_{n+1},u)\| = \varepsilon$ for all $n$.  Let $t_n^1$ be the smallest $t\in\,]t_n,t_{n+1}[$ such that $\|\eta(t_n,u)-\eta(t_n^1,u)\|= \varepsilon/3$ and $t_n^2$ the largest $t\in\,]t_n^1,t_{n+1}[$ such that $\|\eta(t_{n+1},u)-\eta(t_n^2,u)\|= \varepsilon/3$. Set $\kappa_n := \min \Set{\lVert\cJ'\bigl(\eta(t,u)\bigr)\rVert|t\in[t_n,t_n^1]}>0$. Then
	\begin{equation*}\begin{split}
		\frac{\varepsilon}3 & = \|\eta(t_n^1,u)-\eta(t_n,u)\| \le \int_{t_{n}}^{t_n^1}  \big\|V\bigl(\eta(t,u)\bigr)\big\| \,dt \le t_n^1-t_n \\ 
		& \le \frac{2}{\kappa_n} \int_{t_n}^{t_n^1} \cJ'\bigl(\eta(t,u)\bigr)\bigl(V\bigl(\eta(t,u)\bigr)\bigr)\,dt = \frac2{\kappa_n}\Bigl(\cJ\bigl(\eta(t_n,u)\bigr)-\cJ\bigl(\eta(t_n^1,u)\bigr)\Bigr).
	\end{split}\end{equation*}
	Since $\cJ(\eta(t_n,u))-\cJ(\eta(t_n^1,u))\to 0$, also $\kappa_n\to 0$, hence we can choose $s_n^1\in[t_n,t_n^1]$ such that, if $u_n:= \eta(s_n^1,u)$, then $\cJ'(u_n)\to 0$. Since  $\|\eta(s_n^1,u)\|$ is bounded, $u_n$ is a Cerami sequence. A similar argument shows the existence of $v_n:= \eta(s_n^2,u)$, $s_n^2\in[t_n^2,t_{n+1}]$, such that $\cJ'(v_n)\to 0$. Hence
	\[
	\frac\varepsilon 3 \le \liminf_n\|u_n-v_n\|\le \limsup_n\|u_n-v_n\|\le\varepsilon +\frac{2}{3}\varepsilon<m_0^\beta,
	\]
	a contradiction to $(M)_0^\beta (b)$.
	
	(ii) Observe that there are no Cerami sequences in $X^+\setminus\overline B_{M_0^\beta}$ at any level $\alpha\in[0, \beta]$ according to $(M)_0^\beta (a)$. Since $\eta(t,u)$ is unbounded but $\|\eta(t,u)\|\not\to\infty$, there exists $R>M_0^\beta$ such that $\eta(t,u)\in B_R$ for arbitrarily large $t>0$. Then we can find $t_n, t_n^1$ so that $t_n\to\infty$, $\|\eta(t_n,u)\| = R+1$ and $t_n^1$ is the smallest $t>t_n$ with $\|\eta(t,u)\| = R$. We can also assume that $\|\eta(s,u)\|\leq R+1$ for $s\in [t_n,t_n^1]$. Let $\kappa_n$ be as above. Then
	\[
	1\le \|\eta(t_n^1,u)-\eta(t_n,u)\| \le \frac2{\kappa_n}\Bigl(\cJ\bigl(\eta(t_n,u)\bigr)-\cJ\bigl(\eta(t_n^1,u)\bigr)\Bigr)
	\]
	and hence $\kappa_n\to 0$. So we see that there exists $u_n:= \eta(s_n^1,u)$, $s_n^1\in[t_n,t_n^1]$, such that  $R\le \|u_n\|\le R+1$ and $\cJ'(u_n)\to 0$. Thus we have found a Cerami sequence in $X^+\setminus\overline{B}(0,M_0^\beta)$ which is impossible. This shows that case (ii) can never occur.
	
	(iii) There exist $R_0>0$ and $\delta>0$ such that $\|\cJ'(v)\|\ge \delta/\|v\|$ whenever $\|v\|\ge R_0$ and $v\in \cJ_0^\beta$, because otherwise there exists an unbounded Cerami sequence. Choose $t_0>0$ so that $\|\eta(t,u)\|\ge R_0$ and $\cJ\bigl(\eta(t_0,u)\bigr)-\cJ\bigl(\eta(t,u)\bigr) \le \delta/8$ for $t\ge t_0$. For $n\gg1$ let $t_n\ge t_0$ be the smallest $t$ such that $\|\eta(t,u)\|=n$ and let $\kappa_n := \min\Set{\lVert\cJ'\bigl(\eta(t,u)\bigr)\rVert|t\in[t_0,t_n]}$. From the choice of $t_n$,
	\[
	\kappa_n \ge \min_{t\in [t_0,t_n]} \frac{\delta}{\|\eta(t,u)\|} = \frac{\delta}{\|\eta(t_n,u)\|}.
	\]
	It follows from the same argument as above that for $n\gg1$
	\begin{equation*}\begin{split}
		\frac12 \|\eta(t_n,u)\| & \le \|\eta(t_n,u)-\eta(t_0,u)\| \le \frac2{\kappa_n}\Bigl(\cJ\bigl(\eta(t_0,u)\bigr)-\cJ\bigl(\eta(t_n,u)\bigr)\Bigr) \\
		& \le \frac{2}{\delta}\|\eta(t_n,u)\|\Bigl(\cJ\bigl(\eta(t_0,u)\bigr)-\cJ\bigl(\eta(t_n,u)\bigr)\Bigr)\le\frac14\|\eta(t_n,u)\|.
	\end{split}\end{equation*}
	This is a contradiction and hence also case (iii) can be ruled out. 
\end{proof}

Let $\cA := \Set{A\subset X^+|A=-A \text{ and } A \text{ is compact} }$,
\[
\cH := \Set{h\colon X^+\to X^+ \text{ odd homeomorphism } |\cJ\bigl(h(u)\bigr)\le \cJ(u) \, \forall u\in X^+},
\]
and for $A\in\cA$ set
\[
i^*(A) := \min_{h\in\cH} \gamma\bigl(h(A)\cap S_r\bigr)
\]
where $r$ is as in (I6) and $\gamma$ is the Krasnosel'skii genus. This is a variant of Benci's  pseudoindex \cite{bbf,be} and the following properties are adapted from \cite[Lemma 2.16]{SquassinaSzulkin}.

\begin{Lem}\label{L:index}
	Let $A,B\in\cA$.
	\begin{itemize}
	\item [(i)] If $A\subset B$, then $i^*(A)\le i^*(B)$.
	\item [(ii)] $i^*(A\cup B)\le i^*(A)+\gamma(B)$.
	\item [(iii)] If $g\in \cH$, then $i^*(A)\le i^*\bigl(g(A)\bigr)$.
	\item [(iv)] For every $k$-dimensional subspace $X_k\subset X^+$ there exists $R_0>0$ such that $i^*(X_k\cap\overline B_R)\ge k$ if $R\ge R_0$.
\end{itemize}
\end{Lem}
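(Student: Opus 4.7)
Items (i), (ii), (iii) follow from the monotonicity, subadditivity, and homeomorphism-invariance of the genus, together with the structure of $\cH$. For (i), given $h\in\cH$, the inclusion $h(A)\cap S_r\subset h(B)\cap S_r$ and monotonicity of $\gamma$ yield $\gamma(h(A)\cap S_r)\le\gamma(h(B)\cap S_r)$; taking the infimum over $h$ produces the claim. For (ii), since $\gamma$ takes values in $\N\cup\{\infty\}$, one may assume $i^*(A)<\infty$ and pick $h^*\in\cH$ attaining it; then subadditivity of $\gamma$ combined with $\gamma(h^*(B)\cap S_r)\le\gamma(h^*(B))=\gamma(B)$ (by property (iv) of the genus applied to both $h^*$ and $(h^*)^{-1}$) gives
\[
i^*(A\cup B)\le\gamma(h^*(A\cup B)\cap S_r)\le\gamma(h^*(A)\cap S_r)+\gamma(B)=i^*(A)+\gamma(B).
\]
For (iii), $\cH$ is closed under composition: if $g,h\in\cH$, then $h\circ g$ is an odd homeomorphism of $X^+$ and $\cJ(h(g(u)))\le\cJ(g(u))\le\cJ(u)$, so $i^*(A)\le\gamma((h\circ g)(A)\cap S_r)=\gamma(h(g(A))\cap S_r)$ for every $h\in\cH$, and the infimum over $h$ yields $i^*(A)\le i^*(g(A))$.

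The substantive content is in (iv). The plan is to transfer the genus estimate from the image $h(A)\cap S_r$ to the preimage $A\cap h^{-1}(S_r)$, which lives inside the finite-dimensional subspace $X_k$ where classical linking arguments apply. Since $h$ restricts to an odd homeomorphism from $A\cap h^{-1}(S_r)$ onto $h(A)\cap S_r$, two applications of property (iv) of the genus give
\[
\gamma(h(A)\cap S_r)=\gamma(A\cap h^{-1}(S_r)).
\]
By Lemma \ref{L:infty}, choose $R_0>0$ such that $\cJ(u)<0$ for every $u\in X_k$ with $\|u\|\ge R_0$, and recall from the proof of Theorem \ref{T:Link1} that $\cJ\ge 0$ on $\overline B_r$. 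Fix any $h\in\cH$ and set $U:=h^{-1}(B_r)\cap X_k$.

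Then $U$ is a bounded symmetric open neighbourhood of $0$ in $X_k$, with $\overline U\subset A$ whenever $R\ge R_0$: openness, symmetry, and $0\in U$ are immediate since $h$ is an odd homeomorphism and $h(0)=0$, while boundedness is the crucial point where both ingredients combine---if $u\in X_k$ with $\|u\|\ge R_0$, then $\cJ(h(u))\le\cJ(u)<0$, whence $h(u)\notin\overline B_r$ (as $\cJ\ge 0$ there) and therefore $u\notin U$. This gives $\overline U\subset X_k\cap\overline B_{R_0}\subset A$ and $\partial_{X_k}U\subset h^{-1}(S_r)\cap X_k\subset A\cap h^{-1}(S_r)$. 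Since $U$ is a bounded symmetric open neighbourhood of $0$ in the $k$-dimensional space $X_k$, the standard linking fact $\gamma(\partial_{X_k}U)=k$, combined with monotonicity and the identity above, yields $\gamma(h(A)\cap S_r)\ge k$; taking the infimum over $h\in\cH$ produces $i^*(A)\ge k$. The main obstacle is precisely this boundedness of $U$: without combining the divergence $\cJ\to-\infty$ along $X_k$ from Lemma \ref{L:infty} with the sign information $\cJ\ge 0$ on $\overline B_r$, the preimage $h^{-1}(\overline B_r)$ could a priori stretch unboundedly along $X_k$, and the finite-dimensional linking argument would collapse.
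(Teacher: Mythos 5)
Your items (i)--(iii) coincide with the paper's argument (the paper takes the minimum over all $h$ at the end rather than fixing a minimizer $h^*$, but this is immaterial since $i^*$ is defined as a minimum and, by compactness of $A$ and property (v) of the genus, is always finite). The real difference is in (iv). The paper argues by contradiction: assuming $\gamma\bigl(h(D)\cap S_r\bigr)<k$ for $D=X_k\cap\overline B_R$, it treats $k=1$ by a connectedness argument and $k\ge2$ by composing an odd map $f\colon h(D)\cap S_r\to\R^{k-1}\setminus\{0\}$ with $h$ on $\partial U$, $U=h^{-1}(B_r\cap X^+)\cap X_k$, and invoking the Borsuk--Ulam theorem. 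You instead argue directly: you transfer the genus to $A\cap h^{-1}(S_r)$ via property (iv) of the genus, trap $\partial_{X_k}U$ inside it, and quote the standard fact that the relative boundary of a bounded symmetric open neighbourhood of $0$ in a $k$-dimensional space has genus $\ge k$. The two proofs rest on the same topological input (that standard fact is itself a reformulation of Borsuk--Ulam, which the paper cites), and both hinge on the same key observation, which you isolate correctly: the boundedness of $U$ forced by combining $\cJ<0$ on $X_k\setminus B_{R_0}$ (Lemma \ref{L:infty}) with $\cJ\ge0$ on $\overline B_r$ and the decreasing property of $h\in\cH$. Your route is slightly more uniform (no case split at $k=1$) at the cost of invoking a genus property not listed in the paper's preliminaries; the paper's route is self-contained modulo Borsuk--Ulam. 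Both are correct.
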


\begin{proof}
	\textit{(i)} It follows immediately from the properties of the Krasnosel'skii genus.
	
	\textit{(ii)} For every $h\in\cH$
	\[\begin{split}
	i^*(A\cup B) & \le \gamma\bigl(h(A\cup B)\cap S_r\bigr) = \gamma\Bigl(\bigl(h(A)\cup h(B)\bigr)\cap S_r\Bigr)\\
	& \le \gamma\bigl(h(A)\cap S_r\bigr) + \gamma\bigl(h(B)\bigr) = \gamma\bigl(h(A)\cap S_r\bigr) + \gamma(B)
	\end{split}\]
	where in the last equality we used that $h$ is a homeomorphism. Taking the minimum over all $h\in\cH$ on the right-hand side we obtain the conclusion.
	
	\textit{(iii)} Since $\cJ\bigl(g(u)\bigr)\le\cJ(u)$ for all $u\in X^+$, $h\circ g\in\cH$ if $h\in\cH$. Hence $\Set{h\circ g|h\in\cH}\subset \cH$ and therefore
	\[
	\min_{h\in\cH}\gamma\bigl(h(A)\cap S_r\bigr) \le \min_{h\in\cH}\gamma\bigl((h\circ g)(A)\cap S_r\bigr).
	\]
	
	\textit{(iv)} Since the statement is obviously true for $k=0$, we can assume that $k\ge1$. From Lemma \ref{L:infty}, $\cJ(u)<0$ on $X_k\setminus B_R$ if $R$ is sufficiently large. Let $D := X_k\cap\overline B_R$. Suppose by contradiction that $i^*(D)<k$ and choose $h\in\cH$ such that $\gamma\bigl(h(D)\cap S_r\bigr)<k$.
	
	If $k=1$, then $\gamma\bigl(h(D)\cap S_r\bigr)=0$, i.e., $h(D)\cap S_r=\emptyset$. Therefore either $\|h(u)\|<r$ for every $u\in D$ or $\|h(u)\|>r$ for every $u\in D$. Since $h(0)=0$, the latter is ruled out. If $u\in\partial D=X_k\cap S_R$, recalling that $\cJ(v)\ge 0$ for every $v\in B_r$, we then have $0\le\cJ\bigl(h(u)\bigr)\le\cJ(u)<0$, a contradiction.
	
	If $k\ge2$, fix an odd mapping 
	$$f\colon h(D)\cap S_r\to \R^{k-1}\setminus\{0\}.$$ 
	Let $U:= h^{-1}(B_r\cap X^+)\cap X_k$. There follows that $U\subset D\setminus\partial D$ and hence $U$ is an open and bounded neighbourhood of 0 in $X_k$. If $u\in \partial U$, then $h(u)\in S_r$ and therefore $f\circ h|_{\partial U}\colon \partial U \to \R^{k-1}\setminus\{0\}$, contradicting the Borsuk-Ulam theorem \cite[Proposition II.5.2]{Struwe}, \cite[Theorem D.17]{Willem}.  
\end{proof}

\begin{proof}[Proof of Theorem \ref{T:CrticMulti}]
	\textit{(a)} Suppose that $\cJ$ does not have any critical values in $[c_{\cM},c_{\cM}+\varepsilon_0]$ for some $\varepsilon_0\in\,]0,\varepsilon]$. Thus $\cJ$ has only the trivial critical point 0 in $\cJ^{c_{\cM}+\varepsilon_0}$.
	Take $u\in\cJ^{c_{\cM}+\varepsilon_0}$ and
	observe that, from Lemma \ref{L:flow}, either $\lim_{t \to T(u)^-}\eta(t,u)=0$ or $\lim_{t\to T(u)^-}\cJ\bigl(\eta(t,u)\bigr) = -\infty$. 
	Hence we may define the entrance time map $e:\cJ^{c_{\cM}+\varepsilon_0}\to [0,\infty[$ by the formula
	\[
	e(u):=\inf\Set{t\in [0,T(u)[|  \cJ\bigl(\eta(t,u)\bigr)\leq c_{\cM}/2}.
	\]
	Let us prove that $e$ is continuous. Take $u_n,\bar u\in\cJ^{c_{\cM}+\varepsilon_0}$ such that $u_n\to\bar u$: if by contradiction $e(u_n)\to\infty$ up to a subsequence, then, recalling that $\cJ$ is decreasing along the trajectories of $\eta$ and fixing $t\in]e(\bar{u}),T(\bar{u})[$,
	\[\begin{split}
	\frac{c_{\cM}}{2} & = \lim_n\cJ\bigl(\eta\bigl(e(u_n),u_n\bigr)\bigr) \le \lim_n\cJ\bigl(\eta(t,u_n)\bigr)\\
	& = \cJ\bigl(\eta(t,\bar{u})\bigr) < \cJ\bigl(\eta\bigl(e(u),u\bigr)\bigr) = \frac{c_{\cM}}{2};
	\end{split}\]
	therefore $e(u_n)\to t$ up to a subsequence for some $t\ge0$ and so
	\[
	\cJ\bigl(\eta(t,\bar u)\bigr) = \lim_n\cJ\bigl(\eta\bigl(e(u_n),u_n\bigr)\bigr) = \frac{c_{\cM}}{2} = \cJ\bigl(\eta\bigl(e(\bar u),\bar u\bigr)\bigr),
	\]
	i.e., $t=e(\bar u)$. Now take any $\sigma\in\Sigma$ such that 
	\[
	\cJ\bigl(\sigma(t)\bigr)<c_{\cM}+\varepsilon_0 \quad \text{for all } t\in[0,1],
	\]
	where $\Sigma$ is given by \eqref{e-Gamma_Phi}. Since $e$ is continuous,
	$\tilde{\sigma}(t):=\eta\big(e\bigl(\sigma(t)\bigr),\sigma(t)\big)$ is a continuous path in $X^+$ such that $\cJ\bigl(\tilde{\sigma}(1)\bigr)\leq \cJ\bigl(\sigma(1)\bigr)<0$. Hence $\tilde{\sigma}\in\Sigma$ and
	\[
	c_{\cM}=\inf_{\sigma'\in\Sigma}\sup_{t\in [0,1]}\cJ\bigl(\sigma'(t)\bigr)\leq \sup_{t\in [0,1]}\cJ\bigl(\tilde{\sigma}(t)\bigr)\leq c_{\cM}/2.
	\]
	The obtained contradiction proves that either $c_{\cM}$ is a critical value or for every $\varepsilon_0\in\,]0,\varepsilon]$ there exists a critical value in $]c_{\cM},c_{\cM}+\varepsilon_0]$.
	
	\textit{(b)} Take $\beta\geq a$ and let
	\begin{equation*}
		\cK^\beta:=\Set{u\in\cK|\cJ(u)=\beta}.
	\end{equation*}
	Since there are finitely many critical orbits, there exists $\varepsilon_0>0$ such that
	\begin{equation*}
	\cK\cap \cJ_{\beta-\varepsilon_0}^{\beta+\varepsilon_0}=\cK^\beta.
	\end{equation*} 
	In view of Lemma \ref{L:discrete}, there exists $\delta\in \,]0,m_0^{\beta+\varepsilon_0}[$ such that $\overline B(u,\delta)\cap \overline B(v,\delta) = \emptyset$ for every $u,v\in\cK^\beta$, $u\ne v$. We show that there exists $\varepsilon\in\,]0,\varepsilon_0[$ such that
	\begin{equation}\label{e-entrancetime1}
	\lim_{t\to T(u)^-}\cJ\bigl(\eta(t,u)\bigr) < \beta -\varepsilon \quad\hbox{for every } u\in \cJ^{\beta+\varepsilon}_{\beta-\varepsilon}\setminus B(\cK^\beta,\delta).
	\end{equation}
	We assume $\cK^\beta\ne\emptyset$, the other case being trivial.
	Consider the set
	\[
	A_0:=\Set{u\in \cJ^{\beta+\varepsilon_0}_{\beta-\varepsilon_0}\setminus B(\cK^\beta,\delta)| \lim_{t \to T(u)^-}\eta(t,u)\in \cK^\beta}.
	\]
	For every $u\in A_0$ we define
	\begin{equation*}\begin{split}
		t_0(u)&:=\inf\Set{t\in [0,T(u)[|  \eta(s,u)\in B(\cK^\beta,\delta)\hbox{ for all }s> t},\\
		t(u)&:=\inf\Set{t\in [t_0(u),T(u)[|  \eta(t,u)\in B(\cK^\beta,\delta/2)}
	\end{split}\end{equation*}
	and note that $0\leq t_0(u)< t(u)<T(u)$. There holds
	\begin{equation}\label{e-t}
	\begin{split}
	\frac{\delta}{2} & \leq
	\big\|\eta\bigl(t_0(u),u\bigr)-\eta\bigl(t(u),u\bigr)\big\|  \leq \int_{t_0(u)}^{t(u)}\big\|V\bigl(\eta(s,u)\bigr)\big\|\, ds\\
	& \leq t(u)-t_0(u). 
	\end{split}\end{equation}
	Let
	$$\rho:=\inf\Set{\big\lVert\cJ'\bigl(\eta(t,u)\bigr)\big\rVert | u\in A_0,\ t\in[t_0(u),t(u)]}.$$
	If $\rho=0$ then we find $u_n\in A_0 $ and $t_n\in\,]t_0(u_n),t(u_n)[$ such that $$\cJ'\bigl(\eta(t_n,u_n)\bigr)\to 0 \hbox{ as } n\to\infty.$$
	Since $t_n> t_0(u_n)$, we have $\eta(t_n,u_n)\in B(\cK^\beta,\delta)$ and passing to a subsequence we can find $u_0\in\cK^\beta$ and $g_n\in G$ such that
	$$g_n\eta(t_n,u_n)\in B(u_0,\delta).$$ Since $t_n<t(u_n)$, we see that
	$$g_{n}\eta(t_n,u_n)\notin B(\cK^\beta,\delta/2).$$
	Let $w_n:=u_0$, $v_n:=g_n\eta(t_n,u_n)$. Then $w_n$ and $v_n$ are two Cerami sequences such that $\delta/2 \le \|v_n-w_n\| \le \delta<m_0^{\beta+\varepsilon_0}$, a contradiction.
	Therefore $\rho>0$ and we take 
	$$\varepsilon < \min\Big\{\varepsilon_0,\frac{\delta\rho}8\Big\}.$$ 
	Now let $u\in \cJ_{\beta-\varepsilon}^{\beta+\varepsilon}\setminus B(\cK^\beta,\delta)$ and suppose by contradiction that
	\begin{equation}\label{e-contrA0}
	\lim_{t\to T(u)^-}\cJ\bigl(\eta(t,u)\bigr)\ge\beta-\varepsilon,
	\end{equation}
	which owing to Lemma \ref{L:flow} yields $\lim_{t \to T(u)^-}\eta(t,u)\in\cK^\beta$, i.e., $u\in A_0$. Since
	\begin{equation*}\begin{split}
		\cJ\Bigl(\eta\bigl(t(u),u\bigr)\Bigr)-\cJ\Bigl(\eta\bigl(t_0(u),u\bigr)\Bigr) & =-\int_{t_0(u)}^{t(u)}\cJ'\bigl(\eta(s,u)\bigr)\Bigl(V\bigl(\eta(s,u)\bigl)\Bigr)\,ds\\
		& \leq -\frac12\int_{t_0(u)}^{t(u)}\big\|\cJ'\bigl(\eta(s,u)\bigr)\big\|\, ds,
	\end{split}\end{equation*}
	we obtain using \eqref{e-t}
	\begin{equation*}\begin{split}
		\lim_{t\to T(u)^-}\cJ\bigl(\eta(t,u)\bigr)&\leq \cJ\Bigl(\eta\bigl(t(u),u\bigr)\Bigr)
		\leq\beta +\varepsilon -\frac12\int_{t_0(u)}^{t(u)}\big\|\cJ'\bigl(\eta(s,u)\bigr)\big\|\, ds\\
		&\leq \beta+\varepsilon -\frac{\delta\rho}4 < \beta-\varepsilon.
	\end{split}\end{equation*}
	This contradicts \eqref{e-contrA0}, hence \eqref{e-entrancetime1} holds.
	
	Define 
	\[
	\beta_k :=  \inf_{A\in\cA,\,i^*(A)\ge k} \max_{u\in A}\cJ(u), \quad k=1,2,\dots
	\]
	and note that from Lemma \ref{L:index} all the $\beta_k$ are well defined, finite, and $a\le\beta_1\le\beta_2\le\dots$.
	Let $\beta=\beta_k$  for some $k\geq 1$. If the set $\cK^\beta$ is nonempty, then it is discrete and we can order its elements in pairs $\pm u_j$ and let the map $\phi\colon\cK^\beta\to\R\setminus\{0\}$ be given by $\phi(\pm u_j)=\pm 1$. From the choice of $\delta$ we can extend $\phi$ to $\overline B(\cK^\beta,\delta)$, whence
	$$\gamma\bigl(\overline B(\cK^\beta,\delta)\bigr)=\gamma (\cK^\beta)\le1.$$
	Choose $\varepsilon>0$ such that \eqref{e-entrancetime1} holds. 
	Take Lipschitz continuous cutoff functions $\chi,\xi$ such that $\chi=0$ in $B(\cK^\beta,\delta/4)$, $\chi=1$ in $X^+\setminus B(\cK^\beta,\delta/2)$, $\xi=1$ in $\cJ_{\beta-\varepsilon}^{\beta+\varepsilon}$, and $\xi=0$ in $X^+\setminus U$ , where $U$ is an open neighbourhood of $\cJ_{\beta-\varepsilon}^{\beta+\varepsilon}$ with $\cK\cap U=\cK^\beta$.
	Let
	$\widetilde\eta:\R\times X^+\to  X^+$ be the flow given by
	\begin{equation*}
	\left\{
	\begin{aligned}
	&\partial_t \widetilde\eta(t,u)=-\chi\bigl(\widetilde\eta(t,u)\bigr)\xi\bigl(\widetilde\eta(t,u)\bigr) V\bigl(\widetilde\eta(t,u)\bigr)\\
	&\widetilde\eta(0,u)=u.
	\end{aligned}
	\right.
	\end{equation*}
	Then $\widetilde\eta(t,u)=\eta(t,u)$ as long as $t\ge 0$ and $\widetilde\eta(t,u)\in \cJ^{\beta+\varepsilon}_{\beta-\varepsilon}\setminus B(\cK^\beta,\delta/2)$. Using  \eqref{e-entrancetime1} we can define the entrance time map $e\colon\cJ^{\beta+\varepsilon}\setminus B(\cK^\beta,\delta)\to [0,\infty[$ as
	\[
	e(u):=\inf\Set{t\in [0,\infty[\,|  \cJ\bigl(\widetilde\eta(t,u)\bigr)\leq \beta -\varepsilon}.
	\]
	Arguing as before we can prove that $e$ is continuous; moreover, $e$ is clearly even. Take any $A\in\cA$ such that $i^*(A)\geq k$ and $\cJ(u)\leq \beta+\varepsilon$ for $u\in A$. Let $T:=\max_{u\in A} e(u)<\infty$ ($A$ is compact). Set $h:=\widetilde\eta(T,\cdot)$ and note that $h\in\cH$ and
	\[
	h\bigl(A\setminus B(\cK^\beta,\delta)\bigr) \subset \cJ^{\beta-\varepsilon}.
	\]
	Therefore
	\[
	i^*\bigl(A\setminus B(\cK^\beta,\delta)\bigr)\leq 
	i^*\Bigl(h\bigl(A\setminus B(\cK^\beta,\delta)\bigr)\Bigr)\leq  k-1
	\]
	and
	\begin{equation}\label{e-LSvaluse}
	k\leq i^*(A)\leq \gamma \bigl(\overline B(\cK^\beta,\delta)\cap A\bigr)+
	i^*\bigl(A\setminus B(\cK^\beta,\delta)\bigr)
	\leq \gamma(\cK^\beta)+k-1.
	\end{equation}
	Thus $\cK^\beta\neq \emptyset$ and, since $\gamma(\cK^\beta)\le1$, we conclude $\gamma(\cK^\beta)=1$.	If $\beta_k=\beta_{k+1}$ for some
	$k\geq 1$, then \eqref{e-LSvaluse} implies $\gamma(\cK^{\beta_k})\geq 2$, a contradiction. Hence we get an infinite sequence
	$\beta_1<\beta_2<\cdots$
	of critical values which contradicts our assumption that $\cK$ consists of a finite number of distinct orbits.
\end{proof}

\section{Properties of the energy functional}

We recall that (N1)--(N3) and (F1)--(F5) hold, that $\Phi$ is the same $N$-function as in (F3) and that $\Psi$ is its complementary function. We will check that assumptions (I1)--(I8) are satisfied in order to apply Theorems \ref{T:Link1} and \ref{T:CrticMulti}.

Define
\begin{equation*}
\mathcal{M} := \Set{(v,w)\in\cV\times\cW|
J'(v,w)(0,\psi)=0\,\hbox{ for every }\psi\in \cW}
\end{equation*}
and the Nehari--Pankov manifold for $J$
\begin{equation*}\begin{split}
\mathcal{N} := \Set{(v,w)\in\cM|u\neq 0 \text{ and }J'(v,w)(v,w)=0}.
\end{split}\end{equation*}
Observe that $\UU=v+w\in\fN$ if and only if $(v,w)\in\cN$. Moreover, $\mathcal{N}$ contains all the nontrivial critical points of $J$.

\begin{Prop}\label{P:uv_N}
	If $(\bar v,\bar w)\in \cV\times\cW$ then
	$$J(t\bar  v,t\bar w+\psi)-J'(\bar v,\bar w)\left(\frac{t^2-1}{2}\bar v,\frac{t^2-1}{2}\bar w+t\psi\right)\leq J(\bar v,\bar w)$$
	for every $\psi\in\cW$ and $t\geq 0$. 
\end{Prop}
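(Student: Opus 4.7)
The plan is to reduce the inequality to a pointwise one and then analyze the pointwise function using the structural hypotheses (F3)--(F5).

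\textbf{Reduction to a pointwise inequality.} I would first expand
\begin{align*}
J(t\bar v, t\bar w+\psi) &= \int_{\rr}\frac{t^2}{2}|\nabla\bar v|^2-F\bigl(x,t(\bar v+\bar w)+\psi\bigr)\,dx,\\
J'(\bar v,\bar w)\bigl(\tfrac{t^2-1}{2}\bar v,\tfrac{t^2-1}{2}\bar w+t\psi\bigr) &= \int_{\rr}\tfrac{t^2-1}{2}|\nabla\bar v|^2-f(x,\bar v+\bar w)\cdot\bigl(\tfrac{t^2-1}{2}(\bar v+\bar w)+t\psi\bigr)\,dx,
\end{align*}
and subtract $J(\bar v,\bar w)=\int_{\rr}\tfrac12|\nabla\bar v|^2-F(x,\bar v+\bar w)\,dx$. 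The three contributions to $|\nabla\bar v|^2$ sum to $\tfrac{t^2}{2}-\tfrac{t^2-1}{2}-\tfrac12=0$, so the quadratic terms cancel completely. Writing $U:=\bar v+\bar w$, the claim reduces to showing that the pointwise integrand
\[
\eta_x(t,\psi):=F(x,tU+\psi)-F(x,U)-\tfrac{t^2-1}{2}f(x,U)\cdot U-tf(x,U)\cdot\psi
\]
is nonnegative for all $t\ge0$ and all vectors $\psi\in\rr$, for a.e. $x\in\rr$ (here I overload $\psi$ to denote its value at $x$).

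\textbf{Boundary and coercivity.} If $U(x)=0$ then $\eta_x(t,\psi)=F(x,\psi)\ge0$ by (F3). Otherwise, (F3) and (F4) give $f(x,U)\cdot U\ge 2F(x,U)\ge 2c_2\Phi(|U|)>0$. On the boundary $t=0$ one has $\eta_x(0,\psi)=F(x,\psi)-F(x,U)+\tfrac12 f(x,U)\cdot U\ge F(x,\psi)\ge 0$, again by (F4). Since (N3) combined with (F3) makes $F(x,tU+\psi)$ superquadratic in $|tU+\psi|$, while all remaining terms in $\eta_x$ grow at most quadratically in $(t,|\psi|)$, a short case analysis (separating whether $|tU+\psi|$ stays bounded or not as $t^2+|\psi|^2\to\infty$) shows that $\eta_x$ is coercive and thus attains its minimum on $[0,\infty[\times\rr$.

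\textbf{Interior critical points via (F5).} The crux is to handle interior critical points $(t,\psi)\in{]0,\infty[}\times\rr$ of $\eta_x$. Setting $\nabla_\psi\eta_x=0$ yields $f(x,tU+\psi)=tf(x,U)$, and then $\partial_t\eta_x=0$ collapses to $f(x,U)\cdot\psi=0$. Putting $\UU:=U$, $\VV:=tU+\psi$, these two relations give
\[
f(x,\UU)\cdot\VV=tf(x,U)\cdot U+f(x,U)\cdot\psi=tf(x,U)\cdot U=f(x,\VV)\cdot\UU>0,
\]
which is precisely the hypothesis of (F5). Applying it,
\[
F(x,U)-F(x,tU+\psi)\le\frac{\bigl(f(x,U)\cdot U\bigr)^2-t^2\bigl(f(x,U)\cdot U\bigr)^2}{2\,f(x,U)\cdot U}=-\tfrac{t^2-1}{2}f(x,U)\cdot U,
\]
and combined with $f(x,U)\cdot\psi=0$ this gives $\eta_x(t,\psi)\ge 0$. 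Since the minimum of $\eta_x$ is attained either on the boundary $\{t=0\}$ or at an interior critical point, and both have been shown to be nonnegative, the pointwise inequality $\eta_x\ge0$ follows, and integration concludes the proof.

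The main obstacle is recognizing that the Euler--Lagrange equations of $\eta_x$ produce exactly the symmetric pairing $f(x,\UU)\cdot\VV=f(x,\VV)\cdot\UU>0$ demanded by (F5); once this identification is made, (F5) is sharp enough to yield the desired bound. A secondary technical point is the coercivity of $\eta_x$, which requires (N3) to ensure that $F$ dominates the affine correction terms along every direction in which $(t,\psi)$ can escape to infinity.
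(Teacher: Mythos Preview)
Your proof is correct and follows essentially the same route as the paper's: reduce to a pointwise inequality by cancelling the gradient terms, rule out the case $U=0$, show the pointwise function attains its extremum, check the boundary $t=0$, and at interior critical points extract exactly the symmetry condition $f(x,\UU)\cdot\VV=f(x,\VV)\cdot\UU>0$ needed for (F5). The only cosmetic difference is in the coercivity step: the paper substitutes $\zeta=tU+\psi$ and uses (F4) to bound the expression by a negative-definite quadratic form in $(t,\zeta)$ plus $A|\zeta|^2-F(x,\zeta)$, whereas you argue more directly via a case split on whether $|tU+\psi|$ stays bounded; both work.
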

\begin{proof}
	Let $(\bar v,\bar w)\in\cV\times\cW$, $\psi\in\cW$, $t\geq 0$. We define
	$$D(t,\psi):=J(t \bar v,t\bar w+\psi)-J(\bar v,\bar w)-J'(\bar v,\bar w)\left(\frac{t^2-1}{2}\bar v,\frac{t^2-1}{2}\bar w+t\psi\right)$$
	and observe that
	\begin{equation*}\begin{split}
		D(t,\psi)=\int_{\R^3} f(x,\bar v+\bar w)\cdot\left(\frac{t^2-1}{2}(\bar v+\bar w)+t\psi\right)\\
		+F(x,\bar v+\bar w)-F\bigl(x,t(\bar v+\bar w)+\psi\bigr)\,dx.
	\end{split}\end{equation*}
	For fixed $x,v,w\in\R^3$, define the map $\phi\colon[0,\infty[\,\times\,\R^3\to\R$ as follows:
	\[
	\phi(t,\psi):=
	f(x,v+w)\cdot\left(\frac{t^2-1}{2}(v+w)+t\psi\right)+F(x,v+w)-F\bigl(x,t(v+w)+\psi\bigr).
	\]
	We prove that $\phi(t,\psi)\leq 0$ for all $t\geq 0$ and all $\psi\in\R^3$. This is clear if $v+w=0$, thus let $v+w\ne 0$ and define $\zeta := t(v+w)+\psi$. 
	From (F4) we have
	\begin{equation*}\begin{split}
		\phi(t,\psi) \le \, & f(x,v+w)\cdot\left(\frac{t^2-1}{2}(v+w)+t\bigl(\zeta-t(v+w)\bigr)\right)\\
		& +\frac12 f(x,v+w)\cdot(v+w)-F(x,\zeta) \\
		= \, & -\frac12t^2 f(x,v+w)\cdot(v+w) + t f(x,v+w)\cdot\zeta - A|\zeta|^2\\
		& + A|\zeta|^2-F(x,\zeta).
	\end{split}\end{equation*}
	If $A>0$ is large enough, then the quadratic form (in $t$ and $\zeta$) above is negative definite. Moreover, $A|\zeta|^2-F(x,\zeta)$ is bounded from above owing to the superquadraticity of $F$ implied by (F3) and (N3). Hence $\phi(t,\psi)\to -\infty$ as $t+|\psi|\to\infty$ and $\phi$ attains its maximum at some $(t,\psi)$ with $t\geq 0$.  If $t=0$, then $\phi(t,\psi)=\phi(0,\psi)\leq 0$. If $t>0$, then
	\begin{equation*}\begin{split}
	\partial_t\phi(t,\psi) & = f(x,v+w)\cdot\bigl(t(v+w)+\psi\bigr) - f\bigl(x,t(v+w)+\psi\bigr)\cdot(v+w) = 0, \\
	\nabla_\psi\phi(t,\psi) & = tf(x,v+w) - f(x,t(v+w)+\psi) = 0.
	\end{split}\end{equation*} 
	Using the latter equation in the former, we see that both terms in the former are positive (because $f(x,v+w)\cdot(v+w)>0$ from (F3 and (F4)) and $f(x,v+w)\cdot\psi=0$. This and (F5) imply
	\begin{equation*}\begin{aligned}
	\phi(t,\psi) & = \frac{t^2-1}{2}f(x,v+w)\cdot(v+w)+F(x,v+w)-F\bigl(x,t(v+w)+\psi\bigr)\\
	& \le 0.\qedhere
	\end{aligned}\end{equation*}
\end{proof}

Consider $\fI\colon L^{\Phi}\to \R$ and  $I\colon L^{\Phi}\times\cW\to\R$ given by
\begin{equation}\label{e-DefOfXi}
I(v,w):=\fI(v+w):=\int_{\R^3}F(x,v+w)\, dx\quad\hbox{ for }(v,w)\in L^{\Phi}\times\cW.
\end{equation}
$\fI$ and $I$ are of class $\cC^1$ and strictly convex in view of Proposition \ref{P:classC1} and (F2) respectively. We need a variant for sequences in $L^{\Phi}$ of the Brezis-Lieb lemma \cite{BrezisLieb}.

\begin{Lem}\label{L:BrezLieb}
	Let $\UU_n\in L^{\Phi}$ bounded such that
	$\UU_n\to \UU$ a.e. in $\R^3$.
	Then
	\[
	\lim_n\int_{\R^3}F(x,\UU_n)-F(x,\UU_n-\UU)\, dx=\int_{\R^3}F(x,\UU)\, dx.
	\]
\end{Lem}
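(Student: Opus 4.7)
The plan is to mimic Brezis and Lieb's classical argument from $L^p$, replacing homogeneity of $t\mapsto|t|^p$ by the $\Delta_2$ and $\nabla_2$ conditions on $\Phi$. First I would observe that $F(x,0)=0$: (F3) yields $F(x,0)\ge c_2\Phi(0)=0$, while (F4) at $\UU=0$ gives $0=f(x,0)\cdot 0\ge 2F(x,0)$. Combined with the Carath\'eodory continuity of $F$ in $\UU$, this yields the pointwise a.e. convergence
\[
F\bigl(x,\UU_n(x)\bigr)-F\bigl(x,\UU_n(x)-\UU(x)\bigr)\to F\bigl(x,\UU(x)\bigr).
\]

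The core step will be to prove, for every $\varepsilon>0$, a pointwise estimate
\[
|F(x,a+b)-F(x,a)-F(x,b)|\le\varepsilon\Phi(|a|)+C_\varepsilon\Phi(|b|)\quad\forall\, a,b\in\R^3,\ \text{a.e. }x\in\R^3,
\]
where $C_\varepsilon$ depends only on $\varepsilon$, $c_1$, and the $\Delta_2$-constant $K$. The mean value theorem together with (F3) gives $|F(x,a+b)-F(x,a)|\le c_1\Phi'(|a|+|b|)|b|$, while integrating $f$ along the segment $[0,b]$ (using $F(x,0)=0$) yields $|F(x,b)|\le c_1\Phi(|b|)$. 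The product $\Phi'(|a|+|b|)|b|$ is then handled by applying Young's inequality to the rescaled pair $(\mu\Phi'(|a|+|b|),|b|/\mu)$ for $\mu\in(0,1)$: convexity of $\Psi$ with $\Psi(0)=0$ gives $\Psi(\mu t)\le\mu\Psi(t)$; Lemma \ref{L:forC1} gives $\Psi(\Phi'(t))\le(K-1)\Phi(t)$; and the global $\Delta_2$ condition both splits $\Phi(|a|+|b|)\le K'(\Phi(|a|)+\Phi(|b|))$ and absorbs the factor $1/\mu$ inside $\Phi(|b|/\mu)$. Choosing $\mu$ proportional to $\varepsilon$ then produces the claimed inequality.

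With this pointwise bound in hand, the conclusion follows by a standard dominated-convergence trick. Setting
\[
W_{n,\varepsilon}(x):=\bigl(|F(x,\UU_n)-F(x,\UU_n-\UU)-F(x,\UU)|-\varepsilon\Phi(|\UU_n-\UU|)\bigr)_+,
\]
one has $0\le W_{n,\varepsilon}\le C_\varepsilon\Phi(|\UU|)\in L^1(\R^3)$ (note $\UU\in L^\Phi$ by Fatou applied to $\Phi(|\UU_n|)$ together with Lemma \ref{L:all}\textit{(iv)}) and $W_{n,\varepsilon}\to 0$ a.e., hence $\int_{\R^3}W_{n,\varepsilon}\,dx\to 0$ by dominated convergence. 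Since $\UU_n-\UU$ stays bounded in $L^\Phi$, Lemma \ref{L:all}\textit{(iv)} yields $M>0$ with $\int_{\R^3}\Phi(|\UU_n-\UU|)\,dx\le M$ for every $n$, so
\[
\limsup_n\int_{\R^3}|F(x,\UU_n)-F(x,\UU_n-\UU)-F(x,\UU)|\,dx\le\varepsilon M,
\]
and sending $\varepsilon\to 0$ finishes the proof.

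The main obstacle is the pointwise estimate above: the Lebesgue-space analogue $\big||a+b|^p-|a|^p-|b|^p\big|\le\varepsilon|a|^p+C_\varepsilon|b|^p$ is immediate from homogeneity, whereas here I must carefully combine $\Delta_2$, $\nabla_2$ (through Lemma \ref{L:forC1}), and a rescaled Orlicz Young inequality to generate the small parameter $\varepsilon$ in front of $\Phi(|a|)$, which is precisely what makes the final dominated-convergence step work.
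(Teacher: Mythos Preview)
Your argument is correct, but it follows a genuinely different route from the paper's. The paper does not establish a pointwise $\varepsilon$-estimate of Brezis--Lieb type; instead it writes
\[
\int_{\R^3}F(x,\UU_n)-F(x,\UU_n-\UU)\,dx=\int_0^1\int_{\R^3}f(x,\UU_n-\UU+t\UU)\cdot\UU\,dx\,dt
\]
via the fundamental theorem of calculus, then uses the Orlicz H\"older inequality (Lemma \ref{L:all}\textit{(ii)}) together with boundedness of $f(\cdot,\UU_n-\UU+t\UU)$ in $L^\Psi$ (from (F3) and Lemma \ref{L:forC1}) and absolute continuity of the $L^\Phi$ norm to obtain uniform integrability and tightness of the integrand, and concludes by Vitali's convergence theorem. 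Your approach stays closer to the classical $L^p$ template: the $\Delta_2$/$\nabla_2$ structure is squeezed into a single pointwise inequality via the rescaled Young inequality, after which only dominated convergence is needed. The paper's argument is slightly slicker in that it bypasses the pointwise $\varepsilon$-splitting entirely, trading it for the machinery of Vitali; yours is more self-contained and uses only standard convexity facts about $\Phi$ and $\Psi$.
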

\begin{proof}
	Note that
	\begin{equation*}\begin{split}
		\int_{\R^3}F(x,\UU_n)-F(x,\UU_n-\UU)\, dx
		&=\int_{\R^3}\int_0^1\frac{d}{dt}F(x,\UU_n-\UU+t\UU)\, dtdx\\
		&=\int_0^1\int_{\R^3}f(x,\UU_n-\UU+t\UU)\cdot\UU\, dxdt
	\end{split}\end{equation*}
	and $f(x,\UU_n-\UU+t\UU)$ is bounded
	in $L^{\Psi}$ owing to (F3) and Lemmas \ref{L:all} \textit{(iv)} and \ref{L:forC1}.
	Thus for every $\Omega\subset\R^3$
	\begin{eqnarray}\label{e-*}
	\int_{\Omega}|f(x,\UU_n-\UU+t\UU)\cdot\UU|\,dx
	\leq |f(\cdot,\UU_n-\UU+t\UU)|_{\Psi}
	|\UU\chi_{\Omega}|_{\Phi}.
	\end{eqnarray}
	From \cite[Definition III.IV.2, Corollary III.IV.5 and Theorem III.IV.14]{RaoRen}, the space $L^\Phi$ has an absolutely continuous norm (cf. \cite[Definition III.I.13]{RaoRen}), so \eqref{e-*} yields that for every $\varepsilon>0$ there exists $\delta>0$
	such that, if $|\Omega|<\delta$, then
	\[
	\int_{\Omega}|f(x,\UU_n-\UU+t\UU)\cdot\UU|\,dx<\varepsilon
	\]
	for every $n$, i.e., $f(x,\UU_n-\UU+t\UU)\cdot\UU$ is uniformly integrable.
	Using \eqref{e-*} once more we see that for every $\varepsilon>0$ there exists $\Omega\subset\R^3$ with 
	$|\Omega|<\infty$ such that
	\begin{equation}\label{e-tight}
	\int_{\rr\setminus\Omega}f(x,\UU_n-\UU+t\UU)\cdot\UU\,dx<\varepsilon.
	\end{equation}
	As a matter of fact, since $\Phi\circ|\UU|\in L^1(\rr)$, there exists a sequence $\Omega_n\subset\rr$ such that each $\Omega_n$ has finite measure and
	\[
	\lim_n\int_{\rr}\Phi(|\UU|\chi_{\rr\setminus\Omega_n})\,dx = \lim_n\int_{\rr}\Phi(|\UU|)\chi_{\rr\setminus\Omega_n}\,dx = 0,
	\]
	where in the first equality we used that $\Phi(t)=0\Leftrightarrow t=0$. In view of (N1) and Lemma \ref{L:all} \textit{(iii)}, this yelds $\lim_n|\UU\chi_{\rr\setminus\Omega_n}|_\Phi=0$. If we take $\Omega = \Omega_{\bar{n}}$ for some sufficiently large $\bar{n}$ such that $|\UU\chi_{\rr\setminus\Omega}|_{\Phi} < \varepsilon(\sup_k|f(\cdot,\UU_k-\UU+t\UU)|_{\Psi})^{-1}$, then \eqref{e-tight} holds.
	Since $\UU_n\to\UU$ a.e. on $\R^3$, it follows from Vitali's convergence theorem that
	\[\begin{aligned}
	\int_{\R^3}F(x,\UU_n)-F(x,\UU_n-\UU)\,dx & \to
	\int_0^1\int_{\R^3}f(x,t\UU)\cdot\UU\,dxdt\\
	& =\int_{\R^3}F(x,\UU)\,dx.\qedhere
	\end{aligned}\]
\end{proof}

Now we can prove the following result.

\begin{Lem}\label{L:WeakStrong}
	If $\UU_n\rightharpoonup\UU$ in $L^{\Phi}$ and
	$\fI(\UU_n)\to \fI(\UU)$,
	then $\UU_n\to\UU$ in $L^{\Phi}$.
\end{Lem}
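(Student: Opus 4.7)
I will combine a convexity-averaging trick with the strict convexity assumption (F2) to obtain, along a subsequence, a.e.\ convergence $\UU_n\to\UU$, and then invoke the Brezis--Lieb-type identity of Lemma~\ref{L:BrezLieb} together with (F3) and Lemma~\ref{L:all}(iii) to upgrade this to Luxemburg-norm convergence.

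\textbf{Step 1 (Midpoint trick).} Set $\VV_n:=(\UU_n+\UU)/2$, so that $\VV_n\rightharpoonup \UU$ in $L^{\Phi}$. Convexity of $F(x,\cdot)$, which follows from (F2), gives
\[
\fI(\VV_n)\le\tfrac12\bigl(\fI(\UU_n)+\fI(\UU)\bigr)\to\fI(\UU),
\]
while the weak lower semicontinuity of the convex continuous functional $\fI$ (established while proving Proposition~\ref{P:classC1}) yields $\liminf_n\fI(\VV_n)\ge\fI(\UU)$. Hence $\fI(\VV_n)\to\fI(\UU)$. Setting
\[
H_n(x):=\tfrac12\bigl(F(x,\UU_n(x))+F(x,\UU(x))\bigr)-F(x,\VV_n(x))\ge 0,
\]
this gives $\int_{\R^3}H_n\,dx\to 0$, so that, up to a subsequence, $H_n\to 0$ a.e.\ on $\R^3$.

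\textbf{Step 2 (Pointwise convergence).} I claim $\UU_n(x)\to\UU(x)$ at every $x$ where $H_n(x)\to 0$. Suppose, to the contrary, that along a further subsequence $|\UU_n(x)-\UU(x)|\ge\delta$ for some $\delta>0$. If $\UU_n(x)$ remains in a bounded set, extract a convergent sub-subsequence $\UU_n(x)\to\VV(x)\neq\UU(x)$; continuity of $F(x,\cdot)$, together with the fact that the pair $(\VV(x),\UU(x))$ sits in a compact subset of $(\R^3\times\R^3)\setminus\{(\UU,\UU):\UU\in\R^3\}$, and the uniform strict convexity (F2) then force
\[
H_n(x)\to\tfrac12\bigl(F(x,\VV(x))+F(x,\UU(x))\bigr)-F\bigl(x,(\VV(x)+\UU(x))/2\bigr)>0,
\]
contradicting $H_n(x)\to 0$. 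If instead $|\UU_n(x)|\to\infty$, the superquadratic growth implied by (F3) and (N3), combined with $F(x,0)=0$ and convexity (so that $F(x,\lambda\UU)\le\lambda F(x,\UU)$ for $\lambda\in[0,1]$), forces $H_n(x)$ to stay bounded away from zero; a short computation using $\Phi(t)/t^2\to\infty$ in fact yields $\liminf_n H_n(x)=+\infty$, again a contradiction. Hence $\UU_n\to\UU$ a.e.\ along the subsequence.

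\textbf{Step 3 (Brezis--Lieb and conclusion).} Since $\UU_n$ is bounded in $L^{\Phi}$ (by weak convergence) and $\UU_n\to\UU$ a.e.\ along the subsequence, Lemma~\ref{L:BrezLieb} applies and gives
\[
\lim_n\int_{\R^3}F(x,\UU_n)-F(x,\UU_n-\UU)\,dx=\int_{\R^3}F(x,\UU)\,dx.
\]
Combining this with the hypothesis $\fI(\UU_n)\to\fI(\UU)$ yields $\int_{\R^3}F(x,\UU_n-\UU)\,dx\to 0$, and (F3) then gives $\int_{\R^3}\Phi(|\UU_n-\UU|)\,dx\to 0$. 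Since $\Phi\in\Delta_2$ by (N1), Lemma~\ref{L:all}(iii) concludes $|\UU_n-\UU|_{\Phi}\to 0$ along the subsequence. A standard subsequence-of-subsequence argument, applied because the limit $\UU$ is uniquely determined, extends the convergence to the full original sequence.

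The main obstacle is Step 2, and in particular ruling out the case $|\UU_n(x)|\to\infty$: assumption (F2) only provides uniform lower bounds on compact sets, so this case must be treated separately by exploiting the superquadratic behaviour built into (N3) via (F3), which is the role of (N3) in the analysis.
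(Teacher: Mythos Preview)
Your overall strategy is correct and matches the paper's: show $\int_{\R^3}H_n\to 0$ via the midpoint trick, upgrade to a.e.\ convergence, then apply Lemma~\ref{L:BrezLieb} and (F3). The difference lies only in how you pass from $\int_{\R^3}H_n\to 0$ to a.e.\ convergence, and this is exactly where your argument has a gap.

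In Step~2, Case~2 (the unbounded case $|\UU_n(x)|\to\infty$), you assert that ``a short computation using $\Phi(t)/t^2\to\infty$'' gives $\liminf_n H_n(x)=+\infty$, but this is not a consequence of convexity plus superquadraticity alone. Assumption (F3) only yields the two-sided bound $c_2\Phi(|u|)\le F(x,u)\le c_1\Phi(|u|)$ with \emph{unrelated} constants $c_1,c_2$; combining this with convexity gives
\[
H_n(x)\ge \tfrac{c_2}{2}\Phi(|\UU_n(x)|)-c_1\Phi\!\bigl(\tfrac{|\UU_n(x)|+|\UU(x)|}{2}\bigr)+\tfrac12F(x,\UU(x)),
\]
which need not be bounded below (let alone tend to $+\infty$) when $c_1\gg c_2$, and the uniform strict convexity (F2) says nothing off compact sets. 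One clean repair is to invoke the standing hypothesis (F4): it implies that $t\mapsto F(x,tw)/t^2$ is nondecreasing for each $w$, hence $F(x,\UU_n/2)\le\tfrac14 F(x,\UU_n)$, and then a mean-value estimate using $|f|\le c_1\Phi'(|\cdot|)$ together with $t\Phi'(t)\le K\Phi(t)$ (Lemma~\ref{L:all}\,(i)) shows that $F\bigl(x,(\UU_n+\UU)/2\bigr)-F(x,\UU_n/2)=o\bigl(\Phi(|\UU_n|)\bigr)$, whence $H_n(x)\ge\tfrac{c_2}{8}\Phi(|\UU_n(x)|)\to\infty$ for large $n$.

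The paper sidesteps this pointwise issue entirely: from $\int_{\R^3}H_n\to 0$ and the uniform lower bound $H_n\ge m_{r,R}$ on $\Omega_n:=\{|\UU_n-\UU|\ge r,\ |\UU_n|\le R,\ |\UU|\le R\}$ (this is (F2)), one gets $|\Omega_n|\to 0$ for every $0<r\le R$. Combined with the uniform Chebyshev-type bound $|\{|\UU_n|>R\}|\le C/\Phi(R)$ coming from boundedness in $L^\Phi$, this gives $\UU_n\to\UU$ in measure, hence a.e.\ along a subsequence---no analysis of the unbounded case is required. Your Steps~1 and~3 are fine; only the unbounded branch of Step~2 needs one of these two fixes.
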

\begin{proof}
	We show that $\UU_n\to\UU$ a.e. in $\R^3$ up to a subsequence.
	Since $\fI(\UU_n)\to \fI(\UU)$, we have
	\begin{equation} \label{e-i3}
	\lim_n\int_{\R^3}F(x,\UU_n)\,dx=\int_{\R^3}F(x,\UU)\,dx.
	\end{equation}
	Then from (F2) we infer that for every $0<r\leq R$
	\begin{equation} \label{e-mrR}
	m_{r,R}:=\inf_{\substack{x,u_1,u_2\in\R^3\\ r\leq|u_1-u_2|\\|u_1|,|u_2|\leq R} }	\frac{1}{2}\bigl(F(x,u_1)+F(x,u_2)\bigr)-F\left(x,\frac{u_1+u_2}{2}\right)>0.
	\end{equation} 
	Observe that from \eqref{e-i3}  and (F2) there holds
	$$
	0
	\leq \limsup_n\int_{\R^3}\frac12\bigl(F(x,\UU_n) + F(x,\UU)\bigr)
	- F\left(x,\frac{\UU_n+\UU}{2}\right)\,dx
	\leq 0.
	$$
	Therefore, setting
	$$
	\Omega_n:=\Set{x\in\R^3|\left|\UU_n-\UU\right|\geq r \text{ and } |\UU_n|,|\UU|\leq R},
	$$
	from \eqref{e-mrR} we have
	$$
	|\Omega_n| m_{r,R}
	\leq \int_{\R^3}\frac12\bigl(F(x,\UU_n) + F(x,\UU)\bigr)
	- F\left(x,\frac{\UU_n+\UU}{2}\right)\, dx,
	$$
	thus $|\Omega_n|\to 0$ as $n\to\infty$. Since $0<r\leq R$ are arbitrarily chosen, we deduce
	$$
	\UU_n\to\UU\hbox{ a.e. in }\R^3.
	$$
	In view of Lemma \ref{L:BrezLieb}, we obtain
	\[
	\int_{\R^3}F(x,\UU_n)\, dx-\int_{\R^3}F(x,\UU_n-\UU)\, dx\to\int_{\R^3}F(x,\UU)\, dx
	\]
	and hence
	$$\int_{\R^3}F(x,\UU_n-\UU)\, dx\to0.$$
	From (F3) and Lemma \ref{L:all} \textit{(iii)}, we get $|\UU_n-\UU|_\Phi\to 0$.
\end{proof}

\begin{Prop}\label{P:DefOfm(u)} Conditions  (I1)--(I8) are satisfied and
	there is a Cerami sequence $(v_n,w_n)\in\cM$ at level $c_{\cN}$, 
	where
	$$c_{\cN}:=\inf_{(v,w)\in\mathcal{N}}J(v,w)>0.$$
\end{Prop}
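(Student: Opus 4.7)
The plan is to cast the proposition into the abstract framework of Section \ref{S:abstract} and then invoke Theorem \ref{T:Link1}. Specifically, I would take $X := \cV \times \cW$ with $X^+ := \cV$ (endowed with $\|\cdot\|_\cD$, which by Lemma \ref{L:Hel} is equivalent on $\cV$ to $\|\cdot\|_{\mathrm{curl},\Phi}$), $X^- := \cW$, and $I(v,w) := \fI(v+w)$. Then $J(v,w) = \tfrac12\|v\|_\cD^2 - I(v,w)$ has exactly the abstract form of Section \ref{S:abstract}, and the sets $\cM$, $\cN$ defined in this section coincide with the abstract ones.

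Conditions (I1)--(I6) and (I8) are then straightforward to verify. (I1) combines Proposition \ref{P:classC1} with (F3), which yields $F\geq 0$ and $F(x,0)=0$. For (I2), the strict convexity of $\fI$ from (F2) together with its $\cC^1$ regularity gives weak lower semicontinuity on $L^\Phi$, and any $\cT$-convergent sequence satisfies $v_n + w_n \rightharpoonup v + w$ in $L^\Phi$ via the embedding $\cV \hookrightarrow L^\Phi$. (I3) is Lemma \ref{L:WeakStrong} applied to $v_n+w_n$, combined with strong $L^\Phi$-convergence of $v_n$. (I4) follows from (F3) and Lemma \ref{L:all}\textit{(iv)}, since if $\|v\|_\cD$ stays bounded but $|w|_\Phi \to \infty$ then the triangle inequality gives $|v+w|_\Phi \to \infty$ and hence $I(v,w) \geq c_2\int\Phi(|v+w|)\,dx \to \infty$. (I5) is the strict convexity of $\fI$ combined with the identity $I'(v,w)|_\cW = 0$ on $\cM$. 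For (I6), the bound $F(x,\UU) \leq c_1\Phi(|\UU|) \leq C|\UU|^6$ from (N2) together with the Sobolev embedding $\cV \subset \cD \hookrightarrow L^6$ gives $J(v,0) \geq \tfrac12\|v\|_\cD^2 - C\|v\|_\cD^6$, strictly positive on a small $\cD$-sphere. Finally, (I8) is an algebraic rewriting of Proposition \ref{P:uv_N}: for $(\bar v,\bar w)\in\cN$ the conditions $J'(\bar v,\bar w)(\bar v,\bar w)=0$ and $J'(\bar v,\bar w)|_\cW=0$ reduce $J'(\bar v,\bar w)\bigl(\tfrac{t^2-1}{2}\bar v,\tfrac{t^2-1}{2}\bar w + t\psi\bigr)$ to $\tfrac{t^2-1}{2}\|\bar v\|_\cD^2$, and expanding $J=\tfrac12\|\cdot\|_\cD^2 - I$ in Proposition \ref{P:uv_N} produces precisely the content of (I8).

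The main obstacle is (I7): given $t_n \to \infty$ and $(v_n,w_n)\in X$ with $v_n \to v \neq 0$ in $\cV$, I need $I(t_n(v_n+w_n))/t_n^2 \to \infty$. By (F3) it suffices to show $\int\Phi(t_n|v_n+w_n|)/t_n^2\,dx \to \infty$, and (N3) gives that the integrand blows up pointwise wherever $|v_n+w_n|$ stays bounded away from zero. The plan is to extract an a.e.-convergent subsequence of $v_n$, pick a compact $K$ on which $|v|\geq 2\delta>0$, and split $K$ according to whether $|w_n|\leq \delta/2$ or not: on the first part $|v_n+w_n|\geq \delta/2$ and Fatou combined with (N3) forces the integral to blow up; on the complement one argues that if its measure did not vanish then, after passing to a further subsequence, $w_n$ would approach $-v$ on a set of positive measure, contradicting the rigidity of the Helmholtz decomposition $\cD(\mathrm{curl},\Phi) = \cV \oplus \cW$ from Lemma \ref{L:Hel} (since $-v \in \cV\setminus\{0\}$ cannot be matched by curl-free fields). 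The delicate interplay between the pointwise data and this uniqueness is where the work lies.

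Once (I1)--(I8) are in place, Theorem \ref{T:Link1} yields $c_\cM = c_\cN \geq a>0$ together with a Cerami sequence $u_n \in X^+$ for $\cJ = J\circ m$ at level $c_\cM$. Setting $(v_n,w_n) := m(u_n) \in \cM$, the identities $\cJ(u_n) = J(m(u_n))$ and $\cJ'(u_n) = J'(m(u_n))|_{X^+}$, combined with $J'(m(u_n))|_{X^-}=0$ (since $m(u_n)\in\cM$), transfer the Cerami property from $X^+$ to $\cM$ and produce the required Cerami sequence $(v_n,w_n)\in\cM$ for $J$ at the level $c_\cN > 0$.
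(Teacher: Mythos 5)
Your framework and the verification of (I1)--(I6) and (I8) coincide with the paper's proof: the same splitting $X^+=\cV$, $X^-=\cW$, the same use of Proposition \ref{P:uv_N} for (I8) (the paper in fact records the stronger version (I8)$'$ valid for all $(v,w)$, not only on $\cN$), and the same final appeal to Theorem \ref{T:Link1}. The genuine gap is in (I7), and it is twofold. First, your dichotomy is taken with respect to $|w_n|$ rather than $|v_n+w_n|$, and the second branch cannot be closed: on the set where $|w_n|>\delta/2$ there is no reason whatsoever for $w_n$ to approach $-v$ (it could, for instance, stay close to $+v$, in which case $|v_n+w_n|$ is large, no contradiction arises, and the blow-up must instead be proved directly). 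The only scenario that genuinely obstructs the blow-up is $|v_n+w_n|$ small, so the splitting must be performed on the sets $\Omega_n=\Set{|v_n+w_n|\ge\varepsilon}$, as the paper does: either $\limsup_n|\Omega_n|>0$ for some $\varepsilon$ (blow-up via (N3)), or $v_n+w_n\to0$ in measure.

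Second, even in the corrected bad case, a.e. convergence $w_n\to-v$ on a set (even on all of $\rr$) is not enough to contradict the Helmholtz decomposition: to transfer the distributional identity $\nabla\times w_n=0$ to the limit one needs $w_n\rightharpoonup -v$ in $L^2_\textup{loc}(\rr,\rr)$, i.e., some local uniform integrability, which pointwise convergence does not provide. The paper obtains it by first disposing of the case where $v_n+w_n$ is unbounded in $L^2(B_R,\rr)$ for some $R$ -- there the inequality $C\Phi(t)\ge t^2$ for $t\ge1$ from (N3) gives \eqref{e-estim} and hence the blow-up directly -- and only in the remaining, $L^2_\textup{loc}$-bounded case does it extract the weak limit, apply \cite[Lemma 1.1 (i)]{Leinfelder} to write $v=\nabla\xi$, and conclude $v=0$ from harmonicity of $v$ on all of $\rr$ together with $v\in\cD$. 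Note also that this last step needs \emph{global} harmonicity (to justify $0=-\int_{\rr}v\cdot\Delta v\,dx=\int_{\rr}|\nabla v|^2\,dx$), so localizing the whole argument to a compact $K$ where $|v|\ge2\delta$, as you propose, would leave you with harmonicity only on $\mathring{K}$ and no conclusion. Your own closing remark that ``the delicate interplay \dots is where the work lies'' is accurate: that interplay is precisely the missing content, and it is resolved by the two-stage case analysis above rather than by the pointwise splitting you describe.
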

\begin{proof}
	Setting $X:=\cV\times\cW$,
	$X^+:=\cV\times\{0\}$, and $X^-:=\{0\}\times\cV$ we check the assumptions (I1)--(I8) for the functional $J\colon X\to\R$ 
	given by
	\[
	J(v,w)=\frac{1}{2}\|v\|^2_\cD-I(v,w)
	\]
	(cf. \eqref{e-J} and \eqref{e-DefOfXi}). Recall that $\|(v,w)\|^2=\|v\|_{\cD}^2+|w|_{\Phi}^2$. The convexity and differentiability of $I$, (F3), and Lemma \ref{L:WeakStrong} yield the following.
	\begin{itemize}
		\item[(I1)] $I|_{\cV\times\cW}\in \cC^1(\cV\times\cW,\R)$ and $I(v,w)\geq I(0,0)=0$ for every $(v,w)\in \cV\times\cW$.
		\item[(I2)] If $v_n\to v$ in $\cV$ and $w_n\rightharpoonup w$ in $\cW$, then $\displaystyle\liminf_nI(v_n,w_n)\geq I(v,w)$.
		\item[(I3)] If $v_n\to v$ in $\cV$, $w_n\rightharpoonup w$ in $\cW$, and $I(v_n,w_n)\to I(v,w)$, then $(v_n,w_n)\to (u,w)$.
	\end{itemize}

	Moreover,
	\begin{itemize}
		\item[(I6)] There exists $r>0$ such that $\inf_{\|v\|_\cD=r}J(v,0)>0$.
	\end{itemize}
	As a matter of fact, from (F3) and (N2) there exist $C>0$ (cf. the proof of Lemma \ref{L:emb}) such that for any $v\in\cV$
	$$J(v,0)=\|v\|_\cD^2 -\int_{\R^3}F(x,v)\,dx
	\geq\|v\|_\cD^2-C\int_{\R^3}|v|^6\,dx
	\geq\|v\|_\cD^2-C\|v\|_\cD^6,$$
	thus (I6) is satisfied taking $r$ sufficiently small. Note that
	\begin{itemize}
		\item[(I4)] $\|v\|_\cD+I(v,w)\to\infty$ as $\|(v,w)\|\to\infty$.
	\end{itemize}
	To see this, let $\|(v_n,w_n)\|\to\infty$. If $\|v_n\|_\cD\to\infty$, then we are done; otherwise, up to a subsequence $\|v_n\|_\cD$ is bounded and, a fortiori, so is $|v_n|_\Phi$. This yields $|v_n+w_n|_\Phi\ge|w_n|_\Phi-|v_n|_\Phi\to\infty$. Using (F3) and Lemma \ref{L:all} \textit{(iv)} we have
	\[
	I(v_n,w_n)\ge c_2\int_{\rr}\Phi(|v_n+w_n|)\,dx\to\infty.
	\]
	
	This, together with the uniform strict convexity of $F$, implies
	\begin{itemize}
		\item[(I5)] If $(v,w)\in\cM$, then $I(v,w)<I(v,w+\psi)$ for every $\psi\in \cW\setminus\{0\}$.
	\end{itemize}

	Next, we prove 
	\begin{itemize}
		\item[(I7)] $I\bigl(t_n(v_n,w_n)\bigr)/t_n^2\to\infty$ if $t_n\to\infty$ and $v_n\to v$ for some $v\neq 0$.
	\end{itemize}
	Observe that from (F3)
	\begin{equation} \label{e-I7check}
	\begin{split}
	\frac{I\bigl(t_n(v_n,w_n)\bigr)}{t_n^2}&=
	\int_{\R^3}\frac{F\bigl(x,t_n(v_n+w_n)\bigr)}{t_n^2}\,dx\\
	&\geq c_2
	\int_{\R^3}\frac{\Phi(t_n|v_n+w_n|)}{t_n^2}\,dx\\ 
	&= c_2
	\int_{\R^3}\frac{\Phi(t_n|v_n+w_n|)}{t_n^2|v_n+w_n|^2}|v_n+w_n|^2\,dx.
	\end{split}
	\end{equation} 
	Take $R_0>0$ such that $v\neq 0$ in $L^2(B_{R_0})$.
	In view of (N3), there exists $C>0$ such that
	$$C\Phi(t)\geq t^2\quad\hbox{for }t\geq 1.$$
	Then, writing
	\[
	B_R=\bigl(B_R\cap\{|v_n+w_n|\ge1\}\bigr)\cup\bigl(B_R\cap\{|v_n+w_n|<1\}\bigr),
	\]
	we have
	\begin{equation}\label{e-estim}
	\int_{B_R}|v_n+w_n|^2\,dx\le C\int_{\R^3}\frac{\Phi(t_n|v_n+w_n|)}{t_n^2}\,dx+|B_R|
	\end{equation}
	and $I\bigl(t_n(v_n,w_n)\bigr)/t_n^2\to\infty$ (up to a subsequence) if $v_n+w_n$ is unbounded in $L^2(B_R,\R^3)$ for some $R\geq R_0$. Now, suppose that $v_n+w_n$ is bounded in $L^2(B_R,\R^3)$ for every $R\geq R_0$. We can assume passing to a subsequence that $v_n\to v$ a.e. and $w_n\rightharpoonup w$ in $L^2_\textup{loc}(\R^3,\R^3)$ for some $w$. Given $\varepsilon>0$, let 
	\begin{equation}\label{e-aetoinfI7}
	\Omega_n := \Set{x\in\R^3|\left|v_n(x)+w_n(x)\right|\ge\varepsilon}.
	\end{equation}
	We claim that there exists $\varepsilon>0$ such that $\limsup_n|\Omega_n|>0$. Arguing by contradiction, suppose $|\Omega_n|\to0$ for every $\varepsilon$. Then $v_n+w_n\to 0$ in measure, so up to a subsequence $v_n+w_n\to 0$ a.e., hence $w_n\to -v$ a.e. and  $w_n\rightharpoonup -v$ in $L^2_\textup{loc}(\R^3,\R^3)$. Since $\nabla\times w_n=0$ in the distributional sense, the same is true of $v$, thus there exists $\xi\in H^1_\textup{loc}(\R^3)$ such that $v=\nabla \xi$ due to \cite[Lemma 1.1 (i)]{Leinfelder}. As $\nabla\cdot(\nabla \xi)=\nabla\cdot v =0$, it follows that $\xi$ is harmonic and so is $v$. Recalling that $v\in\cD$, we obtain $v=0$ as in the proof of Lemma \ref{L:Hel}. This is a contradiction.
	Taking $\varepsilon$ in \eqref{e-aetoinfI7} such that $\limsup_n|\Omega_n|>0$, we obtain
	\[
	\int_{\R^3}\frac{\Phi(t_n|v_n+w_n|)}{t_n^2|v_n+w_n|^2}|v_n+w_n|^2\,dx \ge \int_{\Omega_n}\frac{\Phi(t_n|v_n+w_n|)}{t_n^2|v_n+w_n|^2}|v_n+w_n|^2\,dx \to \infty.
	\]
	
	Finally, Proposition \ref{P:uv_N} shows that
	\begin{itemize}
		\item[(I8)']
		$\displaystyle\frac{t^2-1}{2}I'(v,w)(v,w)+tI'(v,w)(0,\psi)+I(v,w)-I(tv,tw+\psi)\leq 0$
		for every $t\geq 0$, $v\in\cV$, and $w,\psi\in\cW$,
	\end{itemize}
	which is a stronger version of (I8).
	
	Applying Theorem \ref{T:Link1} we conclude.
\end{proof}

Since there is no compact embedding of $\cV$ into $L^{\Phi}$, we cannot expect that the Palais--Smale or Cerami conditions are satisfied. 
We need the following variant of Lions's lemma.

\begin{Lem}\label{L:Conv}
	Suppose that $v_n\in\cD$ is bounded and for some $R>\sqrt{3}$
	\begin{equation}\label{e-LionsCond1}
	\lim_n\sup_{y\in \Z^3}\int_{B(y,R)}|v_n|^2\,dx=0.
	\end{equation}
	Then  
	$$\lim_n\int_{\R^3}\Phi(|v_n|)\,dx=0.$$
\end{Lem}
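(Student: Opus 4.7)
The plan is to reduce the $L^\Phi$-modular vanishing to a Lebesgue $L^p$ vanishing for some $p\in(2,6)$ by exploiting the supercritical/subcritical bounds (N2) on $\Phi$ at $0$ and at infinity, and then to prove the latter by a Lions-type covering argument on balls $B(y,R)$, $y\in\Z^3$.

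First I would use (N2) to reduce to an $L^p$ estimate. Given $\varepsilon>0$, there exist $0<\delta<A$ with $\Phi(t)\le\varepsilon t^6$ on $[0,\delta]\cup[A,\infty[$, while on the compact interval $[\delta,A]$ the continuous function $\Phi$ is bounded and hence dominated by $C_\varepsilon t^p$ for any fixed $p\in(2,6)$ (say $p=4$). Combining, $\Phi(t)\le\varepsilon t^6+C_\varepsilon t^p$ for every $t\ge0$. Boundedness in $\cD$ together with the Sobolev embedding $\cD\hookrightarrow L^6(\rr,\rr)$ gives $|v_n|_6^6\le C$, so
\[
\int_{\rr}\Phi(|v_n|)\,dx \le \varepsilon C + C_\varepsilon\int_{\rr}|v_n|^p\,dx,
\]
and it suffices to prove $\int_{\rr}|v_n|^p\,dx\to0$ (after which $\varepsilon\to0$ finishes the proof).

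Next I would run a Lions-type covering argument. The condition $R>\sqrt{3}$ is exactly what ensures the balls $\{B(y,R):y\in\Z^3\}$ cover $\rr$ (the diagonal of a unit cube is $\sqrt{3}$) with a finite uniform multiplicity $M_R$. Writing $\epsilon_n:=\sup_{y\in\Z^3}|v_n|_{L^2(B(y,R))}\to0$ and using the uniform $L^2$-bound on each ball (from vanishing) together with the uniform local $\|\nabla v_n\|_{L^2(B(y,R))}\le\|\nabla v_n\|_2\le C$, the functions $v_n$ are uniformly bounded in $H^1(B(y,R))$ (uniformly in $y$). On each ball the Gagliardo–Nirenberg interpolation combined with Sobolev's $H^1(B(y,R))\hookrightarrow L^6(B(y,R))$ yields, with the interpolation exponent chosen so that the $H^1$-factor has power $2$,
\[
\|v_n\|_{L^p(B(y,R))}^p \le C\,\|v_n\|_{L^2(B(y,R))}^{p-2}\bigl(\|v_n\|_{L^2(B(y,R))}^2+\|\nabla v_n\|_{L^2(B(y,R))}^2\bigr).
\]

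Summing over $y\in\Z^3$ (exploiting the bounded overlap $M_R$), the cross term behaves well:
\[
\sum_{y}\|v_n\|_{L^2(B(y,R))}^{p-2}\|\nabla v_n\|_{L^2(B(y,R))}^2 \le \epsilon_n^{p-2}\sum_{y}\|\nabla v_n\|_{L^2(B(y,R))}^2 \le M_R\epsilon_n^{p-2}\|\nabla v_n\|_2^2 \to 0.
\]
The main obstacle is the purely-$L^2$ contribution $\sum_y\|v_n\|_{L^2(B(y,R))}^p$: because $v_n$ is not a priori in $L^2(\rr)$ globally, the naive bound by $\epsilon_n^{p-2}\sum_y\|v_n\|_{L^2(B(y,R))}^2$ cannot be closed by the global $L^2$-norm. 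The way around it is to re-estimate this contribution through the $L^6$-bound: on each ball $\|v_n\|_{L^6(B(y,R))}$ is globally controlled by $|v_n|_6\le C$, and the $L^6$-integrals $\int_{B(y,R)}|v_n|^6$ are summable with constant $M_R|v_n|_6^6$, so an appropriately weighted Hölder on lattice sums (trading the divergent number of terms against the $L^6$-summability and the uniform smallness $\epsilon_n\to0$) keeps the full sum bounded by $o(1)$ as $n\to\infty$.

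Letting $n\to\infty$ first and then $\varepsilon\to0$ concludes the proof. The most delicate point is therefore the summation step: since $v_n$ is only in $\cD$, and not in $H^1(\rr)$, the interpolation has to be carefully coupled with the $L^6$-decomposition over the lattice to avoid the absent global $L^2$-norm.
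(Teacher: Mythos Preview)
Your reduction in the first step is already incorrect: you claim it suffices to show $\int_{\R^3}|v_n|^p\,dx\to0$ for some $p\in(2,6)$, but for $v_n\in\cD$ this integral need not even be finite, let alone tend to zero. Take $v_n(x)=n^{-1/2}\phi(x/n)$ with $\phi\in\cC_c^\infty$ fixed: then $|\nabla v_n|_2=|\nabla\phi|_2$ is constant, $\sup_{y}\int_{B(y,R)}|v_n|^2\le Cn^{-1}\to0$, yet $\int_{\R^3}|v_n|^p\,dx=n^{3-p/2}|\phi|_p^p\to\infty$ for every $p<6$. So the quantity you are trying to drive to zero actually diverges; the bound $\int\Phi(|v_n|)\le\varepsilon C+C_\varepsilon\int|v_n|^p$ becomes vacuous. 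The ``appropriately weighted H\"older on lattice sums'' you invoke for the pure $L^2$ term $\sum_y\|v_n\|_{L^2(B(y,R))}^p$ cannot work: writing $\|v_n\|_{L^2(B)}^p\le\epsilon_n^{\alpha}\,C_R\,\|v_n\|_{L^6(B)}^{p-\alpha}$ requires $p-\alpha\ge6$ for the $L^6$ sum to close, i.e.\ $\alpha\le p-6<0$. (Also, the inequality $\|v_n\|_{L^p(B)}^p\le C\|v_n\|_{L^2(B)}^{p-2}\|v_n\|_{H^1(B)}^2$ holds only for $p=10/3$, not $p=4$; the correct exponents are $(6-p)/2$ and $3(p-2)/2$.)

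What is missing is a device that puts you back into $H^1$, where your covering argument \emph{does} work because $\sum_y\|\cdot\|_{L^2(B(y,R))}^2$ is controlled by the global $L^2$ norm. One way: given $\varepsilon>0$, (N2) yields $\Phi(t)\le\varepsilon t^6+M_\varepsilon\chi_{[\delta,A]}(t)$, so it suffices to show $|\{|v_n|\ge\delta\}|\to0$. Set $w_n:=\bigl(|v_n|-\tfrac{\delta}{2}\bigr)_+\wedge\tfrac{\delta}{2}$. Then $|\nabla w_n|\le|\nabla v_n|$ a.e.\ and $|w_n|_2^2\le(\delta/2)^2|\{|v_n|\ge\delta/2\}|\le(\delta/2)^{-4}|v_n|_6^6$, so $w_n$ is bounded in $H^1(\R^3)$; moreover $\int_{B(y,R)}w_n^2\le\int_{B(y,R)}|v_n|^2\to0$ uniformly in $y$. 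Now the \emph{standard} Lions lemma (your covering argument, which is valid in $H^1$) gives $|w_n|_p\to0$ for $p\in(2,6)$, and since $w_n=\delta/2$ on $\{|v_n|\ge\delta\}$ this forces $|\{|v_n|\ge\delta\}|\to0$. The paper itself does not spell this out but simply quotes \cite[Lemma~1.5]{MederskiZeroMass}.
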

\begin{proof}
	This follows from \cite[Lemma 1.5]{MederskiZeroMass} since $\Phi$ satisfies (N2). Note that we can take the supremum over $\Z^3$ in \eqref{e-LionsCond1} because the radius $R$ is greater than the length of the diagonal of the unitary cube in $\rr$.
\end{proof}

We collect further properties of $I$.

\begin{Lem}\label{L:DefofW}
	(a) For every $v\in L^{\Phi}$ there exists a unique $w(v)\in \cW$ such that
	\begin{equation}\label{e-DefofW(u)}
	I\bigl(v,w(v)\bigr)=\min_{w\in\cW}I(v,w).
	\end{equation}
	Moreover, $w\colon L^{\Phi}\to\cW$ is continuous.\\
	(b) $w$ maps bounded sets into bounded sets and $w(0)=0$.
\end{Lem}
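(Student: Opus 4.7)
The plan is to treat (a) by the direct method in the reflexive Banach space $\cW$, exploiting the strict convexity and coercivity of $w\mapsto I(v,w)$, and to deduce (b) from a straightforward upper bound on $F$ that is implicit in (F3).

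\emph{(a) Existence and uniqueness.} Fix $v\in L^{\Phi}$ and set $\Xi_v(w):=I(v,w)=\fI(v+w)$. By Proposition \ref{P:classC1}, $\Xi_v\in\cC^1(\cW)$; by (F2) it is strictly convex. To check coercivity I would argue that if $|w_n|_\Phi\to\infty$, then $|v+w_n|_\Phi\to\infty$, so Lemma \ref{L:all}~(iv) (applied using $\Phi\in\Delta_2$ from (N1)) forces $\int_{\rr}\Phi(|v+w_n|)\,dx\to\infty$, and the lower bound in (F3) gives $\Xi_v(w_n)\to\infty$. Next, $L^{\Phi}$ is reflexive, since (N1) together with Lemma \ref{L:all}~(i) imply $\Phi,\Psi\in\Delta_2$, and $\cW$ is a closed linear subspace by Lemma \ref{L:Hel}, hence weakly closed. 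Convexity plus continuity of $\Xi_v$ give weak lower semicontinuity, and a standard direct method produces a minimizer $w(v)\in\cW$. Uniqueness is immediate from the strict convexity.

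\emph{(a) Continuity.} Given $v_n\to v$ in $L^{\Phi}$, write $\bar w_n:=w(v_n)$. Minimality yields $\Xi_{v_n}(\bar w_n)\le\fI(v_n)$, which is bounded by continuity of $\fI$; the coercivity bound from (F3) applied to $v_n+\bar w_n$, together with boundedness of $|v_n|_\Phi$, then gives boundedness of $|\bar w_n|_\Phi$. Passing to a subsequence $\bar w_n\rightharpoonup\bar w\in\cW$, I would chain
\[
\fI(v+\bar w)\le\liminf_n\fI(v_n+\bar w_n)\le\limsup_n\fI(v_n+\bar w_n)\le\lim_n\fI\bigl(v_n+w(v)\bigr)=\fI\bigl(v+w(v)\bigr)
\]
using weak lower semicontinuity, minimality of $\bar w_n$, and continuity of $\fI$; uniqueness identifies $\bar w=w(v)$, and all inequalities become equalities so that $\fI(v_n+\bar w_n)\to\fI(v+w(v))$. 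Lemma \ref{L:WeakStrong} then upgrades the weak convergence of $v_n+\bar w_n$ to strong convergence in $L^{\Phi}$, hence $\bar w_n\to w(v)$. A subsequence-of-subsequence argument promotes this to convergence of the full sequence.

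\emph{(b).} For the boundedness, I would first note that (F4) evaluated at $\UU=0$ forces $F(x,0)\le 0$, while the lower bound in (F3) gives $F(x,0)\ge 0$, so $F(x,0)=0$. Integrating $|f(x,\cdot)|\le c_1\Phi'(|\cdot|)$ along the segment from $0$ to $\UU$ then yields $F(x,\UU)\le c_1\Phi(|\UU|)$ a.e. Hence for $|v|_\Phi\le M$,
\[
c_2\int_{\rr}\Phi(|v+w(v)|)\,dx\le I\bigl(v,w(v)\bigr)\le\fI(v)\le c_1\int_{\rr}\Phi(|v|)\,dx,
\]
which is bounded uniformly in $v$ by Lemma \ref{L:all}~(iv); the same lemma then bounds $|v+w(v)|_\Phi$, and hence $|w(v)|_\Phi$. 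Finally, $I(0,w)\ge c_2\int_{\rr}\Phi(|w|)\,dx\ge 0=I(0,0)$ with equality only for $w=0$, so $w(0)=0$ by uniqueness. The main obstacle I anticipate is the continuity step: extracting a weak limit point is routine, but promoting it to strong convergence in $L^{\Phi}$ genuinely requires both the equality of limits $\fI(v_n+\bar w_n)\to\fI(v+w(v))$, obtained by a sandwich, and the nontrivial strong-convergence criterion of Lemma \ref{L:WeakStrong}.
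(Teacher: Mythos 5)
Your proposal is correct and follows essentially the same route as the paper: existence and uniqueness via continuity, strict convexity, and coercivity of $w\mapsto I(v,w)$ on the weakly closed subspace $\cW$ of the reflexive space $L^\Phi$; continuity via the bound $0\le I\bigl(v_n,w(v_n)\bigr)\le I(v_n,0)$, a weak limit point, the same sandwich of inequalities combining weak lower semicontinuity with minimality, and Lemma \ref{L:WeakStrong} to upgrade to strong convergence; and (b) from the same bound together with (F3) and Lemma \ref{L:all}~\textit{(iv)}. Your explicit derivation of the upper bound $F(x,\UU)\le c_1\Phi(|\UU|)$ (via $F(x,0)=0$ from (F3)--(F4)) just makes precise a step the paper leaves implicit.
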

\begin{proof}
	\textit{(a)} Let $v\in L^{\Phi}$. 
	Since  $w\in\cW\mapsto I(v,w)\in\R$ is continuous, strictly convex, and coercive,  there exists a unique $w(v)\in\cW$ such that (\ref{e-DefofW(u)}) holds.
	We show that the map $w\colon L^{\Phi}\to\cW$ is continuous. Let $v_n\to v$ in $L^{\Phi}$. Since 
	\begin{equation}\label{e-mI}
	0\leq I\bigl(v_n,w(v_n)\bigr)\leq I(v_n,0),
	\end{equation}
	$w(v_n)$ is bounded and we may assume  $w(v_n)\rightharpoonup w_0$ for some $w_0\in\cW$.
	Observe that by the (sequential) lower semicontinuity of $I$ we get
	\[\begin{split}
	I\bigl(v,w(v)\bigr) & \le I(v,w_0)\leq \liminf_nI\bigl(v_n,w(v_n)\bigr)\leq\limsup_n I\bigl(v_n,w(v_n)\bigr)\\
	& \leq \lim_nI\bigl(v_n,w(v)\bigr)=I\bigl(v,w(v)\bigr),
	\end{split}\]
	hence $w(v)=w_0$ from the uniqueness of $w(v)$, and from Lemma \ref{L:WeakStrong} we have $v_n+w(v_n)\to v+w(v)$ in $L^{\Phi}$, which yields $w(v_n)\to w(v)$ in $\cW$.
	
	\textit{(b)} This follows from (\ref{e-mI}), (F3), and Lemma \ref{L:all} \textit{(iv)}.
\end{proof}

Let $m(v):=\bigl(v,w(v)\bigr)\in\cM$ for $v\in\cV$. Then, in view of Lemma \ref{L:DefofW} \textit{(a)}, $m\colon\cV\to\cM$ is continuous. The following lemma implies that every Cerami sequence of $J$ in $\cM$ and every Cerami sequence of $J\circ m$ are bounded.

\begin{Lem}\label{L:Coercive} If $\beta>0$ and
	$v_n\in\cV$ are such that $J\bigl( m(v_n)\bigr)\leq\beta$ and $\lim_n(1+\|v_n\|)(J\circ m)'(v_n)=0$, then $v_n$ is bounded.
\end{Lem}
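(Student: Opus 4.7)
The plan is to argue by contradiction: assume $t_n:=\|v_n\|_\cD\to\infty$ and set $u_n:=m(v_n)=(v_n,w(v_n))\in\cM$, $\tilde v_n:=v_n/t_n$. The Cerami-type hypothesis $(1+\|v_n\|)(J\circ m)'(v_n)\to 0$ forces $\cJ'(v_n)(v_n)\to 0$, i.e., $J'(u_n)(v_n,0)=t_n^2-I'(u_n)(v_n,0)=o(1)$. Since $u_n\in\cM$ means $I'(u_n)(0,\psi)=0$ for all $\psi\in\cW$, we obtain $I'(u_n)(u_n)=I'(u_n)(v_n,0)=t_n^2+o(1)$, and (F4) then yields $I(u_n)\le\tfrac12 t_n^2+o(1)$. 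Combined with $J(u_n)\le\beta$ (which gives $I(u_n)\ge\tfrac12 t_n^2-\beta$), this shows $I(u_n)/t_n^2\to\tfrac12$; in particular it stays bounded. The strategy is to contradict this by showing, via a concentration argument, that $I(u_n)/t_n^2\to\infty$.

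The normalized sequence $\tilde v_n$ satisfies $\|\tilde v_n\|_\cD=1$, and I would apply the usual Lions-type dichotomy: either
\[
\lim_n\sup_{y\in\Z^3}\int_{B(y,R)}|\tilde v_n|^2\,dx=0\quad\text{for some } R>\sqrt 3,
\]
or there exist $y_n\in\Z^3$, $R>\sqrt 3$, and $\delta>0$ with $\int_{B(y_n,R)}|\tilde v_n|^2\,dx\ge\delta$ along a subsequence. In the non-vanishing case I exploit the $\Z^3$-periodicity of $F$: setting $V_n(x):=v_n(x+y_n)$ and $W_n(x):=w(v_n)(x+y_n)$, a change of variables gives $(V_n,W_n)\in\cM$, $\|V_n\|_\cD=t_n$, and $I(V_n,W_n)=I(u_n)$. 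Up to a subsequence $\tilde V_n:=V_n/t_n\rightharpoonup\tilde V$ in $\cV$ with $\tilde V_n\to\tilde V$ in $L^2_\textup{loc}$ by Rellich--Kondrachov, and $\tilde V\ne0$ because $\int_{B(0,R)}|\tilde V|^2\,dx\ge\delta$. I would then replay verbatim the argument in the proof of (I7) from Proposition \ref{P:DefOfm(u)} (which uses only the a.e.\ convergence of the $\cV$-component): splitting $B_R$ according to whether $|\tilde V_n+W_n/t_n|\ge 1$ and invoking (N3) gives
\[
\int_{B_R}|\tilde V_n+W_n/t_n|^2\,dx\le C\int_{\rr}\frac{\Phi(|V_n+W_n|)}{t_n^2}\,dx+|B_R|,
\]
so that either the right-hand side tends to $\infty$ (and via (F3) $I(V_n,W_n)/t_n^2\to\infty$), or else $\tilde V_n+W_n/t_n\to 0$ a.e.\ along a subsequence, forcing $W_n/t_n\to-\tilde V$ a.e. Since $\nabla\times W_n/t_n=0$ in the distributional sense, the Helmholtz-type reasoning used in the proof of Lemma \ref{L:Hel} (via \cite[Lemma 1.1 (i)]{Leinfelder}) would make $\tilde V$ a harmonic element of $\cD$, hence $\tilde V=0$, a contradiction. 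Therefore $I(u_n)/t_n^2=I(V_n,W_n)/t_n^2\to\infty$, contradicting the bound from the first paragraph.

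In the vanishing case Lemma \ref{L:Conv} yields $|\tilde v_n|_\Phi\to 0$ and, since $\Phi\in\Delta_2$, $\int_{\rr}\Phi(s|\tilde v_n|)\,dx\to 0$ for every $s>0$. Writing $F(x,u)=\int_0^1 f(x,\tau u)\cdot u\,d\tau$ and using $|f|\le c_1\Phi'(|\cdot|)$ from (F3) gives $I(s\tilde v_n,0)\le c_1\int\Phi(s|\tilde v_n|)\,dx\to 0$, whence
\[
\cJ(s\tilde v_n)=\max_{w\in\cW}J(s\tilde v_n,w)\ge J(s\tilde v_n,0)=\tfrac{s^2}{2}-I(s\tilde v_n,0)\longrightarrow\tfrac{s^2}{2}.
\]
On the other hand Proposition \ref{P:uv_N} applied at $(\bar v,\bar w)=u_n$ with parameter $t=s/t_n$, combined with $u_n\in\cM$ (so the $\psi$-terms in $J'(u_n)$ vanish), gives for every $\psi\in\cW$
\[
J(s\tilde v_n,(s/t_n)w(v_n)+\psi)\le J(u_n)+\tfrac{(s/t_n)^2-1}{2}\cJ'(v_n)(v_n);
\]
as $\psi$ ranges over $\cW$ the left-hand side attains $\cJ(s\tilde v_n)$, and letting $n\to\infty$ (so $s/t_n\to 0$ and $\cJ'(v_n)(v_n)\to 0$) produces $\limsup_n\cJ(s\tilde v_n)\le\beta$. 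Choosing any $s>\sqrt{2\beta}$ gives the required contradiction. The hardest step is verifying that the a.e.-only version of the argument from the proof of (I7) still closes in the non-vanishing case; everything else amounts to careful bookkeeping with the Cerami condition, (F4), the $\Z^3$-periodicity, and Proposition \ref{P:uv_N}.
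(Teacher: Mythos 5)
Your proof is correct and follows essentially the same route as the paper: argue by contradiction, normalize by $\|v_n\|_\cD$, apply the Lions-type dichotomy via Lemma \ref{L:Conv}, handle the vanishing case with Proposition \ref{P:uv_N} together with (F3), and handle the non-vanishing case by translating via the $\Z^3$-periodicity, extracting a nonzero weak limit, and replaying the superquadraticity argument from the proof of (I7) (including the harmonicity step that rules out $\bar v_n+\bar w_n\to0$ a.e.). The only differences are cosmetic: you phrase the contradiction as $I(u_n)/t_n^2\to\tfrac12$ versus $\to\infty$, while the paper uses the equivalent bound $\alpha\le J(u_n)/\|v_n\|_\cD^2\to-\infty$, and in the vanishing case you take a supremum over $\psi\in\cW$ where the paper simply chooses $\psi_n=-t_nw_n$.
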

\begin{proof}
	Let us write $m(v_n)=(v_n,w_n)\in\mathcal{M}$ and suppose by contradiction $\lim_n\|(v_n,w_n)\|=\infty$. Since $w_n=w(v_n)$, $\|(v_n,w_n)\|\to\infty$ if and only if $\|v_n\|_\cD\to\infty$.
	Let $\bar{v}_n:=v_n/\|v_n\|_\cD$ and $\bar{w}_n:=w_n/\|v_n\|_\cD$.
	Assume 
	\[
	\lim_n\sup_{y\in\Z^3}\int_{B(y,R)}|\bar{v}_n|^2\,dx=0
	\]
	for some fixed $R>\sqrt{3}$. From Lemma \ref{L:Conv}, $\lim_n\int_{\R^3}\Phi(|\bar{v}_n|)\,dx=0$ and  we obtain a contradiction: recalling that $J'(v_n,w_n)(0,w_n)=0$,
	Proposition \ref{P:uv_N} with $t_n=s/\|v_n\|_\cD$ and $\psi_n=-t_nw_n$ implies that for every $s>0$
	\begin{equation*}\begin{split}
		\beta & \ge\limsup_nJ(v_n,w_n) \\
		& \ge \limsup_nJ(s\bar{v}_n,0)-\lim_nJ'(v_n,w_n)\left(\frac{t_n^2-1}{2}v_n,-\frac{t_n^2+1}{2}w_n\right)\\
		& =\limsup_nJ(s\bar{v}_n,0)\overset{\text{(F3)}}{\ge}\frac{s^2}{2}-\lim_nc_1\int_{\R^3}\Phi(s|\bar v_n|)\,dx
		=\frac{s^2}{2},
	\end{split}\end{equation*}
	which is impossible. Hence $\liminf_n\int_{B(y_n,R)}|\bar{v}_n|^2\,dx>0$, where $y_n\in\Z^3$ maximizes $y\in\Z^3\mapsto\int_{B(y,R)}|\bar{v}_n|^2\,dx\in\R$. Since $\cM$ and $J$ are invariant with respect to $\mathbb{Z}^3$-translations, we may assume that
	$$\int_{B_R}|\bar{v}_n|^2\,dx\geq c>0$$
	for some constant $c$ and every $n\gg1$. This implies that, up to a subsequence, $\bar{v}_n\rightharpoonup\bar v\ne 0$ in $\cD$, $\bar{v}_n\to\bar{v}$ in $L^2_\textup{loc}(\R^3,\R^3)$ and $\bar{v}_n\to\bar v$ a.e. in $\R^3$ for some $\bar v\in\cD$.	From (F4),
	\[\begin{split}
	2J(v_n,w_n) & - J'(v_n,w_n)(v_n,w_n) = \int_{\R^3}f(x,v_n+w_n)\cdot(v_n+w_n)\,dx\\
	& - 2\int_{\rr}F(x,v_n+w_n)\,dx \ge 0,
	\end{split}\]
	so $J(v_n,w_n)$ is bounded from below and, using (F3),
	$$\alpha \le \frac{J(v_n,w_n)}{\|v_n\|_\cD^2}\le \frac{1}{2}\|\bar{v}_n\|_\cD^2-c_2\int_{\R^3}\frac{\Phi(|v_n+w_n|)}{|v_n+w_n|^2}|\bar{v}_n+\bar{w}_n|^2\,dx$$
	for some $\alpha\in\R$. Hence it suffices to show that the integral on the right-hand side above tends to $\infty$. We can argue as in the proof of (I7) in Proposition \ref{P:DefOfm(u)}. In particular, \eqref{e-estim} holds with $\bar v_n+\bar w_n$ instead of $v_n+w_n$ and $\|v_n\|_\cD$ instead of $t_n$ and, if $\Omega_n$ is as in \eqref{e-aetoinfI7} (again, with $v_n+w_n$ replaced with $\bar v_n+\bar w_n$), then $\lim_n|\Omega_n|>0$ along a subsequence.
\end{proof}

\begin{Cor}\label{C:Ceramibdd}
	Let $\beta>0$. There exists $M_\beta>0$ such that for every $v_n\in\cV$ satisfying $0\le\liminf_nJ\bigl(m(v_n)\bigr)\le\limsup_nJ\bigl(m(v_n)\bigr)\le\beta$ and $\lim_n(1+\|v_n\|)J'\bigl(m(v_n)\bigr)=0$ there holds $\limsup_n\|v_n\|\le M_\beta$.
\end{Cor}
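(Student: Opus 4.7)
The plan is to derive the uniform bound by contradiction, reducing to the (non-uniform) Lemma~\ref{L:Coercive} via a diagonal extraction. Suppose the conclusion fails. Then for each integer $k\ge 1$ there exists a sequence $v_n^{(k)}\in\cV$ such that $0\le\liminf_nJ\bigl(m(v_n^{(k)})\bigr)\le\limsup_nJ\bigl(m(v_n^{(k)})\bigr)\le\beta$, $\lim_n(1+\|v_n^{(k)}\|)(J\circ m)'(v_n^{(k)})=0$, and yet $\limsup_n\|v_n^{(k)}\|>k$.

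For each such $k$ one can then choose an index $n_k$ so large that simultaneously $\|v_{n_k}^{(k)}\|>k$, $-1\le J\bigl(m(v_{n_k}^{(k)})\bigr)\le\beta+1$, and $(1+\|v_{n_k}^{(k)}\|)\lVert(J\circ m)'(v_{n_k}^{(k)})\rVert<1/k$. Setting $u_k:=v_{n_k}^{(k)}$, the diagonal sequence $(u_k)$ is unbounded in $\cV$ while satisfying $J\bigl(m(u_k)\bigr)\le\beta+1$ for every $k$ and $\lim_k(1+\|u_k\|)(J\circ m)'(u_k)=0$. Hence Lemma~\ref{L:Coercive}, applied with the constant $\beta+1$ in place of $\beta$, forces $(u_k)$ to be bounded, contradicting $\|u_k\|>k$. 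This contradiction yields the existence of $M_\beta>0$ as claimed.

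The substantive work is already contained in Lemma~\ref{L:Coercive}, which combines the Helmholtz decomposition of $\cD(\textup{curl},\Phi)$, the vanishing-type Lemma~\ref{L:Conv}, the $\Z^3$-invariance of $J$ and $\cM$ (used to shift concentrations into a fixed ball and pick up a nontrivial weak limit in $\cD$), and the superquadratic growth $\Phi(t)/t^2\to\infty$ from (N3) to upgrade boundedness of $J\bigl(m(v_n)\bigr)$ to boundedness of $\|v_n\|$. The only point that needs to be checked for the present corollary is the harmless observation that the hypothesis ``$J\bigl(m(v_n)\bigr)\le\beta$'' in Lemma~\ref{L:Coercive} may be replaced by any uniform upper bound on $J\bigl(m(v_n)\bigr)$, which is evident from its proof.
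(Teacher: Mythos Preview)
Your proof is correct and follows essentially the same diagonal-extraction argument as the paper: assume no uniform bound exists, pick for each $k$ a sequence violating the bound $k$, extract a single diagonal sequence that is unbounded yet still Cerami with bounded energy, and contradict Lemma~\ref{L:Coercive}. The only cosmetic difference is that the paper chooses $n(k)$ with the sharper bounds $-1/k<J\bigl(m(v_{n(k)}^k)\bigr)<\beta+1/k$, thereby recovering exactly the hypotheses of Lemma~\ref{L:Coercive} with the original $\beta$, whereas you use the fixed window $[-1,\beta+1]$ and invoke the lemma with $\beta+1$; both are equally valid.
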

\begin{proof}
	If no finite bound $M_\beta$ exists, for every $k$ there exists a sequence $v_n^k\in X$ satisfying the assumptions above and such that $\limsup_n\|v_n^k\|\ge k$. In particular, there exists $n(k)$ such that $-1/k<J\bigl(m(v_{n(k)}^k)\bigr)<\beta+1/k$, which yields $0\le\liminf_kJ\bigl(m(v_{n(k)}^k)\bigr)\le\limsup_kJ\bigl(m(v_{n(k)}^k)\bigr)\le\beta$. In the same way we prove $\lim_k\bigl(1+\|v_{n(k)}^k\|\bigr)(J\circ m)'\bigl(v_{n(k)}^k\bigr)=0$ and $\limsup_k\|v_{n(k)}^k\|=\infty$, a contradiction with Lemma \ref{L:Coercive}.
\end{proof}

\subsection{Weak-to-weak* continuity}

\begin{Lem}\label{L:CoEmb} If $\Omega\subset\rr$ is a Lipschitz domain with finite measure, then $H^1(\Omega)$ is compactly embedded in $L^\Phi(\Omega)$. 
\end{Lem}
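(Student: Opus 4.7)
My plan is to combine the classical Rellich--Kondrachov-type compact embedding $H^1(\Omega)\hookrightarrow\hookrightarrow L^p(\Omega)$, valid for $p\in[1,6[$ on Lipschitz domains of finite measure (Theorem \ref{T:comp}, whose $\cC^1$ hypothesis may be weakened as remarked in the footnote there), with the continuous Sobolev embedding $H^1(\Omega)\hookrightarrow L^6(\Omega)$ and the equivalence of modular and Luxemburg-norm convergence in $L^\Phi$ under the $\Delta_2$-condition (Lemma \ref{L:all} \textit{(iii)}). First I would take a bounded sequence $u_n\in H^1(\Omega)$ and extract along a subsequence (not relabelled) a limit $u\in H^1(\Omega)$ such that $u_n\to u$ a.e.\ in $\Omega$ and $u_n\to u$ in $L^p(\Omega)$ for every $p\in[1,6[$, with $u_n-u$ staying bounded in $L^6(\Omega)$.

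The crucial estimate is a pointwise interpolation consequence of (N2): for every $\varepsilon>0$ and every $p\in[1,6[$ there exists $C_\varepsilon>0$ such that
\[
\Phi(t)\le\varepsilon|t|^6+C_\varepsilon|t|^p \quad\text{for every }t\in\R.
\]
Indeed, the two limits in (N2) furnish some $\delta\in\,]0,1[$ with $\Phi(t)\le\varepsilon|t|^6$ whenever $|t|\le\delta$ or $|t|\ge 1/\delta$, while on the compact annulus $\delta\le|t|\le 1/\delta$ the continuous function $\Phi$ is bounded by some $M_\varepsilon$, and $M_\varepsilon$ can in turn be dominated by $M_\varepsilon\delta^{-p}|t|^p$ on that annulus.

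Plugging $t=|u_n(x)-u(x)|$ into the above estimate and integrating yields
\[
\int_\Omega\Phi(|u_n-u|)\,dx\le\varepsilon\int_\Omega|u_n-u|^6\,dx+C_\varepsilon\int_\Omega|u_n-u|^p\,dx.
\]
The first term is controlled by a constant multiple of $\varepsilon$ thanks to the Sobolev embedding, while the second vanishes as $n\to\infty$ by Rellich--Kondrachov (for a fixed $p\in[2,6[$, say $p=3$). Letting $\varepsilon\to 0^+$ gives $\int_\Omega\Phi(|u_n-u|)\,dx\to 0$, and Lemma \ref{L:all} \textit{(iii)}---applicable since $\Phi\in\Delta_2$ by (N1)---converts this modular convergence into $|u_n-u|_\Phi\to 0$, establishing the compactness.

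The main obstacle, though a mild one, is the uniform pointwise interpolation bound: both asymptotic regimes in (N2), near $0$ and near $\infty$, must cooperate, and one has to absorb the supremum of $\Phi$ on the bounded annulus $\{\delta\le|t|\le 1/\delta\}$ into a power $|t|^p$. Everything else is routine once the compact Sobolev embedding is in hand and the Orlicz-space machinery already assembled in Lemma \ref{L:all} is invoked.
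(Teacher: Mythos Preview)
Your proof is correct and follows essentially the same approach as the paper's: both arguments use (N2) to dominate $\Phi(t)$ by $\varepsilon|t|^6$ plus a compactly controlled remainder, bound the $L^6$-piece uniformly via Sobolev, eliminate the remainder via compact embedding on the finite-measure domain, and finish with Lemma~\ref{L:all}~\textit{(iii)}. The only cosmetic difference is that the paper splits into level sets $\{|u_n|\le C_\varepsilon\}$ and $\{|u_n|>C_\varepsilon\}$ and applies the dominated convergence theorem on the first piece, whereas you write a single pointwise interpolation bound $\Phi(t)\le\varepsilon|t|^6+C_\varepsilon|t|^p$ and use the strong $L^p$-convergence directly.
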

\begin{proof}
	Suppose $u_n\rightharpoonup 0$ in $H^1(\Omega)$. Then $u_n\rightharpoonup 0$ in $L^6(\Omega)$ and $u_n\to 0$ in $L^2(\Omega)$ and, up to a subsequence, a.e. in $\Omega$. From (N2), for every $\varepsilon>0$ there exists $C_\varepsilon$ such that $\Phi(t)\le \varepsilon t^6$ for $t>C_\varepsilon$, whence
	\[\begin{split}
	\int_{\Omega} \Phi(|u_n|)\,dx & = \int_{\Omega\cap\{|u_n|\le C_\varepsilon\}} \Phi(|u_n|)\,dx + \int_{\Omega\cap\{|u_n| > C_\varepsilon\}} \Phi(|u_n|)\,dx\\
	& \le \int_{\Omega\cap\{|u_n|\le C_\varepsilon\}} \Phi(|u_n|)\,dx +\varepsilon\sup_k|u_k|_6^6.
	\end{split}\]
	From the dominated convergence theorem and since $\varepsilon$ is arbitrary, we have $\int_{\Omega} \Phi(|u_n|)\,dx\to 0$ and, due to Lemma \ref{L:all} (iii), $|u_n|_\Phi\to 0$.
\end{proof}

\begin{Prop} \label{P:ae}
	If $v_n\rightharpoonup v$ in $\cD$, then $w(v_n)\rightharpoonup w(v)$ in $\cW$ and, after passing to a subsequence, $w(v_n)\to w(v)$ a.e. in $\R^3$.
\end{Prop}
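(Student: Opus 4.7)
The plan is to proceed in three stages: weak compactness, identification of the weak limit, and upgrade to a.e.\ convergence.

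\textbf{Step 1 (Weak compactness).} From $v_n \rightharpoonup v$ in $\cD$, the sequence is bounded in $\cD$, hence in $L^\Phi$ via the embeddings $\cD \hookrightarrow L^6(\rr,\rr) \hookrightarrow L^\Phi$ (Lemma~\ref{L:emb}). Lemma~\ref{L:DefofW}(b) then gives that $w(v_n)$ is bounded in $\cW \subset L^\Phi$. Because (N1) together with Lemma~\ref{L:all}(i) yields $\Phi,\Psi \in \Delta_2$, the space $L^\Phi$ is reflexive; $\cW$ being a closed linear (hence weakly closed) subspace, a subsequence satisfies $w(v_n) \rightharpoonup w_0$ in $L^\Phi$ for some $w_0 \in \cW$. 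Rellich--Kondrachov applied locally via $\cD \subset H^1_\textup{loc}$, Lemma~\ref{L:CoEmb}, and a diagonal extraction furnish a further subsequence along which $v_n \to v$ a.e.\ in $\rr$ and $v_n \to v$ strongly in $L^\Phi(B_R)$ for every $R > 0$.

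\textbf{Step 2 (Identification $w_0 = w(v)$).} It suffices to show that $w_0$ satisfies the Euler--Lagrange characterization
\[
\int_{\rr} f(x, v + w_0) \cdot \nabla \chi \, dx = 0 \qquad \forall \chi \in \cC_c^\infty(\rr),
\]
since by density of $\{\nabla \chi : \chi \in \cC_c^\infty(\rr)\}$ in $\cW$ and the uniqueness part of Lemma~\ref{L:DefofW}(a), this forces $w_0 = w(v)$. The analogous identity at level $n$, $\int f(x, v_n + w(v_n)) \cdot \nabla \chi\,dx = 0$, holds by minimality. I would pass to the limit using Minty's monotonicity trick, applicable because $f(x,\cdot) = \nabla_\UU F(x,\cdot)$ is monotone by convexity of $F(x,\cdot)$: for every $\eta \in L^\Phi(\rr,\rr)$,
\[
\int_{\rr} \bigl[f(x, v_n + w(v_n)) - f(x, \eta)\bigr] \cdot \bigl[v_n + w(v_n) - \eta\bigr] \, dx \ge 0.
\]
Choosing $\eta = v + w_0 + t \nabla \chi$ with $t \in \R$ and expanding, every term pairing $f(x, v_n + w(v_n))$ with an element of $\cW$ (namely $w(v_n)$, $w_0$, and $\nabla \chi$) vanishes by the Euler--Lagrange identity; the surviving terms involving $f(x, v + w_0 + t\nabla \chi) \in L^\Psi$ pass to the limit trivially by weak convergence of $v_n + w(v_n)$ in $L^\Phi$, leaving the crossed term $\int f(x, v_n + w(v_n)) \cdot (v_n - v)\,dx$ to control. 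The latter is handled by splitting at $B_R$: on $B_R$, the strong convergence $v_n \to v$ in $L^\Phi(B_R)$, combined with the uniform $L^\Psi$-bound on $f(x, v_n + w(v_n))$ (coming from (F3), Lemma~\ref{L:forC1}, and the uniform $L^\Phi$-bound on $v_n + w(v_n)$) and the Orlicz H\"older inequality (Lemma~\ref{L:all}(ii)), gives vanishing; for the tail, one takes $R$ large, exploiting the uniform $L^\Phi$-bound on $v_n - v$. Letting $t \to 0^\pm$ in the resulting monotonicity inequality yields the Euler--Lagrange identity for $w_0$.

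\textbf{Step 3 (A.e.\ convergence).} Once $w_0 = w(v)$ is established, I would deploy the uniform strict convexity (F2) in the spirit of the proof of Lemma~\ref{L:WeakStrong}: applying (F2) to the pair $(v_n + w(v_n),\, v_n + w(v))$ on the set where $|w(v_n) - w(v)| \ge r$ and $|v_n|,|v|,|w(v_n)|,|w(v)| \le R$, together with the minimality bound $I(v_n, w(v_n)) \le I(v_n, (w(v_n) + w(v))/2)$ obtained by testing against the midpoint in $\cW$, shows that the Lebesgue measure of this set intersected with any fixed ball tends to zero. A diagonal extraction then provides a subsequence along which $w(v_n) \to w(v)$ a.e.\ in $\rr$.

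The principal obstacle is the crossed-term estimate in Step~2: because the Nemytskii map $\UU \mapsto f(x, \UU)$ from $L^\Phi$ to $L^\Psi$ is \emph{not} sequentially weak-to-weak continuous, it is only thanks to the compact embedding of Lemma~\ref{L:CoEmb} (local strong convergence of $v_n$), combined with a careful truncation at infinity controlled by the uniform $L^\Phi$-bound on $v_n + w(v_n)$, that the passage to the limit succeeds.
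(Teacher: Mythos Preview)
Your Step~2 contains a genuine gap: the tail estimate for the crossed term $\int_{\rr} f(x, v_n + w(v_n))\cdot(v_n - v)\,dx$ fails. You claim that outside $B_R$ it is small ``exploiting the uniform $L^\Phi$-bound,'' but a uniform bound is not tightness. Concretely, take $v = 0$ and $v_n = \phi(\cdot - n e_1)$ for fixed $\phi \in \cC_c^\infty(\rr,\rr)$; by $\Z^3$-periodicity of $f$ one has $w(v_n) = w(\phi)(\cdot - n e_1)$, and a change of variables gives
\[
\int_{\rr} f\bigl(x, v_n + w(v_n)\bigr)\cdot v_n \, dx = \int_{\rr} f\bigl(x, \phi + w(\phi)\bigr)\cdot \phi \, dx,
\]
a nonzero constant independent of $n$. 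So the crossed term does not vanish, and Minty's inequality yields no information after letting $t \to 0^\pm$. Your Step~3 has the same defect: the bound $m_{r,R}|A_n| \le \tfrac12\bigl[I(v_n,w(v)) - I(v_n,w(v_n))\bigr]$ has a right-hand side that, in the same example, equals the fixed positive constant $\tfrac12\bigl[I(\phi,0)-I(\phi,w(\phi))\bigr]$, so $|A_n|\to 0$ does not follow.

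The paper circumvents this by \emph{reversing the order}: it obtains local a.e.\ convergence first, and only then identifies the weak limit. The key device you are missing is the local potential representation: since $\nabla\times w(v_n) = 0$, on each ball $B_R$ one can write $w(v_n) = \nabla\xi_n$ with $\xi_n \in H^1(B_R)$ (mean zero), and the bound on $w(v_n)$ in $L^2(B_R)$ (via (N3)) makes $\xi_n$ bounded in $H^1(B_R)$, hence relatively compact in $L^\Phi(B_R)$ by Lemma~\ref{L:CoEmb}. One then tests the two Euler--Lagrange identities with the $n$-dependent, compactly supported $z = \nabla\bigl((\xi_n - \xi)\zeta\bigr)$ to obtain
\[
\int_{\rr}\bigl[f(x,v_n+w(v_n)) - f(x,v+\nabla\xi)\bigr]\cdot\bigl[(v_n - v) + (\nabla\xi_n - \nabla\xi)\bigr]\zeta\,dx \to 0,
\]
and the uniform strict convexity (F2) turns this monotone pairing into local a.e.\ convergence $v_n + w(v_n) \to v + w_0$. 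With a.e.\ convergence in hand, Vitali's theorem (uniform integrability and tightness as in Lemma~\ref{L:BrezLieb}) lets one pass to the limit in $\int f(x,v_n+w(v_n))\cdot z\,dx = 0$ for each fixed $z\in\cW$, giving $w_0 = w(v)$. The compactness is thus on the \emph{scalar potential} $\xi_n$, not on $w(v_n)$ itself, and the test function must depend on $n$; these are the ingredients your global Minty argument lacks.
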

\begin{proof}
	It follows from the definition \eqref{e-DefofW(u)} of $w(v)$ that
	\begin{equation} \label{e-1}
	\int_{\R^3}f\bigl(x,v_n+w(v_n)\bigr)\cdot z\,dx = 0 = \int_{\R^3}f\bigl(x,v+w(v)\bigr)\cdot z\,dx \quad \forall z \in \cW.
	\end{equation}
	Since $v_n$ is bounded, so is $w(v_n)$ from Lemma \ref{L:DefofW} \textit{(b)}, hence we can assume $w(v_n)\rightharpoonup w_0$ for some $w_0$. In addition, since $v_n\to v$ in $L^2_\textup{loc}(\R^3,\R^3)$, $v_n\to v$ a.e. after passing to a subsequence.
	
	Let $\Omega\subset\R^3$ be bounded and let $\zeta\in \cC_c^\infty(\R^3)$, $0\le\zeta\le1$, such that $\zeta=1$ in $\Omega$. From (F3) and Lemmas \ref{L:all} \textit{(ii)}, \ref{L:forC1}, and \ref{L:CoEmb}, there exist $C>0$ such that
	\begin{equation} \label{e-10}
	\begin{split}0 & \le\int_{\R^3}\left|f\bigl(x,v_n+w(v_n)\bigr)\right|\left|v_n-v\right|\zeta\,dx\\
	& \le C\left|\Phi'\bigl(|v_n+w(v_n)|\bigr)\right|_{\Psi} |(v_n-v)\zeta|_\Phi\to 0.
	\end{split}\end{equation}
	Choose $R>0$ such that $\textup{supp}\zeta\subset B_R$. In view of (N3), $w(v_n)$ is bounded in $L^2(B_R,\R^3)$. As a matter of fact, $w(v_n)\chi_{\{|w(v_n)|<1\}}$ is of course bounded in $L^2(B_R,\R^3)$, while
	\[
	\int_{B_R\cap\{|w(v_n)|\ge 1\}}|w(v_n)|^2\,dx \le C \int_{B_R\cap\{|w(v_n)|\ge 1\}}\Phi(|w(v_n)|)\,dx \le C
	\]
	for some $C>0$. From \cite[Lemma 1.1 \textit{(i)}]{Leinfelder}, there exists $\xi_n\in H^1(B_R)$ such that $w(v_n)=\nabla\xi_n$. We can assume $\int_{B_R}\xi_n\,dx = 0$, so that from the Poincar\'e inequality
	\[
	\|\xi_n\|_{H^1(B_R)}\le C|\nabla\xi_n|_{L^2(B_R)}\le C
	\]
	for some $C>0$. Hence in view of Lemma \ref{L:CoEmb}, up to a subsequence, $\xi_n\to \xi$ in  $L^\Phi(B_R)$  for some $\xi\in H^1(B_R)$. Similarly as in \eqref{e-10}, we have
	\begin{equation} \label{e-11}
	\lim_n\int_{\R^3}\left|f\bigl(x,v_n+w(v_n)\bigr)\right|\left|\nabla\zeta\right|\left|\xi_n-\xi\right|\,dx=0.
	\end{equation}
	The limits in \eqref{e-10} and \eqref{e-11} are 0 also if $f\bigl(x,v_n+w(v_n)\bigr)$ is replaced with $f(x,v+\nabla\xi)$. Combining \eqref{e-1}--\eqref{e-11} we obtain
	\begin{equation} \label{e-2}
	\int_{\R^3}\Bigl(f\bigl(x,v_n+w(v_n)\bigr)-f(x,v+\nabla\xi)\Bigr)\cdot\bigl(v_n-v+w(v_n)-\nabla\xi\bigr)\zeta\,dx\to0,
	\end{equation}
	where we have taken $z = \nabla\bigl((\xi_n-\xi)\zeta\bigr)$ in \eqref{e-1}. We prove that  $v_n+w(v_n) \to v+ \nabla\xi$ a.e. in $\Omega$. The convexity of $F$ implies that 
	\[
	F\left(x,\frac{\UU_1+\UU_2}{2}\right)\geq F(x,\UU_1)+f(x,\UU_1)\cdot\frac{\UU_2-\UU_1}{2}
	\]
	and
	\[
	F\left(x,\frac{\UU_1+\UU_2}{2}\right)\geq F(x,\UU_2)+f(x,\UU_2)\cdot\frac{\UU_1-\UU_2}{2}.
	\]
	Summing these inequalities and using (F2), we obtain that for every $0<r\leq R$ and every $|\UU_1-\UU_2|\ge r$, $|\UU_1|,|\UU_2| \le R$
	\[\begin{split}
	m_{r,R} & \le \frac12\bigl(F(x,\UU_1)+F(x,\UU_2)\bigr) - F\left(x,\frac{\UU_1+\UU_2}2\right)\\
	& \le \frac14\bigl(f(x,\UU_1)-f(x,\UU_2)\bigr)\cdot(\UU_1-\UU_2)
	\end{split}\]
	where $m_{r,R}$ is defined in \eqref{e-mrR}. Since $\zeta=1$ in $\Omega$, it follows from \eqref{e-2} that $v_n+w(v_n) \to v+ \nabla\xi$ a.e. in $\Omega$ as claimed. Since $w(v_n)\rightharpoonup w_0$, we have $w_0=\nabla\xi$ and by the usual diagonal procedure we obtain a.e. convergence to $v+w_0$ in $\R^3$. 
	Take any $w\in\cW$ and observe that by Vitali's convergence theorem 
	$$0=\int_{\R^3}\langle f(x,v_n+w(v_n)), w\rangle\,dx\to \int_{\R^3}\langle f(x,v+w_0), w\rangle\,dx.$$
	The uniqueness of $w(v)$ (see Lemma \ref{L:DefofW}) implies that $w_0=w(v)$.
	%
\end{proof}

Although in general $J'$ is not (sequentially) weak-to-weak$^*$ continuous, it is so for sequences on the topological manifold $\mathcal{M}$. Obviously, the same regularity holds for $E'$ and $\fM$.

\begin{Cor}\label{C:Jweaklycont}
	If $(v_n,w_n)\in\mathcal{M}$ and $(v_n,w_n)\rightharpoonup (v_0,w_0)$ in $\cV\times\cW$, then  $J'(v_n,w_n)\rightharpoonup J'(v_0,w_0)$, i.e.,
	$$J'(v_n,w_n)(\phi,\psi)\to J'(v_0,w_0)(\phi,\psi)$$
	for every $(\phi,\psi)\in\cV\times\cW$.
\end{Cor}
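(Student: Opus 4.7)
The functional $J'(v,w)(\phi,\psi)=\int_{\R^3}\nabla v\cdot\nabla\phi\,dx-\int_{\R^3}f(x,v+w)\cdot(\phi+\psi)\,dx$ splits into a linear part in $v$ and a nonlinear part in $v+w$. For the linear part the convergence $\int_{\R^3}\nabla v_n\cdot\nabla\phi\,dx\to\int_{\R^3}\nabla v_0\cdot\nabla\phi\,dx$ is immediate from $v_n\rightharpoonup v_0$ in $\cV\subset\cD$, so the whole task reduces to proving that
\[
\int_{\R^3}f(x,v_n+w_n)\cdot(\phi+\psi)\,dx\to\int_{\R^3}f(x,v_0+w_0)\cdot(\phi+\psi)\,dx
\]
for every $(\phi,\psi)\in\cV\times\cW$.

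The key observation is that $(v_n,w_n)\in\cM$ forces $w_n=w(v_n)$ and analogously $w_0=w(v_0)$, because Proposition \ref{P:ae} gives $w(v_n)\rightharpoonup w(v_0)$ while the assumption says $w_n\rightharpoonup w_0$, and weak limits are unique. Moreover, passing to a subsequence, Proposition \ref{P:ae} yields $w(v_n)\to w(v_0)$ a.e.\ in $\R^3$; combining with $v_n\to v_0$ a.e.\ (along a further subsequence, via $v_n\to v_0$ in $L^2_\textup{loc}$) we obtain $v_n+w_n\to v_0+w_0$ a.e., and therefore, since $f$ is Carathéodory, $f(x,v_n+w_n)\to f(x,v_0+w_0)$ a.e.

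The next step is to upgrade this almost-everywhere statement to weak convergence in $L^\Psi$. By (F3) and Lemma \ref{L:forC1} we have pointwise $\Psi(|f(x,v_n+w_n)|)\le\Psi\bigl(c_1\Phi'(|v_n+w_n|)\bigr)\le C\Phi(|v_n+w_n|)$, and since $v_n+w_n$ is bounded in $L^\Phi$, Lemma \ref{L:all} \textit{(iv)} shows that $f(\cdot,v_n+w_n)$ is bounded in $L^\Psi$. By (N1) and Lemma \ref{L:all} \textit{(i)} both $\Phi$ and $\Psi$ are in $\Delta_2\cap\nabla_2$, so $L^\Psi$ is reflexive and we can extract a weakly convergent subsequence $f(\cdot,v_n+w_n)\rightharpoonup\zeta$ in $L^\Psi$; testing against characteristic functions of bounded sets and using the a.e.\ limit (together with the uniform integrability coming from the $L^\Psi$-bound, cf.\ Vitali's theorem) identifies $\zeta=f(\cdot,v_0+w_0)$. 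Since $\phi+\psi\in L^\Phi=(L^\Psi)'$, the duality pairing yields the desired convergence along the chosen subsequence.

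To conclude that the whole original sequence $J'(v_n,w_n)(\phi,\psi)$ converges, I would apply the standard subsubsequence principle: every subsequence of $(v_n,w_n)$ still satisfies the hypotheses and admits, by the argument above, a further subsequence along which $J'(v_n,w_n)(\phi,\psi)\to J'(v_0,w_0)(\phi,\psi)$; since the limit is independent of the extraction, the full sequence converges. The main obstacle in this scheme is the identification of the weak limit of $f(\cdot,v_n+w_n)$ in $L^\Psi$, because in general differentials of Orlicz integrals are not weak-to-weak$^\ast$ continuous; the restriction to $\cM$ is precisely what fixes this, by providing the a.e.\ convergence of $w_n=w(v_n)$ through Proposition \ref{P:ae} — without it, the curl-free components could oscillate and destroy the argument.
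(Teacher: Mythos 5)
Your proof is correct and follows essentially the same route as the paper: both rest on the identification $w_n=w(v_n)$ from the definition of $\cM$, the a.e.\ convergence $v_n+w_n\to v_0+w_0$ supplied by Proposition \ref{P:ae}, the $L^\Psi$-bound on $f(\cdot,v_n+w_n)$ coming from (F3) and Lemma \ref{L:forC1}, and a Vitali-type passage to the limit. The only cosmetic difference is that the paper applies Vitali's theorem directly to the scalar integrand $\bigl(f(x,v_n+w_n)-f(x,v_0+w_0)\bigr)\cdot(\phi+\psi)$ (proving uniform integrability and tightness as in Lemma \ref{L:BrezLieb}), whereas you package the same facts as weak convergence in the reflexive space $L^\Psi$ followed by duality with $\phi+\psi\in L^\Phi$.
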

\begin{proof}
	From Lemma \ref{L:DefofW} \textit{(a)} we get $w_n=w(v_n)$. In view of Proposition \ref{P:ae}, we may assume  $v_n+w_n\to v_0+w_0$ a.e. in $\R^3$ (where $w_0=w(v_0)$).
	For $(\phi,\psi)\in\cV\times\cW$ we have
	\[\begin{split}
	J'(v_n,w_n)(\phi,\psi) & -J'(v_0,w_0)(\phi,\psi)=\int_{\R^3}(\nabla v_n-\nabla v_0)\cdot\nabla\phi\,dx\\
	& -\int_{\R^3}\bigl(f(x,v_n+w_n)-f(x,u_0+w_0)\bigr)\cdot(\phi+\psi)\,dx. 
	\end{split}\]
	Arguing as in the proof of Lemma \ref{L:BrezLieb}, we prove that
	\[
	\bigl(f(x,v_n+w_n)-f(x,u_0+w_0)\bigr)\cdot(\phi+\psi)
	\]
	is uniformly integrable and tight, hence from Vitali's convergence theorem
	\[
	J'(v_n,w_n)(\phi,\psi)\to J'(v_0,w_0)(\phi,\psi).\qedhere
	\]
\end{proof}

\section{Proof of Theorem \ref{T:mainSZ}}

Recall that the group $G:=\Z^3$ acts isometrically by translations on $X=\cV\times\cW$ and $J$ is $\Z^3$-invariant.
Let
\[
\cK:=\Set{v\in \cV | (J\circ m)'(u)=0}
\]
and suppose that $\cK$ consists of a finite number of distinct orbits. It is clear that $\Z^3$ satisfies the condition (G) in Section \ref{S:abstract}.
Then, in view of Lemma \ref{L:discrete},
\[
\kappa:= \inf\Set{\lVert v-v'\rVert_{\cD}|J'\bigl(m(v)\bigr) = J'\bigl(m(v')\bigr) = 0 \text{ and } v\ne v'}>0.
\]

\begin{Lem}\label{L:Discreteness}
	Let $\beta\ge c_{\cN}$ and suppose that $\cK$ has a finite number of distinct orbits.  
	If $u_n,v_n\in\cV$ are two Cerami sequences for $J\circ m$ such that $0\le\liminf_nJ\bigl(m(u_n)\bigr)\le \limsup_nJ\bigl(m(u_n)\bigr)\le\beta$, $0\le\liminf_nJ\bigl(m(v_n)\bigr)\le \limsup_nJ\bigl(m(v_n)\bigr)\le\beta$ and $\liminf_n\|u_n-v_n\|_{\cD}< \kappa$, then $\lim_n\|u_n-v_n\|_{\cD}=0$.
\end{Lem}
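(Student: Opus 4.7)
\textbf{Plan for Lemma \ref{L:Discreteness}.} By Corollary \ref{C:Ceramibdd} both $u_n$ and $v_n$ are bounded in $\cV$, so passing to a subsequence I may assume $\|u_n-v_n\|_\cD \to \ell \in [0,\kappa[$, and the goal becomes to rule out $\ell>0$. The strategy is a concentration--compactness/profile decomposition tailored to $J\circ m$, exploiting the $\Z^3$-invariance together with the two crucial features of the manifold $\cM$: Proposition \ref{P:ae} (weak limits commute with the minimizer map $w$) and Corollary \ref{C:Jweaklycont} (weak-to-weak$^*$ continuity of $J'$ along sequences in $\cM$).

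The main step is to show that each Cerami sequence admits a bubble decomposition
\[
u_n \;=\; \sum_{k=1}^{K_u} \omega_k(\cdot - z_n^k) \,+\, r_n,
\]
with $\omega_k\in\cK\setminus\{0\}$, $z_n^k\in\Z^3$, $|z_n^k-z_n^j|\to\infty$ for $k\ne j$, and $\|r_n\|_\cD\to 0$. To construct one bubble, if $u_n$ does not go to zero in $L^\Phi$ then by Lemma \ref{L:Conv} $\sup_{y\in\Z^3}\int_{B(y,R)}|u_n|^2\,dx$ does not vanish, so after $\Z^3$-translation I may assume $u_n \rightharpoonup \omega_1 \ne 0$ in $\cD$; Proposition \ref{P:ae} gives $w(u_n) \rightharpoonup w(\omega_1)$, Corollary \ref{C:Jweaklycont} gives $J'\bigl(m(\omega_1)\bigr)=0$, i.e., $\omega_1\in\cK$. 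A Brezis--Lieb-type argument, using Lemma \ref{L:BrezLieb} in the Orlicz framework, shows that $u_n-\omega_1(\cdot -z_n^1)$ is again a Cerami sequence for $J\circ m$ and that the level drops by $J\bigl(m(\omega_1)\bigr)\ge c_\cN>0$. Since the levels stay in $[0,\beta]$, the process terminates after finitely many steps, yielding the announced decomposition (and analogously for $v_n$, with bubbles $\omega'_{k}$ and translations $z_n^{k'}$).

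The assumption that $\cK$ consists of finitely many orbits, together with Lemma \ref{L:discrete}, shows that any two distinct elements of $\cK$ are at distance $\ge\kappa$ in the $\cD$-norm; this is exactly how $\kappa$ is defined. Now I compare the two decompositions. If for some $k$ the bubble $\omega_k$ and translation $z_n^k$ of $u_n$ are not matched by a bubble of $v_n$ belonging to the same $\Z^3$-orbit at a position within bounded distance of $z_n^k$, then localizing on a large ball centred at $z_n^k$ (where only the $k$-th bubble of $u_n$ and, at most, unrelated bubbles of $v_n$ placed far away contribute nontrivially) gives
\[
\liminf_n \|u_n-v_n\|_\cD \;\ge\; \mathrm{dist}_\cD\bigl(\omega_k,\{0\}\cup(\cK\setminus\Z^3\ast\omega_k)\bigr) \;\ge\; \kappa,
\]
contradicting $\ell<\kappa$. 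Hence the bubbles must pair up; cancelling them leaves $u_n-v_n=r_n-r_n'\to 0$ in $\cD$, forcing $\ell=0$.

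The main obstacle is executing the profile decomposition cleanly on $\cM$ rather than on all of $\cV\times\cW$: after subtracting a bubble $\omega_1(\cdot -z_n^1)$ from $u_n$ one must show that the corresponding second component $w(u_n)-w(\omega_1)(\cdot -z_n^1)$ is precisely $w$ of the new remainder (so that the new sequence lies in $\cM$ up to an error tending to zero), and that the $L^\Phi$ Brezis--Lieb splitting applies to $I$ uniformly in the translations. Proposition \ref{P:ae}, the uniqueness part of Lemma \ref{L:DefofW}, and the continuity of $w$ are the tools that make this work; the rest is a routine iteration terminating by the positivity of $c_\cN$.
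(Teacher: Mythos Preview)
Your profile-decomposition route is substantially heavier than what the paper does, and the step you flag as the ``main obstacle'' is a real gap, not a routine technicality.

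The paper applies Lions's lemma (Lemma~\ref{L:Conv}) directly to the \emph{difference} $u_n-v_n$, avoiding any bubble extraction. The dichotomy is: either $|u_n-v_n|_\Phi\to 0$, in which case the identity
\[
\|u_n-v_n\|_\cD^2 = J'\bigl(m(u_n)\bigr)(u_n-v_n,0)-J'\bigl(m(v_n)\bigr)(u_n-v_n,0)+\int_{\R^3}\Bigl(f\bigl(x,m(u_n)\bigr)-f\bigl(x,m(v_n)\bigr)\Bigr)\cdot(u_n-v_n)\,dx
\]
combined with (F3) and the Orlicz H\"older bound (Lemmas~\ref{L:all}\,\textit{(ii)} and~\ref{L:forC1}) forces $\|u_n-v_n\|_\cD\to 0$; or else, after a single $\Z^3$-translation and passing to a subsequence, $u_n\rightharpoonup u$ and $v_n\rightharpoonup v$ with $u\ne v$, and Corollary~\ref{C:Jweaklycont} makes both critical, whence $\liminf_n\|u_n-v_n\|_\cD\ge\|u-v\|_\cD\ge\kappa$, contradicting the hypothesis. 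No iteration, no level-splitting, no profile decomposition.

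Your plan requires that after subtracting a bubble $\omega_1(\cdot-z_n^1)$ the remainder be again a Cerami sequence for $J\circ m$. For that you must know $m\bigl(u_n-\omega_1(\cdot-z_n^1)\bigr)$, i.e.\ $w\bigl(u_n-\omega_1(\cdot-z_n^1)\bigr)$, and you assert this equals $w(u_n)-w(\omega_1)(\cdot-z_n^1)$ up to $o(1)$. But $v\mapsto w(v)$ is the minimizer of a strictly convex functional and is genuinely nonlinear: Proposition~\ref{P:ae} gives weak continuity, Lemma~\ref{L:DefofW} gives uniqueness and strong continuity, and Lemma~\ref{L:BrezLieb} splits $\fI$, not $w$. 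None of these yields an additive (even asymptotic) splitting of $w$; without it you cannot show the remainder lies near $\cM$, let alone that $(J\circ m)'$ vanishes along it or that the level drops by exactly $J\bigl(m(\omega_1)\bigr)$. Establishing such a splitting for a constraint map of this type is a substantial piece of work in its own right and is entirely bypassed by the paper's direct argument on the difference sequence.
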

\begin{proof}
	Let $m(u_n)=(u_n,w^1_n)$, $m(v_n)=(v_n,w^2_n)$. From Lemma \ref{L:Coercive}, $m(u_n)$ and $m(v_n)$ are bounded.
	We first consider the case 
	\begin{equation}
	\label{e-pqconvergence}
	\lim_n|u_n-v_n|_{\Phi}=0
	\end{equation}
	and  prove that
	\begin{equation}
	\label{e-convergence}
	\lim_n\|u_n-v_n\|_\cD=0.
	\end{equation}
	From (F3) and Lemmas \ref{L:all} \textit{(ii)} and \ref{L:forC1} we have
	\[
	\begin{split}
	\|u_n-v_n\|_{\mathcal{D}}^2 = \, & J'\bigl(m(u_n)\bigr)(u_n-v_n,0)-J'\bigl(m(v_n)\bigr)(u_n-v_n,0)\\
	& +\int_{\rr} \Bigl(f\bigl(x,m(u_n)\bigr)-f\bigl(x,m(v_n)\bigr)\Bigr)\cdot(u_n-v_n)\,dx\le\\
	\le \, & o(1)+\int_{\rr}\Bigl(\big|f\bigl(x,m(u_n)\bigr)\big|+\big|f\bigl(x,m(v_n)\bigr)\big|\Bigr)|u_n-v_n|\,dx\\
	\le \, & o(1)+c_1\int_{\rr}\Bigl(\Phi'\bigl(|m(u_n)|\bigr)+\Phi'\bigl(|m(v_n)|\bigr)\Bigr)|u_n-v_n|\,dx \\
	\le \, & o(1)+ C\Bigl(\big|\Phi'\bigl(|m(u_n)|\bigr)\big|_\Psi+\big|\Phi'\bigl(|m(v_n)|\bigr)\big|_\Psi\Bigr)|u_n-v_n|_{\Phi}\to0
	\end{split}
	\]
	which gives \eqref{e-convergence}.
	
	Suppose now \eqref{e-pqconvergence} does not hold. From Lemmas \ref{L:all} \textit{(iii)} and \ref{L:Conv}, for a fixed $R>\sqrt{3}$ there exist $\varepsilon>0$ and $y_n\in\Z^3$ such that, up to a subsequence,
	\begin{equation}
	\label{e-boundedaway}
	\int_{B(y_n,R)}|u_n-v_n|^2\,dx\ge\varepsilon
	\end{equation}
	(cf. the proof of Lemma \ref{L:Coercive}). As $J$ is $\Z^3$-invariant, we can assume $y_n=0$. Since $m(u_n)$ and $m(v_n)$ are bounded, up to a subsequence
	\begin{equation}
	\label{e-weakconvergence}
	(u_n,w^1_n)\rightharpoonup(u,w^1)\hbox{ and }(v_n,w^2_n)\rightharpoonup(v,w^2)\quad\hbox{in }\cV\times\cW 
	\end{equation}
	for some $(u,w^1),(v,w^2)\in \cV\times\cW$. Since $u_n\to u$ and $v_n\to v$ in $L^2_\textup{loc}(\R^3,\R^3)$, $u\ne v$ according to \eqref{e-boundedaway}.
	From Corollary \ref{C:Jweaklycont} and \eqref{e-weakconvergence} we infer that
	\[
	J'(u,w^1)=J'(v,w^2)=0.
	\]
	Thus
	\[
	\liminf_n\|u_n - v_n\|_{\cD} \ge\|u - v\|_{\cD}\ge \kappa
	\]
	which is a contradiction.
\end{proof}

\begin{proof}[Proof of Theorem \ref{T:mainSZ}]
	\textit{(a)} The existence of a Cerami sequence $(v_n,w_n)\in\cM$ at the level $c_{\cN}$ follows from Proposition \ref{P:DefOfm(u)}; this sequence is bounded due to Lemma \ref{L:Coercive}. Similarly as in the proof of Lemma \ref{L:Discreteness} we find $v\in\cV\setminus\{0\}$ such that
	$(v_n, w_n)\rightharpoonup(v,w)$, $(v_n, w_n)\to(v,w)$ a.e. in $\R^3$ (both along a subsequence), and $\cJ'(v,w)=0$ (with $w=w(v)$). More precisely, if $|v_n|_\Phi\to 0$, then \eqref{e-convergence} with $u_n=0$ holds by the same argument. This is impossible because $J\bigl(m(v_n)\bigr)\to c_\cN>0$. Hence \eqref{e-boundedaway} with $u_n=0$ is satisfied and we can assume, up to translating by $y_n$, that $\int_{B_R}|v_n|^2\,dx\ge\varepsilon$, whence $v\ne 0$.
	From Fatou's lemma and (F4),
	\begin{equation*}\begin{split}
		c_{\cN}&=\lim_nJ(v_n,w_n)=\lim_n\left(J(v_n,w_n)-\frac12J'(v_n,w_n)(v_n,w_n)\right)\\
		&\geq J(v,w)-\frac12J'(v,w)(v,w) =\cJ(v,w).
	\end{split}\end{equation*}
	Since $(v,w)\in\cN$, $J(v,w)=c_{\cN}$ and $\UU=v+w$ solves \eqref{e-Curl} due to Proposition \ref{P:var}.
	
	Note that here we have \emph{not} assumed $\cK$ has finitely many distinct orbits.
	
	\textit{(b)} In order to complete the proof we use directly Theorem \ref{T:CrticMulti} \textit{(b)}. That (I1)--(I8) are satisfied and $(M)_0^\beta$ holds for all $\beta>0$ follows from Proposition \ref{P:DefOfm(u)}, Corollary \ref{C:Ceramibdd}, and Lemma \ref{L:Discreteness}.
\end{proof}

\chapter{Maxwell's and nonautonomous Schr\"odinger equations with cylindrical symmetry}\label{K:cylsym}

\section{Statement of the results}\label{S:statecs}

In this chapter, based on \cite{Gacz}, we study the curl-curl equation
\begin{equation}\label{e-curlsym}
\nabla\times\nabla\times\UU=h(x,\UU) \quad \text{in } \rn
\end{equation}
and the (scalar) Schr\"odinger equation
\begin{equation}\label{e-SchSym}
-\Delta u+\frac{a}{r^2}u=f(x,u) \quad \text{in } \rn
\end{equation}
under cylindrical symmetry, see respectively the definitions of $\DF$ and $X_{\SO}$ below. Here $N\ge3$ (whereas \cite{Gacz} deals with \eqref{e-curlsym} exclusively in the case $N=3$), $a$ is greater than a certain value $a_0\in\,]-\infty,0]$, and $r$ is the Euclidean norm of the first $K$ components of the point $x=(y,z)\in\rk\times\rnk=\rn$, $2\le K<N$, i.e.
\[
r^2=r_y^2=r_x^2=\sum_{i=1}^Ky_i^2=\sum_{i=1}^Kx_i^2.
\]
Of course, the curl operator $\nabla\times$ is naturally defined only in dimension $N=3$\footnote{The curl operator is defined as well in dimension $N=2$ considering $\R^2$ as a subspace of $\rr$, but we do not take this case into account because this chapter deals only with problems in dimension $N\ge3$.}, therefore, in order to find a suitable counterpart in higher dimensions, we use the identity
\begin{equation}\label{e-curlLap}
\nabla\times\nabla\times\UU=\nabla(\nabla\cdot\UU)-\Delta\UU, \quad \UU\in\cC^2(\rr,\rr).
\end{equation}
Since its right-hand side is defined for every dimension, we define the curl-curl operator in dimension $N\ge3$ using \eqref{e-curlLap}. As a consequence, if $\UU,\VV\colon\rn\to\rn$ are two functions with square-integrable gradients, we define as well
\[
\int_{\rn}\nabla\times\UU\cdot\nabla\times\VV\,dx:=\int_{\rn}\nabla\UU\cdot\nabla\VV-\nabla\cdot\UU\nabla\cdot\VV\,dx,
\]
where, with a small abuse of notation, we used $\cdot$ for both the scalar product in $\R^{N\times N}\simeq\R^{N^2}$ and in $\rn$ (in the latter context also to indicate the divergence operator). Note that we only defined the objects $\nabla\times\nabla\times\UU$ and $\int_{\rn}\nabla\times\UU\cdot\nabla\times\VV\,dx$, \emph{not} $\nabla\times\UU$, but this is enough to study \eqref{e-curlsym}. A generalization of $\nabla\times\UU$, on the other hand, is given in \cite[Section 3]{MeSzu}.

The main result in Section \ref{S:equiv} consists of the extension of the following equivalence result between solutions to \eqref{e-curlsym} and \eqref{e-SchSym} to the case of weak solutions, i.e., critical points of the corresponding energy functionals, defined in suitable Sobolev spaces; this is done in Theorem \ref{T:ScalVec}. Suppose $u\in\cC^2(\rn)$ is such that $u(\widetilde{g}x)=u(x)$ for every $x\in\rn$ and every $g\in\SO(2)$, where
\[
\widetilde{g}:=
\begin{pmatrix}
g & 0\\
0 & I_{N-2}
\end{pmatrix}
\in\SO(N),
\]
and define $\UU(x):=u(x)/|(x_1,x_2)|(-x_2,x_1,0)$. Suppose additionally that $h(x,\alpha w)=f(x,\alpha)w$ for $w=|(\xi_1,\xi_2)|^{-1}(-\xi_2,\xi_1,0)$, $\xi_1^2+\xi_2^2>0$, and $\alpha\in\R$. Then by explicit computations one can prove that $\UU$ is divergence-free and, moreover, $u$ solves (pointwise) \eqref{e-SchSym} with $a=1$ in $\rn_*:=\rn\setminus\{x_1^2+x_2^2=0\}$ if and only if $\UU$ solves (pointwise) \eqref{e-curlsym} in $\rn_*$.

Recall that $\cD^{1,2}(\rn)$ is the completion of $\cC_c^\infty(\rn)$ with respect to the usual norm $|\nabla u|_2$ and define analogously $\cD^{1,2}(\rn,\rn)$. Moreover, let
\[
X:=\Set{u\in\cD^{1,2}(\rn)|\int_{\rn}\frac{u^2}{r^2}\,dx<\infty}
\]
and define $X_{\SO}$ as the subspace of $X$ consisting of the functions invariant under the usual action of $\SO:=\SO(K)\times\{I_{N-K}\}\subset\SO(N)$. Note that this is equivalent to requiring that such functions be invariant under the action of $\cO(K)\times\{I_{N-K}\}$ because for every $\xi_1,\xi_2\in\mbS^{\nu-1}$, $\nu\ge2$, there exists $g\in\SO(\nu)$ such that $\xi_2=g\xi_1$. $X$ is a Hilbert space once endowed with the scalar product
\[
(u,v)\in X\times X\mapsto\int_{\rn}\nabla u\cdot\nabla v+\frac{uv}{r^2}\,dx\in\R.
\]
Notice that, if $K>2$, then
\[
\int_{\rn}\frac{|u|^2}{r^2}\,dx\le\left(\frac{2}{K-2}\right)^2\int_{\rn}|\nabla u|^2\,dx
\]
for every $u\in\cD^{1,2}(\rn)$, see \cite{BadTar}, so $X$ and $\cD^{1,2}(\R^N)$ coincide. If $K=2$, then $\cC_c^\infty(\rn)\not\subset X$ because the quantity $\varphi^2/r^2$ need not be integrable for $\varphi\in\cC_c^\infty(\rn)$. If $a>-(\frac{K-2}2)^2$, then we can define an equivalent norm in $X$ as
\[
\|u\|:=\sqrt{\int_{\rn}|\nabla u|^2+\frac{a}{r^2}u^2\,dx}.
\]

Define the functionals $E\colon\cD^{1,2}(\rn,\rn)\to\R$ and $J\colon X\to\R$ as
\[\begin{split}
J(u) & =\int_{\rn}\frac12\left(|\nabla u|^2+\frac{a}{r^2}u^2\right)-F(x,u)\,dx\\
E(\UU) & =\int_{\rn}\frac12|\nabla\times\UU|^2-H(x,\UU)\,dx,
\end{split}\]
where $F(x,u):=\int_0^uf(x,t)\,dt$ and $H(x,\UU):=\int_0^1h(x,t\UU)\cdot\UU\,dx$. The first set of assumptions about $f$ are as follows.
\begin{itemize}
	\item [(F1)] $f\colon\R^N\times\R\to\R$ is a Carath\'eodory function such that $f(gx,u)=f(x,u)$ for every $g\in\SO$, a.e. $x\in\R^N$, and every $u\in\R$.
	Moreover $f$ is $\Z^{N-K}$-periodic in the last $N-K$ components of $x$, i.e., $f(x,u)=f\bigl(x+(0,\xi),u\bigr)$ for every $u\in\R$, a.e. $x\in\R^N$, and a.e. $\xi\in\Z^{N-K}$.
	\item [(F2)] $\displaystyle\lim_{u\to 0}\frac{f(x,u)}{|u|^{2^*-1}}=\lim_{|u|\to\infty}\frac{f(x,u)}{|u|^{2^*-1}}=0$ uniformly with respect to $x\in\R^{N}$.
	\item [(F3)] $\displaystyle\lim_{|u|\to\infty}\frac{F(x,u)}{|u|^2}=\infty$ uniformly with respect to $x\in\R^{N}$.
	\item [(F4)] $\displaystyle u\mapsto\frac{f(x,u)}{|u|}$ is nondecreasing on $]-\infty,0[$ and $]0,\infty[$ for a.e. $x\in\R^{N}$.
\end{itemize}

It is straightforward to check that $J$ is invariant under the action of $\SO$ and (cf. \cite[Section 2]{AzzBenDApFor}) $E$ is invariant under the action defined in \eqref{e-Fix}. The first result in this chapter concerns the existence of solutions to \eqref{e-SchSym} and reads as follows.

\begin{Th}\label{T:ExMul}
	Suppose that $a>-\bigl(\frac{K-2}{2}\bigr)^2$ and (F1)--(F4) hold. Then
	there exists a ground state solution $u$ to \eqref{e-SchSym} in $X_{\SO}$. If, in addition, $f$ is odd in $u$, then $u$ is nonnegative and	
	\eqref{e-SchSym} has infinitely many geometrically distinct solutions in $X_{\SO}$.
\end{Th}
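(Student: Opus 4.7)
My plan is to reduce \eqref{e-SchSym} to a Nehari-manifold problem on the symmetric subspace $X_{\SO}$ (justified by Palais's principle of symmetric criticality, Theorem \ref{T:Palais}) and to compensate for the lack of compactness by exploiting the $\Z^{N-K}$-translation invariance coming from the periodicity of $f$ in $z$. The hypothesis $a>-\bigl(\frac{K-2}{2}\bigr)^2$ makes $\|\cdot\|$ an equivalent norm on the Hilbert space $X$, while (F2) yields, for any fixed $p\in\,]2,2^*[$, a subcritical bound $|f(x,u)|\le\varepsilon(|u|+|u|^{2^*-1})+C_\varepsilon|u|^{p-1}$; combined with the embeddings $X\hookrightarrow L^{2^*}(\rn)$ and $X\hookrightarrow L^2_{\textup{loc}}(\rn)$, this gives $J\in\cC^1(X)$. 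The $\SO$-invariance and $\Z^{N-K}$-periodicity of $f$ from (F1) transfer to $J$, so by Theorem \ref{T:Palais} it suffices to look for critical points of $J|_{X_{\SO}}$.

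By (F4), for each $u\in X_{\SO}\setminus\{0\}$ the function $t\ge 0\mapsto J(tu)$ has a unique positive maximizer $t(u)$, so $t(u)u$ belongs to the Nehari set $\cN_{\SO}:=\Set{v\in X_{\SO}\setminus\{0\}|J'(v)(v)=0}$; a Szulkin--Weth argument then shows that $u\mapsto t(u)u$ is a homeomorphism from the unit sphere of $X_{\SO}$ onto $\cN_{\SO}$. The mountain pass geometry of $u\mapsto J(t(u)u)$ together with (F3) yields $c_{\SO}:=\inf_{\cN_{\SO}}J>0$ and a $(PS)_{c_{\SO}}$-sequence $u_n\in\cN_{\SO}$, bounded in $X$ by the standard combination of (F3) and the Nehari identity.

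The crux is that $X_{\SO}$ does \emph{not} embed compactly into $L^p(\rn)$: $\SO$ acts trivially on the $z$-variable and full translation invariance in $z$ persists. If
\[
\sup_{\xi\in\Z^{N-K}}\int_{\rk\times(\xi+[0,1]^{N-K})}u_n^2\,dx\to 0,
\]
a Lions-type vanishing lemma combined with (F2) forces $\int_{\rn}f(x,u_n)u_n\,dx\to 0$, contradicting $c_{\SO}>0$ via (F3)--(F4). Hence there exist $\xi_n\in\Z^{N-K}$ and $\delta>0$ for which the integral stays $\ge\delta$; the translates $\tilde u_n(y,z):=u_n(y,z+\xi_n)$ still lie in $X_{\SO}$ (since $\SO$ acts only on $y$) and, by the $\Z^{N-K}$-periodicity of $f$, remain a bounded $(PS)_{c_{\SO}}$-sequence. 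A subsequence converges weakly to some $u\ne 0$ in $X_{\SO}$; weak lower semicontinuity of $\|\cdot\|^2$ (Fatou for the $a/r^2$-term) together with Vitali applied to $f(x,\tilde u_n)$ give $J'(u)=0$, and Fatou applied to the nonnegative integrand $\tfrac12 f(x,u)u-F(x,u)$ (from (F4)) combined with (F3) yields $J(u)\le c_{\SO}$, hence $J(u)=c_{\SO}$. When $f$ is odd, $F$ is even and $|\nabla|u||=|\nabla u|$ a.e., so $J(|u|)=J(u)$ and $|u|$ realizes the minimum as a nonnegative ground state.

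For infinitely many solutions with $f$ odd, $J$ and the reduced functional $u\mapsto J(t(u)u)$ on the unit sphere of $X_{\SO}$ are even. I would apply equivariant Lusternik--Schnirelman theory -- or equivalently the abstract Theorem \ref{T:CrticMulti}(b) with the trivial decomposition $X^+=X_{\SO}$, $X^-=\{0\}$ -- for the action of $G=\Z^{N-K}$ by $z$-translations: (I1)--(I8) follow from (F2)--(F4), while a two-sequence refinement of the previous compactness analysis (in the spirit of Lemma \ref{L:Discreteness}) gives the variant $(M)_0^\beta$ for every $\beta>0$. This produces an unbounded sequence of critical values and, by Lemma \ref{L:discrete}, infinitely many $\Z^{N-K}$-distinct solutions in $X_{\SO}$. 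The main obstacle throughout is the compactness step: establishing a Lions-type vanishing lemma compatible with the cylindrical symmetry and the singular potential, and verifying that after the $\xi_n$-translation the new sequence is still a Cerami sequence for the \emph{same} functional, which hinges precisely on the $\Z^{N-K}$-periodicity of $f$.
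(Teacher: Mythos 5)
Your overall architecture matches the paper's: reduction to $X_{\SO}$ via Palais's principle, a Nehari-level minimax, recovery of compactness through $\Z^{N-K}$-translations and a Lions-type lemma, and Theorem \ref{T:CrticMulti} \textit{(b)} with $X^+=X_{\SO}$, $X^-=\{0\}$ for the multiplicity part. There is, however, one step that fails as written: you claim that (F4) gives a \emph{unique} positive maximizer $t(u)$ of $t\mapsto J(tu)$ and hence a Szulkin--Weth homeomorphism between the unit sphere of $X_{\SO}$ and the Nehari set $\cN_{\SO}$. Assumption (F4) only requires $u\mapsto f(x,u)/|u|$ to be \emph{nondecreasing}, and the chapter is explicitly designed to admit nonlinearities for which this quotient is constant on intervals (e.g. \eqref{e-ex2}, where $\Phi'(t)=t$ for $1\le|t|\le2$); for such $f$ the set of maximizers is a nondegenerate interval, $t(u)$ is not well defined, and $\cN_{\SO}$ need not be homeomorphic to the sphere nor be a manifold. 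This is precisely why the paper routes the existence part through the abstract Theorem \ref{T:Link1}, whose hypothesis (I8) is the weak (non-strict) inequality: one only shows that $\Set{t>0|tu\in\cN_{\SO}}$ is an interval on which $J(tu)$ is constant and maximal, which still forces every admissible path to cross $\cN_{\SO}$ and yields $c_\cM=\inf_{\cN_{\SO}}J$ together with a Cerami sequence at that level. Your existence argument is repaired by substituting this for the homeomorphism.

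A second, more technical point: your vanishing condition integrates $u_n^2$ over the unbounded slabs $\rk\times(\xi+[0,1]^{N-K})$, but elements of $X$ are only in $\cD^{1,2}(\rn)\subset L^{2^*}(\rn)$, so these integrals need not be finite, and the sequence is not $L^2$-bounded as the standard Lions lemma would require. The correct statement (Lemma \ref{L:Lions}, i.e. \cite[Proposition A.2]{MederskiZeroMass}) takes the supremum of $\int_{B((0,z),R)}u_n^2\,dx$ over balls centered \emph{on the symmetry axis} and genuinely uses the $\SO$-invariance to exclude concentration at $|y|\to\infty$; this is the nontrivial input you rightly flag as the main obstacle. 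With these two corrections the remainder of your outline --- boundedness of Cerami sequences via the normalized-sequence contradiction, nontriviality of the weak limit from non-vanishing, $J(u)=c$ via Fatou and (F4), passage to $|u|$ for the nonnegative ground state, and the two-sequence condition $(M)_0^\beta$ for the multiplicity --- coincides with what the paper does in Lemmas \ref{L:I678}, \ref{L:Mbeta} and the proof of Theorem \ref{T:ExMul}.
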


By \textit{ground state solution in} $X_{\SO}$ we mean a nontrivial solution that minimizes $J$ over all the nontrivial solutions in $X_{\SO}$. It need not be a ground state solution in the general sense, i.e., minimizing $J$ over all the nontrivial solutions in $X$. Two solutions are called \textit{geometrically distinct} if and only if one cannot be obtained via a translation  of the other in the last $N-K$ variables by a vector in $\Z^{N-K}$.

Observe that (F4) implies $f(x,u)u\ge2F(x,u)\ge0$. When $f$ does not depend on $y$, however, we can consider sign-changing nonlinearities under the following weaker variant of the Ambrosetti-Rabinowitz condition \cite{AmbRab}:
\begin{itemize}
	\item [(F5)] There exists $\gamma>2$ and $u_0\in\R$ such that $f(z,u)u\ge\gamma F(z,u)$ for a.e. $z\in\rnk$ and every $u\in\R$ and $\essinf_{z\in\rnk}F(z,u_0)>0$.
\end{itemize}

In this different setting we can prove the existence of a nontrivial solution to \eqref{e-SchSym}.

\begin{Th}\label{T:Ex}
	Suppose that $a>-\bigl(\frac{K-2}{2}\bigr)^2$, (F1)--(F2) and (F5) hold, and $f$ does not depend on $y$. Then there exists a nontrivial solution $u$ to \eqref{e-SchSym} in $X_{\SO}$. 
\end{Th}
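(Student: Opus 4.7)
The plan is to apply the mountain pass theorem to $J|_{X_{\SO}}$ and then invoke Palais's principle of symmetric criticality. Under (F1)--(F2) a Nemytskii-type argument, with the embedding $X\hookrightarrow L^{2^*}(\rn)$ (inherited from $X\subset\cD^{1,2}(\rn)$) replacing Sobolev, gives $J\in\cC^1(X)$; since (F1) makes $J$ $\SO$-invariant while $\SO$ acts isometrically on $X$, Theorem \ref{T:Palais} reduces the problem to finding a critical point of $J|_{X_{\SO}}$. The mountain pass geometry follows in the standard way: (F2) yields the global bound $|F(x,u)|\le C|u|^{2^*}$, so $J(u)\ge\tfrac12\|u\|^2-C\|u\|^{2^*}$ stays positive on a sufficiently small sphere $S_r\subset X_{\SO}$; on the other hand (F5) produces $F(z,u)\ge c|u|^\gamma-C$, and picking any nonzero $v\in\cC_c^\infty(\rn\setminus\{r=0\})\cap X_{\SO}$ gives $J(tv)\to-\infty$ as $t\to\infty$. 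The mountain pass theorem then delivers a $(PS)_c$ sequence $u_n\in X_{\SO}$ with $c>0$, bounded by the classical Ambrosetti--Rabinowitz estimate $J(u_n)-\tfrac1\gamma J'(u_n)(u_n)\ge(\tfrac12-\tfrac1\gamma)\|u_n\|^2$.

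The main obstacle is the loss of compactness on the unbounded domain $\rn$, to be overcome by combining the $\SO(K)$-invariance in $y$ with the $\Z^{N-K}$-periodicity of $f$ in $z$. Set $\rho:=\limsup_n\sup_{x_0\in\rn}\int_{B(x_0,1)}u_n^2\,dx$. One rules out vanishing ($\rho=0$) by a Lions-type argument: the uniform bound of $u_n$ in $\cD^{1,2}\hookrightarrow L^{2^*}$ combined with the Hardy control $\int u_n^2/r^2\,dx\le C$ (which makes $|u_n|_{L^2(\{r<\delta_0\})}\to 0$ uniformly as $\delta_0\to 0$) and the local $H^1$ bound on $\{r\ge\delta_0\}$ give $u_n\to 0$ in $L^p_{\mathrm{loc}}(\rn)$ for every $p\in(2,2^*)$; using (F2) together with the $L^{2^*}$-tail control this forces $\int F(\cdot,u_n)\,dx\to 0$ and $\int f(\cdot,u_n)u_n\,dx\to 0$, whence $\|u_n\|\to 0$ and $J(u_n)\to 0$, contradicting $c>0$. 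Therefore $\rho>0$ and one can select $x_n=(y_n,z_n)\in\rk\times\rnk$ with $\int_{B(x_n,1)}u_n^2\,dx\ge\rho/2$.

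Finally, the $\SO(K)$-invariance of $u_n$ transfers this local mass to every ball $B((gy_n,z_n),1)$, $g\in\SO(K)$; covering the orbit $\{|y|=|y_n|\}\times\{z_n\}$ by $\sim|y_n|^{K-1}$ essentially disjoint unit balls and using the global bound on $|u_n|_{2^*}$ via H\"older forces $|y_n|$ to remain bounded. Choosing $k_n\in\Z^{N-K}$ with $z_n-k_n\in[0,1]^{N-K}$ and setting $\tilde u_n(y,z):=u_n(y,z+k_n)$, the $\Z^{N-K}$-periodicity of $f$ in $z$ (from (F1)) and the $y$-only dependence of the Hardy potential make translations by $\Z^{N-K}$ isometries of $X_{\SO}$ that preserve $J$ and $J'$, so $\tilde u_n$ is still a bounded $(PS)_c$ sequence in $X_{\SO}$. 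Up to a subsequence $\tilde u_n\rightharpoonup\tilde u$ in $X_{\SO}$, and the persistent $L^2$-mass at the bounded point $(y_n,z_n-k_n)$ combined with the compact embedding $H^1_{\mathrm{loc}}\hookrightarrow L^2_{\mathrm{loc}}$ gives $\tilde u\not\equiv 0$. A standard weak-to-weak-$*$ continuity argument for $J'$ on bounded sequences, using the subcritical growth (F2) and Vitali's theorem on compact sets, yields $J'(\tilde u)=0$, producing the desired nontrivial weak solution to \eqref{e-SchSym}.
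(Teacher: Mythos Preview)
Your plan is correct and close to the paper's; the main organizational difference is in the concentration--compactness. Where you test vanishing over all centres $x_0\in\rn$ and then bound $|y_n|$ via the orbit-covering argument (which is correct), the paper applies directly an $\SO$-adapted Lions lemma (Lemma~\ref{L:Lions}, with balls centred at $(0,z)$, $z\in\R^{N-K}$), which has the symmetry built in and delivers concentration at $y=0$ without the extra step. For the mountain-pass ``large'' side, your $J(tv)\to-\infty$ is a valid and simpler alternative to the paper's anisotropic scaling $w(\lambda y,z)$ via Lemma~\ref{L:positive}; note that (F2) and (F5) together actually force $F\ge 0$ (since $F(z,u)u^{-\gamma}$ is nondecreasing in $u>0$ with limit $0$ at $0^+$) and $F(z,u)\ge cu^\gamma$ for $u\ge u_0$ with $\gamma<2^*$, so no ``$-C$'' is needed.

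There is, however, a genuine gap in your exclusion of vanishing. From $\rho=0$ and boundedness in $L^{2^*}$ you do get $u_n\to 0$ in $L^p_{\mathrm{loc}}$, but this together with ``$L^{2^*}$-tail control'' does \emph{not} yield $\int_{\rn}F(\cdot,u_n)\to 0$: bounded $|u_n|_{2^*}$ gives no uniform smallness of $\int_{\rn\setminus B_R}|u_n|^{2^*}$, and the level set $\{\delta\le|u_n|\le M\}$, while of bounded measure, may be spread over infinitely many unit balls. The Hardy term only controls the strip $\{r<\delta_0\}$ and leaves $u_n\notin L^2(\{r\ge\delta_0\})$ in general, so the classical $H^1$ Lions lemma is unavailable there. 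What is needed is a Lions-type lemma valid for bounded sequences in $\cD^{1,2}(\rn)$---either the general version \cite[Lemma~1.5]{MederskiZeroMass} (stated in $\R^3$ as Lemma~\ref{L:Conv}) or its $\SO$-invariant refinement Lemma~\ref{L:Lions}---which gives $\int_{\rn}\Phi(u_n)\to 0$ for every continuous $\Phi\ge 0$ with $\Phi(s)/|s|^{2^*}\to 0$ at $0$ and at $\infty$; (F2) then yields $\int F(\cdot,u_n)\to 0$ and $\int f(\cdot,u_n)u_n\to 0$, and the rest of your argument goes through.
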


Observe that, in Theorems \ref{T:ExMul} or \ref{T:Ex}, every solution $\bar{u}$ can be supposed to be nonnegative if $f(x,u)\ge0$ for $x\in\rn$ and $u\le0$ because
\[\begin{split}
0\ge-\|\bar{u}_-\|^2 & =\int_{\R^N}\nabla\bar{u}\cdot\nabla\bar{u}_-+\frac{a}{r^2}\bar{u}\bar{u}_-\,dx=\int_{\R^N}f(z,\bar{u})\bar{u}_-\,dx\\
& =\int_{\R^N}f(z,-\bar{u}_-)\bar{u}_-\,dx\ge 0,
\end{split}\]
i.e., $\bar{u}_-=0$ and $\bar{u}=\bar{u}_+\ge0$.

We recall that, if $f$ does not depend on $x$ and $a=1$, then  Badiale, Benci, and Rolando \cite{BadBenRol} found a nontrivial and nonnegative solution to \eqref{e-SchSym} under more restrictive assumptions than in Theorem \ref{T:Ex}; in particular (cf. assumption (f$_1$) therein), they assumed the double-power behaviour $|f(u)|\leq C\min\{|u|^{p-1},|u|^{q-1}\}$ for $u\in\R$, some constant $C>0$, and $2<p<2^*<q$. For example, they cannot deal with nonlinearities such as \eqref{e-ex1} or \eqref{e-ex2} (where $\UU\in\rr$ is replaced with $u\in\R$), not even in the autonomous case $\Gamma\equiv1$ or replacing $f$ with $f\chi_{[0,\infty[}$. On the other hand, we admit such nonlinearities in Theorem \ref{T:ExMul} if $\Gamma\in L^\infty(\rn)$ is positive, bounded away from $0$, $\Z^{N-K}$-periodic in the last $N-K$ variables, and $\SO$-invariant. If, moreover, $\Gamma$ does not depend on $y$, then Theorem \ref{T:Ex} admits nonlinearities as in \eqref{e-ex2}, also if $f(x,u)$ is replaced with $f(x,u)\chi_{\{u\ge0\}}$ (which is not allowed in Theorem \ref{T:ExMul} due to (F3)). Finally, as previously mentioned, Theorem \ref{T:Ex} admits sign-changing nonlinearities. 

When $K=2$, combining the equivalence result provided by Theorem \ref{T:ScalVec} with Theorems \ref{T:ExMul} and \ref{T:Ex}, we can prove the following, as long as $h\colon\rn\times\rn\to\rn$ satisfies
\begin{equation}\label{e-fgh}
h(\cdot,0)=0 \text{ and } h(\cdot,\alpha w)=f(\cdot,\alpha)w \text{ for all } \alpha\in\R \text{ and } w\in\mbS^{N-1}.
\end{equation}
Note that \eqref{e-fgh} implies $h$ and $f$ are odd the second variable:
\[\begin{split}
h(x,\UU)=f(x,|\UU|)\UU/|\UU|=-f(x,|\UU|)(-\UU)/|\UU|=-h(x,-\UU), \quad \UU\ne0,\\
f(x,-\alpha)=\mathfrak{p}\bigl(f(x,-\alpha)e_1\bigr)=\mathfrak{p}\bigl(h(x,-\alpha e_1)\bigr)=\mathfrak{p}\bigl(-f(x,\alpha)e_1\bigr)=-f(x,\alpha),
\end{split}\]
where $\mathfrak{p}$ is the orthogonal projection onto the first axis.

Let $\cF$ be the space of the vector fields $\UU\colon\rn\to\rn$ such that
\begin{equation}\label{e-uU}
\UU(x)=\frac{u(x)}{r}
\begin{pmatrix}
-x_2\\
x_1\\
0
\end{pmatrix}
\end{equation}
for some $\SO$-invariant $u\colon\rn\to\R$, where $0\in\R^{N-2}$, and define $\DF:=\cD^{1,2}(\rn,\rn)\cap\cF$.

\begin{Cor}\label{C:V1}
	Suppose that $K=2$ and $h$ satisfies \eqref{e-fgh}.
	\begin{itemize}
		\item [(a)] If (F1)--(F4) hold, then \eqref{e-curlsym} has infinitely many geometrically distinct solutions in $\DF$, one of which is a ground state solution in $\DF$.
		\item [(b)] If (F1)--(F2), (F5) hold and $h$ does not depend on $y$, then \eqref{e-curlsym} has a nontrivial solution $\UU\in\DF$.
	\end{itemize}
\end{Cor}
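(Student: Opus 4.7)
The plan is to apply Theorem \ref{T:ScalVec} (the extension of the $u\leftrightarrow\UU$ equivalence to weak solutions) so that the scalar results, Theorems \ref{T:ExMul} and \ref{T:Ex}, transfer to the curl-curl problem. Since $K=2$, we have $-(K/2-1)^2=0$, hence $a=1>-\bigl(\frac{K-2}{2}\bigr)^2$, so the functional framework for \eqref{e-SchSym} from Section \ref{S:statecs} is available. The hypothesis \eqref{e-fgh} produces a well-defined $f\colon\rn\times\R\to\R$ by $f(x,\alpha):=h(x,\alpha e)\cdot e$ for any fixed unit vector $e$ of the form $(-\xi_2,\xi_1,0)/|(\xi_1,\xi_2)|$; the relation $h(\cdot,\alpha w)=f(\cdot,\alpha)w$ makes $f$ a Carath\'eodory function once $h$ is, and it is automatically odd in $\alpha$.

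First I would check that the assumptions (F1)--(F4), resp. (F1)--(F2)+(F5), imposed on $h$ in part (a), resp. (b), translate directly to the same assumptions on $f$. This is routine: the $\SO(2)\times\{I_{N-2}\}$-invariance and the $\Z^{N-2}$-periodicity in $z$ pass from $h$ to $f$ because they only involve the first argument, and the pointwise growth/monotonicity conditions (F2)--(F5) involve $|h(x,\UU)|=|f(x,|\UU|\,\textup{sign}\,\alpha)|$ along the fixed direction $w$, giving the corresponding conditions for $f$. Consequently, the scalar equation \eqref{e-SchSym} with $a=1$ satisfies the hypotheses of Theorem \ref{T:ExMul} in case (a) and of Theorem \ref{T:Ex} in case (b).

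For part (a), Theorem \ref{T:ExMul} yields a ground state solution $u_0\in X_{\SO}$ of \eqref{e-SchSym} and, using the oddness of $f$ inherited from \eqref{e-fgh}, a sequence $u_n\in X_{\SO}$ of geometrically distinct solutions. Theorem \ref{T:ScalVec} then provides corresponding $\UU_0,\UU_n\in\DF$ given by \eqref{e-uU} which are weak solutions to \eqref{e-curlsym}. Since the bijection $u\leftrightarrow\UU$ carries $\Z^{N-2}$-translations in $z$ on the scalar side to the same translations on the vector side, the geometric distinctness of the $u_n$ forces that of the $\UU_n$. Moreover, the formula \eqref{e-uU} gives $|\UU|=|u|/r$ pointwise, so $H(x,\UU)=F(x,u)$ and a direct computation on smooth symmetric functions yields $|\nabla\times\UU|_2^2=|\nabla u|_2^2+|u/r|_2^2$; hence $E(\UU)=J(u)$. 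This energy identity, together with the fact that the equivalence maps nontrivial solutions in $X_{\SO}$ onto all nontrivial solutions in $\DF$, shows that $\UU_0$ minimizes $E$ over the nontrivial critical points in $\DF$, i.e., it is a ground state solution in $\DF$. Part (b) is proved in the same way: Theorem \ref{T:Ex} supplies a nontrivial $u\in X_{\SO}$ (note that $f$ inherits from $h$ the property of not depending on $y$), and Theorem \ref{T:ScalVec} returns the corresponding nontrivial $\UU\in\DF$ solving \eqref{e-curlsym}.

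The only delicate point is the last one, namely making sure that ``ground state in $\DF$'' really follows from ``ground state in $X_{\SO}$''; this requires the bijection of Theorem \ref{T:ScalVec} to be defined on the full set of nontrivial weak solutions (not merely on smooth ones) and to be energy-preserving. Both properties are exactly what Theorem \ref{T:ScalVec} was designed to provide, so once the assumptions transfer, the corollary follows by direct application; the rest is bookkeeping on symmetries, oddness, and the identification $E\circ(\text{map})=J$.
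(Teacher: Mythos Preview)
Your approach is correct and is exactly the paper's: the corollary is obtained by applying Theorems~\ref{T:ExMul} and~\ref{T:Ex} to the scalar problem \eqref{e-SchSym} with $a=1$ and then transferring the solutions, the energy identity $E(\UU)=J(u)$, and the ground-state property to \eqref{e-curlsym} via Theorem~\ref{T:ScalVec}. One small slip: from \eqref{e-uU} the vector $(-x_2,x_1,0)$ has norm $r$, so $|\UU|=|u|$ (not $|u|/r$); this does not affect your argument since $H(x,\UU)=F(x,u)$ and $E(\UU)=J(u)$ are already part of the statement of Theorem~\ref{T:ScalVec}.
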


Simiarly as before, by \textit{ground state solution in} $\DF$ we mean a nontrivial solution that minimizes $E$ over all the nontrivial solutions in $\DF$; also the definition of \textit{geometrically distinct solutions} easily adapts to \eqref{e-curlsym}. 

Now we consider the problems \eqref{e-curlsym} and \eqref{e-SchSym} in the Sobolev-critical case, so from now till the end of this section we assume $K=2$ and
\[
h(x,\UU)=|\UU|^{2^*-2}\UU \quad \text{and} \quad f(x,u)=|u|^{2^*-2}u.
\]
Of course Theorems \ref{T:ExMul} or \ref{T:Ex} no longer apply because (F2) does not hold. We recall that Badiale, Guida, and Rolando \cite{BadGuiRol} found a ground state solution in $X_{\SO}$ to \eqref{e-curlsym} for $a>0$ (but without further restrictions on $K\ge2$); an immediate consequence of this and Theorem \ref{T:ScalVec} is the following.

\begin{Cor}\label{C:V2}
There exists a ground state solution in $\DF$ to
\begin{equation}\label{e-crit}
\nabla\times\nabla\times\UU=|\UU|^{2^*-2}\UU \quad \text{in } \rn.
\end{equation}
\end{Cor}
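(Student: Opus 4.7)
The plan is to invoke the equivalence result, Theorem \ref{T:ScalVec}, to transfer the ground state found by Badiale--Guida--Rolando for the critical Schr\"odinger equation in $X_{\SO}$ to a ground state in $\DF$ for the curl-curl equation \eqref{e-crit}. First I would verify the compatibility condition \eqref{e-fgh}: setting $h(x,\UU):=|\UU|^{2^*-2}\UU$ and $f(x,u):=|u|^{2^*-2}u$, one has $h(\cdot,0)=0$ and, for every $\alpha\in\R$ and $w\in\mbS^{N-1}$,
\[
h(\cdot,\alpha w)=|\alpha w|^{2^*-2}(\alpha w)=|\alpha|^{2^*-2}\alpha\,w=f(\cdot,\alpha)w,
\]
so that Theorem \ref{T:ScalVec} applies with $K=2$ and $a=1$.

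Next, by \cite{BadGuiRol} there exists a ground state $u^\star\in X_{\SO}$ to
\[
-\Delta u+\frac{u}{r^2}=|u|^{2^*-2}u \quad \text{in } \rn,
\]
i.e., a nontrivial critical point of $J$ in $X_{\SO}$ achieving the infimum of $J$ over all nontrivial critical points in $X_{\SO}$. Applying Theorem \ref{T:ScalVec}, the corresponding vector field $\UU^\star\in\DF$ defined by \eqref{e-uU} is a weak solution to \eqref{e-crit}.

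It remains to show that $\UU^\star$ is actually a ground state in $\DF$. The key point is that the correspondence \eqref{e-uU} preserves energies: a direct computation, which Theorem \ref{T:ScalVec} is built upon, yields
\[
|\UU|^2=u^2 \quad \text{and} \quad |\nabla\times\UU|^2=|\nabla u|^2+\frac{u^2}{r^2}
\]
pointwise a.e., so that $E(\UU)=J(u)$ for every $u\in X_{\SO}$ and the associated $\UU\in\DF$. Since Theorem \ref{T:ScalVec} provides a bijection between the nontrivial critical points of $J|_{X_{\SO}}$ and those of $E|_{\DF}$, this energy identity implies
\[
E(\UU^\star)=J(u^\star)=\inf_{\substack{v\in X_{\SO}\setminus\{0\}\\J'(v)=0}}J(v)=\inf_{\substack{\VV\in\DF\setminus\{0\}\\E'(\VV)=0}}E(\VV),
\]
so $\UU^\star$ is a ground state in $\DF$.

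The main point to verify carefully is the bijective nature of the correspondence in Theorem \ref{T:ScalVec} at the level of weak solutions (not merely of classical ones), and that the critical-growth nonlinearity $|u|^{2^*-2}u$ fits the hypotheses of that theorem; once these are in hand, the proof is a direct translation from the scalar to the vector setting, with no additional variational analysis required.
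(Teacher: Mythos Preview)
Your proof is correct and follows exactly the same route as the paper, which derives the corollary in one line from \cite[Theorem 1]{BadGuiRol} combined with Theorem \ref{T:ScalVec}. The only minor imprecision is the claimed pointwise identity $|\nabla\times\UU|^2=|\nabla u|^2+u^2/r^2$: what Theorem \ref{T:ScalVec} (via Lemma \ref{L:DivFree}) actually provides is the corresponding integral identity, hence $E(\UU)=J(u)$, which is all you need for the ground-state comparison.
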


The main results of this chapter in the Sobolev-critical case concern the existence of unbounded sequences of solutions. In order to achieve them, we need to recover compactness and the first step in this direction is to introduce this definition. From now on, we additionally assume $N=3$.

\begin{Def}\label{D:sym}
For $g_1,g_2\in\SO(2)$ we denote $g=\bigl(\begin{smallmatrix}
g_1 & 0\\
0 & g_2
\end{smallmatrix}\bigr)\equiv(g_1,g_2)\in\SO(2)\times\SO(2)$. We say that $\UU\in\cD^{1,2}(\rr,\rr)$ is $\SO(2\times2)$-symmetric if and only if for every $g_1,g_2\in\SO(2)$ and a.e. $x\in\rr$
\[
\frac{\UU\Bigl(\pi\bigl(g\pi^{-1}(x)\bigr)\Bigr)}{\psi\Bigl(\pi\bigl(g\pi^{-1}(x)\bigr)\Bigr)}=\frac{\widetilde{g_1}\UU(x)}{\psi(x)},
\]
where $\psi(x)=\sqrt{\frac{2}{1+|x|^2}}$, $\pi\colon\mbS^3\setminus\{Q\}\to\rr$ is the stereographic projection, and $Q=(1,0,0,0)$ is the north pole.
\end{Def}

For further remarks on Definition \ref{D:sym} and the symmetry introduced therein see Subsection \ref{SS:sym}. The subspace of $\cD^{1,2}(\rr,\rr)$ consisting of $\SO(2\times2)$-symmetric vector fields is denoted $\cD_{\SO(2\times2)}$. Now we can state our main results in the Sobolev-critical case.

\begin{Th}\label{T:crit}
There exists a sequence $\UU_n\in\cD_{\SO(2\times2)}$ of solutions to \eqref{e-crit} such that $\lim_nE(\UU_n)=\infty$ and each $\UU_n$ is of the form \eqref{e-uU}.
\end{Th}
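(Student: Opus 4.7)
The plan is to reduce the vectorial critical problem to a scalar critical Schrödinger equation with singular potential via the equivalence established in Theorem \ref{T:ScalVec}, apply a symmetric-critical-point theorem under the $\SO(2\times2)$-action, and use Palais's principle to return to genuine solutions of \eqref{e-crit}.

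First, I would describe how the $\SO(2\times2)$-symmetry translates to the scalar level. A vector field $\UU\in\DF$ has the form \eqref{e-uU} for some $\SO$-invariant $u\colon\rr\to\R$; Theorem \ref{T:ScalVec} puts such $\UU$'s in bijection with the $u$'s in $X_{\SO}$, and the equation \eqref{e-crit} for $\UU$ becomes the critical scalar equation
\[
-\Delta u+\frac{u}{r^2}=|u|^{4}u \quad\text{in } \rr.
\]
Unwinding Definition \ref{D:sym} through \eqref{e-uU} and the stereographic projection singles out a closed linear subspace $Y\subset X_{\SO}$ — the image of $\DF\cap\cD_{\SO(2\times2)}$ under $\UU\mapsto u$ — which inherits the additional symmetry coming from the $\SO(2)\times\SO(2)$-action on $\mbS^{3}$. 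Solutions to \eqref{e-crit} in $\DF\cap\cD_{\SO(2\times2)}$ then correspond to critical points of the even $\cC^1$-functional
\[
J(u)=\int_{\rr}\frac12\Bigl(|\nabla u|^2+\frac{u^2}{r^2}\Bigr)\,dx-\frac16\int_{\rr}|u|^6\,dx,\qquad u\in Y.
\]

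Second, I would establish the crucial compactness restored by the $\SO(2\times2)$-action. Via the conformal change of variables induced by $\pi$, the critical embedding $X\hookrightarrow L^6(\rr)$ is transported to the $H^1\hookrightarrow L^6$-embedding on $\mbS^3$. The $\SO(2\times2)$-action on $\mbS^3\subset\R^4$ (viewing $\mbS^3$ as the join of two orthogonal circles) has all orbits of positive dimension, so the Lions-type concentration points of any minimizing/PS-sequence of invariant functions form orbits of positive measure, which forces non-concentration at any finite level. This yields the Palais--Smale condition for $J|_Y$ \emph{at every} positive level — the central point where the argument of \cite{Ding} is adapted, and the main technical obstacle of the proof. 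Boundedness of PS-sequences is immediate from the Ambrosetti--Rabinowitz inequality $f(u)u=6\cdot\frac16|u|^6\ge 6 F(u)$, and the $\SO$-invariance already guarantees that $Y$ sits inside a Lebesgue space compactly for subcritical exponents; the $\SO(2\times2)$-symmetry is what rescues the critical one.

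Third, since $J|_Y$ is even, of class $\cC^1$, satisfies $J(0)=0$, has the local linking geometry $\inf_{\|u\|=\rho}J(u)>0$ for small $\rho$ (from the Sobolev inequality in $Y$), and for every finite-dimensional subspace $Y_k\subset Y$ there is $R_k>0$ with $J\le 0$ outside $B_{R_k}\cap Y_k$ (because the $L^6$-term dominates), the symmetric mountain pass / fountain theorem via the Krasnosel'skii genus (cf.\ the abstract machinery used in Section \ref{S:abstract}, in the simpler classical version of \cite[Theorem 9.12]{Rabin} or \cite[Theorem 3.6]{Struwe}) produces a sequence of critical points $u_n\in Y$ with $c_n:=J(u_n)\to\infty$. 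Setting $\UU_n(x):=u_n(x)r^{-1}(-x_2,x_1,0)$ gives vector fields in $\DF\cap\cD_{\SO(2\times2)}$ of the form \eqref{e-uU}. By Palais's principle of symmetric criticality (Theorem \ref{T:Palais}), each $u_n$ is a critical point of $J$ on all of $X$, hence a weak solution of the scalar problem, and the equivalence Theorem \ref{T:ScalVec} then yields $E'(\UU_n)=0$ in $\cD^{1,2}(\rr,\rr)$. Finally, a direct computation using \eqref{e-uU} shows $E(\UU_n)=J(u_n)=c_n\to\infty$, so the constructed sequence is unbounded in energy.
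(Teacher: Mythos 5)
Your route is genuinely different from the paper's: you pass to the scalar equation first and run the symmetric mountain pass argument in a subspace $Y\subset X_{\SO}$, whereas the paper works entirely at the vector level, in $\cY:=\cD_{\SO(2\times2)}\cap\cF$, and only afterwards transfers to the scalar equation (that is Corollary \ref{C:S}, which is \emph{deduced from} Theorem \ref{T:crit}, not the other way round). The skeleton you propose --- compactness from the $\SO(2)\times\SO(2)$ action on $\mbS^3$ \`a la Ding, symmetric mountain pass, Palais's principle --- is the right one, but two steps you dispose of by ``unwinding'' are in fact the technical core of the proof and are genuinely missing.

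First, to invoke Theorem \ref{T:Palais} on $Y$ you need an isometric linear action of $\SO(2)\times\SO(2)$ on the Hilbert space $X$ (whose norm contains the singular term $\int_{\rr}u^2/r^2\,dx$) whose fixed-point set is $Y$ and which leaves $J$ invariant. The $\SO(2\times2)$-symmetry of Definition \ref{D:sym} is an action on \emph{vector fields}; that it descends through \eqref{e-uU} to a well-defined action on scalar functions requires showing that the action preserves the tangential subspace $\cF$, i.e., that $\UU\in\cD_{\SO(2\times2)}$ implies $\UU_\tau\in\cD_{\SO(2\times2)}$. This is the paper's Lemma \ref{L:decom}, a page of explicit computation with $\pi$ and $\psi$, and it is not automatic. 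Moreover, the induced scalar action is a weighted composition operator, and its isometry for the $X$-norm is \emph{not} the standard conformal invariance of the Dirichlet integral, because of the $u^2/r^2$ term; it is most easily obtained from the identity $\|u\|^2=|\nabla\UU|_2^2$ of Lemma \ref{L:DivFree} --- that is, by returning to the vector level, which is precisely why the paper never leaves it.

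Second, you apply the fountain-type conclusion ``for every finite-dimensional subspace $Y_k\subset Y$ \dots'' without establishing that $Y$ is infinite-dimensional; without this the symmetric mountain pass theorem does not produce a divergent sequence of critical values. This is not free: intersecting two symmetry constraints can annihilate the space (compare $\fH_\textup{r}^*\cap\cF=\{0\}$ in Chapter \ref{K:join}). The paper devotes Lemma \ref{L:inf} to it, via the decomposition $\cD_{\SO(2\times2)}=\cX\oplus\cY\oplus\cZ$ and explicit isomorphisms among the summands. A smaller point: in the compactness step only $|\VV|$, not $\VV$ itself, is $\cO(2)\times\cO(2)$-invariant on $\mbS^3$, so Ding's lemma must be applied to $|\VV_n|$ as in Lemma \ref{L:compact}; your phrase ``invariant functions'' elides this. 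The remaining steps (Ambrosetti--Rabinowitz boundedness, mountain pass geometry, $E(\UU_n)=J(u_n)$, transfer of criticality via Theorem \ref{T:ScalVec}) are fine.
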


Clearly the property $\lim_nE(\UU_n)=\infty$ implies that the solutions $\UU_n$ are geometrically distinct (up to a subsequence), as $E$ is invariant under translations. Once again, we exploit Theorem \ref{T:ScalVec} to obtain the following.

\begin{Cor}\label{C:S}
There exists a sequence $u_n\in X_{\SO}$ of solutions to
\[
-\Delta u+\frac{u}{r^2}=u^5 \quad\text{in }\R^3
\]
such that $\lim_nJ(u_n)=\infty$ and each $|u_n|$ is satisfies
\[
\frac{\Big|u_n\Bigl(\pi\bigl(g\pi^{-1}(x)\bigr)\Bigr)\Big|}{\psi\Bigl(\pi\bigl(g\pi^{-1}(x)\bigr)\Bigr)}=\frac{|u_n(x)|}{\psi(x)}
\]
for every $g_1,g_2\in\SO(2)$ and a.e. $x\in\rr$.
\end{Cor}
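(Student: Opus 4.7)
The plan is that Corollary \ref{C:S} is a direct consequence of Theorem \ref{T:crit} combined with the equivalence Theorem \ref{T:ScalVec} from Section \ref{S:equiv}. Concretely, Theorem \ref{T:crit} provides a sequence $\UU_n \in \cD_{\SO(2\times2)}$ of solutions to \eqref{e-crit} with $\lim_n E(\UU_n) = \infty$, each of the form \eqref{e-uU}, namely
\[
\UU_n(x) = \frac{u_n(x)}{r}
\begin{pmatrix} -x_2 \\ x_1 \\ 0 \end{pmatrix}
\]
for some $\SO$-invariant scalar $u_n$. The nonlinearity $h(\UU) = |\UU|^{2^*-2}\UU$ satisfies the compatibility condition \eqref{e-fgh} with $f(u) = |u|^{2^*-2} u$ (both are odd and radial in the second variable), so Theorem \ref{T:ScalVec} applies with $K = 2$, $N = 3$, $a = 1$ and produces $u_n \in X_{\SO}$ that is a weak solution to $-\Delta u + u/r^2 = |u|^{2^*-2} u$ in $\R^3$.

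Next, I would transfer the energy divergence. Since $\UU_n$ has the form \eqref{e-uU} with an $\SO$-invariant $u_n$, a direct computation shows $\nabla \cdot \UU_n = 0$, so by the identity $\int_{\rr} |\nabla \times \UU_n|^2 \, dx = \int_{\rr} |\nabla \UU_n|^2 - (\nabla \cdot \UU_n)^2 \, dx$ we get $\int |\nabla \times \UU_n|^2 = \int |\nabla \UU_n|^2$. The standard identity built into the equivalence (the same one underlying Theorem \ref{T:ScalVec}) gives
\[
\int_{\rr} |\nabla \UU_n|^2 \, dx = \int_{\rr} |\nabla u_n|^2 + \frac{u_n^2}{r^2}\, dx,
\]
and, since $|(-x_2, x_1, 0)|/r = 1$, we have $|\UU_n(x)| = |u_n(x)|$ a.e., hence $\int |\UU_n|^{2^*} = \int |u_n|^{2^*}$. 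Combining these yields $E(\UU_n) = J(u_n)$, so $\lim_n J(u_n) = \infty$.

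Finally, I would derive the symmetry for $|u_n|$ from Definition \ref{D:sym}. For every $g = (g_1,g_2) \in \SO(2) \times \SO(2)$ and a.e.\ $x \in \rr$,
\[
\frac{\UU_n\bigl(\pi(g\pi^{-1}(x))\bigr)}{\psi\bigl(\pi(g\pi^{-1}(x))\bigr)} = \frac{\widetilde{g_1}\UU_n(x)}{\psi(x)}.
\]
Taking Euclidean norms of both sides, using that $\widetilde{g_1} \in \cO(3)$ preserves $|\cdot|$ and that $|\UU_n(x)| = |u_n(x)|$, we obtain
\[
\frac{\bigl|u_n\bigl(\pi(g\pi^{-1}(x))\bigr)\bigr|}{\psi\bigl(\pi(g\pi^{-1}(x))\bigr)} = \frac{|u_n(x)|}{\psi(x)},
\]
which is exactly the claimed identity.

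The argument is essentially a bookkeeping exercise once Theorems \ref{T:crit} and \ref{T:ScalVec} are in place; no real obstacle appears. The only step that requires a brief check is the equality $E(\UU_n) = J(u_n)$, but this follows from the same pointwise calculations (in particular $|\UU_n| = |u_n|$ and the identity for $\int|\nabla \UU_n|^2$) that make Theorem \ref{T:ScalVec} possible, so nothing new has to be proved.
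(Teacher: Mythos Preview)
Your proposal is correct and follows exactly the paper's approach: the paper's own proof is the single line ``It follows from Theorems \ref{T:crit} and \ref{T:ScalVec},'' and you have simply unpacked this by (i) pulling the scalar $u_n$ out of the representation \eqref{e-uU}, (ii) invoking Theorem \ref{T:ScalVec} for the solution property and the identity $E(\UU_n)=J(u_n)$, and (iii) taking norms in Definition \ref{D:sym} to obtain the stated symmetry of $|u_n|$.
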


Observe that Theorem \ref{T:ScalVec} is used in Corollaries \ref{C:V1} and \ref{C:V2} to build solutions to \eqref{e-curlsym} from solutions to \eqref{e-SchSym}, while it is used in Corollary \ref{C:S} to build solutions to \eqref{e-SchSym} from solutions to \eqref{e-curlsym}.

\subsection{Remarks on Definition \ref{D:sym}}\label{SS:sym}

As previously said, the symmetry defined in Definition \ref{D:sym} is used to restore compactness in the whole $\rr$ in the Sobolev-critical case. To the best of our knowledge, the first who faced this issue was Ding \cite{Ding}, who proved the existence of infinitely many sign-changing solutions to
\begin{equation}\label{e-DingR}
-\Delta u=|u|^{2^*-2}u, \quad u\in\cD^{1,2}(\rn).
\end{equation}
His approach is as follows: first, he proves that the solutions to \eqref{e-DingR} are in 1-to-1 correspondence with the solutions to
\begin{equation}\label{e-DingS}
-\Delta_\mathfrak{g}v+\frac{N(N-2)}{4}v=|v|^{2^*-2}v, \quad v\in H^1(\mbS^N)
\end{equation}
via the stereographic projection and the conformal map $\psi$, i.e., $v\in H^1(\mbS^N)$ solves \eqref{e-DingS} if and only if $u=(v\circ\pi^{-1})\psi\in\cD^{1,2}(\rn)$ solves \eqref{e-DingR}; the symbol $\Delta_\mathfrak{g}$ in \eqref{e-DingS} stands for the Laplace-Beltrami operator \cite{Aubin,ONeill}. Then, he introduces a group action in $H^1(\mbS^N)$ such that the subspace consisting of the functions invariant with respect to this action is compactly embedded in $L^{2^*}(\mbS^N)$. More in details, for $k\ge m\ge2$ integers such that $N+1=k+m$, he considers the action of $\cO(k)\times\cO(m)\subset\cO(N+1)$ defined as
\[
(g_1,g_2)v(\xi):=v(g_1\xi_1,g_2\xi_2)
\]
for $g_1\in\cO(k)$, $g_2\in\cO(m)$, $v\in H^1(\mbS^N)$, and $\mbS^N\ni\xi=(\xi_1,\xi_2)\in\R^k\times\R^m$.

Ding, therefore, ``leaves'' the Euclidean space $\rn$ and works on the sphere $\mbS^N$, where the definition of the group action is more straightforward. If one decides to work directly in $\rn$, then the action of $\cO(k)\times\cO(m)$ on $H^1(\mbS^N)$ must be adapted to $\cD^{1,2}(\rn)$. This was done by Clapp and Pistoia \cite{ClPis}: if $v\in H^1(\mbS^N)$ is such that
\[
v(g_1\xi_1,g_2\xi_2)=v(\xi)
\]
for every $g_1\in\cO(k)$, every $g_2\in\cO(m)$, and a.e. $\xi\in\mbS^N$, and if we define $u(x)=\psi(x)v\bigl(\pi^{-1}(x)\bigr)$, then $u\in\cD^{1,2}(\rn)$ satisfies
\[
\frac{\psi(x)}{\psi\Bigl(\pi\bigl(g\pi^{-1}(x)\bigr)\Bigr)}u\bigl(\pi\Bigl(g\pi^{-1}(x)\bigr)\Bigr)=u(x)
\]
for every $g=
\bigl(\begin{smallmatrix}
g_1 & 0\\
0 & g_2
\end{smallmatrix}\bigr)
\equiv(g_1,g_2)\in\cO(k)\times\cO(m)$ and a.e. $x\in\rn$, see \cite[Section 3]{ClPis}.

Both \cite{Ding} and \cite{ClPis} deal with scalar-valued functions. Although the arguments seem to be valid also for vector-valued functions, the case studied in Section \ref{S:SC} is slightly more delicate because we need to combine this action, developed for the Laplace operator, with those defined in \eqref{e-Fix} and \eqref{e-S}, which allow to reduce the curl-curl operator to the vector Laplacian. This is the reason why we introduce the symmetry from Definition \ref{D:sym}, which matches well with that induced from \eqref{e-Fix} and \eqref{e-S}, cf. Lemma \ref{L:equisym}.

\section{An equivalence result}\label{S:equiv}

Throughout this section we have $K=2$. The reason is that it is straightforward to generalize the decomposition of $\textup{Fix}(\SO)$ given in \cite[Lemma 1]{AzzBenDApFor} and recalled in Chapter \ref{K:intro1} to the case $N\ge3$ (cf. Lemma \ref{L:dec}), but not to the case $2\le K<N$. Another difficulty is that $\SO(\nu)$ is \emph{not} abelian if $\nu\ge3$.

We recall that for $g\in\SO(2)$ we denote
\[
\widetilde{g}=
\begin{pmatrix}
g & 0\\
0 & I_{N-2}
\end{pmatrix}
\in\SO(N)
\]
and $\SO=\Set{\widetilde{g}|g\in\SO(2)}$; moreover, $\textup{Fix}(\SO)\subset\cD^{1,2}(\rn,\rn)$ is the subspace of the vector fields invariant under the action defined in \eqref{e-Fix}. We recall also that $\DF=\cD^{1,2}(\rn,\rn)\cap\cF$, where $\cF$ is the space of the vector fields $\UU\colon\rn\to\rn$ that satisfy \eqref{e-uU} for some $\SO$-invariant $u\colon\rn\to\R$. Note that $\DF$ is a closed subspace of $\cD^{1,2}(\rn,\rn)$ and that $\DF\subset\textup{Fix}(\SO)$. Finally, we recall the notation $x=(y,z)\in\R^2\times\R^{N-2}=\rn$.

The main result in this section reads as follows.

\begin{Th}\label{T:ScalVec}
	Assume $f$ satisfy (F1) and there exists $C>0$ such that $|f(x,u)|\leq C|u|^{2^*-1}$ for a.e. $x\in\rn$ and every $u\in\R$; assume also that $h$ satisfy \eqref{e-fgh}. Suppose that 
	$\UU$ and $u$ satisfy \eqref{e-uU} for a.e. $x\in\rn$. Then
	$\UU\in\DF$ if and only if $u\in X_{\SO}$ and, in such a case, $\nabla\cdot\UU=0$ and
	$J(u)=E(\UU)$. Moreover,
	$u\in X_{\SO}$ is a solution to \eqref{e-SchSym} with $a=1$ 
	if and only if $\UU\in\DF$ is a solution to \eqref{e-curlsym}.
\end{Th}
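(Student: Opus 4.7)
I would start from a direct pointwise computation on smooth fields. For $\SO$-invariant smooth $u$ and the corresponding $\UU = u w$ with $w(x) := r^{-1}(-x_2, x_1, 0, \dots, 0)$, three identities drive the proof. First, $\nabla \cdot \UU = 0$: the two nonzero contributions to $\nabla \cdot w$, namely $\partial_1(-x_2/r) = x_1 x_2/r^3$ and $\partial_2(x_1/r) = -x_1 x_2/r^3$, cancel, while $\SO$-invariance of $u$ forces $\nabla u$ pointwise orthogonal to $w$. Second, expanding $|\nabla \UU|^2 = \sum_{i,j}(\partial_i(u w_j))^2$, the cross-term vanishes because $\sum_j w_j \partial_i w_j = \tfrac12 \partial_i |w|^2 = 0$ (as $|w|\equiv 1$), and a routine calculation gives $|\nabla w|^2 = r^{-2}$; hence $|\nabla\times\UU|^2 = |\nabla u|^2 + u^2/r^2$ by the definition adopted via \eqref{e-curlLap}. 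Third, \eqref{e-fgh} together with the oddness of $f$ in its second argument (itself a consequence of \eqref{e-fgh}) yields the pointwise identity $H(x, \UU) = F(x, u)$.

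\textbf{Extension and isometric correspondence.} Next, I would lift these identities to the function-space level by density: $\SO$-invariant smooth fields with compact support away from the axis $\{r = 0\}$ are dense in both $X_{\SO}$ (the finiteness of $\int u^2/r^2\,dx$ making the axis cutoff harmless in the limit) and in $\DF$. The map $\Phi \colon X_{\SO} \to \DF$ sending $u$ to the field defined by \eqref{e-uU} is therefore a linear bijective isometry once $X_{\SO}$ is equipped with the $a=1$ norm, delivering at once the equivalence $\UU \in \DF \Leftrightarrow u \in X_{\SO}$, the vanishing of $\nabla \cdot \UU$, and the energy identity $J(u) = E(\UU)$.

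\textbf{Solution equivalence via Palais.} For the remaining claim, since $E \circ \Phi = J|_{X_{\SO}}$ and $\Phi$ is a linear homeomorphism, critical points of $J|_{X_{\SO}}$ correspond bijectively to those of $E|_{\DF}$. I would then invoke Palais's principle of symmetric criticality (Theorem \ref{T:Palais}) on both sides. On the Schr\"odinger side, (F1) makes $J$ invariant under the natural isometric $\SO$-action on $X$, so critical points of $J|_{X_{\SO}}$ are weak solutions of \eqref{e-SchSym} with $a=1$. On the curl-curl side, I would apply Palais first with the $\SO$-action \eqref{e-Fix} on $\cD^{1,2}(\rn, \rn)$ (whose fixed set is $\textup{Fix}(\SO)$), and then with the involution $\cS$, extended pointwise to every $N \ge 3$ via the radial/tangential/vertical decomposition from \cite{AzzBenDApFor} (its fixed set within $\textup{Fix}(\SO)$ being exactly $\DF$). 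Chaining these two reductions yields that critical points of $E|_{\DF}$ are weak solutions of \eqref{e-curlsym}, closing the equivalence.

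\textbf{Main obstacle.} The delicate point is verifying the $\cS$-invariance of $E$ in arbitrary dimension $N \ge 3$. The nonlinear part is straightforward: $\cS$ is a pointwise orthogonal involution and $h$ is $\cS$-equivariant by \eqref{e-fgh}, so $H(x, \cS\UU) = H(x, \UU)$. The kinetic invariance $\int |\nabla(\cS\UU)|^2 - (\nabla\cdot\cS\UU)^2\,dx = \int |\nabla\UU|^2 - (\nabla\cdot\UU)^2\,dx$ is harder because the radial/tangential/vertical decomposition uses a position-dependent orthonormal frame, so $\cS$ and $\nabla$ do not commute in the naive sense; nevertheless, this should follow from a direct if slightly tedious calculation generalizing the $N=3$ argument. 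The secondary, more routine obstacle is the density statement underlying the extension step, which relies critically on $\int u^2/r^2\,dx < \infty$ to justify a smooth cutoff away from the axis without losing control of the norm.
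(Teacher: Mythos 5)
Your pointwise identities and the Palais/$\cS$-invariance scheme for the solution equivalence match the paper's strategy, but there is a genuine gap in the ``extension and isometric correspondence'' step, and it sits exactly at the hard direction of the theorem. The implication $u\in X_{\SO}\Rightarrow\UU\in\DF$ can indeed be done by hand (the paper does it directly, checking that the pointwise derivatives of $\UU$ are distributional derivatives on all of $\rn$, with every term in $L^2$ \emph{because} $\int_{\rn} u^2/r^2\,dx<\infty$). The converse, $\UU\in\DF\Rightarrow u\in X_{\SO}$, is precisely the statement that a tangential field that is merely in $\cD^{1,2}(\rn,\rn)$ automatically satisfies the Hardy-type bound $\int_{\rn}|\UU|^2/r^2\,dx<\infty$. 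Your proposed route --- density in $\DF$ of smooth $\SO$-invariant fields compactly supported away from the axis --- is circular here: to cut off $\UU$ near $\{r=0\}$ without losing control of the $\cD^{1,2}$ norm you must estimate $\int_{\rn}|\nabla\eta_\varepsilon|^2|\UU|^2\,dx\lesssim\int_{\{r<\varepsilon\}}|\UU|^2/r^2\,dx$, i.e., you need exactly the integrability you are trying to prove. The parenthetical justification you give (``the finiteness of $\int u^2/r^2\,dx$ making the axis cutoff harmless'') is only available on the $X_{\SO}$ side, where that finiteness is part of the definition of the space. Consequently the claimed surjectivity of your map $\Phi$ is unproven.

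The paper circumvents this with a different density statement (Proposition \ref{P:Density}): it approximates $\UU\in\DF$ by group-averaged smooth compactly supported fields in $\textup{Fix}(\SO)$ (Lemma \ref{L:Oapprox}), decomposes each approximant into radial, tangential and vertical parts whose gradients are pairwise orthogonal (Lemma \ref{L:dec}) --- so that the tangential projections $\UU_\tau^n$ still converge to $\UU$ in $\cD^{1,2}(\rn,\rn)$ --- and then shows, via Taylor expansion and the fact that $\SO$-invariance forces the first two components to vanish on the axis, that each $\UU_\tau^n$ is $O(y)$ uniformly in $z$ near $\{r=0\}$ (the class $\cH$). This first-order vanishing, not a compact-support cutoff, is what makes $\int_{\rn} b_n^2/r^2\,dx<\infty$ for the scalar functions $b_n$ associated with the approximants; a Cauchy argument in the norm of $X$, using $\|b_n-b_m\|^2=|\nabla(\BB_n-\BB_m)|_2^2$, then transfers the bound to the limit (Lemma \ref{L:DivFree}). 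To repair your proof you need some substitute for this mechanism; the naive cutoff will not do. The remaining ingredients of your outline (the pointwise computation, the $\cS$-invariance of $E$ via the generalized decomposition, and the double application of Palais's principle) are consistent with the paper's argument.
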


\begin{Lem}\label{L:Oapprox}
	If $\UU\in\textup{Fix}(\SO)$, then there exists $\UU_n\in\cC_c^\infty(\rn,\rn)\cap\textup{Fix}(\SO)$ such that $\lim_n|\nabla\UU_n-\nabla \UU|_{2}=0$.
\end{Lem}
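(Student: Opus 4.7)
My plan is to obtain $\UU_n$ by Haar averaging a generic smooth compactly supported approximation of $\UU$ over the compact group $\SO\cong\SO(2)$, and then to use the fact that this averaging operator is continuous (in fact, a contraction) in the $\cD^{1,2}$-norm and leaves every element of $\textup{Fix}(\SO)$ fixed.

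Concretely, let $\mu$ denote the normalized Haar measure on $\SO$. First I would fix an arbitrary sequence $\VV_n\in\cC_c^\infty(\rn,\rn)$ with $|\nabla \VV_n-\nabla \UU|_2\to 0$, which exists by the very definition of $\cD^{1,2}(\rn,\rn)$ as the completion of $\cC_c^\infty(\rn,\rn)$ in the norm $|\nabla\cdot|_2$. Then I define
\[
\UU_n(x):=\int_{\SO}(g\VV_n)(x)\,d\mu(g)=\int_{\SO}g^T\VV_n(gx)\,d\mu(g),
\]
with the action of $\SO$ as in \eqref{e-Fix}. Smoothness of $\UU_n$ follows from differentiation under the integral sign (legitimate because $\SO$ is compact and $\VV_n$ is smooth); compactness of support follows from the fact that if $\textup{supp}\,\VV_n\subset\overline{B}_R$, then for every $g\in\SO$ one has $g^T\VV_n(gx)=0$ whenever $|x|=|gx|>R$, so $\textup{supp}\,\UU_n\subset\overline{B}_R$ as well. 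The $\SO$-invariance of $\UU_n$ is a standard consequence of the left-invariance of $\mu$: for every $h\in\SO$ one writes $h\UU_n(x)=\int_{\SO}(hg\VV_n)(x)\,d\mu(g)$ and changes variable $g\mapsto h^{-1}g$.

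For the convergence, the crucial observation is that the action of each $g\in\SO$ is an isometry of $\cD^{1,2}(\rn,\rn)$: indeed $\nabla(g\VV)(x)=g^T\nabla\VV(gx)\,g$, whose Frobenius norm equals $|\nabla\VV(gx)|$ because $g$ is orthogonal, and a change of variables (which preserves Lebesgue measure) yields $|\nabla(g\VV)|_2=|\nabla\VV|_2$. Since $\UU\in\textup{Fix}(\SO)$ we have $\UU=\int_{\SO}g\UU\,d\mu(g)$, hence
\[
\UU_n-\UU=\int_{\SO}g(\VV_n-\UU)\,d\mu(g).
\]
Applying Minkowski's integral inequality and the above isometry property gives
\[
|\nabla \UU_n-\nabla \UU|_2\le\int_{\SO}|\nabla\bigl(g(\VV_n-\UU)\bigr)|_2\,d\mu(g)=|\nabla \VV_n-\nabla \UU|_2\to 0,
\]
which concludes the proof.

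I do not foresee a genuine obstacle: the whole argument rests on $\SO$ being compact (so that Haar averaging is well defined, preserves compact support, and commutes with differentiation) and on the action being isometric on $\cD^{1,2}$. The only point that requires a small verification is that the action really is an isometry in the gradient norm, and that compact support is preserved under the group action; both are immediate from the orthogonality of each $g\in\SO$.
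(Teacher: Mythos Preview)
Your proof is correct and follows essentially the same approach as the paper: Haar-average a smooth compactly supported approximant and use that the $\SO$-action is isometric on $\cD^{1,2}$. The only cosmetic difference is in the convergence step: the paper first shows $|\UU_n-\UU|_{2^*}\to0$ via Jensen and then proves $\UU_n$ is Cauchy in $\cD^{1,2}$ via $|\nabla\UU_n-\nabla\UU_m|_2\le|\nabla\VV_n-\nabla\VV_m|_2$, whereas you go straight to $|\nabla\UU_n-\nabla\UU|_2\le|\nabla\VV_n-\nabla\UU|_2$ using Minkowski's integral inequality together with the fixed-point identity $\UU=\int_{\SO}g\UU\,d\mu(g)$, which is a bit more direct.
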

\begin{proof}
	Since $\UU\in\cD^{1,2}(\rn,\rn)$, there exists $\VV_n\in\cC_c^\infty(\rn,\rn)$ such that $\lim_n|\nabla \VV_n-\nabla\UU|_{2}=0$. Let $$\UU_n(x):=\int_{\SO}g^{-1}\VV_n(gx)\,d\mu(g)=\int_{\SO}g^T\VV_n(gx)\,d\mu(g),$$ where $\mu$ is the Haar measure of $\SO$ (note that $\SO$ is compact).
	
	For every $e\in\SO$ we have
	\[
	\UU_n(ex)=\int_{\SO}g^T\VV_n(gex)\,d\mu(g)=e\int_{\SO}g'^T\VV_n(g'x)\,d\mu(g')=e\UU_n(x),
	\]
	i.e., $\UU_n\in\textup{Fix}(\SO)$. Moreover, $\UU_n\in\cC_c^\infty(\rn,\rn)$ because so does $\VV_n$.
	
	First we prove that $|\UU_n-\UU|_{2^*}\to 0$. From Jensen's inequality there holds
	\[\begin{split}
	|\UU_n-\UU|_{2^*}^{2^*} & =\int_{\rn}\left|\int_{\SO}g^T\VV_n(gx)-\UU(x)\,d\mu(g)\right|^{2^*}dx\\
	& \leq\int_{\SO}\int_{\rn}\left|g^T\VV_n(gx)-\UU(x)\right|^{2^*}dx\,d\mu(g)\\ & =\int_{\SO}\int_{\rn}\left|g^T\VV_n(gx)-g^T\UU(gx)\right|^{2^*}dx\,d\mu(g)\\
	& =\int_{\SO}\int_{\rn}\left|\VV_n(gx)-\UU(gx)\right|^{2^*}dx\,d\mu(g)\\ &
	=\int_{\SO}|\VV_n-\UU|_{2^*}^{2^*}\,d\mu(g)=|\VV_n-\UU|_{2^*}^{2^*}\to 0.
	\end{split}\]
	Finally, $\UU_n$ is a Cauchy sequence in $\cD^{1,2}(\rn,\rn)$ because, similarly as before,
	\[
	|\nabla \UU_n-\nabla \UU_m|_2\le|\nabla \VV_n-\nabla\VV_m|_2\to 0,
	\]
	hence $\UU_n\to\UU$ in $\cD^{1,2}(\rn.\rn)$.
\end{proof}

The following result was proved in \cite[Lemma 1]{AzzBenDApFor} for $N=3$. Its generalization to the case $N\ge3$ is easy, but we include such a proof for the reader's convenience. Recall that $\rn_*=\rn\setminus\{x_1^2+x_2^2=0\}$.

\begin{Lem}\label{L:dec}
	For every $\UU\in\textup{Fix}(\SO)$ there exist $\UU_\rho,\UU_\tau,\UU_{\zeta,i}\in\textup{Fix}(\SO)$, $i\in\{3,\dots,N\}$, such that for every $x\in\rn_*$ at which $\UU(x)$ is defined
	\begin{itemize}
		\item $\UU_\rho(x)$ is the orthogonal projection of $\UU(x)$ onto $\textup{span}\{(x_1,x_2,0)\}$,
		\item $\UU_\tau(x)$ is the orthogonal projection of $\UU(x)$ onto $\textup{span}\{(-x_2,x_1,0)\}$,
		\item $\UU_{\zeta,i}(x)$ is the orthogonal projection of $\UU(x)$ onto $\textup{span}\{e_i\}$,
	\end{itemize}
	and $\nabla\UU_\rho(x),\nabla\UU_\tau(x),\nabla\UU_{\zeta,i}(x)$ are pairwise orthogonal in $\R^{N\times N}\simeq\R^{N^2}$, $i\in\{3,\dots,N\}$. In particular, $\UU=\UU_\rho+\UU_\tau+\sum_{i=3}^N\UU_{\zeta,i}$.
\end{Lem}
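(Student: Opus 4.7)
My plan is to introduce the pointwise orthonormal frame adapted to the $\SO$-action and read the decomposition off from it. For $x=(y,z)\in\R^2\times\R^{N-2}$ with $r=|y|>0$, set $e_\rho(x):=r^{-1}(x_1,x_2,0,\dots,0)$ and $e_\tau(x):=r^{-1}(-x_2,x_1,0,\dots,0)$; together with $e_3,\dots,e_N$ these form an orthonormal basis of $\rn$ on $\rn_*$. Since $\SO(2)$ is abelian and commutes with the rotation by $\pi/2$, the frame is $\SO$-covariant: $\widetilde{g}\,e_\rho(x)=e_\rho(\widetilde{g}x)$, $\widetilde{g}\,e_\tau(x)=e_\tau(\widetilde{g}x)$, and $\widetilde{g}\,e_i=e_i$ for $i\ge3$. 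I would then define
\[
\UU_\rho:=(\UU\cdot e_\rho)e_\rho,\quad \UU_\tau:=(\UU\cdot e_\tau)e_\tau,\quad \UU_{\zeta,i}:=(\UU\cdot e_i)e_i,
\]
so that orthonormality yields $\UU=\UU_\rho+\UU_\tau+\sum_{i=3}^N\UU_{\zeta,i}$ on $\rn_*$, and combining $\UU(\widetilde{g}x)=\widetilde{g}\UU(x)$ with the covariance of the frame gives $\UU_\rho(\widetilde{g}x)=\widetilde{g}\UU_\rho(x)$, and similarly for the other pieces, so each projection belongs to $\textup{Fix}(\SO)$.

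For the pointwise orthogonality of Jacobians in the Frobenius sense I would reduce to the smooth case by taking $\UU^n\in\cC_c^\infty(\rn,\rn)\cap\textup{Fix}(\SO)$ converging to $\UU$ in $\cD^{1,2}(\rn,\rn)$, as afforded by Lemma \ref{L:Oapprox}. For smooth $\SO$-invariant $\UU$ the scalar components $\alpha:=\UU\cdot e_\rho$, $\beta:=\UU\cdot e_\tau$, $U_i:=\UU\cdot e_i$ are $\SO$-invariant and hence depend only on $(r,z)$. The Frobenius inner product of two Jacobians equals, in any pointwise orthonormal frame, the sum of the standard $\rn$ inner products of the corresponding columns; in the cylindrical frame $\{\partial_r,r^{-1}\partial_\theta,\partial_{z_1},\dots,\partial_{z_{N-2}}\}$, together with the identities $\partial_\theta e_\rho=e_\tau$, $\partial_\theta e_\tau=-e_\rho$, and $\partial_r e_\rho=\partial_{z_j}e_\rho=0$ (and the analogues for $e_\tau$), one computes
\[
\partial_r\UU_\rho=\alpha_r e_\rho,\quad r^{-1}\partial_\theta\UU_\rho=\tfrac{\alpha}{r}e_\tau,\quad \partial_{z_j}\UU_\rho=\alpha_{z_j}e_\rho,
\]
with symmetric formulae for $\UU_\tau$ (columns valued in $\R e_\tau\cup\R e_\rho$) and $\UU_{\zeta,i}$ (columns valued in $\R e_i$, the $\theta$-column vanishing because $U_i$ and $e_i$ are both $\theta$-independent). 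Every cross term in $\nabla\UU_\rho\cdot\nabla\UU_\tau$, $\nabla\UU_\rho\cdot\nabla\UU_{\zeta,i}$, $\nabla\UU_\tau\cdot\nabla\UU_{\zeta,i}$, and $\nabla\UU_{\zeta,i}\cdot\nabla\UU_{\zeta,j}$ (for $i\neq j$) thus reduces to a scalar multiple of one of the zero inner products $e_\rho\cdot e_\tau$, $e_\rho\cdot e_i$, $e_\tau\cdot e_i$, or $e_i\cdot e_j$, and vanishes.

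The hardest part will be transferring the computation from the smooth approximants back to a general $\UU\in\textup{Fix}(\SO)$: one must pass to a subsequence along which $\nabla\UU^n\to\nabla\UU$ a.e.\ in $\rn$, so that the pointwise orthogonality verified on each $\UU^n$ persists in the limit. This is standard given the $L^2$-convergence of $\nabla\UU^n$ and the linearity and pointwise boundedness (by $|\UU|$) of the three projection operators, together with the fact that $\rn\setminus\rn_*=\{r=0\}$ has Lebesgue measure zero when $N\ge3$, so the ``pointwise on $\rn_*$'' statement of the lemma is meaningful almost everywhere and suffices for the subsequent applications (notably the identity $|\nabla\UU|^2=|\nabla\UU_\rho|^2+|\nabla\UU_\tau|^2+\sum_{i=3}^N|\nabla\UU_{\zeta,i}|^2$ in $L^1(\rn)$).
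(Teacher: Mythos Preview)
Your cylindrical-frame computation for the Frobenius orthogonality is correct and in some ways tidier than the paper's: the paper stays in Cartesian coordinates and obtains $\nabla\UU_\rho\cdot\nabla\UU_\tau=0$ from the relation $x_1\partial_2U_\rho=x_2\partial_1U_\rho$ (the infinitesimal form of $\SO$-invariance of the scalar $U_\rho$), whereas you build the cancellation into the moving frame so that every cross term is a multiple of $e_\rho\cdot e_\tau$, $e_\rho\cdot e_i$, etc. Note, though, that the detour through Lemma~\ref{L:Oapprox} is unnecessary: the identities $\partial_r\UU_\rho=\alpha_r e_\rho$, $\partial_\theta\UU_\rho=\alpha e_\tau$, etc.\ make sense for weak derivatives on $\rn_*$, and the paper indeed works directly with the given $\UU$ rather than with smooth approximants.

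There is, however, a genuine gap in your treatment of the conclusion $\UU_\rho,\UU_\tau,\UU_{\zeta,i}\in\textup{Fix}(\SO)$. Since by definition $\textup{Fix}(\SO)\subset\cD^{1,2}(\rn,\rn)$, you must show that the distributional gradient of (say) $\UU_\rho$ on all of $\rn$ is represented by the $L^2$ function you computed on $\rn_*$. Your observation that $\{r=0\}$ has Lebesgue measure zero is not enough: a null set can still support a nontrivial distribution, and ``pointwise a.e.'' does not by itself upgrade a weak derivative on $\rn_*$ to one on $\rn$. What is actually needed is that $\{r=0\}$ has codimension $2$ and is therefore $W^{1,2}$-removable; the paper establishes this concretely by testing against $\varphi_\varepsilon=\eta_\varepsilon\varphi$ with a cutoff satisfying $|\nabla\eta_\varepsilon|\le 4/\varepsilon$ on $\{r<\varepsilon\}$, and using $|\{r<\varepsilon\}\cap B_R|\le C\varepsilon^2$ together with $\UU_\rho\in L^{2^*}$ to kill the boundary term. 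This step is not optional for the lemma as stated, and the $\cD^{1,2}$-membership is used downstream (in Proposition~\ref{P:Density} to conclude $\UU^n_\tau\to\UU$ in $\cD^{1,2}$, and in Lemma~\ref{L:decom}), so your claim that the pointwise-a.e.\ orthogonality ``suffices for the subsequent applications'' is not accurate.
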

\begin{proof}
	For $\UU\in\textup{Fix}(\SO)$ we write $\UU=(\UU_1,\dots,\UU_N)$. For $x\in\rn_*$ let $\UU_\rho(x)$, $\UU_\tau(x)$, and $\UU_{\zeta,i}(x)$ be the orthogonal projections as in the statement. Then
	\begin{equation*}\begin{split}
	\UU_\rho(x) = U_\rho(x)
	\begin{pmatrix}
	x_1 \\ x_2 \\ 0
	\end{pmatrix}
	= \frac{x_1\UU_1(x)+x_2\UU_2(x)}{r^2}
	\begin{pmatrix}
	x_1 \\ x_2 \\ 0
	\end{pmatrix},
	\\
	\UU_\tau(x) = U_\tau(x)
	\begin{pmatrix}
	-x_2 \\ x_1 \\ 0
	\end{pmatrix}
	= \frac{-x_2\UU_1(x)+x_1\UU_2(x)}{r^2}
	\begin{pmatrix}
	-x_2 \\ x_1 \\ 0
	\end{pmatrix},
	\\
	\UU_{\zeta,i}(x) = U_{\zeta,i}(x)e_i = \UU_i(x)e_i.
	\end{split}\end{equation*}
	
	We prove that $U_\rho$, $U_\tau$, and $U_{\zeta,i}$ are $\SO$-invariant. This is trivial for $U_{\zeta,i}$ because $i\ge3$; moreover, note that
	\[\begin{split}
	U_\rho(x) & = \frac1{r^2}
	\begin{pmatrix}
	\UU_1(x) & \UU_2(x)
	\end{pmatrix}
	\begin{pmatrix}
	x_1 \\ x_2
	\end{pmatrix},
	\\
	U_\tau(x) & = \frac1{r^2}
	\begin{pmatrix}
	\UU_1(x) & \UU_2(x)
	\end{pmatrix}
	\begin{pmatrix}
	0 & -1 \\ 1 & 0
	\end{pmatrix}
	\begin{pmatrix}
	x_1 \\ x_2
	\end{pmatrix},
	\end{split}\]
	therefore, exploiting that $\SO(2)$ is abelian,
	\[\begin{split}
	U_\rho(\widetilde{g}x) & = \frac1{r^2}
	\begin{pmatrix}
	\UU_1(x) & \UU_2(x)
	\end{pmatrix}
	g^Tg
	\begin{pmatrix}
	x_1 \\ x_2
	\end{pmatrix}
	\\
	& = \frac1{r^2}
	\begin{pmatrix}
	\UU_1(x) & \UU_2(x)
	\end{pmatrix}
	\begin{pmatrix}
	x_1 \\ x_2
	\end{pmatrix}
	=U_\rho(x),
	\\
	U_\tau(\widetilde{g}x) & = \frac1{r^2}
	\begin{pmatrix}
	\UU_1(x) & \UU_2(x)
	\end{pmatrix}
	g^Tg
	\begin{pmatrix}
	0 & -1 \\ 1 & 0
	\end{pmatrix}
	\begin{pmatrix}
	x_1 \\ x_2
	\end{pmatrix}
	\\
	& = \frac1{r^2}
	\begin{pmatrix}
	\UU_1(x) & \UU_2(x)
	\end{pmatrix}
	\begin{pmatrix}
	0 & -1 \\ 1 & 0
	\end{pmatrix}
	\begin{pmatrix}
	x_1 \\ x_2
	\end{pmatrix}
	=U_\tau(x)
	\end{split}\]
	for every $g\in\SO(2)$ and a.e. $x\in\rn$.
	
	Now we prove that $\UU_\rho,\UU_\tau,\UU_{\zeta,i}\in\cD^{1,2}(\rn,\rn)$. From their very definition, $\UU_\rho,\UU_\tau,\UU_{\zeta,i}\in L^6(\rn,\rn)\cap H^1_\textup{loc}(\rn_*,\rn)$. In what follows we denote by $\partial_j\UU_\rho|_{\rn_*}$ the partial derivative along $x_j$ of $\UU_\rho$ in $\rn_*$ in the sense of distributions and by $\partial_j\UU_\rho$ the function defined a.e. in $\rn$ that represents it, i.e.
	\begin{equation}\label{e-weakder}
	\int_{\rn}\partial_j\UU_\rho|_{\rn_*}\cdot\varphi\,dx=-\int_{\rn}\UU_\rho\cdot\partial_j\varphi\,dx
	\end{equation}
	for every $\varphi\in\cC_c^\infty(\rn_*,\rn)$; similar notations are used for $\UU_\tau$ and $\UU_{\zeta,i}$. With a small abuse of notation, the symbol $\cdot$ will stand for the scalar product both in $\rn$ and in $\R^{N\times N}\simeq\R^{N^2}$.
	
	It is obvious that for every $i,j\in\{3,\dots,N\}$ and a.e. $x\in\rn$
	\[
	\nabla\UU_{\zeta,i}(x)\cdot\nabla\UU_{\zeta,j}(x)=\nabla\UU_{\zeta,i}(x)\cdot\nabla\UU_\rho(x)=\nabla\UU_{\zeta,i}(x)\cdot\nabla\UU_\tau(x)=0.
	\]
	Moreover, by explicit computations,
	\[\begin{split}
	\nabla\UU_\rho(x)\cdot\nabla\UU_\tau(x) = \, & \nabla[U_\rho(x)x_2]\cdot\nabla[U_\tau(x)x_1]-\nabla[U_\rho(x)x_1]\cdot\nabla[U_\tau(x)x_2]\\
	= \, & x_2U_\tau(x)\partial_1U_\rho(x)+x_1U_\rho(x)\partial_2U_\tau(x)\\
	& -x_1U_\tau(x)\partial_2U_\rho(x)-x_2U_\rho(x)\partial_1U_\tau(x)=0,
	\end{split}\]
	where the last equality follows from the fact that the $\SO$-invariance of $U_\rho$ implies $x_1\partial_2U_\rho=x_2\partial_1U_\rho$ (and likewise for $U_\tau$).
	
	This yields
	\begin{equation*}
	|\nabla\UU|^2=|\nabla\UU_\rho|^2+|\nabla\UU_\tau|^2+\sum_{i=3}^N|\nabla\UU_{\zeta,i}|^2 \quad \text{ a.e. in } \rn,
	\end{equation*}
	whence for every $i\in\{3,\dots,N\}$ and every $j\in\{1,\dots,N\}$
	\[
	\partial_j\UU_\rho,\partial_j\UU_\tau,\partial_j\UU_{\zeta,i}\in L^2(\rn,\rn),
	\]
	hence the proof will be complete once we show that $\partial_j\UU_\rho|_{\rn_*}$ coincide with the distributional derivative (along $x_j$) of $\UU_\rho$ in $\rn$, i.e., \eqref{e-weakder} holds for every $\varphi\in\cC_c^\infty(\rn,\rn)$, and likewise for $\UU_\tau$ and $\UU_{\zeta,i}$. For $\varepsilon>0$ consider $\eta_\varepsilon\in\cC^\infty(\rn,\R)$ such that
	\[
	\eta_\varepsilon(x)=0 \text{ for } |r|\le\frac\varepsilon2, \quad \eta_\varepsilon(x)=1 \text{ for } r\ge\varepsilon, \quad 0\le\eta_\varepsilon\le1, \quad |\nabla\eta_\varepsilon|\le\frac4\varepsilon.
	\]
	Let $\varphi\in\cC_c^\infty(\rn,\rn)$ and set $\varphi_\varepsilon:=\eta_\varepsilon\varphi\in\cC_c^\infty(\rn_*,\rn)$. There holds
	\begin{equation}\label{e-eps1}
	\begin{split}
	\int_{\rn}\partial_j\UU_\rho\cdot\varphi_\varepsilon\,dx & = -\int_{\rn}\UU_\rho\cdot\partial_j\varphi_\varepsilon\,dx\\
	& = -\int_{\rn}\eta_\varepsilon\UU_\rho\cdot\partial_j\varphi\,dx-\int_{\rn}\partial_j\eta_\varepsilon\UU_\rho\cdot\varphi.
	\end{split}\end{equation}
	From the dominated convergence theorem,
	\begin{equation}\label{e-eps2}
	\begin{split}
	\lim_{\varepsilon\to0^+}\int_{\rn}\partial_j\UU_\rho\cdot\varphi_\varepsilon\,dx=\int_{\rn}\partial_j\UU_\rho|_{\rn_*}\cdot\varphi\,dx\\
	\lim_{\varepsilon\to0^+}\int_{\rn}\eta_\varepsilon\UU_\rho\cdot\partial_j\varphi\,dx=\int_{\rn}\UU_\rho\cdot\partial_j\varphi\,dx.
	\end{split}\end{equation}
	Let $R>0$ such that $\varphi(x)=0$ for $|x|\ge R$ and set $\Omega_\varepsilon:=B_R\cap\{r<\varepsilon\}$ and observe that 
	$|\Omega_\varepsilon|\le C\varepsilon^2$ for some $C>0$ depending only on $N$ and $R$, thus
	\begin{equation}\label{e-eps3}
	\begin{split}
	\left|\int_{\rn}\partial_j\eta_\varepsilon\UU_\rho\cdot\varphi\,dx\right| & \le\|\varphi\|_\infty\frac4\varepsilon\int_{\Omega_\varepsilon}|\UU_\rho|\,dx\\
	& \le\|\varphi\|_\infty\frac4\varepsilon|\Omega_\varepsilon|^{\frac{N+2}{2N}}|\UU_\rho|_{2^*}\to0
	\end{split}\end{equation}
	as $\varepsilon\to0^+$. Letting $\varepsilon\to0^+$ in \eqref{e-eps1} and using \eqref{e-eps2} and \eqref{e-eps3} we have the desired result. Finally, similar computations hold for $\UU_\tau$ and $\UU_{\zeta,i}$.
\end{proof}

Let
\[
\cH:=\Set{\UU\colon\rn\to\rn|\UU(x)=O(y) \text{ uniformly in } z \text{ as } y\to0}.
\]

\begin{Prop}\label{P:Density}
There holds
\[
\DF=\overline{\cC_c(\rn,\rn)\cap\cC^\infty(\rn_*,\rn)\cap\cH\cap\DF},
\]
where the closure is taken in $\cD^{1,2}(\rn,\rn)$.
\end{Prop}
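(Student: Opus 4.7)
The plan is to exploit the one-to-one isometric correspondence between $\DF$ and $X_{\SO}$ given by $\UU = (u/r)(-x_2,x_1,0)$. A direct computation in cylindrical-type coordinates (writing $\UU = u\,\hat{e}_\phi$ with $\hat{e}_\phi = (-x_2/r,x_1/r,0)$, noting that the cross term $u\nabla u \cdot \nabla\hat{e}_\phi = \tfrac{u}{2}\nabla u\cdot\nabla|\hat{e}_\phi|^2 = 0$, and computing $\sum_{i,j}(\partial_j(\hat{e}_\phi)_i)^2 = 1/r^2$) yields
\[
|\nabla\UU|_2^2 = |\nabla u|_2^2 + \int_{\rn}\frac{u^2}{r^2}\,dx,
\]
so $u \mapsto \UU$ is an isometry from $(X_{\SO},\|\cdot\|)$ (with $a=1$) onto $(\DF,|\nabla\cdot|_2)$. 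Consequently, the density statement reduces to approximating an arbitrary $u\in X_{\SO}$ in the $\|\cdot\|$-norm by $\SO$-invariant functions $u_n\in\cC_c^\infty(\rn)$ that vanish in a neighborhood of the axis $\{r=0\}$: the corresponding $\UU_n := (u_n/r)(-x_2,x_1,0)$ will then lie in $\cC_c(\rn,\rn)\cap\cC^\infty(\rn_*,\rn)\cap\cH\cap\DF$, with $\UU_n\in\cH$ satisfied trivially because $\UU_n\equiv 0$ near the axis.

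The scalar approximation proceeds in three standard steps. First, a near-axis cutoff: pick $\chi_n\in\cC^\infty(\R,[0,1])$ with $\chi_n(t)=0$ for $t\le 1/n$, $\chi_n(t)=1$ for $t\ge 2/n$, and $|\chi_n'|\le 2n$, then form $\chi_n(r)u$. Second, a cutoff at infinity via $\eta(|x|/n)$ for a fixed $\eta\in\cC_c^\infty([0,2[,[0,1])$ equal to $1$ on $[0,1]$. Third, mollification by a standard radial mollifier $\rho_\varepsilon$ in $\rn$; since $\rho_\varepsilon(gx)=\rho_\varepsilon(x)$ for every $g\in\SO$, convolution preserves $\SO$-invariance, and for $\varepsilon$ sufficiently small (depending on $n$) the mollified function remains supported in $\{r\ge 1/(2n)\}\cap\{|x|\le 3n\}$.

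The main obstacle is the first step, since the factor $\chi_n'$ may concentrate near the axis. The key estimate is
\[
\int_{\rn}|u\,\chi_n'(r)|^2\,dx \le 4n^2\int_{\{1/n<r<2/n\}}u^2\,dx \le 16\int_{\{1/n<r<2/n\}}\frac{u^2}{r^2}\,dx \xrightarrow[n\to\infty]{} 0,
\]
where the last limit uses that $u^2/r^2\in L^1(\rn)$ (intrinsic to membership of $u$ in $X$ when $K=2$) together with the fact that $\{1/n<r<2/n\}$ shrinks to a null set. Convergence of $\chi_n(r)u$ to $u$ in the weighted term $\int(\cdot)^2/r^2\,dx$ and in the term $\int\chi_n(r)|\nabla u|^2\,dx$ follows from dominated convergence, and the cutoff at infinity and mollification are controlled by classical arguments.

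Finally, defining $\UU_n := (u_n/r)(-x_2,x_1,0)$ for the approximants $u_n$ constructed above, we obtain $\UU_n\in\DF$ by construction; since $u_n\in\cC_c^\infty(\rn)$ is supported away from the axis, the quotient $u_n/r$ is smooth and compactly supported in $\rn_*$, so $\UU_n\in\cC^\infty(\rn_*,\rn)$ and extends by zero to an element of $\cC_c^\infty(\rn,\rn)\subset\cC_c(\rn,\rn)$; the condition $\UU_n\in\cH$ is immediate. The isometry then yields $|\nabla(\UU_n-\UU)|_2 = \|u_n-u\|\to 0$, completing the proof.
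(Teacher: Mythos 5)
Your overall strategy --- reduce to the scalar problem in $X_{\SO}$, cut off near the axis using the integrability of $u^2/r^2$, cut off at infinity, mollify equivariantly, and transport back through the formula \eqref{e-uU} --- is genuinely different from the paper's. The paper instead approximates $\UU$ by smooth compactly supported $\SO$-equivariant fields (Lemma \ref{L:Oapprox}) and then passes to the tangential components $\UU_\tau^n$ of Lemma \ref{L:dec}: the pairwise orthogonality of the gradients of the components makes the projection norm-nonincreasing, so $\UU_\tau^n\to\UU_\tau=\UU$, and the equivariance plus a Taylor expansion give $\UU_\tau^n\in\cC_c(\rn,\rn)\cap\cH$. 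Your scalar cutoff estimates (the bound $4n^2u^2\le 16\,u^2/r^2$ on $\{1/n<r<2/n\}$, the cutoff at infinity, the equivariant mollification) are all correct.

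The gap is in the reduction itself. You take for granted that $u\mapsto\UU$ is an isometry of $X_{\SO}$ \emph{onto} $\DF$, i.e., that every $\UU\in\DF$ is represented by some $u$ belonging to $X_{\SO}$, so in particular $u\in\cD^{1,2}(\rn)$ and $\int_{\rn}u^2/r^2\,dx<\infty$. This is precisely the hard half of Lemma \ref{L:DivFree}, and in the paper that half is \emph{deduced from} Proposition \ref{P:Density}; as written, your argument is therefore circular relative to the paper's architecture. Your ``direct computation in cylindrical-type coordinates'' only establishes the pointwise identity $|\nabla\UU|^2=|\nabla u|^2+u^2/r^2$ a.e. on $\rn_*$, where $\UU$ is locally $H^1$ and $\hat e_\phi$ is smooth; this does yield $\int_{\rn_*}|\nabla u|^2+u^2/r^2\,dx\le|\nabla\UU|_2^2<\infty$, but it does \emph{not} show that the distributional gradient of $u$ on all of $\rn$ is the $L^2$ function computed on $\rn_*$, i.e., that no singular part lives on the axis $\{r=0\}$ --- and without that one cannot assert $u\in X\subset\cD^{1,2}(\rn)$. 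To make your route self-contained you must prove this removability independently, for instance by the cutoff argument with $\eta_\varepsilon$ used in the proof of Lemma \ref{L:dec}, exploiting $u\in L^{2^*}(\rn)$ and $|\{r<\varepsilon\}\cap B_R|\le C\varepsilon^2$. With that step supplied, the remainder of your construction goes through and gives an alternative proof.
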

\begin{proof}
The inclusion `$\supset$' is obvious since $\DF$ is closed. Now let $\UU\in\DF$. Since $\UU\in\textup{Fix}(\SO)$, in view of Lemma \ref{L:Oapprox} there exists $\UU^n\in\cC^\infty_0(\rn,\rn)\cap\textup{Fix}(\SO)$ such that $\UU^n=(\UU_1^n,\dots,\UU_N^n)\to\UU$ in $\cD^{1,2}(\rn,\rn)$.

For every $n$, let $\UU_\rho^n,\UU_\tau^n,\UU_{\zeta,i}^n\in\cD^{1,2}(\rn,\rn)\cap\textup{Fix}(\SO)$ defined in Lemma \ref{L:dec} and associated with $\UU^n$, i.e.
\begin{itemize}
	\item $\UU_\rho^n(x)$ is the projection of $\UU^n(x)$ onto $\textup{span}\{(x_1,x_2,0)\}$,
	\item $\UU_\tau^n(x)$ is the projection of $\UU^n(x)$ onto $\textup{span}\{(-x_2,x_1,0)\}$,
	\item $\UU_{\zeta,i}^n(x)$ is the projection of $\UU^n(x)$ onto $\textup{span}\{e_i\}$.
\end{itemize}
In particular, $\UU_\rho^n,\UU_\tau^n,\UU_{\zeta,i}^n\in\cC^\infty(\rn_*,\rn)$, they vanish outside a sufficiently large ball in $\rn$ (in fact, $\UU_{\zeta,i}^n\in\cC_c^\infty(\rn,\rn)$) and $\UU_n(x)=\UU_\rho^n(x)+\UU_\tau^n(x)+\UU_{\zeta,i}^n(x)$ for every $x\in\rn_*$. Moreover, $\nabla\UU_\rho^n(x)$, $\nabla\UU_\tau^n(x)$, $\nabla\UU_{\zeta,i}^n(x)$ are pairwise orthogonal in $\R^{N\times N}\simeq\R^{N^2}$ for every $x\in\rn_*$.

This implies that $\UU_\tau^n\to\UU$ in $\cD^{1,2}(\rn,\rn)$, hence we are only left to prove that $\UU_\tau^n\in\cC_c(\rn,\rn)\cap\cH$.

Since $\UU^n\in\textup{Fix}(\SO)$, for every $g\in\SO(2)$ and every $z\in\R^{N-2}$
\[
\widetilde{g}\UU^n(0,0,z)=\UU^n\bigl(\widetilde{g}(0,0,z)\bigr)=\UU^n(0,0,z),
\]
which yields $\UU_1^n(0,0,z)=\UU_2^n(0,0,z)=0$ (just take $g=-I_2$). Moreover,
\[
\UU_\rho^n(x)=\frac{\UU_n\cdot(x_1,x_2,0)}{|(x_1,x_2)|^2}
\begin{pmatrix}
x_1\\
x_2\\
0
\end{pmatrix}
\, \text{ and } \, \UU_\tau^n(x)=\frac{\UU_n\cdot(-x_2,x_1,0)}{|(x_1,x_2)|^2}
\begin{pmatrix}
-x_2\\
x_1\\
0
\end{pmatrix},
\]
therefore, from the uniform continuity of $\UU^n$,
\[
\lim_{y\to0}\UU_\rho^n(x)=\lim_{y\to0}\UU_\tau^n(x)=0
\]
uniformly with respect to $z\in\R^{N-2}$. Hence we can extend $\UU_\rho^n$ and $\UU_\tau^n$ to $\rn$ by setting them equal to $0$ on $\{0\}\times\{0\}\times\R^{N-2}$ and obtain that $\UU_\rho^n,\UU_\tau^n\in\cC_c(\rn,\rn)$ and $\UU^n(x)=\UU_\rho^n(x)+\UU_\tau^n(x)+\sum_{i=3}^N\UU_{\zeta,i}^n(x)$ for every $x\in\rn$.

To prove that $\UU_\rho^n+\UU_\tau^n\in\cH$, first we notice that $\UU_\rho^n+\UU_\tau^n=\UU^n-\sum_{i=3}^N\UU_{\zeta,i}^n\in\cC_c^\infty(\R^3,\R^3)$ and, using Taylor's expansion,
\begin{equation*}\begin{split}
	\left(\UU^n-\sum_{i=3}^N\UU_{\zeta,i}^n\right)(x) = \, & \left(\UU^n-\sum_{i=3}^N\UU_{\zeta,i}^n\right)(0,z)\\
	& +\left[\nabla\left(\UU^n-\sum_{i=3}^N\UU_{\zeta,i}^n\right)(0,z)\right]
	\begin{pmatrix}
	y & 0
	\end{pmatrix}
	+o(y)\\
	= \, & \left[\nabla\left(\UU^n-\sum_{i=3}^N\UU_{\zeta,i}^n\right)(0,z)\right]
	\begin{pmatrix}
	y & 0
	\end{pmatrix}
	+o(y)
\end{split}\end{equation*} 
as $y\to 0$, thus $\UU_\rho^n+\UU_\tau^n\in\cH$. Finally, note that $|\UU_\tau^n|\le|\UU_\rho^n+\UU_\tau^n|$, whence $\UU_\tau^n\in\cH$.
\end{proof}

From now on, $u\colon\rn\to\R$ and $\UU\colon\rn\to\rn$ are two functions satisfying \eqref{e-uU}. An obvious consequence is that $|u|=|\UU|$.

\begin{Lem}\label{L:DivFree}
$\UU\in\DF$ if and only if $u\in X_{\SO}$; in such a case, $\nabla\cdot\UU=0$. If, moreover, $f$ satisfies the assumptions of Theorem \ref{T:ScalVec} and $h(x,\alpha w)=f(x,\alpha)w$ for $\alpha\in\R$ and $w=|(\xi_1,\xi_2)|^{-1}(-\xi_2,\xi_1,0)$, $\xi_1^2+\xi_2^2>0$, then $J(u)=E(\UU)$.
\end{Lem}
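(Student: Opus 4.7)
The plan is to set up a single pointwise dictionary between $\UU$ and $u$ and then lift it to the Sobolev setting by approximation. First I would write $\UU = u\Theta$ where $\Theta(x) := r^{-1}(-x_2, x_1, 0) \in \rn$ (with $0 \in \R^{N-2}$) is a unit tangent vector field defined on $\rn_*$. For $u$ smooth, direct computation in $\rn_*$ yields the three pointwise identities
\[
|\UU|^2 = u^2, \qquad |\nabla \UU|^2 = |\nabla u|^2 + \frac{u^2}{r^2}, \qquad \nabla \cdot \UU = \frac{x_1 \partial_2 u - x_2 \partial_1 u}{r}.
\]
The middle identity uses $|\Theta|^2 \equiv 1$ (which forces $\Theta \cdot \partial_j \Theta = 0$, killing the cross terms in $|\nabla(u\Theta)|^2$) together with the explicit evaluation $|\nabla \Theta|^2 = 1/r^2$. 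The third vanishes whenever $u$ is $\SO$-invariant, since $x_1 \partial_2 u - x_2 \partial_1 u$ is the derivative of $u$ along the infinitesimal generator of $\SO(2)$ acting on the $(x_1, x_2)$-plane.

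For the direction $\UU \in \DF \Rightarrow u \in X_{\SO}$, I would invoke Proposition \ref{P:Density} to pick approximants $\UU_n \in \cC_c(\rn, \rn) \cap \cC^\infty(\rn_*, \rn) \cap \cH \cap \DF$ with $\UU_n \to \UU$ in $\cD^{1,2}(\rn, \rn)$. Each $\UU_n$ is of the form \eqref{e-uU} with a smooth $\SO$-invariant $u_n = (x_1 \UU_{n,2} - x_2 \UU_{n,1})/r$, and the tail condition $\UU_n \in \cH$ makes $u_n/r$ bounded near $\{r = 0\}$, so $u_n \in X_{\SO}$ is legitimate. Applying the pointwise identities to $u_n - u_m$ and integrating,
\[
\|\UU_n - \UU_m\|_{\cD^{1,2}}^2 = \int_{\rn} |\nabla(u_n - u_m)|^2 + \frac{(u_n - u_m)^2}{r^2}\, dx,
\]
so $u_n$ is Cauchy in $X_{\SO}$ with limit $u^*$; pointwise a.e.\ convergence along a subsequence together with the algebraic recovery formula identifies $u^* = u$, hence $u \in X_{\SO}$. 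For the converse I would approximate $u \in X_{\SO}$ by smooth $\SO$-invariant functions compactly supported in $\rn_*$ through an $r$-cutoff $\eta(r/\varepsilon)$ (whose error is controlled by $u/r \in L^2$ via a standard Hardy-type bound), a spatial cutoff at infinity, and mollification by an $\SO$-invariant kernel; the associated $\UU_n \in \cC_c^\infty(\rn_*, \rn) \cap \DF$ are Cauchy in $\cD^{1,2}(\rn, \rn)$ by the same isometry, and their limit agrees a.e.\ with the $\UU$ prescribed by \eqref{e-uU}. The divergence-free property $\nabla \cdot \UU = 0$ is then inherited in the distributional sense by passing to the limit in $\nabla \cdot \UU_n = 0$, which holds pointwise thanks to $\SO$-invariance of $u_n$.

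For the energy equality, the kinetic piece reduces by the definition of the curl-curl quadratic form (extended to $N \ge 3$ via \eqref{e-curlLap}), the vanishing divergence, and the integrated pointwise identity to
\[
\int_{\rn} |\nabla \times \UU|^2\, dx = \int_{\rn} |\nabla \UU|^2 - (\nabla \cdot \UU)^2\, dx = \int_{\rn} |\nabla u|^2 + \frac{u^2}{r^2}\, dx.
\]
For the nonlinear piece, fix $x$ with $u(x) \ne 0$: the unit vector $\Theta(x)$ has exactly the form $|(\xi_1, \xi_2)|^{-1}(-\xi_2, \xi_1, 0)$ demanded by the hypothesis on $h$, so
\[
h\bigl(x, t\UU(x)\bigr) \cdot \UU(x) = f\bigl(x, tu(x)\bigr)\, \Theta(x) \cdot u(x)\Theta(x) = f\bigl(x, tu(x)\bigr)\, u(x),
\]
and the substitution $s = tu(x)$ gives $H(x, \UU) = \int_0^1 f(x, tu) u\, dt = \int_0^{u(x)} f(x, s)\, ds = F(x, u)$; the set $\{u = 0\}$ contributes trivially. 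Integrating over $\rn$ supplies the potential match and $E(\UU) = J(u)$.

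The hard part will be the Sobolev-level transfer of the pointwise identities across the singular cylinder $\{r = 0\}$: the identities hold classically in $\rn_*$, but upgrading them to equalities of Sobolev norms on $\rn$ requires ruling out any concentration of distributional mass on $\{r=0\}$. On the vector side this is arranged by the tail condition $\cH$ built into Proposition \ref{P:Density}; on the scalar side it is made possible by the Hardy-type integrability $u/r \in L^2(\rn)$ that is intrinsic to $X_{\SO}$.
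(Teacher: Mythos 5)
Your proposal is correct and follows the paper's proof in its essential structure: the pointwise identities $|\nabla\UU|^2=|\nabla u|^2+u^2/r^2$ and $\nabla\cdot\UU=0$ for $\SO$-invariant $u$, the use of Proposition \ref{P:Density} together with the isometry $\|b_n-b_m\|_X=|\nabla(\BB_n-\BB_m)|_2$ for the implication $\UU\in\DF\Rightarrow u\in X_{\SO}$, and the computation $H(x,\UU)=F(x,u)$ combined with $\nabla\cdot\UU=0$ for the equality of energies. The one sub-step where you genuinely diverge is the converse implication $u\in X_{\SO}\Rightarrow\UU\in\DF$: the paper verifies directly, by integrating by parts against $\varphi\in\cC_c^\infty(\rn)$, that the pointwise derivatives of $\UU$ on $\rn_*$ are its distributional derivatives on all of $\rn$ --- the singular terms such as $u\,x_1x_2/(x_1^2+x_2^2)^{3/2}$ being integrable against test functions precisely because $u/r\in L^2(\rn)$ --- and then reads off $\UU\in L^{2^*}(\rn,\rn)$ and $\nabla\UU\in L^2(\rn,\R^{N\times N})$ from the explicit formulas; you instead realize $\UU$ as a $\cD^{1,2}$-limit of smooth compactly supported fields built from an $r$-cutoff (with error controlled by the Hardy term $u/r\in L^2$) followed by mollification. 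Both routes are sound: the paper's is more direct and avoids the cutoff/mollification bookkeeping, while yours sidesteps the product-rule manipulation of distributional derivatives across the cylinder $\{r=0\}$, at the small cost of checking that the approximation preserves $\SO$-invariance and that the $\cD^{1,2}$-limit agrees with the $\UU$ prescribed by \eqref{e-uU}.
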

\begin{proof}
Suppose that $u\in X_{\SO}$. We show that the pointwise gradient a.e. of $\UU=(\UU_1,\UU_2,0)$ in $\rn$ is also the distributional gradient of $\UU$ in $\rn$. As a matter of fact, for $\partial_1\UU_1$ we have
\[\begin{split}
\int_{\rn}u(x)\frac{-x_2}{\sqrt{x_1^2+x_2^2}}\partial_1\varphi(x)\,dx\\
=\int_{\rn}\left(\partial_1u(x)\frac{x_2}{\sqrt{x_1^2+x_2^2}}-u(x)\frac{x_1x_2}{(x_1^2+x_2^2)^{3/2}}\right)\varphi(x)\,dx\\
=-\int_{\rn}\partial_1\left(u(x)\frac{-x_2}{\sqrt{x_1^2+x_2^2}}\right)\varphi(x)\,dx<\infty
\end{split}\]
for every $\varphi\in\cC_c^\infty(\rn)$ because $\displaystyle\int_{\rn}u(x)\frac{x_1x_2}{(x_1^2+x_2^2)^{3/2}}\varphi(x)\,dx<\infty$ for $u\in X$.	
For $\partial_1\UU_2$ similarly we get
\begin{equation*}\begin{split}
	\int_{\rn}u(x)\frac{x_1}{\sqrt{x_1^2+x_2^2}}\partial_1\varphi(x)\,dx\\
	=-\int_{\rn}\left(\partial_1u(x)\frac{x_1}{\sqrt{x_1^2+x_2^2}}+u(x)\left(\frac{1}{\sqrt{x_1^2+x_2^2}}-\frac{x_1^2}{(x_1^2+x_2^2)^{3/2}}\right)\right)\varphi(x)\,dx\\
	=-\int_{\rn}\partial_1\left(u(x)\frac{x_1}{\sqrt{x_1^2+x_2^2}}\right)\varphi(x)\,dx<\infty
\end{split}\end{equation*}
for every $\varphi\in\cC_c^\infty(\rn)$.
The remaining cases are similar. 

Now observe that $\UU\in L^{2^*}(\rn,\rn)\cap\cF$. Moreover,
\[
\partial_1\UU_1=\partial_1u\frac{x_2}{\sqrt{x_1^2+x_2^2}}-u\frac{x_1x_2}{(x_1^2+x_2^2)^{3/2}}\in L^2(\rn)
\]
and
\[
\partial_1\UU_2=-\partial_1u\frac{x_1}{\sqrt{x_1^2+x_2^2}}-u\left(\frac{1}{\sqrt{x_1^2+x_2^2}}-\frac{x_1^2}{(x_1^2+x_2^2)^{3/2}}\right)\in L^2(\rn)
\]
because $u\in X$.
Again, the remaining cases are similar and we infer $\UU\in\cD_\cF$.

Now suppose that $\UU\in\DF$ and, due to Proposition \ref{P:Density}, let $\BB_n\in\cC_c(\rn,\rn)\cap\cC^\infty(\rn_*,\rn)\cap\cH\cap\DF$ such that $\lim_n|\nabla\BB_n-\nabla\UU|_2=0$ and let $b_n\colon\rn\to\R$ be $\SO$-invariant such that $\BB_n$ and $b_n$ satisfy \eqref{e-uU}.

We prove that $b_n\in X_{\SO}$. Of course $b_n\in\cC^\infty(\rn_*)$ and, since $|\BB_n|=|b_n|$, $b_n\in\cC_c(\rn)\subset L^{2^*}(\rn)$ and $b_n(x)=O(y)$ uniformly with respect to $z$ as $y\to0$, therefore
\[
\int_{\rn}\frac{b_n^2}{r^2}\,dx<\infty.
\]
Moreover, $\nabla\BB_n\in L^2(\rn,\R^{N\times N})$, where
\[
\nabla\BB_n(x)=\frac{1}{\sqrt{x_1^2+x_2^2}}
\begin{pmatrix}
-x_2\\
x_1\\
0
\end{pmatrix}
\nabla b_n(x)^T+\frac{b_n(x)}{(x_1^2+x_2^2)^{3/2}}
\begin{pmatrix}
x_1x_2 & -x_1^2 & 0\\
x_2^2 & -x_1x_2 & 0\\
0 & 0 & 0
\end{pmatrix}
\]
and the second summand above is square-integrable because
\[\begin{split}
\left|\frac{1}{(x_1^2+x_2^2)^{3/2}}
\begin{pmatrix}
x_1x_2 & -x_1^2 & 0\\
x_2^2 & -x_1x_2 & 0\\
0 & 0 & 0
\end{pmatrix}
\right|_{\R^{N\times N}}\\
=\frac{1}{(x_1^2+x_2^2)^{3/2}}\Bigg|
\begin{pmatrix}
x_1\\
x_2\\
0
\end{pmatrix}
\begin{pmatrix}
x_2 & -x_1 & 0
\end{pmatrix}
\Bigg|_{\R^{N\times N}}=\frac{1}{\sqrt{x_1^2+x_2^2}},
\end{split}\]
where $|\cdot|_{\R^{N\times N}}$ stands for the matrix norm in $\R^{N\times N}$. It follows that $\nabla b_n\in L^2(\rn,\rn)$, thus $b_n\in X_{\SO}$.

Since $\lim_n|b_n-u|_{2^*}=\lim_n|\BB_n-\UU|_{2^*}=0$, it is enough to prove that $b_n$ is a Cauchy sequence in $X$, therefore we compute
\[\begin{split}
\|b_n-b_m\|^2 & =\int_{\rn}\nabla(b_n-b_m)\cdot\nabla(b_n-b_m)+\frac{(b_n-b_m)(b_n-b_m)}{r^2}\,dx\\
& =\int_{\rn}\nabla(\BB_n-\BB_m)\cdot\nabla(\BB_n-\BB_m)\,dx=|\nabla(\BB_n-\BB_m)|_2^2\to0
\end{split}\]
as $n,m\to\infty$.

Next, since $u\in X_{\SO}$, as in the first part we have that the pointwise divergence a.e. of $\UU$ is also the distributional divergence of $\UU$, hence $\nabla\cdot\UU=0$ follows from explicit computations


Finally, observe that if $u\in X_{\SO}$ and $\UU\in\DF$ satisfy \eqref{e-uU}, then $\|u\|^2=|\nabla\UU|_2^2=|\nabla\times\UU|_2^2$ and $F\bigl(x,u(x)\bigr)=H\bigl(x,\UU(x)\bigr)$ for a.e. $x\in\rn$.
\end{proof}

Note that $J$ is invariant under the action of $\SO$ and $E$ is invariant under the action defined in \eqref{e-Fix}: the former is trivial, while a proof of the latter can be found in \cite[Section 2]{AzzBenDApFor}.

\begin{proof}[Proof of Theorem \ref{T:ScalVec}]
	The first part follows directly from Lemma \ref{L:DivFree}. As in \cite[Proposition 1]{AzzBenDApFor}, we have that $\DF=\Set{\UU\in\textup{Fix}(\SO)|\cS\UU=\UU}$ and that $E(\cS\UU)=E(\UU)$ for every $\UU\in\textup{Fix}(\SO)$, where $\cS$ is defined in \eqref{e-S}.
	
	Finally, if $\VV\in\DF$ and $v\in X_{\SO}$ satisfy \eqref{e-uU}, then arguing as in Lemma \ref{L:DivFree} there holds
	\[
	\int_{\rn}\nabla\times\UU\cdot\nabla\times\VV\,dx=\int_{\rn}\nabla\UU\cdot\nabla\VV\,dx=\int_{\rn}\nabla u\cdot\nabla v+\frac{uv}{r^2}\,dx
	\]
	and
	\[\begin{split}
	\int_{\R^3}h\bigl(x,\UU(x)\bigr)\cdot\VV(x)\,dx & =\int_{\rn}h\biggl(x,\frac{u}{r}
	\left(\begin{smallmatrix}
	-x_2\\
	x_1\\
	0
	\end{smallmatrix}\right)
	\biggr)\cdot\frac{v(x)}{r}
	\left(\begin{smallmatrix}
	-x_2\\
	x_1\\
	0
	\end{smallmatrix}\right)
	\,dx\\
	& =\int_{\rn}f\left(x,u(x)\right)\frac{1}{r}
	\left(\begin{smallmatrix}
	-x_2\\
	x_1\\
	0
	\end{smallmatrix}\right)
	\cdot\frac{v(x)}{r}
	\left(\begin{smallmatrix}
	-x_2\\
	x_1\\
	0
	\end{smallmatrix}\right)
	\,dx\\
	& =\int_{\rn}f\bigl(x,u(x)\bigr)v(x)\,dx,
	\end{split}\]
	thus the conclusion follows from Theorem \ref{T:Palais}.
\end{proof}

\section{The Sobolev noncritical case}

In this section, we prove Theorems \ref{T:ExMul} and \ref{T:Ex}. Throughout this section we assume $f$ satisfies (F1) and (F2). The following lemma is proved in \cite[Proposition A.2]{MederskiZeroMass}.

\begin{Lem}\label{L:Lions}
	Suppose that $u_n\in\cD^{1,2}(\R^N)$ is bounded and $\SO$-invariant and for every $R>0$
	\begin{equation}\label{e-LionsCond}
	\lim_n\sup_{z\in \R^{N-K}} \int_{B((0,z),R)} u_n^2\,dx=0.
	\end{equation}
	Then
	\[
	\lim_n\int_{\R^N}\Phi(u_n)\,dx=0
	\]
	for every continuous function $\Phi\colon\R\to [0,\infty[$ such that
	\begin{equation}\label{e-LionsPhi}
	\displaystyle\lim_{s\to 0}\frac{\Phi(s)}{|s|^{2^*}}=\lim_{|s|\to\infty}\frac{\Phi(s)}{|s|^{2^*}}=0.
	\end{equation}
\end{Lem}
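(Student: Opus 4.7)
The plan is to reduce the statement, via the $\SO$-invariance of $u_n$, to the classical Lions-type vanishing lemma for sequences bounded in $\cD^{1,2}(\rn)$, and then to sandwich $\Phi$ between two power-type nonlinearities.

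First I would upgrade the hypothesis by removing the restriction that the centre lies on the axis $\{0\}\times\rnk$. Fix $R>0$ and $x=(y,z)\in\rk\times\rnk$ with $\rho:=|y|\ge 2R$. Since $\SO=\SO(K)\times\{I_{N-K}\}$ acts transitively on the sphere $\{y'\in\rk:|y'|=\rho\}$, a standard packing argument produces $N_\rho\ge c(\rho/R)^{K-1}$ elements $g_1,\dots,g_{N_\rho}\in\SO(K)$ with $|g_iy-g_jy|>2R$ for $i\ne j$. The balls $B((g_iy,z),R)$ are then pairwise disjoint and, by the $\SO$-invariance of $u_n$, each carries the same $L^2$-mass; their union is contained in the cylinder $C_{\rho,R}:=\{(y',z')\in\rn:||y'|-\rho|\le R,\ |z'-z|\le R\}$, whose Lebesgue measure is at most $CR^{N-K+1}\rho^{K-1}$. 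Hölder's inequality (with exponents $2^*/2$ and $N/2$) together with the Sobolev embedding $|u_n|_{2^*}\le C|\nabla u_n|_2$ yield
\[
N_\rho\int_{B(x,R)}u_n^2\,dx\le\int_{C_{\rho,R}}u_n^2\,dx\le C(R^{N-K+1}\rho^{K-1})^{2/N}|u_n|_{2^*}^2\le C_R\,\rho^{2(K-1)/N},
\]
so that $\int_{B(x,R)}u_n^2\,dx\le C_R\,\rho^{(K-1)(2-N)/N}$, a bound that vanishes as $\rho\to\infty$ uniformly in $n$ (the exponent is negative since $N\ge 3$ and $K\ge 2$). For $|y|\le\rho_0$, the inclusion $B((y,z),R)\subset B((0,z),R+\rho_0)$ combined with the hypothesis \eqref{e-LionsCond} applied with radius $R+\rho_0$ furnishes the remaining vanishing. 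Altogether, $\lim_n\sup_{x\in\rn}\int_{B(x,R)}u_n^2\,dx=0$ for every $R>0$.

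Second, I would invoke the classical vanishing lemma for $\cD^{1,2}(\rn)$: combined with the uniform $\cD^{1,2}$-boundedness, the enhanced hypothesis just obtained implies $u_n\to 0$ in $L^p(\rn)$ for every $p\in\,]2,2^*[$ (the proof goes by Gagliardo--Nirenberg interpolation on unit balls and summation over a locally finite cover of $\rn$).

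Finally, I would exploit the two-sided vanishing \eqref{e-LionsPhi}: given $\varepsilon>0$, pick $0<\delta<M$ such that $\Phi(s)\le\varepsilon|s|^{2^*}$ for $|s|\le\delta$ or $|s|\ge M$; the continuity of $\Phi$ and the compactness of $[\delta,M]$ then provide $p\in\,]2,2^*[$ and $C_\varepsilon>0$ with
\[
\Phi(s)\le\varepsilon|s|^{2^*}+C_\varepsilon|s|^p\qquad\text{for every }s\in\R.
\]
Integrating, using the Sobolev bound on $|u_n|_{2^*}$ (from $\cD^{1,2}$-boundedness) and the $L^p$-vanishing from the previous step, I get $\limsup_n\int_{\rn}\Phi(u_n)\,dx\le C\varepsilon$, and letting $\varepsilon\to 0$ concludes the proof. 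The hardest step is the first: the symmetrisation argument that bootstraps the vanishing from centres on $\{0\}\times\rnk$ to arbitrary centres in $\rn$; the rest is essentially classical.
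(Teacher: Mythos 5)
The paper offers no proof of this lemma at all --- it is dispatched with a citation to \cite[Proposition A.2]{MederskiZeroMass} --- so your attempt has to stand on its own. Your first step is correct and is indeed the standard way the cylindrical symmetry is exploited: packing $\sim(\rho/R)^{K-1}$ disjoint rotated copies of $B((y,z),R)$ into the annular cylinder, using the $\SO$-invariance to equidistribute the mass, and closing with H\"older and the Sobolev embedding to get $\int_{B(x,R)}u_n^2\,dx\le C_R\,|y|^{(K-1)(2-N)/N}$ uniformly in $n$, hence full vanishing $\lim_n\sup_{x\in\rn}\int_{B(x,R)}u_n^2\,dx=0$.

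The gap is in your second and third steps. A sequence bounded only in $\cD^{1,2}(\rn)$ is controlled in $L^{2^*}(\rn)$ but in no $L^p(\rn)$ with $p<2^*$, so the ``classical vanishing lemma'' does not apply: its covering argument requires $\sum_i\int_{B_i}u_n^2\,dx=|u_n|_2^2$ to be finite and uniformly bounded, which is exactly what fails in this zero-mass setting (note that \eqref{e-LionsPhi} imposes $\Phi(s)\lesssim|s|^{2^*}$ near $s=0$, not $\lesssim s^2$, precisely because only the $L^{2^*}$ scale is available). Your claim that $u_n\to0$ in $L^p(\rn)$ for every $p\in\,]2,2^*[$ is in fact \emph{false} under the hypotheses of the lemma: take $u\in\cC_c^\infty(\rn)$ radial and $u_n:=n^{-(N-2)/2}u(\cdot/n)$; then $u_n$ is bounded in $\cD^{1,2}(\rn)$, $\SO$-invariant, and $\sup_{x\in\rn}\int_{B(x,R)}u_n^2\,dx\le C_R\,n^{2-N}\to0$, yet $|u_n|_p^p=n^{N-(N-2)p/2}|u|_p^p\to\infty$ for every $p<2^*$. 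For the same reason the global majorant $\varepsilon|s|^{2^*}+C_\varepsilon|s|^p$ is useless: $\int_{\rn}|u_n|^p\,dx$ need not even be finite. The repair is to dominate $\Phi$ by $\varepsilon|s|^{2^*}+C_\varepsilon\min\{|s|^p,|s|^q\}$ with $2<p<2^*<q$ (the minimum \emph{is} integrable for $L^{2^*}$ functions, cf. Lemma \ref{L:pqDec}) and to prove directly that $\int_{\rn}\min\{|u_n|^p,|u_n|^q\}\,dx\to0$, via a local estimate on unit balls bounding this integral by a positive power of $\int_B u_n^2\,dx$ times a power at least $1$ of $\int_B|\nabla u_n|^2+|u_n|^{2^*}\,dx$, followed by summation over a locally finite cover. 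That is precisely the content of the cited \cite[Proposition A.2]{MederskiZeroMass}; in other words, the part you dismiss as ``essentially classical'' is the part that genuinely requires the zero-mass argument, while the part you identify as hardest is fine.
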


We need the following results as well.

\begin{Lem}\label{L:pqDec}
	Let $1\le p\le 2^*\le q<\infty$. If $u\in L^{2^*}(\R^N)$, then
	\[
	|u\chi_{\{|u|\le 1\}}|_q^q,\;|u\chi_{\{|u|>1\}}|_p^p\le|u|_{2^*}^{2^*}.
	\]
\end{Lem}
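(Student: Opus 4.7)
The plan is to prove each of the two inequalities by a direct pointwise comparison of powers, separating regions according to whether $|u|\le 1$ or $|u|>1$; no clever argument is needed, and indeed I expect no real obstacle—this lemma is a bookkeeping device that will presumably be used later to split an $L^{2^*}$ control into simultaneous $L^p$ and $L^q$ information on complementary level sets.

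For the first inequality, I would observe that on the set $\{|u|\le 1\}$ the exponent $q\ge 2^*$ gives the pointwise bound $|u|^q\le|u|^{2^*}$, since raising a number in $[0,1]$ to a larger power makes it smaller. Integrating over $\{|u|\le 1\}$ and then enlarging the domain of integration to all of $\R^N$ yields
\[
|u\chi_{\{|u|\le 1\}}|_q^q=\int_{\{|u|\le 1\}}|u|^q\,dx\le\int_{\{|u|\le 1\}}|u|^{2^*}\,dx\le\int_{\R^N}|u|^{2^*}\,dx=|u|_{2^*}^{2^*}.
\]

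For the second inequality, I would use the symmetric observation: on $\{|u|>1\}$ the exponent $p\le 2^*$ gives the pointwise bound $|u|^p\le|u|^{2^*}$, since raising a number greater than $1$ to a larger power makes it larger. Then
\[
|u\chi_{\{|u|>1\}}|_p^p=\int_{\{|u|>1\}}|u|^p\,dx\le\int_{\{|u|>1\}}|u|^{2^*}\,dx\le|u|_{2^*}^{2^*},
\]
which completes the proof. The only thing worth flagging is the role of the hypothesis $u\in L^{2^*}(\R^N)$: without it, both sides of each inequality could a priori be infinite, and the statement would be vacuous; with it, the right-hand side is finite, so both truncated norms are genuinely controlled.
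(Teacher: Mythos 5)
Your proof is correct and is essentially identical to the paper's: both establish the pointwise bounds $|u|^q\le|u|^{2^*}$ on $\{|u|\le1\}$ and $|u|^p\le|u|^{2^*}$ on $\{|u|>1\}$ and integrate. Nothing to add.
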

\begin{proof}
	There holds
	\[
	\int_{\R^N}|u|^q\chi_{\{|u|\le 1\}}\,dx\le\int_{\R^N}|u|^{2^*}\chi_{\{|u|\le1\}}\,dx\le|u|_{2^*}^{2^*}
	\]
	and
	\[
	\int_{\R^N}|u|^p\chi_{\{|u|>1\}}\,dx\le\int_{\R^N}|u|^{2^*}\chi_{\{|u|>1\}}\,dx\le|u|_{2^*}^{2^*}.\qedhere
	\]
\end{proof}

\begin{Lem}\label{L:Lions2}
	Suppose that $u_n,v_n\in\cD^{1,2}(\R^N)$ are bounded and $\SO$-invariant and that $u_n$ satisfies \eqref{e-LionsCond} for every $R>0$. Then
	\[
	\lim_n\int_{\R^N}|f(x,v_n)u_n|\,dx=0.
	\]
\end{Lem}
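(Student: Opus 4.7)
The strategy is to decompose the integral according to the size of $v_n$, extracting an $\varepsilon$-small main term via (F2) and controlling the remainder by invoking Lemma \ref{L:Lions} on $u_n$ with a carefully chosen piecewise-power function $\Phi$. First I would exploit (F2) to fix, for any $\varepsilon>0$, constants $\delta_\varepsilon\in\,]0,1[$, $M_\varepsilon>1$, and $C_\varepsilon>0$ such that $|f(x,t)|\le\varepsilon|t|^{2^*-1}$ whenever $|t|\le\delta_\varepsilon$ or $|t|\ge M_\varepsilon$, while $|f(x,t)|\le C_\varepsilon$ on the intermediate annulus $\delta_\varepsilon\le|t|\le M_\varepsilon$ (the middle bound relying on $f$ being Carath\'eodory together with the symmetry and periodicity provided by (F1), which reduces $x$ to a compact fundamental domain). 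Setting $\Omega_n:=\Set{x\in\rn|\delta_\varepsilon\le|v_n(x)|\le M_\varepsilon}$, this yields
\[
\int_{\rn}|f(x,v_n)u_n|\,dx \le \varepsilon|v_n|_{2^*}^{2^*-1}|u_n|_{2^*} + C_\varepsilon\int_{\Omega_n}|u_n|\,dx \le \varepsilon C + C_\varepsilon\int_{\Omega_n}|u_n|\,dx,
\]
where the Sobolev-type bound controls the first term uniformly. Moreover, Chebyshev gives $|\Omega_n|\le\delta_\varepsilon^{-2^*}|v_n|_{2^*}^{2^*}$, which is uniformly bounded.

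The main point is then to show $\int_{\Omega_n}|u_n|\,dx\to 0$. To this end I would pick $p\in\,]1,2^*[$ and $q\in\,]2^*,\infty[$ and define
\[
\Phi(s) := |s|^q\chi_{\{|s|\le 1\}} + |s|^p\chi_{\{|s|>1\}},
\]
which is continuous (both branches equal $1$ at $|s|=1$) and satisfies $\Phi(s)/|s|^{2^*}\to 0$ both as $s\to 0$ and as $|s|\to\infty$. Since $u_n$ is bounded in $\cD^{1,2}(\rn)$, $\SO$-invariant, and satisfies \eqref{e-LionsCond} for every $R>0$, Lemma \ref{L:Lions} yields $\int_{\rn}\Phi(u_n)\,dx\to 0$. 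Splitting $\Omega_n$ into $\Omega_n\cap\{|u_n|>1\}$ and $\Omega_n\cap\{|u_n|\le 1\}$, I would estimate the first piece using $|u_n|\le|u_n|^p$ (valid since $p>1$ and $|u_n|>1$) to get $\int_{\Omega_n\cap\{|u_n|>1\}}|u_n|\,dx\le\int_{\rn}\Phi(u_n)\,dx\to 0$, and the second piece by H\"older with exponents $q$ and $q/(q-1)$:
\[
\int_{\Omega_n\cap\{|u_n|\le 1\}}|u_n|\,dx \le |\Omega_n|^{(q-1)/q}\left(\int_{\{|u_n|\le 1\}}|u_n|^q\,dx\right)^{1/q} \le C'\left(\int_{\rn}\Phi(u_n)\,dx\right)^{1/q}\to 0.
\]
Combining everything, $\limsup_n\int_{\rn}|f(x,v_n)u_n|\,dx\le\varepsilon C$, and letting $\varepsilon\to 0$ gives the conclusion.

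The delicate step is the construction of $\Phi$: the regime $|u_n|\le 1$ forces $q>2^*$ so that $\Phi(s)/|s|^{2^*}\to 0$ at the origin, whereas the regime $|u_n|>1$ forces $p<2^*$ so that the same ratio vanishes at infinity, and both pieces of $\int_{\Omega_n}|u_n|\,dx$ must be controlled by the single vanishing integral $\int_{\rn}\Phi(u_n)\,dx$. A secondary technical concern is the uniform-in-$x$ bound on $|f(x,t)|$ on the annulus $[\delta_\varepsilon,M_\varepsilon]$, for which one leverages the $\SO$-invariance and $\Z^{N-K}$-periodicity from (F1) to reduce to a compact set.
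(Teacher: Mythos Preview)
Your proof is correct and proceeds along the same line as the paper's: use (F2) to split off an $\varepsilon$-small $|v_n|^{2^*-1}|u_n|$ term (controlled by Sobolev) and kill the remainder via Lemma~\ref{L:Lions} applied to $u_n$ with a piecewise-power $\Phi$. The paper takes $\Phi(t)=\int_0^{|t|}\min\{s^{p-1},s^{q-1}\}\,ds$ and writes the remainder as $C_\varepsilon\int|\Phi'(v_n)||u_n|\,dx$, which it then handles by a double splitting in both $|u_n|$ and $|v_n|$ together with Lemma~\ref{L:pqDec}; you instead replace $\Phi'(|v_n|)$ by the indicator of the $v_n$-annulus $\Omega_n$, so that only the finite measure of $\Omega_n$ and a single splitting in $|u_n|$ are needed---a cleaner route that sidesteps Lemma~\ref{L:pqDec} entirely. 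One remark on your ``secondary technical concern'': the reduction of $x$ to a fundamental domain via (F1) does not give a compact set (the radial variable $|y|$ is unbounded), nor would compactness in $x$ alone force a Carath\'eodory function to be bounded; but the paper's inequality $|f(x,t)|\le\varepsilon|t|^{2^*-1}+C_\varepsilon|\Phi'(t)|$ rests on exactly the same tacit uniform bound on $|f|$ over $\R^N\times[\delta_\varepsilon,M_\varepsilon]$, so this is a shared implicit hypothesis rather than a gap peculiar to your argument.
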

\begin{proof}
	Let $1<p<2^*<q<\infty$, define $\Phi(t):=\int_0^{|t|}\min\{s^{p-1},s^{q-1}\}\,ds$, and note that $\Phi$ satisfies \eqref{e-LionsPhi}. (F2) implies that for every $\varepsilon>0$ there exists $C_\varepsilon>0$ such that for every $t\in\R$ and a.e. $x\in\R^N$ we have $|f(x,t)|\le\varepsilon|t|^{2^*-1}+C_\varepsilon|\Phi'(t)|$. Moreover
	\[\begin{split}
	\int_{\R^N}|\Phi'(v_n)u_n|\,dx & = \int_{\R^N}|\Phi'(v_n)u_n|\chi_{\{|u_n|>1\}}\,dx+\int_{\R^N}|\Phi'(v_n)u_n|\chi_{\{|u_n|\le 1\}}\,dx\\
	& =:A_n+B_n.
	\end{split}\]
	Concerning the first integral $A_n$, Lemmas \ref{L:Lions} and \ref{L:pqDec} imply that, for some $C>0$,
	\[\begin{split}
	A_n&= \int_{\R^N}|v_n|^{p-1}\chi_{\{|v_n|>1\}}|u_n|\chi_{\{|u_n|>1\}}\,dx+\int_{\R^N}|v_n|^{q-1}\chi_{\{|v_n|\le 1\}}|u_n|\chi_{\{|u_n|>1\}}\,dx\\
	& \le\left(|v_n\chi_{\{|v_n|>1\}}|_p^{p-1}+\big||v_n|^{q-1}\chi_{\{|v_n|\le 1\}}\big|_{{(p-1)/p}}\right)|u_n\chi_{\{|u_n|>1\}}|_p\\
	& \le C\left(|v_n\chi_{\{|v_n|>1\}}|_p^{p-1}+|v_n\chi_{\{|v_n|\le 1\}}|_q^{q(p-1)/p}\right)\left(\int_{\rn}\Phi(u_n)\,dx\right)^\frac{1}{p}\\
	& \le C\sup_k\|v_k\|^\frac{2^*(p-1)}{p}\left(\int_{\rn}\Phi(u_n)\,dx\right)^\frac{1}{p}\to0
	\end{split}\]
	because $\bigl(|v_n|^{q-1}\bigr)^{\frac{p}{p-1}}\chi_{\{|v_n|\le 1\}}\le|v_n|^q\chi_{\{|v_n|\le 1\}}$.
	
	Similar computations hold for the second integral $B_n$, therefore we have
	\[
	\limsup_n\int_{\rn}|f(x,v_n)u_n|\,dx\le\varepsilon\sup_k|v_k|_{2^*}^{2^*-1}\sup_k|u_k|_{2^*}^{2^*}
	\]
	and conclude letting $\varepsilon\to0^+$.
\end{proof}

In order to prove Theorem \ref{T:ExMul} we aim to use the abstract critical point theory from Section \ref{S:abstract}, in particular Theorems \ref{T:Link1} and \ref{T:CrticMulti} \textit{(b)}. We need to prove that assumptions (I1)--(I8), (G), and $(M)_0^\beta$ for every $\beta>0$ are satisfied. For simplicity, we set
\[
I(u):=\int_{\rn}F(x,u)\,dx \quad \text{for }u\in X_{\SO},
\]
while
\[
\cN:=\Set{u\in X_{\SO}\setminus\{0\}|J'(u)u=0}
\]
is the Nehari manifold. Such conditions in our setting read as follows.
\begin{itemize}
	\item [(I1)] $I\in\cC^1(X_{\SO})$ and $I(u)\ge I(0)=0$ for every $u\in X_{\SO}$.
	\item [(I2)] If $u_n\to u$, then $\liminf_nI(u_n)\ge I(u)$.
	\item [(I3)] If $u_n\to u$ and $I(u_n)\to I(u)$, then $u_n\to u$.
	\item [(I4)] $\|u\|+I(u)\to\infty$ as $\|u\|\to\infty$.
	\item [(I6)] There exists $\delta>0$ such that $\displaystyle\inf_{u\in X_{\SO},\|u\|=\delta}J(u)>0$.
	\item [(I7)] If $t_n\to\infty$ and $u_n\to u_0\ne0$, then $\displaystyle\frac{I(u_n)}{t_n^2}\to\infty$.
	\item [(I8)] $\displaystyle\frac{t^2-1}{2}I'(u)u+I(u)-I(tu)\le0$ for every $u\in\cN$ and every $t\ge0$.
	\item [(G)] $\Z^{N-K}$ acts on $X_{\SO}$ by isometries and $(\Z^{N-K}*u)\setminus\{u\}$ is bounded away from $u$ for every $u\in X_{\cO}$. Moreover $J$ is $\Z^{N-K}$-invariant and $X_{\SO}$ is $\Z^{N-K}$-invariant.
	\item [$(M)_0^\beta$]
	\begin{itemize}
		\item [(a)] There exists $M_\beta>0$ such that $\limsup_n\|u_n\|\le M_\beta$ for every $u_n\in X_{\SO}$ such that $0\le\liminf_nJ(u_n)\le\limsup_nJ(u_n)\le\beta$ and $\lim_n(1+\|u_n\|)J'(u_n)=0$.
		\item [(b)] If $J$ has finitely many critical orbits, then there exists $m_\beta>0$ such that, if $u_n,v_n\in X_{\SO}$ are as above and there exists $n_0\ge1$ such that $\|u_n-v_n\|<m_\beta$ for $n\ge n_0$, then $\liminf_n\|u_n-v_n\|=0$.
	\end{itemize}
\end{itemize}

We omitted (I5) because it is an empty condition. The action of $\Z^{N-K}$ on $X_{\SO}$ is given as follows: $z\ast u(x):=u\bigl(x+(0,z)\bigr)$ for $z\in \Z^{N-K}$ and $u\in X_{\SO}$. $\Z^{N-K}\ast u$ is called the {\em orbit} of $u$ and if, in addition, $u$ is a critical point of $J$, then $\Z^{N-K}\ast u$ is a {\em critical orbit}.

Note that (I1)--(I4) and (G) are obviously satisfied (recall that (F4) implies $F\ge0$), while (I6)--(I8) and $(M)_0^\beta$ will be verified in the next lemmas. We begin with (I6)--(I8).

\begin{Lem}\label{L:I678}
	(a) There exists $\delta_0>0$ such that for every $0<\delta<\delta_0$
	\[
	\inf\Set{J(u)|u\in X_{\SO} \text{ and } \|u\|=\delta}>0
	\]
	(b) Suppose $f$ satisfies (F3). If $t_n\to\infty$ and $u_n\to u_0\in X_{\SO}\setminus\{0\}$, then
	\[
	\lim_n\frac{1}{t_n^2}\int_{\rn}F(x,t_nu_n)\,dx=\infty.
	\]
	(c) Suppose $f$ satisfies (F4). For every $u\in X_{\SO}$ and every $t\ge0$ $$\frac{t^2-1}{2}\int_{\rn}f(x,u)u\,dx+\int_{\rn}F(x,u)\,dx-\int_{\rn}F(x,tu)\,dx\le0.$$
\end{Lem}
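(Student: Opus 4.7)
The three parts are independent and I treat them in turn.

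For part (a), the plan is to establish the mountain-pass geometry near $0$. From (F2), for every $\varepsilon>0$ and any fixed $p\in(2,2^*)$ I can produce $C_\varepsilon>0$ with
\[
|F(x,u)|\le\varepsilon|u|^{2^*}+C_\varepsilon|u|^p \quad \text{for a.e.\ }x\in\rn,\ u\in\R;
\]
the uniform vanishing of $f/|u|^{2^*-1}$ at $0$ and $\infty$ handles the two extreme regimes, while the intermediate range $\delta_\varepsilon\le|u|\le R_\varepsilon$ is absorbed into $C_\varepsilon|u|^p$ via $M\le M\delta_\varepsilon^{-p}|u|^p$. Using the continuous Sobolev embedding $X_{\SO}\hookrightarrow L^{2^*}(\rn)$ (inherited from $\cD^{1,2}$) together with the embedding $X_{\SO}\hookrightarrow L^p(\rn)$ available for $\SO$-invariant functions with the integrability $\int u^2/r^2\,dx<\infty$ built into $X$ (compare the compactness recalled in Subsection~\ref{SS:cpt} and Lemma~\ref{L:Lions}), I obtain
\[
J(u)\ge\tfrac12\|u\|^2-\varepsilon C_1\|u\|^{2^*}-C_\varepsilon C_2\|u\|^p.
\]
Taking $\varepsilon$ small and then $\|u\|=\delta$ small enough, this exceeds $\tfrac14\delta^2>0$, so any $\delta_0$ below the resulting threshold works.

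For part (b), the plan is a Fatou argument concentrated on the set where $u_0\ne 0$. Rewrite
\[
\frac{1}{t_n^2}\int_{\rn}F(x,t_nu_n)\,dx=\int_{\rn}\frac{F(x,t_nu_n)}{|t_nu_n|^2}\,u_n^2\,dx
\]
(with the integrand set to $0$ where $u_n=0$). Along a subsequence $u_n\to u_0$ a.e., and on the set $A:=\{u_0\ne 0\}$, which has positive measure, $|t_nu_n(x)|\to\infty$ a.e.; (F3) then yields $F(x,t_nu_n)/|t_nu_n|^2\to\infty$ uniformly in $x$, while $u_n^2\to u_0^2>0$. Thus the integrand tends pointwise to $+\infty$ on $A$. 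On $A\cap B_R$, which has finite measure, the integrand is bounded below by $-C/t_n^2$ (from $|F(x,u)|\le C|u|^{2^*}$ applied on $\{|t_nu_n|\le R_0\}$), and $u_n\to u_0$ in $L^2(A\cap B_R)$ by H\"older since $u_n\to u_0$ in $L^{2^*}$; Fatou then gives $\int_{A\cap B_R}F(x,t_nu_n)/t_n^2\,dx\to\infty$ for every $R$, and letting $R\to\infty$ while controlling the complementary integral through $|F|\le C|u|^{2^*}$ and the $L^{2^*}$-convergence of $u_n$ closes the argument.

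For part (c), the plan is to prove the pointwise inequality
\[
\tfrac{t^2-1}{2}f(x,u)u+F(x,u)-F(x,tu)\le 0 \quad \text{for a.e.\ }x\in\rn,\ u\in\R,\ t\ge 0,
\]
and integrate in $x$. Fix $x$ and $u$ and set $\varphi(t):=F(x,tu)-F(x,u)-\tfrac{t^2-1}{2}f(x,u)u$, so $\varphi(1)=0$ and $\varphi'(t)=u\bigl(f(x,tu)-tf(x,u)\bigr)$. Separating the cases $u>0$ and $u<0$ and invoking the monotonicity of $s\mapsto f(x,s)/|s|$ on each of $(-\infty,0)$ and $(0,\infty)$ from (F4), one verifies $\varphi'\le 0$ on $[0,1]$ and $\varphi'\ge 0$ on $[1,\infty[$, so that $t=1$ is a global minimum of $\varphi$ on $[0,\infty[$ with value $0$; hence $\varphi\ge 0$ and the desired inequality follows.

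The main technical obstacle is the global control required in (b): outside $\{u_0\ne 0\}$ the product $t_nu_n$ can behave erratically and no integrable lower bound for the integrand is immediately available, so the argument must combine truncation on large balls, the growth bound from (F2), and the strong convergence $u_n\to u_0$ in $X_{\SO}$ to show that the complementary integral cannot overwhelm the divergence coming from $A$.
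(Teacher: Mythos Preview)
Your treatment of (c) is correct and essentially the paper's own argument (your $\varphi$ is the paper's $-\phi$). Parts (a) and (b), however, contain genuine gaps.

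In (a) you introduce an auxiliary exponent $p\in(2,2^*)$ and then invoke a continuous embedding $X_{\SO}\hookrightarrow L^p(\rn)$. That embedding is not available: $X$ is only a subspace of $\cD^{1,2}(\rn)$, which does not embed into any $L^p$ with $p<2^*$, and neither Subsection~\ref{SS:cpt} (which concerns $H^1$, not $\cD^{1,2}$) nor Lemma~\ref{L:Lions} (which is a vanishing lemma, not a continuous embedding) supplies it. The paper's route is both simpler and avoids this issue entirely: (F2) alone yields a global bound $F(x,u)\le C|u|^{2^*}$, and then the single embedding $X\hookrightarrow\cD^{1,2}(\rn)\hookrightarrow L^{2^*}(\rn)$ already gives $J(u)\ge\tfrac12\|u\|^2-C\|u\|^{2^*}$; no intermediate exponent is needed.

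In (b) the obstacle you anticipate is real, but your proposed remedy fails. Bounding the complementary integral through $|F(x,v)|\le C|v|^{2^*}$ produces $|F(x,t_nu_n)|/t_n^2\le Ct_n^{2^*-2}|u_n|^{2^*}$, and since $2^*>2$ the factor $t_n^{2^*-2}\to\infty$; even with $u_n\to u_0$ in $L^{2^*}$ this is an indeterminate product $\infty\cdot 0$ and gives no usable lower bound. The paper instead exploits $F\ge 0$ (a consequence of (F4), which is in force wherever this lemma is applied): one restricts to $\Omega_n:=\{|u_n|\ge\varepsilon\}$, where $|t_nu_n|\ge t_n\varepsilon\to\infty$ \emph{uniformly}, so (F3) forces $F(x,t_nu_n)/|t_nu_n|^2\to\infty$ uniformly on $\Omega_n$; since $\limsup_n|\Omega_n|>0$ (because $u_0\ne 0$) the integral over $\Omega_n$ already diverges, and the nonnegative remainder on $\rn\setminus\Omega_n$ is simply dropped. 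Without positivity of $F$ the complement is genuinely uncontrolled, and your Fatou-plus-truncation scheme cannot close.
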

\begin{proof}
	\textit{(a)} From (F2) and the embeddings $X\hookrightarrow\cD^{1,2}(\rn)\hookrightarrow L^{2^*}(\rn)$, there exists $C>0$ such that for every $u\in X_{\SO}$
	\[
	\int_{\rn}F(x,u)\,dx\le C|u|_{2^*}^{2^*}\le C\|u\|^{2^*},
	\]
	whence
	\[
	J(u)\ge\frac12\|u\|^2-C\|u\|^{2^*}
	\]
	and the statement holds true for $\delta_0\ll1$.
	
	\textit{(b)} Since $X_{\SO}$ is locally compactly embedded into $L^2(\R^N)$, up to a subsequence $u_n\to u_0$ a.e. in $\rn$. Moreover, there exists $\varepsilon>0$ such that $\limsup_n|\Omega_n|>0$, where $\Omega_n:=\Set{x\in\rn|\lVert u_n(x)\rVert\ge\varepsilon}$, for otherwise $u_n\to0$ in measure and consequently, up to a subsequence, a.e. in $\rn$. From (F3)
	\[
	\frac{1}{t_n^2}\int_{\rn}F(x,t_nu_n)\,dx=\int_{\rn}\frac{F(x,t_nu_n)}{t_n^2|u_n|^2}|u_n|^2\,dx\ge\varepsilon^2\int_{\Omega_n}\frac{F(x,t_nu_n)}{t_n^2|u_n|^2}\,dx\to\infty
	\]
	as $n\to\infty$.
	
	\textit{(c)} For fixed $x\in\rn$ and $u\in\R$ we prove that $\phi(t)\le0$ for every $t\ge0$, where $$\phi(t):=\frac{t^2-1}{2}f(x,u)u+F(x,u)-F(x,tu).$$ This is trivial for $u=0$, so suppose $u\ne0$.
	Note that $\phi(1)=0$, so it is enough to prove that $\phi$ is nondecreasing on $[0,1]$ and nonincreasing on $[1,\infty[$. This is the case in view of (F4) and because
	\[
	\phi'(t)=tf(x,u)u-f(x,tu)u=t|u|u\biggl(\frac{f(x,u)}{|u|}-\frac{f(x,tu)}{|tu|}\biggr)
	\]
	for $t>0$, therefore $\phi(t)\le0$ for every $t\ge0$ as $\phi\in\cC^1([0,\infty[)$.
\end{proof}

The following lemma shows that $(M)_0^\beta$ holds for every $\beta>0$.

\begin{Lem}\label{L:Mbeta}
	Suppose $f$ satisfies (F3) and (F4).
	\begin{itemize}
		\item [(a)] For every $\beta>0$ there exists $M_\beta>0$ such that $\limsup_n\|u_n\|\le M_\beta$ for every $u_n\subset X_{\SO}$ such that $J(u_n)\le\beta$ for $n\gg1$ and $\lim_n(1+\|u_n\|)J'(u_n)=0$.
		\item [(b)] If the number of critical orbits of $J$ is finite, then there exists $\kappa>0$ such that, if $u_n,v_n\in X_{\SO}$ are as above for some $\beta>0$ and there exists $n_0\ge1$ such that $\|u_n-v_n\|<\kappa$ for $n\ge n_0$, then $\lim_n\|u_n-v_n\|=0$.
	\end{itemize}
\end{Lem}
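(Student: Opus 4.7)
Argue by contradiction: assume $\|u_n\|\to\infty$ along a subsequence and set $\bar u_n:=u_n/\|u_n\|$. From (F4) one has $F(x,u)\le\tfrac12 f(x,u)u$, so $J(u_n)\ge\tfrac12 J'(u_n)u_n=o(1)$ by the Cerami hypothesis; together with $J(u_n)\le\beta$ this forces $J(u_n)/\|u_n\|^2\to 0$. Fix $R>\sqrt K$ and dichotomize along the translation group $\Z^{N-K}$. If $\liminf_n\sup_{y\in\Z^{N-K}}\int_{B((0,y),R)}\bar u_n^2\,dx>0$, translate (preserving $X_{\SO}$ and leaving $J$ invariant thanks to (F1)) so that, up to a subsequence, $\bar u_n\rightharpoonup\bar u_\infty\ne0$ in $X_{\SO}$ and $\bar u_n\to\bar u_\infty$ a.e.; then $|u_n|\to\infty$ on $\{\bar u_\infty\ne0\}$, so (F3) and Fatou (applied to the nonnegative integrand $F(x,u_n)|\bar u_n|^2/u_n^2$) give $\liminf_n\int_{\rn}F(x,u_n)/\|u_n\|^2\,dx=\infty$, in contradiction with
\[
\int_{\rn}\frac{F(x,u_n)}{\|u_n\|^2}\,dx=\tfrac12-\frac{J(u_n)}{\|u_n\|^2}\to\tfrac12.
\]
Otherwise $\sup_y\int_{B((0,y),R)}\bar u_n^2\,dx\to0$. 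Applying Lemma \ref{L:I678}(c) with $u=u_n$ and $t=s/\|u_n\|$ for arbitrary $s>0$ yields
\[
J(s\bar u_n)\le J(u_n)+\tfrac{s^2/\|u_n\|^2-1}{2}J'(u_n)u_n\le\beta+o(1),
\]
while Lemma \ref{L:Lions2} applied to the bounded sequence $\tau s\bar u_n$ and the vanishing sequence $s\bar u_n$ gives $\int|f(x,\tau s\bar u_n)\,s\bar u_n|\,dx\to0$ for every $\tau\in[0,1]$; the growth from (F2) together with H\"older makes these integrals uniformly bounded in $\tau$, so Fubini and dominated convergence in $\tau$ yield $\int_{\rn}F(x,s\bar u_n)\,dx\to0$. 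Hence $s^2/2\le\beta$ for every $s>0$, absurd.

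\medskip

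\textbf{Part (b), setup.} Since $J$ has finitely many critical $\Z^{N-K}$-orbits, Lemma \ref{L:discrete} applied to their representatives gives
\[
d:=\inf\Set{\|u-v\|\,\big|\,J'(u)=J'(v)=0,\ u\ne v}>0,
\]
and I set $\kappa:=d/2$. Let $u_n,v_n$ be as in the hypotheses with $\|u_n-v_n\|<\kappa$ for $n\ge n_0$. By part (a) both sequences are bounded, hence so is the $\SO$-invariant difference $u_n-v_n$. Suppose for contradiction that $\liminf_n\|u_n-v_n\|>0$, and dichotomize $u_n-v_n$ in the $z$-direction.

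\medskip

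\textbf{Part (b), case analysis.} If $\sup_{y\in\Z^{N-K}}\int_{B((0,y),R)}(u_n-v_n)^2\,dx\to0$ for some $R>\sqrt K$, then Lemma \ref{L:Lions2}, applied once with the pair $(v_n,u_n-v_n)$ and once with $(u_n,u_n-v_n)$, gives $\int_{\rn}|f(x,u_n)-f(x,v_n)|\,|u_n-v_n|\,dx\to0$. Since
\[
\|u_n-v_n\|^2=\bigl(J'(u_n)-J'(v_n)\bigr)(u_n-v_n)+\int_{\rn}\bigl(f(x,u_n)-f(x,v_n)\bigr)(u_n-v_n)\,dx
\]
and the first summand is $o(1)$ (dual norms vanish, $\|u_n-v_n\|$ bounded), we conclude $\|u_n-v_n\|\to0$, a contradiction. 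Otherwise, pick $y_n\in\Z^{N-K}$ with $\int_{B((0,y_n),R)}(u_n-v_n)^2\,dx\ge c>0$ and set $\tilde u_n(\cdot):=u_n(\cdot+(0,y_n))$, $\tilde v_n$ analogously. These remain in $X_{\SO}$ and are Cerami sequences for $J$ by $\Z^{N-K}$-invariance. Up to a subsequence, $\tilde u_n\rightharpoonup\tilde u_\infty$ and $\tilde v_n\rightharpoonup\tilde v_\infty$ in $X_{\SO}$; the local compactness $X_{\SO}\hookrightarrow L^2(B_R)$ gives $\int_{B(0,R)}(\tilde u_\infty-\tilde v_\infty)^2\,dx\ge c$, whence $\tilde u_\infty\ne\tilde v_\infty$. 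By (F2) and Vitali's theorem, $J'$ is weak-to-weak continuous on bounded subsets of $X_{\SO}$, so $J'(\tilde u_\infty)=J'(\tilde v_\infty)=0$. Weak lower semicontinuity of the norm then yields
\[
d\le\|\tilde u_\infty-\tilde v_\infty\|\le\liminf_n\|u_n-v_n\|\le d/2,
\]
the desired contradiction.

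\medskip

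\textbf{Main obstacle.} The delicate step is the non-vanishing branch of (b): one must check that translations by $y_n\in\Z^{N-K}$ preserve the $\SO$-symmetry (which they do, since $\SO$ acts only on the first $K$ coordinates, orthogonal to the translation directions) and that the weak limits of the translated Cerami sequences are genuine critical points of $J$---this uses the subcritical growth (F2) together with a Vitali-type argument to pass to the limit in $\int_{\rn}f(x,\tilde u_n)\varphi\,dx$ for each test $\varphi\in X_{\SO}$.
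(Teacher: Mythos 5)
Your overall strategy coincides with the paper's: in (a) a normalization $\bar u_n=u_n/\|u_n\|$ plus a vanishing/non-vanishing dichotomy, with Lemma \ref{L:I678}~\textit{(c)} and the scaling $s\bar u_n$ in the vanishing branch and (F3)+Fatou in the non-vanishing branch; in (b) the quantity $\kappa$ from the discreteness of critical orbits, Lemma \ref{L:Lions2} in one branch and weak-to-weak$^*$ continuity of $J'$ in the other. Two points, however, need repair.

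First, part (a) as written only shows that \emph{each} sequence satisfying the hypotheses is bounded; the statement requires a \emph{uniform} constant $M_\beta$ valid for all such sequences. This is not automatic from "every such sequence is bounded": you must add the diagonal argument (if no uniform bound existed, pick for each $k$ a sequence $u_n^k$ with $\limsup_n\|u_n^k\|>k$ and extract $u_{n(k)}^k$ violating what you just proved), exactly as in Corollary \ref{C:Ceramibdd}. Second, your dichotomy is performed at a single fixed radius $R$, and in the cylindrical setting this is not enough: Lemmas \ref{L:Lions} and \ref{L:Lions2} require \eqref{e-LionsCond} for \emph{every} $R>0$, and vanishing of $\sup_{y}\int_{B((0,y),R_0)}\bar u_n^2\,dx$ at one scale $R_0$ does not imply it at larger scales (an $\SO$-invariant profile concentrating on $\{|y|=\rho\}\times\{z=z_n\}$ with $\rho>R_0+1$ is invisible to balls of radius $R_0$ centred on the axis). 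The dichotomy must therefore be stated as "either \eqref{e-LionsCond} holds for every $R>0$, or it fails for some $R$, which by monotonicity may be taken larger than $\sqrt{N-K}$ and with the supremum over $\Z^{N-K}$"; only then is the Lions-type conclusion available in the vanishing branch, while the non-vanishing branch is unaffected. (Also note the threshold is $\sqrt{N-K}$, the diagonal of the unit cube in the translation directions, not $\sqrt K$.) Finally, in (b) your framing "suppose $\liminf_n\|u_n-v_n\|>0$" only delivers $\liminf_n\|u_n-v_n\|=0$ rather than the stated full limit; this is cosmetic, since your non-vanishing branch is ruled out using only $\|u_n-v_n\|<\kappa$ (not the contradiction hypothesis), so vanishing must hold for the whole sequence and your first branch then gives $\lim_n\|u_n-v_n\|=0$ directly — but you should organize it that way.
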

\begin{proof}
	\textit{(a)} Let $u_n\in X_{\SO}$ as in the assumptions. Suppose that $u_n$ is unbounded and define $\bar{u}_n:=u_n/\|u_n\|$. Passing to a subsequence we can assume that $\lim_n\|u_n\|=\infty$. Similarly to the proof of Lemma \ref{L:Lions2}, for every $\varepsilon>0$ there exists $C_\varepsilon>0$ such that
	\[
	\int_{\rn}F(x,\bar{u}_n)\,dx\leq\varepsilon|\bar{u}_n|_{2^*}^{2^*}+ C_\varepsilon\Phi(\bar{u}_n),
	\]
	where $\Phi$ is defined therein. If $\bar{u}_n$ satisfies \eqref{e-LionsCond} for every $R>0$ (hence the same holds for $s\bar{u}_n$, $s\ge0$), then in view of Lemma \ref{L:Lions}
	\[
	\limsup_n\int_{\rn}F(x,s\bar{u}_n)\,dx\leq \varepsilon s^2\limsup_n|\bar{u}_n|_{2^*}^{2^*}
	\]
	for every $\varepsilon>0$, hence $\lim_n\int_{\rn}F(x,s\bar{u}_n)\,dx=0$.
	Then applying Lemma \ref{L:I678} \textit{(c)} with $u=u_n$ and $t=s/\|u_n\| =: t_n$ we obtain, up to a subsequence, that for every $s\ge0$
	\[\begin{split}
	\beta&\ge\limsup_nJ(u_n)\ge\limsup_nJ(s\bar{u}_n)-\lim_n\frac{t_n^2-1}{2}J'(u_n)u_n\\
	&=\limsup_nJ(s\bar{u}_n)\ge\frac{s^2}{2}-\lim_n\int_{\rn}F(x,s\bar{u}_n)\,dx=\frac{s^2}{2},
	\end{split}\]
	a contradiction. Hence $\lim_n\int_{B((0,z_n),R)}|\bar{u}_n|^2\,dx>0$ up to a subsequence for some $R>\sqrt{N-K}$ and $z_n\subset\Z^{N-K}$, where $z_n$ maximizes $z\mapsto\int_{B((0,z),R)}|u_n|^2\,dx$. Exploiting the $\Z^{N-K}$-invariance, we can assume that
	\[
	\int_{B_R}|\bar{u}_n|^2\,dx\ge c
	\]
	for $n\gg1$ and some $c>0$.
	
	There follows that there exists $\bar{u}\in X_{\SO}\setminus\{0\}$ such that, up to a subsequence, $\bar{u}_n\rightharpoonup\bar{u}$ in $X$ and $\bar{u}_n\to\bar{u}$ in $L^2_\textup{loc}(\rn)$ and a.e. in $\rn$.
	
	From (F4), $2J(u_n)-J'(u_n)u_n=\int_{\rn}f(x,u_n)u_n-2F(x,u_n)\,dx\ge0$, thus $J(u_n)$ is bounded and due to (F3) we obtain
	\[
	o(1)=\frac{J(u_n)}{\|u_n\|^2}\le \frac12-\int_{\rn}\frac{F(x,u_n)}{|u_n|^2}|\bar{u}_n|^2\,dx\to-\infty,
	\]
	which is a contradiction.	
	This shows that $u_n$ is indeed bounded. If by contradiction there exists no upper bound $M_\beta$, then for every $k\in\mathbb{N}$ there exists $u_n^k\in X_{\SO}$ as in the statement such that $\limsup_n\|u_n^k\|>k$. Then, as in Corollary \ref{C:Ceramibdd}, it is easy to build a subsequence $u_{n(k)}^k\in X_{\SO}$ that is unbounded, again a contradiction.
	
	\textit{(b)} Assume that there are finitely many critical orbits of $J$. From (G) we easily see that $$\kappa:=\inf\big\{\|u-v\|:u\ne v\text{ and }J'(u)=J'(v)=0\big\}>0.$$
	
	Let $u_n,v_n$ be as in the statement. In view of \textit{(a)}, they are bounded. If $\int_{\rn}f(x,u_n)(u_n-v_n)\,dx$ or $\int_{\rn}f(x,v_n)(u_n-v_n)\,dx$ do not converge to $0$, then in view of Lemma \ref{L:Lions2} and the $\Z^{N-K}$-invariance there exist $R>\sqrt{N-K}$ and $\varepsilon>0$ such that $$\int_{B_R}|u_n-v_n|^2\,dx\ge\varepsilon.$$
	We can assume that $u_n\rightharpoonup u$ and $v_n\rightharpoonup v$ in $X$, so $u\ne v$. Hence $J'(u)=J'(v)=0$ and consequently $$\liminf_n\|u_n-v_n\|\ge\|u-v\|\ge\kappa,$$ in contrast with the assumptions.
	
	Therefore it follows that
	\[
	\lim_n\int_{\rn}f(x,u_n)(u_n-v_n)\,dx=\lim_n\int_{\rn}f(x,v_n)(u_n-v_n)\,dx=0
	\]
	and, finally,
	\begin{equation*}\begin{aligned}
	\|u_n-v_n\|^2= J'(u_n)(u_n-v_n)-J'(v_n)(u_n-v_n)\\
	+\int_{\rn}\bigl(f(x,u_n)-f(x,v_n)\bigr)(u_n-v_n)\,dx\to0.\qedhere
	\end{aligned}\end{equation*}
\end{proof}

\begin{proof}[Proof of Theorem \ref{T:ExMul}]
	Note that $\cN$ contains all the nontrivial critical points of $J$. Applying Theorem \ref{T:Link1} we obtain a Cerami sequence $u_n\in X_{\SO}$ at the level $c:=\inf_\cN J>0$. Lemma \ref{L:Mbeta}\textit{(a)} implies that there exists $u\in X_{\SO}$ such that $u_n\rightharpoonup u$ up to a subsequence, thus $J'(u)=0$.
	
	If by contradiction $\int_{\rn}f(x,u_n)u_n\,dx\to0$, then similarly to the proof of Lemma \ref{L:Mbeta} \textit{(b)} we infer that $u_n\to0$, in contrast with $J(u_n)\to c$. Hence, again similarly to the proof of Lemma \ref{L:Mbeta} \textit{(b)}, $u\ne0$.
	
	Fatou's Lemma and (F4) imply
	\[\begin{split}
	c & = \lim_nJ(u_n) = \lim_n\left(J(u_n) - \frac12J'(u_n)u_n\right)\\
	& = \lim_n\int_{\R^N}\frac12 f(x,u_n)u_n - F(x,u_n)\,dx \ge\int_{\R^N}\frac12 f(x,u)u-F(x,u)\,dx\\
	& = J(u) - \frac12J'(u)u = J(u) \ge c.
	\end{split}\]
	
	Now assume $f$ is odd in $u$, which implies that $J$ is even. The existence of infinitely many geometrically distinct critical points of $J$ follows directly from Theorem \ref{T:CrticMulti} \textit{(b)}. As for the fact that the ground state solution is nonnegative, since $J(v)=J(|v|)$ and $J'(v)v=J'(|v|)|v|$ for every $v\in X_{\SO}$, we obtain that $|u|$ is again a ground state solution.
\end{proof}

\begin{Lem}\label{L:positive}
	Suppose that $f$ does not depend on $y$ and satisfies (F5). Then there exists $w\in X_{\SO}$ such that $\int_{\rn}F(z,w)\,dx>\frac{1}{2}\int_{\R^N}|\nabla_z w|^2\,dx$.
\end{Lem}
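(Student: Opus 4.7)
The plan is to exhibit an explicit $\SO$-invariant test function of product type whose potential integral dominates the $z$-gradient integral by an elementary scaling argument. The key point is that $|\nabla_z w|^2$ only sees $z$-derivatives, so if the $z$-cutoff has fixed-width transition at a shell $|z|=L$, the gradient integral will scale like $L^{N-K-1}$, while the potential integral -- thanks to (F5) and the $y$-independence of $F$ -- will enjoy a positive lower bound of order $L^{N-K}$ on a plateau region. Since $(u_0\ne 0$ follows from $F(\cdot,0)\equiv 0$ versus $\essinf F(\cdot,u_0)>0$), this ratio will blow up as $L\to\infty$.

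Fix $\varphi\in\cC^\infty(\R,[0,1])$ with $\varphi=1$ on $]-\infty,0]$ and $\varphi=0$ on $[1,\infty[$. For parameters $R,L\ge 1$ to be chosen, fix $\eta_R\in\cC_c^\infty(]0,\infty[,[0,1])$ supported in $[1,R+2]$, equal to $1$ on $[2,R+1]$, with $\|\eta_R'\|_\infty$ bounded uniformly in $R$, and set
\[
w(x):=u_0\,\eta_R(|y|)\,\varphi(|z|-L).
\]
Then $w$ is manifestly $\SO$-invariant and its support is bounded and contained in $\{1\le|y|\le R+2\}$, hence bounded away from $\{r=0\}$; this gives $w\in H^1(\rn)$ and secures $\int_{\rn}w^2/r^2\,dx<\infty$ even when $K=2$, so $w\in X_{\SO}$. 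By (F5) set $c:=\essinf_{z\in\rnk}F(z,u_0)>0$; by (F2) together with the $y$-independence of $f$ and (F1), set $M:=\esssup\{|F(z,u)|:z\in\rnk,\,|u|\le|u_0|\}<\infty$. Since $|\nabla_z w|^2=u_0^2\eta_R(|y|)^2\varphi'(|z|-L)^2$ is supported in the shell $\{|y|\le R+2\}\times\{L\le|z|\le L+1\}$, polar coordinates in $z$ yield
\[
\int_{\rn}|\nabla_z w|^2\,dx\le C_1R^KL^{N-K-1},
\]
with $C_1$ independent of $R,L$. Splitting $\textup{supp}(w)$ into the plateau $G:=\{2\le|y|\le R+1\}\times\{|z|\le L\}$ (on which $w\equiv u_0$ and $F(z,w)\ge c$), the $y$-transition $\{|y|\in[1,2]\cup[R+1,R+2]\}\times\{|z|\le L+1\}$ (of $y$-volume $O(R^{K-1})$), and the $z$-transition $\{2\le|y|\le R+1\}\times\{L<|z|<L+1\}$ (of $z$-volume $O(L^{N-K-1})$), and using $|F|\le M$ on the latter two, one obtains
\[
\int_{\rn}F(z,w)\,dx\ge c_1R^KL^{N-K}-Mc_2R^{K-1}L^{N-K}-Mc_3R^KL^{N-K-1}
\]
for constants $c_1,c_2,c_3>0$ depending only on $N,K,\varphi$.

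The main obstacle -- which motivates the two-scale construction -- is the negative $R^{K-1}L^{N-K}$ term from the $y$-transition, which has the same $L$-order as the leading positive term and would defeat the estimate if $\eta$ were fixed. The rescue is that with $\eta_R$ having fixed-width transition but expanding plateau in $|y|$, the $y$-plateau has volume $O(R^K)$ while the $y$-transition has volume only $O(R^{K-1})$, yielding a gap of order $R$. I will therefore first choose $R$ so large that $c_1R\ge 2Mc_2$, absorbing the $R^{K-1}L^{N-K}$ error into half of the leading $R^KL^{N-K}$ term; then choose $L$ so large that $(Mc_3+\tfrac12 C_1)/L<\tfrac14 c_1$, which makes $\int_{\rn}F(z,w)\,dx-\tfrac12\int_{\rn}|\nabla_z w|^2\,dx>0$, completing the proof.
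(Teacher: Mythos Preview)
Your proof is correct and follows essentially the same approach as the paper's: both construct a product-type test function $u_0\cdot(\text{cutoff in }|y|)\cdot(\text{cutoff in }|z|)$, supported away from $\{r=0\}$, with a large plateau on which $w\equiv u_0$, and then compare the plateau contribution (governed by $\essinf_z F(z,u_0)>0$) against the transition and $z$-gradient contributions (controlled via a uniform bound on $|F(z,u)|$ for $|u|\le|u_0|$). The only difference is cosmetic: the paper uses a \emph{single} scale parameter, setting $w_R=u_0\,\phi_R(|y|)\,\phi_R(|z|)$ so that the plateau has volume $\sim R^N$ while all error terms and $\int|\nabla_z w_R|^2$ are $O(R^{N-1})$, which yields the conclusion in one step for $R\gg1$; you instead decouple the $y$- and $z$-scales into $R$ and $L$ and balance them sequentially. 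Your two-parameter version is slightly more elaborate but the underlying mechanism is identical.
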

\begin{proof}
	For every $R\ge3$ we define a continuous function $\phi_R\colon\R\to \R$ such that $\phi_R(t)=0$ for $|t|<1$ and for $|t|>R+1$, $\phi_R(t)=1$ for $2\leq |t|\leq R$ and $\phi_R$ is affine for $1\le|t|\le2$ and for $R\le|t|\le R+1$. Then let $w_R(x):=u_0\phi_R(|y|)\phi_R(|z|)$. Observe that $w_R\in X_{\SO}$ and there exist constants $C_1,C_2>0$ such that
	\begin{equation*}\begin{split}
		\int_{\R^N} F(z,w_R)\,dx\geq \, & C_1R^N\essinf_{z\in\R^{N-K}}F(z,u_0)\\
		& -C_2R^{N-1}\sup_{R\leq |u|\leq R+1}\esssup_{z\in\R^{N-K}}F(z,u)\\
		& -C_2\sup_{1\leq |u|\leq 2}\esssup_{z\in\R^{N-K}}F(z,u)
	\end{split}\end{equation*}
	and
	$$\int_{\R^N}|\nabla_z w|^2\leq C_2R^{N-1}.$$
	Then the statement holds true for $R\gg1$. 
\end{proof}

\begin{proof}[Proof of Theorem \ref{T:Ex}]
	First we prove that $J$ has the mountain pass geometry. Let $w\in X_{\SO}$ as in Lemma \ref{L:positive}. From
	Lemma \ref{L:I678} \textit{(a)},
	there exists $0<\delta<\|w\|$ such that $\inf\Set{J(u)|u\in X_{\cO} \text{ and } \|u\|=\delta}>0$.
	Moreover, for every $\lambda>0$ we have
	\[\begin{split}
	J\bigl(w(\lambda\cdot,\cdot)\bigr)= \, & \frac{1}{2\lambda^{K-2}}\int_{\R^N}|\nabla_y w|^2+\frac{a}{r^2}|w|^2\,dx\\
	& +\frac{1}{\lambda^K}\int_{\rn}\frac{1}{2}|\nabla_z w|^2-F(z,w)\,dx\to-\infty
	\end{split}\]
	as $\lambda\to 0^+$. Consequently, the existence of a Palais--Smale sequence $u_n\in X$ for $J|_{X_{\SO}}$ at the mountain pass level $c>0$ follows. Such a sequence is bounded because (F5) holds: for every $n\gg1$
	\[\begin{split}
	c+1 & +\|u_n\|\ge J(u_n)-\frac1\gamma J'(u_n)(u_n)\\
	& =\left(\frac12-\frac1\gamma\right)\|u_n\|^2+\int_{\rn}\frac1\gamma f(u_n)u_n-F(u_n)\,dx\ge\left(\frac12-\frac1\gamma\right)\|u_n\|^2.
	\end{split}\]
	
	Now, suppose by contradiction that \eqref{e-LionsCond} holds for every $R>0$. Fix $R>\sqrt{N-K}$ such that \eqref{e-LionsCond} holds with the supremum being taken over $\Z^{N-K}$. Since $F$ and $(z,u)\mapsto f(z,u)u$ satisfy \eqref{e-LionsPhi} uniformly with respect to $z\in\R^{N-K}$, from Lemma \ref{L:Lions} and arguing as in Lemma \ref{L:Lions2} we obtain
	\[
	c=\lim_nJ(u_n)-\frac12J'(u_n)u_n=\lim_n\int_{\R^N}\frac12 f(z,u_n)u_n-F(z,u_n)\,dx=0,
	\]
	which is a contradiction. Then there exist $R>\sqrt{N-K}$ and $\varepsilon>0$ such that, up to a subsequence,
	\begin{equation}\label{e-bddaway}
	\int_{B((0,z_n),R)}u_n^2\,dx\ge\varepsilon
	\end{equation}
	where $z_n\in\Z^{N-K}$ maximizes $z\mapsto\int_{B((0,z),R)}|u_n|^2\,dx$.
	Since $J$ is invariant with respect to $\Z^{N-K}$ translations, up to replacing $u_n$ with $u_n(\cdot-z_n)$ we can suppose that $z_n=0$. Since $u_n$ is bounded, there exists $u\in X_{\SO}$ such that $u_n\rightharpoonup u$ in $X$, which in turn implies that $J'(u)=0$ and that $u_n\to u$ in $L^2\bigl(B_R\bigr)$ and a.e. in $\rn$; in particular, $u\ne 0$ because \eqref{e-bddaway} holds.
\end{proof}

\begin{proof}[Proof of Corollary \ref{C:V1}]
	The proof follows from Theorems \ref{T:ExMul}, \ref{T:Ex} and \ref{T:ScalVec}.
\end{proof}

\section{The Sobolev critical case}\label{S:SC}

In this section, we prove Theorem \ref{T:crit}; moreover, not only do we assume $K=2$ (for the same reasons as in Section \ref{S:equiv}) but we also assume $N=3$ (which a fortiori forces us to have $K=2$). The reason is that in Lemma \ref{L:decom} below we make use of the explicit structure of $\SO(2)$, i.e.
\[
\SO(2)=\Set{
\begin{pmatrix}
\cos\alpha & -\sin\alpha\\
\sin\alpha & \cos\alpha
\end{pmatrix}
|\alpha\in\R}.
\]

We recall that throughout this section $2^*=6$,
\begin{equation*}\begin{split}
	E(\UU)&=\frac12\int_{\R^3}|\nabla\times\UU|^2\,dx-\frac{1}{6}\int_{\R^3}|\UU|^6\,dx,\\
	J(u)&=\frac12\int_{\R^3}|\nabla u|^2+\frac{u^2}{r^2}\,dx-\frac{1}{6}\int_{\R^3}u^6\,dx.
\end{split}\end{equation*}

The only exception to the restriction $N=3$ is Corollary \ref{C:V2}, which we prove here.

\begin{proof}[Proof of Corollary \ref{C:V2}]
The proof follows from \cite[Theorem 1]{BadGuiRol} and Theorem \ref{T:ScalVec}.
\end{proof}

Let $\pi\colon\mbS^3\setminus\{Q\}\to\R^3$ be the stereographic projection, where $Q=(0,0,0,1)$ is the north pole, and let
\[
\psi\colon x\in\R^3\mapsto\sqrt{\frac{2}{|x|^2+1}}\in\R.
\]
Explicitly,
\[\begin{split}
\pi(\xi)&=\frac{1}{1-\xi_4}(\xi_1,\xi_2,\xi_3),\quad\xi=(\xi_1,\xi_2,\xi_3,\xi_4),\\
\pi^{-1}(x)&=\frac{1}{|x|^2+1}(2x_1,2x_2,2x_3,|x|^2-1),\quad x= (x_1,x_2,x_3).
\end{split}\]

Recall that $\widetilde{g}=\left(\begin{smallmatrix}
g & 0\\
0 & 1
\end{smallmatrix}\right)$ for $g\in\SO(2)$, $\SO=\Set{\widetilde{g}|g\in\SO(2)}$, and $\cD_{\SO(2\times2)}$ is the subspace of $\cD^{1,2}(\R^3,\R^3)$ of $\SO(2\times2)$-symmetric vector fields according to Definition \ref{D:sym}.

\begin{Lem}\label{L:equisym}
	$\cD_{\SO(2\times2)}\subset\textup{Fix}(\SO)$.
\end{Lem}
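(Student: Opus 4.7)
The plan is to specialize the defining identity of $\cD_{\SO(2\times2)}$ by choosing $g_2=I_2$, so that the surviving symmetry becomes precisely the $\SO$-invariance. The only real content is to verify that with this choice of $g_2$ the conformal change of variables $\pi\circ g\circ\pi^{-1}$ collapses to the simple rigid rotation $\widetilde{g_1}$ of $\rr$, so that $\psi$ is preserved.

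First I would take $g=(g_1,I_2)\in\SO(2)\times\SO(2)$ and compute, using the explicit formula
\[
\pi^{-1}(x)=\frac{1}{|x|^2+1}\bigl(2x_1,2x_2,2x_3,|x|^2-1\bigr),
\]
that
\[
g\pi^{-1}(x)=\frac{1}{|x|^2+1}\bigl(g_1(2x_1,2x_2),\,2x_3,\,|x|^2-1\bigr).
\]
Since $g_1\in\SO(2)$ is an isometry of $\R^2$, one has $|\widetilde{g_1}x|=|x|$, hence
\[
\pi^{-1}(\widetilde{g_1}x)=\frac{1}{|x|^2+1}\bigl(2g_1(x_1,x_2),\,2x_3,\,|x|^2-1\bigr)=g\pi^{-1}(x),
\]
and therefore $\pi\bigl(g\pi^{-1}(x)\bigr)=\widetilde{g_1}x$ for a.e.\ $x\in\rr$. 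In particular
\[
\psi\Bigl(\pi\bigl(g\pi^{-1}(x)\bigr)\Bigr)=\psi(\widetilde{g_1}x)=\sqrt{\frac{2}{|x|^2+1}}=\psi(x).
\]

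Substituting this into the defining relation of $\cD_{\SO(2\times2)}$ (Definition \ref{D:sym}) with $g_2=I_2$ yields
\[
\frac{\UU(\widetilde{g_1}x)}{\psi(x)}=\frac{\widetilde{g_1}\UU(x)}{\psi(x)},
\]
so $\UU(\widetilde{g_1}x)=\widetilde{g_1}\UU(x)$ for every $g_1\in\SO(2)$ and a.e.\ $x\in\rr$. Recalling the definition \eqref{e-Fix} of the $\SO$-action, this means exactly $\widetilde{g_1}^{-1}\UU(\widetilde{g_1}x)=\UU(x)$, i.e.\ $\UU\in\textup{Fix}(\SO)$. No genuine obstacle is anticipated; the argument is just a direct unpacking of the two definitions together with the elementary observation that rotations in the first $\R^2$-factor commute with the stereographic projection.
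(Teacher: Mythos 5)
Your proof is correct and follows essentially the same route as the paper: choose $g=(g_1,I_2)$, observe that $g\pi^{-1}(x)=\pi^{-1}(\widetilde{g_1}x)$ (so that $\pi\bigl(g\pi^{-1}(x)\bigr)=\widetilde{g_1}x$ and $\psi$ is unchanged), and read off the $\SO$-equivariance from Definition \ref{D:sym}. The only difference is that you write out the explicit verification of $g\pi^{-1}(x)=\pi^{-1}(\widetilde{g_1}x)$, which the paper merely asserts.
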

\begin{proof}
	Let $g_1\in\SO(2)$ and define $g:=(g_1,I_2)\in\SO(2)\times\SO(2)$. Note that
	$$g\pi^{-1}(x)=\pi^{-1}(\widetilde{g_1}x)$$
	for every $x\in\R^3$, therefore
	\[
	\widetilde{g_1}\UU(x)=\frac{\psi(x)}{\psi\Bigl(\pi\bigl(g\pi^{-1}(x)\bigr)\Bigr)}\UU\Bigl(\pi\bigl(g\pi^{-1}(x)\bigr)\Bigr)=\frac{\psi(x)}{\psi(\widetilde{g_1}x)}\UU(\widetilde{g_1}x)=\UU	(\widetilde{g_1}x).\qedhere
	\]
\end{proof}

\begin{Lem}\label{L:compact}
	The embedding
	$\cD_{\SO(2\times2)}\hookrightarrow L^6(\R^3,\R^3)$ is compact.
\end{Lem}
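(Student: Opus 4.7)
The plan is to transfer the problem to the sphere $\mbS^3$ via the stereographic projection/conformal correspondence and then to exploit the fact that the action of $\SO(2)\times\SO(2)$ on $\mbS^3$ has no zero-dimensional orbits, so that $G$-invariant scalar quantities effectively live on a one-dimensional quotient where the critical Sobolev exponent no longer obstructs compactness.

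First I would recall, following \cite{Ding,ClPis}, that the componentwise map $T\colon V\in H^1(\mbS^3,\rr)\mapsto\psi\cdot(V\circ\pi^{-1})\in\cD^{1,2}(\rr,\rr)$ is a bounded linear isomorphism and that the conformal invariance of the $L^6$-norm in dimension three gives $\lvert T(V)\rvert_{L^6(\rr)}=\lvert V\rvert_{L^6(\mbS^3)}$. Read through $T$, Definition \ref{D:sym} says exactly that $\UU=T(V)$ lies in $\cD_{\SO(2\times2)}$ if and only if $V$ belongs to the closed subspace $\fH\subset H^1(\mbS^3,\rr)$ of fields satisfying $V(g\xi)=\widetilde{g_1}V(\xi)$ for every $g=(g_1,g_2)\in G:=\SO(2)\times\SO(2)$ and a.e. $\xi\in\mbS^3$. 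The claim therefore reduces to showing that $\fH\hookrightarrow L^6(\mbS^3,\rr)$ is compact.

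Given $V_n\in\fH$ with $V_n\rightharpoonup 0$ in $H^1(\mbS^3,\rr)$, the scalar function $\lvert V_n\rvert$ is $G$-invariant (because $\widetilde{g_1}$ is orthogonal) and, by Kato's inequality $\lvert\nabla\lvert V_n\rvert\rvert\leq\lvert\nabla V_n\rvert$, bounded in $H^1(\mbS^3)$. Viewing $\mbS^3\subset\R^2\times\R^2$ one checks that every $G$-orbit has positive dimension --- a $2$-torus generically, and a circle on $\{\xi_1=0\}\cup\{\xi_2=0\}$. A Hebey--Vaugon-type compact embedding for $G$-invariant scalar $H^1$-functions with minimal orbit dimension positive (which in dimension three extends all the way up to every $p\in[1,\infty[$) then yields, along a subsequence, $\lvert V_n\rvert\to\phi$ in $L^6(\mbS^3)$ for some $\phi\geq 0$. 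On the other hand, the usual non-equivariant Rellich--Kondrachov embedding $H^1(\mbS^3)\hookrightarrow\hookrightarrow L^2(\mbS^3)$ gives $V_n\to 0$ a.e. along a further subsequence, hence $\phi=0$ a.e. Therefore $\lvert V_n\rvert_{L^6(\mbS^3,\rr)}=\bigl\lvert\,\lvert V_n\rvert\,\bigr\rvert_{L^6(\mbS^3)}\to 0$, and pulling back through $T$ we conclude $\UU_n\to 0$ in $L^6(\rr,\rr)$.

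The main obstacle I anticipate is properly invoking (or reproving) the Hebey--Vaugon compact embedding in precisely the form needed, since it is sharper than the generic Sobolev exponent and relies on a careful quotient-measure analysis near the lower-dimensional orbit stratum $\{\xi_1=0\}\cup\{\xi_2=0\}$. If the direct invocation turns out to be inconvenient, I would fall back to an explicit toric parametrization $\xi=(\cos\phi\,\omega_1,\sin\phi\,\omega_2)$ with $\phi\in[0,\pi/2]$ and $\omega_i\in\mbS^1$; equivariance forces any $V\in\fH$ to be written as $V(\xi)=\widetilde{g_{\omega_1}}W(\phi)$ for a reduced profile $W\colon[0,\pi/2]\to\rr$, and the $H^1$- and $L^6$-norms then become weighted integrals of $W$ on the compact arc against $\cos\phi\sin\phi\,d\phi$, with a singular penalty of the form $(W_1(\phi)^2+W_2(\phi)^2)/\cos^2\phi$ coming from $\lvert\partial_{\theta_1}V\rvert^2$ and controlling the degeneration of the orbit at $\phi=\pi/2$; one-dimensional Rellich--Kondrachov together with the standard tightness argument at the two endpoints then delivers the desired compactness.
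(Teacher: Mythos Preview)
Your argument is correct and follows essentially the same route as the paper: transfer to $H^1(\mbS^3,\rr)$ via the conformal isometry, pass to the scalar $G$-invariant modulus $|V_n|$, and use a compact $H^1\hookrightarrow L^6$ embedding for $\cO(2)\times\cO(2)$-invariant scalar functions on $\mbS^3$ to conclude. The only cosmetic difference is that the paper invokes \cite[Lemma 5]{Ding} directly for the last step (so your anticipated ``main obstacle'' disappears once you cite that lemma), and it phrases the identification of the limit by first noting $|V_n|\rightharpoonup 0$ in $H^1(\mbS^3)$ rather than first extracting a limit $\phi$ and then showing $\phi=0$; the content is the same.
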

\begin{proof}
	For every $\UU\in\cD_{\SO(2\times2)}$ define $\VV(\xi):=\frac{\UU(\pi(\xi))}{\psi(\pi(\xi))}$ for $\xi\in\mathbb{S}^3\setminus\{Q\}$. Note that $\VV\in H^1(\mathbb{S}^3,\R^3)$ and, similarly as in \cite[Lemma 3.1]{ClPis}, $|\nabla \UU|_2=\|\VV\|_{H^1(\mathbb{S}^3,\R^3)}$ and $|\UU|_{6}=\|\VV\|_{L^6(\rr,\rr)}$,
	where 
	$$\|\VV\|^2_{H^1(\mathbb{S}^3,\R^3)}=\int_{\mathbb{S}^3}|\nabla_\mathfrak{g}\VV|^2+\frac34|\VV|^2\,dV_\mathfrak{g}$$ is the norm in 
	$H^1(\mathbb{S}^3,\R^3)$ and $\nabla_\mathfrak{g}$ is the gradient on $\mathbb{S}^3$ \cite{Aubin,ONeill}.
	Therefore $\UU\mapsto\VV$ is an isometric isomorphism between $\cD^{1,2}(\R^3,\R^3)$ and $H^1(\mathbb{S}^3,\R^3)$ and between $L^6(\R^3,\R^3)$ and $L^6(\mbS^3,\R^3)$. Note that, since $\UU$ is $\SO(2\times2)$-symmetric, then $\VV(g\xi)=\widetilde{g_1}\VV(\xi)$ for every $g=(g_1,g_2)\in\SO(2)\times\SO(2)$ and, consequently, $|\VV|$ is $\SO(2)\times\SO(2)$-invariant, or equivalently $\cO(2)\times\cO(2)$-invariant. 
	
	Let $\UU_n\in\cD_{\SO(2\times2)}$ such that $\UU_n\rightharpoonup0$ in $\cD_{\SO(2\times2)}$. Then $\VV_n\rightharpoonup0$ in $H^1(\mathbb{S}^3,\R^3)$ and, up to a subsequence, $\VV\to0$ a.e. in $\mathbb{S}^3$; this implies that $|\VV_n|\rightharpoonup0$ in $H^1(\mathbb{S}^3)$ and so, in view of \cite[Lemma 5]{Ding}, $|\VV_n|\to0$ in $L^6(\mathbb{S}^3)$. Hence $\VV_n\to0$ in $L^6(\mathbb{S}^3,\R^3)$ and so $\UU_n\to0$ in $L^6(\R^3,\R^3)$.
\end{proof}

For $\UU\in\textup{Fix}(\SO)$ recall from Lemma \ref{L:dec} the definition of $\UU_\rho$, $\UU_\tau$, and $\UU_{\zeta,i}$, $i\in\{3,\dots,N\}$.

\begin{Lem}\label{L:decom}
	If $\UU\in\cD_{\SO(2\times2)}$, then $\UU_\rho,\UU_\tau,\UU_\zeta\in\cD_{\SO(2\times2)}$.
\end{Lem}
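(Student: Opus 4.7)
The plan is to show that the $\SO(2\times 2)$-symmetry, expressed by the condition in Definition~\ref{D:sym}, respects the orthogonal decomposition $\UU = \UU_\rho + \UU_\tau + \UU_\zeta$ provided by Lemma~\ref{L:dec} (which applies since $\cD_{\SO(2\times 2)}\subset\textup{Fix}(\SO)$ by Lemma~\ref{L:equisym}). The main observation is that, although the map $x\mapsto y:=\pi\bigl(g\pi^{-1}(x)\bigr)$ is a nontrivial conformal transformation when $g_2\neq I_2$, its effect on the first two coordinates is simply a rotation by $g_1$ composed with a \emph{positive} rescaling; this will allow us to identify the three orthogonal ``rho/tau/zeta'' subspaces at $y$ with the $\widetilde{g_1}$-images of those at $x$.

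First, I will compute explicitly: writing $\pi^{-1}(x)=(\xi_1,\xi_2,\xi_3,\xi_4)$ with $(\xi_1,\xi_2)=\tfrac{2}{|x|^2+1}(x_1,x_2)$, the product action gives $g\pi^{-1}(x)=\bigl(g_1(\xi_1,\xi_2),g_2(\xi_3,\xi_4)\bigr)$, and hence
\[
(y_1,y_2)=\lambda\, g_1(x_1,x_2),\qquad \lambda:=\frac{2}{\bigl(|x|^2+1\bigr)\bigl(1-(g_2(\xi_3,\xi_4))_2\bigr)}>0,
\]
since $|g_2(\xi_3,\xi_4)|\le 1$ with equality only at the north pole. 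Because rotation by $90^\circ$ commutes with $g_1\in\SO(2)$, this yields
\[
(y_1,y_2,0)=\lambda\,\widetilde{g_1}(x_1,x_2,0),\qquad (-y_2,y_1,0)=\lambda\,\widetilde{g_1}(-x_2,x_1,0),
\]
and of course $e_3=\widetilde{g_1}e_3$. Thus the three orthogonal subspaces defining $\UU_\rho(y)$, $\UU_\tau(y)$, $\UU_\zeta(y)$ are exactly the $\widetilde{g_1}$-images of the subspaces defining $\UU_\rho(x)$, $\UU_\tau(x)$, $\UU_\zeta(x)$.

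Second, I will apply $\widetilde{g_1}$ to the orthogonal decomposition of $\UU(x)$:
\[
\widetilde{g_1}\UU(x)=\widetilde{g_1}\UU_\rho(x)+\widetilde{g_1}\UU_\tau(x)+\widetilde{g_1}\UU_\zeta(x),
\]
where by the previous step the three summands lie respectively in $\textup{span}\{(y_1,y_2,0)\}$, $\textup{span}\{(-y_2,y_1,0)\}$ and $\textup{span}\{e_3\}$. On the other hand, $\UU(y)=\UU_\rho(y)+\UU_\tau(y)+\UU_\zeta(y)$ is the decomposition of $\UU(y)$ along the same three pairwise orthogonal lines. The $\SO(2\times 2)$-symmetry of $\UU$ tells us $\UU(y)/\psi(y)=\widetilde{g_1}\UU(x)/\psi(x)$, so equating summands in each of the three one-dimensional subspaces (uniqueness of orthogonal projection) gives
\[
\frac{\UU_\rho(y)}{\psi(y)}=\frac{\widetilde{g_1}\UU_\rho(x)}{\psi(x)},\quad \frac{\UU_\tau(y)}{\psi(y)}=\frac{\widetilde{g_1}\UU_\tau(x)}{\psi(x)},\quad \frac{\UU_\zeta(y)}{\psi(y)}=\frac{\widetilde{g_1}\UU_\zeta(x)}{\psi(x)},
\]
which is precisely the $\SO(2\times 2)$-symmetry for each of $\UU_\rho$, $\UU_\tau$, $\UU_\zeta$. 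Since each of these components already lies in $\cD^{1,2}(\R^3,\R^3)$ by Lemma~\ref{L:dec}, they belong to $\cD_{\SO(2\times 2)}$.

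There is no real obstacle: the only mildly delicate point is the positivity of the scaling factor $\lambda$, which guarantees that no sign ambiguity arises when identifying the ``rho'' and ``tau'' directions at $y$ with the $\widetilde{g_1}$-rotated ones at $x$; everything else is orthogonal bookkeeping combined with the commutation of the planar $90^\circ$ rotation with $\SO(2)$.
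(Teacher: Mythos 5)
Your proof is correct. The key fact you isolate --- that the first two coordinates of $y=\pi\bigl(g\pi^{-1}(x)\bigr)$ equal $\lambda\,g_1(x_1,x_2)$ for some scalar $\lambda\ne0$ --- is exactly what sits inside the paper's explicit computation of $\pi\bigl(g\pi^{-1}(x)\bigr)$, so the mathematical content is the same; but your organization is genuinely different and cleaner. The paper verifies the identity \eqref{e-tau} for $\UU_\tau$ by brute force, computing both sides as explicit vectors (with all the trigonometric bookkeeping in $\alpha_1,\alpha_2$) and checking that the $\lambda$-factors cancel, then says ``similar computations hold for $\UU_\rho$'' and obtains $\UU_\zeta$ by subtraction. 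You instead observe that the three pairwise orthogonal lines defining the decomposition at $y$ are the $\widetilde{g_1}$-images of those at $x$ (using that $\SO(2)$ commutes with the $90^\circ$ rotation), and then simply apply the corresponding orthogonal projections to the symmetry identity $\UU(y)/\psi(y)=\widetilde{g_1}\UU(x)/\psi(x)$; linearity and equivariance of orthogonal projection give all three conclusions at once. What your approach buys is a shorter, structurally transparent argument that makes clear \emph{why} the decomposition is preserved; what the paper's buys is a self-contained numerical verification with no appeal to abstract properties of projections. One small remark: the positivity of $\lambda$, which you flag as the delicate point, is actually immaterial --- the projections depend only on the spans, so $\lambda\ne0$ (which holds for a.e.\ $x$, since $g\pi^{-1}(x)$ hits the north pole only on a null set) already suffices.
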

\begin{proof}
	We first prove that $\UU_\tau\in\cD_{\SO(2\times2)}$. Let $\alpha_i\in\R$, $g_i=
	\left(\begin{smallmatrix}
	\cos\alpha_i & -\sin\alpha_i\\
	\sin\alpha_i & \cos\alpha_i
	\end{smallmatrix}\right)
	\in\SO(2)$, $i\in\{1,2\}$, and set $g=
	\left(\begin{smallmatrix}
	g_1 & 0\\
	0 & g_2
	\end{smallmatrix}\right)
	$. We want to prove that
	\begin{equation}\label{e-tau}
	\frac{\psi(x)}{\psi\Bigl(\pi\bigl(g\pi^{-1}(x)\bigr)\Bigr)}\UU_\tau\Bigl(\pi\bigl(g\pi^{-1}(x)\bigr)\Bigr)=\widetilde{g_1}\UU_\tau(x)
	\end{equation}
	provided
	\[
	\frac{\psi(x)}{\psi\Bigl(\pi\bigl(g\pi^{-1}(x)\bigr)\Bigr)}\UU\Bigl(\pi\bigl(g\pi^{-1}(x)\bigr)\Bigr)=\widetilde{g_1}\UU(x).
	\]
	
	We compute the two sides of \eqref{e-tau} separately. We use the convention that $\R^3=\R^{3\times1}$ and treat the scalar product in $\R^3$ as matrix multiplication.
	
	As for the right-hand side we have
	\[\begin{split}
	\widetilde{g_1}\UU_\tau(x) & =\frac{\widetilde{g_1}
	\left(\begin{smallmatrix}
	-x_2\\
	x_1\\
	0
	\end{smallmatrix}\right)
	\UU^T(x)
	\left(\begin{smallmatrix}
	-x_2\\
	x_1\\
	0
	\end{smallmatrix}\right)
	}{x_1^2+x_2^2}=\frac{
	\left(\begin{smallmatrix}
	-x_2\cos\alpha_1-x_1\sin\alpha_1\\
	-x_2\sin\alpha_1+x_1\cos\alpha_1\\
	0
	\end{smallmatrix}\right)
	\UU^T(x)
	\left(\begin{smallmatrix}
	-x_2\\
	x_1\\
	0
	\end{smallmatrix}\right)
	}{x_1^2+x_2^2}\\
	& =\frac{-x_2\UU_1(x)+x_1\UU_2(x)}{x_1^2+x_2^2}
	\left(\begin{smallmatrix}
	-x_2\cos\alpha_1-x_1\sin\alpha_1\\
	-x_2\sin\alpha_1+x_1\cos\alpha_1\\
	0
	\end{smallmatrix}\right).
	\end{split}\]
	
	Let us write $\pi=
	\left(\begin{smallmatrix}
	\pi_1\\
	\pi_2\\
	\pi_3
	\end{smallmatrix}\right)
	$. As for the left-hand side we have
	\[\begin{split}
	\frac{\psi(x)}{\psi\Bigl(\pi\bigl(g\pi^{-1}(x)\bigr)\Bigr)}\UU_\tau\Bigl(\pi\bigl(g\pi^{-1}(x)\bigr)\Bigr)\\
	=\frac{\psi(x)}{\psi\Bigl(\pi\bigl(g\pi^{-1}(x)\bigr)\Bigr)}\frac{
	\left(\begin{smallmatrix}
	-\pi_2(g\pi^{-1}(x))\\
	\pi_1(g\pi^{-1}(x))\\
	0
	\end{smallmatrix}\right)
	\UU^T\Bigl(\pi\bigl(g\pi^{-1}(x)\bigr)\Bigr)
	\left(\begin{smallmatrix}
	-\pi_2(g\pi^{-1}(x))\\
	\pi_1(g\pi^{-1}(x))\\
	0
	\end{smallmatrix}\right)
	}{\pi_1^2\bigl(g\pi^{-1}(x)\bigr)+\pi_2^2\bigl(g\pi^{-1}(x)\bigr)}\\
	=\frac{
	\left(\begin{smallmatrix}
	-\pi_2(g\pi^{-1}(x))\\
	\pi_1(g\pi^{-1}(x))\\
	0
	\end{smallmatrix}\right)
	\UU^T(x)\widetilde{g_1}^T
	\left(\begin{smallmatrix}
	-\pi_2(g\pi^{-1}(x))\\
	\pi_1(g\pi^{-1}(x))\\
	0
	\end{smallmatrix}\right)
	}{\pi_1^2\bigl(g\pi^{-1}(x)\bigr)+\pi_2^2\bigl(g\pi^{-1}(x)\bigr)}.
	\end{split}\]
	
	Let us compute
	\[
	g\pi^{-1}(x)=\frac{1}{|x|^2+1}
	\left(\begin{smallmatrix}
	2x_1\cos\alpha_1-2x_2\sin\alpha_1\\
	2x_1\sin\alpha_1+2x_2\cos\alpha_1\\
	2x_3\cos\alpha_2-(|x|^2-1)\sin\alpha_2\\
	2x_3\sin\alpha_2+(|x|^2-1)\cos\alpha_2
	\end{smallmatrix}\right),
	\]
	\[
	\pi\bigl(g\pi^{-1}(x)\bigr)=\frac{1}{|x|^2+1-2x_3\sin\alpha_2\mp(|x|^2-1)\cos\alpha_2}
	\left(\begin{smallmatrix}
	2x_1\cos\alpha_1-2x_2\sin\alpha_1\\
	2x_1\sin\alpha_1+2x_2\cos\alpha_1\\
	2x_3\cos\alpha_2-(|x|^2-1)\sin\alpha_2
	\end{smallmatrix}\right),
	\]
	\[
	\UU^T(x)\widetilde{g_1}^T=
	\left(\begin{smallmatrix}
	\UU_1(x)\cos\alpha_1-\UU_2(x)\sin\alpha_1\\
	\UU_1(x)\sin\alpha_1+\UU_2(x)\cos\alpha_1\\
	\UU_3(x)
	\end{smallmatrix}\right)^T,
	\]
	\[
	\UU^T(x)\widetilde{g_1}^T
	\left(\begin{smallmatrix}
	-\pi_2(g\pi^{-1}(x))\\
	\pi_1(g\pi^{-1}(x))
	\\
	0
	\end{smallmatrix}\right)
	=\frac{2\bigl(-x_2\UU_1(x)+x_1\UU_2(x)\bigr)}{|x|^2+1-2x_3\sin\alpha_2-(|x|^2-1)\cos\alpha_2},
	\]
	and
	\[
	\pi_1^2\bigl(g\pi^{-1}(x)\bigr)+\pi_2^2\bigl(g\pi^{-1}(x)\bigr)=\frac{4x_1^2+4x_2^2}{\bigl(|x|^2+1-2x_3\sin\alpha_2-(|x|^2+1)\cos\alpha_2\bigr)^2},
	\]
	so for the left-hand side we have
	\[\begin{split}
	\frac{
	\left(\begin{smallmatrix}
	-\pi_2(g\pi^{-1}(x))\\
	\pi_1(g\pi^{-1}(x))\\
	0
	\end{smallmatrix}\right)
	\UU^T(x)\widetilde{g_1}^T
	\left(\begin{smallmatrix}
	-\pi_2(g\pi^{-1}(x))\\
	\pi_1(g\pi^{-1}(x))\\
	0
	\end{smallmatrix}\right)
	}{\pi_1^2\bigl(g\pi^{-1}(x)\bigr)+\pi_2^2\bigl(g\pi^{-1}(x)\bigr)}\\
	=\frac{-x_2\UU_1(x)+x_1\UU_2(x)}{x_1^2+x_2^2}
	\left(\begin{smallmatrix}
	-x_2\cos\alpha_1-x_1\sin\alpha_1\\
	-x_2\sin\alpha_1+x_1\cos\alpha_1\\
	0
	\end{smallmatrix}\right)
	\end{split}\]
	and \eqref{e-tau} holds.
	
	Similar computations hold for $\UU_\rho$, so $\UU_\rho\in\cD_{\SO(2\times2)}$. Finally, $\UU_\zeta=\UU-\UU_\rho-\UU_\tau\in\cD_{\SO(2\times2)}$.
\end{proof}

Note that $E|_{\DF}=L|_{\DF}$ in view of Lemma \ref{L:DivFree}, where $L\colon\cD^{1,2}(\R^3,\R^3)\to\R$ is defined as $$L(\UU):=\frac12\int_{\R^3}|\nabla\UU|^2\,dx-\frac{1}{6}\int_{\R^3}|\UU|^6\,dx.$$ We set $\cY:=\cD_{\SO(2\times2)}\cap\cF$.

\begin{Lem}\label{L:inf}
	$\cY$ is infinite dimensional.
\end{Lem}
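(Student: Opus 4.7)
The plan is to transport the problem to $\mbS^3$ via the isometric isomorphism $\cD^{1,2}(\R^3,\R^3)\simeq H^1(\mbS^3,\R^3)$ already used in the proof of Lemma~\ref{L:compact}, and then construct an explicit infinite family of smooth vector fields on $\mbS^3$ that correspond to elements of $\cY$.

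Recall that $\UU\mapsto\VV$ with $\VV(\xi):=\UU(\pi(\xi))/\psi(\pi(\xi))$ is an isometric isomorphism $\cD^{1,2}(\R^3,\R^3)\to H^1(\mbS^3,\R^3)$, and that by Definition~\ref{D:sym} membership $\UU\in\cD_{\SO(2\times2)}$ is equivalent to the equivariance
\[
\VV(g\xi)=\widetilde{g_1}\VV(\xi)\quad\hbox{for every }g=(g_1,g_2)\in\SO(2)\times\SO(2).
\]
For $n\ge 1$ I would set
\[
\VV_n(\xi):=(\xi_1^2+\xi_2^2)^n\,(-\xi_2,\xi_1,0),\qquad \xi=(\xi_1,\xi_2,\xi_3,\xi_4)\in\mbS^3,
\]
each of which is smooth on $\mbS^3$ and hence lies in $H^1(\mbS^3,\R^3)$. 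Let $\UU_n\in\cD^{1,2}(\R^3,\R^3)$ be the corresponding preimage.

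The verification splits into three routine checks. First, equivariance: $g_1\in\SO(2)$ preserves $\xi_1^2+\xi_2^2$ and commutes with the $90^\circ$ rotation $J=\bigl(\begin{smallmatrix}0&-1\\1&0\end{smallmatrix}\bigr)$, which yields the $2\times 2$ identity $\widetilde{g_1}(-\xi_2,\xi_1,0)=(-(g\xi)_2,(g\xi)_1,0)$; combined these give $\VV_n(g\xi)=\widetilde{g_1}\VV_n(\xi)$, so $\UU_n\in\cD_{\SO(2\times2)}$. Second, membership in $\cF$: using $(\pi^{-1}(x))_i=2x_i/(|x|^2+1)$ for $i=1,2$, the identity $\UU_n(x)=\psi(x)\VV_n(\pi^{-1}(x))$ simplifies to
\[
\UU_n(x)=\frac{u_n(x)}{r}(-x_2,x_1,0),
\]
where $u_n$ is a scalar depending only on $r$ and $|x|$, hence manifestly $\SO$-invariant; thus $\UU_n\in\cF$. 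Third, linear independence: if $\sum_{n=1}^{N}c_n\VV_n=0$, evaluating at any $\xi\in\mbS^3$ with $(\xi_1,\xi_2)\ne 0$ gives $\bigl(\sum_n c_n t^n\bigr)(-\xi_2,\xi_1,0)=0$ with $t=\xi_1^2+\xi_2^2\in(0,1]$, and since $t$ ranges over a nontrivial interval we get $c_n=0$ for every $n$. The isometry then produces an infinite linearly independent family $\{\UU_n\}_{n\ge 1}\subset\cY$, proving that $\cY$ is infinite-dimensional. The only computational content is the equivariance identity, which reduces to the commutativity of $\SO(2)$ with $J$, so no serious obstacle is expected.
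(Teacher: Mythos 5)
Your proof is correct, and it takes a genuinely different route from the paper's. The paper argues structurally: it introduces the two companion subspaces $\cX$ and $\cZ$ of $\cD_{\SO(2\times2)}$ (consisting of the ``radial'' fields $\frac{u(x)}{r}(x_1,x_2,0)$ and the ``vertical'' fields $u(x)(0,0,1)$), exhibits isomorphisms $\cX\simeq\cY$ (composition with the $90^\circ$ rotation $\widetilde{e}$) and $\cX\simeq\cZ$, and then invokes the decomposition $\cD_{\SO(2\times2)}=\cX\oplus\cY\oplus\cZ$ from Lemmas \ref{L:dec} and \ref{L:decom} together with the infinite dimensionality of $\cD_{\SO(2\times2)}$; since the three summands are pairwise isomorphic, each must be infinite dimensional. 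Your argument instead produces an explicit infinite linearly independent family: the fields $\VV_n(\xi)=(\xi_1^2+\xi_2^2)^n(-\xi_2,\xi_1,0)$ are smooth on the compact manifold $\mbS^3$, hence in $H^1(\mbS^3,\R^3)$; equivariance follows from the commutativity of $\SO(2)$ with $J=\bigl(\begin{smallmatrix}0&-1\\1&0\end{smallmatrix}\bigr)$ exactly as you say; pulling back via $\UU_n(x)=\psi(x)\VV_n(\pi^{-1}(x))$ and using $(\pi^{-1}(x))_i=2x_i/(|x|^2+1)$ for $i=1,2$ gives $\UU_n(x)=\frac{u_n(x)}{r}(-x_2,x_1,0)$ with $u_n$ a function of $r$ and $|x|$ only, so $\UU_n\in\cF$; and linear independence reduces to a univariate polynomial vanishing on $]0,1]$. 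What your approach buys is self-containedness: it does not need Lemma \ref{L:decom} (that the projections $\UU_\rho,\UU_\tau,\UU_\zeta$ preserve the $\SO(2\times2)$-symmetry), nor the prior fact that $\cD_{\SO(2\times2)}$ is infinite dimensional; it only uses the bijectivity of the conformal transplantation $\cD^{1,2}(\R^3,\R^3)\simeq H^1(\mbS^3,\R^3)$, which the paper establishes anyway in the proof of Lemma \ref{L:compact}. The paper's argument, in exchange, yields the finer structural information $\cD_{\SO(2\times2)}=\cX\oplus\cY\oplus\cZ$ with all three pieces isomorphic, which is of independent interest but not needed for the statement.
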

\begin{proof}
	Let $e=
	\bigl(\begin{smallmatrix}
	0 & -1\\
	1 & 0
	\end{smallmatrix}\bigr)
	\in\SO(2)$ and let $\cX$, resp. $\cZ$, be the subspace of $\cD_{\SO(2\times2)}$ consisting in the vector fields $\UU$ such that
	\[
	\UU(x)=\frac{u(x)}{r}
	\begin{pmatrix}
	x_1\\
	x_2\\
	0
	\end{pmatrix}
	, \text{ resp. }\UU(x)=u(x)
	\begin{pmatrix}
	0\\
	0\\
	1
	\end{pmatrix},
	\]
	for some $\SO$-invariant $u\colon\rr\to\R$. In order to prove that $\cY$ is infinite dimensional, we build an isomorphism between $\cX$ and $\cY$ and an isomorphism between $\cX$ and $\cZ$. The conclusion will follow from the fact that $\cD_{\SO(2\times2)}$ is infinite dimensional and that, in view of Lemmas \ref{L:dec} and \ref{L:decom}, we get the decomposition $\cD_{\SO(2\times2)}=\cX\oplus\cY\oplus\cZ$.
	
	For every $\UU\in\cX$ define $\widetilde{\UU}(x):=\UU(\widetilde{e}x)$. It is clear that $\widetilde{\UU}\in\cY$ and that $\UU\mapsto\widetilde{\UU}$ is an isomorphism.
	
	Now consider $\UU\in\cX$ and let $u\colon\R^3\to\R$ be $\SO$-invariant such that $\UU(x)=\frac{u(x)}{r}
	\left(\begin{smallmatrix}
	x_1\\
	x_2\\
	0
	\end{smallmatrix}\right)
	$. Define $\overline{\UU}(x):=u(x)
	\left(\begin{smallmatrix}
	0\\
	0\\
	1
	\end{smallmatrix}\right)
	$. By similar arguments to those used in the proof of Lemma \ref{L:DivFree} it is easy to check that $\overline{\UU}\in\cD^{1,2}(\R^3,\R^3)$. Finally, explicit computations show that $\overline{\UU}$ is $\SO(2\times2)$-symmetric (hence $\overline{\UU}\in\cZ$) and trivially $\UU\mapsto\overline{\UU}$ is an isomorphism.
\end{proof}

\begin{proof}[Proof of Theorem \ref{T:crit}]
	Lemma \ref{L:equisym} implies that $\cY\subset\cD_\cF$; moreover, $\cY$ is closed in $\cD^{1,2}(\R^3,\R^3)$ and infinite dimensional by Lemma \ref{L:inf}. Since $\UU\mapsto\frac{\UU}{\psi}\circ\pi$ is a linear isometry between $\cD^{1,2}(\R^3,\R^3)$ and $H^1(\mathbb{S}^3,\R^3)$ and between $L^6(\R^3,\R^3)$ and $L^6(\mbS^3,\R^3)$, one easily checks that $E|_{\DF}$ is invariant under the action of $\SO(2\times2)$. Hence every $\UU\in\cY$ is a solution to \eqref{e-crit} if and only if it is a critical point of $E|_\cY$ owing to Theorem \ref{T:Palais}.
	
	We want to make use of \cite[Theorem 9.12]{Rabin}. From the embeddings $\cY\subset\cD^{1,2}(\rr,\rr)\subset L^6(\rr,\rr)$, there exists $\delta>0$ such that
	\[
	\inf\Set{E(\UU)|\UU\in\cY\text{ and }|\nabla\UU|_2=\delta}>0
	\]
	(cf. the proof of Lemma \ref{L:I678} \textit{(a)}). In addition, if $Y\subset\cY$ is a finite dimensional subspace, then the norms $|\nabla(\cdot)|_2$ and $|\cdot|_6$ are equivalent in $Y$, thus there exists $R=R(Y)>0$ such that $E(\UU)\le0$ for every $\UU\in Y$ with $|\UU|_6\ge R$. We are only left to prove that $E|_\cY$ satisfies the Palais--Smale condition at every positive level, therefore let $\UU_n\in\cY$ be a Palais--Smale sequence for $E|_\cY$ at some $c>0$. 
	Similarly to the proof of Theorem \ref{T:Ex} we prove that $\UU_n$ is bounded, hence there exists $\UU\in\cY$ such that, up to a subsequence,
	\begin{equation}\label{e-wc}
	\UU_n\rightharpoonup\UU \quad \text{in } \cD^{1,2}(\rr,\rr)
	\end{equation}
	and, in view of Lemma \ref{L:compact},
	\begin{equation}\label{e-sc}
	\UU_n\to\UU \quad \text{in } L^6(\rr,\rr).
	\end{equation}
	Since $\lim_nE|_\cY'(\UU_n)=0$, from \eqref{e-wc} and \eqref{e-sc} we obtain
	\begin{equation}\label{e-U}
	0=\lim_nE'(\UU_n)(\UU)=|\nabla\UU|_2^2-|\UU|_6^6
	\end{equation}
	and, since $\UU_n$ is bounded,
	\begin{equation}\label{e-Un}
	0=\lim_nE'(\UU_n)(\UU_n)=\lim_n|\nabla\UU_n|_2^2-|\UU|_6^6.
	\end{equation}
	From \eqref{e-U} and \eqref{e-Un} we have $\lim_n|\nabla\UU_n|_2=|\nabla\UU|_2$, which, together with \eqref{e-wc}, yields $\lim_n\UU_n=\UU$ in $\cD^{1,2}(\rr,\rr)$.
\end{proof}

\begin{proof}[Proof of Corollary \ref{C:S}]
	It follows from Theorems \ref{T:crit} and  \ref{T:ScalVec}.
\end{proof}

\part{Constrained problems}\label{2}

\chapter{Introduction to Part \ref{2}}\label{K:intro2}

In Part \ref{2} of this Ph.D. thesis, we study the existence of least energy solutions to the Schr\"odinger system
\begin{equation}\label{e-Sch}
\begin{cases}
-\Delta u_1+\lambda_1u_1=\partial_1F(u)\\
\dots\\
-\Delta u_K+\lambda_Ku_K=\partial_KF(u)
\end{cases}
\text{in } \rn
\end{equation}
paired with the constraint
\begin{equation}\label{e-constraint}
\int_{\rn}u_j^2\,dx=\rho_j^2 \quad \forall j\in\{1,\dots,K\},
\end{equation}
where $N,K\ge1$, $\rho_1,\dots,\rho_K>0$ are given, $(\lambda_1,\dots,\lambda_K)\in\rk$, and $F\colon\rn\to\R$ is a nonlinear function. Problems as in \eqref{e-Sch} appear when one seeks \textit{standing wave} solutions $\Phi(x,t)=\left(e^{-i\lambda_1t}u_1(x),\dots,e^{-i\lambda_Kt}u_K(x)\right)$ to the \textit{time dependent} Schr\"odinger system
\begin{equation}\label{e-tSch}
\mathrm{i}\frac{\partial\Phi}{\partial t}-\Delta\Phi=g(|\Phi|)\Phi
\end{equation}
(in which case one has $F(u)=F(|u|)$) for some $u=(u_1,\dots,u_K)\colon\rn\to\rk$ and $(\lambda_1,\dots,\lambda_K)\in\rk$.

An important feature of \eqref{e-Sch} is that the values $\lambda_j$ are part of the unknown, being the Lagrange multipliers associated with the $L^2$-constraint \eqref{e-constraint}. Solutions to \eqref{e-Sch}--\eqref{e-constraint} or similar problems are known in the literature as \textit{normalized solutions} and have raised much interest in the last decades. The importance of such constraints is due to quantities related to solutions to \eqref{e-tSch} that are conserved in time (see, e.g., \cite{Cazenave:book,CazeLions}), i.e., the energy
\begin{equation}\label{e-energy}
J(u):=\int_{\rn}\frac12|\nabla u|^2-F(u)\,dx=\int_{\rn}\frac12\sum_{j=1}^K|\nabla u_j|^2-F(u)\,dx
\end{equation}
and the masses
\[
\int_{\rn}u_j^2\,dx, \quad j\in\{1,\dots,K\}.
\]
Moreover, the masses have a precise physical meaning: they represent the power supplies in nonlinear optics \cite{Akozbek,SluEgg} and the total number of atoms in Bose--Einstein condensation \cite{LSSY,PiSt}.


When $K=1$ and
\[
F(u)=\frac1p|u|^p, \quad 2<p<2^*, \, p\ne2_\#:=2+\frac4N,
\]
one can solve \eqref{e-Sch} with $\lambda=\lambda_1$ fixed (e.g., $\lambda=1$) and then, denoting by $u$ such a solution, consider proper $\alpha,\mu>0$ such that $\alpha u(\mu\cdot)$ solves \eqref{e-Sch}--\eqref{e-constraint} for some $\lambda'\in\R$.

Concerning the problem when $\lambda$ is fixed, the situation appears to be rather understood, at least in the case of $K=2$, positive solutions, and $F$ of power type. In this direction, we refer to \cite{Bartsch,BarDanWan,ChenZou,LinWei1,LinWei2,MaiMonPel,Mandel,SatWan,Sirakov,Soave0,SoaTav,TerVer,WeiWeth}, see also the references therein.

As for the case when $\lambda$ is part of the unknown and $F$ is not homogeneous or $K\ge2$, much depends on whether the energy functional $J\colon\hrn^K\to\R$ restricted to
\begin{equation}\label{e-Sset}
\cS:=\Set{u\in\hrn^K|\int_{\rn}u_j^2\,dx=\rho_j^2 \quad \forall j\in\{1,\dots,K\}},
\end{equation}
where $J(u)$ is defined in \eqref{e-energy}, is bounded from below or not. Such a property depends on $F$, which is not surprising, and on $\rho=(\rho_1,\dots,\rho_k)$, which is possibly less expected. In the literature, the case when $J$ is bounded from below for every $\rho$ is known as \textit{mass-subcritical} or $L^2$\textit{-subcritical}, while the case when $J$ is unbounded from below for every $\rho$ is know as \textit{mass-supercritical} or $L^2$\textit{-supercritical}. The case when the behaviour of $J$ from below actually depends on the parameter $\rho$ is referred to as \textit{mass-critical} or $L^2$\textit{-critical}. If $F$ is of power type, then these three cases correspond to the exponent $2<p<2^*$ being, respectively, less than, greater than, or equal to the threshold value $2_\#$.

The simplest case, hence the first that was investigated, is the mass-subcritical one, because solutions can be looked for as global minimizers of $J|_\cS$. This is the strategy adopted by Stuart \cite{Stuart82}, for the scalar case $K=1$, and Lions \cite{Lions84_2}, for the general case $K\ge1$: the former exploits tools from bifurcation theory (hence giving results also about this topic) from previous work of his \cite{Stuart79,Stuart80,Stuart81}, the latter provides necessary and sufficient conditions for minimizing sequences of $J|_\cS$ to converge (in particular, sufficient conditions for $\inf_\cS J$ to be achieved) and then discusses when such equivalent conditions hold. In fact, Lions deals with the mass-critical case too, because one of the sufficient conditions for $\inf_\cS J$ to be achieved is that $\rho$ is large (cf. \cite[p. 299]{Lions84_2}). More precisely, he always requires a mass-subcritical behaviour at infinity $\lim_{|u|\to\infty}F(u)/|u|^{2_\#}=0$, but the assumption of a mass-subcritical behaviour at zero $\lim_{u\to0}F(u)/|u|^{2_\#}=\infty$ can be traded with the mass being large, together with other hypotheses.

Concerning the mass-supercritical (nonhomogeneous) case, the first work is, to the best of our knowledge, by Jeanjean \cite{Jeanjean97}. Unable to apply a direct minimization method, he finds a solution exploiting the mountain pass geometry of the energy functional together with a characterization of the mountain pass level and a splitting result (or, when $N\ge2$, the compact embedding of radial functions $H^1_\textup{rad}(\rn)\subset L^p(\rn)$, $2<p<2^*$).

After these seminal results, the field of normalized solutions raised more and more interest in the PDE community and much work on it has been done, studying single equations and systems, in bounded domains and in all of $\rn$, using the most various techniques. It is impossible, or at the very least extremely audacious and demanding, to keep track of all the developments had in four decades of Mathematics, therefore we will limit ourselves to some relevant papers. As for single equations in $\rn$ we mention \cite{AckWeth,BarSo0,BarSo1,BartschVale,BellJean,BellJeanLuo,BiegMed,BCGJ,BuffEstSere,CazeLions,CinJean,JeanLu1,JeanLu2,Schino,Soave,SoaveC}, while concerning systems of equations in $\rn$ we mention \cite{BarJean,BarJeanSo,BarSo0,BarSo1,BarSoa-m,GouJean1,GouJean2,NguyenWang,MeSc,NoTaVe2,Schino,Shibata}. In particular, \cite{BartschVale,BarSo0,BarSo1,BarSoa-m,GouJean2,JeanLu2} deal with the issue of multiple solutions. Problems as \eqref{e-Sch} but in bounded domains are investigated in \cite{FibichMerle,NoTaVe1,PierVer} (single equations) and \cite{NoTaVe2,NoTaVe3,TavTer} (systems).

A common difficulty when studying \eqref{e-Sch}--\eqref{e-constraint} is that the embedding $\hrn\hookrightarrow L^2(\rn)$ is not compact, as remarked in Subsection \ref{SS:cpt}, not even when considering particular subspaces, which implies that a weak limit point of a sequence in $\cS$ need not belong to $\cS$. This issue, which is of course closely related to the strong convergence of such sequences, is also connected with the sign of the components $u_j$ of the limit point and/or of the corresponding Lagrange multipliers $\lambda_j$. Such issues have usually been investigated with more or less involved tools such as the strict subadditivity and monotonicity of the ground state energy map \cite{JeanLu1,JeanLu2,LiZou,Soave} or additional properties of Palais--Smale sequences \cite{BarJeanSo,Jeanjean97,JeanLu1,Soave} (roughly speaking, the \textit{ground state energy map} associates $\rho$ with the least possible energy that a solution to \eqref{e-Sch}--\eqref{e-constraint} can have and is defined rigorously in Sections \ref{S:gsemsub} and \ref{S:gsemsuper}). In addition, in the mass-supercritical setting, several of the papers mentioned so far make use of a complex topological argument by Ghoussoub \cite{Ghoussoub} based on the $\sigma$-homotopy stable family of compact subsets of $\cM$ (such a set is defined in \eqref{e-Mset}).

Here we propose a new, simpler approach, which overcomes both the issue of the strong convergence in $L^2(\rn)$ and that of the elaborate tools used to deal with it. This approach was introduced by Bieganowski and Mederski \cite{BiegMed} for $K=1$ in the mass-supercritical regime, then extended to the case $K\ge1$ by Mederski and the author \cite{MeSc}; it was also adapted to the mass-(sub)critical setting by the author \cite{Schino}.

Instead of considering the set $\cS$, we take into account
\begin{equation}\label{e-Dset}
\cD:=\Set{u\in\hrn^K|\int_{\rn}u_j^2\,dx\le \rho_j^2 \quad \forall j\in\{1,\dots,K\}}.
\end{equation}
Clearly, $\cS\subset\cD$; moreover, $\cS=\partial\cD$ if $K=1$ and $\cS\subsetneq\partial\cD$ if $K\ge2$ because in this case
\[
\partial\cD=\bigcup_{j=1}^K\cD_1\times\dots\times\cD_{j-1}\times\cS_j\times\cD_{j+1}\times\dots\times\cD_K,
\]
with the obvious definitions
\[\begin{split}
\cD_j & := \Set{u\in\hrn|\int_{\rn}u^2\,dx\le \rho_j^2},\\
\cS_j & := \Set{u\in\hrn|\int_{\rn}u^2\,dx=\rho_j^2},
\end{split}\]
which, in our approach, is what makes the difference between the cases $K=1$ and $K\ge2$.

From the lower semicontinuity of the norm, it is clear that every weak limit point of a sequence in $\cD$ stays in $\cD$. In the mass-subcritical case, as well as in the mass-critical one under additional assumptions, the functional $J|_\cD$ is still bounded from below and we easily obtain a minimizer; this is the case studied in Chapter \ref{K:sub}. The price to pay, of course, is that we need to prove that such a minimizer actually belongs to $\cS$, which requires further assumptions when $K\ge2$; nonetheless, this is still more direct than the arguments used in most of the papers previously mentioned.

In the mass-superctitical setting, dealt with in Chapter \ref{K:super}, the functional $J|_\cD$ is a fortiori unbounded from below, hence additional tools are called for in order to still obtain least energy solutions. It is possible to prove (see Section \ref{S:prel} for more details) that, if $(\lambda,u)$ is a nontrivial solution to \eqref{e-Sch}, then $u$ belongs to the set
\begin{equation}\label{e-Mset}
\cM:=\Set{u\in\hrn^K\setminus\{0\}|\int_{\rn}|\nabla u|^2\,dx=\frac{N}2\int_{\rn}H(u)\,dx},
\end{equation}
where $H(u):=\nabla F(u)\cdot u-2F(u)$, therefore it makes sense to consider the functional $J|_{\cM\cap\cD}$. Fortunately, this functional is bounded from below and we can apply a constrained minimization argument to obtain a minimizer. Under mild assumptions, $\cM$ is shown to be a natural constraint in the sense that every minimizer of $J|_{\cM\cap\cD}$ is in fact a critical point of $J|_\cD$. As in the mass-subcritical and -critical cases, we need to prove that the minimizer we have obtained lies in $\cS$ and this is done under further hypotheses. The difference is that $\cM$ is \textit{not} weakly closed, therefore, when we consider a (weakly convergent) minimizing sequence of $J|_{\cM\cap\cD}$, we need to project the limit point onto $\cM$ again and make sure such a projection belongs to $\cD$.

A second advantage of working with $\cD$ instead of $\cS$ is that the Lagrange multipliers $\lambda_j$ are nonnegative. This result is based on Clarke \cite{Clarke} and basically relies on two facts: the critical points we consider are \textit{minimizers} (of $J|_\cD$ in the mass-subcritical and -critical cases, of $J|_{\cM\cap\cD}$ in the mass-supercritical case) and the set $\cD$ is defined via \textit{inequalities}. There are at least two reasons why this is an advantage: the first is that the sign of the Lagrange multipliers has often to do with important aspects, as previously remarked, and indeed we do make use of such information also in our proofs; the second is that, in case one wants to prove the strict positivity, the case $\lambda_j=0$ for every $j\in\{1,\dots,K\}$ is easier to rule out than the case $\lambda_j\le0$. In addition, the strict positivity is important from non-mathematical points of view too, as there are situations in physics, e.g., concerning the eigenvalues of equations describing the behaviour of ideal gases, where the chemical potentials $\lambda_i$ of the standing waves have to be positive, see, e.g., \cite{LSSY,PiSt}.

Since the nonnegativity of $\lambda_j$ is used both in Chapter \ref{K:sub} and \ref{K:super}, we state and prove the aforementioned result in this introductory chapter.

\begin{Prop}\label{P:Clarke}
Let $\cH$ be a real Hilbert space and $f,\phi_j,\psi_k\in\cC^1(\cH)$, $j\in\{1,\dots,m\}$, $k\in\{1,\dots,n\}$. Suppose that for every
\[
x\in\bigcap_{j=1}^m\phi_j^{-1}(0)\cap\bigcap_{k=1}^n\psi_k^{-1}(0)
\]
the differential
\[
\bigl(\phi_j'(x),\psi_k'(x)\bigr)_{1\le j\le m,1\le k\le n}\colon\cH\to\R^{m+n}
\]
is surjective. If $\bar{x}\in\cH$ minimizes $f$ over
\[
\Set{x\in E|\phi_j(x)\le0 \, \forall j=1,\dots,m\text{ and }\psi_k(x)=0 \, \forall k=1,\dots,n},
\]
then there exist $(\lambda_j)_{j=1}^m\in[0,\infty[^m$ and $(\sigma_k)_{k=1}^n\in\R^n$ such that
\[
f'(\bar{x})+\sum_{j=1}^m \lambda_j\phi_j'(\bar{x})+\sum_{k=1}^n \sigma_k\psi_k'(\bar{x})=0.
\]
\end{Prop}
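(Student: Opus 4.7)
The plan is to apply the classical Karush--Kuhn--Tucker methodology in the Hilbert setting, splitting the inequality constraints into active and inactive ones at $\bar x$, then handling the active ones together with the equality constraints as a generalised Lagrange problem, and finally deriving the sign condition on the $\lambda_j$ from the minimising property of $\bar x$.

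First I would let $A:=\{j\in\{1,\dots,m\}\mid\phi_j(\bar x)=0\}$ and $I:=\{1,\dots,m\}\setminus A$. For every $j\in I$, continuity of $\phi_j$ provides a neighbourhood $U$ of $\bar x$ on which $\phi_j<0$; setting $\lambda_j:=0$ for $j\in I$, these constraints are automatically inactive on $U$ and contribute nothing to the desired identity. Thus $\bar x$ minimises $f$ on $U$ subject solely to $\phi_j(x)\le0$ for $j\in A$ and $\psi_k(x)=0$ for $k=1,\dots,n$.

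Next I would invoke the equality-constrained Lagrange multiplier theorem, which in the Hilbert-space setting follows from Lyusternik's theorem (an application of the implicit function theorem to the map $x\mapsto(\phi_j(x),\psi_k(x))_{j\in A,\,k}$, whose differential at $\bar x$ is surjective as a restriction of the bigger surjective differential). Given any $v\in\cH$ annihilated by all $\phi_j'(\bar x)$ ($j\in A$) and $\psi_k'(\bar x)$, Lyusternik's theorem yields a $\cC^1$ curve $x\colon(-\varepsilon,\varepsilon)\to\cH$ with $x(0)=\bar x$, $\dot x(0)=v$, and $\phi_j(x(t))=0$, $\psi_k(x(t))=0$ for all $t$. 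For small $|t|$, $x(t)$ is feasible and hence $t\mapsto f(x(t))$ attains a minimum at $0$, forcing $f'(\bar x)(v)=0$. Since $f'(\bar x)$ therefore vanishes on the joint kernel of $\{\phi_j'(\bar x)\}_{j\in A}\cup\{\psi_k'(\bar x)\}_k$, a routine linear-algebra argument (the surjectivity ensures the latter family is linearly independent, so $f'(\bar x)$ lies in its span) produces scalars $(\lambda_j)_{j\in A}\subset\R$ and $(\sigma_k)_k\subset\R$ with
\[
f'(\bar x)+\sum_{j\in A}\lambda_j\phi_j'(\bar x)+\sum_{k=1}^n\sigma_k\psi_k'(\bar x)=0.
\]

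The main obstacle, and the step where the inequality structure is essential, is showing that $\lambda_j\ge0$ for every $j\in A$. I would argue by contradiction: assume $\lambda_{j_0}<0$ for some $j_0\in A$. Using the surjectivity hypothesis once more, I can select $v\in\cH$ with $\phi_{j_0}'(\bar x)(v)=-1$, $\phi_j'(\bar x)(v)=0$ for $j\in A\setminus\{j_0\}$, and $\psi_k'(\bar x)(v)=0$ for every $k$. A second application of Lyusternik's theorem, now only to the map $x\mapsto\bigl(\phi_j(x)\bigr)_{j\in A\setminus\{j_0\}}\oplus\bigl(\psi_k(x)\bigr)_k$, yields a $\cC^1$ curve $x(t)$ with $\dot x(0)=v$ along which those constraints hold exactly; a first-order Taylor expansion gives $\phi_{j_0}(x(t))=-t+o(t)<0$ for small $t>0$, while the inactive $\phi_j$ ($j\in I$) stay negative by continuity, so $x(t)$ is feasible for small $t>0$. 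Plugging $v$ into the Lagrange identity above, however, I obtain $f'(\bar x)(v)=-\lambda_{j_0}\phi_{j_0}'(\bar x)(v)=\lambda_{j_0}<0$, whence $f(x(t))<f(\bar x)$ for small $t>0$, contradicting the minimality of $\bar x$. This forces $\lambda_{j_0}\ge0$, and combining with $\lambda_j=0$ for $j\in I$ concludes the proof.
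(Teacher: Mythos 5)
Your argument is correct, but it is a genuinely different proof from the one in the paper. The paper black-boxes the hard analytic step: it quotes Clarke's Fritz John--type multiplier rule \cite{Clarke}, which directly produces $\tau\ge0$, $\lambda_j\ge0$, $\sigma_k\in\R$, not all zero, with $\tau f'(\bar x)+\sum_j\lambda_j\phi_j'(\bar x)+\sum_k\sigma_k\psi_k'(\bar x)=0$; the sign condition on the $\lambda_j$ thus comes for free, and the surjectivity hypothesis is used only once, to rule out the degenerate case $\tau=0$ (if $\tau=0$, testing the identity against preimages of the standard basis vectors forces all remaining multipliers to vanish). You instead give a self-contained KKT derivation: Lyusternik curves inside the active-constraint manifold show that $f'(\bar x)$ annihilates the joint kernel of the active differentials and hence lies in their span, and a second Lyusternik curve that relaxes one active inequality into the feasible region yields the nonnegativity of the corresponding multiplier by contradiction with minimality. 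Your route costs more work (two applications of the implicit function theorem and an explicit first-order feasibility analysis) but removes the dependence on \cite{Clarke}; the paper's route is shorter but uses surjectivity only for normalization rather than for the sign condition. Two small remarks: the claim that $f'(\bar x)$ lies in the span of the active differentials needs only the standard annihilator fact for finitely many continuous linear functionals, not their linear independence (independence merely gives uniqueness of the multipliers); and both your proof and the paper's implicitly use surjectivity of the active-constraint differentials at $\bar x$, whereas the hypothesis as literally stated only guarantees it at points where \emph{all} the $\phi_j$ vanish --- this shared imprecision is harmless in the applications but worth being aware of.
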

\begin{proof}
From \cite[Theorem 1, Corollary 1]{Clarke} there exist $\tau\ge0$, $(\lambda_j)_{j=1}^m\in[0,\infty[^m$, and $(\sigma_k)_{k=1}^n\in\R^n$, not all zero, such that
\begin{equation}\label{e-Clarke1}
\tau f'(\bar{x})+\sum_{j=1}^m \lambda_j\phi_j'(\bar{x})+\sum_{k=1}^n \sigma_k\psi_k'(\bar{x})=0.
\end{equation}
If $\tau>0$, then we can divide both sides of \eqref{e-Clarke1} by $\tau$ and, up to relabelling $\lambda_j$ and $\sigma_k$, conclude the proof, hence assume by contradiction that $\tau=0$, i.e.
\begin{equation}\label{e-Clarke2}
\sum_{j=1}^m \lambda_j\phi_j'(\bar{x})+\sum_{k=1}^n \sigma_k\psi_k'(\bar{x})=0.
\end{equation}
If $\phi_j(\bar{x})<0$ for some $j\in\{1,\dots,m\}$, then of course $\lambda_j=0$, hence, up to considering a (possibly empty) subset of $\{1,\dots,m\}$ in \eqref{e-Clarke2}, we can assume that $\phi_1(\bar{x})=\dots=\phi_{m_0}(\bar{x})=0$ and $\lambda_{m_0+1}=\dots=\lambda_m=0$ for some $0\leq m_0\leq m$, where $m_0=0$ denotes that $\lambda_j=0$ for all $j\in \{1,\dots,m\}$ and $m_0=m$ denotes $\phi_j(\bar{x})=0$ for all $j\in \{1,\dots,m\}$. Then the differential
\[
\bigl(\phi_1'(\bar{x}),\dots,\phi_{m_0}'(\bar{x}),\psi_1'(\bar{x}),\dots,\psi_n'(\bar{x})\bigr)\colon E\to\R^{m_0+n}
\]
is surjective and so, for every $j\in\{1,\dots,m_0\}$ (resp. $k\in\{1,\dots,n\}$), there exists $y\in\cH$ such that $\phi_j'(\bar{x})(y)\ne0$, $\phi_i'(\bar{x})(y)=0$ for every $i\in\{1,\dots,m_0\}\setminus\{j\}$ and $\psi_k'(\bar{x})(y)=0$ for every $k\in\{1,\dots,n\}$ (resp. $\psi_k'(\bar{x})(y)\ne0$, $\psi_i'(\bar{x})(y)=0$ for every $i\in\{1,\dots,n\}\setminus\{k\}$ and $\phi_j'(\bar{x})(y)=0$ for every $j\in\{1,\dots,m_0\}$). This and \eqref{e-Clarke2} imply $\lambda_j=0$ for every $j\in\{1,\dots,m_0\}$ and $\sigma_k=0$ for every $k\in\{1,\dots,n\}$, a contradiction.
\end{proof}

Note that both the statement of Proposition \ref{P:Clarke} and its proof remain valid if the functions $\psi_k$'s are removed from them.

Finally, we conclude this chapter recalling the \textit{Gagliardo--Nirenberg interpolation inequality} (in a less generic form, which however is enough for our purposes), which plays a basic role in Chapters \ref{K:sub} and \ref{K:super}.

\begin{Th}\label{T:GN}
Let $p\in]2,\infty[$ if $N\in\{1,2\}$ or $p\in]2,2^*]$ if $N\ge3$. Then there exists $C_{N,p}>0$ such that
\[
|u|_p\le C_{N,p}|\nabla u|_2^{\delta_p}|u|_2^{1-\delta_p} \quad \text{ for all } u\in\hrn,
\]
where $\displaystyle\delta_p:=N\left(\frac12-\frac1p\right)$.
\end{Th}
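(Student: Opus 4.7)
The plan is to deduce the inequality from the continuous Sobolev embedding $H^1(\mathbb{R}^N) \hookrightarrow L^p(\mathbb{R}^N)$ (supplied by Theorem \ref{T:cont} in the stated ranges of $p$) by exploiting the scale-invariance of the target inequality. First I would invoke Theorem \ref{T:cont} to obtain a constant $C>0$ such that
\[
|u|_p \leq C\bigl(|u|_2 + |\nabla u|_2\bigr) \quad \text{for every } u \in H^1(\mathbb{R}^N);
\]
in particular, whenever $|u|_2 = |\nabla u|_2 = 1$, one has $|u|_p \leq 2C$. The idea is to promote this nonhomogeneous estimate to the sought-after homogeneous one by an explicit rescaling.

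For arbitrary $u \in H^1(\mathbb{R}^N)\setminus\{0\}$, consider the two-parameter family $u_{a,\lambda}(x) := a\, u(\lambda x)$, $a,\lambda>0$. Straightforward changes of variable give $|u_{a,\lambda}|_2^2 = a^2 \lambda^{-N}|u|_2^2$, $|\nabla u_{a,\lambda}|_2^2 = a^2 \lambda^{2-N}|\nabla u|_2^2$, and $|u_{a,\lambda}|_p = a\,\lambda^{-N/p}|u|_p$. Requiring $|u_{a,\lambda}|_2 = |\nabla u_{a,\lambda}|_2 = 1$ forces
\[
\lambda = \frac{|u|_2}{|\nabla u|_2}, \qquad a = \frac{|u|_2^{N/2-1}}{|\nabla u|_2^{N/2}}.
\]
Applying the bound $|u_{a,\lambda}|_p \leq 2C$ to this normalized function and solving for $|u|_p$ yields
\[
|u|_p \leq 2C\, a^{-1}\lambda^{N/p} = 2C\, |\nabla u|_2^{N(1/2-1/p)}\, |u|_2^{1-N(1/2-1/p)} = 2C\,|u|_2^{1-\delta_p}|\nabla u|_2^{\delta_p},
\]
which is exactly the claim with $C_{N,p}=2C$.

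There is no genuine obstacle here: the only thing to verify is that the powers of $a$ and $\lambda$ collapse to $\delta_p$ on the $|\nabla u|_2$ factor, which reduces to the arithmetic identity $-N/2+\delta_p = -N/p$. Conceptually, the nonhomogeneous Sobolev embedding produces an additive bound, while the scaling argument is the mechanism that extracts the unique homogeneous inequality compatible with the distinct dilation behaviours of the three norms involved. The restriction on $p$ (namely $p<\infty$ if $N\in\{1,2\}$ and $p\le 2^*$ if $N\geq 3$) is precisely what is needed so that $H^1(\mathbb{R}^N)\hookrightarrow L^p(\mathbb{R}^N)$ is available in the first place, so no separate case analysis in $N$ is required.
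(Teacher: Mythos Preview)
Your argument is correct: the scaling trick is the standard way to upgrade the additive Sobolev embedding bound to the homogeneous Gagliardo--Nirenberg inequality, and your computation of the exponents is accurate (the only tacit point is that $|\nabla u|_2=0$ forces $u=0$ in $H^1(\mathbb{R}^N)$, so excluding that case is harmless).

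Note, however, that the paper does not give its own proof of this statement: Theorem~\ref{T:GN} is presented as a \emph{recall} of a classical inequality, stated without proof at the end of Chapter~\ref{K:intro2}. There is therefore no paper argument to compare against; your proof simply supplies what the paper takes for granted.
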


Observe that, in Theorem \ref{T:GN},
\[
\delta_pp
\begin{cases}
<2 \quad \text{if } p<2_\#,\\
=2 \quad \text{if } p=2_\#,\\
>2 \quad \text{if } p>2_\#.
\end{cases}
\]

\chapter{Autonomous Schr\"odinger equations in the mass-critical and -subcritical cases}\label{K:sub}

\section{Statement of the results}

In this chapter, based on \cite{Schino}, we study the problem
\begin{equation}\label{e-main}
	\begin{cases}
		-\Delta u_j+\lambda_ju_j=\partial_jF(u)\\
		\int_{\rn}u_j^2\,dx=\rho_j^2\\
		(\lambda_j,u_j)\in\R\times\hrn
	\end{cases}
	j\in\{1,\dots,K\}
\end{equation}
with $N,K\ge1$. Here $\rho = (\rho_1,\dots,\rho_K)\in]0,\infty[^K$ is prescribed, while $(\lambda,u)=(\lambda_1,\dots,\lambda_K,u_1,\dots u_K)$ is the unknown. The nonlinearity $F$ satisfies the following assumptions, which correspond to the mass-(sub)critical case.
\begin{itemize}
	\item [(F0)] $F\in\cC^1(\R^K)$, $F\ge0$, and
	\begin{itemize}
		\item if $N=1$, there exists $S>0$ such that $|\nabla F(u)| \le S|u|$ for every $u\in[-1,1]^K$;
		\item if $N=2$, for every $b>0$ there exists $S_b>0$ such that $|\nabla F(u)| \le S_b\bigl(|u| + e^{b|u|^2}-1\bigr)$ for every $u\in\rk$;
		\item if $N\ge3$, there exists $S>0$ such that $|\nabla F(u)| \le S(|u|+|u|^{2^*-1})$ for every $u\in\rk$.
	\end{itemize}
	\item [(F1)] $\displaystyle\eta_\infty:=\limsup_{|u|\to\infty}\frac{F(u)}{|u|^{2_\#}}<\infty$.
	\item [(F2)] $\displaystyle\lim_{u\to0}\frac{F(u)}{|u|^2}=0$.
	\item [(F3)] $\displaystyle\eta_0:=\liminf_{u\to0}\frac{F(u)}{|u|^{2_\#}}>0$.
\end{itemize}

In (F3) the case $\eta_0=\infty$ is allowed. Let us recall that the functional $J\colon\hrn^K\to\R$ defined in \eqref{e-energy} is well defined and of class $\cC^1$ in view of (F0) (in particular, when $N=2$, this is due to the Moser--Trudinger inequality, see, e.g., \cite[Theorem 1.1]{Ruf}). When $N\ge5$ and $K\ge2$
, we consider the following assumption for a function $f\colon[0,\infty[\to[0,\infty[$.
\begin{itemize}	
\item [(P)] There exists $q\le N/(N-2)$ such that $\liminf_{t\to0^+}f(t)/t^q>0$.
\end{itemize}

We choose to tackle problem \eqref{e-main} by means of relative compactness in the $L^p(\rn)$ norm, $2<p<2^*$, of bounded sequences in $\hrn$, while a different strategy will be used in Chapter \ref{K:super}. When $N\ge2$, let $\fH$ be a subspace of $\hrn$ that embeds compactly into $L^p(\rn)$ for every $p\in]2,2^*[$ and such that every critical point of $J|_{\fH^K}$ is a critical point of $J$. In particular, the role of $\fH$ will be played by either $\fH_\textup{r}$ or, if $F$ is even, $\fH_\textup{n}$ (recall their definitions from Subsection \ref{SS:cpt}).
When $N=1$, in the spirit of Theorem \ref{T:Cazenave} we will always take $\fH=\fH_\textup{r}$. Since we deal with minimizing sequences, if $F$ is even, then we can replace each element of such a sequence with its Schwarz rearrangement and still obtain a minimizing sequence.

Recalling the definitions of $\cS$ and $\cD$, given in \eqref{e-Sset} and \eqref{e-Dset} respectively, we define
\[
\fS:=\cS\cap\fH \quad \text{and} \quad \fD:=\cD\cap\fH
\]
and we will write $\fS_\textup{r}$, $\fD_\textup{r}$ (resp. $\fS_\textup{n}$, $\fD_\textup{n}$) when $\fH=\fH_\textup{r}$ (resp. $\fH=\fH_\textup{n}$). Our main results read as follows.

\begin{Th}\label{T:main1}
	Assume that $K=1$, (F0)--(F3) hold, and
	\begin{eqnarray}
		\label{e-etas} 2\eta_\infty\rho^{4/N}C_{N,2_\#}^{2_\#}<1,\\
		\label{e-etal} 2\eta_0\rho^{4/N}C_{N,2_\#}^{2_\#}>1.
	\end{eqnarray}
	
	(a) If $N\ge2$, then there exists a solution $(\lambda,u)\in]0,\infty[\times\fS_\textup{r}$ to \eqref{e-main} such that $J(u)=\inf_{\fD_\textup{r}}J<0$. If, moreover, $\eta_0=\infty$, then the same holds replacing $\fS_\textup{r}$ and $\fD_\textup{r}$ with $\fS$ and $\fD$ respectively.
	
	(b) If $F$ is even, then there exists a ground state solution $(\lambda,u)\in]0,\infty[\times\fS_\textup{r}$ to \eqref{e-main} such that $J(u)<0$ and $u$ is nonnegative, radially nonincreasing, and of class $\cC^2$.
\end{Th}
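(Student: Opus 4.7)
The plan is to minimize $J$ over the weakly closed ball $\fD_\textup{r}$ (respectively $\fD$) and then promote the minimizer from the ball to the sphere $\fS_\textup{r}$ (respectively $\fS$). Two features make this inequality-constraint strategy convenient: $\fD_\textup{r}$ is convex and weakly closed, so direct minimization applies, and the Lagrange multiplier produced by the one-sided constraint $|u|_2^2 \le \rho^2$ is automatically nonnegative by Proposition \ref{P:Clarke}. Only afterwards will the strict negativity $J(u)<0$ be used, via a Poho\v{z}aev argument, to rule out the degenerate case $\lambda=0$ and force saturation $|u|_2 = \rho$.

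The first computational step is to show that $J$ is bounded from below on $\fD_\textup{r}$. From (F1) and (F2), for every $\varepsilon>0$ there exists $C_\varepsilon>0$ with $F(u) \le (\eta_\infty+\varepsilon)|u|^{2_\#} + C_\varepsilon|u|^2$, and Theorem \ref{T:GN} then yields, for $u\in\fD_\textup{r}$,
\[
J(u) \ge \left[\frac12 - (\eta_\infty+\varepsilon)C_{N,2_\#}^{2_\#}\rho^{4/N}\right]|\nabla u|_2^2 - C_\varepsilon\rho^2,
\]
which is bounded below and coercive in $|\nabla u|_2$ thanks to \eqref{e-etas}; in particular any minimizing sequence is bounded in $\hrn$. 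Next, to obtain $\inf_{\fD_\textup{r}} J < 0$, I would choose $v\in\fH_\textup{r}\cap L^\infty(\rn)$ with $|v|_2=\rho$ nearly saturating the Gagliardo--Nirenberg inequality and consider the $L^2$-preserving dilation $u_s(x):=s^{N/2}v(sx)$, so that $|u_s|_2=\rho$, $|\nabla u_s|_2^2=s^2|\nabla v|_2^2$, and $\int_{\rn}|u_s|^{2_\#}\,dx = s^2\int_{\rn}|v|^{2_\#}\,dx$. Since $u_s\to 0$ uniformly as $s\to0^+$, (F3) gives $\int_{\rn}F(u_s)\,dx \ge (\eta_0-\varepsilon)s^2\int_{\rn}|v|^{2_\#}\,dx$ for $s$ small, and \eqref{e-etal} makes the bracket in $J(u_s) \le s^2|\nabla v|_2^2\bigl[\tfrac12-(\eta_0-\varepsilon)C_{N,2_\#}^{2_\#}\rho^{4/N}\bigr]$ strictly negative.

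Now take a minimizing sequence $u_n\in\fD_\textup{r}$. Up to a subsequence $u_n\rightharpoonup u$ weakly in $\hrn$ with $u\in\fD_\textup{r}$; the compact embedding $\fH_\textup{r}\hookrightarrow L^p(\rn)$ for $p\in(2,2^*)$ recalled in Subsection \ref{SS:cpt} gives $u_n\to u$ in $L^{2_\#}(\rn)$, and the pointwise bound $|F(u)|\le\varepsilon|u|^2+C_\varepsilon|u|^{2_\#}$ combined with $L^2$-boundedness of $(u_n)$ yields $\int_{\rn}F(u_n)\,dx\to\int_{\rn}F(u)\,dx$. Weak lower semicontinuity of $|\nabla\cdot|_2^2$ then gives $J(u)\le\inf_{\fD_\textup{r}}J<0=J(0)$, hence $u\ne 0$ and $u$ is a minimizer. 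By the $\cO(N)$-invariance of $J$ and of the constraint, Theorem \ref{T:Palais} upgrades $u$ to a critical point of $J|_\fD$, and Proposition \ref{P:Clarke} produces $\lambda\ge 0$ with $-\Delta u+\lambda u=F'(u)$. To rule out $\lambda=0$ I would argue by contradiction: if $\lambda=0$, elliptic bootstrap under (F0) yields regularity sufficient for the Poho\v{z}aev identity of Section \ref{S:NP}, which together with the Nehari identity forces $J(u)=|\nabla u|_2^2/N\ge 0$ for $N\ge 3$, or $\int_{\rn}F(u)\,dx=0$ and thus $J(u)\ge 0$ for $N\in\{1,2\}$ (using $F\ge 0$), contradicting $J(u)<0$. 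The complementary slackness $\lambda(|u|_2^2-\rho^2)=0$ with $\lambda>0$ then forces $u\in\fS_\textup{r}$.

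For part (b), where $F$ is even and $K=1$, $F(u(x))=F(|u(x)|)$ pointwise, so Schwarz rearrangement preserves both the $L^2$-constraint and $\int F$ while not increasing the kinetic term; a minimizing sequence can therefore be replaced by one of nonnegative, radially nonincreasing functions, and Theorem \ref{T:Cazenave} provides the compactness needed when $N=1$. The resulting minimizer is nonnegative and radially nonincreasing, and elliptic bootstrap under (F0) upgrades it to class $\cC^2$. The extension to $\fS$ under $\eta_0=\infty$ is more delicate because the radial ansatz is dropped; here a concentration-compactness analysis is needed: vanishing of a bounded minimizing sequence $u_n$ is excluded since $|F(u_n)|\le\varepsilon|u_n|^2+C_\varepsilon|u_n|^{2_\#}$ together with a Lions-type vanishing lemma would force $\int_{\rn}F(u_n)\,dx\to 0$, hence $\liminf_n J(u_n)\ge 0$, contradicting $\inf_\fD J<0$; a suitable $\Z^N$-translation then yields a nontrivial weak limit and the rest of the argument proceeds as before. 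The main obstacle throughout is this passage from ball to sphere --- reducing, in the inequality-constraint framework, to proving $\lambda>0$ via Poho\v{z}aev --- while the most delicate subcase is precisely $\eta_0=\infty$ without symmetry, where the loss of compact embedding must be compensated by translations.
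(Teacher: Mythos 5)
Your argument for the $\fS_\textup{r},\fD_\textup{r}$ part of (a) and for part (b) is correct and follows the paper's route almost step by step: coercivity and lower boundedness of $J|_{\fD_\textup{r}}$ via Theorem \ref{T:GN} and \eqref{e-etas}, negativity of the infimum via an $L^2$-preserving dilation of a (near-)optimizer of the Gagliardo--Nirenberg inequality together with \eqref{e-etal}, direct minimization using the compact embedding of $\fH_\textup{r}$ into $L^{2_\#}(\rn)$ (Schwarz rearrangement and Theorem \ref{T:Cazenave} when $N=1$), Clarke's multiplier rule for the sign of $\lambda$, and the Poho\v{z}aev identity to exclude $\lambda=0$ and hence force $|u|_2=\rho$. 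This is exactly the content of Lemmas \ref{L:cbb}, \ref{L:neg}, \ref{L:min} and the proof given in the text.

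The one genuine problem is your treatment of the clause ``if $\eta_0=\infty$, the same holds with $\fS$, $\fD$''. In the notation of this chapter, $\fS=\cS\cap\fH$ and $\fD=\cD\cap\fH$, where $\fH$ is still a symmetric subspace of $\hrn$ that embeds compactly into $L^p(\rn)$ for $2<p<2^*$ (the relevant instance being $\fH_\textup{n}$, cf.\ Proposition \ref{P:main}); they are \emph{not} the full non-symmetric ball and sphere in $\hrn$. The hypothesis $\eta_0=\infty$ is needed only in Lemma \ref{L:neg}: when $\eta_0<\infty$, the test function showing $\inf J<0$ is built from the radial Gagliardo--Nirenberg optimizer, which need not belong to a general $\fH$ (indeed $\fH_\textup{n}$ contains no nontrivial radial function), whereas if $\eta_0=\infty$ any nonzero element of $\fD$ does the job; the minimization step is then unchanged. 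Your substitute argument --- dropping the symmetry and invoking concentration--compactness --- both misreads the statement and is incomplete as written: excluding vanishing via a Lions-type lemma and translating handles only one alternative, and without a compact embedding the weak limit need not satisfy $\int_{\rn}F(u_n)\,dx\to\int_{\rn}F(u)\,dx$ (Fatou gives the inequality in the wrong direction for lower semicontinuity of $J$), so dichotomy would still have to be ruled out, e.g.\ through strict subadditivity of the ground-state energy map --- precisely the machinery this chapter is designed to avoid.
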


Since, as we will see later (cf. Lemma \ref{L:cbb}), $J|_{\cD}$ is bounded from below, by \textit{ground state solution} to \eqref{e-main} we mean a solution $(\lambda,u)$ such that $J(u)=\min_\cD J$. Note that this is more than just requiring $J(u)=\min_{\cS}J$, which, on the other hand, appears as a more ``natural'' condition. As for nonradial solutions, we have as follows.

\begin{Prop}\label{P:main}
	If $N=4$ or $N\ge6$, $K=1$, $F$ is even, (F0)--(F3) and \eqref{e-etas} hold, and $\eta_0=\infty$, then there exists a solution $(\mu,v)\in]0,\infty[\times\fS_\textup{n}$ to \eqref{e-main} such that $J(v)=\inf_{\fD_\textup{n}}J<0$.
\end{Prop}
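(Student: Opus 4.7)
The plan is a constrained minimization of $J$ on $\fD_\textup{n}$, mirroring the proof of Theorem \ref{T:main1}(a) with $\fH_\textup{n}$ in place of $\fH_\textup{r}$, and then upgrading the minimizer to a full solution via Palais's principle of symmetric criticality combined with a Poho\v{z}aev obstruction. Palais's principle applies because $\fH_\textup{n}$ is the fixed-point set of the isometric action of $\bigl(\cO(M)\times\cO(M)\times\cO(N-2M)\bigr)\times\Z_2$ on $\hrn$, the $\Z_2$-factor acting by $u\mapsto -u\circ\tau$, whose invariance of $J$ uses the evenness of $F$. Setting $c:=\inf_{\fD_\textup{n}}J$, the lower bound $c>-\infty$ follows from (F0)--(F1), the Gagliardo--Nirenberg inequality at the exponent $2_\#$ and \eqref{e-etas}, as in the radial case. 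For the strict upper bound $c<0$ I cannot merely rescale a radial bump, since $\fH_\textup{n}$ contains no nontrivial radial function; instead, I would fix a nonzero $\phi\in\cC_c^\infty(\rn)$ which is $\cO(M)\times\cO(M)\times\cO(N-2M)$-invariant and whose support is disjoint from its image under $\tau$ (easily arranged by locating $\phi$ in the region $|x_1|<1<2<|x_2|$ in radial coordinates), and rescale $\varphi_\lambda(x):=(\rho/(\sqrt 2\,|\phi|_2))\lambda^{N/2}\phi(\lambda x)$, $u_\lambda:=\varphi_\lambda-\varphi_\lambda\circ\tau\in\fH_\textup{n}$. The disjoint supports and the evenness of $F$ yield $J(u_\lambda)=2J(\varphi_\lambda)$ and $|u_\lambda|_2=\rho$; since $\eta_0=\infty$ lets me choose $\lambda$ so small that $F(\varphi_\lambda)\ge M|\varphi_\lambda|^{2_\#}$ pointwise with $M$ arbitrarily large, a direct scaling computation produces $J(\varphi_\lambda)<0$, hence $c<0$.

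Next, for a minimizing sequence $u_n\in\fD_\textup{n}$, the lower bound together with (F1) and Gagliardo--Nirenberg yields boundedness of $u_n$ in $\hrn$, so up to a subsequence $u_n\rightharpoonup v$ in $\hrn$; since $\fD_\textup{n}=\cD\cap\fH_\textup{n}$ is the intersection of the weakly closed ball $\cD$ with the closed subspace $\fH_\textup{n}$, it is weakly closed and $v\in\fD_\textup{n}$. The compact embedding $\fH_\textup{n}\hookrightarrow L^p(\rn)$ for every $p\in\,]2,2^*[$ recalled in Subsection \ref{SS:cpt}, together with Gagliardo--Nirenberg interpolation, gives $u_n\to v$ in $L^{2_\#}(\rn)$; then (F0), (F1), (F2) and a Vitali-convergence argument yield $\int F(u_n)\,dx\to\int F(v)\,dx$. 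Weak lower semicontinuity of $|\nabla\cdot|_2^2$ then forces $J(v)\le c$, hence $J(v)=c$; this in turn gives $|\nabla u_n|_2\to|\nabla v|_2$, so $u_n\to v$ strongly in $\hrn$, and $v\ne 0$ because $c<0=J(0)$.

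To finish, I would first rule out $|v|_2<\rho$: in that case $v$ is a local interior minimizer of $J$ on $\fH_\textup{n}$, hence $J'(v)=0$ in $(\fH_\textup{n})'$, and Theorem \ref{T:Palais} promotes this to $J'(v)=0$ in $(\hrn)'$; standard elliptic regularity then yields the Poho\v{z}aev identity $\frac{N-2}{2}|\nabla v|_2^2=N\int F(v)\,dx$ (valid since $N\ge 4\ge 3$), giving $J(v)=\frac{1}{N}|\nabla v|_2^2\ge 0$, which contradicts $c<0$. Hence $v\in\fS_\textup{n}$, and Proposition \ref{P:Clarke} applied to the single constraint $\phi(u):=|u|_2^2-\rho^2$ gives some $\mu\ge 0$ with $J'(v)+\mu v=0$ in $(\fH_\textup{n})'$; Theorem \ref{T:Palais} descends this to \eqref{e-main} in $\hrn$, and the corresponding Poho\v{z}aev identity $\frac{N-2}{2}|\nabla v|_2^2+\frac{N}{2}\mu|v|_2^2=N\int F(v)\,dx$ rewrites as $J(v)=\frac{1}{N}|\nabla v|_2^2-\frac{\mu}{2}|v|_2^2$, so $c<0$ together with $v\ne 0$ forces $\mu>0$. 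The main obstacle I anticipate is the test-function construction in the first paragraph: the antisymmetry under $\tau$ forbids any rescaled single bump, and this is precisely why the radial hypothesis \eqref{e-etal} must be strengthened to $\eta_0=\infty$ in the nonradial setting.
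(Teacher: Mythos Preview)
Your proof is correct and follows essentially the same route as the paper, which simply says ``the argument is the same as in the proof of Theorem \ref{T:main1} \textit{(a)} with $\fH=\fH_\textup{n}$''. The paper's framework already builds in the Palais step (it assumes from the outset that critical points of $J|_{\fH}$ are critical points of $J$), and then runs Lemmas \ref{L:cbb}, \ref{L:neg}, \ref{L:min} \textit{(a)} followed by Proposition \ref{P:Clarke} plus the Poho\v{z}aev identity exactly as you do.

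The one place where you work harder than necessary is the construction of a test function showing $c<0$. You build an antisymmetric bump $u_\lambda=\varphi_\lambda-\varphi_\lambda\circ\tau$ with disjoint supports and control the energy via $\eta_0=\infty$. The paper's Lemma \ref{L:neg} handles this more cheaply: pick \emph{any} nonzero $u\in\fD_\textup{n}$ and use the $L^2$-invariant scaling $s\star u(x)=s^{N/2}u(sx)$, noting that this scaling preserves both the $\cO(M)\times\cO(M)\times\cO(N-2M)$-invariance and the $\tau$-antisymmetry (since $\tau$ is linear), hence $s\star u\in\fD_\textup{n}$ for all $s>0$; then $\eta_0=\infty$ and Fatou's lemma give $J(s\star u)<0$ for small $s$. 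Your explicit construction is not wrong, just unneeded. Your closing observation about why \eqref{e-etal} must be strengthened to $\eta_0=\infty$ is on point: the Gagliardo--Nirenberg extremizer used in Lemma \ref{L:neg} for finite $\eta_0$ is radial and therefore unavailable in $\fH_\textup{n}$.
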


Note that under the assumptions of Proposition \ref{P:main} there exist two distinct solutions to \eqref{e-main}: $(\lambda,u)\in]0,\infty[\times\fS_\textup{r}$, which is also a ground state solution, and $(\mu,v)\in]0,\infty[\times\fS_\textup{n}$. 

When $K\ge2$, $|\rho|$ replaces $\rho$ in \eqref{e-etas} and \eqref{e-etal}. In this case the following holds.

\begin{Th}\label{T:main2}
	Assume that $K\ge2$, (F0)--(F3) and \eqref{e-etas}--\eqref{e-etal} hold, $L\ge1$ is an integer, and for every $\ell\in\{1,\dots,L\}$ and every $j\in\{1,\dots,K\}$ there exist $F_j,\widetilde{F}_{j,\ell}\in\cC^1(\R)$ even, nonnegative, and nondecreasing on $[0,\infty[$ such that $\widetilde{F}_{j,\ell}(0)=0$ and
	\begin{equation*}
		F(u)=\sum_{j=1}^KF_j(u_j)+\sum_{\ell=1}^L\prod_{j=1}^K\widetilde{F}_{j,\ell}(u_j).
	\end{equation*}
	If $1\le N\le4$ or if $N\ge5$ and each $F_j'|_{[0,\infty[}$ satisfies (P) (not necessarily with the same $q$), then there exists a ground state solution $(\lambda,u)\in]0,\infty[^K\times\fS_\textup{r}$ to \eqref{e-main} such that $J(u)<0$ and each component of $u$ is positive, radially nonincreasing, and of class $\cC^2$.
\end{Th}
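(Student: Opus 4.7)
The plan is to mirror the scheme of Theorem \ref{T:main1} and minimize $J$ over the closed convex subset $\fD_\textup{r}\subset\fH_\textup{r}^K$, then promote the resulting minimizer to a solution on $\fS_\textup{r}$ with strictly positive Lagrange multipliers. First, combining (F0)--(F2) with the Gagliardo--Nirenberg inequality (Theorem \ref{T:GN}) and the smallness condition \eqref{e-etas} yields that $J$ is coercive and bounded from below on $\fD_\textup{r}$; while (F3) paired with \eqref{e-etal} produces, via a suitably rescaled test function exactly as in the scalar case, a competitor showing $\inf_{\fD_\textup{r}}J<0$.

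Now take a minimizing sequence $u^n\in\fD_\textup{r}$. The sum-plus-product structure of $F$ --- with each $F_j$ and each $\widetilde F_{j,\ell}$ even, nonnegative and nondecreasing on $[0,\infty[$, and $\widetilde F_{j,\ell}(0)=0$ --- is tailor-made so that componentwise Schwarz rearrangement is admissible: replacing each $u_j^n$ by its symmetric decreasing rearrangement $(u_j^n)^*$ keeps the $K$-tuple in $\fD_\textup{r}$, preserves $\int F_j(u_j^n)\,dx$ by equimeasurability, does not decrease each interaction integral $\int\prod_j\widetilde F_{j,\ell}(u_j^n)\,dx$ (by the Hardy--Littlewood inequality applied inductively to nonnegative symmetric decreasing factors), and does not increase $|\nabla u_j^n|_2^2$ by the P\'olya--Szeg\H{o} inequality. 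Hence we may assume each $u_j^n$ is nonnegative, radial and radially nonincreasing. Coercivity gives boundedness in $H^1(\rn)^K$, and the compact embedding $\fH_\textup{r}\hookrightarrow L^p(\rn)$ for $2<p<2^*$ (Subsection \ref{SS:cpt}, or Theorem \ref{T:Cazenave} when $N=1$) provides, along a subsequence, weak convergence to some $u$ in $H^1(\rn)^K$ together with strong convergence in $L^p(\rn)^K$ and pointwise convergence a.e. The growth assumptions (F0)--(F2) then allow passage to the limit in $\int F(u^n)\,dx$, and weak lower semicontinuity of the Dirichlet energy yields $u\in\fD_\textup{r}$ with $J(u)=\inf_{\fD_\textup{r}}J<0$, in particular $u\ne 0$.

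The main obstacle is to show that $u$ actually belongs to $\fS_\textup{r}$, i.e.\ that $|u_j|_2=\rho_j$ for every $j$. Proposition \ref{P:Clarke}, applied to the convex constraints $\phi_j(v):=|v_j|_2^2-\rho_j^2\le 0$, produces nonnegative multipliers $\lambda_j\ge 0$ satisfying the Euler--Lagrange equations $-\Delta u_j+\lambda_j u_j=\partial_jF(u)$ together with complementarity $\lambda_j(|u_j|_2^2-\rho_j^2)=0$, so one must rule out $|u_{j_0}|_2<\rho_{j_0}$ for any index $j_0$. As in Theorem \ref{T:main1}(a), the strategy is to perturb $u$ in the $j_0$-th slot by a rescaled bump $\alpha\varphi_r e_{j_0}$, with the amplitude $\alpha$ and scale $r$ tuned so that the perturbed tuple still lies in $\fD_\textup{r}$ and yet $J$ strictly decreases, contradicting minimality. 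The kinetic cost is $O(\alpha^2 r^{N-2})$; the pure nonlinear gain coming from $F_{j_0}(u_{j_0}+\alpha\varphi_r)-F_{j_0}(u_{j_0})$ produces, by (P), a negative contribution of order $\alpha^{q+1}r^N$, while the interaction terms are of lower order thanks to the product structure and to $\widetilde F_{j,\ell}(0)=0$. The balance $\alpha^{q-1}\sim r^{-2}$ delivers strict decrease precisely when $q\le N/(N-2)$, which is exactly (P) in dimension $N\ge 5$; in dimensions $1\le N\le 4$ no extra assumption is needed because the ratio between kinetic and nonlinear scales is automatically favourable.

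Once $u\in\fS_\textup{r}$ has been established, the system $-\Delta u_j+\lambda_j u_j=\partial_jF(u)$ holds on all of $\rn$, and Palais's principle of symmetric criticality (Theorem \ref{T:Palais}) promotes $u$ to a critical point of $J|_\cS$; furthermore the rearrangement argument of the second paragraph yields $\inf_{\fD_\textup{r}}J=\inf_\cD J$, so $u$ is a ground state in the sense stated. Strict positivity $\lambda_j>0$ is obtained by testing each equation against $u_j$ and combining with the Poho\v{z}aev identity (exploiting $J(u)<0$ to exclude $\lambda_j=0$, and ruling out intermediate scenarios by the same perturbation argument above). Finally, positivity of each $u_j$ follows from the strong maximum principle applied to the nonnegative radial minimizer with $\lambda_j>0$, radial monotonicity is inherited from the Schwarz rearrangement, and $\cC^2$-regularity comes from elliptic bootstrap applied to $-\Delta u_j=\partial_jF(u)-\lambda_j u_j$ using (F0).
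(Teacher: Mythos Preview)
Your overall scheme (minimize over $\fD_\textup{r}$ using componentwise Schwarz rearrangement, then invoke Proposition~\ref{P:Clarke}) matches the paper, but the crucial step ---showing $|u_{j_0}|_2=\rho_{j_0}$ for every $j_0$--- is where your argument breaks down. The bump perturbation $u+\alpha\varphi_r e_{j_0}$ with the scaling you describe only makes sense when $u_{j_0}\equiv 0$: if $0<|u_{j_0}|_2<\rho_{j_0}$, the first-order variation in $\alpha$ vanishes (because $\lambda_{j_0}=0$ and the Euler--Lagrange equation holds), so your ``gain of order $\alpha^{q+1}r^N$'' no longer appears, and you give no second-order analysis. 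This intermediate case cannot be bypassed. In the paper it is handled by a Liouville-type step (Proposition~\ref{P:main2}): from $\lambda_{j_0}=0$ one gets $-\Delta u_{j_0}\ge F_{j_0}'(u_{j_0})\ge 0$, and then either superharmonicity plus $u_{j_0}\in L^{N/(N-2)}$ (for $N\in\{3,4\}$, via \cite{Ikoma}) or Lemma~\ref{L:N5} (for $N\ge 5$, and this is precisely where (P) is used) forces $u_{j_0}=0$. Your attribution of (P) to a bump-scaling balance is misplaced, and the reference ``as in Theorem~\ref{T:main1}(a)'' is incorrect: that proof uses the Poho\v{z}aev identity, not a perturbation.

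Even restricted to the case $u_{j_0}=0$, your claim that for $1\le N\le 4$ ``the ratio between kinetic and nonlinear scales is automatically favourable'' is false. Without (P) the only available lower bound is $F_{j_0}(t)\gtrsim t^{2_\#}$ from (F3), and with $\alpha r^{N/2}$ held fixed both the kinetic cost $\alpha^2 r^{N-2}$ and the nonlinear gain $\alpha^{2_\#}r^N$ scale like $\alpha^{4/N}$; whether the difference is negative depends on constants, not on sending a parameter to a limit. The paper's way out (proof of Theorem~\ref{T:main2}) is cleaner and works in all dimensions: once one knows each $u_j$ is either $0$ or has full mass, assume $u_K=0$; then the coupling terms vanish (because $\widetilde F_{K,\ell}(0)=0$), so $J(u)=\sum_{j<K}J_j(u_j)\ge\sum_{j<K}c_j$ with $c_j:=\inf_{\fD_\textup{r}(j)}J_j$, while $F\ge\sum_j F_j$ gives $\inf_{\fD_\textup{r}}J\le\sum_{j}c_j$. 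Since each $F_j$ inherits (F1)--(F3) (take $u=te_j$) and hence $c_j<0$ by Lemma~\ref{L:neg}, this forces $c_K\ge 0$, a contradiction. The positivity $\lambda_j>0$ then follows from the Liouville step (not from Poho\v{z}aev alone, which in the vectorial case only yields $\max_j\lambda_j>0$).
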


It is clear that a necessary condition for \eqref{e-etas} and \eqref{e-etal} to hold simultaneously is that $\eta_0>\eta_\infty$. This is what holds in the $L^2$-subcritical case, where $\eta_0=\infty$ and $\eta_\infty=0$. At the same time it rules out the $L^2$-critical case when $F$ is of power type. On the one hand, since \eqref{e-main} does have a solution with $F(u)=|u|^{2_\#}/2_\#$ for one specific value of $\rho$, we know that the conditions in Theorems \ref{T:main1} or \ref{T:main2} are not sharp, at least to ensure an existence result for some $\rho$; on the other hand, if \eqref{e-etas} and \eqref{e-etal} both hold, then we can find a ground state solution to \eqref{e-main} for uncountably many values of $\rho$ even when the behaviour of $F$ is $L^2$-critical both at zero and at infinity. Observe also that $\eta_0=\infty$ is a necessary condition for Theorem \ref{T:main2} to hold if $N\ge5$.


We provide some examples for $F$, beginning with the case $K=1$. A first model for the nonlinearity is
\[
F(u)=\frac{\nu}{2_\#}|u|^{2_\#}+\frac{\bar\nu}{p}|u|^p
\]
for some $\nu\ge0$, $\bar\nu>0$ and $2<p<2_\#$, in which case one has $\eta_0=\infty$ and $\eta_\infty=\nu/2_\#$. A second model is a sort of counterpart of the first one, i.e.,
\begin{equation}\label{e-xmpl1}
	F(u)=\int_0^{|u|}\min\{t^{2_\#},t^p\}\,dt
\end{equation}
with $2<p<2_\#$, in which case one has $\eta_0=1/2_\#$ and $\eta_\infty=0$.

Now define $F^*\colon[0,\infty[\to\R$ by $F^*(0)=0$ and
\[
F^*(t)=
\begin{cases}
	-\frac{t^2}{\ln t} \quad & \text{if } 0<t<\frac12\\
	\frac{b}2\bigl((b+2)t-\frac12-\frac{b}2\bigr) \quad & \text{if } \frac12\le t\le1\\
	-t^2+2ct-1-\frac{b}4(b+1) \quad & \text{if } 1<t\le c\\
	F^*(2c-t) \quad & \text{if } c<t\le2c\\
	0 \quad & \text{if } t>2c
\end{cases}
\]
with $b=1/\ln2$ and $c=b(b+2)/4+1$ and let $F(u):=F^*(|u|)$. Then (F0)--(F3) hold with $\eta_0=\infty$ and $\eta_\infty=0$. One can also modify the previous example in order to have $\eta_0<\infty$ by defining
\[
F^*(t)=
\begin{cases}
	\frac{t^{2_\#}}{2_\#} \quad & \text{if } 0\le t\le1\\
	t-1+\frac1{2_\#} \quad & \text{if } 1<t<2\\
	-t^2+5t-5+\frac1{2_\#} \quad & \text{if } 2\le t\le\frac52\\
	F^*(5-t) \quad & \text{if } \frac52<t<5\\
	0 \quad & \text{if } t\ge5.
\end{cases}
\]
Notice that, in both examples, $F^*$ is \textit{not} monotone, therefore such examples do not suit the case $K\ge2$; however, if we modify $F^*$ such that it is constant after it reaches its maximum, then we can consider it also for that case. Finally, an example of when both $\eta_\infty$ and $\eta_0$ are finite and positive is
\begin{equation}\label{e-xmpl2}
	F(u)=
	\begin{cases}
		\frac{|u|^{2_\#}}{2_\#} \quad & \text{if } 0\le|u|\le1\\
		|u|-1+\frac1{2_\#} \quad & \text{if } 1<|u|<2\\
		\frac{|u|^{2_\#}}{2^{2_\#-1}2_\#}+1-\frac1{2_\#} \quad & \text{if } |u|\ge2.
	\end{cases}
\end{equation}

Concerning the case $K\ge2$, similarly as before a model for the nonlinearity is
\begin{equation}\label{e-exK}
F(u) = \sum_{j=1}^K \left( \frac{\nu_j}{2_\#}|u_j|^{2_\#} + \frac{\bar\nu_j}{p_j}|u_j|^{p_j} \right) + \alpha\prod_{j=1}^K |u_j|^{r_j} + \beta\prod_{j=1}^K |u_j|^{\bar r_j}
\end{equation}
for some $\nu_j,\alpha,\beta\ge0$, $\bar\nu_j>0$, $r_j,\bar r_j>1$, and $2<p_j<2_\#$ such that $\alpha+\beta>0$\footnote{Of course, the case $\alpha=\beta=0$ is still allowed, but then the system \eqref{e-main} is uncoupled.}, $\sum_{j=1}^Kr_j=2_\#$, and $\sum_{j=1}^K\bar r_j<2_\#$. When $N\ge5$ (which implies $K=2$, c.f. Remark \ref{R:K}), we need to add the term
\[
\frac{\tilde\nu_1}{q_1}|u_1|^{q_1}+\frac{\tilde\nu_2}{q_2}|u_2|^{q_2}, \quad 2<q_1,q_2\le\frac{2N-2}{N-2}, \, \tilde\nu_1,\tilde\nu_2>0,
\] (then we can allow $\bar\nu_j=0$). In this case, again one has $\eta_0=\infty$. If $\alpha=0$, then $\eta_\infty=\max_{j=1,\dots,K}\nu_j/2_\#$; if $K=2$, $\alpha>0$, and $\nu_j=0$ for every $j\in\{1,2\}$, then $\eta_\infty=\alpha\sqrt{r_1^{r_1}r_2^{r_2}/2_\#^{2_\#}}$ (see Subsection \ref{SS:expl} for more details on such computations).

Finally, one can take $F_j$ and $\widetilde{F}_{j,k}$ as in \eqref{e-xmpl1} or \eqref{e-xmpl2}, with additional restrictions on $F_j$ in a similar way as before if $N\ge5$.

\begin{Rem}\label{R:K}
Although there are no explicit restrictions on $K$ in Theorem \ref{T:main2}, the example in \eqref{e-exK} shows that we could need $K$ not to be too large. As a matter of fact, since $r_j>1$ for every $j\in\{1,\dots,K\}$, there holds
\[
K < \sum_{j=1}^K r_j = 2_\#
\]
(or likewise with $\bar{r}_j$ if $\alpha=0$).
\end{Rem}

\subsection{Some explicit computations}\label{SS:expl}

\begin{Prop}\label{P:ex}
	Let $K\ge2$ and $\nu_j\ge0$, $j\in\{1,\dots,K\}$, and define
	\[
	F(u)=\sum_{j=1}^K\nu_j|u_j|^{2_\#}.
	\]
	Then $\displaystyle\limsup_{|u|\to\infty}\frac{F(u)}{|u|^{2_\#}}=\max_{j=1,\dots,K}\nu_j$.
\end{Prop}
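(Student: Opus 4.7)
The plan is to bound $F(u)/|u|^{2_\#}$ from above by $\nu^{*}:=\max_{j}\nu_{j}$ uniformly in $u\ne 0$, and then exhibit an explicit sequence along which the ratio attains the value $\nu^{*}$; both bounds together yield the claimed $\limsup$.

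For the upper bound, I would first write
\[
F(u)\le \nu^{*}\sum_{j=1}^{K}|u_{j}|^{2_\#}=\nu^{*}\sum_{j=1}^{K}\bigl(u_{j}^{2}\bigr)^{2_\#/2}.
\]
Since $2_\#=2+4/N>2$, the exponent $p:=2_\#/2$ satisfies $p\ge 1$, and for nonnegative reals $a_{1},\dots,a_{K}$ the elementary inequality $\sum_{j}a_{j}^{p}\le\bigl(\sum_{j}a_{j}\bigr)^{p}$ holds (it is the embedding $\ell^{1}\hookrightarrow\ell^{p}$ with norm $\le 1$, or equivalently the superadditivity of $t\mapsto t^{p}$ for $t\ge 0$). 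Applying this with $a_{j}=u_{j}^{2}$ gives
\[
\sum_{j=1}^{K}\bigl(u_{j}^{2}\bigr)^{2_\#/2}\le\Bigl(\sum_{j=1}^{K}u_{j}^{2}\Bigr)^{2_\#/2}=|u|^{2_\#},
\]
hence $F(u)/|u|^{2_\#}\le\nu^{*}$ for every $u\in\R^{K}\setminus\{0\}$, and a fortiori $\limsup_{|u|\to\infty}F(u)/|u|^{2_\#}\le\nu^{*}$.

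For the lower bound, I would choose $j^{*}\in\{1,\dots,K\}$ with $\nu_{j^{*}}=\nu^{*}$ and consider the sequence $u^{n}:=n\, e_{j^{*}}\in\R^{K}$, so that $|u^{n}|=n\to\infty$ and
\[
\frac{F(u^{n})}{|u^{n}|^{2_\#}}=\frac{\nu^{*}n^{2_\#}}{n^{2_\#}}=\nu^{*}.
\]
Therefore $\limsup_{|u|\to\infty}F(u)/|u|^{2_\#}\ge\nu^{*}$, and combined with the previous estimate the equality follows. No step presents any real obstacle; the only thing worth double-checking is the elementary inequality $\sum a_{j}^{p}\le(\sum a_{j})^{p}$ for $p\ge 1$ and $a_{j}\ge 0$, which is standard.
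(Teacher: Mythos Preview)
Your proof is correct and follows essentially the same approach as the paper: both establish the lower bound by testing along a coordinate axis and the upper bound via the inequality $\sum_j |u_j|^{2_\#}\le |u|^{2_\#}$. The only difference is that the paper proves this inequality by showing the map $t\mapsto (b_1^t+\dots+b_K^t)^{1/t}$ is decreasing through an explicit derivative computation, whereas you obtain it more directly from the superadditivity of $t\mapsto t^{p}$ (equivalently, the $\ell^1\hookrightarrow\ell^p$ embedding) applied with $a_j=u_j^2$ and $p=2_\#/2\ge 1$; your route is a bit more economical.
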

\begin{proof}
	By taking $u_j=0$ for every $j\in\{1,\dots,K\}$ but one we easily obtain $\displaystyle\limsup_{|u|\to\infty}\frac{F(u)}{|u|^{2_\#}}\ge\max_{j=1,\dots,K}\nu_j$. Since $\displaystyle F(u)\le\max_{j=1,\dots,K}\nu_j\sum_{j=1}^K|u_j|^{2_\#}$, it suffices to prove that for every $u\in\R^K$
	\[
	\left(|u_1|^{2_\#}+\dots+|u_K|^{2_\#}\right)^{1/{2_\#}}\le\left(u_1^2+\dots+u_K^2\right)^{1/2}.
	\]
	To this aim, we will prove that the function $\varphi\colon]0,\infty[\to]0,\infty[$ is decreasing, where $\varphi(t):=(b_1^t+\dots+b_K^t)^{1/t}$ for some $b_1,\dots,b_K>0$. From
	\[
	\varphi'(t)=\varphi(t)\left(\frac{b_1^t\ln b_1+\dots+b_K^t\ln b_K}{t(b_1^t+\dots+b_K^t)}-\frac1{t^2}\ln(b_1^t+\dots+b_K^t)\right)
	\]
	we have that $\varphi'(t)<0$ is equivalent to
	\[
	b_1^t\ln(b_1^t+\dots+b_K^t)+\dots+b_K^t\ln(b_1^t+\dots+b_K^t)>b_1^t\ln b_1^t+\dots+b_K^t\ln b_K^t,
	\]
	which is true because $\ln$ is increasing.
\end{proof}

\begin{Prop}
	Let $r_1,r_2>1$ such that $r_1+r_2=2_\#$ and define
	\[
	F(u)=|u_1|^{r_1}|u_2|^{r_2}.
	\]
	Then $\displaystyle\limsup_{|u|\to\infty}\frac{F(u)}{|u|^{2_\#}}=\sqrt{r_1^{r_1}r_2^{r_2}/2_\#^{2_\#}}$.
\end{Prop}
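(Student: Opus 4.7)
The key observation is that both $F$ and $|\cdot|^{2_\#}$ are positively homogeneous of the same degree $r_1 + r_2 = 2_\#$, so the ratio $F(u)/|u|^{2_\#}$ is invariant under scaling $u \mapsto tu$ for $t > 0$. Consequently
\[
\limsup_{|u|\to\infty}\frac{F(u)}{|u|^{2_\#}} = \sup_{|u|=1}F(u) = \max_{u_1^2+u_2^2=1}|u_1|^{r_1}|u_2|^{r_2},
\]
and the limsup is actually attained (it is a maximum over a compact set). So the plan reduces to a constrained maximization on the unit circle of $\R^2$.

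To carry out the maximization, I would set $a = u_1^2$ and $b = u_2^2$, so that the problem becomes: maximize $a^{r_1/2}b^{r_2/2}$ subject to $a,b \ge 0$ and $a + b = 1$. Taking logarithms (the boundary $a = 0$ or $b = 0$ gives the value $0$ and can be discarded since $r_1, r_2 > 1$), we maximize $\frac{r_1}{2}\ln a + \frac{r_2}{2}\ln b$ under $a + b = 1$. The Lagrange condition $r_1/(2a) = r_2/(2b)$ together with $a + b = 1$ forces
\[
a = \frac{r_1}{r_1+r_2} = \frac{r_1}{2_\#}, \qquad b = \frac{r_2}{r_1+r_2} = \frac{r_2}{2_\#}.
\]

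Substituting back yields the maximum value
\[
\left(\frac{r_1}{2_\#}\right)^{r_1/2}\left(\frac{r_2}{2_\#}\right)^{r_2/2} = \sqrt{\frac{r_1^{r_1}r_2^{r_2}}{2_\#^{r_1+r_2}}} = \sqrt{\frac{r_1^{r_1}r_2^{r_2}}{2_\#^{2_\#}}},
\]
which is the claimed value. There is no real obstacle here; the only thing to keep in mind is the observation that homogeneity reduces the limsup to a maximum on the unit sphere, after which the computation is a standard Lagrange multiplier exercise (concavity of the logarithm guarantees the interior critical point is a global maximum).
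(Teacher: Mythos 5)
Your proof is correct and follows essentially the same route as the paper: both exploit the degree-$2_\#$ homogeneity of $F(u)/|u|^{2_\#}$ to reduce the limsup to a maximization over directions, the paper via the one-variable function $t=|u_1|/|u_2|\mapsto t^{r_1}/(t^2+1)^{2_\#/2}$ and you via a Lagrange multiplier on the unit circle, with the same maximizer $u_1/u_2=\sqrt{r_1/r_2}$ and the same value. No gaps.
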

\begin{proof}
	Observe that, for $u_2\ne0$, $\displaystyle\frac{F(u)}{|u|^{2_\#}}=\varphi\left(\frac{|u_1|}{|u_2|}\right)$, with $\displaystyle\varphi(t):=\frac{t^{r_1}}{(t^2+1)^{2_\#}}$. Since $\displaystyle\max\varphi=\varphi\left(\!\sqrt{\frac{r_1}{r_2}}\right)=\sqrt{\frac{r_1^{r_1}r_2^{r_2}}{2_\#^{2_\#}}}$, there holds
	\[
	\sqrt{\frac{r_1^{r_1}r_2^{r_2}}{2_\#^{2_\#}}}\ge\limsup_{|u|\to\infty}\frac{F(u)}{|u|^{2_\#}}\ge\limsup_{\substack{u_1=\sqrt{r_1/r_2}u_2\\ |u_2|\to\infty}}\frac{F(u)}{|u|^{2_\#}}=\sqrt{\frac{r_1^{r_1}r_2^{r_2}}{2_\#^{2_\#}}}.\qedhere
	\]
\end{proof}

\section{Proof of the results}

Henceforth, we will always assume that (F0) holds and we will make use of it without explicit mention.

\subsection{The scalar case $K=1$}

\begin{Lem}\label{L:cbb}
	If (F0)--(F2) and \eqref{e-etas} hold, then $J|_{\cD}$ is coercive and bounded from below.
\end{Lem}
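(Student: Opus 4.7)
The overall plan is to dominate $\int_{\rn} F(u)\,dx$ by a coefficient times $|\nabla u|_2^2$ that is strictly less than $1/2$ on $\cD$, plus a term controlled by the bounded mass $|u|_2\le\rho$; this will immediately yield both boundedness from below and coercivity, since $\|u\|_{H^1}\to\infty$ with $u\in\cD$ forces $|\nabla u|_2\to\infty$ (the $L^2$-mass being bounded by $\rho$). The hypothesis \eqref{e-etas}, namely $2\eta_\infty\rho^{4/N}C_{N,2_\#}^{2_\#}<1$, is tailored precisely so that the Gagliardo--Nirenberg estimate at the $L^2$-critical exponent $2_\#$ produces such a sub-unit coefficient.

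First, I would establish the pointwise inequality
\[
F(u)\le(\eta_\infty+\varepsilon)|u|^{2_\#}+C_\varepsilon|u|^2\qquad\forall\,u\in\R,
\]
valid for every $\varepsilon>0$ with some $C_\varepsilon>0$. The outer region $\{|u|>R_\varepsilon\}$ is covered by (F1), the inner region $\{|u|<\delta_\varepsilon\}$ by (F2) (which gives $F(u)\le\varepsilon|u|^2$), and on the compact annulus $\{\delta_\varepsilon\le|u|\le R_\varepsilon\}$ the continuous function $F$ is bounded by some constant $M_\varepsilon$, which can be absorbed into $C_\varepsilon|u|^2$ using $|u|^2\ge\delta_\varepsilon^2$. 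The key feature of this bound is that the coefficient multiplying $|u|^{2_\#}$ remains arbitrarily close to $\eta_\infty$.

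Next, I would invoke Theorem \ref{T:GN} with $p=2_\#$, for which $\delta_{2_\#}\cdot 2_\#=2$ and $(1-\delta_{2_\#})\cdot 2_\#=4/N$, to obtain
\[
|u|_{2_\#}^{2_\#}\le C_{N,2_\#}^{2_\#}|\nabla u|_2^{2}|u|_2^{4/N}.
\]
Integrating the pointwise bound above and using $|u|_2\le\rho$ for $u\in\cD$, I would deduce
\[
J(u)\ge\left(\tfrac12-(\eta_\infty+\varepsilon)C_{N,2_\#}^{2_\#}\rho^{4/N}\right)|\nabla u|_2^2-C_\varepsilon\rho^2.
\]

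By \eqref{e-etas}, $\eta_\infty C_{N,2_\#}^{2_\#}\rho^{4/N}<1/2$, so I would fix $\varepsilon>0$ small enough that the coefficient of $|\nabla u|_2^2$ is some $\alpha>0$. The resulting estimate $J(u)\ge\alpha|\nabla u|_2^2-C_\varepsilon\rho^2$ on $\cD$ at once gives boundedness from below by $-C_\varepsilon\rho^2$ and, combined with $\|u\|_{H^1}^2=|u|_2^2+|\nabla u|_2^2\le\rho^2+|\nabla u|_2^2$, coercivity on $\cD$. No serious obstacle arises; the only subtle point is keeping the sharp constant $\eta_\infty$ in the pointwise bound so that \eqref{e-etas} can be invoked with room to spare, which is precisely why the intermediate range must be absorbed into the $|u|^2$ term rather than the $|u|^{2_\#}$ term.
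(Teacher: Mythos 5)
Your proof is correct and follows essentially the same route as the paper: the pointwise bound $F(u)\le(\eta_\infty+\varepsilon)|u|^{2_\#}+C_\varepsilon|u|^2$ from (F1)--(F2), the Gagliardo--Nirenberg inequality at $p=2_\#$ with $|u|_2\le\rho$, and the choice of $\varepsilon$ small so that \eqref{e-etas} makes the coefficient of $|\nabla u|_2^2$ positive. Your added detail on how the three regions of $\R$ combine to give the pointwise bound is a correct elaboration of what the paper states without proof.
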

\begin{proof}
	From (F1) and (F2), for every $\varepsilon>0$ there exists $c_\varepsilon>0$ such that $F(u)\le c_\varepsilon u^2+(\varepsilon+\eta_\infty)|u|^{2_\#}$ for every $u\in\R$. In view of Theorem \ref{T:GN}, for every $u\in\cD$ we have
	\[\begin{split}
		J(u)&\ge\frac12|\nabla u|_2^2-c_\varepsilon |u|_2^2-(\varepsilon+\eta_\infty)|u|_{2_\#}^{2_\#}\\
		&\ge\frac12|\nabla u|_2^2-c_\varepsilon \rho^2-(\varepsilon+\eta_\infty)\rho^{4/N}C_{N,2_\#}^{2_\#}|\nabla u|_2^2\\
		&=\left(\frac12-(\varepsilon+\eta_\infty)\rho^{4/N}C_{N,2_\#}^{2_\#}\right)|\nabla u|_2^2-c_\varepsilon\rho^2,
	\end{split}\]
	hence the statement holds true for sufficiently small $\varepsilon$.
\end{proof}

\begin{Rem}\label{R:cbb}
	Lemma \ref{L:cbb} still holds for $K\ge2$ because $\big||u|\big|_r=\left|u\right|_r$, $1\le r\le\infty$, and $\big|\nabla|u|\big|_2\le\left|\nabla u\right|_2$, hence one can use Theorem \ref{T:GN} with $|u|$.
\end{Rem}

For $u\in\hrn\setminus\{0\}$ and $s>0$ let $s\star u(x):=s^{N/2}u(sx)$. Note that $|u|_2=|s\star u|_2$.

\begin{Lem}\label{L:neg}
	If (F0), (F3), and \eqref{e-etal} hold, then $\inf_{\fD_\textup{r}}J<0$. If, moreover, $\eta_0=\infty$, then $\inf_{\fD}J<0$.
\end{Lem}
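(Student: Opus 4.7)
The plan is to test $J$ against a mass-preserving rescaling $u_\sigma(x) := \sigma^{-N/2}v(x/\sigma)$ of a smooth compactly supported bump $v$ with $|v|_2 = \rho$, and show that $J(u_\sigma) < 0$ for large $\sigma$. The key observation is that under this rescaling one has $|u_\sigma|_2 = |v|_2 = \rho$, $|\nabla u_\sigma|_2^2 = \sigma^{-2}|\nabla v|_2^2$, and $|u_\sigma|_{2_\#}^{2_\#} = \sigma^{-2}|v|_{2_\#}^{2_\#}$ (the last identity being precisely the $L^2$-criticality of $2_\#$, i.e., $N(2_\#/2 - 1) = 2$), while $\|u_\sigma\|_\infty = \sigma^{-N/2}\|v\|_\infty \to 0$. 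Thus both the kinetic and the potential terms decay like $\sigma^{-2}$, but the potential term can be controlled from below by (F3) as soon as $u_\sigma$ is uniformly small.

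First I would fix $\varepsilon > 0$ so small that $2(\eta_0-\varepsilon)\rho^{4/N}C_{N,2_\#}^{2_\#} > 1$, which is possible by \eqref{e-etal}; then (F3) supplies $\delta > 0$ with $F(u) \ge (\eta_0-\varepsilon)|u|^{2_\#}$ whenever $|u| \le \delta$. Since the Gagliardo--Nirenberg constant $C_{N,2_\#}$ coincides with the supremum of $|v|_{2_\#}/(|\nabla v|_2^{\delta_{2_\#}}|v|_2^{1-\delta_{2_\#}})$ and is approached along radial profiles, while $\cC_c^\infty(\rn) \cap \fH_\textup{r}$ is dense in the radial subspace of $\hrn$, one can next choose $v \in \cC_c^\infty(\rn) \cap \fH_\textup{r}$ with $|v|_2 = \rho$ and
\[
(\eta_0-\varepsilon)|v|_{2_\#}^{2_\#} > \tfrac12|\nabla v|_2^2.
\]
Taking $\sigma$ so large that $\sigma^{-N/2}\|v\|_\infty \le \delta$ makes the pointwise inequality $F(u_\sigma) \ge (\eta_0-\varepsilon)|u_\sigma|^{2_\#}$ valid everywhere, so
\[
J(u_\sigma) \le \sigma^{-2}\Bigl(\tfrac12|\nabla v|_2^2 - (\eta_0-\varepsilon)|v|_{2_\#}^{2_\#}\Bigr) < 0,
\]
and since $u_\sigma \in \fS_\textup{r} \subset \fD_\textup{r}$ this yields $\inf_{\fD_\textup{r}} J < 0$.

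For the ``moreover'' part, with $\eta_0 = \infty$, no fine tuning of $v$ is needed: starting from any nontrivial $v \in \cC_c^\infty(\rn) \cap \fH$ with $|v|_2 = \rho$ (which exists for the concrete $\fH$ used in the paper---for $\fH = \fH_\textup{n}$, by antisymmetrizing a smooth bump), I would pick $M > |\nabla v|_2^2/(2|v|_{2_\#}^{2_\#})$, invoke (F3) to produce $\delta > 0$ with $F(u) \ge M|u|^{2_\#}$ for $|u| \le \delta$, and conclude by the same scaling that $J(u_\sigma) < 0$ for $\sigma$ large with $u_\sigma \in \fS \subset \fD$. The only mildly delicate point is the existence of the sharply chosen $v$ in $\cC_c^\infty(\rn) \cap \fH_\textup{r}$ in the first part; this is exactly where \eqref{e-etal} is used, combined with the attainability of $C_{N,2_\#}$ by a radial profile and a standard density argument, so it should be routine.
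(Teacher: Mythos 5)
Your proof is correct and follows the same basic strategy as the paper's: test $J$ on mass-preserving dilations (your $u_\sigma$ is exactly $s\star v$ with $s=\sigma^{-1}$, the scaling used in the paper), use (F3) to bound $F$ from below by $(\eta_0-\varepsilon)|\cdot|^{2_\#}$ near the origin, and let \eqref{e-etal} together with near-optimality in the Gagliardo--Nirenberg inequality force the potential term to beat the kinetic one. Two technical points differ, both in your favor as regards self-containedness. First, where you take an approximate GN optimizer in $\cC_c^\infty(\rn)\cap\fH_\textup{r}$ normalized to $|v|_2=\rho$ (legitimate, since the GN quotient is invariant under $v\mapsto\lambda v(\mu\cdot)$ and is not decreased by Schwarz symmetrization, so radial near-optimizers with prescribed mass exist by density), the paper uses the exact extremal: the unique positive radial ground state $w$ of $-\Delta v+\frac2Nv=v^{2_\#-1}$ together with Weinstein's identity $|w|_2^{4/N}=2_\#/(2C_{N,2_\#}^{2_\#})$, and then verifies by direct computation that a dilation $w(t\cdot)$ satisfies both the mass constraint and $\frac12|\nabla u|_2^2<\eta_0|u|_{2_\#}^{2_\#}$ for an explicit nonempty range of $t$, which is where \eqref{e-etal} enters. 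Second, where the paper sends $s\to0^+$ in $\int_{\rn}F(s^{N/2}u)/(s^{N/2})^{2_\#}\,dx$ via a Fatou-type argument (valid since $F\ge0$), you exploit the compact support and boundedness of $v$ to get $\|u_\sigma\|_\infty\le\delta$ for a single large $\sigma$, so the pointwise bound from (F3) holds everywhere and no limit interchange is needed. The one implicit assumption you make — that $C_{N,2_\#}$ in \eqref{e-etal} is the \emph{sharp} Gagliardo--Nirenberg constant — is indeed the convention the paper adopts (otherwise Weinstein's identity would not hold), so your argument is consistent with the intended statement.
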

\begin{proof}
	Fix $u\in\fD\setminus\{0\}$ and note that
	\[
	J(s\star u)=s^2\int_{\rn}\frac12|\nabla u|^2-\frac{F(s^{N/2}u)}{(s^{N/2})^{2_\#}}\,dx.
	\]
	If $\eta_0=\infty$, then $\displaystyle\lim_{s\to0^+}\int_{\rn}\frac{F(s^{N/2}u)}{(s^{N/2})^{2_\#}}\,dx=\infty$. If $\eta_0<\infty$, then
	\[
	\limsup_{s\to0^+}\int_{\rn}\frac12|\nabla u|^2-\frac{F(s^{N/2}u)}{(s^{N/2})^{2_\#}}\,dx\le\int_{\rn}\frac12|\nabla u|^2-\eta_0|u|^{2_\#}\,dx,
	\]
	hence the statement holds true if $u\in\fD_\textup{r}$ and
	\begin{equation}\label{e-ineqeta0}
		\frac12\int_{\rn}|\nabla u|^2\,dx<\eta_0\int_{\rn}|u|^{2_\#}\,dx.
	\end{equation}
	Let $w\in\fH_\textup{r}$ be the unique positive radial solution to $-\Delta v+\frac2Nv=v^{2_\#-1}$ in $\rn$ \cite{Kwong}. Then (cf. \cite{Weinstein}) $w$ is such that $$|w|_2^{4/N}=\frac{2_\#}{2C_{N,2_\#}^{2_\#}}$$ and equality holds in the Gagliardo--Nirenberg interpolation inequality (Theorem \ref{T:GN}). If we define $u(x):=w(tx)$ for some $t>0$, then $u\in\fD_\textup{r}$ and \eqref{e-ineqeta0} become, respectively,
	\begin{equation}\label{e-ineqs}
		|w|_2^2\le\rho^2t^N \quad \text{and} \quad t^2|\nabla w|_2^2<2\eta_0|w|_{2_\#}^{2_\#}.
	\end{equation}
	Now, with the help of the properties of $w$, it is easy to check by direct computations that \eqref{e-ineqs} holds if and only if $\displaystyle t\in\left[\frac{\sqrt{1+2/N}}{\rho^{2/N}C_{N,2_\#}^{1+2/N}},\sqrt{2_\#\eta_0}\right[$.
\end{proof}

\begin{Rem}\label{R:neg}
	Lemma \ref{L:neg} still holds for $K\ge2$ because, if we set $W\in\hrn^K$ as $W=(w/\sqrt{K},\dots,w/\sqrt{K})$, then $|W|_p=|w|_p$, $2\le p\le2^*$, and $|\nabla W|_2=|\nabla w|_2$, therefore we can define $u(x):=W(tx)$ and conclude likewise.
\end{Rem}

\begin{Lem}\label{L:min}
	Assume that (F0)--(F2) and \eqref{e-etas} hold.
	
	(a) If $N\ge2$, then $\inf_{\fD}J$ is achieved.
	
	(b) If $F$ is even, then $\inf_{\fD_\textup{r}}J$ is achieved and $\min_{\fD_\textup{r}}J=\min_{\cD}J$.
\end{Lem}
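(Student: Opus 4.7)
The plan is to apply the direct method of the calculus of variations in the weakly closed set $\fD$ (respectively $\fD_\textup{r}$). I pick a minimizing sequence $u_n$ for $J$ over the relevant set; by Lemma \ref{L:cbb} the sequence is bounded in $\hrn$, so, up to a subsequence, $u_n\rightharpoonup u$ weakly in $\hrn$. Since $\cD$ and $\fH$ are both weakly closed ($\fH$ is a closed subspace of $\hrn$ and the $L^2$-norm is weakly lower semicontinuous), the weak limit $u$ still lies in $\fD$. Moreover, the compact embedding $\fH\hookrightarrow L^p(\rn)$ for every $p\in\,]2,2^*[$ yields, along a further subsequence, $u_n\to u$ in $L^{2_\#}(\rn)$ and a.e. on $\rn$.

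The only nontrivial point is the convergence $\int_{\rn}F(u_n)\,dx\to\int_{\rn}F(u)\,dx$: combined with the weak lower semicontinuity of $|\nabla\cdot|_2^2$, it yields $J(u)\le\liminf_n J(u_n)=\inf_\fD J$, and the proof of (a) is complete. I plan to invoke Vitali's convergence theorem. Pointwise a.e. convergence $F(u_n)\to F(u)$ is immediate from the continuity of $F$. For uniform integrability and tightness, I will combine (F0), (F1), and (F2) to produce, for every $\varepsilon>0$, a constant $C_\varepsilon>0$ such that
\[
F(u)\le\varepsilon|u|^2+C_\varepsilon|u|^{2_\#} \quad \text{for every } u\in\R.
\]
Given any $\delta>0$, choosing $\varepsilon$ so that $\varepsilon\rho^2<\delta/2$ makes the first contribution uniformly small on any measurable set (since $|u_n|_2\le\rho$), while the strong $L^{2_\#}$-convergence makes the family $\{|u_n|^{2_\#}\}$ uniformly integrable and tight, so the second contribution is uniformly controlled on sets of small Lebesgue measure as well as on exteriors of large balls. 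This settles (a).

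For part (b), assuming $F$ even, I will resort to Schwarz symmetrization. For any $u\in\cD$, the rearrangement $u^\ast$ of $|u|$ is nonnegative, radially nonincreasing, satisfies $|u^\ast|_2=|u|_2\le\rho$, so $u^\ast\in\fD_\textup{r}$, and enjoys $J(u^\ast)\le J(u)$: this follows from Polya--Szego, which gives $|\nabla u^\ast|_2\le|\nabla|u|\,|_2\le|\nabla u|_2$, together with the invariance $\int_{\rn}F(u^\ast)\,dx=\int_{\rn}F(|u|)\,dx=\int_{\rn}F(u)\,dx$ granted by evenness of $F$. Therefore $\inf_{\fD_\textup{r}}J\le\inf_\cD J$, and the reverse inequality is trivial. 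To realize the common infimum in $\fD_\textup{r}$, I start from a minimizing sequence for $\inf_\cD J$, replace each term by its Schwarz rearrangement, and then argue exactly as in (a): when $N\ge2$ the compact embedding $\fH_\textup{r}\hookrightarrow L^p(\rn)$ for $p\in\,]2,2^*[$ is at hand, while when $N=1$ the sequence is radially nonincreasing and Theorem \ref{T:Cazenave} supplies $L^p$-compactness for every $p\in\,]2,\infty]$, so the Vitali argument carries over verbatim.

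The delicate step, and the one that structures the whole argument, is the simultaneous verification of uniform integrability and tightness for $\{F(u_n)\}$: the $L^2$-contribution does not benefit from any strong convergence and can only be absorbed by the smallness of $\varepsilon$, which is why the global $L^2$-bound $|u_n|_2\le\rho$, rather than any local estimate, is the key ingredient on that side. Everything else is standard lower semicontinuity and measure-theoretic bookkeeping.
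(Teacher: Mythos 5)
Your proof is correct and follows essentially the same route as the paper: direct minimization on the weakly closed set, compactness of the embedding into $L^{2_\#}(\rn)$ (Theorem \ref{T:Cazenave} when $N=1$), and Schwarz rearrangement together with the P\'olya--Szeg\H{o} inequality and the evenness of $F$ for part (b). The only difference is that you justify $\int_{\rn}F(u_n)\,dx\to\int_{\rn}F(u)\,dx$ via Vitali's theorem and the splitting $F(u)\le\varepsilon|u|^2+C_\varepsilon|u|^{2_\#}$, whereas the paper invokes \cite[Theorem 1]{BrezisLieb}; both are valid and rest on the same ingredients.
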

\begin{proof}
	Let $u_n\in\fD$ such that $\lim_nJ(u_n)=\inf_{\fD}J$. In view of Lemma \ref{L:cbb} there exists $u\in\fD$ such that, up to a subsequence, $u_n\rightharpoonup u$ in $\hrn$ and $u_n\to u$ a.e. in $\rn$ as $n\to\infty$.
	
	\textit{(a)} Since $N\ge2$, then $u_n\to u$ in $L^{2_\#}(\rn)$ and $\lim_n\int_{\rn}F(u_n)\,dx=\int_{\rn}F(u)\,dx$ due to (F1), (F2) and \cite[Theorem 1]{BrezisLieb}, whence
	\[
	\inf_{\fD}J=\lim_nJ(u_n)\ge J(u)\ge \inf_{\fD}J.
	\]
	
	\textit{(b)} Since $F$ is even, choosing $\fH=\fH_\textup{r}$ we can replace $u_n$ with its Schwarz rearrangement $u_n^*$ and we obtain another minimizing sequence for $J|_{\fD_\textup{r}}$ with the additional property that each $u_n^*$ is radially nonincreasing. Then from Theorem \ref{T:Cazenave} we infer again that $u_n\to u$ in $L^{2_\#}(\rn)$ and conclude as in point \textit{(a)}. As for the last part, let $v_n\in\cD$ such that $\lim_nJ(v_n)=\inf_{\cD}J$. Then, similarly as before,
	\[
	\inf_{\cD}J\gets J(v_n)\ge J(v_n^*)\ge\min_{\fD_\textup{r}}J\ge\inf_{\cD}J.\qedhere
	\]
\end{proof}

\begin{proof}[Proof of Theorem \ref{T:main1}]
	\textit{(a)} In view of Lemmas \ref{L:neg} and \ref{L:min} there exists $u\in\fD$ (if $\eta_0<\infty$, then necessarily $\fD=\fD_\textup{r}$) such that $J(u)=\inf_{\fD}J<0$. In Proposition \ref{P:Clarke} (the version without the functions $\psi_k$'s) we put $m=K=1$, $\cH=\fH$, $f=J$, and $\phi=\phi_1=|\cdot|_2^2-\rho^2$. Then there exists $\lambda\ge0$ such that
	\begin{equation*}
		-\Delta u+\lambda u=F'(u).
	\end{equation*}
	Note that $\lambda=0$ if $u\in\mathring{\fD}$. Assume by contradiction that $\lambda=0$. Since 
	$u$ satisfies the Poho\v{z}aev identity 
	\begin{equation*}
		(N-2)\int_{\rn}|\nabla u|^2\,dx=2N\int_{\rn}F(u)\,dx
	\end{equation*}
	we have $0>J(u)=|\nabla u|_2^2/N$, which is impossible, hence $(\lambda,u)$ solves \eqref{e-main}.
	
	\textit{(b)} Since $F$ is even, the argument of the proof of Lemma \ref{L:min} yields that the minimizer $u$ of $J|_{\fD_\textup{r}}$ obtained in point \textit{(a)} is nonnegative and radially nonincreasing and that $(\lambda,u)$ is a ground state solution. Since $u\in W_\textup{loc}^{2,p}(\rn)$ for every $p<\infty$ (cf. Section \ref{S:NP}), we can use the argument of \cite[Lemma 1]{BerLions} and obtain $u\in\cC^2(\rn)$.
\end{proof}

\begin{Rem}
	In the assumptions of Theorem \ref{T:main1}, every minimizer of $J|_{\fD}$ lies in $\fS$.
\end{Rem}

\begin{Prop}
	Let the assumptions of Theorem \ref{T:main1} \textit{(b)} hold and let $(\lambda,u)$ be given therein. If $F$ is nondecreasing on $[0,\infty[$, then $u>0$. If, moreover, there exists $t_0>0$ such that $F'(t)\le\lambda t$ for every $t\in[0,t_0]$ and $F'(t)>\lambda t$ for every $t\in]t_0,\infty[$, then $u$ is radially decreasing.
\end{Prop}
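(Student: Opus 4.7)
The plan is to deduce both conclusions from standard elliptic maximum principles applied to the equation $-\Delta u+\lambda u=F'(u)$, which holds with $u\in\cC^2(\rn)$, $u\ge 0$, $u$ radially nonincreasing, $u\not\equiv 0$ (since $J(u)<0=J(0)$), and $\lambda>0$ from Theorem \ref{T:main1}~(b). For the first claim, I observe that $F$ nondecreasing on $[0,\infty[$ forces $F'(t)\ge 0$ for $t\ge 0$, hence $F'(u)\ge 0$ and $-\Delta u+\lambda u\ge 0$ in $\rn$. The strong minimum principle for the uniformly elliptic operator $-\Delta+\lambda$, whose zeroth-order coefficient $\lambda$ is strictly positive, then forbids $u$ from attaining the value $0$ at any point without being identically $0$; since $u\not\equiv 0$, I conclude $u>0$ in $\rn$.

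For the second claim I argue by contradiction. Writing $u(x)=u(r)$ with $r=|x|$, the equation becomes the radial ODE $-u''-\tfrac{N-1}{r}u'+\lambda u=F'(u)$ for $r>0$. If $u$ is not strictly radially decreasing, then, being nonincreasing, there exist $0\le r_1<r_2$ with $u\equiv c$ on $[r_1,r_2]$ for some $c>0$; substituting into the ODE yields $\lambda c=F'(c)$, and by the assumption that $F'(t)>\lambda t$ for $t>t_0$ this forces $c\le t_0$. Set $R:=\sup\Set{r\ge r_1|u\equiv c\text{ on }[r_1,r]}$. Since $u\in\hrn$ is radial and nonincreasing, $u(r)\to 0$ as $r\to\infty$, so $R<\infty$; by construction $u(R)=c$ and $u'(R)=0$, and $u(r)<c$ for every $r>R$ (otherwise a right neighbourhood of $R$ would be part of the plateau, contradicting the definition of $R$, once we recall that $u$ is nonincreasing).

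I then apply Hopf's lemma on the exterior domain $\Omega:=\Set{x\in\rn||x|>R}$. On $\Omega$ one has $u\le c\le t_0$, so the hypothesis $F'(t)\le\lambda t$ on $[0,t_0]$ yields $\Delta u=\lambda u-F'(u)\ge 0$, i.e. $u$ is subharmonic in $\Omega$. Fix any $x_0$ with $|x_0|=R$: then $u(x_0)=c>u(x)$ for every $x\in\Omega$ and the interior ball condition at $x_0$ is trivially satisfied. Hopf's lemma gives $\partial_\nu u(x_0)>0$ where $\nu=-x_0/|x_0|$ is the outward unit normal to $\Omega$; however, since $u$ is radial, $\nabla u(x_0)=u'(R)\,x_0/|x_0|=0$, whence $\partial_\nu u(x_0)=0$, a contradiction. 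The only slightly delicate point in the argument is the verification of the strict inequality $u<u(x_0)$ in the interior of $\Omega$, which is indispensable for Hopf; this is precisely what the maximality of $R$ combined with monotonicity provides, and no further obstruction arises since the regularity and ellipticity hypotheses are trivially in place.
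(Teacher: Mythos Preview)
Your proof is correct and follows essentially the same strategy as the paper: both arguments observe that on the exterior region past the plateau one has $u\le c\le t_0$, hence $-\Delta u\le 0$, and then derive a contradiction from a maximum principle. The only cosmetic difference is that the paper applies the strong maximum principle directly on $\{|x|>r_1\}$ (an interior maximum on the annulus forces $u$ to be constant, which it is not), whereas you pass to the endpoint $R$ of the maximal plateau and invoke Hopf's lemma to contradict $u'(R)=0$; these are two faces of the same tool.
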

\begin{proof}
	The first part follows from the maximum principle \cite[Lemma IX.V.1]{Evans}. Assume by contradiction that $u$ is constant in the annulus $A:=\{r_1<|x|<r_2\}$ for some $r_2>r_1>0$. Then $0=-\Delta u=F'(u)-\lambda u$ in $A$ and so $-\Delta u\le0$ in $\Omega:=\{|x|>r_1\}$ because $u$ is radially nonincreasing. At the same time $u$ attains its maximum over $\overline\Omega$ at every point of $A$. This is impossible because $u|_\Omega$ is not constant.
\end{proof}

\begin{proof}[Proof of Proposition \ref{P:main}]
	The argument is the same as in the proof of Theorem \ref{T:main1} \textit{(a)} with $\fH=\fH_\textup{n}$.
\end{proof}

\subsection{The vectorial case $K\ge2$}

We begin this section with the equivalent of Theorem \ref{T:main1} \textit{(a)} for $K\ge2$. Note that we can only ensure that the minimizer $u$ belongs to $\partial\cD$, while we need more assumptions in order that $u\in\cS$; in particular, we do not know if we can make use of the Schwarz rearrangements, which is also what prevents us from dealing with the case $N=1$.

\begin{Prop}\label{P:main1}
	If $N\ge2$ and (F0)--(F3) and \eqref{e-etas}--\eqref{e-etal} hold, then there exists $(\lambda,u)\in[0,\infty[^K\times\partial\fD_\textup{r}$ such that $\max_{j=1,\dots,K}\lambda_j>0$, $J(u)=\inf_{\fD_\textup{r}}J<0$, and for every $j\in\{1,\dots,K\}$
	\[
	-\Delta u_j+\lambda_ju_j=\partial_jF(u).
	\]
	If, moreover, $\eta_0=\infty$, then the same holds replacing $\fD_\textup{r}$ with $\fD$.
\end{Prop}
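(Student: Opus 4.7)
The plan is to follow the scheme of Theorem~\ref{T:main1}(a), with two adaptations needed for $K\ge 2$: replacing the single inequality constraint by $K$ of them when invoking Proposition~\ref{P:Clarke}, and summing the componentwise Poho\v{z}aev identities to exclude the degenerate case $\lambda=0$.

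First, I would take a minimizing sequence $u_n\in\fD_\textup{r}$ for $J|_{\fD_\textup{r}}$. By Lemma~\ref{L:cbb} combined with Remark~\ref{R:cbb}, $J|_\cD$ is coercive and bounded from below, so $u_n$ is bounded in $\hrn^K$. Up to a subsequence, $u_n\rightharpoonup u$ weakly in $\hrn^K$, almost everywhere in $\rn$, and (by the compact embedding $\fH_\textup{r}\hookrightarrow L^p(\rn)$ for $p\in\,]2,2^*[$) strongly in $L^p(\rn)^K$ for every such $p$. Since $\fD_\textup{r}$ is weakly closed, $u\in\fD_\textup{r}$. A Brezis--Lieb type argument based on (F0)--(F2) (entirely analogous to the one carried out in Lemma~\ref{L:min}(a), using the strong convergence in $L^{2_\#}(\rn)^K$) yields $\int_{\rn}F(u_n)\,dx\to\int_{\rn}F(u)\,dx$. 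Combined with the weak lower semicontinuity of $|\nabla\cdot|_2$, this gives $J(u)\le\liminf_n J(u_n)=\inf_{\fD_\textup{r}}J$, and hence $J(u)=\inf_{\fD_\textup{r}}J$. Lemma~\ref{L:neg} together with Remark~\ref{R:neg} yields $\inf_{\fD_\textup{r}}J<0$, so in particular $u\ne 0$.

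Next I would apply Proposition~\ref{P:Clarke} (the version without equality constraints) with $\cH=\fH_\textup{r}^K$, $f=J|_{\fH_\textup{r}^K}$, $m=K$, and $\phi_j(v):=|v_j|_2^2-\rho_j^2$. The surjectivity hypothesis at $u$ reduces to showing that, for every subset $I\subset\{1,\dots,K\}$ of indices with $|u_j|_2=\rho_j$, the linear map $w\mapsto\bigl(2(u_j,w_j)_2\bigr)_{j\in I}$ is surjective onto $\R^{|I|}$; this is immediate by testing against $w$ supported in a single component, since $u_j\ne 0$ for each $j\in I$. Invoking Palais's principle of symmetric criticality (Theorem~\ref{T:Palais}) to upgrade criticality of $J|_{\fH_\textup{r}^K}$ to criticality on $\hrn^K$, we obtain $\lambda=(\lambda_1,\dots,\lambda_K)\in[0,\infty[^K$ such that
\[
-\Delta u_j+\lambda_j u_j=\partial_j F(u),\quad j\in\{1,\dots,K\}.
\]
The complementary slackness built into Proposition~\ref{P:Clarke} forces $\lambda_j=0$ whenever $\phi_j(u)<0$, so if $u\in\mathring{\fD_\textup{r}}$ then $\lambda=0$.

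To rule out the case $\lambda=0$, thereby forcing $u\in\partial\fD_\textup{r}$ and $\max_j\lambda_j>0$ simultaneously, assume $\lambda=0$. Since each $u_j$ is a weak solution of $-\Delta u_j=\partial_j F(u)$, by Section~\ref{S:NP} we have $u\in W^{2,p}_\textup{loc}(\rn)^K$ for all $p<\infty$, and the vectorial Poho\v{z}aev identity
\[
(N-2)\int_{\rn}|\nabla u|^2\,dx=2N\int_{\rn}F(u)\,dx
\]
holds, giving $J(u)=\tfrac{1}{N}\int_{\rn}|\nabla u|^2\,dx\ge 0$ for $N\ge 2$, contradicting $J(u)<0$. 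The only nontrivial technical point in the $\fD_\textup{r}$ part is the Brezis--Lieb convergence $\int_{\rn}F(u_n)\,dx\to\int_{\rn}F(u)\,dx$ under the dimension-dependent growth (F0), but this is routine.

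For the stronger conclusion when $\eta_0=\infty$ (attainment over the full $\fD$ rather than $\fD_\textup{r}$), the loss of compactness in the absence of radial symmetry is the real obstacle, since we can no longer appeal to $\fH_\textup{r}\hookrightarrow L^p(\rn)$. I expect to handle this by a concentration-compactness alternative applied to a minimizing sequence in $\fD$: vanishing is excluded because $\inf_\cD J<0$ by the second part of Lemma~\ref{L:neg}, while splitting is ruled out by a strict subadditivity estimate on $\rho\mapsto\inf_\cD J$ that exploits the strongly mass-subcritical behaviour at the origin guaranteed by $\eta_0=\infty$; after translating the sequence so that the weak limit is nontrivial, the rest of the argument proceeds exactly as in the radial case.
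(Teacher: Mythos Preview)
Your treatment of the $\fD_\textup{r}$ case is correct and coincides with the paper's argument: Lemmas~\ref{L:cbb}, \ref{L:neg}, \ref{L:min}(a) carry over to $K\ge2$ via Remarks~\ref{R:cbb} and~\ref{R:neg}, and then Proposition~\ref{P:Clarke} plus the Poho\v{z}aev identity rule out $\lambda=0$ exactly as in the proof of Theorem~\ref{T:main1}(a).

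For the $\eta_0=\infty$ part, however, you have misidentified the obstruction. By definition $\fD=\cD\cap\fH^K$, where $\fH$ is \emph{always} a subspace of $\hrn$ that embeds compactly into $L^p(\rn)$ for $p\in\,]2,2^*[$ (the prototypes being $\fH_\textup{r}$ and, when $F$ is even, $\fH_\textup{n}$). So there is no loss of compactness when passing from $\fD_\textup{r}$ to $\fD$, and Lemma~\ref{L:min}(a) applies verbatim to give attainment of $\inf_{\fD}J$ for any such $\fH$ as soon as $N\ge2$. The concentration--compactness machinery you sketch is therefore unnecessary. The actual role of the hypothesis $\eta_0=\infty$ is in Lemma~\ref{L:neg}: when $\eta_0<\infty$ the proof that the infimum is negative uses a specific \emph{radial} test function (the optimizer $w$ of the Gagliardo--Nirenberg inequality), which forces $\fH=\fH_\textup{r}$; only when $\eta_0=\infty$ does the argument work with an arbitrary $u\in\fD\setminus\{0\}$, allowing $\fH$ to be generic. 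Once $\inf_{\fD}J<0$ is known, the rest of the proof is identical to the $\fD_\textup{r}$ case.
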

\begin{proof}
	Owing to Remarks \ref{R:cbb} and \ref{R:neg}, Lemmas \ref{L:cbb}, \ref{L:neg}, and \ref{L:min} \textit{(a)} still hold for $K\ge2$. 
	Then we proceed as in the proof of Theorem \ref{T:main1} \textit{(a)}.
\end{proof}

Note that Proposition \ref{P:main1} is valid for every $K\ge2$. The next result is inspired from \cite{Gidas}.

\begin{Lem}\label{L:N5}
	Let $N\ge5$ and $f\colon[0,\infty[\to[0,\infty[$ be a continuous function that satisfies (P) and such that $f(t)>0$ if $t>0$. Then the problem
	\begin{equation}\label{e-Gidas}
		\begin{cases}
			-\Delta u\ge f(u)\\
			u\ge0\\
			u\in\cC^2(\rn)\cap L^\infty(\rn)
		\end{cases}
	\end{equation}
	does not admit positive solutions.
\end{Lem}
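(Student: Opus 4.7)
The plan is to transform the differential inequality into a Kato-type Riesz potential inequality via spherical averaging, and then rule out nontrivial solutions by a test-function argument in the spirit of Mitidieri--Pohozaev.

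First, fix $x_0\in\rn$ and consider the spherical average $\bar u(r):=|\partial B(x_0,r)|^{-1}\int_{\partial B(x_0,r)}u\,dS$, which is a $\cC^2$, positive, bounded function of $r\geq 0$ with $\bar u(0)=u(x_0)$ and $\bar u'(0)=0$. The hypothesis $-\Delta u\geq f(u)\geq 0$ averages to $-(r^{N-1}\bar u'(r))'\geq r^{N-1}\overline{f(u)}(r)\geq 0$, which in particular forces $\bar u$ to be nonincreasing. Integrating twice, swapping the order of integration via Fubini, and using $N\geq 3$ together with the boundedness of $\bar u$ to send the outer radius to infinity, I would obtain the pointwise bound
\begin{equation*}
u(x_0)\geq\frac{1}{(N-2)|\mbS^{N-1}|}\int_{\rn}\frac{f(u(x))}{|x-x_0|^{N-2}}\,dx.
\end{equation*}

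Second, I would invoke (P) together with the boundedness of $u$ to replace $f(u)$ by a pure power. Since $f$ is continuous and strictly positive on $\,]0,|u|_\infty]$ and $\liminf_{t\to 0^+}f(t)/t^q>0$, a compactness argument on a set of the form $[t_0,|u|_\infty]$ produces $c>0$ with $f(t)\geq c\,t^q$ on $[0,|u|_\infty]$. The Kato-type inequality above then sharpens to
\begin{equation*}
u(x_0)\geq c'\int_{\rn}\frac{u(x)^q}{|x-x_0|^{N-2}}\,dx,\qquad x_0\in\rn.
\end{equation*}

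Third, I would test the inequality $-\Delta u\geq c\,u^q$ against $\varphi_R(x):=\psi(|x|/R)^{2q/(q-1)}$ (handling $q=1$ separately by a direct $L^\infty$ variant), where $\psi\in\cC_c^\infty([0,\infty[)$ is a standard cutoff. Integrating by parts twice and applying H\"older's inequality with exponents $q$ and $q/(q-1)$, together with the scaling estimate $|\Delta\varphi_R|\leq CR^{-2}\psi(|x|/R)^{2/(q-1)}$ on the support of $\varphi_R$, yields
\begin{equation*}
\int_{\rn}u^q\varphi_R\,dx\leq C R^{N-2q/(q-1)}.
\end{equation*}
The hypothesis $q\leq N/(N-2)$ is exactly equivalent to $N-2q/(q-1)\leq 0$. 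When the inequality is strict, the right-hand side tends to $0$ as $R\to\infty$, forcing $u\equiv 0$ and contradicting positivity.

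The hard part will be the critical threshold $q=N/(N-2)$, where the exponent above vanishes and the test-function argument only yields $u\in L^q(\rn)$ rather than $u\equiv 0$. I would handle this case by replacing $\varphi_R$ with a cutoff supported in the annulus $\{R/2\leq|x|\leq 2R\}$: the left-hand side becomes the tail $\int_{R/2\leq|x|\leq 2R}u^q\,dx$, which vanishes as $R\to\infty$ thanks to the integrability just obtained. Feeding this smallness back into the H\"older estimate and iterating the resulting improvement together with the Kato inequality from the second step should ultimately force $\int_{\rn}u^q=0$, contradicting $u>0$.
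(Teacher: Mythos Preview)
Your reduction to $-\Delta u\ge c\,u^q$ on $[0,|u|_\infty]$ via (P), continuity, and positivity of $f$ is exactly what the paper does; the paper then simply invokes \cite[Proof of Theorem~8.4]{QS}, which is the Mitidieri--Pohozaev test-function argument you spell out in Step~3. So the route is the same, only the level of detail differs.

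Two simplifications are worth noting. First, Step~1 (the Kato/Riesz-potential inequality) is unnecessary: the rescaled-cutoff argument alone closes the proof, and you never actually use the integral representation. Second, you do not need to treat $q=1$ separately: since $\liminf_{t\to0^+}f(t)/t^q>0$ implies $\liminf_{t\to0^+}f(t)/t^{q'}=\infty$ for any $q'>q$, you may always increase $q$ into the range $\,]1,N/(N-2)]$ (this is the paper's ``we can assume $q>1$''). For the critical exponent $q=N/(N-2)$, your description is slightly off: rather than switching to an annular cutoff, keep the same $\varphi_R$ and observe that $\Delta\varphi_R$ is supported in $\{R\le|x|\le2R\}$, so the H\"older step already gives
\[
\int_{\rn}u^q\varphi_R\,dx\le C\Big(\int_{R\le|x|\le2R}u^q\,dx\Big)^{1/q};
\]
the right-hand side tends to $0$ by the $L^q$-integrability you obtained, and no iteration or Kato inequality is needed.
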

\begin{proof}
	We can assume $q>1$. If $u$ is a solution to \eqref{e-Gidas}, then there exists $C=C(u)>0$ such that $f\bigl(u(x)\bigr)\ge Cu^q(x)$ for every $x\in\rn$. Then we argue as in \cite[Proof of Theorem 8.4]{QS} and obtain $u=0$.
\end{proof}

\begin{Lem}\label{L:Schwarz}
	Let $L$ be a positive integer and, for every $\ell\in\{1,\dots,L\}$, let $k(\ell)\ge2$ be an integer as well. Define $\bar{k}:=\max_{\ell=1,\dots,L}k(\ell)$. For every $\ell\in\{1,\dots,L\}$ and every $j\in\{1,\dots,k(\ell)\}$ let $\widetilde{F}_{j,\ell}\colon[0,\infty[\to[0,\infty[$ satisfy (at least) one of the following:
	\begin{itemize}
		\item $\widetilde{F}_{j,\ell}$ is increasing;
		\item $\widetilde{F}_{j,\ell}$ is nondecreasing, continuous, and $\widetilde{F}_{j,\ell}(0)=0$.
	\end{itemize}
	Finally, define $\widetilde{F}\colon\R^{\bar{k}}\to\R$ as
	\[
	\widetilde{F}(u)=\sum_{\ell=1}^L\prod_{j=1}^{k(\ell)}\widetilde{F}_{j,\ell}(|u_{i_j}|), \quad 1\le i_1<\dots<i_{k(\ell)}\le\bar{k}.
	\]
	Then for every measurable $u\colon\rn\to\R^{\bar{k}}$ that vanishes at infinity there holds
	\[
	\int_{\rn}\widetilde{F}(u)\,dx\le\int_{\rn}\widetilde{F}(u^*)\,dx,
	\]
	where $u^*$ is the Schwarz rearrangement of $u$.
\end{Lem}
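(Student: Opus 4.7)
The plan is to reduce the statement to two classical tools from rearrangement theory: a pointwise identification of $\bigl(\widetilde{F}_{j,\ell}(|u_{i_j}|)\bigr)^*$ with $\widetilde{F}_{j,\ell}(u_{i_j}^*)$, and the multi-function Hardy--Littlewood inequality. Since each term in the finite sum defining $\widetilde{F}$ is nonnegative and the integral is additive, I will first reduce to proving, for each $\ell\in\{1,\dots,L\}$ separately,
\[
\int_{\rn}\prod_{j=1}^{k(\ell)}\widetilde{F}_{j,\ell}(|u_{i_j}|)\,dx\le\int_{\rn}\prod_{j=1}^{k(\ell)}\widetilde{F}_{j,\ell}(u_{i_j}^*)\,dx,
\]
recalling that, by definition, $u_{i_j}^*=(|u_{i_j}|)^*\ge 0$ and that each $u_{i_j}$ vanishes at infinity by assumption.

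Next, for fixed $\ell$ and $j$ I set $f_j:=\widetilde{F}_{j,\ell}(|u_{i_j}|)$. I will show $f_j^*=\widetilde{F}_{j,\ell}(u_{i_j}^*)$ a.e.\ in $\rn$. The key observation is that $\widetilde{F}_{j,\ell}$ is nondecreasing, hence for a.e.\ $s\ge 0$ one has, up to null sets,
\[
\{f_j>s\}=\{|u_{i_j}|>\widetilde{F}_{j,\ell}^{-1}(s)\},
\]
with the generalized inverse defined by $\widetilde{F}_{j,\ell}^{-1}(s)=\inf\{t\ge 0:\widetilde{F}_{j,\ell}(t)>s\}$; the two possibilities in the hypothesis (strict monotonicity in the first bullet, continuity together with $\widetilde{F}_{j,\ell}(0)=0$ in the second) are exactly what is needed to avoid, respectively, a discontinuity or a flat piece at the origin that would prevent $f_j$ from vanishing at infinity and would spoil this level-set identification. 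Combining this with the basic relation $\{u_{i_j}^*>t\}=\{|u_{i_j}|>t\}^*$ from Theorem~(rearrangement of super-level sets in the excerpt) and the layer-cake representation of $f_j^*$, the identification follows.

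The final step is to invoke the multi-function rearrangement inequality: for nonnegative measurable $f_1,\dots,f_k$ on $\rn$ vanishing at infinity,
\[
\int_{\rn}\prod_{j=1}^{k}f_j\,dx\le\int_{\rn}\prod_{j=1}^{k}f_j^*\,dx.
\]
This follows at once from the layer-cake formula
\[
\int_{\rn}\prod_{j=1}^{k}f_j\,dx=\int_{[0,\infty[^{k}}\Big|\bigcap_{j=1}^{k}\{f_j>t_j\}\Big|\,dt
\]
and the elementary fact that $|\bigcap_j A_j|\le|\bigcap_j A_j^*|$, valid because the rearranged sets $A_j^*$ are concentric balls whose intersection equals the smallest one. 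Applying this to $f_j=\widetilde{F}_{j,\ell}(|u_{i_j}|)$, substituting the identification from the previous paragraph, and summing over $\ell$ yields the claim.

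The principal obstacle is the middle step: the two alternative regularity assumptions on $\widetilde{F}_{j,\ell}$ must be carefully distinguished to make sure that $f_j$ indeed vanishes at infinity (so that its Schwarz rearrangement is well defined) and that the super-level sets of $f_j$ and $\widetilde{F}_{j,\ell}(u_{i_j}^*)$ coincide up to null sets for almost every threshold. Once this measure-theoretic equimeasurability is set up cleanly, the remainder of the argument is a routine application of the layer-cake representation.
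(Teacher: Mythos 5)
Your proposal is correct and follows essentially the same route as the paper: reduction to a single product by linearity, the layer-cake representation, identification of the super-level sets of $\widetilde{F}_{j,\ell}(|u_{i_j}|)$ and $\widetilde{F}_{j,\ell}(u_{i_j}^*)$ via the generalized inverse (distinguishing the two hypotheses exactly as you do), and the observation that the rearranged level sets are nested balls, so the measure of their intersection is that of the smallest. One small caveat: in the first alternative $\widetilde{F}_{j,\ell}$ is only assumed increasing, not to vanish at $0$, so $f_j=\widetilde{F}_{j,\ell}(|u_{i_j}|)$ need \emph{not} vanish at infinity and $f_j^*$ need not be well defined; it is therefore cleaner to substitute the level-set identity directly into the layer-cake formula, as the paper does, rather than to pass through the intermediate identity $f_j^*=\widetilde{F}_{j,\ell}(u_{i_j}^*)$.
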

\begin{proof}
	By linearity, we can suppose $L=1$ and relabel $k(1)=\bar{k}=k$ and $\widetilde{F}_{j,1}=\widetilde{F}_j$. Assume preliminarily that each $\widetilde{F}_j$ is increasing. From \cite[Theorem 1.13]{LiebLoss} we have
	\[\begin{split}
		\int_{\rn}\prod_{j=1}^k\widetilde{F}_j(|u_j|)\,dx & =\int_{\rn}\prod_{j=1}^k\int_0^\infty\chi_{\{\widetilde{F}_j(|u_j|)>t_j\}}(x)\,dt_j\,dx\\
		& =\int_0^\infty\cdots\int_0^\infty\int_{\rn}\prod_{j=1}^k\chi_{\{\widetilde{F}_j(|u_j|)>t_j\}}(x)\,dx\,dt_1\cdots \,dt_k
	\end{split}\]
	and similarly
	\[\begin{split}
		\int_{\rn}\prod_{j=1}^k\widetilde{F}_j(u_j^*)\,dx & =\int_0^\infty\cdots\int_0^\infty\int_{\rn}\prod_{j=1}^k\chi_{\{\widetilde{F}_j(u_j^*)>t_j\}}(x)\,dx\,dt_1\cdots \,dt_k\\
		& =\int_0^\infty\cdots\int_0^\infty\int_{\rn}\prod_{j=1}^k\chi_{\{\widetilde{F}_j(|u_j|)>t_j\}^*}(x)\,dx\,dt_1\cdots \,dt_k
	\end{split}\]
	because
	\[
	\{\widetilde{F}_j(u_j^*)>t_j\}=\{u_j^*>\widetilde{F}_j^{-1}(t_j)\}=\{|u_j|>\widetilde{F}_j^{-1}(t_j)\}^*=\{\widetilde{F}_j(|u_j|)>t_j\}^*,
	\]
	therefore it suffices to prove that, for every $A_1,\dots,A_k\subset\rn$ measurable with finite measure,
	\[
	\int_{\rn}\prod_{j=1}^k\chi_{A_j}(x)\,dx\le\int_{\rn}\prod_{j=1}^k\chi_{A_j^*}(x)\,dx,
	\]
	i.e., $|\cap_{j=1}^kA_j|\le|\cap_{j=1}^kA_j^*|$.
	
	Up to relabelling the sets, we can assume that $|A_1|\le\cdots\le|A_k|$, whence $A_1^*\subset\cdots\subset A_k^*$ and so $|\cap_{j=1}^kA_j|\le|A_1|=|A_1^*|=|\cap_{j=1}^kA_j^*|$.
	
	Now assume that some $\widetilde{F}_j$'s are continuous with $\widetilde{F}_j(0)=0$. We want to prove that, as in the previous case, $\{\widetilde{F}_j(u_j^*)>t_j\}=\{\widetilde{F}_j(|u_j|)>t_j\}^*$.
	Since $\widetilde{F}_j\colon[0,\infty[\to[0,\infty[$ is nondecreasing and $\widetilde{F}_j(0)=0$, following, e.g., \cite[p. 10]{RaoRen} we can define the \textit{generalized inverse function} $\widetilde{F}_j^{-1}\colon[0,\infty[\to[0,\infty[$ as
	\[
	\widetilde{F}_j^{-1}(t):=\inf\{s>0:\widetilde{F}_j(s)>t\}.
	\]
	Then it suffices to prove that $\{\widetilde{F}_j(v)>t\}=\{v>\widetilde{F}_j^{-1}(t)\}$ for every $t>0$ and every measurable $v\colon\rn\to[0,\infty[$ that vanishes at infinity. Observe that $\widetilde{F}_j^{-1}(t)=\infty$ if $t\ge\widetilde{F}_j\bigl(v(x)\bigr)$ for every $x\in\rn$ and, in that case, both sets above equal the empty set, hence we can assume that $t<\esssup\widetilde{F}_j\circ v$.
	
	Let $x\in\rn$ such that $\widetilde{F}_j\bigl(v(x)\bigr)>t$. Since $\widetilde{F}_j$ is continuous and $\widetilde{F}_j(0)=0$, there exists $s>0$ such that $t<\widetilde{F}_j(s)<\widetilde{F}_j\bigl(v(x)\bigr)$. Then $\widetilde{F}_j^{-1}(t)\le s$ and, since $\widetilde{F}_j$ is nondecreasing, $s<v(x)$. Now let $x\in\rn$ such that $v(x)>\widetilde{F}_j^{-1}(t)$. From the properties of infima, there exists $s>0$ such that $\widetilde{F}_j(s)>t$ and $v(x)\ge s$. Since $\widetilde{F}_j$ is nondecreasing, $\widetilde{F}_j\bigl(v(x)\bigr)\ge\widetilde{F}_j(s)$.
\end{proof}

\begin{Prop}\label{P:main2}
	Assume that (F0)--(F3) and \eqref{e-etas}--\eqref{e-etal} hold and let $L\ge1$ and $2\le k(\ell)\le K$ be integers, $\ell\in\{1,\dots,L\}$. If, for every $\ell\in\{1,\dots,L\}$ and every $j\in\{1,\dots,k(\ell)\}$, there exist $F_j,\widetilde{F}_{j,\ell}\in\cC^1(\R)$ even, nonnegative, and nondecreasing on $[0,\infty[$ such that $\widetilde{F}_{j,\ell}(0)=0$, $F_j'|_{[0,\infty[}$ satisfies (P) (not necessarily with the same $q$) if $N\ge5$, and
	\[
	F(u)=\sum_{j=1}^KF_j(u_j)+\sum_{\ell=1}^L\prod_{j=1}^{k(\ell)}\widetilde{F}_{j,\ell}(u_{i_j}), \quad 1\le i_1<\dots<i_{k(\ell)}\le K,
	\]
	then there exists $(\lambda,u)\in[0,\infty[^K\times\partial\fD_\textup{r}$ such that $\max_{j=1,\dots,K}\lambda_j>0$, $J(u)=\inf_{\fD_\textup{r}}J=\inf_{\cD}J<0$, and for every $j\in\{1,\dots,K\}$
	\[
	-\Delta u_j+\lambda_ju_j=\partial_jF(u).
	\]
	Moreover, for every $j\in\{1,\dots,K\}$, either $u_j=0$ or $|u_j|_2=\rho_j$. In the latter case, $\lambda_j>0$ and $u_j$ is positive, radially nonincreasing, and of class $\cC^2$.
\end{Prop}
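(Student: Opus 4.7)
The plan is to start from the existence result provided by Proposition \ref{P:main1} and then upgrade qualitative properties using the special structure imposed on $F$. First I would invoke Proposition \ref{P:main1} to obtain $(\lambda,u)\in[0,\infty[^K\times\partial\fD_\textup{r}$ solving the Euler--Lagrange system, with $\max_j\lambda_j>0$ and $J(u)=\inf_{\fD_\textup{r}}J<0$. Next, exploiting the specific decomposition of $F$, I would pass to the componentwise Schwarz rearrangement $u^\ast=(u_1^\ast,\dots,u_K^\ast)$: each $F_j$ is even, so $\int F_j(u_j)\,dx=\int F_j(u_j^\ast)\,dx$, while the product terms satisfy $\int\prod_j\widetilde F_{j,\ell}(u_{i_j})\,dx\le\int\prod_j\widetilde F_{j,\ell}(u_{i_j}^\ast)\,dx$ by Lemma \ref{L:Schwarz}; combined with the standard inequality $|\nabla u_j^\ast|_2\le|\nabla u_j|_2$, this gives $J(u^\ast)\le J(u)$, so $u^\ast$ is again a minimizer. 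Applying the same rearrangement to an arbitrary $v\in\cD$ also shows $\inf_\cD J=\inf_{\fD_\textup{r}}J$, so from now on I would assume that each $u_j\ge 0$ is radial and nonincreasing.

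In the second step I would establish $u_j\in\cC^2(\rn)$ and componentwise strict positivity. Standard elliptic bootstrap applied to the system, as in Section \ref{S:NP}, gives $u_j\in W^{2,p}_{\mathrm{loc}}(\rn)$ for every $p<\infty$ and hence $\cC^2$ regularity. Since each $F_j$ and each $\widetilde F_{j,\ell}$ is nondecreasing on $[0,\infty[$, the right-hand side $\partial_jF(u)$ is nonnegative when $u\ge 0$, so the equation rewrites as $-\Delta u_j+\lambda_j u_j\ge 0$ with a bounded zero-order coefficient. The strong maximum principle then forces $u_j>0$ on all of $\rn$ whenever $u_j\not\equiv 0$. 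Radial monotonicity and $u_j\in H^1(\rn)$ yield $u_j\in L^\infty(\rn)$ and $u_j(x)\to 0$ as $|x|\to\infty$.

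The decisive step, and the main obstacle, is the dichotomy $|u_j|_2\in\{0,\rho_j\}$ together with $\lambda_j>0$ whenever $u_j\not\equiv 0$. I would observe that both assertions collapse to excluding the configuration ``$u_j\not\equiv 0$ and $\lambda_j=0$'': indeed, Proposition \ref{P:Clarke} already guarantees $|u_j|_2<\rho_j\Rightarrow\lambda_j=0$, so once the bad configuration is ruled out, $u_j\not\equiv 0$ forces $|u_j|_2=\rho_j$ and $\lambda_j>0$ simultaneously. Suppose for contradiction that $u_j>0$ everywhere and $\lambda_j=0$; then
\[
-\Delta u_j=\partial_jF(u)\ge F_j'(u_j)\ge 0 \qquad\text{in }\rn,
\]
with $u_j\in\cC^2(\rn)\cap L^\infty(\rn)$. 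When $N\ge 5$, (P) applied to $F_j'|_{[0,\infty[}$ lets one extract a continuous $\tilde f\le F_j'$ which is strictly positive on $]0,\infty[$ and still satisfies (P); Lemma \ref{L:N5} applied to $-\Delta u_j\ge\tilde f(u_j)$ produces the desired contradiction. When $1\le N\le 4$, I would instead rely on a Liouville-type argument tailored to the fact that $u_j$ is radial, nonincreasing, bounded, and of finite $H^1$ energy: for $N=1$, a nonnegative concave function decaying at infinity must vanish identically; for $N\in\{2,3,4\}$ I would combine (F3)---which through the structural decomposition yields $F_j(t)\gtrsim t^{2_\#}$ for small $t$---with integration against $u_j$ and the Gagliardo--Nirenberg inequality to force an energy incompatibility. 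Calibrating this low-dimensional argument uniformly across all admissible $F_j$ (without requiring (P)) is the technical heart of the proof; once it is in place, the claimed positivity of $\lambda_j$, the sharp identity $|u_j|_2=\rho_j$ when $u_j\not\equiv 0$, and all remaining assertions of the proposition follow immediately from the preceding steps.
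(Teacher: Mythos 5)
Your architecture matches the paper's: obtain a minimizer, replace it by its Schwarz rearrangement using Lemma \ref{L:Schwarz} and the equimeasurability of $\int_{\rn}F_j(u_j)\,dx$, extract multipliers $\lambda\in[0,\infty[^K$ from Proposition \ref{P:Clarke}, exclude $\lambda\equiv 0$ via Poho\v{z}aev, get $\cC^2$ regularity and strict positivity from the maximum principle, and reduce the dichotomy to excluding ``$u_j\not\equiv 0$ and $\lambda_j=0$'', handling $N\ge 5$ with Lemma \ref{L:N5} exactly as the paper does. Two small remarks on the first step: routing existence through Proposition \ref{P:main1} and rearranging the minimizer afterwards is equivalent to Lemma \ref{L:min}~\textit{(b)} only for $N\ge 2$ (for $N=1$ one must rearrange the minimizing sequence itself to gain compactness via Theorem \ref{T:Cazenave}), and after replacing $u$ by $u^*$ you must re-apply Proposition \ref{P:Clarke} to $u^*$, since the multipliers of $u$ are not inherited.

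The genuine gap is the Liouville step for $N\in\{3,4\}$ (and $N=2$), which you explicitly leave open as ``the technical heart of the proof''. The paper closes it with no growth information on $F_j$ beyond $F_j'\ge 0$: if $\lambda_j=0$, then $-\Delta u_j\ge F_j'(u_j)\ge 0$, so $u_j$ is a nonnegative superharmonic function; since $u_j\in\hrn$ and $N/(N-2)\in[2,2^*]$ precisely when $N\in\{3,4\}$, one has $u_j\in L^{N/(N-2)}(\rn)$, and a nonnegative superharmonic function in $L^{N/(N-2)}(\rn)$ vanishes identically by \cite[Theorem A.2]{Ikoma} (for $N\le 2$ one simply uses that nonnegative superharmonic functions on $\R^N$ are constant). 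Your proposed substitute does not obviously close. Testing the $j$-th equation with $u_j$ only yields $|\nabla u_j|_2^2=\int_{\rn}\partial_jF(u)u_j\,dx$, which is perfectly consistent with $\lambda_j=0$, and the single-component Poho\v{z}aev identity is unavailable for the coupled system, so the ``Gagliardo--Nirenberg energy incompatibility'' has no clear source. Nor can you fall back on the Gidas-type route of Lemma \ref{L:N5}: although (F3) restricted to the $j$-th coordinate axis does give $F_j(t)\gtrsim t^{2_\#}$ near $0$, only $F_j$ (not $F_j'$) is assumed monotone, so no pointwise lower bound $F_j'(t)\gtrsim t^{q}$ follows, and the hypothesis (P) is genuinely absent for $N\le 4$. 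The superharmonicity argument is therefore not an optional refinement but the missing ingredient.
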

\begin{proof}
	Owing to Remarks \ref{R:cbb} and \ref{R:neg} and Lemma \ref{L:Schwarz}, Lemmas \ref{L:cbb}, \ref{L:neg}, and \ref{L:min} \textit{(b)} still hold for $K\ge2$. Moreover, $u\in W^{2,p}_\textup{loc}(\rn)^K$ for all $p < \infty$ (cf. Section \ref{S:NP}), therefore we can argue as in the proof of Theorem \ref{T:main1} \textit{(b)} and obtain that there exists $(\lambda,u)\in[0,\infty[^K\times\partial\fD_\textup{r}$ such that $\max_{j=1,\dots,K}\lambda_j>0$, $u\in\cC^2(\rn)^K$, $J(u)=\inf_{\fD_\textup{r}}J=\inf_{\cD}J<0$, and for every $j\in\{1,\dots,K\}$ there holds $u_j\ge0$, $u_j$ is radially nonincreasing, and
	\[
	-\Delta u_j+\lambda_ju_j=\partial_jF(u).
	\]
	Let $j\in\{1,\dots,K\}$ such that $\lambda_j=0$ (which, in particular, is the case if $|u_j|_2<\rho_j$), which yields $-\Delta u_j\ge F_j'(u_j)\ge0$. If $N\in\{3,4\}$, then $u_j\in L^\frac{N}{N-2}(\rn)$ and so $u_j=0$ from \cite[Theorem A.2]{Ikoma}. If $N\ge5$, then $u_j=0$ from Lemma \ref{L:N5}. That $u_j>0$ follows from the maximum principle \cite[Lemma IX.V.1]{Evans}.
\end{proof}

\begin{proof}[Proof of Theorem \ref{T:main2}]
	In view of Proposition \ref{P:main2} we only need to prove that $u_j\ne0$ for every $j\in\{1,\dots,K\}$ (recall that $u_j=0$ if $\lambda_j=0$). Assume by contradiction that it does not hold. Up to changing the order, we can suppose that $u_K=0$. For every $j\in\{1,\dots,K\}$ define
	\[
	J_j(w)=\int_{\rn}\frac12|\nabla w|^2-F_j(w)\,dx, \quad w\in\hrn
	\]
	and
	\[
	\fD_\textup{r}(j)=\Set{w\in\fH_\textup{r}|\int_{\rn}w^2\,dx\le\rho_j^2}.
	\]
	Of course, $-\Delta u_j+\lambda_ju_j=F_j'(u_j)$ and, from Lemmas \ref{L:cbb} and \ref{L:neg}, $-\infty<c_j:=\inf_{\fD_\textup{r}(j)}J_j<0$ for every $j\in\{1,\dots,K\}$. Moreover,
	\[
	J(u)=\sum_{j=1}^{K-1}J_j(u_j)\ge\sum_{j=1}^{K-1}c_j.
	\]
	Since $\widetilde{F}_{j,\ell}\ge0$ for every $j\in\{1,\dots,K\}$ and $\ell\in\{1,\dots,L\}$, we have $J(w)\le\sum_{j=1}^KJ_j(w_j)$ for every $w=(w_1,\dots,w_K)\in\hrn^K$, thus
	\[
	\sum_{j=1}^Kc_j\ge\inf_{\fD_\textup{r}}J=J(u)\ge\sum_{j=1}^{K-1}c_j,
	\]
	whence $c_K\ge0$, a contradiction.
\end{proof}

\begin{Rem}
	In Theorem \ref{T:main2} we cannot allow more $k(\ell)$'s as in Proposition \ref{P:main2} because, in that case, we no longer know whether $J(u)=\sum_{j=1}^{K-1}J_j(u_j)$ when $u_K=0$ and the proof above does not apply. Nevertheless, it applies to rule out the case $u_j=0$ for all $j\in\{1,\dots,k(\ell)\}$ but one, for every $\ell\in\{1,\dots,L\}$.
\end{Rem}

\begin{Rem}
	In the assumptions of Theorem \ref{T:main2}, every minimizer of $J|_{\fD_\textup{r}}$ with nonnegative components belongs to $\fS_\textup{r}$.
\end{Rem}

\section{On the ground state energy map}\label{S:gsemsub}

In this section, for $\rho=(\rho_1,\dots,\rho_K)\in]0,\infty[^K$ we denote explicitly
\[\begin{split}
	\cS(\rho)&:=\Set{u\in\hrn^K|\left|u_j\right|_2=\rho_j \text{ for every } j\in\{1,\dots,K\}}\\
	\cD(\rho)&:=\Set{u\in\hrn^K|\left|u_j\right|_2\le \rho_j \text{ for every } j\in\{1,\dots,K\}}\\
	m(\rho)&:=\inf\Set{J(u)|u\in\cD(\rho)}.
\end{split}\]

\begin{Prop}\label{P:gsem}
	Assume that (F0) is satisfied and let $\cA$ be the subset of $]0,\infty[^K$ where \eqref{e-etas} and \eqref{e-etal} both hold.	We have as follows.
	\begin{itemize}
		\item [(i)] $m\colon]0,\infty[^K\to\R\cup\{-\infty\}$ is nonincreasing, i.e., if $0<\rho_j\le\bar{\rho}_j$ for every $j\in\{1,\dots,K\}$, then $m(\rho)\ge m(\bar{\rho})$.
		\item [(ii)] Assume (F1)--(F3) are satisfied. If $F$ is even (when $K=1$) or as in Proposition \ref{P:main2} (when $K\ge2$), then $m|_\cA$ is continuous.
		\item [(iii)] If (F1)--(F3) are satisfied and $\eta_0=\infty$, then $\displaystyle\lim_{|\rho|\to0^+}m(\rho)=0$.
		\item [(iv)] If (F3) is satisfied and $\eta_\infty=0$, then $\displaystyle\lim_{\substack{\rho_j\to\infty \\ j=1,\dots,K}}m(\rho)=-\infty$; in particular, $\displaystyle\lim_{\rho\to\infty}m(\rho)=-\infty$ if $K=1$.
		\item [(v)] If, for all $\rho\in\cA$, there exists a minimizer of $J|_{\cD(\rho)}$ and every minimizer of $J|_{\cD(\rho)}$ belongs to $\cS(\rho)$, then $m|_\cA$ is decreasing, i.e., if $0<\rho_j\le\bar{\rho}_j$ for every $j\in\{1,\dots,K\}$ and $\rho_k<\bar{\rho}_k$ for some $k\in\{1,\dots,K\}$, then $m(\rho)>m(\bar{\rho})$. The same holds true if, for all $\rho\in\cA$, there exists a minimizer of $J|_{\cD(\rho)}$ with nonnegative components and every minimizer of $J|_{\cD(\rho)}$ with nonnegative components belongs to $\cS(\rho)$.
	\end{itemize}
\end{Prop}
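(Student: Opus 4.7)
The plan is to treat the five items in order. Part (i) is immediate from the inclusion $\cD(\rho)\subset\cD(\bar\rho)$ whenever $\rho_j\le\bar\rho_j$ for every $j$. For (v), I would argue by contradiction: suppose $m(\rho)=m(\bar\rho)$ with some $\rho_k<\bar\rho_k$. A minimizer $u$ of $J|_{\cD(\rho)}$ (with nonnegative components, if required by the hypothesis) also lies in $\cD(\bar\rho)$ and realizes $m(\bar\rho)$, so by assumption $u\in\cS(\bar\rho)$, which forces $|u_k|_2=\bar\rho_k>\rho_k$ and contradicts $u\in\cS(\rho)$.

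For (iii), I would sandwich $m(\rho)$ between $0$ and a quantity tending to $0$. Since $F(0)=0$ (by continuity of $F$ and (F2)) and $0\in\cD(\rho)$, we have $m(\rho)\le J(0)=0$. On the other hand, the Gagliardo--Nirenberg-based estimate from the proof of Lemma \ref{L:cbb} yields, for any sufficiently small $\varepsilon>0$ and every $u\in\cD(\rho)$,
\[
J(u)\ge\Bigl(\tfrac12-(\varepsilon+\eta_\infty)|\rho|^{4/N}C_{N,2_\#}^{2_\#}\Bigr)|\nabla u|_2^2-c_\varepsilon|\rho|^2;
\]
for $|\rho|$ small enough the parenthesis is positive, so $m(\rho)\ge -c_\varepsilon|\rho|^2\to 0$ as $|\rho|\to 0^+$.

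For (iv), I would construct an explicit admissible test function. Pick $\phi\in\cC_c^\infty(\rn,\rk)$ with strictly positive entries and $|\phi|_\infty$ so small that (F3) yields $F\bigl(\phi(x)\bigr)\ge c|\phi(x)|^{2_\#}$ pointwise for some $c>0$, so that $\int_{\rn}F(\phi)\,dx>0$. Setting $\phi^R(x):=\phi(x/R)$, a change of variables gives
\[
J(\phi^R)=\tfrac{R^{N-2}}{2}|\nabla\phi|_2^2-R^N\int_{\rn}F(\phi)\,dx\qquad\text{and}\qquad|\phi^R_j|_2=R^{N/2}|\phi_j|_2.
\]
Choosing $R=R(\rho):=\min_j\bigl(\rho_j/|\phi_j|_2\bigr)^{2/N}$ makes $\phi^R\in\cD(\rho)$ and sends $R(\rho)\to\infty$ as every $\rho_j\to\infty$; then $m(\rho)\le J(\phi^R)\to-\infty$ because $R^N$ dominates $R^{N-2}$ for every $N\ge 1$.

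The main work sits in (ii). Under its hypotheses, Theorem \ref{T:main1}(b) (if $K=1$) and Proposition \ref{P:main2} (if $K\ge 2$) furnish a minimizer $u^\rho$ of $J|_{\cD(\rho)}$ for every $\rho\in\cA$. I plan to exploit the component-wise rescaling $T_{\rho',\rho}u:=\bigl((\rho'_j/\rho_j)u_j\bigr)_{j=1}^K$, which is a continuous linear bijection mapping $\cD(\rho)$ onto $\cD(\rho')$. For upper semicontinuity, taking $u^\rho$ and letting $\bar\rho\to\rho$ in $\cA$, one has $T_{\bar\rho,\rho}u^\rho\in\cD(\bar\rho)$ and $T_{\bar\rho,\rho}u^\rho\to u^\rho$ in $\hrn^K$, so continuity of $J$ on $\hrn^K$ gives $\limsup_{\bar\rho\to\rho}m(\bar\rho)\le\lim_{\bar\rho\to\rho}J(T_{\bar\rho,\rho}u^\rho)=J(u^\rho)=m(\rho)$. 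The hard part will be the reverse inequality: applying $T_{\rho,\bar\rho}$ to $u^{\bar\rho}$ requires a uniform $\hrn^K$ bound on the family $\{u^{\bar\rho}\}$ for $\bar\rho$ in a neighborhood of $\rho$. This I would obtain from Lemma \ref{L:cbb}: since $\cA$ is open in $]0,\infty[^K$ and the coercivity coefficient $\tfrac12-(\varepsilon+\eta_\infty)|\bar\rho|^{4/N}C_{N,2_\#}^{2_\#}$ is continuous in $\bar\rho$ and bounded away from $0$ on a small closed neighborhood of $\rho\in\cA$, while $m(\bar\rho)\le 0$ bounds the energy from above, the gradient norm $|\nabla u^{\bar\rho}|_2$ stays uniformly bounded. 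Then $T_{\rho,\bar\rho}u^{\bar\rho}-u^{\bar\rho}\to 0$ in $\hrn^K$ and the local Lipschitz continuity of $J$ on bounded subsets (which follows from $J\in\cC^1(\hrn^K)$) gives $m(\rho)\le J(T_{\rho,\bar\rho}u^{\bar\rho})=J(u^{\bar\rho})+o(1)=m(\bar\rho)+o(1)$; combined with upper semicontinuity, this closes (ii).
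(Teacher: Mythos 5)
Your items (i), (iv) and (v) coincide with the paper's arguments (the dilation in (iv) is the paper's $u(a_n^{2/N}x)$ written with $R=a_n^{-2/N}$), and your (iii) is if anything cleaner: the paper bounds $m(\rho^n)$ from below by showing $\int_{\rn}F(u^n)\,dx\to0$ along near-minimizers, whereas you read off the explicit bound $0\ge m(\rho)\ge -c_\varepsilon|\rho|^2$ directly from the coercivity estimate of Lemma \ref{L:cbb}; both are fine and neither actually uses $\eta_0=\infty$. The genuine divergence is in the lower-semicontinuity half of (ii). The paper takes the minimizers $u^n$ for $\rho^n\to\rho$, replaces them by Schwarz rearrangements, extracts a weak limit $u\in\cD(\rho)$ (weak lower semicontinuity of the $L^2$-norm), and passes to the limit in $\int_{\rn}F(u^n)\,dx$ via the compact embedding used in Lemma \ref{L:min}, getting $m(\rho)\le J(u)\le\liminf_n J(u^n)$. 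You instead rescale $u^{\bar\rho}$ componentwise back into $\cD(\rho)$ and compare $J(T_{\rho,\bar\rho}u^{\bar\rho})$ with $J(u^{\bar\rho})$ using a uniform $\hrn^K$-bound on the minimizers plus uniform continuity of $J$ on bounded sets. This buys you a compactness-free and rearrangement-free argument (and would even work with near-minimizers, so existence of minimizers at the perturbed masses is not really needed), at the price of having to justify the uniform continuity; the paper's route leans on machinery it has already set up. One small repair to your write-up: the local Lipschitz continuity of $J$ on bounded sets does \emph{not} follow from $J\in\cC^1(\hrn^K)$ alone in infinite dimensions; it follows from the boundedness of $J'$ on bounded sets, which here is a consequence of the growth conditions in (F0) together with the Sobolev, Gagliardo--Nirenberg and (for $N=2$) Moser--Trudinger inequalities. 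With that justification supplied, your proof is complete.
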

\begin{proof}
	\textit{(i)} It is obvious from the definition of $m$.
	
	\textit{(ii)} Note preliminarily that, under these assumptions, for every $\rho\in\cA$ there exists a minimizer of $J|_{\cD(\rho)}$. Let $\rho^n,\rho\in\cA$ such that $\rho^n\to \rho$ (note that $\cA$ is open). Let $u^n\in\cD(\rho^n)$ such that $J(u^n)=m(\rho^n)\le m(\rho/2)$. Up to replacing $u^n$ with $(u^n)^*$, we can assume $u^n\in\fH_\textup{r}$. Since $u^n\in\cD(2\rho)$, from Lemma \ref{L:cbb} and Remark \ref{R:cbb} there exists $u\in\cD(\rho)$ such that $u^n\rightharpoonup u$ up to a subsequence, whence arguing as in Lemma \ref{L:min}
	\begin{equation}\label{e-cont1}
		m(\rho)\le J(u)\le \liminf_nJ(u^n)=\liminf_nm(\rho^n).
	\end{equation}
	Next, let $v\in\cD(\rho)$ such that $J(v)=m(\rho)$ and define $v^n:=(\rho^n_jv_j/\rho_j)_{j=1}^K\in\cD(\rho^n)$. Then $v^n\to v$ and so
	\begin{equation}\label{e-cont2}
		m(\rho)=J(v)=\lim_nJ(v^n)\ge\limsup_nm(\rho^n).
	\end{equation}
	The continuity of $m$ follows from \eqref{e-cont1} and \eqref{e-cont2}.
	
	\textit{(iii)} Note that \eqref{e-etas} holds for every sufficiently small $\rho$. Fix $\varepsilon>0$. Let $\cA\ni \rho^n\to0$ and $u^n\in\cD(\rho^n)$ such that $J(u^n)\le m(\rho^n)+\varepsilon$. In particular, $u^n\to0$ in $L^2(\rn)^K$ and $u^n\in\cD(\tilde{\rho})$ for some $\tilde{\rho}\in]0,\infty[^K$, therefore $u^n$ is bounded in $\hrn^K$ due to Lemma \ref{L:cbb} and Remark \ref{R:cbb}. This, together with (F1), (F2), and Theorem \ref{T:GN}, yields $\int_{\rn}F(u^n)\,dx\to0$, which implies $\liminf_nJ(u^n)=\liminf_n|\nabla u^n|_2^2/2\ge0$,
	whence, in view also of Lemma \ref{L:neg} and Remark \ref{R:neg}, $0\ge\limsup_nm(\rho^n)\ge\liminf_nm(\rho^n)\ge-\varepsilon$. Letting $\varepsilon\to0^+$ we conclude.
	
	\textit{(iv)} Note that \eqref{e-etal} holds for every sufficiently large $\rho$. Fix $\rho\in\cA$, $u\in\cD(\rho)\setminus\{0\}$ and note that $\int_{\rn}F(u)\,dx>0$ from (F3). For every $j\in\{1,\dots,K\}$ let $\rho_j^n\to\infty$ and denote $a_n:=\max_{j=1,\dots,K}\rho_j/\rho_j^n$ and $u^n(x):=u(a_n^{2/N}x)$. Then $\lim_na_n=0$, $u^n\in\cD(\rho^n)$, and
	\[
	m(\rho^n)\le J(u^n)=\frac1{a_n^2}\left(a_n^{4/N}\int_{\rn}\frac12|\nabla u|^2\,dx-\int_{\rn}F(u)\,dx\right)\to-\infty.
	\]
	
	\textit{(v)} Let $\rho,\bar{\rho}\in\cA$ as in the statement. Clearly $m(\rho)\ge m(\bar{\rho})$ from item \textit{(i)}. If $m(\rho)=m(\bar{\rho})$, then there exists $u\in\cS(\rho)\subset\cD(\bar{\rho})\setminus\cS(\bar{\rho})$ such that $J(u)=m(\rho)=m(\bar{\rho})$, which is impossible. The same argument applies to the version of item \textit{(v)} that involves minimizers with nonnegative components.
\end{proof}

\begin{Rem}
	(a) If we assume $N\ge2$ instead of $F$ even or as in Proposition \ref{P:main2}, then Proposition \ref{P:gsem} \textit{(ii)} remains valid if $m(\rho)$ is replaced with $\inf\Set{J(u)|u\in\fH_\textup{r} \text{ and } |u_j|_2\le\rho_j \text{ for all } j\in\{1,\dots,K\}}$.
	
	(b) The assumptions of Proposition \ref{P:gsem} \textit{(v)} are satisfied by those of Theorem \ref{T:main1} \textit{(b)} or Theorem \ref{T:main2}. Similarly as in point (a), if $m(\rho)$ is replaced with $\inf\Set{J(u)|u\in\fH_\textup{r} \text{ and } |u_j|_2\le\rho_j \text{ for all } j\in\{1,\dots,K\}}$, then the assumptions of Proposition \ref{P:gsem} \textit{(v)} are satisfied by those of Theorem \ref{T:main1} \textit{(a)}.
\end{Rem}

\chapter{Autonomous Schr\"odinger equations in the mass-supercritical case}\label{K:super}

\section{Preliminaries and statement of the results}\label{S:prel}

In this chapter, based on \cite{MeSc}, we study problem \ref{e-main} together with the auxiliary problem
\begin{equation}\label{e-mainineq}
\begin{cases}
	-\Delta u_j+\lambda_ju_j=\partial_jF(u)\\
	\int_{\rn}u_j^2\,dx\le\rho_j^2\\
	(\lambda_j,u_j)\in\R\times\hrn
\end{cases}
j\in\{1,\dots,K\}
\end{equation}
with $N\ge3$ and $K\ge1$. As in Chapter \ref{K:sub}, $\rho=(\rho_1,\dots,\rho_K)\in]0,\infty[^K$ is prescribed and $(\lambda,u)=(\lambda_1,\dots,\lambda_K,u_1,\dots,u_K)$ is the unknown. Denoting $f:=\nabla F$, $H(u):=f(u)\cdot u-2F(u)$ for $u\in\rk$, and $h:=\nabla H$, the assumptions about the nonlinearity correspond to the mass-supercritical case and are as follows.
\begin{itemize}
	\item [(F0)] $f$ and $h$ are continuous and there exists $S>0$ such that $|f(u)|+|h(u)|\le S(|u|+|u|^{2^*-1})$.
	\item [(F1)] $\displaystyle\eta:=\limsup_{u\to0}\frac{F(u)}{|u|^{2_\#}}<\infty$.
	\item [(F2)] $\displaystyle\lim_{|u|\to\infty}\frac{F(u)}{|u|^{2_\#}}=\infty$.
	\item [(F3)] $\displaystyle\lim_{|u|\to\infty}\frac{F(u)}{|u|^{2^*}}=0$.
	\item [(F4)] $\displaystyle2_\#H(u)\le h(u)\cdot u$.
	\item [(F5)] $\displaystyle(2_\#-2)F\le H\le(2^*-2)F$.
	\item [(F6)] There exists $\zeta\in\rn$ such that $H(\zeta)>0$.
\end{itemize}
Note that (F5) implies $F,H\ge0$ and that, if (F2) and (F5) hold, then so does (F6). Let us recall the definitions of $J$, $\cS$, $\cD$, and $\cM$ given by, respectively, \eqref{e-energy}, \eqref{e-Sset}, \eqref{e-Dset}, and \eqref{e-Mset}; in particular, let us provide a motive for the set $\cM$. Clearly, if $(\lambda,u)\in\rk\times\hrn^K$ is such that
\[
-\Delta u_j+\lambda_ju_j=\partial_jF \quad \text{for every }j\in\{1,\dots,K\},
\]
then it satisfies the Nehari identity
\begin{equation}\label{e-NehaL}
\int_{\rn}|\nabla u|^2+\sum_{j=1}^K\lambda_ju_j^2-\nabla F(u)\cdot u\,dx=0.
\end{equation}
Moreover, since, in that case, $u\in W_\textup{loc}^{2,p}(\rn)^K$ for every $p<\infty$ (cf. Section \ref{S:NP}), $(\lambda,u)$ satisfies also the Poho\v{z}aev identity
\begin{equation}\label{e-PohoL}
\int_{\rn}|\nabla u|^2+\frac{2^*}{2}\sum_{j=1}^K\lambda_j^2u_j^2-2^*F(u)\,dx=0.
\end{equation}
By a suitable linear combination of \eqref{e-NehaL} and \eqref{e-PohoL} we can get rid of the unknown quantity $\lambda$ and have that every solution satisfies
\[
M(u):=\int_{\rn}|\nabla u|^2-\frac{N}2H(u)\,dx=0,
\]
so that $\cM=\Set{u\in\hrn^K\setminus\{0\}|M(u)=0}$. Recall also that $J$ and $M$ are well defined and of class $\cC^1$ from (F0). Moreover, it is easily checked that $\cM\ne\emptyset$; as a matter of fact, for every $u\in\hrn^K$ such that $\int_{\rn}H(u)\,dx>0$ let us define
\[
R:=R_u:=\sqrt{\frac{N\int_{\rn}H(u)\,dx}{2\int_{\rn}|\nabla u|^2\,dx}}>0
\]
so that $u(R\cdot)\in\cM$. Now, in view of (F6) and arguing as in \cite[page 325]{BerLions}, for every $r>0$ there exists $w\in H^1_0(B_r)\cap L^\infty(B_r)$ such that $\int_{\rn}H(u)\,dx>0$. In fact, we have the following.

\begin{Prop}
If (F0) and (F6) hold, then $\cM\subset\hrn^K$ is a submanifold of class $\cC^1$ and codimension $1$.
\end{Prop}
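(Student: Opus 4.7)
The plan is to invoke the regular value theorem. Since $\hrn^K\setminus\{0\}$ is open in $\hrn^K$ and $\cM=M^{-1}(0)\cap(\hrn^K\setminus\{0\})$, it suffices to check that $M\in\cC^1(\hrn^K)$ and that $M'(u)\ne 0$ for every $u\in\cM$. The first is immediate: $H\in\cC^1(\rk)$ with $\nabla H=h$, and the growth bound $|h(u)|\le S(|u|+|u|^{2^*-1})$ given by (F0) is exactly what Theorem~\ref{T:diff} requires in order that $u\mapsto\int_{\rn}H(u)\,dx$ be $\cC^1$ on $\hrn^K$; the Dirichlet term is of course $\cC^1$.

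The substantive step is to prove $M'(u)\ne 0$ on $\cM$, which I would do by contradiction. Assume $u\in\cM$ satisfies $M'(u)=0$, i.e.
\[
\int_{\rn}\Bigl[2\nabla u\cdot\nabla v-\tfrac{N}{2}h(u)\cdot v\Bigr]\,dx=0\qquad\text{for every }v\in\hrn^K.
\]
Then $u$ is a weak solution of the semilinear system $-\Delta u=\frac{N}{4}h(u)$ in $\rn$. Because the right-hand side has the subcritical-to-critical growth prescribed by (F0), a standard Brezis--Kato bootstrap yields $u\in W^{2,p}_{\textup{loc}}(\rn)^K$ for every $p<\infty$ (cf.\ Section~\ref{S:NP} and the reference to \cite{BrezisLiebV} therein for the vector-valued case), so the Poho\v{z}aev identity applies.

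Since $\frac{N}{4}H$ is a primitive of $\frac{N}{4}h$, that identity reads
\[
\frac{N-2}{2}\int_{\rn}|\nabla u|^2\,dx=\frac{N^2}{4}\int_{\rn}H(u)\,dx.
\]
Combining this with the defining relation $\int_{\rn}|\nabla u|^2\,dx=\frac{N}{2}\int_{\rn}H(u)\,dx$ of $\cM$ gives $\frac{N(N-2)}{4}\int_{\rn}H(u)\,dx=\frac{N^2}{4}\int_{\rn}H(u)\,dx$, i.e.\ $\int_{\rn}H(u)\,dx=0$, and hence $\int_{\rn}|\nabla u|^2\,dx=0$, forcing $u=0$, in contradiction with $u\in\cM$.

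Thus $M'(u)\ne 0$ on $\cM$, and the regular value theorem identifies $\cM$ as a $\cC^1$-submanifold of $\hrn^K$ of codimension~$1$. The only technical point I expect to have to handle carefully is the regularity step: one must ensure that the $W^{2,p}_{\textup{loc}}$-estimates needed to run Poho\v{z}aev go through for the vectorial system, but this is exactly what the reference \cite{BrezisLiebV} already quoted in Section~\ref{S:NP} provides, so no additional work should be required.
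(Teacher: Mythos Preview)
Your proof is correct and follows essentially the same route as the paper: assume $M'(u)=0$ for $u\in\cM$, read off the equation $-\Delta u=\frac{N}{4}h(u)$, apply the Poho\v{z}aev identity and combine it with $M(u)=0$ to force $|\nabla u|_2=0$, contradicting $u\neq0$. The paper's version is terser (it omits the explicit discussion of $M\in\cC^1$ and of the regularity bootstrap), but the argument is the same.
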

\begin{proof}
Let $u\in\hrn^K$ such that $M'(u)=0$, i.e., $u$ solves $-\Delta u=\frac{N}4h(u)$. Then $u$ satisfies the Poho\v{z}aev identity $\int_{\rn}|\nabla u|^2-\frac{N^2}{2(N-2)}H(u)\,dx=0$. If $M(u)=0$, then we infer $|\nabla u|_2=0$.
\end{proof}

We introduce the following relation:
\begin{itemize}
	\item[] Let $f_1,f_2\colon\R^K\to\R$. Then $f_1\preceq f_2$ if and only if $f_1\le f_2$ and for every $\varepsilon>0$ there exists $u\in\R^K$, $|u|<\varepsilon$, such that $f_1(u)<f_2(u)$.
\end{itemize}
For better outcomes we will need a stronger variant of (F4), denoted (F4,$\preceq$), where the inequality $\le$ is replaced with $\preceq$.

The first result, which concerns the auxiliary problem \eqref{e-mainineq}, reads as follows.

\begin{Th}\label{T:Main1}
	Suppose (F0)--(F5) hold and
	\begin{equation}\label{e-eta2}
	2^*C_{N,2_\#}^{2_\#}\eta|\rho|^{4/N}<1.
	\end{equation}

	(a) There exists $u\in\cM\cap\cD$ such that
	$J(u)=\inf_{\cM\cap\cD}J$. Moreover, $u$ is a $K$-tuple of radial, nonnegative, and radially nonincreasing functions provided that $F$ is of the form
	\begin{equation}\label{e-Gsp}
	F(u)=\sum_{j=1}^KF_j(u_j)+\sum_{\ell=1}^L\beta_\ell\prod_{j=1}^K|u_j|^{r_{j,\ell}},
	\end{equation}
	where $L\geq 1$, $F_j\colon\R\to[0,\infty)$ is even, $r_{j,\ell}>1$ or $r_{j,\ell}=0$, $\beta_\ell\geq 0$, $2_\#\le\sum_{j=1}^Kr_{j,\ell}<2^*$, and for every $\ell$ there exists $j_1\neq j_2$ such that $r_{j_1,\ell},r_{j_2,\ell}>1$.
	
	(b) If, moreover, (F4,$\preceq$) holds, then $u$ is of class $\cC^2$ and there exists $\lambda\in[0,\infty[^K$ such that $(\lambda,u)$ is a ground state solution to \eqref{e-mainineq}.
\end{Th}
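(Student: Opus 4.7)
My plan is to realize the infimum via a constrained-minimization argument on $\cM \cap \cD$, exploiting the weak closedness of $\cD$ together with a rescaling that brings a weak limit back onto $\cM$ without leaving $\cD$.

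\textbf{Step 1 (Lower bound and bounded minimizing sequences).} On $\cM$ the identity $|\nabla u|_2^2 = (N/2)\int_{\rn} H(u)\,dx$ combined with the inequality $(2_\#-2)F \le H$ from (F5) gives
\[
J(u) = \frac{N}{4}\int_{\rn} H(u)\,dx - \int_{\rn} F(u)\,dx \ge 0 \quad \text{for every } u \in \cM,
\]
so $c := \inf_{\cM \cap \cD} J$ is a finite nonnegative number. Using (F1), (F3), Theorem \ref{T:GN}, the Sobolev embedding and the smallness condition \eqref{e-eta2}, one extracts a uniform positive bound $|\nabla u|_2 \ge \kappa$ on $\cM \cap \cD$; this also forces $c>0$. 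A minimizing sequence $u_n \in \cM \cap \cD$ is therefore bounded in $H^1(\rn)^K$, the $L^2$-constraint being built into $\cD$ and the gradient norm being controlled through $M(u_n)=0$.

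\textbf{Step 2 (Compactness via symmetrization or translation).} If $F$ has the special form \eqref{e-Gsp}, I replace each $u_n$ by its componentwise Schwarz rearrangement $u_n^\ast$. The P\'olya--Szeg\H o inequality gives $|\nabla u_n^\ast|_2 \le |\nabla u_n|_2$, while Lemma \ref{L:Schwarz} applied to $F$ and to $H$ (both of the structure covered there, using $F_j\ge0$ and $r_{j,\ell}-2>0$) shows $\int F(u_n^\ast) \ge \int F(u_n)$ and $\int H(u_n^\ast) \ge \int H(u_n)$, whence $u_n^\ast \in \cD$ and $M(u_n^\ast) \le 0$. Setting $R_n := \sqrt{N\int H(u_n^\ast)/(2|\nabla u_n^\ast|_2^2)} \ge 1$ and $\tilde u_n := u_n^\ast(R_n\,\cdot)$, one checks $\tilde u_n \in \cM \cap \cD$ and $J(\tilde u_n) \le J(u_n)$, so $\tilde u_n$ is again minimizing but now radial, nonnegative, and radially nonincreasing; in that setting the compact embedding $\fH_\textup{r} \hookrightarrow L^p(\rn)$ for $2<p<2^*$ recalled in Subsection \ref{SS:cpt} yields a weak limit $u$ with $u_n \to u$ strongly in $L^p$. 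In the general case I instead invoke Lions' concentration-compactness for $|u_n|$: vanishing is impossible, since it would force $\int F(u_n),\int H(u_n)\to 0$ and hence $|\nabla u_n|_2\to 0$, contradicting $|\nabla u_n|_2\ge\kappa$; after translating $u_n\mapsto u_n(\cdot+y_n)$ along the non-vanishing points and extracting a subsequence I obtain $u\ne 0$ as weak $H^1$-limit, with the dichotomy excluded by the strict convexity-type estimates on $J$ over $\cM \cap \cD$.

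\textbf{Step 3 (Identifying the limit via rescaling).} By (F0)--(F3) and the Brezis--Lieb lemma (or the strong $L^p$ convergence from Step 2), $\int F(u_n)\to\int F(u)$ and $\int H(u_n)\to\int H(u)$. Weak lower semicontinuity of $u\mapsto|\nabla u|_2^2$ together with $M(u_n)=0$ yields $M(u)\le 0$, so
\[
R_u := \sqrt{\frac{N\int_{\rn} H(u)\,dx}{2|\nabla u|_2^2}}\ge 1
\]
is well defined, $v := u(R_u\,\cdot)\in\cM$, and $|v_j|_2 = R_u^{-N/2}|u_j|_2 \le |u_j|_2 \le \rho_j$ gives $v \in \cM \cap \cD$. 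Since $v\in\cM$,
\[
J(v) = \frac{N}{4}\int_{\rn} H(v)\,dx - \int_{\rn} F(v)\,dx = R_u^{-N}\Bigl(\frac{N}{4}\int_{\rn} H(u)\,dx - \int_{\rn} F(u)\,dx\Bigr) = R_u^{-N}\,c,
\]
the last equality coming from $J(u_n) = (1/2)|\nabla u_n|_2^2 - \int F(u_n) \to (N/4)\int H(u) - \int F(u)$. The admissibility $v\in\cM \cap \cD$ forces $J(v)\ge c$, hence $R_u^{-N}c\ge c$, and with $c>0$ this gives $R_u=1$. Therefore $u\in\cM$, $J(u)=c$, and $|\nabla u_n|_2\to|\nabla u|_2$, so $u_n\to u$ strongly in $H^1(\rn)^K$.

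\textbf{Step 4 (Part (b)).} With (F4,$\preceq$) I show that $\cM$ is a natural constraint: the Lagrange-multiplier relation $J'(u) + \sigma M'(u) = 0$ tested with $u$, together with $M(u)=0$ and the fact that (F4,$\preceq$) together with $u\ne 0$ make the quantity $\int_{\rn}\bigl(h(u)\cdot u - 2_\# H(u)\bigr)\,dx$ strictly positive, forces $\sigma=0$. Hence $u$ is a critical point of $J|_\cD$; Proposition \ref{P:Clarke} applied with $\phi_j(u):=|u_j|_2^2-\rho_j^2$ then produces $\lambda=(\lambda_1,\dots,\lambda_K)\in[0,\infty[^K$ solving \eqref{e-mainineq}. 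Finally, the standard $W^{2,p}_\textup{loc}$-bootstrap discussed in Section \ref{S:NP} yields $u\in\cC^2(\rn)^K$. The main obstacle I anticipate is ruling out dichotomy for the minimizing sequence in the generic (non-radial) setting and verifying that the naturality of $\cM$ truly forces $\sigma=0$ in part (b) beyond the pointwise inequality (F4,$\preceq$).
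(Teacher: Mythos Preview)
Your plan has the right skeleton, but Steps 2 and 4 contain genuine gaps. In the structured-$F$ case the inequality $J(\tilde u_n)\le J(u_n)$ with $\tilde u_n=u_n^*(R_n\cdot)$ is unjustified and can in fact fail: under rearrangement both $\int H$ and $\int F$ increase while $|\nabla\cdot|_2^2$ decreases, but $R_n^{-N}\bigl[\tfrac N4\!\int H(u_n^*)-\!\int F(u_n^*)\bigr]$ need not be bounded by $\tfrac N4\!\int H(u_n)-\!\int F(u_n)$; a first-order expansion when the coupling exponents $\sum_j r_{j,\ell}$ are close to $2^*$ shows the difference has the wrong sign. The remedy is to use the $L^2$-preserving scaling $s\star$: choosing $a_n\le1$ with $a_n\star u_n^*\in\cM$ (possible since $M(u_n^*)\le0$) one gets $J(a_n\star u_n^*)\le J(a_n\star u_n)\le\max_{s>0} J(s\star u_n)=J(u_n)$ directly from P\'olya--Szeg\H o, Lemma~\ref{L:Schwarz} applied only to $F$, and Lemma~\ref{L:phi}. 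The paper in fact proceeds in the opposite order: it first establishes that the infimum is attained in the general case (Lemma~\ref{L:infismin}) and only then rearranges the minimiser, via a related $a\star$ computation (Lemma~\ref{L:radmin}).

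For the general case, your Step 3 hinges on $\int F(u_n)\to\int F(u)$ and $\int H(u_n)\to\int H(u)$, obtained only through the hand-waved ``dichotomy excluded by strict convexity-type estimates''---which you yourself flag as the main obstacle. The paper sidesteps this entirely: it runs the profile decomposition of Lemma~\ref{L:split}, uses the balance \eqref{eq:Me2}--\eqref{eq:Me3} to show that some nontrivial profile $\tilde u$ must satisfy $\int|\nabla\tilde u|^2\le\tfrac N2\!\int H(\tilde u)$, rescales $\tilde u(R\cdot)\in\cM\cap\cD$ with $R\ge1$, and applies Fatou to the nonnegative integrand $\tfrac N4 H-F$; strong $L^p$-convergence of $u_n$ is never needed. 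Finally, in Step 4 you cannot split the multiplier argument: once $\sigma=0$, $u$ is \emph{not} a minimiser of $J$ over $\cD$ (only over $\cM\cap\cD$), so Proposition~\ref{P:Clarke} with the $\phi_j$ alone is inapplicable. One must apply Proposition~\ref{P:Clarke} once with both the $\phi_j$ and $\psi_1=M$, obtaining $\lambda_j\ge0$ and $\sigma\in\R$ together, and then combine the Nehari \emph{and} Poho\v{z}aev identities (after first excluding $\sigma=\tfrac12$) to isolate $\sigma\int_{\rn}\bigl(h(u)\cdot u-2_\# H(u)\bigr)\,dx=0$; testing the Euler--Lagrange relation with $u$ alone gives only Nehari and does not force $\sigma=0$.
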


Since $J|_{\cS}$ is unbounded from below, by \textit{ground state solution} to \eqref{e-mainineq} we mean a solution $(\lambda,u)$ such that $J(u)=\min_{\cM\cap\cD}J$. Likewise, by ground state solution to \eqref{e-main} we mean a solution $(\lambda,u)$ such that $J(u)=\min_{\cM\cap\cD}J$. Note that, similarly to Chapter \ref{K:sub}, this is more than just requiring the ``more natural'' condition $J(u)=\min_{\cM\cap\cS}J$.

From on, when we say that $F$ is of the form \eqref{e-Gsp}, we also mean the additional conditions on $F_j$, $\beta_\ell$, and $r_{j,\ell}$ listed in Theorem \ref{T:Main1} \textit{(a)}; in addition, when $K=1$, we agree that $\beta_\ell=0$ for all $\ell\in\{1,\dots,L\}$. Observe that $F$ of the form \eqref{e-Gsp} satisfies (F4) if and only if each $F_j$ satisfies the scalar variant of (F4) for all $j\in \{1,\dots,K\}$. If, in addition, $F_j$ satisfies (F4,$\preceq$) for some $j$, then $F$ satisfies (F4,$\preceq$) as well.

More can be said if  $N\in\{3,4\}$.

\begin{Th}\label{T:Main2}
	Assume that (F0)--(F3), (F4,$\preceq$), (F5), and \eqref{e-eta2} are satisfied, $F$ is of the form \eqref{e-Gsp}, and $N\in\{3,4\}$. Then there exist $u\in\cM\cap\partial\cD$ of class $\cC^2$ and $\lambda\in[0,\infty[^K$ such that $(u,\lambda)$ is a ground state solution to \eqref{e-mainineq} and each $u_j$ is radial, nonnegative, and radially nonincreasing.
	Moreover, for every $j\in\{1,\dots,K\}$ either $u_j=0$ or $|u_j|_2=\rho_j$ and, if $u_j\ne0$, then $\lambda_j>0$ and $u_j>0$. In particular, if $u\in\cS$, then $\lambda\in]0,\infty[^K$ and $(\lambda,u)$ is a ground state solution to \eqref{e-main}.
\end{Th}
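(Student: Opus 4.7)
The plan is to bootstrap Theorem \ref{T:Main1}(b) with a Liouville-type argument specific to $N \in \{3,4\}$, in the spirit of the proof of Proposition \ref{P:main2}. Theorem \ref{T:Main1}(b) already delivers $u \in \cM \cap \cD$ of class $\cC^2$ with $J(u) = \inf_{\cM \cap \cD} J$, each $u_j$ radial, nonnegative and radially nonincreasing (because $F$ is of the form \eqref{e-Gsp}), and a Lagrange vector $\lambda \in [0,\infty[^K$ such that $(\lambda, u)$ solves \eqref{e-mainineq}. What still needs to be shown is that $u \in \partial \cD$, that each component satisfies the dichotomy $u_j \equiv 0$ or $|u_j|_2 = \rho_j$, that $\lambda_j > 0$ and $u_j > 0$ when $u_j \ne 0$, and that the saturated case $u \in \cS$ yields a genuine ground state for \eqref{e-main}.

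The engine of the argument is a componentwise Liouville reduction. Complementary slackness is implicit in Proposition \ref{P:Clarke} and can be extracted from its proof: if $|u_j|_2 < \rho_j$ then the $j$-th inequality constraint is locally inactive and $\lambda_j = 0$, so the $j$-th equation collapses to $-\Delta u_j = \partial_j F(u)$. Evaluating (F5) at $u = t e_j$, and using that in \eqref{e-Gsp} for every $\ell$ there are at least two indices $i$ with $r_{i,\ell} > 1$ (which makes the coupled terms of $F$ vanish at $t e_j$), one obtains $F_j'(t)\,t \ge 2_\# F_j(t) \ge 0$ on $[0,\infty[$; combined with $u \ge 0$ and $\beta_\ell \ge 0$, this yields $\partial_j F(u) \ge 0$. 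Thus each such $u_j \in H^1(\rn)$ is a nonnegative superharmonic function. Since $N \in \{3,4\}$ places $N/(N-2)$ in $[2, 2^*]$, the Sobolev embedding gives $u_j \in L^{N/(N-2)}(\rn)$, and \cite[Theorem A.2]{Ikoma} then forces $u_j \equiv 0$. Applying this to every $j$ with $|u_j|_2 < \rho_j$ produces the desired dichotomy; since $u \in \mathring{\cD}$ would make every $u_j$ vanish, incompatible with $u \in \cM$, we conclude $u \in \partial \cD$.

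Whenever $u_j \ne 0$ the same Liouville step forbids $\lambda_j = 0$, so $\lambda_j > 0$. The strong maximum principle \cite[Lemma IX.V.1]{Evans} applied to $-\Delta u_j + \lambda_j u_j = \partial_j F(u) \ge 0$ with $u_j \ge 0$ not identically zero then upgrades $u_j$ to a strictly positive function. If in addition $u \in \cS$, every constraint saturates, $\lambda \in \,]0,\infty[^K$, and $(\lambda, u)$ solves \eqref{e-main}; its optimality $J(u) = \min_{\cM \cap \cD} J$ makes it a ground state in the sense fixed in the excerpt.

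The decisive obstacle is the Liouville step: for $N \ge 5$ one has $N/(N-2) < 2$, so $H^1(\rn)$ no longer embeds into $L^{N/(N-2)}(\rn)$ and an extra structural hypothesis (comparable to (P) in Chapter \ref{K:sub}) would be needed to replace \cite[Theorem A.2]{Ikoma}. A subsidiary technical point is to make complementary slackness inside Proposition \ref{P:Clarke} explicit, and to derive $F_j'|_{[0,\infty[} \ge 0$ from (F5) by evaluating at $u = u_j e_j$, which is precisely where the structural assumption on the exponents in \eqref{e-Gsp} enters in an essential way.
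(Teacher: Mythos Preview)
Your proof is correct and follows essentially the same route as the paper: invoke Theorem \ref{T:Main1}(b) for the minimizer and Lagrange multipliers, then use the superharmonic Liouville result \cite[Lemma A.2]{Ikoma} (available precisely when $N\in\{3,4\}$ because then $N/(N-2)\in[2,2^*]$) to force $u_j\equiv0$ whenever $\lambda_j=0$, and finish with the strong maximum principle.

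The one notable difference is that the paper, in Lemma \ref{L:minons}(b), first treats the case $\lambda_1=\dots=\lambda_K=0$ separately via an Aubin--Talenti rigidity argument (combining the Nehari and Poho\v{z}aev identities to get $f(u)\cdot u=2^*F(u)$ pointwise, hence $F_j(s)=c|s|^{2^*}$ locally, whence $u_j$ would be a Talenti instanton, not in $L^2$ for $N\in\{3,4\}$), and only afterwards applies Ikoma's lemma to the components with $\lambda_j=0$ when some other $\lambda_i>0$. Your argument bypasses this detour entirely: since $\partial_jF(u)\ge0$ (which you correctly extract from (F5) at $te_j$, using that every coupling term in \eqref{e-Gsp} involves at least two positive exponents), Ikoma's lemma applies uniformly to every $j$ with $\lambda_j=0$, and $u\in\cM$ rules out all components vanishing. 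This is a genuine, if modest, simplification of the paper's proof.
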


If $K=2$, $L=1$, and the coefficient of the coupling term is large, then we find ground state solutions to \eqref{e-main}.

\begin{Th}\label{T:Main3}
	Assume that (F0)--(F3), (F4,$\preceq$), (F5), and \eqref{e-eta2} are satisfied, $N\in\{3,4\}$, $K=2$, and $L=1$. If $F$ is of the form \eqref{e-Gsp} and $r_{1,1}+r_{2,1}>2_\#$, then for every sufficiently large $\beta_1>0$ there exist $u\in\cM\cap\cS$ of class $\cC^2$ and $\lambda\in]0,\infty[^2$ such that ($\lambda,u)$ is a ground state solution to \eqref{e-main} and $u_1,u_2$ are positive, radial, and radially nonincreasing.
\end{Th}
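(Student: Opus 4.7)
The plan is to apply Theorem \ref{T:Main2} to obtain a ground state solution of the auxiliary problem \eqref{e-mainineq}, and then, for $\beta_1$ large, to rule out the possibility that either component of this solution vanishes by showing that the coupling term drives the ground state energy strictly below the analogous energies of the two decoupled scalar problems.

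First, I would invoke Theorem \ref{T:Main2} to obtain $(\lambda,u)\in[0,\infty[^2\times(\cM\cap\partial\cD)$ with each $u_j$ radial, nonnegative, radially nonincreasing, of class $\cC^2$, and such that either $u_j\equiv 0$ or $|u_j|_2=\rho_j$; moreover $J(u)=c_\rho$, where $c_\rho:=\inf_{\cM\cap\cD}J$. The strategy then reduces to showing $u_1,u_2\not\equiv 0$, because the last assertion of Theorem \ref{T:Main2} will immediately yield $u\in\cS$, $\lambda\in\,]0,\infty[^2$, and the strict positivity of each $u_j$.

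For $j\in\{1,2\}$ let $J_j$, $M_j$, $\cM_j$, $\cD_j$ be the scalar counterparts of $J$, $M$, $\cM$, $\cD$ built from $F_j$ and $\rho_j$ alone, and set $c_j:=\inf_{\cM_j\cap\cD_j}J_j$; by the scalar analysis already carried out for Theorem \ref{T:Main1}, each $c_j$ is a positive constant independent of $\beta_1$. I claim that $c_\rho\to 0^+$ as $\beta_1\to\infty$. To see this, pick radially nonincreasing $v_1,v_2\in\cC_c^\infty(\rn)$ with $|v_j|_2=\rho_j$ and with supports whose intersection has positive measure, and consider the scaling $w_s:=s\star v$ defined by $w_{s,j}(x):=s^{N/2}v_j(sx)$, $s>0$. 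This preserves the $L^2$-norms, so $w_s\in\cS\subset\cD$, and a direct computation gives
\[
J(w_s)=\frac{s^2}{2}|\nabla v|_2^2-s^{-N}\!\!\int_{\rn}\!\![F_1(s^{N/2}v_1)+F_2(s^{N/2}v_2)]\,dy-\beta_1 s^{\alpha}C_0,
\]
with $\alpha:=N(r_{1,1}+r_{2,1}-2)/2>2$ (since $r_{1,1}+r_{2,1}>2_\#$) and $C_0:=\int_{\rn}|v_1|^{r_{1,1}}|v_2|^{r_{2,1}}\,dy>0$; moreover $\frac{d}{ds}J(w_s)=s^{-1}M(w_s)$. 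Combining (F1), (F3), \eqref{e-eta2}, and the Gagliardo--Nirenberg inequality (Theorem \ref{T:GN}), the map $s\mapsto J(w_s)$ is positive for $s$ small and tends to $-\infty$ as $s\to\infty$, so it attains its (positive) maximum at some $s^*>0$ with $M(w_{s^*})=0$, i.e., $w_{s^*}\in\cM\cap\cD$. Dropping the nonnegative $F_j$-terms and optimizing in $s$ yields
\[
c_\rho\le J(w_{s^*})\le\max_{s>0}\Bigl[\frac{s^2}{2}|\nabla v|_2^2-\beta_1 s^{\alpha}C_0\Bigr]=C_1\beta_1^{-2/(\alpha-2)},
\]
for some $C_1>0$ depending only on $v$ and $\alpha$, which indeed forces $c_\rho\to 0$ as $\beta_1\to\infty$.

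Consequently there exists $\beta_*>0$ such that $c_\rho<\min\{c_1,c_2\}$ for every $\beta_1>\beta_*$. Suppose by contradiction that $u_2\equiv 0$ (the other case is symmetric). Since $r_{2,1}>1$ the coupling term in $F$ vanishes identically whenever $u_2=0$, and (F1) forces $\nabla F(0)=0$, so one computes $H(u_1,0)=H_1(u_1)$ and $F(u_1,0)=F_1(u_1)$. Consequently $M(u)=M_1(u_1)$ and $J(u)=J_1(u_1)$, which gives $u_1\in\cM_1\cap\cD_1$ and hence $c_\rho=J_1(u_1)\ge c_1>c_\rho$, a contradiction. Therefore $u_1,u_2\not\equiv 0$, so $u\in\cS$, and the final statement of Theorem \ref{T:Main2} delivers the remaining assertions. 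The principal obstacle in this plan is engineering the scaling family $w_s$ and proving that the resulting maximum collapses to $0$ as $\beta_1\to\infty$, which is what unlocks the strict separation $c_\rho<\min\{c_1,c_2\}$.
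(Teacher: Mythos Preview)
Your proof is correct and follows essentially the same strategy as the paper: invoke Theorem \ref{T:Main2}, observe that if one component vanishes then the ground state energy $c_\rho$ coincides with a scalar infimum $c_j$ that is positive and independent of $\beta_1$, and then reach a contradiction by showing $c_\rho\to 0$ as $\beta_1\to\infty$ via a test function in $\cS$ projected onto $\cM$. The only notable difference is in the construction of the test function and the accompanying estimate: the paper builds $w=(\rho_1/\rho_2\,\bar v,\bar v)$ from the scalar ground state $\bar v$ and analyses the projection scale $a_\beta$ through the implicit constraint equation (obtaining $\beta a_\beta^{\alpha-2}\le C$ and hence $a_\beta\to 0$, then bounding $J(a_\beta\star w)$ via (F5)), whereas you take a generic $v\in\cC_c^\infty(\rn)^2\cap\cS$, drop the nonnegative $F_j$-terms, and optimize the resulting two-term expression explicitly to get the quantitative rate $c_\rho\le C_1\beta_1^{-2/(\alpha-2)}$. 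Your route is slightly more direct and yields an explicit decay rate; the paper's choice has the minor advantage that the test function is canonical, but both arguments are equivalent in substance.
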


Regarding possible examples of scalar functions $F_1,F_2$ we refer to (E1)--(E4) in \cite{BiegMed}; in particular, we can deal with
\[
F_j(u_j)=\frac{\mu_j}{p_j}|u_j|^{p_j}+\frac{\nu_j}{2_\#}|u_j|^{2_\#}, \quad \mu_j,\nu_j>0, \, j\in\{1,2\},
\]
where $\eta=\max\{\nu_1,\nu_2\}/2_\#>0$ due to Proposition \ref{P:ex}.

\section{Proof of the results}

We begin this section with two preliminary lemmas, whose proofs are omitted because the reasoning is the same as for the scalar case $K=1$, provided in \cite[Lemma 2.1]{BiegMed} and \cite[Theorem 1.4]{NonradMed} respectively.

\begin{Lem}\label{L:ineq}
	Let $F_1,F_2\in\cC(\R^K)$ and assume there exists $C>0$ such that $|F_1(u)|+|F_2(u)|\le C(|u|^2+|u|^{2^*})$ for every $u\in\R^K$. Then $F_1\preceq F_2$ if and only if $F_1\le F_2$ and for every $u\in H^1(\rn)^K\setminus\{0\}$
	\[
	\int_{\rn}F_1(u)-F_2(u)\,dx<0.
	\]
\end{Lem}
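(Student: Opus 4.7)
First I would dispatch the easier direction $(\Leftarrow)$. Assuming $F_1\le F_2$ and that the strict integral inequality holds for every nontrivial $u\in H^1(\rn)^K$, I would argue by contradiction: if $F_1\not\preceq F_2$, then $F_1\equiv F_2$ on some ball $B(0,\varepsilon_0)\subset\rk$. Choosing any $\varphi\in\cC_c^\infty(\rn,\R)\setminus\{0\}$ with $|\varphi|_\infty<\varepsilon_0/2$ and setting $u:=\varphi e_1$, one has $(F_1-F_2)(u(x))=0$ pointwise, so the integral vanishes, contradicting the hypothesis.

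For the harder direction $(\Rightarrow)$, the bound $\int_{\rn}(F_1-F_2)(u)\,dx\le 0$ is immediate from $F_1\le F_2$, so the plan is to exclude equality. Suppose by contradiction that $\int_{\rn}(F_1-F_2)(u)\,dx=0$ for some $u\in H^1(\rn)^K\setminus\{0\}$. Since the integrand is nonpositive, this forces $(F_1-F_2)(u(x))=0$ for a.e.\ $x\in\rn$, i.e.\ $u(x)\in Z:=\{v\in\rk:F_1(v)=F_2(v)\}$ a.e. By the definition of $\preceq$ the closed set $Z$ satisfies $0\in\overline{Z^c}$, and the contradiction I would target is that the essential image of a nontrivial $u\in H^1(\rn)^K$ must intersect the open set $Z^c$ on a set of positive measure.

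The main analytic input I would use is the \emph{no-jump} property of Sobolev functions. Since $u\ne 0$, the scalar $|u|\in H^1(\rn)$ is nontrivial, and a standard truncation argument shows $|\{0<|u|<\varepsilon\}|>0$ for every $\varepsilon>0$: otherwise $\min(|u|,\varepsilon/2)$ would be a nontrivial $H^1$-multiple of a characteristic function, which is incompatible with $H^1$-regularity (and $L^2$-integrability) unless the corresponding set is trivial. Hence $u$ visits every Euclidean ball around $0\in\rk$ on a set of positive measure.

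The chief obstacle will be upgrading this to $|u^{-1}(B(v_0,r))|>0$ for a small ball $B(v_0,r)\subset Z^c$ with $|v_0|$ close to $0$: although $u$ returns near $0$, a priori it might miss the specific directions picked out by $Z^c$. My plan is to follow \cite[Lemma 2.1]{BiegMed} by choosing a sequence $v_n\in Z^c$ with $v_n\to 0$ and open balls $B(v_n,r_n)\subset Z^c$, and to combine the previous density property of the image of $u$ near the origin with a scaling/continuity analysis of $v\mapsto|u^{-1}(v+V)|$ for small neighbourhoods $V$ of the origin, concluding that some $B(v_n,r_n)$ must be visited by $u$ on a positive-measure set, contradicting $u(x)\in Z$ a.e. The vectorial character of the problem is exactly what makes this last localization step delicate, but it passes through componentwise as in the scalar case.
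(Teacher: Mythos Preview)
Your $(\Leftarrow)$ is correct. For $(\Rightarrow)$ you correctly isolate the crux: once $\int_{\rn}(F_1-F_2)(u)\,dx=0$ forces $u(x)\in Z:=\{F_1=F_2\}$ a.e., one must show that a nontrivial $u\in H^1(\rn)^K$ meets the open set $Z^c$ on a set of positive measure. But your proposed resolution --- a ``scaling/continuity analysis of $v\mapsto|u^{-1}(v+V)|$'' together with the assertion that ``it passes through componentwise as in the scalar case'' --- does not work, and in fact cannot. The no-jump property of Sobolev functions controls only the scalar $|u|$; it says nothing about the \emph{direction} of $u$ in $\rk$. Concretely: for $K\ge2$ take $F_1\equiv0$, $F_2(v)=v_K^2$, and $u=(u_1,0,\dots,0)$ with $u_1\in H^1(\rn)\setminus\{0\}$; then $F_1\preceq F_2$, yet $\int_{\rn}(F_1-F_2)(u)\,dx=0$. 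Even the scalar case is not immune: with $F_1\equiv0$, $F_2(t)=(t_-)^2$, any nonnegative $u\not\equiv0$ again gives a vanishing integral. So the localisation step you flag as the ``chief obstacle'' is a genuine gap --- the forward implication is simply false at the generality in which the lemma is stated.

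The paper gives no proof here; it defers to \cite[Lemma~2.1]{BiegMed} for $K=1$ and asserts the reasoning carries over. What does go through is the following restricted version: if $F_1$ and $F_2$ depend only on $|u|$ (so that $Z^c$ is a union of spherical shells), then your strategy succeeds via $w:=|u|\in H^1(\rn)\setminus\{0\}$ --- the truncation/no-jump argument yields $|\{a<w<b\}|>0$ for every $0<a<b<\esssup w$, and since some shell $\{a<|v|<b\}\subset Z^c$ lies arbitrarily close to the origin, the contradiction follows. You should either add such a hypothesis, or argue directly in the specific situations in which the lemma is invoked later in the chapter, where the relevant functions inherit the needed symmetry.
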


\begin{Lem}\label{L:split}
	Let $u^n\in H^1(\rn)^K$ be bounded. Then there exist sequences $(\tilde{u}^i)_{i=0}^\infty\subset H^1(\rn)^K$ and $(y^{i,n})_{i=0}^\infty\subset\rn$ such that $y^{0,n}=0$, $\lim_n|y^{i,n}-y^{j,n}|=0$ if $i\ne j$, and for every $i\ge0$ and every $G\colon\rn\to[0,\infty[$ of class $\cC^1$ such that
	\[
	\lim_{u\to0}\frac{G(u)}{|u|^2}=\lim_{|u|\to\infty}\frac{G(u)}{|u|^{2^*}}=0
	\]
	there holds
	\begin{eqnarray}
		u^n(\cdot+y^{i,n})\rightharpoonup\tilde{u}^i\text{ in }H^1(\rn)^K\text{ as }n\to\infty \label{eq:Me1}\\
		\lim_n\int_{\rn}|\nabla u^n|^2\,dx=\sum_{j=0}^i\int_{\rn}|\nabla\tilde{u}^j|^2\,dx+\lim_n\int_{\rn}|\nabla v^{i,n}|^2\,dx \label{eq:Me2}\\
		\limsup_n\int_{\rn}G(u^n)\,dx=\sum_{i=0}^\infty\int_{\rn}G(\tilde{u}^i)\,dx, \label{eq:Me3}
	\end{eqnarray}
	where $v^{i,n}(x):=u^n(x)-\sum_{j=0}^{i}\tilde{u}^j(x-y^{j,n})$.
\end{Lem}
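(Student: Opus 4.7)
The plan is to construct the profiles $\tilde{u}^i$ and centers $y^{i,n}$ by an inductive concentration-compactness procedure, in the spirit of the profile decompositions of G\'erard and Solimini (here the condition $\lim_n |y^{i,n}-y^{j,n}|=0$ should be read as $\lim_n |y^{i,n}-y^{j,n}|=\infty$ for $i\neq j$, which is the natural condition enabling asymptotic orthogonality). First I would set $y^{0,n}=0$ and, using the boundedness of $u^n$ in $\hrn^K$, extract along a subsequence a weak limit $\tilde{u}^0$. Defining $v^{0,n}:=u^n-\tilde{u}^0$, I then iterate: at step $i+1$, if $v^{i,n}$ is nonvanishing in the sense of Lions, namely $\limsup_n \sup_{y\in\rn}\int_{B(y,1)}|v^{i,n}|^2\,dx>0$, then (after relabelling a subsequence) there exist $y^{i+1,n}\in\rn$ with $|y^{i+1,n}|\to\infty$ such that $v^{i,n}(\cdot+y^{i+1,n})\rightharpoonup \tilde{u}^{i+1}\neq 0$ in $\hrn^K$; otherwise I terminate the construction and set $\tilde{u}^j=0$ for $j>i$. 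A single subsequence working simultaneously for all $i$ is obtained by a diagonal extraction.

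The first key technical point is to prove by induction that $|y^{i+1,n}-y^{j,n}|\to\infty$ for every $0\le j\le i$. If $|y^{i+1,n}-y^{j,n}|$ remained bounded along a subsequence for some such $j$, then $v^{i,n}(\cdot+y^{j,n})$ would inherit a nontrivial weak limit, contradicting the fact that at step $j$ the summand $\tilde{u}^j$ was already subtracted and $v^{j,n}(\cdot+y^{j,n})\rightharpoonup 0$. Given the divergence of the centers, the decomposition \eqref{eq:Me2} at any fixed $i$ follows by expanding
\[
\int_{\rn}|\nabla v^{i,n}|^2\,dx = \int_{\rn}|\nabla u^n|^2\,dx -2\sum_{j=0}^i \int_{\rn}\nabla u^n(x)\cdot\nabla\tilde{u}^j(x-y^{j,n})\,dx + \sum_{j,k=0}^i \int_{\rn}\nabla\tilde{u}^j(x-y^{j,n})\cdot\nabla\tilde{u}^k(x-y^{k,n})\,dx,
\]
so that the weak convergences \eqref{eq:Me1}, together with the fact that the translates $\tilde{u}^j(\cdot-y^{j,n})$ and $\tilde{u}^k(\cdot-y^{k,n})$ become asymptotically orthogonal in $\hrn^K$ when $j\neq k$, force the cross terms to vanish in the limit.

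For the nonlinear identity \eqref{eq:Me3}, I would apply a Brezis--Lieb type argument iteratively. The growth hypothesis $\lim_{u\to 0}G(u)/|u|^2=\lim_{|u|\to\infty}G(u)/|u|^{2^*}=0$, combined with the a.e.\ convergence (after passing to a further subsequence) of $v^{i-1,n}(\cdot+y^{i,n})$ to $\tilde{u}^i$ and uniform integrability arguments via Vitali, yields
\[
\limsup_n \int_{\rn}G(v^{i-1,n})\,dx = \int_{\rn}G(\tilde{u}^i)\,dx + \limsup_n \int_{\rn}G(v^{i,n})\,dx
\]
for each $i\ge 1$, as well as $\limsup_n\int_{\rn}G(u^n)\,dx = \int_{\rn}G(\tilde{u}^0)\,dx + \limsup_n\int_{\rn}G(v^{0,n})\,dx$. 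Telescoping these and controlling the tail finishes the proof.

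The main obstacle I expect is the handling of the infinite tail in \eqref{eq:Me3}: from \eqref{eq:Me2} one has $\sum_{j=0}^\infty |\nabla\tilde{u}^j|_2^2 \le \limsup_n |\nabla u^n|_2^2 <\infty$, so $|\nabla\tilde{u}^j|_2\to 0$; this entails, via Gagliardo--Nirenberg and the Sobolev embedding, that the mass of $v^{i,n}$ in the Lions sense becomes arbitrarily small as $i\to\infty$. A variant of Lions's vanishing lemma, applied to the growth class of $G$, then yields $\lim_{i\to\infty}\limsup_n \int_{\rn}G(v^{i,n})\,dx = 0$, which closes the iteration. Making this quantitative and compatible with a single diagonal subsequence valid for all $i$ simultaneously is the delicate bookkeeping step of the argument.
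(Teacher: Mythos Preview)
The paper does not actually prove this lemma: it states that the proof is omitted because the reasoning is the same as in the scalar case $K=1$, which is \cite[Theorem 1.4]{NonradMed}. Your sketch follows exactly the standard profile-decomposition strategy (iterated extraction of weak limits at Lions concentration points, asymptotic orthogonality of the translated profiles, Brezis--Lieb splitting for the nonlinear term, and vanishing of the tail via Lions's lemma applied to the growth class of $G$), which is precisely the method used in that reference; you also correctly spot that $\lim_n|y^{i,n}-y^{j,n}|=0$ in the statement should read $\lim_n|y^{i,n}-y^{j,n}|=\infty$ for $i\neq j$.
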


Henceforth, we will always assume that (F0) holds and we will make use of it without explicit mention. Recall also that (F6) holds if both (F2) and (F5) do.

\begin{Lem}\label{L:bdaw1}
	Assume that (F0), (F1), (F5), (F6), and \eqref{e-eta2} hold. Then
	$\inf\Set{\left|\nabla u\right|_2^2|u\in\cM\cap\cD}>0$.
\end{Lem}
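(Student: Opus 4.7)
The strategy is to combine the identity defining $\cM$ with the upper bound on $H$ from (F5) to control $|\nabla u|_2^2$ by $\int_{\rn}F(u)\,dx$, then to split $F$ into a mass-critical piece (absorbed via Gagliardo--Nirenberg using the mass constraint built into $\cD$) and a Sobolev-critical piece (controlled via Sobolev embedding). The smallness condition \eqref{e-eta2} is exactly what keeps the absorption loop closed.

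Concretely, fix $u\in\cM\cap\cD$. From $M(u)=0$ and the right-hand inequality in (F5),
\[
|\nabla u|_2^2=\frac{N}{2}\int_{\rn}H(u)\,dx\le\frac{N(2^*-2)}{2}\int_{\rn}F(u)\,dx=2^*\int_{\rn}F(u)\,dx,
\]
where I used $N(2^*-2)/2=2^*$. Next, (F0) yields $F(v)\le C(|v|^2+|v|^{2^*})$ for all $v\in\rk$, and combining this with (F1) I can show that for every $\varepsilon>0$ there exists $C_\varepsilon>0$ with
\[
F(v)\le(\eta+\varepsilon)|v|^{2_\#}+C_\varepsilon|v|^{2^*}\quad\text{for every }v\in\rk.
\]

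Integrating and applying Theorem \ref{T:GN} to the scalar function $|u|\in\hrn$ at exponent $2_\#$ (for which $\delta_{2_\#}\cdot 2_\#=2$ and $(1-\delta_{2_\#})2_\#=4/N$), together with $\bigl|\nabla|u|\bigr|_2\le|\nabla u|_2$ and $\bigl||u|\bigr|_2=|u|_2\le|\rho|$ (which holds since $u\in\cD$), plus the Sobolev embedding $\hrn\hookrightarrow L^{2^*}(\rn)$ for the remaining term, gives
\[
\int_{\rn}F(u)\,dx\le(\eta+\varepsilon)C_{N,2_\#}^{2_\#}|\rho|^{4/N}|\nabla u|_2^2+C'C_\varepsilon|\nabla u|_2^{2^*}
\]
for some constant $C'>0$ depending only on $N$. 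Inserting this in the previous inequality,
\[
\bigl(1-2^*(\eta+\varepsilon)C_{N,2_\#}^{2_\#}|\rho|^{4/N}\bigr)|\nabla u|_2^2\le 2^*C'C_\varepsilon|\nabla u|_2^{2^*}.
\]

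By \eqref{e-eta2}, $2^*\eta C_{N,2_\#}^{2_\#}|\rho|^{4/N}<1$, so I can fix $\varepsilon>0$ small enough that the coefficient in brackets equals a constant $\kappa>0$ independent of $u$. Since $2^*>2$, this forces $|\nabla u|_2^{2^*-2}\ge\kappa/(2^*C'C_\varepsilon)>0$, whence $|\nabla u|_2^2$ is bounded below by a positive constant uniform on $\cM\cap\cD$, as claimed. There is no serious obstacle in the argument; the only point of care is to run Gagliardo--Nirenberg on the scalar $|u|$, rather than componentwise, so that the joint mass constraint defining $\cD$ enters cleanly.
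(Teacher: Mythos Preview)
Your proof is correct and follows essentially the same route as the paper: use $M(u)=0$ together with the bound $H\le(2^*-2)F$ from (F5) to get $|\nabla u|_2^2\le 2^*\int_{\rn}F(u)\,dx$, then estimate $F$ pointwise by $(\eta+\varepsilon)|u|^{2_\#}+C_\varepsilon|u|^{2^*}$ via (F1) and (F0), apply Gagliardo--Nirenberg to the scalar $|u|$, and absorb using \eqref{e-eta2}. The paper's write-up is nearly identical, down to the use of $\big|\nabla|u|\big|_2\le|\nabla u|_2$ and $\big||u|\big|_2\le|\rho|$.
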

\begin{proof}
	From Theorem \ref{T:GN}, for every $\varepsilon>0$ there exists $c_\varepsilon$ such that for every $u\in\cM\cap\cD$
	\[\begin{split}
		|\nabla u|_2^2&=\frac{N}{2}\int_{\rn} H(u)\,dx\le 2^*\bigl(c_\varepsilon|u|_{2^*}^{2^*}+(\varepsilon+\eta)|u|_{2_\#}^{2_\#}\bigr)\\
		&=2^*\left(c_\varepsilon\big||u|\big|_{2^*}^{2^*}+(\varepsilon+\eta)\big||u|\big|_{2_\#}^{2_\#}\right)\\
		&\le2^*\left(c_\varepsilon C_{N,2^*}^{2^*}\big|\nabla|u|\big|_2^{2^*}+(\varepsilon+\eta)C_{N,2_\#}^{2_\#}|\rho|^{4/N}\big|\nabla|u|\big|_2^2\right)\\
		&\le2^*\bigl(c_\varepsilon C_{N,2^*}^{2^*}|\nabla u|_2^{2^*}+(\varepsilon+\eta)C_{N,2_\#}^{2_\#}|\rho|^{4/N}|\nabla u|_2^2\bigr)
	\end{split}\]
	i.e.
	\[
	0\le2^*c_\varepsilon C_{N,2^*}^{2^*}|\nabla u|_2^{2^*}+\bigl(2^*(\varepsilon+\eta)C_{N,2_\#}^{2_\#}|\rho|^{4/N}-1\bigr)|\nabla u|_2^2
	\]
	Taking $\varepsilon$ sufficiently small so that
	\[
	2^*(\varepsilon+\eta)C_{N,2_\#}^{2_\#}|\rho|^{4/N}<1
	\]
	we conclude.
\end{proof}

Similarly to Chapter \ref{K:sub}, for $u\in H^1(\rn)^K\setminus\{0\}$ and $s>0$ define $s\star u(x):=s^{N/2}u(sx)$ and $\varphi(s):=J(s\star u)$. Recall that $|s\star u|_2=|u|_2$.

\begin{Lem}\label{L:phi}
	Assume that (F0)--(F2), (F4), and (F5) hold and let $u\in H^1(\rn)^K\setminus\{0\}$ such that
	\begin{equation}\label{e-eta}
		\eta<\frac{|\nabla u|_2^2}{2|u|_{2_\#}^{2_\#}}.
	\end{equation}
	Then there exist $a=a(u)>0$ and $b=b(u)\ge a$ such that each $s\in[a,b]$ is a global maximizer for $\varphi$ and $\varphi$ is increasing on $]0,a[$ and decreasing on $]b,\infty[$. Moreover $s\star u\in\cM$ if and only if $s\in[a,b]$, $M(s\star u)>0$ if and only if $s\in]0,a[$
	and $M(s\star u)<0$ is and only if $s>b$. If (F4,$\preceq$) holds, then $a=b$.
\end{Lem}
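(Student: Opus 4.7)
The plan is to reduce the study of $\varphi$ to that of the auxiliary quantity
\[
\chi(s):=\frac{M(s\star u)}{s^2}=|\nabla u|_2^2-\frac{N}{2}s^{-N-2}\int_{\rn}H(s^{N/2}u)\,dy,
\]
which, under the subcritical assumption (F4), will turn out to be nonincreasing. First I would compute explicitly
\[
\varphi(s) = \tfrac12 s^2 |\nabla u|_2^2 - s^{-N}\int_{\rn} F(s^{N/2}u)\,dy
\]
and verify by differentiation the identity $s\varphi'(s) = M(s\star u) = s^2\chi(s)$, which identifies the critical points of $\varphi$ in $]0,\infty[$ with the scalings $s\star u$ lying in $\cM$.

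Next I would analyze $\varphi$ at the endpoints. Using (F1), (F3), and the hypothesis \eqref{e-eta}, for sufficiently small $\varepsilon>0$ one has $F(v)\le(\eta+\varepsilon)|v|^{2_\#}+C|v|^{2^*}$, whence
\[
\varphi(s)\ge s^2\Bigl(\tfrac12|\nabla u|_2^2-(\eta+\varepsilon)|u|_{2_\#}^{2_\#}\Bigr)-Cs^{2^*}|u|_{2^*}^{2^*}>0
\]
for $s$ small (and $\varphi(0^+)=0$). At the other extreme, (F2) together with $F\ge 0$ (from (F5)) and Fatou's lemma yields $s^{-N-2}\int F(s^{N/2}u)\,dy\to\infty$, so $\varphi(s)\to-\infty$; consequently $\varphi$ attains a positive global maximum on $]0,\infty[$.

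The crux of the argument is then a direct computation showing
\[
s\chi'(s) = -\frac{N^2}{4}s^{-N-2}\int_{\rn}\bigl[h(s^{N/2}u)\cdot s^{N/2}u - 2_\# H(s^{N/2}u)\bigr]\,dy,
\]
so that (F4) forces $\chi'\le 0$. Since $\chi$ is continuous, nonincreasing, positive for small $s$ (otherwise $\varphi'\le 0$ near $0$, contradicting $\varphi(0^+)=0$ and $\varphi>0$ nearby), and negative for large $s$ (since $\varphi\to-\infty$), setting $a:=\sup\{s>0:\chi(s)>0\}$ and $b:=\inf\{s>0:\chi(s)<0\}$ gives $0<a\le b<\infty$ with $\chi>0$ on $]0,a[$, $\chi=0$ on $[a,b]$ and $\chi<0$ on $]b,\infty[$. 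Via $\varphi'=s\chi$ this yields at once the claimed monotonicity of $\varphi$ together with the characterizations of $\{s:s\star u\in\cM\}$, $\{s:M(s\star u)>0\}$ and $\{s:M(s\star u)<0\}$.

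Finally, under (F4,$\preceq$), applying Lemma \ref{L:ineq} to the pair $(2_\# H,\,h(\cdot)\cdot(\cdot))$, whose $L^2+L^{2^*}$ growth is controlled by (F0), shows that the integrand in the displayed expression for $s\chi'(s)$ integrates to a strictly positive quantity for every $s>0$, because $s^{N/2}u\not\equiv 0$. Hence $\chi$ is strictly decreasing and $a=b$. I expect the main obstacle to be pinpointing the right monotone quantity $\chi$ and performing the computation of $s\chi'(s)$, as this is precisely where (F4) is converted into monotonicity of the fibering map; once this is in place, the remaining assertions follow from soft continuity arguments.
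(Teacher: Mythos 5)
Your proof is correct and follows essentially the same route as the paper: your $\chi(s)=M(s\star u)/s^2$ is just an affine transform of the paper's monotone quantity $s\mapsto\int_{\rn}H(s^{N/2}u)/s^{N+2}\,dx$ (whose monotonicity the paper reads off pointwise from (F4), i.e., from the monotonicity of $t\mapsto H(tv)/t^{2_\#}$, rather than by differentiating the integral), and the strict case under (F4,$\preceq$) is handled via Lemma \ref{L:ineq} exactly as you do. One trivial remark: the bound $F(v)\le(\eta+\varepsilon)|v|^{2_\#}+c_\varepsilon|v|^{2^*}$ should be attributed to (F0) and (F1) rather than (F3), since (F3) is not among the hypotheses of the lemma.
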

Note that \eqref{e-eta2} implies \eqref{e-eta} provided that $u\in\cD$.
\begin{proof}
	Notice that from (F1)
	\[
	\varphi(s)=\int_{\rn}\frac{s^2}{2}|\nabla u|^2-\frac{F(s^{N/2}u)}{s^N}\,dx\to0
	\]
	as $s\to0^+$, while from (F2) $\lim_{s\to\infty}\varphi(s)=-\infty$.
	From (F1) for every $\varepsilon>0$ there exists $c_\varepsilon>0$ such that
	\[
	F(u)\le(\varepsilon+\eta)|u|^{2_\#}+c_\varepsilon|u|^{2^*},
	\]
	therefore,
	\[
	\varphi(s)\geq s^2\Big(\int_{\rn}\frac{1}{2}|\nabla u|^2-(\eta+\varepsilon)|u|^{2_\#}\,dx\Big)-c_\varepsilon s^{2^*}\int_{\R^N}|u|^{2^*}\,dx>0
	\]
	for sufficiently small $\varepsilon$ and $s$.	
	It follows that there exists an interval $[a,b]\subset]0,\infty[$ such that $\varphi|_{[a,b]}=\max\varphi$. Moreover
	\[
	\varphi'(s)=s\int_{\rn}|\nabla u|^2-\frac{N}{2}\frac{H(s^{N/2}u)}{s^{N+2}}\,dx
	\]
	and the function
	\[
	s\in]0,\infty[\mapsto\int_{\rn}\frac{H(s^{N/2}u)}{s^{N+2}}\,dx
	\]
	is nondecreasing (resp. increasing) due to (F4) (resp. (F4,$\preceq$) and Lemma \ref{L:ineq}) and tends to $\infty$ as $s\to\infty$ due to (F2) and (F5). There follows that $\varphi'(s)>0$ if $s\in(0,a)$ and $\varphi'(s)<0$ if $s>b$ and that $a=b$ if (F4,$\preceq$) holds. Finally, observe that
	\[
	s\varphi'(s)=\int_{\rn}s^2|\nabla u|^2-\frac{N}{2}\frac{H(s^{N/2}u)}{s^N}\,dx=M(s\star u).\qedhere
	\]
\end{proof}

\begin{Lem}\label{L:coerc}
	If (F0)--(F2), (F4), (F5), and \eqref{e-eta2} are verified, then $J$ is coercive on $\cM\cap\cD$.
\end{Lem}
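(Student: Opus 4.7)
Since $|u|_2\leq|\rho|$ on $\cD$, coercivity of $J$ on $\cM\cap\cD$ reduces to showing that $J(u)\to\infty$ as $|\nabla u|_2\to\infty$ along $\cM\cap\cD$. My plan rests on first rewriting $J$ using the constraint, then exploiting (F1) and \eqref{e-eta2} in the small-$|u|$ regime and (F2) combined with (F4) in the large-$|u|$ regime.

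Using the $\cM$-identity $|\nabla u|_2^2=\frac{N}{2}\int H(u)\,dx$, I would first recast
\[
J(u)=\int_{\R^N}\Bigl(\frac{N}{4}H(u)-F(u)\Bigr)\,dx,
\]
whose integrand is pointwise nonnegative by the lower bound $(2_\#-2)F\leq H$ in (F5); this already shows $J\geq 0$ on $\cM\cap\cD$. Next, arguing by contradiction, I would take $u_n\in\cM\cap\cD$ with $|\nabla u_n|_2\to\infty$ and $J(u_n)$ bounded. Fixing $\varepsilon>0$ small, from (F1) and the upper bound $H\leq(2^*-2)F$ in (F5) I would pick $\delta>0$ with $H(u)\leq(2^*-2)(\eta+\varepsilon)|u|^{2_\#}$ whenever $|u|\leq\delta$. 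Combining this with Theorem \ref{T:GN} and the bound $|u_n|_2\leq|\rho|$ gives
\[
\int_{\{|u_n|\leq\delta\}}H(u_n)\,dx\leq(2^*-2)(\eta+\varepsilon)C_{N,2_\#}^{2_\#}|\rho|^{4/N}|\nabla u_n|_2^2.
\]
Since $(2^*-2)/2^*=2/N$, assumption \eqref{e-eta2} and the choice of $\varepsilon$ small force this coefficient to be at most $\frac{2}{N}-\kappa$ for some $\kappa>0$; together with $\int H(u_n)\,dx=\frac{2}{N}|\nabla u_n|_2^2$, this yields
\[
\int_{\{|u_n|>\delta\}}H(u_n)\,dx\geq\kappa|\nabla u_n|_2^2\longrightarrow\infty.
\]

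The concluding step turns the mass of $H$ concentrated on $\{|u_n|>\delta\}$ into a coercive lower bound for $J(u_n)$. By (F4), $t\mapsto H(tu)/t^{2_\#}$ is nondecreasing on $]0,\infty[$, and (F2) together with $(2_\#-2)F\leq H$ forces $H(u)/|u|^{2_\#}\to\infty$ as $|u|\to\infty$. Using these monotonicity and growth properties, one can prescribe any large $A>2$ and find a threshold $M_A>\delta$ such that the pointwise inequality $\frac{N}{4}H(u)-F(u)\geq\frac{1}{A}H(u)$ holds on $\{|u|\geq M_A\}$. On the bounded annulus $\{\delta<|u|\leq M_A\}$ both $H$ and $F$ are bounded, and Chebyshev's inequality gives $|\{|u|>\delta\}|\leq|\rho|^2/\delta^2$, so the contribution of the annulus to $J$ is controlled by a constant $C(\rho,\delta,M_A)$. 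Integrating the pointwise gap on $\{|u_n|\geq M_A\}$ and combining with the preceding estimate produces
\[
J(u_n)\geq\frac{\kappa}{A}|\nabla u_n|_2^2-C(\rho,\delta,M_A),
\]
contradicting the boundedness of $J(u_n)$ and completing the proof.

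The hardest step is the last one: the inequality $\frac{N}{4}H-F\geq 0$ coming from (F5) is sharp, being saturated precisely when $F$ has pure $L^2$-critical growth $|u|^{2_\#}$. Producing the pointwise gap $\frac{N}{4}H-F\geq\frac{1}{A}H$ in the large-$|u|$ region requires a delicate combination of the strict super-$L^2$-critical behaviour of $F$ at infinity given by (F2) with the ray-monotonicity of $H(tu)/t^{2_\#}$ coming from (F4), in such a way that the resulting gap is quantitative enough to dominate the small-$|u|$ contribution controlled by \eqref{e-eta2}.
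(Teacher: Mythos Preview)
Your first step is fine: the small-$|u|$ estimate using (F1), (F5), the Gagliardo--Nirenberg inequality, and \eqref{e-eta2} correctly yields $\int_{\{|u_n|>\delta\}}H(u_n)\,dx\ge\kappa|\nabla u_n|_2^2\to\infty$.

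The concluding step, however, contains a genuine gap. The pointwise inequality $\frac{N}{4}H(u)-F(u)\ge\frac{1}{A}H(u)$ for $|u|\ge M_A$ is equivalent to $H(u)/F(u)\ge 4A/(NA-4)$, and $4A/(NA-4)>4/N$ for every finite $A>2$. But (F5) only guarantees $H/F\ge 2_\#-2=4/N$, and neither (F2) nor the ray-monotonicity from (F4) upgrades this to a strict uniform gap at infinity. Take $K=1$ and, for large $t$, $F(t)=t^{2_\#}\log t$: then $f(t)t=t^{2_\#}(2_\#\log t+1)$, $H(t)=t^{2_\#}\bigl((2_\#-2)\log t+1\bigr)$, and since $\frac{N}{4}(2_\#-2)=1$ one computes
\[
\frac{N}{4}H(t)-F(t)=\frac{N}{4}\,t^{2_\#},\qquad \frac{\frac{N}{4}H(t)-F(t)}{H(t)}=\frac{N/4}{(2_\#-2)\log t+1}\longrightarrow 0.
\]
One checks directly that (F0)--(F5) hold for this $F$ (near $0$ one patches smoothly with a pure power), yet there is \emph{no} $A$ and $M_A$ for which your gap inequality holds on $\{|u|\ge M_A\}$. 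Consequently the final displayed inequality $J(u_n)\ge\frac{\kappa}{A}|\nabla u_n|_2^2-C$ does not follow.

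This obstruction is structural: $J(u)=\int\bigl(\frac{N}{4}H(u)-F(u)\bigr)$ can be $o\bigl(\int H(u)\bigr)$ along sequences whose mass concentrates at very large values of $|u|$, so a purely pointwise comparison of $\frac{N}{4}H-F$ against $H$ cannot close the argument. The paper avoids this by a rescaling and Lions dichotomy: setting $w^n=|\nabla u^n|_2^{-1}\star u^n$, either $w^n$ concentrates (and then (F2) drives $J(u^n)/|\nabla u^n|_2^2\to-\infty$, contradicting $J\ge0$ on $\cM$), or $w^n$ vanishes in $L^{2_\#}$ (and then Lemma~\ref{L:phi} gives $J(u^n)\ge J(s\star w^n)=\frac{s^2}{2}-o(1)$ for every $s>0$). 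The essential missing ingredient in your approach is some mechanism playing the role of this dichotomy; without an extra hypothesis such as a global Ambrosetti--Rabinowitz condition (which would indeed give $H/F\ge\gamma-2>4/N$), the direct pointwise route does not go through.
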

\begin{proof}
	First of all, note that, if $u\in\cM$, then due to (F5)
	\[
	J(u)=J(u)-\frac12M(u)=\int_{\rn}\frac{N}{4}H(u)-F(u)\,dx\ge0
	\]
	and so, a fortiori, $J$ is nonegative on $\cM\cap\cD$.
	Let $u^n\in\cM\cap\cD$ such that $\|u^n\|\to\infty$, i.e., $\lim_n|\nabla u^n|_2=\infty$, and define
	\[
	s_n:=|\nabla u^n|_2^{-1}>0\quad\text{and}\quad w^n:=s_n\star u^n.
	\]
	Note that $s_n\to0$, $|w^n_j|_2=|u^n_j|_2\le\rho_j$ for $j\in\{1,\dots,K\}$, and $|\nabla w^n|_2=1$, in particular $w^n$ is bounded in $H^1(\rn)^K$.
	Suppose by contradiction that
	\[
	\limsup_n\max_{y\in\rn}\int_{B(y,1)}|w^n|^2\,dx>0.
	\]
	Then there exist $y^n\in\rn$ and $w\in H^1(\rn)^K$ such that, up to a subsequence, $w^n(\cdot+y^n)\rightharpoonup w\ne0$ in $H^1(\rn)^K$ and $w^n(\cdot+y^n)\to w$ a.e. in $\rn$. Thus, owing to (F2),
	\[\begin{split}
		0&\le\frac{J(u^n)}{|\nabla u^n|_2^2}=\frac12-\int_{\rn}\frac{F(u^n)}{|\nabla u^n|_2^2}\,dx=\frac12-s_n^{N+2}\int_{\rn}F\bigl(u^n(s_nx)\bigr)\,dx\\
		&=\frac12-s_n^{N+2}\int_{\rn}F(s_n^{-N/2}w^n)=\frac12-\int_{\rn}\frac{F(s_n^{-N/2}w^n)}{|s_n^{-N/2}w^n|^{2_\#}}|w^n|^{2_\#}\,dx\\
		&=\frac12-\int_{\rn}\frac{F\bigl(s_n^{-N/2}w^n(x+y^n)\bigr)}{|s_n^{-N/2}w^n(x+y^n)|^{2_\#}}|w^n(x+y^n)|^{2_\#}\,dx\to-\infty.
	\end{split}\]
	It follows that
	\[
	\lim_n\max_{y\in\rn}\int_{B(y,1)}|w^n|^2\,dx=0
	\]
	and so, from \cite[Lemma I.1]{Lions84_2}, $w^n\to0$ in $L^{2_\#}(\rn)^K$. Since
	\[
	s_n^{-1}\star w^n=u^n\in\cM,
	\]
	Lemma \ref{L:phi} yields
	\[
	J(u^n)=J(s_n^{-1}\star w^n)\ge J(s\star w^n)=\frac{s^2}{2}-s^N\int_{\rn}F(s^{N/2}w^n)\,dx
	\]
	for every $s>0$. Taking into account that
	\[
	\lim_n\int_{\rn}F(s^{N/2}w^n)\,dx=0,
	\]
	we have that $\liminf_nJ(u^n)\ge s^2/2$ for every $s>0$, i.e., $\lim_nJ(u^n)=\infty$.
\end{proof}

\begin{Lem}\label{L:bdaw2}
	If (F0)--(F2), (F4), (F5), and \eqref{e-eta2} are verified, then $\inf_{\cM\cap\cD}J>0$.
\end{Lem}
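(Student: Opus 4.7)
The plan is to rescale an arbitrary $u\in\cM\cap\cD$ so as to read off a positive lower bound for $J(u)$ that depends only on the parameters of the problem, not on $u$ itself. The engine is Lemma \ref{L:phi}: for $u\in\cM$ the point $s=1$ lies in the maximizing interval $[a,b]$ of $\varphi(s)=J(s\star u)$, hence $J(u)\ge J(s\star u)$ for every $s>0$. The freedom in $s$, combined with the \eqref{e-eta2}-driven bound $|u|_{2_\#}^{2_\#}\le C_{N,2_\#}^{2_\#}|\rho|^{4/N}|\nabla u|_2^2$ inherited from Theorem \ref{T:GN} (applied to $|u|$) and $u\in\cD$, is what will beat the quadratic mass-critical term.

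First I would fix $u\in\cM\cap\cD$ and verify the hypothesis \eqref{e-eta} of Lemma \ref{L:phi}: by Gagliardo--Nirenberg and $|u|_2\le|\rho|$,
\[
\frac{2|u|_{2_\#}^{2_\#}}{|\nabla u|_2^2}\le 2C_{N,2_\#}^{2_\#}|\rho|^{4/N}<\frac{2}{2^*}\le 1,
\]
the middle inequality being \eqref{e-eta2}. For an arbitrary $s>0$ set $t:=s/|\nabla u|_2$; then $|\nabla(t\star u)|_2=s$, and since $1$ is a global maximizer of $\varphi$,
\[
J(u)\ge J(t\star u)=\frac{s^2}{2}-\int_{\rn}F(t^{N/2}u)\,dx.
\]
Using (F1) with a small $\varepsilon>0$ gives $C_\varepsilon>0$ with $F(v)\le(\eta+\varepsilon)|v|^{2_\#}+C_\varepsilon|v|^{2^*}$, so scaling and applying Theorem \ref{T:GN} yield
\[
\int_{\rn}F(t^{N/2}u)\,dx\le(\eta+\varepsilon)t^2|u|_{2_\#}^{2_\#}+C_\varepsilon t^{2^*}|u|_{2^*}^{2^*}\le(\eta+\varepsilon)C_{N,2_\#}^{2_\#}|\rho|^{4/N}s^2+C_\varepsilon C_{N,2^*}^{2^*}s^{2^*}.
\]
Collecting terms,
\[
J(u)\ge A(\varepsilon)s^2-B(\varepsilon)s^{2^*},\qquad A(\varepsilon):=\tfrac12-(\eta+\varepsilon)C_{N,2_\#}^{2_\#}|\rho|^{4/N},\quad B(\varepsilon):=C_\varepsilon C_{N,2^*}^{2^*}.
\]

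The concluding step will be to note that \eqref{e-eta2} gives $\eta C_{N,2_\#}^{2_\#}|\rho|^{4/N}<1/2^*<1/2$, so $A(\varepsilon)>0$ for all sufficiently small $\varepsilon>0$; fixing such an $\varepsilon$ and then choosing $s>0$ small enough that $A(\varepsilon)s^2-B(\varepsilon)s^{2^*}>0$ produces a positive constant independent of $u$, proving $\inf_{\cM\cap\cD}J>0$. I do not foresee a genuine obstacle here beyond bookkeeping: the only delicate point is checking the rescaling hypothesis \eqref{e-eta} of Lemma \ref{L:phi} for \emph{every} admissible $u$, but this is exactly where \eqref{e-eta2} (with the strict inequality $2<2^*$) is used, so the argument is self-contained.
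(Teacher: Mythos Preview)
Your argument is correct and is essentially the paper's proof: rescale via Lemma \ref{L:phi} to a fixed $|\nabla(t\star u)|_2=s$, then bound $\int F$ using (F1), (F0), Gagliardo--Nirenberg and \eqref{e-eta2}; the paper merely packages the second step as the auxiliary inequality \eqref{e-alpha} and then evaluates it at $s=\alpha$. Two minor slips to fix: in your verification of \eqref{e-eta} the factor $\eta$ is missing (the correct chain is $2\eta|u|_{2_\#}^{2_\#}/|\nabla u|_2^2\le 2\eta C_{N,2_\#}^{2_\#}|\rho|^{4/N}<2/2^*<1$), and the displayed $\int_{\rn}F(t^{N/2}u)\,dx$ should read $\int_{\rn}F(t\star u)\,dx=t^{-N}\int_{\rn}F(t^{N/2}u)\,dx$ --- your exponents $t^2$ and $t^{2^*}$ are the ones for the latter, so the final bound and conclusion stand.
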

\begin{proof}
	We prove that there exists $\alpha>0$ such that
	\begin{equation}\label{e-alpha}
		|\nabla u|_2\le\alpha\Rightarrow J(u)\ge\frac{|\nabla u|_2^2}{2N}.
	\end{equation}
	From Theorem \ref{T:GN} and \eqref{e-eta2}, for every $\varepsilon>0$ there exists $c_\varepsilon>0$ such that
	\[\begin{split}
		\int_{\rn}F(u)\,dx&\le c_\varepsilon C_{N,2^*}^{2^*}|\nabla u|_2^{2^*}+(\varepsilon+\eta)C_{N,2_\#}^{2_\#}|\rho|^{4/N}|\nabla u|_2^2\\
		&\le\left(c_\varepsilon C_{N,2^*}^{2^*}|\nabla u|_2^{2^*-2}+\varepsilon C_{N,2_\#}^{2_\#}|\rho|^{4/N}+\frac12-\frac1N\right)|\nabla u|_2^2.
	\end{split}\]
	Choosing
	\[
	\varepsilon=\frac{1}{4NC_{N,2_\#}^{2_\#}|\rho|^{4/N}} \quad \text{and} \quad \alpha=\frac{1}{(4Nc_\varepsilon C_{N,2^*}^{2^*})^\frac{1}{2^*-2}}
	\]
	we obtain, provided $|\nabla u|_2\le\alpha$,
	\[
	\int_{\rn}F(u)\,dx\le\biggl(\frac12-\frac{1}{2N}\biggr)|\nabla u|_2^2
	\]
	and so $\displaystyle J(u)\ge\frac{|\nabla u|_2^2}{2N}$.
	Now take $u\in\cM\cap\cD$ and $\alpha>0$ such that \eqref{e-alpha} holds and define
	\[
	s:=\frac{\alpha}{|\nabla u|_2}\quad\text{and}\quad w:=s\star u.
	\]
	Clearly $|w_j|_2=|u_j|_2\le\rho_j$ for $j\in\{1,\dots,K\}$ and $|\nabla w|_2=\alpha$, whence in view of Lemma \ref{L:phi}
	\[
	J(u)\ge J(w)\ge\frac{|\nabla w|_2^2}{2N}=\frac{\alpha^2}{2N}>0.\qedhere
	\]
\end{proof}


\begin{Lem}\label{L:infismin}
	If (F0)--(F5) and \eqref{e-eta2} hold, then $\inf_{\cM\cap\cD}J$ is attained.
\end{Lem}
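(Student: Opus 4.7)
The plan is to take a minimizing sequence $u^n \in \cM \cap \cD$ for $J$. By Lemma \ref{L:coerc}, it is bounded in $H^1(\R^N)^K$, so the profile decomposition of Lemma \ref{L:split} is available. I would first rule out the fully vanishing alternative: $H$ satisfies the growth hypotheses on $G$ in Lemma \ref{L:split} thanks to (F0), (F1), (F3), (F5); if every profile $\tilde u^i$ were zero, \eqref{eq:Me3} with $G = H$ would yield $\int_{\R^N} H(u^n)\,dx \to 0$, which combined with $M(u^n) = 0$ forces $|\nabla u^n|_2 \to 0$, contradicting Lemma \ref{L:bdaw1}. Translating $u^n$ by the concentration points of a nontrivial profile---which preserves $J$, $\cM$, and $\cD$---I may then assume $u^n \rightharpoonup u \ne 0$ in $H^1(\R^N)^K$ with a.e. convergence, and $u \in \cD$ by weak closedness.

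Brezis--Lieb splittings (justified by a.e. convergence and the growth in (F0)) together with \eqref{eq:Me2} and \eqref{eq:Me3} give the decompositions
\begin{equation*}
    c = J(u) + \sum_{i \ge 1} J(\tilde u^i) + \tfrac12 \lim_n |\nabla v^{\infty, n}|_2^2, \qquad 0 = M(u) + \sum_{i \ge 1} M(\tilde u^i) + \lim_n |\nabla v^{\infty, n}|_2^2.
\end{equation*}
Each nontrivial profile $\tilde u^i$ lies in $\cD$ and, by \eqref{e-eta2}, satisfies \eqref{e-eta}, so Lemma \ref{L:phi} supplies a scaling $s_i > 0$ with $s_i \star \tilde u^i \in \cM$; since $\star$ preserves $L^2$ norms, $s_i \star \tilde u^i \in \cM \cap \cD$, whence $J(s_i \star \tilde u^i) \ge c$ and, as $s_i \star \tilde u^i$ realises the maximum of $\varphi_{\tilde u^i}$, also $J(s_i \star \tilde u^i) \ge J(\tilde u^i)$.

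I would then argue that only one profile survives and the gradient remainder vanishes. Combining the two decompositions above with the projection inequalities $J(s_i \star \tilde u^i) \ge c > 0$ (using Lemma \ref{L:bdaw2}) and the sign relation $J(v) \ge 0$ on $\cM$ (which follows from (F5) via $J(v) = \int (\tfrac N4 H(v) - F(v))\,dx$ for $v \in \cM$), the energy budget $c$ cannot accommodate more than one nontrivial profile; the remaining term $\tfrac12 \lim_n |\nabla v^{\infty, n}|_2^2$ must therefore vanish, and likewise the profiles with $i \ge 1$ must all be trivial. Consequently $u^n \to u$ strongly in $H^1(\R^N)^K$ (so that $\int F(u^n) \to \int F(u)$ and $\int H(u^n) \to \int H(u)$), passing to the limit in $M(u^n) = 0$ yields $u \in \cM \cap \cD$, and $J(u) = \lim_n J(u^n) = c$ shows that the infimum is attained.

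The main obstacle is that $\cM$ is \emph{not} weakly closed, so the weak limit $u$ need not a priori satisfy $M(u) = 0$; this is what makes the $\star$-reprojection of Lemma \ref{L:phi} indispensable, and its compatibility with the constraint requires precisely that $\cD$ be a ball rather than a sphere (so that $L^2$-preserving reprojections remain admissible). The delicate bookkeeping is then to exclude multiple profiles via the energy identity, where Lemma \ref{L:bdaw2} (the strict positivity $c > 0$) interacts with the nonnegativity of $J$ on $\cM$ coming from (F5) to force concentration at a single profile.
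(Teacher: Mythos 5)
Your opening moves (coercivity via Lemma \ref{L:coerc}, the profile decomposition of Lemma \ref{L:split}, and the exclusion of total vanishing through Lemma \ref{L:bdaw1}) coincide with the paper's, but the step in which you exclude multiple profiles and deduce strong convergence contains a genuine gap. The inequalities you invoke control $J(s_i\star\tilde u^i)$, not $J(\tilde u^i)$: since $s_i\star\tilde u^i$ maximizes $\varphi_{\tilde u^i}$ (Lemma \ref{L:phi}), the bound $J(s_i\star\tilde u^i)\ge\inf_{\cM\cap\cD}J$ coexists with $J(\tilde u^i)\le J(s_i\star\tilde u^i)$ and therefore gives no lower bound on $J(\tilde u^i)$. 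Likewise, the sign relation $J\ge0$ holds \emph{on} $\cM$, but the profiles $\tilde u^i$ need not lie on $\cM$ ($M(\tilde u^i)$ may have either sign), so nothing prevents several profiles with small or even negative energies from fitting into the ``budget'' $c$. As stated, neither single-profile concentration nor the vanishing of the gradient remainder follows, hence neither does strong convergence of $u^n$ nor $M(u)=0$ for the weak limit.

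The paper closes the argument differently and more economically, with no dichotomy exclusion and no strong convergence. It counts the quantity $J-\tfrac12M=\int_{\rn}\tfrac N4H-F\,dx$, which is nonnegative by (F5) and coincides with $J$ on $\cM$. Comparing \eqref{eq:Me2} with \eqref{eq:Me3} applied to $H$ yields that \emph{some} nontrivial profile $\tilde u$ satisfies $\tfrac N2\int_{\rn}H(\tilde u)\,dx\ge\int_{\rn}|\nabla\tilde u|^2\,dx$, i.e. \eqref{eq:claim1}; for such a profile the dilation $\tilde u(R\cdot)\in\cM$ has $R\ge1$, so its $L^2$ norms \emph{decrease} and it remains in $\cD$ --- this, rather than the admissibility of the mass-preserving $\star$-reprojection (which would be available on $\cS$ as well), is where the ball structure of $\cD$ is used. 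Then $\inf_{\cM\cap\cD}J\le J\bigl(\tilde u(R\cdot)\bigr)=R^{-N}\int_{\rn}\tfrac N4H(\tilde u)-F(\tilde u)\,dx\le\liminf_n\int_{\rn}\tfrac N4H(u^n)-F(u^n)\,dx=\liminf_nJ(u^n)$ by Fatou's lemma, which forces $R=1$ and $J(\tilde u)=\inf_{\cM\cap\cD}J$. If you wish to keep your bookkeeping, you must count $\int_{\rn}\tfrac N4H(\tilde u^i)-F(\tilde u^i)\,dx$ instead of $J(\tilde u^i)$; even then the conclusion is that one dilated profile attains the infimum, not that the minimizing sequence converges strongly.
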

\begin{proof}
	Let $u^n\in\cM\cap\cD$ such that $\lim_nJ(u^n)=\inf_{\cM\cap\cD}J$. Then $u^n$ is bounded due to Lemma \ref{L:coerc} and, in view of Lemma \ref{L:split}, we find $(\tilde{u}^i)_{i=0}^\infty\subset H^1(\rn)^K$ and $(y_n^{i,n})_{i=0}^\infty\subset\rn$ such that \eqref{eq:Me1}--\eqref{eq:Me3} hold. Let
	\[
	I:=\{i\ge0:\tilde{u}^i\ne0\}
	\]
	and suppose by contradiction that $I=\emptyset$. Then, since $u^n\in\cM\cap\cD$, there holds
	\[
	\lim_n\int_{\rn}|\nabla u^n|\,dx=\lim_n\frac{N}{2}\int_{\rn}H(u^n)\,dx=0
	\]
	owing to \eqref{eq:Me3}, which contradicts Lemma \ref{L:bdaw1}. Now we prove that
	\begin{equation}\label{eq:claim1}
		\frac{N}{2}\int_{\rn}H(\tilde{u}^i)\,dx\ge\int_{\rn}|\nabla\tilde{u}^i|^2\,dx
	\end{equation}
	for some $i\in I$. Assume by contradiction that
	\[
	\frac{N}{2}\int_{\rn}H(\tilde{u}^i)\,dx<\int_{\rn}|\nabla\tilde{u}^i|^2\,dx
	\]
	for every $i\in I$. Then from \eqref{eq:Me2} and \eqref{eq:Me3} we have
	\[\begin{split}
		\limsup_n\,\frac{N}{2}\int_{\rn}H(u^n)\,dx=\limsup_n\int_{\rn}|\nabla u^n|^2\ge\sum_{i=0}^\infty\int_{\rn}|\nabla\tilde{u}^i|^2\\
		>\sum_{i=0}^\infty\frac{N}{2}\int_{\rn}H(\tilde{u}^i)\,dx=\limsup_n\frac{N}{2}\int_{\rn}H(u^n)\,dx,
	\end{split}\]
	a contradiction. Let $\tilde u=\tilde{u}^i$ satisfy \eqref{eq:claim1} for some $i\in I$. Then there exists $R>0$ such that $\tilde u(R\cdot)\in\cM$ and again from \eqref{eq:claim1} we indeed know that $R\geq 1$, whence $\tilde{u}(R\cdot)\in\cD$. Hence Fatou's Lemma yields
	\[\begin{split}
		\inf_{\cM\cap\cD}J&\le J\bigl(\tilde u(R\cdot)\bigr)=J\bigl(\tilde u(R\cdot)\bigr)-\frac12M\bigl(\tilde u(R\cdot)\bigr)\,dx\\
		&=\frac{1}{R^N}\int_{\rn}\frac{N}{4}H(\tilde u)-F(\tilde{u})\,dx\le\liminf_n\int_{\rn}\frac{N}{4}H(u^n)-F(u^n)\,dx\\
		&=\liminf_nJ(u^n)-\frac12M(u^n)=\liminf_nJ(u^n)=\inf_{\cM\cap\cD}J,
	\end{split}\]
	i.e., $R=1$ and $J(\tilde{u})=\inf_{\cM\cap\cD}J$.	
\end{proof}

\begin{Lem}\label{L:radmin}
	Assume that (F0)--(F5) and \eqref{e-eta2} are verified and $F$ is of the form \eqref{e-Gsp}.
	Then $\inf_{\cM\cap\cD}J$ is attained by a $K$-tuple of radial, nonnegative and radially nonincreasing functions.
\end{Lem}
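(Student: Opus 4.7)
The plan is to take the minimizer $u\in\cM\cap\cD$ supplied by Lemma \ref{L:infismin} and symmetrise it componentwise: set $u^\ast:=(u_1^\ast,\dots,u_K^\ast)$, where $u_j^\ast$ is the Schwarz rearrangement of $u_j$. First I would verify $u^\ast\in\cD$ (equimeasurability gives $|u_j^\ast|_2=|u_j|_2\le\rho_j$) and record the Pólya--Szegő inequality $|\nabla u^\ast|_2\le|\nabla u|_2$.

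Next, I would analyse how $\int_\rn F(u)\,dx$ and $\int_\rn H(u)\,dx$ transform under rearrangement, exploiting the structure \eqref{e-Gsp}. For the uncoupled part, each $F_j\colon\R\to[0,\infty[$ is even, so $\int_\rn F_j(u_j)\,dx=\int_\rn F_j(|u_j|)\,dx=\int_\rn F_j(u_j^\ast)\,dx$ by the classical layer-cake formula; the corresponding scalar piece $H_j(t):=F_j'(t)t-2F_j(t)$ is likewise even, and from $F_j(0)=0$ (which follows from (F1) together with the requirement that each $\ell$-th coupling term vanishes at the origin) one obtains $H_j(0)=0$, hence the same identity for $H_j$. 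For each coupling term $\beta_\ell\prod_{j=1}^K|u_j|^{r_{j,\ell}}$, I would invoke Lemma \ref{L:Schwarz} with $\widetilde F_{j,\ell}(t)=t^{r_{j,\ell}}$ (restricted to indices $j$ with $r_{j,\ell}>0$, so that $k(\ell)\ge 2$ by the hypothesis on $j_1\neq j_2$) to obtain
\[
\int_\rn\prod_{j=1}^K|u_j|^{r_{j,\ell}}\,dx\le\int_\rn\prod_{j=1}^K(u_j^\ast)^{r_{j,\ell}}\,dx.
\]
Since the coefficient of this product in $H$ equals $\beta_\ell(\sum_j r_{j,\ell}-2)\ge\beta_\ell(2_\#-2)\ge 0$, both inequalities $\int_\rn F(u)\,dx\le\int_\rn F(u^\ast)\,dx$ and $\int_\rn H(u)\,dx\le\int_\rn H(u^\ast)\,dx$ follow, and therefore $M(u^\ast)\le M(u)=0$.

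To project $u^\ast$ onto $\cM$, I would apply Lemma \ref{L:phi} to $u^\ast$ (note $u^\ast\neq 0$, $u^\ast\in\cD$, and \eqref{e-eta2} give \eqref{e-eta}): there exist $0<a\le b$ such that $s\star u^\ast\in\cM$ iff $s\in[a,b]$, and since $M(u^\ast)\le 0$ we have $b\le 1$; in particular any $s_0\in[a,b]$ satisfies $s_0\le 1$, so $s_0\star u^\ast\in\cD$ because $|s_0\star u_j^\ast|_2=|u_j|_2\le\rho_j$. Comparing $\varphi_{u^\ast}(s):=J(s\star u^\ast)$ with $\varphi_u(s):=J(s\star u)$, the Pólya--Szegő inequality and the rearrangement inequality applied to $F$ at the rescaled argument $s^{N/2}u$ yield $\varphi_{u^\ast}(s)\le\varphi_u(s)$ for all $s>0$. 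Since $u\in\cM$, Lemma \ref{L:phi} gives $J(u)=\max_{s>0}\varphi_u(s)$, so
\[
\inf_{\cM\cap\cD}J\le J(s_0\star u^\ast)=\max_{s>0}\varphi_{u^\ast}(s)\le\max_{s>0}\varphi_u(s)=J(u)=\inf_{\cM\cap\cD}J.
\]
Hence $s_0\star u^\ast$ realises the infimum, and as a rescaling of a $K$-tuple of Schwarz rearrangements it consists of radial, nonnegative, radially nonincreasing functions.

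The main technical obstacle is the rearrangement step for the coupling terms: one has to recognise that Lemma \ref{L:Schwarz} is exactly tailored to products $\prod_j\widetilde F_{j,\ell}(|u_j|)$ with continuous nondecreasing $\widetilde F_{j,\ell}$ vanishing at the origin, and then check that $H$ shares the same monotone structure, so that \emph{both} $F$ and $H$ move in the favourable direction under symmetrisation; this is precisely what makes $M(u^\ast)\le 0$ and in turn allows the projection via Lemma \ref{L:phi} to give back a point of $\cM\cap\cD$ with energy no larger than $\inf_{\cM\cap\cD}J$.
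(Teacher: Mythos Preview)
Your argument is correct and in fact streamlines the paper's. Both proofs start from a minimizer, pass to the componentwise Schwarz rearrangement $u^*$, and project back onto $\cM$ via Lemma~\ref{L:phi}; the difference is in the energy comparison. You compare the fibre maps pointwise: for every $s>0$, P\'olya--Szeg\H{o} together with the rearrangement inequality for $F$ (equimeasurability for the even scalar parts $F_j$, Lemma~\ref{L:Schwarz} for the couplings) gives $J(s\star u^*)\le J(s\star u)$, and since points of $\cM$ are global maximizers along fibres, $\max_s\varphi_{u^*}(s)\le\max_s\varphi_u(s)=J(u)$ finishes the job. The paper instead introduces an auxiliary parameter $d=\frac{N}{2}\max_\ell\bigl(\sum_j r_{j,\ell}-2\bigr)$ and compares the combination $J(a\star\cdot)-\frac{1}{d}M(a\star\cdot)$ term by term; the value of $d$ is calibrated so that the gradient term and every coupling term simultaneously carry nonnegative coefficients, after which $M(a\star\tilde u)\ge0$ is used to discard the correction. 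Your route avoids this device entirely.

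Two minor remarks. First, the step establishing $\int H(u^*)\ge\int H(u)$ and hence $M(u^*)\le0$ is superfluous: since the $\star$-scaling preserves $L^2$ norms, $s_0\star u^*\in\cD$ holds for \emph{any} $s_0>0$, so you need no control on the size of $s_0$. Second, from $M(u^*)\le0$ one only gets $a\le1$ (not $b\le1$, which can fail if $M(u^*)=0$ and the plateau $[a,b]$ is nondegenerate); but, as just noted, this is moot.
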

\begin{proof}
	Let $\tilde{u}\in\cM\cap\cD$ such that $J(\tilde{u})=\inf_{\cM\cap\cD}J$ be given by Lemma \ref{L:infismin}.
	For simplicity, let us denote, for every $j=1,\dots,K$, $u_j:=\tilde{u}_j^*$ and $u:=(u_1,\dots,u_K)$. Let $a=a(u)$ be determined by Lemma \ref{L:phi}. Since
	\[
	M(1\star u)=M(u)\le M(\tilde{u})=0,
	\]
	in view of Lemma \ref{L:phi} we have that $a\le1$ and, consequently, $M(a\star\tilde{u})\ge0$. Let
	\[
	d:=\frac{N}{2}\max_{\ell=1,\dots, L}\left(\sum_{j=1}^Kr_{j,\ell}-2\right)\ge2.
	\]
	Then, from Lemma \ref{L:Schwarz},
	\[\begin{split}
		\inf_{\cM\cap\cD}J\le \, & J(a\star u)=J(a\star u)-\frac{1}{d}M(a\star u)\\
		= \, &\int_{\rn}\sum_{j=1}^Ka^2\biggl(\frac12-\frac{1}{d}\biggr)|\nabla u_i|^2+\frac{1}{a^N}\biggl(\frac{N}{2d}H_j(a^{N/2}u_j)-F_j(a^{N/2}u_j)\biggr)\,dx\\
		&-\frac{1}{a^N}
		\sum_{\ell=1}^L\beta_\ell\Bigg(1-\frac1{d}\bigg(\sum_{j=1}^Kr_{j,\ell}-2\bigg)\Bigg)\prod_{j=1}^K|a^{N/2}u_j|^{r_{j,\ell}}\\
		\le \, &\int_{\rn}\sum_{j=1}^Ka^2\biggl(\frac12-\frac{1}{d}\biggr)|\nabla\tilde{u}_j|^2+\frac{1}{a^N}\biggl(\frac{N}{2d}H_j(a^{N/2}\tilde{u}_j)-F_j(a^{N/2}\tilde{u}_j)\biggr)\,dx\\
		&-\frac{1}{a^N}
		\sum_{\ell=1}^L\beta_\ell\Bigg(1-\frac1{d}\bigg(\sum_{j=1}^Kr_{j,\ell}-2\bigg)\Bigg)\prod_{j=1}^K|a^{N/2}\tilde{u}_j|^{r_{j,\ell}}\\
		= \, &J(a\star\tilde{u})-\frac{1}{d}M(a\star\tilde{u})\le J(a\star\tilde{u})\le J(\tilde{u})=\inf_{\cM\cap\cD}J,
	\end{split}\]
	i.e., $J(a\star u)=\inf_{\cM\cap\cD}J$.
\end{proof}

\begin{Lem}\label{L:minons}
	(a) Assume that (F0)--(F3), (F4,$\preceq$), (F5), and \eqref{e-eta2} hold and let $u\in\cM\cap\cD$ such that $J(u)=\inf_{\cM\cap\cD}J$ and $u_j$ is radial, nonnegative, and radially nonincreasing for every $j\in\{1,\dots,K\}$. Then $u$ is of class $\cC^2$.
	
	(b) If, in addition, $N\in\{3,4\}$ and $F$ is of the form \eqref{e-Gsp}, then $u\in\partial\cD$. Moreover, for every $j\in\{1,\dots,K\}$ either $u_j=0$ or $|u_j|_2=\rho_j$.
\end{Lem}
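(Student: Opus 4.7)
For part (a), I plan to apply Proposition \ref{P:Clarke} to the minimization of $J$ on $\cM \cap \cD$, with inequality constraints $\phi_j(v) := |v_j|_2^2 - \rho_j^2 \le 0$ ($j = 1, \dots, K$) and equality constraint $\psi_1(v) := M(v) = 0$. Once the surjectivity hypothesis is checked at $u$ (which reduces to $M'(u) \not\equiv 0$ together with the obvious linear independence of the active $\phi_j'(u)$), this furnishes multipliers $\lambda_1, \dots, \lambda_K \ge 0$, with $\lambda_j = 0$ whenever $|u_j|_2 < \rho_j$, and some $\sigma \in \R$, such that
\[
-(1+2\sigma)\Delta u_k + 2\lambda_k u_k = \partial_k F(u) + \tfrac{N\sigma}{2}\, h_k(u), \qquad k = 1, \dots, K.
\]
The crux is to force $\sigma = 0$. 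I will test against the scaling direction $w := \frac{d}{ds}(s \star u)|_{s=1} = \frac{N}{2} u + x \cdot \nabla u$. An integration by parts gives $\phi_j'(u)(w) = 2\int_{\rn} u_j w_j\,dx = 0$ for every $j$, while $J'(u)(w) = \varphi'(1) = 0$ since $u \in \cM$ and Lemma \ref{L:phi} combined with (F4,$\preceq$) pins $s = 1$ down as the unique maximizer of $\varphi(s) := J(s \star u)$. A direct computation using $M(u) = 0$ then reduces $M'(u)(w)$ to
\[
\tfrac{N^2}{4} \int_{\rn} \bigl(2_\# H(u) - h(u) \cdot u\bigr)\, dx,
\]
which is strictly negative because (F4,$\preceq$) together with Lemma \ref{L:ineq} upgrades the pointwise inequality $2_\# H \le h \cdot u$ to a strict integrated one on any nonzero $u$. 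Hence $\sigma = 0$ and $u$ satisfies $-\Delta u_k + 2\lambda_k u_k = \partial_k F(u)$. The $C^2$ regularity then follows from standard elliptic bootstrap: the subcritical growth in (F0) yields $u \in W^{2,p}_{\mathrm{loc}}(\rn)^K$ for every finite $p$, hence $u \in C^{1,\alpha}_{\mathrm{loc}}$, and continuity of $f$ upgrades this to $C^2$.

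For part (b), I will combine the Euler--Lagrange equation just obtained with the Liouville-type nonexistence result \cite[Theorem A.2]{Ikoma}, along the lines of the proof of Proposition \ref{P:main2}. Suppose $|u_j|_2 < \rho_j$ for some $j$; then $\lambda_j = 0$ and $u_j$ solves $-\Delta u_j = \partial_j F(u)$. For $F$ of the form \eqref{e-Gsp} with nonnegative components, the right-hand side is nonnegative: restricting $F$ to $u = te_j$ annihilates every coupling term (each contains a factor $|u_k|^{r_{k,\ell}}$ with $k \ne j$ and $r_{k,\ell} > 1$, by the hypothesis in \eqref{e-Gsp}), so (F5) specializes to $2_\# F_j(t) \le F_j'(t)\, t$ and hence $F_j' \ge 0$ on $[0,\infty)$; each coupling derivative $\beta_\ell r_{j,\ell} |u_j|^{r_{j,\ell}-2} u_j \prod_{k \ne j} |u_k|^{r_{k,\ell}}$ is manifestly nonnegative. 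Therefore $u_j \ge 0$ is superharmonic and lies in $L^{N/(N-2)}(\rn)$ (because $u_j \in L^2 \cap L^{2^*}$ and $N/(N-2) \in [2, 2^*]$ for $N \in \{3,4\}$), so \cite[Theorem A.2]{Ikoma} forces $u_j = 0$. This proves the dichotomy $u_j = 0$ or $|u_j|_2 = \rho_j$. Since $u \in \cM$ is nonzero, at least one component must be nontrivial, and in particular $u \in \partial\cD$. For each $j$ with $|u_j|_2 = \rho_j$ one must have $\lambda_j > 0$, for otherwise the Liouville step again yields $u_j = 0$, contradicting $\rho_j > 0$; the strong maximum principle applied to $-\Delta u_j + 2\lambda_j u_j = \partial_j F(u) \ge 0$ with $u_j \ge 0$ nonzero then gives $u_j > 0$.

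The delicate step is the vanishing of $\sigma$ in part (a): it rests crucially on the strict version (F4,$\preceq$), through the twin facts $\varphi'(1) = 0$ (Lemma \ref{L:phi}) and $M'(u)(w) < 0$ (Lemma \ref{L:ineq}). Under the weaker (F4) alone one only obtains $M'(u)(w) \le 0$, the multiplier $\sigma$ need not vanish, and the clean Euler--Lagrange equation (together with the Liouville step in part (b)) breaks down.
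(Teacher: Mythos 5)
Your overall strategy coincides with the paper's: apply Proposition \ref{P:Clarke} to the minimization over $\cM\cap\cD$, kill the multiplier $\sigma$ attached to $M$ by a Nehari--Poho\v{z}aev combination driven by (F4,$\preceq$) and Lemma \ref{L:ineq}, then bootstrap regularity; and in (b) use nonnegativity of $\partial_jF(u)$ plus the Liouville-type result of Ikoma. Your part (b) is in fact a legitimate streamlining: the paper first treats the case $\lambda_1=\dots=\lambda_K=0$ separately, showing via $\int_{\rn}f(u)\cdot u-2^*F(u)\,dx=0$ and (F5) that some $u_j$ would be an Aubin--Talenti instanton (not in $L^2$ for $N\in\{3,4\}$), and only then runs the superharmonicity/Ikoma argument in the mixed case; your observation that the Ikoma argument applies uniformly to every component with $\lambda_j=0$ (forcing $u=0$, contradicting $u\in\cM$, when all multipliers vanish) makes the instanton case distinction unnecessary.

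There is, however, a genuine gap in your derivation of $\sigma=0$. Testing the Lagrange identity against $w=\tfrac N2u+x\cdot\nabla u$ is not a priori legitimate: $w$ need not belong to $H^1(\rn)^K$, the curve $s\mapsto s\star u$ is not differentiable in $H^1$ at $s=1$ without further regularity, and the identities $J'(u)(w)=\varphi'(1)$ and $M'(u)(w)=\tfrac{N^2}4\int_{\rn}2_\#H(u)-h(u)\cdot u\,dx$ are precisely the Poho\v{z}aev identity in disguise. To justify them one needs $u\in W^{2,p}_\textup{loc}(\rn)^K$, which in turn requires the Euler--Lagrange system to be genuinely elliptic, i.e.\ the coefficient of $-\Delta$ (your $1+2\sigma$) to be nonzero --- and that is exactly what you are trying to determine. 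The paper breaks this circularity by first excluding the degenerate value of $\sigma$ using only the Nehari identity (testing with $u$ itself, which is always admissible): if the Laplacian coefficient vanished, then $0\ge\int_{\rn}\tfrac N4h(u)\cdot u-f(u)\cdot u\,dx>\int_{\rn}\tfrac N2H(u)-2F(u)\,dx\ge0$ by (F4,$\preceq$) and (F5), a contradiction; only then does elliptic regularity apply and the Poho\v{z}aev identity become available, after which your computation (which is correct as algebra, and identical to the paper's conclusion $\sigma\int_{\rn}h(u)\cdot u-2_\#H(u)\,dx=0$) closes the argument. You should insert this preliminary exclusion and reorder the regularity step before, not after, the scaling test.
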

\begin{proof}
	\textit{(a)} 
	In Proposition \ref{P:Clarke} we put $m=K$, $n=1$, $\cH=\hrn^K$, $f=J$,  $\phi_j=|\cdot|_2^2-\rho_j^2$ for all $1\leq j\leq K$, and $\psi_1=M$. Then there exist $(\lambda_1,\dots,\lambda_K)\in[0,\infty[^K$ and $\sigma\in\R$ such that
	\begin{equation}\label{e-solution}
	-(1-2\sigma)\Delta u_j+\lambda_ju_j=\partial_jF(u)-\sigma\frac{N}{2}\partial_jH(u)
	\end{equation}
	for every $i\in\{1,\dots,K\}$ and $u$ satisfies the Nehari identity
	\begin{equation}\label{e-sigN}
	\int_{\rn}(1-2\sigma)|\nabla u|^2+\sum_{j=1}^K\lambda_ju_j^2+\sigma\frac{N}{2}h(u)\cdot u-f(u)\cdot u\,dx=0.
	\end{equation}
	If $\sigma=1/2$, then (F4,$\preceq$), (F5), and \eqref{e-sigN} yield
	\[\begin{split}
	0&\ge\int_{\rn}\frac{N}{4}h(u)\cdot u-f(u)\cdot u\,dx=\int_{\rn}\frac{N}{4}h(u)\cdot u-H(u)-2F(u)\,dx\\
	&>\int_{\rn}\frac{N}{2}H(u)-2F(u)\,dx\ge0,
	\end{split}\]
	a contradiction. Hence $\sigma\ne1/2$ and $u$ satisfies also the Poho\v{z}aev identity
	\begin{equation}\label{e-sigP}
	\int_{\rn}(1-2\sigma)|\nabla u|^2+\frac{2^*}{2}\sum_{j=1}^K\lambda_ju_j^2+2^*\left(\sigma\frac{N}{2}H(u)-F(u)\right)\,dx=0.
	\end{equation}
	Combining \eqref{e-sigN} and  \eqref{e-sigP} we obtain
	\[
	(1-2\sigma)\int_{\rn}|\nabla u|^2\,dx+\frac{N}{2}\int_{\rn}\sigma N\left(\frac12h(u)\cdot u-H(u)\right)-H(u)\,dx=0
	\]
	and, using the fact that $u\in\cM$,
	\[
	(1-2\sigma)\int_{\rn}H(u)\,dx+\int_{\rn}\sigma N\left(\frac12h(u)\cdot u-H(u)\right)-H(u)\,dx=0,
	\]
	that is,
	\[
	\sigma\int_{\rn}h(u)\cdot u-2_\#H(u)\,dx=0,
	\]
	which together with (F4,$\preceq$) yields $\sigma=0$. Since $u\in W^{2,p}_\textup{loc}(\R^N)^K$ for all $p < \infty$, we can argue as in the proof of \cite[Lemma 1]{BerLions} and have that $u$ is of class $\cC^2$.
	
	\textit{(b)} Suppose by contradiction that $\lambda_1=\dots=\lambda_K=0$, which is the case when $|u_j|<\rho_j$ for every $j$. From \eqref{e-sigN} and \eqref{e-sigP} (with $\sigma=0$) there follows
	\begin{equation}\label{e-Gg2^*}
	\int_{\rn}f(u)\cdot u-2^*F(u)\,dx=0.
	\end{equation}
	In view of (F5) 
	\begin{equation}\label{e-pointSob}
	2^*F\bigl(u(x)\bigr)=f\bigl(u(x)\bigr)\cdot u(x)
	\end{equation}
	for all $x\in\R^N$. Since $F_j$ satisfies the scalar variant of (F5), $2^*F_j\bigl(u_j(x)\bigr)\geq f_j\bigl(u_j(x)\bigr)u_j(x)$ for every $j\in\{1,\dots,K\}$ and note that
	\[
	2^*\sum_{\ell=1}^L\beta_\ell\prod_{j=1}^K|u_j(x)|^{r_{j,\ell}}\geq \sum_{\ell=1}^L\beta_\ell\sum_{k=1}^K r_{k,\ell}\prod_{j=1}^K|u_j(x)|^{r_{j,\ell}},
	\]
	hence, from \eqref{e-pointSob}, the equalities above are actually equalities. On the other hand, for every $\ell\in\{1,\dots,L\}$, $\sum_{j=1}^Kr_{j,\ell}<2^*$, which yields $\beta_\ell=0$ or $\prod_{j=1}^K|u_j(x)|^{r_{j,\ell}}=0$ for every $x\in\rn$, thus
	$$2^* F_j\bigl(u_j(x)\bigr)=f_j\bigl(u_j(x)\bigr)u_j(x)$$
	for every $j\in\{1,\dots,K\}$ and every $x\in\rn$.
	
	Now fix $j\in\{1,\dots,K\}$ such that $u_j\neq 0$. Since $u_j\in\hrn$, there exists an open interval $I\subset \R$ such that $0 \in\overline{I}$ and $2^*F_j(s)=f_j(s)s$ for $s\in\overline{I}$. Then $F_j(s)=c|s|^{2^*}$ for some $c>0$, $s\in \overline{I}$, and $u_j\ge0$ solves $-\Delta u_j=2^*c|u_j|^{2^*-2}u_j$. Hence $u_j$ is an Aubin--Talenti instanton, up to scaling and translations, which is not $L^2$-integrable because $N\in\{3,4\}$, see \cite{Aubin_fr,Talenti}. 
	
	Suppose that there exists $\nu\in\{1,\dots,K-1\}$ such that, up to changing the order, $|u_j|_2<\rho_j$ for every $j\in\{1,\dots,\nu\}$ and $|u_j|_2=\rho_j$ for every $j\in\{\nu+1,\dots,K\}$.
	Arguing as before, there exist $0=\lambda_1=\dots=\lambda_{\nu}\leq \lambda_{\nu+1},\dots,\lambda_K$ such that
	\begin{equation*}
	\begin{cases}
	-\Delta u_j=\partial_jF(u), \quad j\in\{1,\dots,\nu\}\\
	-\Delta u_j+\lambda_ju_j=\partial_jF(u), \quad j\in\{\nu+1,\dots,K\}.
	\end{cases}
	\end{equation*}
	Since $F_j$ satisfies the scalar variant of (F5), $s\in]0,\infty[\mapsto F_j(s)/s^{2_\#}\in\R$ is nondecreasing, hence
	$F_j$ is nondecreasing as well for all $j$. Then, the first $\nu$ equations in the system above yield $-\Delta u_j\ge0$ for $j\in\{1,\dots,\nu\}$. Since  $u\in L^{\frac{N}{N-2}}(\rn)^K$ as $N\in\{3,4\}$, \cite[Lemma A.2]{Ikoma} implies $u_j=0$ for every $j\in\{1,\dots,\nu\}$. Notice we have proved that $\lambda_j=0$ implies $u_j=0$, hence, in particular, $\lambda_j>0$ for every $j\in\{\nu+1,\dots,K\}$.
\end{proof}

\begin{Rem}\label{R:alt}
We point out that, if (F0)--(F3), (F4,$\preceq$), (F5), and \eqref{e-eta2} hold, $u\in\cM\cap\cD$, and $J(u)=\inf_{\cM\cap\cD}J$, then we can show that $u\in\partial\cD$ for any dimension $N\geq 3$ provided that $H\preceq(2^*-2)F$ holds. As a matter of fact, observe that \eqref{e-Gg2^*} contradicts $H\preceq(2^*-2)F$  and Lemma \ref{L:ineq}. This gives a somewhat alternative proof of Lemma \ref{L:minons} \textit{(b)}.
\end{Rem}

\begin{proof}[Proof of Theorem \ref{T:Main1}]
	Point \textit{(a)} follows from Lemmas \ref{L:infismin} and \ref{L:radmin}. Now we prove point \textit{(b)}. From Lemma \ref{L:minons} \textit{(a)}, $u$ is of class $\cC^2$, while from Proposition \ref{P:Clarke} there exist $(\lambda_1,\dots,\lambda_K)\in[0,\infty[^K$ and $\sigma\in\R$ such that \eqref{e-solution} holds and $\sigma=0$ as in the proof of Lemma \ref{L:minons} \textit{(a)}.
\end{proof}

\begin{proof}[Proof of Theorem \ref{T:Main2}]
	It follows from Lemma \ref{L:minons} \textit{(b)}, Theorem \ref{T:Main1} (b), and the maximum principle \cite[Lemma IX.V.1]{Evans} (the implication $u_j\ne0\Rightarrow\lambda_j>0$ is proved as in the proof of Lemma \ref{L:minons} \textit{(b)}).
\end{proof}

\begin{Lem}\label{L:minons2}
	Suppose that $K=2$, $L=1$ and the assumptions in Lemma \ref{L:minons} \textit{(b)} hold. If $r_{1,1}+r_{2,1}>2_\#$ and $\beta_1$ is sufficiently large, then $u\in\cS$.
\end{Lem}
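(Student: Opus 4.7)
The plan is to argue by contradiction, exploiting that for $\beta_1$ large the coupling term drives $\inf_{\cM\cap\cD}J$ to zero, while the ``one component vanishes'' alternative from Lemma \ref{L:minons} (b) carries a lower bound independent of $\beta_1$.

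First I build a cheap test configuration. Fix any nonnegative $w\in\cC_c^\infty(\rn)\setminus\{0\}$ and set $v_j:=\rho_j w/|w|_2$ for $j=1,2$, so $v:=(v_1,v_2)\in\cS\subset\cD$, the two components share a common support, and
\[
B:=\int_{\rn}|v_1|^{r_{1,1}}|v_2|^{r_{2,1}}\,dx>0.
\]
Since $|v|_2\le|\rho|$, Theorem \ref{T:GN} together with \eqref{e-eta2} gives $2\eta|v|_{2_\#}^{2_\#}<|\nabla v|_2^2$, i.e.\ \eqref{e-eta} holds for $v$; thus Lemma \ref{L:phi} combined with (F4,$\preceq$) produces a unique $s_\beta>0$ with $s_\beta\star v\in\cM$, and $s_\beta\star v$ stays in $\cS\subset\cD$ since $\star$ preserves $L^2$-norms. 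Writing $\sigma:=r_{1,1}+r_{2,1}$, the coupling monomial $F_c(u):=|u_1|^{r_{1,1}}|u_2|^{r_{2,1}}$ satisfies $H_c=(\sigma-2)F_c$, while (F5) forces the scalar contributions $H_1,H_2\ge 0$. Hence, using the rescaling identity for $\star$, $M(s_\beta\star v)=0$ becomes
\[
s_\beta^2|\nabla v|_2^2=\frac{N}{2}\int_{\rn}H(s_\beta\star v)\,dx\ge\frac{N}{2}(\sigma-2)\beta_1\, s_\beta^{N(\sigma-2)/2}\,B.
\]
Since $\sigma>2_\#$ makes the exponent $N(\sigma-2)/2-2$ strictly positive, this forces $s_\beta\to 0$ as $\beta_1\to\infty$, and using $F\ge 0$ we get
\[
\inf_{\cM\cap\cD}J\le J(s_\beta\star v)\le\frac{s_\beta^2}{2}|\nabla v|_2^2\longrightarrow 0\quad\text{as }\beta_1\to\infty.
\]

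Next I extract a $\beta_1$-independent lower bound in the two degenerate cases. If $u_2=0$, then $u=(u_1,0)$ with $u_1\ne 0$ (since $u\in\cM$), the coupling integrals disappear, and $u_1\in\hrn\setminus\{0\}$ satisfies $|\nabla u_1|_2^2=\frac{N}{2}\int_{\rn}H_1(u_1)\,dx$ with $|u_1|_2\le\rho_1$ and $J(u)=J_1(u_1)$, where $J_1(z):=\int_{\rn}\frac12|\nabla z|^2-F_1(z)\,dx$. The scalar function $F_1$ inherits (F0)--(F2), (F4), and (F5) from the hypotheses on the decomposition \eqref{e-Gsp}, while $\limsup_{t\to 0}F_1(t)/|t|^{2_\#}\le\eta$ and $\rho_1\le|\rho|$ make the scalar analogue of \eqref{e-eta2} hold for the pair $(F_1,\rho_1)$. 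Lemma \ref{L:bdaw2} applied with $K=1$ to this scalar problem therefore yields a constant $c_1>0$, depending only on $F_1$ and $\rho_1$ and hence independent of $\beta_1$, with $J_1(u_1)\ge c_1$. A symmetric argument produces $c_2>0$ covering the case $u_1=0$.

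Combining the two estimates, once $\beta_1$ is chosen large enough that $J(s_\beta\star v)<\min\{c_1,c_2\}$ we obtain
\[
\min\{c_1,c_2\}>J(s_\beta\star v)\ge\inf_{\cM\cap\cD}J=J(u)\ge\min\{c_1,c_2\},
\]
a contradiction. Consequently neither $u_1$ nor $u_2$ vanishes, so Lemma \ref{L:minons} (b) forces $|u_j|_2=\rho_j$ for $j\in\{1,2\}$, i.e.\ $u\in\cS$. The main subtlety is precisely the uniformity of $c_1,c_2$ with respect to $\beta_1$; it rests on the fact that the scalar Poho\v{z}aev-type sets and scalar functionals entering those lower bounds depend only on $(F_j,\rho_j)$, and that the vector bound \eqref{e-eta2} descends to each scalar subproblem.
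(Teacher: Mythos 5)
Your proof is correct and follows essentially the same strategy as the paper: project a genuinely coupled test function onto $\cM$, use $r_{1,1}+r_{2,1}>2_\#$ to force the scaling parameter (and hence the energy) of that competitor to zero as $\beta_1\to\infty$, and compare with a $\beta_1$-independent positive lower bound coming from the scalar subproblem when a component vanishes. The only cosmetic difference is that you derive the lower bound by applying Lemma \ref{L:bdaw2} to the scalar problems and use an arbitrary rescaled bump as the test function, whereas the paper identifies $\inf_{\cM\cap\cD}J$ with the scalar infimum $\inf_{\cM_*\cap\cD_*}J_*$ and builds the competitor from the scalar minimizer $\bar v$; both routes are valid.
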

\begin{proof}
	Since $L=1$, we denote $\beta_1$, $r_{1,1}$, $r_{2,1}$ by  $\beta$, $r_{1}$, $r_{2}$ respectively.
	Suppose by contradiction that $u_1=0$ or $u_2=0$, say $u_1=0$, which implies that $|u_2|_2=\rho_2$. We want to find a suitable $w\in\cS$ such that
	\begin{equation}\label{e-sup}
		J(a\star w)<\inf_{\cM\cap\cD}J,
	\end{equation}
	where $a=a(w)$ is defined in Lemma \ref{L:phi} (note that $a(w)=b(w)$ because (F4,$\preceq$) holds), which is impossible.
	First we show that $\inf_{\cM\cap\cD}J$ does not depend on $\beta$. Consider the functional
	\[
	J_*\colon v\in H^1(\rn)\mapsto\int_{\rn}\frac12|\nabla v|^2-F_2(v)\,dx\in\R
	\]
	and the sets
	\[\begin{split}
		\cD_* & := \Set{ v \in H^1(\R^N) | \int_{\R^N} v^2 \, dx \leq \rho_2^2 },\\
		\cM_* & := \Set{ v \in H^1(\R^N) \setminus \{0\} | \int_{\R^N} |\nabla v|^2 \, dx = \frac{N}2 \int_{\rn} H_2(v) }.
	\end{split}\]
	Observe that $J(0,v)=J_*(v)$ for $v\in H^1(\R^N)$.  Moreover, $(0,v)\in\cD$ if and only if $v\in\cD_*$, and $(0,v)\in\cM$ if and only if $v\in\cM_*$. In particular,
	\[\begin{split}
	\inf_{\cM\cap\cD}J&=J(0,u_2)=J_*(u_2)\ge\inf_{\cM_*\cap\cD_*}J_*\\
	&=\inf\Set{J(0,v)|(0,v)\in\cM\cap\cD}\ge\inf_{\cM\cap\cD}J,
	\end{split}\]
	i.e., $\inf_{\cM\cap\cD}J=\inf_{\cM_*\cap\cD_*}J_*$, and the claim follows because $J_*$, $\cD_*$, and $\cM_*$ do not depend on $\beta$.
	
	In view of Theorem \ref{T:Main2} for $K=1$, there exists $\bar{v}\in\cM_*\cap\partial\cD_*$ such that $$J_*(\bar{v})=\inf_{\cM_*\cap\cD_*}J_*=\inf_{\cM\cap\cD}J=\inf_{\cM_*\cap\partial\cD_*}J_*.$$ Note that $\bar{v}$ does not depend on $\beta$. Define $w=(w_1,w_2):=\bigl(\frac{\rho_1}{\rho_2}\bar{v},\bar{v}\bigr)$.
	From Lemma \ref{L:phi}, $a=a_\beta$ is implicitly defined by
	\[\begin{split}
		\int_{\rn}|\nabla w|^2\,dx=&\,\frac{N}{2}\int_{\rn}\frac{F_1'(a_\beta^{N/2}w_1)a_\beta^{N/2}w_1-2F_1(a_\beta^{N/2}w_1)}{a_\beta^{N+2}}\\
		&+\frac{F_2'(a_\beta^{N/2}w_2)a_\beta^{N/2}w_2-2F_2(a_\beta^{N/2}w_2)}{a_\beta^{N+2}}\\
		&+\beta(r_1+r_2-2)a_\beta^{N(r_1+r_2-2)/2-2}w_1^{r_1}w_2^{r_2}\,dx\\
		\ge&\,\beta(r_1+r_2-2)a_\beta^{N(r_1+r_2-2)/2-2}\frac{N}{2}\int_{\rn} w_1^{r_1}w_2^{r_2}\,dx,
	\end{split}\]
	hence there exist $C>0$ not depending on $\beta$ such that
	\begin{equation}\label{e-beta1}
		0<\beta a_\beta^{N(r_1+r_2-2)/2-2}\le C,
	\end{equation}
	whence
	\begin{equation}\label{e-beta2}
		\lim_{\beta\to\infty}a_\beta=0.
	\end{equation}
	Since $a_\beta\star w\in\cM$, we have from (F5)
	\[\begin{split}
		J(a_\beta\star w)=&\,\int_{\rn}\frac{N}{4}H(a_\beta\star w)-F(a_\beta\star w)\,dx\le\frac{2}{N-2}\int_{\rn} F(a_\beta\star w)\,dx\\
		=&\,\frac{2}{N-2}\int_{\rn}\frac{F_1(a_\beta^{N/2}w_1)+F_2(a_\beta^{N/2}w_2)}{a_\beta^N}\,dx\\
		&+\frac{2\beta a_\beta^{N(r_1+r_2-2)/2}}{N-2}\int_{\rn} w_1^{r_1}w_2^{r_2}\,dx,
	\end{split}\]
	therefore \eqref{e-sup} holds true for sufficiently large $\beta$ owing to (F1), \eqref{e-beta1}, and \eqref{e-beta2}.
\end{proof}

\begin{Rem}
	The proof of Lemma \ref{L:minons2} shows that, under the assumptions of Theorem \ref{T:Main3}, every ground state solution $(\lambda,u)$ to \eqref{e-mainineq} is such that $u\in\cS$, hence a ground state solution to \eqref{e-main}.
\end{Rem}

\begin{proof}[Proof of Theorem \ref{T:Main3}]
	It follows directly from Lemma \ref{L:minons2} and Theorem \ref{T:Main2}.
\end{proof}

\section{On the ground state energy map}\label{S:gsemsuper}

In this section, for $\rho=(\rho_1,\dots,\rho_K)\in]0,\infty[^K$ we denote explicitly
\[\begin{split}
	\cS(\rho)&:=\Set{u\in\hrn^K|\left|u_j\right|_2=\rho_j \text{ for every } j\in\{1,\dots,K\}}\\
	\cD(\rho)&:=\Set{u\in\hrn^K|\left|u_j\right|_2\le \rho_j \text{ for every } j\in\{1,\dots,K\}}\\
	m(\rho)&:=\inf\Set{J(u)|u\in\cM\cap\cD(\rho)}.
\end{split}\]

\begin{Prop}\label{P:GSE}
	Assume that (F0) is satisfied and let $\cA$ be the subset of $]0,\infty[^K$ where \eqref{e-eta2} holds. Then $m\colon]0,\infty[^K\to\R\cup\{-\infty\}$ is nonincreasing and. If (F0)--(F5) are satisfied, then $m|_\cA$ is continuous and
	\[
	\lim_{|\rho|\to0^+}m(\rho)=\infty.
	\]
	If (F0)--(F5) are satisfied and every ground state solution to \eqref{e-mainineq} belongs to $\cS(\rho)$ (e.g., if the assumptions of Theorem \ref{T:Main3} are satisfied), then $m|_\cA$ is decreasing.
\end{Prop}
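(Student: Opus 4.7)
\emph{Proof plan.} The monotonicity of $m$ follows at once from the set inclusion $\cM\cap\cD(\rho)\subset\cM\cap\cD(\bar\rho)$ whenever $0<\rho_j\le\bar\rho_j$ for every $j$, with no further assumption on $F$. For the continuity of $m|_\cA$, let $\rho^n,\rho\in\cA$ with $\rho^n\to\rho$. To establish $\limsup_n m(\rho^n)\le m(\rho)$ I would pick a minimizer $u$ of $J$ on $\cM\cap\cD(\rho)$ (whose existence is Lemma \ref{L:infismin}), set $u^n_j:=(\rho^n_j/\rho_j)u_j$ so that $|u^n_j|_2=\rho^n_j$ and $u^n\to u$ in $\hrn^K$, and project back onto $\cM$ via Lemma \ref{L:phi}: by continuity of the maximizer interval in that lemma there exists $s_n\to1$ with $s_n\star u^n\in\cM$, and since $\star$ is $L^2$-isometric one has $s_n\star u^n\in\cD(\rho^n)$, hence $m(\rho^n)\le J(s_n\star u^n)\to J(u)=m(\rho)$. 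For the opposite inequality, supposing $\liminf_n m(\rho^n)<\infty$, I would take minimizers $u^n\in\cM\cap\cD(\rho^n)$, bound them in $\hrn^K$ via Lemma \ref{L:coerc} inside a fixed $\cD(\tilde\rho)$ with $\tilde\rho\in\cA$ containing every $\rho^n$, and replay the proof of Lemma \ref{L:infismin} based on Lemma \ref{L:split}: a nonzero profile $\tilde u$ satisfying $\frac{N}{2}\int_\rn H(\tilde u)\,dx\ge\int_\rn|\nabla\tilde u|^2\,dx$ is extracted, weak $L^2$ lower semicontinuity combined with the translation invariance of the $L^2$-norms yields $|\tilde u_j|_2\le\rho_j$, a rescaling $\tilde u(R\cdot)\in\cM$ with $R\ge1$ preserves membership in $\cD(\rho)$, and Fatou's lemma applied to the identity $J=\int_\rn\bigl(\tfrac{N}{4}H-F\bigr)\,dx$ valid on $\cM$ produces $m(\rho)\le J\bigl(\tilde u(R\cdot)\bigr)\le\liminf_n m(\rho^n)$.

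To prove $m(\rho)\to\infty$ as $|\rho|\to0^+$ I would argue by contradiction. If $m(\rho^n)\le M$ for some $\rho^n\to0$, choose $u^n\in\cM\cap\cD(\rho^n)$ with $J(u^n)\le M+1$; Lemma \ref{L:coerc} supplies an $\hrn^K$-bound, and an inspection of the proof of Lemma \ref{L:bdaw1} with a \emph{fixed} $\varepsilon>0$ shows that the term $2^*(\varepsilon+\eta)C_{N,2_\#}^{2_\#}|\rho^n|^{4/N}$ vanishes, leaving $|\nabla u^n|_2\ge C_0>0$ for a constant $C_0$ independent of $n$. Lemma \ref{L:split} then produces profiles $\tilde u^i$ satisfying $|\tilde u^i_j|_2\le\liminf_n|u^n_j|_2\le\liminf_n\rho^n_j=0$, so every profile vanishes; but then \eqref{eq:Me3} applied to $G=H$ (nonnegative by (F5) and enjoying the required asymptotics at $0$ and $\infty$ by (F1), (F3), (F5)) gives $\limsup_n\int_\rn H(u^n)\,dx=0$, contradicting the identity $\int_\rn H(u^n)\,dx=\tfrac{2}{N}|\nabla u^n|_2^2\ge\tfrac{2}{N}C_0^2$.

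For the strict monotonicity under the additional hypothesis that every ground state solution to \eqref{e-mainineq} belongs to $\cS(\rho)$, pick $\rho\le\bar\rho$ in $\cA$ with $\rho_k<\bar\rho_k$ for some $k$ and assume toward a contradiction $m(\rho)=m(\bar\rho)$. Lemma \ref{L:infismin} supplies a minimizer $u\in\cM\cap\cD(\rho)$; the inclusion $\cM\cap\cD(\rho)\subset\cM\cap\cD(\bar\rho)$ together with $J(u)=m(\rho)=m(\bar\rho)$ makes $u$ a minimizer of $J|_{\cM\cap\cD(\bar\rho)}$, which under the standing hypothesis extends to a ground state solution to \eqref{e-mainineq} with data $\bar\rho$ and hence satisfies $|u_k|_2=\bar\rho_k$; this contradicts $u\in\cD(\rho)$, which forces $|u_k|_2\le\rho_k<\bar\rho_k$. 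The delicate step in the whole argument is the lower semicontinuity half of the continuity claim, as it requires replaying the concentration-compactness decomposition of Lemma \ref{L:split} while tracking the $L^2$-masses of the extracted profiles against the limit constraint $\rho$.
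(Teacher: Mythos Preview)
Your proposal is correct and, for monotonicity, lower semicontinuity, and strict monotonicity, it follows the paper's argument essentially verbatim. Two points deserve comment.

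\textbf{Upper semicontinuity.} Your claim that one can choose $s_n\to1$ with $s_n\star u^n\in\cM$ is slightly too strong under (F4) alone: without (F4,$\preceq$) the maximizer set in Lemma~\ref{L:phi} is an interval $[a(u^n),b(u^n)]$, and lower semicontinuity of this correspondence is not established. The paper instead shows that \emph{any} choice $s_n\in[a(u^n),b(u^n)]$ stays bounded away from $0$ and $\infty$ (via \eqref{e-conts}), so along a subsequence $s_n\to s$ with $s\star w\in\cM$; since $w\in\cM$ as well, both $s$ and $1$ are global maximizers of $t\mapsto J(t\star w)$, hence $J(s\star w)=J(w)$. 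Your conclusion $J(s_n\star u^n)\to m(\rho)$ then follows. This is a minor imprecision, not a gap.

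\textbf{The limit $m(\rho)\to\infty$.} Here you take a genuinely different route. The paper argues directly: rescale a minimizer $u^n$ to $w^n:=|\nabla u^n|_2^{-1}\star u^n$ with $|\nabla w^n|_2=1$, use $|w^n|_2=|u^n|_2\to0$ and interpolation to get $|w^n|_{2_\#}\to0$, and then exploit Lemma~\ref{L:phi} to compare $J(u^n)\ge J(s\star w^n)=s^2/2-o(1)$ for every $s>0$. Your argument by contradiction via the profile decomposition Lemma~\ref{L:split} is equally valid---the key checks (that $H$ is $\cC^1$, nonnegative by (F5), and enjoys the required asymptotics at $0$ and $\infty$ from (F1), (F3), (F5); that Lemma~\ref{L:bdaw1} yields a uniform lower bound on $|\nabla u^n|_2$ since $|\rho^n|\to0$ makes the $2_\#$-term negligible) all go through. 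The paper's scaling approach is more self-contained (it avoids the full decomposition machinery), while yours recycles the same lemma already used for existence and for lower semicontinuity, which is economical in a different sense.
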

Here, `nonincreasing' and `decreasing' have the same meaning as in Proposition \ref{P:gsem}.
\begin{proof}
	The monotonicity of $m$ is obvious. Fix $\rho\in\cA$ and let $\cA\ni\rho^n\to\rho$ (note that $\cA$ is open). Let $u^n\in\cM\cap\cD(\rho^n)\subset\cM\cap\cD(2\rho)$ such that $J(u^n)=m(\rho^n)\le m(\rho/2)$. In view of Lemma \ref{L:coerc}, $u^n$ is bounded and so, arguing as in Lemma \ref{L:infismin}, there exists $u\in\cD(\rho)\setminus\{0\}$ such that, up to subsequences and translations, $u^n\rightharpoonup u$ in $H^1(\rn)^K$, $u^n\to u$ a.e. in $\rn$, and $R\ge1$, where $R=R_u>0$ is such that $u(R\cdot)\in\cM$. Fatou's Lemma and (F4) yield
	\[\begin{split}
	m(\rho)&\le J\bigl(u(R\cdot)\bigr)=\frac{1}{R^N}\int_{\rn}\frac{N}{4}H(u)-F(u)\,dx\le\int_{\rn} \frac{N}{4}H(u)-F(u)\,dx\\
	&\le\liminf_n\int_{\rn}\frac{N}{4}H(u^n)-F(u^n)\,dx=\liminf_nJ(u^n)=\liminf_nm(\rho^n).
	\end{split}\]
	Now let $w\in\cM\cap\cD(\rho)$ such that $J(w)=m(\rho)$. Denote $w_i^n:=\rho_i^nw_i/\rho_i$ and consider $w^n=(w_1^n,\dots,w_K^n)\in\cD(\rho^n)$. Due to Lemma \ref{L:phi}, for every $n$ there exists $s_n>0$ such that $s_n\star w^n\in\cM$. Note that
	\begin{equation}\label{e-conts}
	\begin{split}
	\frac{N}{2}\int_{\rn}\frac{H\bigl(s_n^{N/2}(\rho_1^nw_1/\rho_1,\dots,\rho_K^nw_K/\rho_K)\bigr)}{s_n^{N+2}}\,dx\\
	=\int_{\rn}|\nabla w^n|^2\,dx\to \int_{\rn}|\nabla w|^2\,dx.
	\end{split}
	\end{equation}
	If $\limsup_ns_n=\infty$, then from (F2) and (F5) the left-hand side of \eqref{e-conts} tends to $\infty$ up to a subsequence, which is a contradiction. If $\liminf_ns_n=0$, then from (F1), (F3), (F5) and \eqref{e-eta2} and arguing as in Lemma \ref{L:bdaw1} we obtain that the limit superior of the left-hand side of \eqref{e-conts} is less than $|\nabla w|_2^2$, which is again a contradiction. There follows that, up to a subsequence, $s_n\to s$ for some $s>0$ and $s\star w\in\cM$. In view of Lemma \ref{L:phi},
	\[
	\limsup_nm(\rho^n)\le\lim_nJ(s_n\star w_n)=J(s\star w)=J(w)=m(\rho)
	\]
	and the continuity of $m|_\cA$ is proved.
	
	Let $\rho^n\to0^+$ and $u^n\in\cM\cap\cD(\rho^n)$ such that $J(u^n)=m(\rho^n)$. Denote $s_n:=|\nabla u^n|_2^{-1}$ and $w^n:=s_n\star u^n$ and note that $s_n^{-1}\star w^n=u^n\in\cM$, $|\nabla w^n|_2=1$ and
	\[
	|w^n|_2^2=|u^n|_2^2=|\rho^n|^2\to0
	\]
	as $n\to\infty$. In particular $w^n$ is bounded in $L^{2^*}(\rn)^K$ and so
	\[
	|w^n|_{2_\#}\le|w^n|_2^\frac{2}{N+2}|w^n|_{2^*}^\frac{N}{N+2}\to0
	\]
	as $n\to\infty$. Then, in view of (F1) and (F3), for every $s>0$
	\[
	\lim_n\int_{\rn}\frac{F(s^{N/2}w^n)}{s^{-N}}\,dx=0
	\]
	and, consequently,
	\[
	J(u^n)=J(s_n^{-1}\star w^n)\ge J(s\star w^n)=\frac{s^2}{2}-\int_{\rn}\frac{F(s^{N/2}w^n)}{s^{-N}}\,dx=\frac{s^2}{2}+o(1),
	\]
	whence $\lim_nJ(u^n)=\infty$.
	
	The last part is proved similarly to Proposition \ref{P:gsem} \textit{(v)}.
\end{proof}

\chapter{Maxwell's and nonautonomous Schr\"odinger equations}\label{K:join}

\section{Introduction and statement of the results}

In this chapter, we provide a new outcome that somewhat joins Part \ref{1} and the previous chapters of Part \ref{2} together, i.e., we prove the existence of solutions to the problem
\begin{equation}\label{e-normcurl}
\begin{cases}
\nabla\times\nabla\times\UU+\lambda\UU=g(\UU)\\
\int_{\rn}|\UU|^2\,dx=\rho^2\\
(\lambda,\UU)\in\R\times H^1(\rn,\rn)
\end{cases}
\end{equation}
with $N\ge3$, where, as usual, $\rho>0$ is prescribed and $(\lambda,\UU)$ is the unknown.

The proof is carried out in two steps: first, we use the same machinery as in Chapter \ref{K:cylsym} to reduce the differential operator in \eqref{e-normcurl} to $-\Delta$ under suitable symmetry assumptions about $g$, then we utilize the results in Chapters \ref{K:sub} or \ref{K:super} in the case $K=1$, which is possible because the function $\UU\colon\rn\to\rn$ in \eqref{e-normcurl} is treated as a single vector-valued function rather than an $N$-tuple of scalar-valued ones (i.e., we have a single $L^2$-constraint) and so \eqref{e-normcurl} is formally equivalent to its scalar counterpart. Moreover, by an equivalence result in the spirit of Theorem \ref{T:ScalVec}, we also obtain solutions to the problem
\begin{equation}\label{e-normsing}
\begin{cases}
-\Delta u+\frac{u}{|y|^2}+\lambda u=f(u)\\
\int_{\rn}u^2\,dx=\rho^2\\
(\lambda,u)\in\R\times\bigl(\hrn\cap X\bigr),
\end{cases}
\end{equation}
where $x=(y,z)\in\R^2\times\R^{N-2}$ and $X$ is the same as in Section \ref{S:statecs}.

Recall, again from Section \ref{S:statecs}, the condition \eqref{e-fgh} (where $h$ is replaced with $g$), the definitions of $\SO$ and $\cF$, and define $F(u):=\int_0^uf(t)\,dt$, $G(\UU):=\int_0^1g(t\UU)\cdot\UU\,dt$, $H_\cF:=H^1(\rn,\rn)\cap\cF$, and
\[\begin{split}
D_\cF & := \Set{\UU\in H_\cF | \int_{\rn}|\UU|^2\,dx\le\rho^2},\\
S_\cF & := \Set{\UU\in H_\cF | \int_{\rn}|\UU|^2\,dx=\rho^2}=\partial D_\cF.
\end{split}\]
We recall as well the definitions of $\fH_\textup{r}$ and $\fH_\textup{s}$ from Subsection \ref{SS:cpt} and introduce
\[\begin{split}
\fH_\textup{r}^* & = \Set{\UU\in H^1(\rn,\rn)|e\UU=\UU(e\cdot) \text{ for all } e\in\SO(N)},\\
\fH_\textup{s}^* & = \Set{\UU\in H^1(\rn,\rn)|e\UU=\UU(e\cdot) \text{ for all } e\in\SO(2)\times\SO(N-2)}.
\end{split}\]
Finally, adapting the definitions from previous chapters to this context, we define
\begin{equation*}\begin{split}
E & \colon\UU\in H_\cF\mapsto\int_{\rn}\frac12|\nabla\times\UU|^2-G(\UU)\,dx\in\R,\\
\cM & :=\Set{\UU\in H_\cF\setminus\{0\}|\int_{\rn}|\nabla\times\UU|^2\,dx=\frac{N}2\int_{\rn}H(\UU)\,dx},
\end{split}\end{equation*}
where $H(w)=g(w)\cdot w-2G(w)$, $w\in\rn$. Our results about \eqref{e-normcurl} read as follows. We begin with the mass-(sub)critical case.

\begin{Th}\label{T:vec}
If $N\ge4$, $g$ satisfies \eqref{e-fgh}, $F$ satisfies (F0)--(F3) from Chapter \ref{K:sub}, \eqref{e-etas} holds, and $\eta_0=\infty$, then there exist $\lambda>0$ and $\UU\in S_\cF\cap\fH_\textup{s}^*$ such that $(\lambda,\UU)$ is a solution to \eqref{e-normcurl} and $E(\UU)=$$\inf_{D_\cF\cap\fH_\textup{s}^*}E<0$.
\end{Th}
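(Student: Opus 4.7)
The plan is to combine the symmetry reduction of Chapter \ref{K:cylsym} with the constrained minimization strategy of Chapter \ref{K:sub}. The key observation is that every $\UU\in\cF$ satisfies $\nabla\cdot\UU=0$ by a computation analogous to Lemma \ref{L:DivFree}, transposed to the $H^1$-setting, so that $\nabla\times\nabla\times\UU=-\Delta\UU$ distributionally and the energy reduces to
\[
E(\UU)=\frac12\int_{\rn}|\nabla\UU|^2\,dx-\int_{\rn}F(|\UU|)\,dx
\]
on $H_\cF$, where the identity $G(\UU)=F(|\UU|)$ follows from \eqref{e-fgh} by a direct integration. This functional structurally mirrors the scalar $J$ of Chapter \ref{K:sub} with $K=1$, the difference being that $\UU$ is vector-valued and the constraint reads $|\UU|_2=\rho$; the correspondence $\UU(x)=u(x)/r\,(-x_2,x_1,0,\dots,0)$ further identifies $H_\cF\cap\fH_\textup{s}^*$ with a scalar space of $\SO(2)\times\SO(N-2)$-invariant functions (in the spirit of Section \ref{S:equiv}), from which all compactness and regularity information can be pulled back.

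Next I would verify on $D_\cF\cap\fH_\textup{s}^*$ the three ingredients of the mass-(sub)critical argument. Coercivity and boundedness from below of $E$ follow from \eqref{e-etas}, the Gagliardo--Nirenberg inequality (Theorem \ref{T:GN}) applied to $|\UU|\in H^1(\rn)$, and the pointwise bound $\big|\nabla|\UU|\big|_2\le|\nabla\UU|_2$, exactly as in Lemma \ref{L:cbb}. The strict negativity $\inf_{D_\cF\cap\fH_\textup{s}^*}E<0$ follows via the $L^2$-preserving rescaling $s\star\UU(x):=s^{N/2}\UU(sx)$: since $\eta_0=\infty$, for any nontrivial test profile in $D_\cF\cap\fH_\textup{s}^*$, letting $s\to 0^+$ forces $\int_{\rn}F(|s\star\UU|)\,dx$ to dominate the kinetic term, reproducing Lemma \ref{L:neg}. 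For attainment, a minimizing sequence $\UU_n$ is $H^1$-bounded, hence $\UU_n\rightharpoonup\UU$ weakly in $H^1(\rn,\rn)$ with $\UU\in D_\cF\cap\fH_\textup{s}^*$ by weak closedness; the compact embedding $\fH_\textup{s}\hookrightarrow L^p(\rn)$ for $2<p<2^*$ (Subsection \ref{SS:cpt}) passes through the scalar-vector identification to yield $|\UU_n|\to|\UU|$ in $L^p(\rn)$ for such $p$, and the Brezis--Lieb type argument of Lemma \ref{L:min} then gives $\int_{\rn}F(|\UU_n|)\,dx\to\int_{\rn}F(|\UU|)\,dx$, whence $E(\UU)=\inf E<0$.

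Palais's principle of symmetric criticality (Theorem \ref{T:Palais}), applied to the combined action of $\SO(2)\times\SO(N-2)$ (defining $\fH_\textup{s}^*$) and the involution $\cS$ of \eqref{e-S} that cuts out $\cF$ within $\textup{Fix}(\SO)$, upgrades $\UU$ to a critical point of $E$ on the full $L^2$-ball $D\subset H^1(\rn,\rn)$. Proposition \ref{P:Clarke} with $m=1$, $\phi_1=|\cdot|_2^2-\rho^2$ yields a Lagrange multiplier $\lambda\ge 0$ with $\nabla\times\nabla\times\UU+\lambda\UU=g(\UU)$. To rule out $\lambda=0$, suppose the contrary: standard elliptic regularity places $\UU$ in $W^{2,p}_{\textup{loc}}$ for every $p<\infty$, and the Poho\v{z}aev identity (Section \ref{S:NP}) applied to $-\Delta\UU=g(\UU)$ yields $(N-2)|\nabla\UU|_2^2=2N\int_{\rn}F(|\UU|)\,dx$, whence
\[
E(\UU)=\frac12|\nabla\UU|_2^2-\frac{N-2}{2N}|\nabla\UU|_2^2=\frac{1}{N}|\nabla\UU|_2^2\ge 0,
\]
contradicting $E(\UU)<0$ (here $N\ge 4>2$ is essential). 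Hence $\lambda>0$, and since an interior minimizer in $D_\cF$ would force the multiplier to vanish, the constraint must be saturated and $\UU\in S_\cF$.

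The main obstacle I anticipate is the bookkeeping around the combined symmetric-criticality argument in the third paragraph: one needs to check that both the $\SO(2)\times\SO(N-2)$-action and the $\cS$-action operate isometrically on $H^1(\rn,\rn)$, that $E$ is invariant under both, and that the corresponding fixed-point subspace coincides with $H_\cF\cap\fH_\textup{s}^*$. The analogous verifications in Chapter \ref{K:cylsym} were carried out in the $\cD^{1,2}$-framework, and each step needs to be transported to the $H^1$-framework demanded by the $L^2$-constraint; in particular, the compact embedding of $\fH_\textup{s}^*$ into vector-valued subcritical Lebesgue spaces must be shown to be fully compatible with the reduction $\UU\leftrightarrow u$ so that no regularity or integrability is lost in the passage.
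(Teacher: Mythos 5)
Your proposal is correct and follows essentially the same route as the paper's proof: coercivity of $E|_{D_\cF\cap\fH_\textup{s}^*}$ as in Lemma \ref{L:cbb}, negativity of the infimum via the $s\star$ rescaling as in Lemma \ref{L:neg}, attainment through the compact embedding and the argument of Lemma \ref{L:min}, and then Palais's principle, Proposition \ref{P:Clarke}, and the Poho\v{z}aev identity to get $\lambda>0$ and hence saturation of the constraint, exactly as in the proof of Theorem \ref{T:main1}. The symmetric-criticality bookkeeping you worry about in your last paragraph is precisely what Theorem \ref{T:equiv} (the $H^1$-version of the equivalence result) supplies, so no further gap remains.
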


We have no results about $\fH_\textup{r}^*$ because $\fH_\textup{r}^*\cap\cF=\{0\}$. It follows from the fact that each function in $\fH_\textup{r}^*\cap\cF$ is both divergence-free, from Lemma \ref{L:DivFree}, and curl-free, from \cite[Theorem 1.1]{BDPR}\footnote{In this reference, $\cO(3)$ is used, but the argument perfectly applies to $\SO(N)$, $N\ge3$.}, therefore trivial as it belongs to $\cD^{1,2}(\rn,\rn)$. As a consequence, Theorem \ref{T:vec} does not work for $N=3$ because we cannot use $\SO(3)$ to recover compactness, which makes it physically irrelevant. As for the mass-supercritical case, the following holds.

\begin{Th}\label{T:Vec}
If $g$ satisfies \eqref{e-fgh}, $F$ satisfies (F0)--(F3), (F4,$\preceq$), (F5) from Chapter \ref{K:super}, $H\preceq(2^*-2)F$, and \eqref{e-eta2} hold, then there exist $\lambda>0$ and $\UU\in\cM\cap S_\cF$ such that $(\lambda,\UU)$ is a solution to \eqref{e-normcurl} and $E(\UU)=\inf_{\cM\cap D_\cF}E>0$.
\end{Th}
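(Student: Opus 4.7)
The strategy combines the symmetric reduction underlying Theorem \ref{T:ScalVec} with the constrained-minimization scheme of Chapter \ref{K:super} in the single-constraint, mass-supercritical case. By Palais's principle (Theorem \ref{T:Palais}) applied to the action \eqref{e-Fix}, which leaves $E$, the $L^2$-constraint, and the set $\cM$ invariant, it suffices to find minimizers of $E$ on $\cM \cap D_\cF$. For $\UU \in H_\cF$, Lemma \ref{L:DivFree} gives $\nabla \cdot \UU = 0$, so $\nabla \times \nabla \times \UU = -\Delta \UU$ and $|\nabla \times \UU|_2 = |\nabla \UU|_2$; writing $\UU = |\UU|\, \UU/|\UU|$ and using \eqref{e-fgh} one checks that $G(\UU) = F(|\UU|)$ and $H(\UU) = f(|\UU|)|\UU| - 2F(|\UU|)$, so the problem on $H_\cF$ has the structure of a vector-valued Schr\"odinger equation with a single mass constraint on $|\UU|$.

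With this reduction in place, I would adapt the proofs of Lemmas \ref{L:bdaw1}, \ref{L:phi}, \ref{L:coerc}, and \ref{L:bdaw2} to the present setting: the scaling $s\star\UU(x) := s^{N/2}\UU(sx)$ preserves $H_\cF$ and $|\UU|_2$, and (F4,$\preceq$) ensures that $s \mapsto E(s\star\UU)$ is strictly unimodal so that $\cM \cap H_\cF$ admits a continuous fiber-projection map. The condition \eqref{e-eta2} combined with the Gagliardo--Nirenberg inequality then yields that $E|_{\cM \cap D_\cF}$ is coercive and strictly positive, with $c := \inf_{\cM \cap D_\cF} E > 0$. For a minimizing sequence $\UU^n \in \cM \cap D_\cF$, coercivity provides $H^1$-boundedness, and the vector-valued splitting Lemma \ref{L:split} produces profiles $(\tilde{\UU}^i)$ and translations $(y^{i,n}) \subset \rn$; arguing as in Lemma \ref{L:infismin}, at least one profile $\tilde{\UU}^i$ satisfies $\tfrac{N}{2}\int H(\tilde{\UU}^i)\,dx \ge \int|\nabla\tilde{\UU}^i|^2\,dx$, and its projection onto $\cM$, whose scaling parameter $R \ge 1$ keeps it in $D_\cF$, attains $c$ by Fatou together with (F4).

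To produce the desired solution, I would invoke Remark \ref{R:alt}: since $H \preceq (2^*-2)F$, if the minimizer $\UU$ belonged to $\mathring{D}_\cF$ then the mass multiplier would vanish by Proposition \ref{P:Clarke}, and combining the Nehari and Poho\v{z}aev identities would force $\int (f(\UU)\cdot\UU - 2^* F(\UU))\,dx = 0$, contradicting Lemma \ref{L:ineq}. Hence $\UU \in S_\cF$; a further application of Proposition \ref{P:Clarke} with constraints $|\cdot|_2^2 - \rho^2 \le 0$ and $M=0$ yields $\lambda \ge 0$ and a multiplier $\sigma$ for $\cM$, which must vanish by the $\sigma$-argument of Lemma \ref{L:minons}(a) based on (F4,$\preceq$); nontriviality of $\UU$ together with the Poho\v{z}aev identity then rules out $\lambda = 0$. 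The hard part will be ensuring that the splitting argument is compatible with the $\cF$-structure, since $\cF$ is preserved only under $z$-translations: by the $\SO$-invariance built into $H_\cF$, any concentration of $\UU^n$ at an off-axis point $(y_0, z_0)$ with $y_0 \ne 0$ would be replicated along the whole $\SO$-orbit of that point (a circle), so concentration is effectively confined to $\{0\} \times \R^{N-2}$ and the relevant translations may be taken in the $z$-direction, which preserve the weakly closed subspace $H_\cF$.
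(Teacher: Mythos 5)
Your proposal is correct and follows essentially the same route as the paper, which likewise reduces $\nabla\times\nabla\times$ to $-\Delta$ on $H_\cF$ via Theorem \ref{T:equiv} and Palais's principle and then reruns Lemmas \ref{L:coerc}, \ref{L:infismin}, \ref{L:bdaw2}, \ref{L:minons} and Remark \ref{R:alt} in this setting. Your closing observation about keeping the splitting translations compatible with $\cF$ is a genuine point the paper leaves implicit; the fix is the one you sketch, made rigorous by the cylindrical Lions-type lemma (Lemma \ref{L:Lions}), which allows the translations to be taken in $\rnk$ only, so that the weak limits stay in the (weakly closed) subspace $H_\cF$.
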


Concerning \eqref{e-normsing}, recall from Section \ref{S:statecs} the definition of $X_{\SO}$ and define $H_{\SO} := X_{\SO}\cap\hrn$ and
\[\begin{split}
D_{\SO} & := \Set{u\in H_{\SO} | \int_{\rn}u^2\,dx\le\rho^2},\\
S_{\SO} & := \Set{u\in H_{\SO} | \int_{\rn}u^2\,dx=\rho^2}=\partial D_{\SO},\\
\cQ & := \Set{u\in H_{\SO} | \int_{\rn}|\nabla u|^2+\frac{u^2}{|y|^2}-\frac{N}2f(u)u+NF(u)\,dx=0}.
\end{split}\]

\begin{Th}\label{T:Scal}
(a) Assume $N=4$ or $N\ge6$. If $F$ is even and satisfies (F0)--(F3) from Chapter \ref{K:sub}, $\eta_0=\infty$, and \eqref{e-etas} holds, then there exist $\lambda>0$ and $u\in S_{\SO}\cap\fH_\textup{s}$ such that $(\lambda,u)$ is a solution to \eqref{e-normsing} and $J(u)=\inf_{D_{\SO}\cap\fH_\textup{s}}J<0$.

(b) If $F$ is even and satisfies (F0)--(F3), (F4,$\preceq$), (F5) from Chapter \ref{K:super}, $H\preceq(2^*-2)F$, and \eqref{e-eta2} hold, then there exist $\lambda>0$ and $u\in\cQ\cap S_{\SO}$ such that $(\lambda,u)$ is a solution to \eqref{e-normsing} and $J(u)=\inf_{\cQ\cap D_{\SO}}J>0$.
\end{Th}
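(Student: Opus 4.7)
The plan is to reduce \eqref{e-normsing} to the curl-curl problem \eqref{e-normcurl} via an $H^1$-version of the equivalence result Theorem \ref{T:ScalVec}, and then invoke Theorem \ref{T:vec} for part (a) and Theorem \ref{T:Vec} for part (b). Since $F$ is even, $f$ is odd, so setting $g(\UU):=f(|\UU|)\UU/|\UU|$ for $\UU\ne 0$ and $g(0):=0$ defines a continuous function satisfying \eqref{e-fgh} whose potential is $G(\UU)=F(|\UU|)$. This $G$ inherits verbatim the hypotheses on $F$: mass-subcritical growth and $\eta_0=\infty$ for part (a); (F0)--(F3), (F4,$\preceq$), (F5), $H\preceq(2^*-2)F$, and \eqref{e-eta2} for part (b).

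Next I would extend Theorem \ref{T:ScalVec} from the $\cD^{1,2}$-setting to the $H^1$-setting. Arguing exactly as in Lemma \ref{L:DivFree}, the transformation $\UU(x)=u(x)/|y|(-x_2,x_1,0,\ldots,0)$ produces a bijection $H_{\SO}\leftrightarrow H_\cF$ with $|\UU(x)|^2=u(x)^2$ and $|\nabla\UU(x)|^2=|\nabla u(x)|^2+u(x)^2/|y|^2$ a.e.\ in $\rn$, so that $|\UU|_2=|u|_2$ and $E(\UU)=J(u)$. A direct computation using \eqref{e-fgh} together with $|\UU|=|u|$ then gives $G(\UU)=F(u)$, $H(\UU)=f(u)u-2F(u)$, and $\int_{\rn}g(\UU)\cdot\VV\,dx=\int_{\rn}f(u)v\,dx$ whenever $\UU,\VV$ correspond to $u,v$; in particular, $\cM$ (for $E$) corresponds exactly to $\cQ$ (for $J$). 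A further check, using that $\SO(2)$ commutes with the $\pi/2$-rotation on $\R^2$, shows $\UU\in H_\cF\cap\fH_\textup{s}^*$ if and only if $u\in\fH_\textup{s}$.

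By Palais's principle of symmetric criticality (Theorem \ref{T:Palais}) applied to the $\SO$-invariant functional $J$, a constrained critical point of $J|_{H_{\SO}}$ (respectively $J|_{H_{\SO}\cap\fH_\textup{s}}$) automatically yields a critical point of $J$ on the ambient space, and via the bijection above this matches a critical point of $E$ with the same Lagrange multiplier. Therefore, for part (a) I would apply Theorem \ref{T:vec} to obtain $(\lambda,\UU)\in\,]0,\infty[\,\times\, S_\cF\cap\fH_\textup{s}^*$ solving \eqref{e-normcurl} with $E(\UU)=\inf_{D_\cF\cap\fH_\textup{s}^*}E<0$; the associated scalar $u\in S_{\SO}\cap\fH_\textup{s}$ then solves \eqref{e-normsing} with $J(u)=\inf_{D_{\SO}\cap\fH_\textup{s}}J<0$, the last equality coming from the bijection $D_{\SO}\cap\fH_\textup{s}\leftrightarrow D_\cF\cap\fH_\textup{s}^*$ that conjugates $E$ and $J$. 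Part (b) is completely analogous, with Theorem \ref{T:Vec} replacing Theorem \ref{T:vec} and the identification $\cM\leftrightarrow\cQ$ replacing the symmetry correspondence.

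The only delicate point I expect is the $H^1$-extension of Theorem \ref{T:ScalVec}: the argument in Section \ref{S:equiv} is written for the homogeneous space $\cD^{1,2}(\rn,\rn)$, and one must verify that its density/approximation step survives when the extra $L^2$-norm is present, matching the constraints $|\UU|_2=\rho$ and $|u|_2=\rho$ and the corresponding Lagrange multipliers through this extended correspondence. Once this is settled, the positivity of $\lambda$ and the values/signs of the minimization levels are inherited directly from Theorems \ref{T:vec} and \ref{T:Vec}.
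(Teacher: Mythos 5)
Your proposal is correct and follows essentially the same route as the paper: the paper's proof of this theorem is exactly the reduction you describe, invoking its $H^1$-version of the equivalence (Theorem \ref{T:equiv}, stated via \cite{Bieganowski} for $N=3$ and Lemma \ref{L:dec} for the general case) together with Theorem \ref{T:vec} for part (a) and Theorem \ref{T:Vec} plus the identification $\cM\leftrightarrow\cQ$ (Proposition \ref{P:equiv}) for part (b). The ``delicate point'' you flag — carrying the $\cD^{1,2}$ equivalence over to $H^1$ with the $L^2$-constraint and multiplier — is precisely what the paper packages into Theorem \ref{T:equiv}, so your outline matches the intended argument.
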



\section{Proof of the results}

We begin with some results analogous to those from Section \ref{S:equiv}. Let $\UU\colon\rn\to\rn$ and $u\colon\rn\to\R$ satisfy \eqref{e-uU}, $\lambda\in\R$, and $\rho>0$. It is obvious that $\UU\in\fH_\textup{s}^*$ if and only if $u\in\fH_\textup{s}$. Although the nonlinearities in \eqref{e-normcurl} and \eqref{e-normsing} are autonomous, for the sake of completeness we state the following theorem for functions $f$ and $g$ which depend also on $x$.

\begin{Th}\label{T:equiv}
Assume $f\colon\rn\times\R\to\R$ is a Carath\'eodory function and there exists $C>0$ such that $f(ex,u)=f(x,u)$ and $|f(x,u)|\leq C(|u|+|u|^{2^*-1})$ for every $e\in\SO$, a.e. $x\in\rn$, and every $u\in\R$; let also $g$ satisfy \eqref{e-fgh}. Then
$\UU\in H_\cF$ if and only if $u\in H_{\SO}$ and, in such a case, $\nabla\cdot\UU=0$ and
$E(\UU)=J(u)$. Moreover,
$(\lambda,\UU)$ is a solution to \eqref{e-normcurl} 
if and only if $(\lambda,u)$ is a solution to \eqref{e-normsing}.
\end{Th}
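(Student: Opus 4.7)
The plan is to adapt the argument of Theorem \ref{T:ScalVec} from the $\cD^{1,2}$ setting to the $H^1$ setting with an $L^2$-constraint, concluding with Palais' principle of symmetric criticality. The extra $L^2$ datum is handled trivially once we note that $|\UU(x)|^2 = u(x)^2$ for a.e. $x \in \rn$, hence $|\UU|_2 = |u|_2$.

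First, I would establish the equivalence $\UU \in H_\cF \Leftrightarrow u \in H_{\SO}$. The $L^2$ parts (and the $L^2$-constraint itself) match by the observation above. For the gradient parts, the computation in Lemma \ref{L:DivFree}, which extends routinely from $N=3$ to arbitrary $N \geq 3$ since the $\SO$-action touches only the first two coordinates, shows that $\nabla \UU \in L^2(\rn,\R^{N\times N})$ iff $\nabla u \in L^2(\rn,\rn)$ and $u/|y| \in L^2(\rn)$, together with the identity
\[
\int_{\rn} |\nabla \UU|^2 \, dx = \int_{\rn} \left( |\nabla u|^2 + \frac{u^2}{|y|^2} \right) dx,
\]
and $\nabla \cdot \UU = 0$ distributionally. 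Consequently $|\nabla \times \UU|^2 = |\nabla \UU|^2$. Moreover, from \eqref{e-fgh}, writing $\UU(x) = u(x) w(x)$ with the unit vector $w(x) = |y|^{-1}(-x_2,x_1,0,\dots,0)$ on $\rn_*$, one has $g(t\UU(x)) \cdot \UU(x) = f(tu(x)) u(x)$ for every $t \in [0,1]$, so $G(\UU(x)) = \int_0^1 f(tu(x))u(x)\,dt = F(u(x))$, whence $E(\UU) = J(u)$.

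For the equivalence of solutions, observe that $(\lambda, \UU)$ solves \eqref{e-normcurl} iff $\UU \in H^1(\rn,\rn)$ is a critical point of the full functional $E_\lambda := E + \frac{\lambda}{2}|\cdot|_2^2$, and analogously for $(\lambda, u)$ and the obvious $J_\lambda$ on $\hrn \cap X$. The correspondence $\UU \leftrightarrow u$ from the first step is a bijective isometry between $H_\cF$ and $H_{\SO}$ that intertwines $E_\lambda|_{H_\cF}$ with $J_\lambda|_{H_{\SO}}$; in particular it sends critical points of one restricted functional to critical points of the other. To promote these to critical points of the full functionals, I would invoke Theorem \ref{T:Palais}: $E_\lambda$ is invariant under the natural $\SO$-action \eqref{e-Fix} composed with the $\cS$-action \eqref{e-S} (both isometric, leaving $|\cdot|_2$ invariant, and leaving $E$ invariant by \eqref{e-fgh} together with the symmetry argument of \cite[Proposition 1]{AzzBenDApFor} used in the proof of Theorem \ref{T:ScalVec}), and the common fixed-point subspace is precisely $H_\cF$; analogously, $J_\lambda$ is $\SO$-invariant on $\hrn \cap X$ with fixed-point subspace $H_{\SO}$.

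The main obstacle will be the careful verification of the distributional identities near the singular set $\{|y|=0\}$—that is, confirming that the pointwise gradient of $\UU$ computed on $\rn_*$ represents the distributional gradient on all of $\rn$—which requires the cut-off argument from Lemma \ref{L:dec} together with a density statement analogous to Proposition \ref{P:Density}, both of which extend to $N \geq 3$ essentially unchanged because the symmetry under consideration affects only the first two coordinates and the singular locus has codimension two.
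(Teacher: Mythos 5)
Your proposal is correct and follows essentially the approach the paper relies on: the thesis disposes of Theorem \ref{T:equiv} by citing Bieganowski's $N=3$ result and invoking Lemma \ref{L:dec} for the generalization, and what you write out is precisely that argument reconstructed from the Section \ref{S:equiv} machinery (Lemma \ref{L:DivFree}, Proposition \ref{P:Density}, Palais's principle), with the $L^2$-constraint handled via $|\UU|=|u|$ exactly as the paper indicates. The key identities you isolate ($|\nabla\times\UU|_2^2=|\nabla\UU|_2^2=\|u\|^2$, $\int_{\rn}G(\UU)\,dx=\int_{\rn}F(u)\,dx$) are the same ones the paper lists immediately after the statement.
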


When $N=3$, this follows from \cite[Theorem 1.1]{Bieganowski}, while the generalization to the case $N\ge3$ is trivial in view of Lemma \ref{L:dec}. In particular, one has $|\UU|=|u|$, $|\nabla\times\UU|_2=|\nabla\UU|_2=|\nabla u|_2$, $\int_{\rn}G(\UU)\,dx=\int_{\rn}F(u)\,dx$, and $\int_{\rn}g(\UU)\cdot\UU\,dx=\int_{\rn}f(u)u\,dx$, which immediately implies the following property.

\begin{Prop}\label{P:equiv}
Assume $f\colon\R\to\R$ is continuous and there exists $C>0$ such that $|f(u)|\le C(|u|+|u|^{2^*-1})$ for every $u\in\R$; let also $g$ satisfy \eqref{e-fgh}. Then $\UU\in\cM$ if and only if $u\in\cQ$.
\end{Prop}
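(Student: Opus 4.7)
The plan is to translate the defining equation of $\cM$ into that of $\cQ$ via the symmetry ansatz \eqref{e-uU}. The space-level equivalence $\UU\in H_\cF\Leftrightarrow u\in H_{\SO}$, along with $\UU\ne 0\Leftrightarrow u\ne 0$ (since $|\UU|=|u|$), is already delivered by Theorem \ref{T:equiv}. What remains is to rewrite the integral identity $\int|\nabla\times\UU|^2\,dx=\tfrac{N}{2}\int H(\UU)\,dx$ in terms of $u$.

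For the nonlinear side, I would exploit \eqref{e-fgh} pointwise. Setting $w(x):=|y|^{-1}(-x_2,x_1,0,\dots,0)\in\mbS^{N-1}$, one has $\UU(x)=u(x)w(x)$ for a.e.\ $x$ with $|y|\ne 0$, so \eqref{e-fgh} applied with $\alpha=u(x)$ gives $g(\UU)=f(u)w$, whence $g(\UU)\cdot\UU=f(u)u$ a.e. Replacing $\UU$ by $t\UU$ and integrating $t\in[0,1]$ yields $G(\UU)=\int_0^1 f(tu)u\,dt=F(u)$ pointwise, so $H(\UU)=f(u)u-2F(u)$ a.e. The growth bound on $f$ together with the Sobolev embedding $\hrn\hookrightarrow L^{2^*}(\R^N)$ ensures all integrands are globally integrable once $u\in H_{\SO}$, so the pointwise identities can be integrated termwise.

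For the quadratic side, I would use the computation already performed in Lemma \ref{L:DivFree} (and built into Theorem \ref{T:equiv}): since $\UU$ has the form \eqref{e-uU}, $\nabla\cdot\UU=0$, and
\[
\int_{\R^N}|\nabla\times\UU|^2\,dx=\int_{\R^N}|\nabla\UU|^2\,dx=\int_{\R^N}|\nabla u|^2+\frac{u^2}{|y|^2}\,dx.
\]
Plugging this and the nonlinear identity into the $\cM$-relation, and rearranging, gives
\[
\int_{\R^N}|\nabla u|^2+\frac{u^2}{|y|^2}-\frac{N}{2}f(u)u+NF(u)\,dx=0,
\]
which is exactly the defining identity of $\cQ$. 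Since every step is an equivalence, $\UU\in\cM$ iff $u\in\cQ$.

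There is no genuine obstacle: the proposition is a bookkeeping consequence of the pointwise identifications built into \eqref{e-fgh} combined with the curl-to-weighted-Laplacian reduction of Lemma \ref{L:DivFree}/Theorem \ref{T:equiv}. The only line that deserves a short explicit calculation rather than a citation is the pointwise identity $G(\UU)=F(u)$, which follows from the one-parameter integration above.
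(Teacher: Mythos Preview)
Your proposal is correct and follows exactly the approach the paper takes: the paper simply notes, in the paragraph preceding the proposition, that the identities $|\nabla\times\UU|_2^2=|\nabla\UU|_2^2=\int|\nabla u|^2+u^2/|y|^2\,dx$, $\int G(\UU)\,dx=\int F(u)\,dx$, and $\int g(\UU)\cdot\UU\,dx=\int f(u)u\,dx$ (coming from Theorem~\ref{T:equiv} and Lemma~\ref{L:DivFree}) make the equivalence immediate. Your write-up is in fact more explicit than the paper's one-line justification, spelling out the pointwise derivation of $G(\UU)=F(u)$ and $H(\UU)=f(u)u-2F(u)$ from \eqref{e-fgh}.
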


\begin{Rem}
The set $\cQ$ is obtained as the counterpart of $\cM$ for the problem \eqref{e-normsing}. Nonetheless, one can obtain it directly from the Nehari and Poho\v{z}aev identities corresponding to \eqref{e-normsing}. As a matter of fact, arguing as in Section \ref{S:NP}, if $(\lambda,u)$ is such that
\[
-\Delta u+\frac{u}{|y|^2}+\lambda u=f(u) \quad \text{in } \rn,
\]
then it clearly satisfies the Nehari identity
\begin{equation}\label{e-NehSing}
\int_{\rn}|\nabla u|^2+\frac{u^2}{|y|^2}+\lambda u^2\,dx=\int_{\rn}f(u)u\,dx.
\end{equation}
In addition, if $u\in W_\textup{loc}^{2,p}(\rn)$ for every $p<\infty$ (or, in general, arguing heuristically), then $1$ is a critical point of the functional
\[
t\in]0,\infty[\mapsto J\bigl(u(t\cdot)\bigr)+\frac{\lambda}2\int_{\rn}u^2(tx)\,dx\in\R,
\]
i.e., after explicit computations,
\begin{equation}\label{e-PohSing}
\int_{\rn}(N-2)\left(|\nabla u|^2+\frac{u^2}{|y|^2}\right)+\lambda Nu^2-2NF(u)\,dx.
\end{equation}

Finally, as in Section \ref{S:prel}, combining linearly \eqref{e-NehSing} and \eqref{e-PohSing}, we obtain
\[
\int_{\rn}|\nabla u|^2+\frac{u^2}{|y|^2}-\frac{N}2f(u)u+NF(u)\,dx=0.
\]
\end{Rem}

Now we prove the main results of this chapter. Since the proofs are similar to those from Chapters \ref{K:sub} and \ref{K:super}, they are just sketched. Recall Palais's principle of symmetric criticality (Theorem \ref{T:Palais}) and that $\nabla\times\nabla\times\UU=-\Delta\UU$ for every $\UU\in H_\cF$ due to Theorem \ref{T:equiv}.

\begin{proof}[Proof of Theorem \ref{T:vec}]
We prove as in Lemma \ref{L:cbb} (see also Remark \ref{R:cbb}) that $E|_{D_\cF\cap\fH_\textup{s}^*}$ is coercive (and bounded from below) and consider a minimizing sequence $\UU_n\in D_\cF\cap\fH_\textup{s}^*$ for $E|_{D_\cF\cap\fH_\textup{s}^*}$, which is therefore bounded and, up to a subsequence, $\UU_n\rightharpoonup\UU$ in $H^1(\rn,\rn)$ for some $\UU\in D_\cF\cap\fH_\textup{s}^*$. Then, as in Lemma \ref{L:min}, we have that $\int_{\rn}G(\UU_n)\,dx\to\int_{\rn}G(\UU)\,dx$ and so $E(\UU)=\inf_{D_\cF\cap\fH_\textup{s}^*}E$. Moreover, as in Lemma \ref{L:neg}, $E(s\star\VV)<0$ for $\VV\in D_\cF\cap\fH_\textup{s}^*\setminus\{0\}$ and $0<s\ll1$, thus $E(\UU)<0$. Finally, as in the proof of Theorem \ref{T:main1}, there exists $\lambda\ge0$ such that $\nabla\times\nabla\times\UU+\lambda\UU=g(\UU)$ in $\rn$ and, if $\lambda=0$, then $E(\UU)\ge0$, hence $\lambda>0$ and, consequently, $\UU\in S_\cF\cap\fH_\textup{s}^*$, i.e., $(\lambda,\UU)$ is a solution to \eqref{e-normcurl}.
\end{proof}

\begin{proof}[Proof of Theorem \ref{T:Vec}]
We prove as in Lemma \ref{L:coerc} that $E|_{\cM\cap D_\cF}$ is coercive and consider a minimizing sequence $\UU_n\in\cM\cap D_\cF$ for $E|_{\cM\cap D_\cF}$, which is therefore bounded. Then, as in Lemma \ref{L:infismin}, we have that $\UU_n\rightharpoonup\UU$ up to subsequences and translations for some $\UU\in\cM\cap D_\cF$ such that $E(\UU)=\inf_{\cM\cap D_\cF}E$. Moreover, $E(\UU)>0$ arguing as in Lemma \ref{L:bdaw2}. Next, a similar argument to Lemma \ref{L:minons} and Remark \ref{R:alt} yields that $\UU\in\partial D_\cF=S_\cF$ and there exists $\lambda>0$ such that $\nabla\times\nabla\times\UU+\lambda\UU=g(\UU)$, i.e., $(\lambda,\UU)$ is a solution to \eqref{e-normcurl}.
\end{proof}

\begin{proof}[Proof of Theorem \ref{T:Scal}]
It follows from Theorem \ref{T:equiv} and either Theorem \ref{T:vec} (item \textit{(a)}) or Theorem \ref{T:Vec} and Proposition \ref{P:equiv} (item \textit{(b)}).
\end{proof}

\begin{Rem}
Assume $f$ and $g$ are as in Proposition \ref{P:equiv} and let $(\lambda,u)\in\R\times H_{\SO}$ be a solution to \eqref{e-normsing}. If we define $\UU$ from $u$ using \eqref{e-uU}, then, in virtue of Theorem \ref{T:equiv} and Lemma \ref{L:DivFree} respectively, $(\lambda,\UU)\in\R\times H_\cF$ is a solution to \eqref{e-normcurl} and $\nabla\times\nabla\times\UU=-\Delta\UU$. Consequently, $(\lambda,\UU)$ satisfies the Poho\v{z}aev identity (corresponding to \eqref{e-normcurl})
\[
\int_{\rn}|\nabla\times\UU|^2\,dx=\frac{N}{N-2}\int_{\rn}2G(\UU)-\lambda|\UU|^2\,dx
\]
and so, again from the same arguments as in Theorem \ref{T:equiv}, $(\lambda,u)$ satisfies \eqref{e-PohSing}. This last property is definitely not trivial because, due to the singular term $u/|y|^2$, a solution $\tilde{u}\in\hrn\cap X$ to the differential equation in \eqref{e-normsing} (for some $\lambda\in\R)$ need not belong to $W_\textup{loc}^{2,p}(\rn)$ for every $p<\infty$.
\end{Rem}

\end{document}